\def\Q{\mathbb{Q}}
\def\Z{\mathbb{Z}}
\def\Ab{\mathbb{A}}
\def\B{\mathcal{B}}
\def\P{\mathcal{P}}
\def\E{\mathsf{E}}
\def\K{\mathsf{K}}
\def\ad{\mathrm{ad}}
\def\ra{\rightarrow}
\def\lra{\longrightarrow}
\def\ov{\overline}
\def\ovla{\overleftarrow}
\def\ovra{\overrightarrow}
\def\ul{\underline}
\def\wh{\widehat}
\def\wt{\widetilde}
\newcommand{\DF}[2]{{\displaystyle\frac{#1}{#2}}}
\newcommand{\I}{\mathbb{I}}
\newcommand{\W}{\mathbb{W}}
\newcommand{\INT}{\mathbb{Z}}
\newcommand{\RAT}{\mathbb{Q}}
\newcommand{\A}{\mathcal {A}}
\newcommand{\V}{\mathcal {V}}
\newcommand{\ES}{\mathscr{S}}
\newcommand{\dr}{\mathrm{dR}}
\newcommand{\cris}{\mathrm{cris}}
\def\El{\mathcal{E}}
\newcommand{\G}{\mathcal {G}}
\newcommand{\eM}{\mathrm {M}_0}
\newcommand{\eMw}{\mathrm{M}^w_0}
\newcommand{\eMc}{\mathrm{M}_0^\mathrm{c}}
\newcommand{\dkV}{V_k^\vee}
\newcommand{\IIsom}{\mathbf{Isom}}
\newcommand{\Isom}{\mathrm{Isom}}
\newcommand{\Aut}{\mathrm{Aut}}
\newcommand{\Hom}{\mathrm{Hom}}
\newcommand{\rdt}{\mathrm{rdt}}
\newcommand{\Sh}{\mathrm{Sh}}
\newcommand{\Hdr}{\mathrm{H}^1_{\mathrm{dR}}}
\newcommand{\GL}{\mathrm{GL}}
\newcommand{\GSp}{\mathrm{GSp}}
\def\Adm{\mathrm{Adm}}
\def\Aff{\mathrm{Aff}}
\def\loc{\mathrm{loc}}
\def\Sht{\mathrm{Sht}}
\newcommand{\mmu}{\{\mu\}}
\newcommand{\Admu}{\mathrm{Adm}(\{\mu\})}
\newcommand{\BGmu}{B(G,\{\mu\})}
\newcommand{\CGmu}{C(\mathcal{G},\{\mu\})}
\newcommand{\IW}{\widetilde{W}}
\newcommand{\Gr}{\mathrm{Gr}}
\newcommand{\Mloc}{\mathrm{M}^{\mathrm{loc}}_G}
\newcommand{\Mlo}{\mathrm{M}^{\mathrm{loc}}}
\newcommand{\Mlohat}{\widehat{\mathrm{M}}^{\mathrm{loc}}}
\DeclareMathOperator{\Spec}{\mathrm{Spec}}
\DeclareMathOperator{\gr}{\mathrm{gr}}
\DeclareMathOperator{\Spf}{\mathrm{Spf}}
\DeclareMathOperator{\Max}{\mathrm{Max}}
\newcommand{\nc}{\newcommand}
\def\makeop#1{\expandafter\def\csname#1\endcsname
	{\mathop{\rm #1}\nolimits}\ignorespaces}
\def\makebb#1{\expandafter\def
	\csname bb#1\endcsname{{\mathbb{#1}}}\ignorespaces}
\def\makebf#1{\expandafter\def\csname bf#1\endcsname{{\bf
			#1}}\ignorespaces} 
\def\makegr#1{\expandafter\def
	\csname gr#1\endcsname{{\mathfrak{#1}}}\ignorespaces}
\def\makescr#1{\expandafter\def
	\csname scr#1\endcsname{{\EuScript{#1}}}\ignorespaces}
\def\makecal#1{\expandafter\def\csname cal#1\endcsname{{\mathcal
			#1}}\ignorespaces} 
\def\doLetters#1{#1A #1B #1C #1D #1E #1F #1G #1H #1I #1J #1K #1L #1M
	#1N #1O #1P #1Q #1R #1S #1T #1U #1V #1W #1X #1Y #1Z}
\def\doletters#1{#1a #1b #1c #1d #1e #1f #1g #1h #1i #1j #1k #1l #1m
	#1n #1o #1p #1q #1r #1s #1t #1u #1v #1w #1x #1y #1z}
    \def\setminus{\smallsetminus}
\def\Gm{{{\bbG}_{\rm m}}}   
\def\Spec{{\rm Spec}\,}
\def\Fpbar{\overline{\bbF}_p}
\def\Qp{{\bbQ}_p}
\def\Zp{{\bbZ}_p}
\def\Qbar{\overline{\bbQ}}
\def\Sh{{\rm Sh}}
\newcommand{\C}{\mathbb C}
\newcommand{\pr}{\indent }
\def\pr{{\rm pr}}
\renewcommand{\>}{\rangle} 
\newcommand{\isoto}{\stackrel{\sim}{\longrightarrow}}
\nc{\embed}{\hookrightarrow}
\newcommand{\ac}{algebraically closed }
\nc{\ol}{\overline}
\nc{\opp}{\mathrm{opp}}
\def\ul{\underline}
\def\onto{\twoheadrightarrow}
\def\wh{\widehat}
\def\E{\mathsf{E}}
\def\K{\mathsf{K}}
\newenvironment{subeqn}{\refstepcounter{subsubsection}
$$}{\leqno{\rm(\thesubsubsection)}$$\global\@ignoretrue}
\begin{document}

\newtheorem{defn}[subsubsection]{Definition}
\newtheorem{theorem}[subsubsection]{Theorem}
\newtheorem{lemma}[subsubsection]{Lemma}
\newtheorem{proposition}[subsubsection]{Proposition}
\newtheorem{corollary}[subsubsection]{Corollary}
\theoremstyle{definition}
\newtheorem{definition}[subsubsection]{Definition}
\theoremstyle{definition}
\newtheorem{construction}[subsubsection]{Construction}
\theoremstyle{definition}
\newtheorem{notations}[subsubsection]{Notations}
\theoremstyle{definition}
\newtheorem{asp}[subsubsection]{Assumption}
\theoremstyle{definition}
\newtheorem{set}[subsubsection]{Setting}
\theoremstyle{remark}
\newtheorem{remark}[subsubsection]{Remark}
\theoremstyle{remark}
\newtheorem{example}[subsubsection]{Example}

\numberwithin{equation}{subsection}

\theoremstyle{plain}
\newtheorem{introth}{Theorem}
\renewcommand{\theintroth}{\Alph{introth}}

\title{EKOR strata for Shimura varieties with parahoric level structure}
\author{Xu Shen, Chia-Fu Yu, and Chao Zhang}
\address{Morningside Center of Mathematics\\
	Academy of Mathematics and Systems Science\\
	Chinese Academy of Sciences\\
	No. 55, Zhongguancun East Road\\
	Beijing 100190, China}\email{shen@math.ac.cn}
\address{Institute of Mathematics\\ Academia Sinica and NCTS\\Astronomy Mathematics Building\\ No. 1, Roosevelt Road Sec. 4\\ Taipei, Taiwan, 10617}\email{chiafu@math.sinica.edu.tw}
\address{Shing-Tung Yau Center of Southeast University\\Yifu Architecture Building, No. 2, Sipailou\\ Nanjing 210096, China}\email{zhangchao1217@gmail.com}

\subjclass[2010]{14G35; 11G18}
\keywords{Shimura varieties, local models, stratifications, $G$-zips, Shtukas}

\begin{abstract}
	In this paper we study the geometry of reduction modulo $p$ of the Kisin-Pappas integral models for certain Shimura varieties of abelian type with parahoric level structure.
	We give some direct and geometric constructions for the EKOR strata on these Shimura varieties, using the theories of $G$-zips and mixed characteristic local $\G$-Shtukas. We establish several basic properties of these strata, including the smoothness, dimension formula, and closure relation. Moreover, we apply our results to the study of Newton strata and central leaves on these Shimura varieties.
\end{abstract}
\maketitle \setcounter{tocdepth}{2}
\tableofcontents

\

\

\

\section*[Introduction]{Introduction}

In this paper, we study the geometry of reduction modulo $p$ of the Kisin-Pappas integral models (\cite{Paroh}) for certain Shimura varieties of abelian type with parahoric level structure.
Our goal is to develop a geometric approach to define and study the EKOR (\emph{Ekedahl-Kottwitz-Oort-Rapoport}) stratifications on the reductions of these Shimura varieties, which were first introduced by He and Rapoport (\cite{He-Rap}) in a different setting.
\\

The reduction modulo $p$ of Shimura varieties admits very rich geometric structures. We refer to the excellent survey papers \cite{Ra, guide to red mod p, Hain} for some overview. In this paper, we focus on one important perspective of the mod $p$ geometry of Shimura varieties with parahoric level structure.
In \cite{He-Rap}, He and Rapoport have proposed a general guideline to study the geometry of reduction modulo $p$ of Shimura varieties with parahoric level structure. In particular, they postulated \emph{five basic axioms} (\cite{He-Rap} section 3) on the integral models of Shimura varieties. Assuming the verification of these axioms, in \cite{He-Rap} section 6,
He and Rapoport introduced the EKOR stratification on the special fibers of these integral models, based on the works of Lusztig and He on $G$-stable piece decompositions (\cite{Lusztig1, Lusztig, He}). This new stratification has the following key features:
\begin{itemize}
	\item for hyperspecial levels, by works of Viehmann (\cite{truncat level 1}), the EKOR stratification coincides with the EO (\emph{Ekedahl-Oort}) stratification; 
	\item for Iwahori levels, by works of Lusztig and He (\cite{Lusztig, He} and \cite[Corollary 6.2]{He-Rap}), it coincides with the KR (\emph{Kottwitz-Rapoport}) stratification;
	\item for a general parahoric level, the EKOR stratification is a \emph{refinement} of the KR stratification.
\end{itemize}
The finer structure of EKOR stratification makes it easier to be compared with other natural stratifications, e.g. the \emph{Newton} stratification. In fact, under their axioms He-Rapoport showed that for a general parahoric level, each Newton stratum contains a certain EKOR stratum (\cite{He-Rap} Theorem 6.18), while in general there is no KR stratum that is entirely contained in a given Newton stratum.
One can expect that the geometry of EKOR strata will lead to interesting arithmetic applications, see \cite{GK, WZ} for examples in the good reduction case.
\\

In \cite{He-Rap} section 7, the He-Rapoport axioms were verified for the Siegel modular varieties.  For PEL-type Shimura varieties associated to unramified groups of type $A$ and $C$ and to odd ramified unitary groups, these axioms were verified by He-Zhou in \cite{HZ}. For certain Shimura varieties of Hodge type associated to tamely ramified and residually split groups, in \cite{zhou isog parahoric} Zhou has verified these axioms for the Kisin-Pappas integral models constructed in \cite{Paroh}.
Hence in these cases we get the EKOR stratifications by \cite{He-Rap} section 6. For a hyperspecial or an Iwahori level, by the above works of Viehmann or Lusztig and He, some basic geometric properties of EKOR strata, like smoothness and quasi-affineness, are known by the corresponding geometric properties of EO or KR strata.  However, for a general parahoric level, even if one had verified the He-Rapoport axioms, the smoothness of EKOR strata was still unknown. In fact, the geometric meaning of EKOR types was quite mysterious, since the construction in \cite{He-Rap} is purely group theoretic.
\\

In this paper we would like to find a direct and geometric construction of these strata, for some concrete integral models.
More precisely, similar to \cite{zhou isog parahoric}, we also work with the concrete integral models constructed by Kisin and Pappas in \cite{Paroh} for certain Shimura varieties of abelian type. The main results of Kisin and Pappas are that, under certain conditions, there exist some local model diagrams\footnote{The existence of the local model diagram is in fact one of the He-Rapoport axioms, and
	of course, one hopes that eventually all the He-Rapoport axioms should be verified for the Kisin-Pappas integral models.}, 
which relate the integral models of Shimura varieties with the Pappas-Zhu local model schemes \cite{local model P-Z}. 
Roughly speaking, our construction of the EKOR stratification will be about certain refinement of the local model diagram in characteristic $p$. It can be viewed as a geometric realization of the group theoretical considerations in \cite{He-Rap} section 6. 
\\

To state our main results, we need some notations. Let $p>2$ be a fixed prime throughout the paper. Let $(G,X)$ be a Shimura datum of abelian type, $\K=K_pK^p\subset G(\Ab_f)$ an open compact subgroup with $K^p\subset G(\Ab_f^p)$ sufficiently small and $K_p\subset G(\Q_p)$ a \emph{parahoric} subgroup. We have the associated Shimura variety $\Sh_\K=\Sh_{\K}(G,X)$ over the reflex field $\E$. Let $v|p$ be a place of $\E$ and $E=\E_v$.
Let $x$ be a point of the Bruhat-Tits building $\B(G,\Q_p)$, with the attached Bruhat-Tits stabilizer group scheme $\G=\G_x$ and its neutral connected component $\G^\circ=\G_x^\circ$, such that $K_p=\G^\circ(\Z_p)$. Assume that $G$ splits over a tamely ramified extension of $\Q_p$ and $p\nmid |\pi_1(G^{\text{der}})|$.
We will consider the following cases:
\begin{itemize}
	\item $(G,X)$ is of Hodge type and $\G=\G^\circ$.
	\item $(G,X)$ is of abelian type such that $(G^\ad,X^\ad)$ has no factors of type $D^\mathbb{H}$ (we refer to \cite{varideshi} Table 2.3.8 for the precise meaning of type $D^\mathbb{H}$).
	\item $(G,X)$ is of abelian type such that $G$ is unramified over $\Q_p$ and $K_p$ is contained in some hyperspecial subgroup of $G(\Q_p)$.
\end{itemize}
Let $\ES_{\K}=\ES_{\K}(G,X)$ be the integral model over $O_E$ of the Shimura variety $\Sh_{\K}$ constructed by Kisin-Pappas in \cite{Paroh}. The above cases are exactly when we have the local model diagram by  Theorem 4.2.7 (for the first case) and Theorem 4.6.23 (for the last two cases) of \cite{Paroh}.  This is also why we restrict to Shimura data in the above cases (note that each case may overlap with the others, and the first case is used to deduce the other cases in \cite{Paroh} 4.6).
We are interested in the geometry of the special fiber $\ES_{\K,0}$ over $k=\ov{\mathbb{F}}_p$. When $K_p$ is hyperspecial (thus $G$ is unramified), $\ES_{\K,0}$ is smooth. In the general case, by \cite[Corollary 0.3]{Paroh}, $\ES_{\K,0}$ is reduced, and the strict henselizations of the local rings on $\ES_{\K,0}$ have irreducible components which are normal and Cohen-Macaulay.
We will simply write $\ES_0$ for the special fiber when the level $\K$ is fixed. 
\\

We have in fact two approaches for the constructions of EKOR strata on $\ES_0$:
\begin{itemize}
	\item 
	a local construction which uses the theory of $G$-zips due to Moonen-Wedhorn \cite{disinv} and Pink-Wedhorn-Ziegler \cite{zipdata, zipaddi}. Here ``local'' means that we first work on a fixed KR stratum; then we let the KR stratum vary;
	\item a global construction which uses the theory of local $\G$-Shtukas, generalizing some constructions of Xiao-Zhu in \cite{XiaoZhu} (see also \cite{Zhu2}) in the case of good reductions. Here ``global'' means that we work directly on the whole special fiber (up to perfection).
\end{itemize}
To explain the ideas, we assume that $(G,X)$ is of Hodge type for simplicity. Let $G=G_{\Q_p}$ and $\{\mu\}$ be the attached Hodge cocharacter of $G$. Kottwitz and Rapoport defined a  finite subset \[\Admu\] of the Iwahori Weyl group of $G$, \emph{the $\{\mu\}$-admissible set},  cf. \cite{KR, guide to red mod p}. Writing $K=K_p$, from $\Admu$ we get finite sets $\Admu_K$ and ${}^K\Admu$, which will parametrize the types of the KR stratification and EKOR stratification of level $K$ respectively. We have the relations \[\Admu\supset {}^K\Admu \twoheadrightarrow \Admu_K.\] We have a partial order $\leq_K$ on $\Admu_K$, induced from the Bruhat order on the associated affine Weyl group.  On the finite set ${}^K\Admu$, we have also a partial order $\leq_{K,\sigma}$, introduced by He in \cite{mini leng double coset} section 4.
See \ref{Iwahori Weyl} for more details on these sets $\Admu, \Admu_K$ and ${}^K\Admu$. 
\\

The starting point of our local construction is the observation that the EKOR stratification (constructed in \cite{He-Rap} section 6) is a refinement of the KR stratification, and for a KR type $w\in \Admu_K$,
there is always an attached algebraic zip datum $\mathcal{Z}_w$ (see \ref{zip in EKOR}) in the sense of Pink-Wedhorn-Ziegler. Let $\G_0=\G\otimes_{\Z_p}\ov{\mathbb{F}}_p$ and $\G_0^{\rdt}$ be its reductive quotient. Let $M^{\loc}$ be the special fiber over $k=\ov{\mathbb{F}}_p$ of the Pappas-Zhu local model scheme attached to the triple $(G,\{\mu\}, K)$ constructed in \cite{local model P-Z}. Then by construction $\G_0$ acts on $M^{\loc}$ and the underling topological space $|[\G_0\backslash M^{\loc}]|$ of the quotient stack $[\G_0\backslash M^{\loc}]$ is homeomorphic to $\Admu_K$ (for which the topology is defined by its partial order $\leq_K$).
Under the above assumptions, the existence of the local model diagram gives us a morphism of algebraic stacks
\[\lambda_K: \ES_0\ra [\G_0\backslash M^{\loc}],\]such that the fibers are the KR strata of $\ES_0$. We identify the finite Weyl group $W_K$ attached to $K$ with the Weyl group of $\G_0^{\rdt}$. There exists an explicit set $J_w$ (see \ref{zip in EKOR}) of simple reflections in $W_K$ defined from $w$ and $K$.
After choosing a suitable Siegel embedding, we can construct a $\G_0^{\rdt}$-zip of type $J_w$ on the KR stratum $\ES_0^w$ attached to $w$. Let $\G_0^{\rdt}$-$Zip_{J_w}$ be the algebraic stack over $k$ of $\G_0^{\rdt}$-zips of type $J_w$.  By \cite{zipaddi} we have an isomorphism of algebraic stacks $\G_0^{\rdt}$-$Zip_{J_w}\simeq [E_{\mathcal{Z}_w}\backslash\mathcal{G}_0^{\mathrm{rdt}}]$, where $E_{\mathcal{Z}_w}$ is the zip group attached to $\mathcal{Z}_w$, see Definition \ref{D: alg zip data}.
Thus we get a morphism of algebraic stacks \[\zeta_w:\ES_0^w\rightarrow [E_{\mathcal{Z}_w}\backslash\mathcal{G}_0^{\mathrm{rdt}}]\] whose fibers are precisely the EKOR strata in $\ES_0^w$. 

Let $\pi_K: {}^K\Admu\ra \Admu_K$ be the natural surjection, then the local construction gives us a geometrization of $\pi_K^{-1}(w)$, as we have the following bijection
\[\pi_K^{-1}(w)\simeq {}^{J_w}W_K\simeq |[E_{\mathcal{Z}_w}\backslash\mathcal{G}_0^{\mathrm{rdt}}]|,\]
where ${}^{J_w}W_K\subset W_K$ is the set of minimal length representatives for $W_{J_w}\backslash W_K$.
The strategy of the construction is similar to \cite{EOZ}, but making more systematical use of the local model diagram. We refer to subsections \ref{subsec--pointwise constr} and \ref{subsection EO in KR} for details of the construction. 

The main property of the morphism $\zeta_w$ is the following result.
\begin{introth}[Theorem \ref{sm of zeta}]
	The morphism $\zeta_w$ is smooth.
\end{introth}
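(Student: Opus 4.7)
The plan is to use the local model diagram to reduce smoothness of $\zeta_w$ to a smoothness question on a smooth chart. Restricting the local model diagram to the KR stratum yields a correspondence
\[
\ES_0^w \xleftarrow{\pi_w} \widetilde{\ES}_0^w \xrightarrow{q_w} M^{\loc}_w,
\]
where $M^{\loc}_w \subset M^{\loc}$ is the $\G_0$-orbit indexed by $w$, the map $\pi_w$ is a $\G_0$-torsor, and $q_w$ is smooth. Since $\pi_w$ is smooth surjective, it suffices to establish smoothness of the composition $\zeta_w \circ \pi_w : \widetilde{\ES}_0^w \to [E_{\mathcal{Z}_w}\backslash \G_0^{\rdt}]$. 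Smoothness of this composition can in turn be checked after pullback along the smooth cover $\G_0^{\rdt} \to [E_{\mathcal{Z}_w}\backslash \G_0^{\rdt}]$, reducing us to showing that a (pulled-back) lift $\widetilde{\zeta}_w$ to $\G_0^{\rdt}$ is smooth.

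The next step is to identify $M^{\loc}_w$ with the partial flag variety $\G_0^{\rdt}/P_w$, where $P_w$ is the standard parabolic of type $J_w$ attached to the algebraic zip datum $\mathcal{Z}_w$. This identification uses that the unipotent radical of $\G_0$ acts trivially on $M^{\loc}_w$ together with the type-computation of $J_w$ from $w$ already carried out in the construction of $\mathcal{Z}_w$. With this in hand, the lift $\widetilde{\zeta}_w$ admits a pointwise description: it records the relative position of the Hodge filtration (pulled back along $q_w$) and the conjugate filtration coming from Frobenius-pullback of the universal Dieudonn\'e module. I would then verify smoothness of $\widetilde{\zeta}_w$ by deformation theory: along a square-zero thickening, the Hodge filtration deforms smoothly by smoothness of $q_w$, the conjugate filtration is uniquely determined by Frobenius, and the resulting tangent-space computation should match the tangent space of $\G_0^{\rdt}$, yielding formal smoothness; combined with finite type, this gives smoothness.

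The hard part will be the honest identification between the zip-datum produced by Frobenius on the universal Dieudonn\'e module over $\widetilde{\ES}_0^w$ and the algebraic zip datum $\mathcal{Z}_w$ coming from the local model. In the hyperspecial case this matching is classical and essentially pointwise, but in the parahoric setting $M^{\loc}$ is singular and the torsor $\widetilde{\ES}_0^w$ trivializes only the $\G_0$-structure, so the unipotent radical of $\G_0$ creates a genuine gap that must be controlled when comparing the conjugate filtration. Overcoming this should rest on the pointwise construction of the EO refinement of the KR stratum worked out in the subsections on pointwise construction and EO in KR, combined with the Siegel embedding, which ultimately should reduce the question to the smoothness of the zip morphism for Siegel Shimura varieties, where the analogous result is known.
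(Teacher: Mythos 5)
Your general shape — reduce along the $\G_0$-torsor $\pi$ and along the cover $\G_0^{\rdt}\to [E_{\mathcal{Z}_w}\backslash\G_0^{\rdt}]$, then check surjectivity on tangent spaces — matches the paper's strategy, and you've correctly flagged the honest zip/local-model comparison as the crux. But there are two genuine problems with the proposal.

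First, the claimed identification $M^{\loc}_w\cong\G_0^{\rdt}/P_{J_w}$ is false: the unipotent radical of $\G_0$ does \emph{not} act trivially on $\eMw$. One has $\eMw\cong\G_0/\G_{0,w}$ with $\G_{0,w}$ the (smooth) stabilizer, and $\dim\eMw=\ell({}^Kw_K)$; there is only a $\G_0^{\rdt}$-equivariant smooth \emph{fibration} $q_\#:\eMw\twoheadrightarrow\G_0^{\rdt}/P_{J_w}$ (Remark \ref{remark--Grdt zip 1}), whose fibers are positive-dimensional in general. The failure is already visible for Iwahori level, where $\G_0^{\rdt}$ is a torus, so every $\G_0^{\rdt}/P_{J_w}$ is a point, while $\eMw$ has dimension $\ell(w)$. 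Since your tangent-space accounting hinges on this identification, it does not go through as written; what is actually used is only the surjection $T_e\eMw\twoheadrightarrow\mathrm{Lie}(U^-_{J_w})$ coming from $q_\#$ being a smooth covering.

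Second, the final appeal to "reduce to the Siegel case" is not how the comparison is resolved, and I don't see how it would give the result directly: the zip morphism is for the group $\G_0^{\rdt}$ attached to the parahoric $\G$, not for a symplectic group, and smoothness of a map is not inherited along a closed immersion into a Siegel situation where the analogous map is smooth. What the paper actually uses at this point is the Kisin--Pappas deformation theory of $p$-divisible groups with crystalline tensors: the explicit form $\Psi'_{R_G}=\widetilde{\mathfrak{g}}^{-1}\circ(\Psi_0\otimes 1)$ of the universal display over $A/\mathfrak{m}^2$ (Remark \ref{key remark}(2)), which shows that on the level of tangent spaces the map $\zeta_w^\#:\mathbb{E}^w\to\G_0^{\rdt}$ factors through $\overline{\mathfrak{g}}^{\rdt}\in\G_0^{\rdt}(A/\mathfrak{m}^2)$, and then the vanishing of $d\sigma'$ at $1$. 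Also missing from your outline is the smoothness of the \emph{conjugate} local model diagram (Theorem \ref{conj local model diag}), which is what controls the conjugate filtration to first order and is itself a nontrivial input proved from the same deformation-theoretic "constancy mod $\mathfrak{a}_E$" statement — "uniquely determined by Frobenius" is not by itself enough without this.
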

As a consequence, we can translate many geometric properties of the quotient stack $[E_{\mathcal{Z}_w}\backslash\mathcal{G}_0^{\mathrm{rdt}}]$ to those of $\ES_0^w$. In particular, each EKOR stratum is a locally closed smooth subvariety of $\ES_0^w$, and the closure of an EKOR stratum in $\ES_0^w$ is a union of EKOR strata. Moreover, we can also describe the dimension of an EKOR  stratum (if non-empty) once we know the dimension of the KR stratum containing it.
\\

The disadvantage of the local construction is that the type $J_w$ varies on different KR strata. The theory developed in \cite{zipdata, zipaddi} is not enough to put these $\G_0^{\rdt}$-zips of different types uniformly together. In particular, when we want to show the closure relation of EKOR strata defined locally as above, we will meet a serious problem. To overcome these difficulties,
in section \ref{section global} we adapt some ideas of Xiao and Zhu in \cite{XiaoZhu}. One of the key observations is that by the works of Scholze (\cite[Corollary 21.6.10]{SW}) and He-Pappas-Rapoport (\cite[Theorem 2.15]{He-Pap-Rap}), the (perfection of the) special fiber of the Pappas-Zhu local model admits an embedding into the Witt vector affine flag varieties $\Gr_\G$ (\cite{zhu-aff gras in mixed char,BS}). 
So we introduce the notions of local $(\G,\mu)$-Shtukas and their truncations in level 1, cf. Definition \ref{D: shtuka} and subsection \ref{subsection restricted loc sht}, generalizing those in \cite{XiaoZhu} in the unramified case. Roughly, a local $(\G,\mu)$-Shtuka over a perfect ring $R$ is a $\G$-torsor $\El$ over $W(R)$, together with an isomorphism $\beta: \sigma^\ast\El[\frac{1}{p}]\stackrel{\sim}{\ra} \El[\frac{1}{p}]$ over $W(R)[\frac{1}{p}]$, such that the relative position between $\sigma^\ast\El$ and $\El$ at any geometric point of $\Spec\,R$ lies in $\Admu_K$.  Here $\sigma: W(R)\ra W(R)$ is the Frobenius.
Let $\Aff^{pf}_k$ be the category of perfect $k$-algebras.
We need to pass to the world of perfect algebraic geometry as in \cite{zhu-aff gras in mixed char} and \cite{XiaoZhu}. 

Consider the prestack $\Sht_{\mu,K}^{\loc}$ of local $(\G,\mu)$-Shtukas, which can be described as (cf. Lemma \ref{L: pretacks sht}) \[\Sht_{\mu,K}^{\loc}\simeq \Big[\frac{M^{\loc,\infty}}{Ad_\sigma L^+\G}\Big],\]where $M^{\loc,\infty}\subset LG$ is the pre-image of $M^{\loc}\subset \Gr_\G=LG/L^+\G$, and the quotient means that we take the $\sigma$-conjugation action of $L^+\G$ on $M^{\loc,\infty}$. Consider the reductive 1-truncation group $L^{1-\rdt}\G$, i.e. for any $R\in \Aff^{pf}_k$, $L^{1-\rdt}\G(R)=\G_0^{\rdt}(R)$. Let $L^+\G^{(1)-\rdt}:=\ker (L^+\G\ra L^{1-\rdt}\G)$
and \[M^{\loc,(1)-\rdt}\subset LG/L^+\G^{(1)-\rdt}\] be the image of $M^{\loc,\infty}\subset LG$ under the projection $LG\ra LG/L^+\G^{(1)-\rdt}$. 
Fix any integer $m\geq 2$,  we have the algebraic stack of $(m,1)$-restricted local $(\G,\mu)$-Shtukas
\[\Sht_{\mu,K}^{\loc(m,1)}= \Big[\frac{M^{\loc,(1)-\rdt}}{Ad_\sigma L^m\G}\Big]. \]Here $L^m\G$ is the $m$-truncation group, i.e. for any $R\in \Aff^{pf}_k$, $L^m\G(R)=\G(W_m(R))$.
There are natural perfectly smooth morphisms \[\Sht_{\mu,K}^{\loc}\ra \Sht_{\mu,K}^{\loc(m,1)}\stackrel{\pi_K}{\lra} [\G_0\backslash M^{\loc}].\] Recall that
we have a homeomorphism of topological spaces $\Admu_K\simeq |[\G_0\backslash M^{\loc}]|$.
By the works of Lusztig and He,  we have a homeomorphism of topological spaces (see Lemma \ref{L: top restricted sht})
\[{}^K\Admu\simeq |\Sht_{\mu,K}^{\loc(m,1)}|,\]
where the topology on ${}^K\Admu$ is defined by the partial order $\leq_{K,\sigma}$.
By the works of Hamacher-Kim \cite{Ham-Kim} and Pappas \cite{Pappas Ober}, we have a universal local $(\G,\mu)$-Shtukas over $\ES_{0}^{pf}$, the perfection of the special fiber of our Shimura variety. Taking its $(m,1)$-restriction, we get a morphism of algebraic stacks
\[\upsilon_K: \ES_{0}^{pf}\ra  \Sht_{\mu,K}^{\loc(m,1)},\]
which lifts the morphism of algebraic stacks $\lambda_K: \ES_{0}^{pf}\ra [\G_0\backslash M^{\loc}]$ induced by the local model digram. 
\begin{introth}[Theorem \ref{T: perf smooth}]
	The morphism $\upsilon_K$ is perfectly smooth.
\end{introth}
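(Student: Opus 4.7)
The plan is to adapt the argument of Xiao-Zhu in the hyperspecial case (\cite{XiaoZhu} Proposition 7.2.4) to our parahoric setting. The strategy combines three ingredients: the Kisin-Pappas local model diagram, which relates $\ES_0^{pf}$ to $M^{\loc}$ via a smooth $\G_0$-equivariant chart; the universal local $(\G,\mu)$-Shtuka on $\ES_0^{pf}$ provided by Hamacher-Kim and Pappas; and the perfectly smooth atlas $M^{\loc,(1)-\rdt}\to\Sht_{\mu,K}^{\loc(m,1)}$.

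First I would reduce to a local statement on a chart of the target. Since the atlas $M^{\loc,(1)-\rdt}\to\Sht_{\mu,K}^{\loc(m,1)}$ is a torsor under the $Ad_\sigma$-action of the perfectly smooth group $L^m\G$, it suffices to show that the base change
$$\wt{\ES}_0^{pf,(m,1)}:=\ES_0^{pf}\times_{\Sht_{\mu,K}^{\loc(m,1)}} M^{\loc,(1)-\rdt}\lra M^{\loc,(1)-\rdt}$$
is perfectly smooth. The fibered product $\wt{\ES}_0^{pf,(m,1)}$ parametrizes pairs $(x,\tau)$ where $x\in\ES_0^{pf}$ and $\tau$ is a trivialization at level $(m,1)$ of the universal local $(\G,\mu)$-Shtuka at $x$; it is naturally an $L^m\G$-torsor over $\ES_0^{pf}$ with the $Ad_\sigma$-action.

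Next I would construct $\wt{\ES}_0^{pf,(m,1)}$ explicitly via an enhancement of the Kisin-Pappas local chart. The $\G_0$-torsor $\wt{\ES}_0^{pf}\to\ES_0^{pf}$ from Kisin-Pappas encodes the Hodge filtration, i.e.\ the level-$1$ data of the Shtuka, while Pappas' universal Shtuka provides the Frobenius data at all higher levels. Adjoining trivializations of this extra structure compatibly yields a natural factorization
$$\wt{\ES}_0^{pf,(m,1)}\twoheadrightarrow\wt{\ES}_0^{pf}\twoheadrightarrow\ES_0^{pf},$$
together with a canonical $L^m\G$-equivariant lift $\tilde q:\wt{\ES}_0^{pf,(m,1)}\to M^{\loc,(1)-\rdt}$ of the Kisin-Pappas chart $q:\wt{\ES}_0^{pf}\to M^{\loc}$.

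Finally, I would verify that $\tilde q$ is perfectly smooth through a relative tangent analysis combined with a dimension count. Composing $\tilde q$ with $M^{\loc,(1)-\rdt}\to M^{\loc}$ recovers $q$ up to torsor extensions on the source, and $q$ is smooth by Kisin-Pappas; the expected relative dimension of $\tilde q$ is $m\cdot\dim\G_0-\dim\G_0^{\rdt}$, which is nonnegative for $m\geq 2$. The main obstacle will be making precise the key deformation-theoretic input: after fixing the Hodge filtration and the $\G$-torsor at level $1$, the Frobenius data at levels $2,\ldots,m$ of the universal Shtuka must be freely deformable in $\ES_0^{pf}$ up to the $L^m\G$-gauge action, so that the differential of $\tilde q$ is surjective. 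This is most delicate in the abelian (non-Hodge) type case, where Pappas' universal Shtuka is obtained via Deligne's construction from an associated Hodge type datum rather than directly from a $p$-divisible group; here one must transfer the deformation statement from the Hodge type case by functoriality of Pappas' construction.
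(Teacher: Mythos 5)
Your overall architecture matches the paper's proof: pull back along the $L^m\G$-torsor atlas $M^{\loc,(1)-\rdt}\to\Sht_{\mu,K}^{\loc(m,1)}$, obtain an $L^m\G$-torsor over $\Sh_K$ together with an $L^m\G$-equivariant lift $\tilde q$ of the Kisin--Pappas chart $q$, and then check surjectivity of the differential of $\tilde q$. However, the place where you locate the crux of the argument is off, and that is where the proposal has a genuine gap. You write that ``after fixing the Hodge filtration and the $\G$-torsor at level $1$, the Frobenius data at levels $2,\ldots,m$ of the universal Shtuka must be freely deformable in $\ES_0^{pf}$ up to the $L^m\G$-gauge action.'' No such deformation statement about $\ES_0^{pf}$ at the higher levels of the Shtuka is needed, and trying to prove one would lead you down the wrong path. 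The actual mechanism is the following split: working over an \'etale neighborhood $U\to\Sh_K$ where the torsor trivializes, $\tilde q$ becomes $(u,g)\mapsto g\, a(m,1)(u)\, \pi_{m,1\text{-}\rdt}(\sigma(g))^{-1}$ over $U$. The $U$-direction already maps \'etale onto $M^{\loc}$ via the Kisin--Pappas local model diagram, so it accounts exactly for the $T M^{\loc}$-direction in the target, not for the $\G_0^{\rdt}$-fiber. The $\G_0^{\rdt}$-fiber direction is produced entirely by the gauge $L^m\G$-action; and the reason this works is that the derivative of the Frobenius twist $g\mapsto\sigma(g)$ vanishes, so on tangent spaces the gauge action reduces to the projection $\operatorname{Lie}L^m\G\to\operatorname{Lie}\G_0^{\rdt}$, which is visibly surjective. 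Without the $d\sigma=0$ observation one cannot conclude, and a dimension count alone (your $m\dim\G_0-\dim\G_0^{\rdt}\geq 0$) is not a substitute for verifying surjectivity of the differential: it is consistent with, but does not imply, flatness.

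Two smaller points. First, your proposed factorization $\wt{\ES}_0^{pf,(m,1)}\twoheadrightarrow\wt{\ES}_0^{pf}\twoheadrightarrow\ES_0^{pf}$ needs care: the universal Shtuka $\El$ is built from the crystalline torsor via a Frobenius pullback $\sigma^{-1,\ast}$, whereas $\wt{\ES}_0$ is the de Rham torsor, so the comparison under $L^m\G\to\G_0$ is not literal; the paper avoids this by trivializing on an \'etale neighborhood rather than asserting such a global factorization. Second, your worry about the abelian-type case is misplaced for the statement at hand: Theorem~\ref{T: perf smooth} is proved in the Hodge-type setting of section~\ref{section global}, and the paper explicitly leaves the abelian-type global construction to the reader at the end of section~\ref{section abelian}, handling abelian type instead by the \'etale descent of $\G^{\rdt,\ad}$-zip data along connected components.
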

The relation between the local and global constructions is as follows.
Recall that we have $\pi_K: \Sht_{\mu,K}^{\loc(m,1)}\ra [\G_0\backslash M^{\loc}]$.
For any $w\in |[\G_0\backslash M^{\loc}]|$, there is a natural perfectly smooth morphism \[\pi_K^{-1}(w)\ra [E_{\mathcal{Z}_w}\backslash\mathcal{G}_0^{\mathrm{rdt}}]^{pf}\] which induces a homeomorphism of underlying topological spaces, see Proposition \ref{P: G zip and shtukas}. Here $[E_{\mathcal{Z}_w}\backslash\mathcal{G}_0^{\mathrm{rdt}}]^{pf}$ is the perfection of the algebraic stack $[E_{\mathcal{Z}_w}\backslash\mathcal{G}_0^{\mathrm{rdt}}]$. The morphism $\upsilon_K$ thus interpolates the morphisms $\zeta_w^{pf}$ when $w\in \Adm(\{\mu\})_K$ varies.
\\

The fibers of $\upsilon_K$ give the EKOR strata on $\ES_{0}^{pf}$. Since the perfection does not change underlying topological spaces, we get the closure relation of EKOR strata on $\ES_{0}$.
\\


We collect some of the main results for EKOR strata as follows. 
\begin{introth}\label{Thm C}
	\begin{enumerate}
		\item {\rm (Theorem \ref{thm--first properties}, Theorem \ref{T: EKOR abelian} (1), Corollary \ref{C: closure relation})} 
		We have the EKOR stratification
		\[\ES_0=\coprod_{x\in {}^K\Admu}\ES_0^x.\]
		For each $x\in {}^K\Admu$, the corresponding EKOR stratum $\ES_0^x$ is a locally closed subvariety which is non-empty, smooth, equi-dimensional of dimension $\ell(x)$. Moreover, we have the closure relation
		\[\ov{\ES_0^x}=\coprod_{x'\leq_{K,\sigma}x}\ES_0^{x'}.\]
		
		\item {\rm (Theorem \ref{thm--quasi affine}, Theorem \ref{T: EKOR abelian} (2))} If $K$ is Iwahori, then any KR stratum is quasi-affine. In general, if the He-Rapoport axiom 4 (c) holds, any EKOR stratum is quasi-affine. In particular, if $G_{\RAT_p}$ is residually split, then any EKOR stratum is quasi-affine.
	\end{enumerate}
\end{introth}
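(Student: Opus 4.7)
The plan is to deduce both assertions from the perfect smoothness of the global morphism $\upsilon_K: \ES_0^{pf} \to \Sht_{\mu,K}^{\loc(m,1)}$ of Theorem B, combined with the homeomorphism $|\Sht_{\mu,K}^{\loc(m,1)}| \simeq {}^K\Admu$. Since ${}^K\Admu$ is finite, each $x$ is locally closed, so $\ES_0^x := \upsilon_K^{-1}(x)$ is locally closed in $\ES_0^{pf}$ and hence in $\ES_0$, as perfection preserves the underlying topological space. Perfect smoothness of $\upsilon_K$ then forces every non-empty $\ES_0^{x,pf}$ to be perfectly smooth over $k$, which implies that $\ES_0^x$ itself is smooth and equi-dimensional. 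For the dimension formula I would invoke the local construction of Theorem A: within the KR stratum $\ES_0^w$ (of dimension $\ell(w)$), for $w = \pi_K(x)$, the EKOR stratum $\ES_0^x$ is a fiber of the smooth morphism $\zeta_w: \ES_0^w \to [E_{\mathcal{Z}_w} \backslash \mathcal{G}_0^{\rdt}]$ over the stratum labelled by the corresponding element $u \in {}^{J_w}W_K$, and the standard length/dimension comparison in $[E_{\mathcal{Z}_w} \backslash \mathcal{G}_0^{\rdt}]$ yields $\dim \ES_0^x = \ell(x)$.

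Non-emptiness is the delicate part. I would prove topological surjectivity of $\upsilon_K$ by reducing to the Iwahori level: when $K_p$ is Iwahori the KR stratification on $\ES_{0,\mathrm{Iw}}$ is indexed by $\Admu$ and every KR stratum is non-empty via the interplay between the local model diagram and the global geometry of the Shimura variety, which is where one invokes the available parts of the He--Rapoport axioms for the Kisin--Pappas setting. The finite change-of-level map $\ES_{0,\mathrm{Iw}} \to \ES_0$, together with the combinatorial surjection $\Admu \twoheadrightarrow {}^K\Admu$ refining $\Admu \twoheadrightarrow \Admu_K$ via $\pi_K^{-1}(w) \subset {}^K\Admu$, forces every $x \in {}^K\Admu$ to be hit. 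The closure relation then follows formally: $\upsilon_K$ being perfectly smooth and topologically surjective is a topological quotient, so $\ov{\ES_0^x} = \upsilon_K^{-1}(\ov{\{x\}}) = \upsilon_K^{-1}(\{x' \mid x' \leq_{K,\sigma} x\})$, the last equality by definition of the topology on ${}^K\Admu$. For $(G,X)$ of abelian type one descends from Hodge type along the adjoint quotient, exactly as in Theorems A and B.

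For part (2), in the Iwahori case EKOR $=$ KR and the perfected local model diagram identifies each KR stratum as the preimage of a Schubert cell in the Witt vector affine flag variety $\Gr_\G$. Schubert cells are quasi-affine by \cite{zhu-aff gras in mixed char, BS}, and quasi-affineness descends in the perfectly smooth topology, so it transports along $\lambda_K^{pf} = \pi_K \circ \upsilon_K$. For a general parahoric, axiom 4(c) of He--Rapoport posits a compatible Iwahori cover whose restriction to any EKOR stratum at level $K$ maps finitely and surjectively onto a union of KR strata at the Iwahori level; finite flatness then transports quasi-affineness upward. The residually split case is closed by the known fact that axiom 4(c) holds automatically in that setting.

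The principal obstacles are therefore: (i) the non-emptiness step, which does not formally follow from smoothness of $\upsilon_K$ and demands a genuinely global input securing topological surjectivity of $\upsilon_K$, typically packaged as part of the He--Rapoport axioms for the Kisin--Pappas models; and (ii) verifying axiom 4(c) in sufficient generality to handle quasi-affineness outside the residually split case, a non-trivial compatibility between Iwahori and parahoric integral models whose resolution is really the crux of extending the quasi-affineness to the fully general parahoric.
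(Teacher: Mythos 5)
Your outline for part (1) is broadly aligned with the paper, modulo one gap that should be flagged: deducing smoothness of $\ES_0^x$ from the perfect smoothness of $\upsilon_K$ is not automatic. Perfect smoothness is a statement about the perfected scheme $\ES_0^{x,\mathrm{pf}}$ and only controls the underlying topological space of $\ES_0^x$; it does not by itself give back smoothness of the non-perfected scheme, since perfection destroys the structure sheaf information on which smoothness depends. The paper is careful to separate the two constructions: smoothness and the dimension formula come from the honest smoothness of $\zeta_w$ in the local construction (Theorem \ref{sm of zeta} and Theorem \ref{thm--first properties}), while the global construction via $\upsilon_K$ is invoked only for what perfection does preserve, namely the closure relation (Corollary \ref{C: closure relation}). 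Your use of the change-of-level morphism $\pi_{I,K}$ to reduce non-emptiness to the Iwahori case is the right move, but the key input there is not a generic appeal to the He--Rapoport axioms: it is Zhou's theorem that all Iwahori KR strata are non-empty (\cite{zhou isog parahoric}), fed through Proposition \ref{change para--EKOR}.

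The argument you give for part (2) has a genuine gap and would not go through. You propose that the local model diagram identifies each Iwahori KR stratum with the preimage of a quasi-affine Schubert cell in $\Gr_\G$, and that quasi-affineness ``transports'' along $\lambda_K$. But the target of $\lambda_K$ restricted to the KR stratum $\ES_{I,0}^w$ is the quotient stack $[\G_0\backslash \eM^w]$, which, since $\eM^w$ is a single $\G_0$-orbit, is the classifying stack $B\G_{0,w}$ --- a point stack, certainly not quasi-affine. There is no mechanism by which quasi-affineness of the Schubert cell $\eM^w$ implies quasi-affineness of the fiber of a smooth map over $B\G_{0,w}$: quasi-affineness does not ascend along smooth morphisms (consider $\mathbb{A}^1\times\mathbb{P}^1\to\mathbb{A}^1$), and it does not descend through $\G_0$-torsors either. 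The paper's actual proof of Theorem \ref{thm--quasi affine} is of an entirely different nature: it takes the Hodge line bundle $\omega=\det(\V^1)$ on the ambient Siegel moduli space, observes that its restriction to $\ES_{I,0}^w$ is the pullback along the $G$-zip morphism $\zeta_I:\ES_{I,0}^w\to[T_{\sigma'}\backslash T]$ of a line bundle $\mathcal{L}_\eta$ attached to a character of the torus $T=\mathcal{I}_0^{\rdt}$, and then computes that $\mathcal{L}_\eta$ is torsion in $\Pic([T_{\sigma'}\backslash T])$; since $\omega$ is ample this forces $O_{\ES_{I,0}^w}$ to be ample and hence $\ES_{I,0}^w$ quasi-affine. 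The paper explicitly warns against the route you take, remarking that ``the morphism $\zeta_w$ in this case contains some geometric information that can not be seen directly from the local model diagram''; your proposal falls precisely into that trap.
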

In (1), the non-emptiness is deduced by results of  several other authors, see for example \cite{KMS, Yu, zhou isog parahoric}. We refer to Corollary \ref{coro--non-emp and closure} for more details. The smoothness and dimension formula are conjectures of He and Rapoport in \cite{He-Rap}, based on their axioms together with some different observations (see also our discussions after Theorem \ref{intro thm--etale}). It is an interesting question to investigate the singularities of the closure of an EKOR stratum. If the level $K$ is Iwahori, then it is known that each closure of KR stratum is normal and Cohen-Macaulay, see \cite[Corollary 4.2.12]{Paroh} and \cite[Theorem 1.2]{local model P-Z}. We don't know whether this is true for a general parahoric level.

Quasi-affineness of EKOR strata is also conjectured by He and Rapoport in \cite{He-Rap}. Statement (2) says that this conjecture holds as long as the He-Rapoport axiom 4 (c) holds. It is reduced to quasi-affineness of Iwahori KR strata using Theorem \ref{intro thm--etale}. Noting that axiom 4 (c) has been verified by Zhou if $G_{\RAT_p}$ is residually split \cite{zhou isog parahoric}, EKOR strata are quasi-affine in this case. Quasi-affineness of Iwahori KR strata  in the Siegel case was known by the work of G\"ortz-Yu \cite{Iwahoric Siegel-Gor-Yu}.

It is somehow surprising that quasi-affineness of Iwahori KR strata is proved using techniques to study EKOR strata: KR strata are defined using the local model diagram, so it sounds reasonable to expect that one could study Iwahori KR strata simply using the local model diagram. On the contrary, our proof of the quasi-affineness of Iwahori KR strata  does use techniques from the study of EKOR strata, and in particular it involves the morphism $\zeta_w$ which contains the geometric information that can not be seen from the local model diagram. We refer to the proof of Theorem \ref{thm--quasi affine} for more details.
\\

We can define various ordinary loci and superspecial loci using EKOR strata. More precisely, 
\begin{enumerate}
	\item we can do this for each KR stratum. Namely, for $w\in \Admu_K$, in $\ES_0^w$,
	\begin{enumerate}
		\item there is a unique EKOR stratum, namely $\ES_0^{{}^Kw_K}$ with ${}^Kw_K$ as in \ref{ordi rep of KR}. This stratum is open dense in $\ES_0^w$ by the smoothness of the map $\zeta_w$, and is called the $w$\emph{-ordinary locus};
		\item there is a unique EKOR stratum, namely $\ES_0^{x_w}$.  Here $x_w$ is as in \ref{subsubsection two sections}. This stratum is closed in $\ES_0^w$, and is called the $w$-\emph{superspecial locus};
	\end{enumerate}
	\item  we can also do this globally in $\ES_0$. Namely,
	\begin{enumerate}
		\item the \emph{ordinary locus} $\ES_0^{\mathrm{ord}}$ is the union of EKOR strata $\ES_0^x$ with $\ell(x)=\mathrm{dim}(\ES_0)$. It is clear by definition that $$\ES_0^{\mathrm{ord}}=\coprod_{w\in \Admu_K,\atop\ell({}^Kw_K)=\mathrm{dim}(\ES_0)}\ES_0^{{}^Kw_K},$$ and it is an open dense smooth subscheme in $\ES_0$ (note that the density follows from the smoothness of the maps $\zeta_w$ for all $w\in \Admu_K$). By a result of He-Nie in \cite{mu-ordi of SHV} (see the following Proposition \ref{P: max EKOR}) we can rewrite the above disjoint union as
		\[\ES_0^{\mathrm{ord}}=\coprod_{\mu'\in W_0(\ul{\mu}), t^{\mu'}\in {}^K\IW}\ES_0^{t^{\mu'}}.\] We refer to Proposition \ref{P: max EKOR} for the precise meaning of the notation $W_0(\ul{\mu})$,
		\item the \emph{superspecial locus} is the EKOR strata $\ES_0^\tau$, where $\tau$ is as in \ref{tau}. $\ES_0^\tau$ is the unique closed EKOR stratum in $\ES_0$, and it is of dimension 0. Moreover, $\ES_0^\tau$ is a central leaf (see \cite{Kim} for some discussions on central leaves in the reduction of Hodge-type Shimura varieties with parahoric level structure). 
	\end{enumerate}
\end{enumerate}

Let $I\subset K$ be an Iwahori subgroup. Fix the prime to $p$ level $K^p$ and we simply write $\ES_I=\ES_{IK^p}$ and $\ES_K=\ES_{KK^p}$.
Then by \cite{zhou isog parahoric} section 7, we have a morphism between special fibers \[\pi_{I,K}: \ES_{I,0}\ra \ES_{K,0}.\]
\begin{introth}[Theorem \ref{thm--etale forg level}, Theorem \ref{T: EKOR abelian} (3)] \label{intro thm--etale}
	For $x\in {}^K\Admu$ viewed as an element of ${}^I\Admu=\Admu$, the morphism \[\pi_{I,K}^x:\ES_{I,0}^x\rightarrow \ES_{K,0}^x\] induced by $\pi_{I,K}$ is finite \'{e}tale. If in addition the He-Rapoport axiom 4 (c) is satisfied, $\pi_{I,K}^x$ is a finite \'{e}tale covering.
\end{introth}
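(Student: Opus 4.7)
The plan is to establish finiteness via properness combined with the dimension formula, and then to upgrade to étaleness by comparing with the global Shtuka description.

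First I would note that $\pi_{I,K}\colon \ES_{I,0}\to\ES_{K,0}$ is proper by the construction of the forgetful morphism for different parahoric levels in \cite{zhou isog parahoric} section 7, so its restriction $\pi_{I,K}^x$ is also proper. Since the Iwahori EKOR stratification coincides with the KR stratification, Theorem \ref{Thm C}(1) applied at both $I$ and $K$ levels yields that $\ES_{I,0}^x$ and $\ES_{K,0}^x$ are smooth, non-empty, and equi-dimensional of the same dimension $\ell(x)$. Hence $\pi_{I,K}^x$ is quasi-finite and, together with properness, finite.

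To upgrade to étaleness I would compare with the global Shtuka stacks. The perfectly smooth morphisms $\upsilon_I,\upsilon_K$ from Theorem B fit into a commutative square
\[
\xymatrix{
\ES_{I,0}^{pf} \ar[r]^-{\upsilon_I} \ar[d]_{\pi_{I,K}} & \Sht_{\mu,I}^{\loc(m,1)} \ar[d]^{\pi^{\loc}_{I,K}} \\
\ES_{K,0}^{pf} \ar[r]^-{\upsilon_K} & \Sht_{\mu,K}^{\loc(m,1)}
}
\]
where $\pi^{\loc}_{I,K}$ is induced by the inclusion $I\subset K$ on the truncated group schemes. Under the identifications $|\Sht_{\mu,I}^{\loc(m,1)}|\simeq \Admu$ and $|\Sht_{\mu,K}^{\loc(m,1)}|\simeq {}^K\Admu$ from Lemma \ref{L: top restricted sht}, the map $\pi^{\loc}_{I,K}$ on underlying sets sends $w\in \Admu$ to the minimal length representative of $W_K w$, so it restricts to the identity on ${}^K\Admu\subset \Admu$. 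Restricting to the residual gerbes at $x\in{}^K\Admu$, one then verifies that the induced map between the automorphism groups of the $(m,1)$-restricted local Shtuka at $x$ is an isomorphism, so $\pi^{\loc}_{I,K}$ is perfectly étale over $x$. Pulling this étaleness back along the perfectly smooth $\upsilon_I,\upsilon_K$ and using that perfection preserves étaleness on schemes of finite type yields étaleness of $\pi_{I,K}^x$.

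The hard part will be the stacky étaleness check at $x$: specifically, verifying that the natural morphism of $\sigma$-conjugation automorphism groups of the $(m,1)$-restricted Shtuka at $x$, from Iwahori to parahoric level, is an isomorphism. This reduces to a group-theoretic statement about $\sigma$-centralizers of $x$ in $L^m\G_I$ and $L^m\G_K$, and ultimately rests on $x$ being of minimal length in $W_K\cdot x$, via the Lusztig-He theory of $G$-stable pieces. For the covering statement, He-Rapoport axiom 4(c), which provides the compatibility of the KR/EKOR stratifications with the forgetful morphism, yields surjectivity of $\pi_{I,K}^x$, hence makes it a finite étale covering.
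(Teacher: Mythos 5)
Your proposal is a genuinely different route from the paper's, but it has several gaps that would sink it as written.

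The paper's proof of \'{e}taleness is entirely local: for a point $z\in\ES_{I,0}^x(k)$ with image $z'$, it uses the local model diagrams at both levels to identify the complete local rings of the EKOR strata with complete local rings of the affine Schubert cells $\mathrm{M}_{I,0}^x$ and $\mathrm{M}_{K,0}^{w_x}$, and then reduces everything to Lemma~\ref{lemma--etale forg level}, a statement purely about local models: the projection $q_{I,K}\colon \mathrm{M}_{I,0}^x\to \mathrm{M}_{K,0}$ is finite \'{e}tale onto its image. Combining this with the dimension count then forces $\Spec\widehat{O}_{\ES_{I,0}^x,z}\to\Spec\widehat{O}_{\ES_{K,0}^x,z'}$ to be an isomorphism. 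No Shtuka stack comparison across levels is used.

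Your approach via the Shtuka stacks has a serious gap at its core: the morphism $\pi_{I,K}^{\loc}\colon\Sht_{\mu,I}^{\loc(m,1)}\to\Sht_{\mu,K}^{\loc(m,1)}$ that you assert is ``induced by the inclusion $I\subset K$ on the truncated group schemes'' does not exist in the naive way. The pro-unipotent radicals satisfy $\breve K_1\subset\breve I_1$, equivalently $L^+\G_K^{(1)\text{-rdt}}\subset L^+\G_I^{(1)\text{-rdt}}$, so the natural map of torsor quotients goes from $LG/L^+\G_K^{(1)\text{-rdt}}$ \emph{onto} $LG/L^+\G_I^{(1)\text{-rdt}}$, i.e.\ in the opposite direction to what your square requires. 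Concretely, an Iwahori-level $(m,1)$-restricted Shtuka carries only an isomorphism of $\G_{I,0}^{\rdt}$-torsors (i.e.\ $T$-torsors), which is strictly less data than the isomorphism of $\G_{K,0}^{\rdt}$-torsors you would need at level $K$, and there is no canonical way to fill in that gap. This is not a detail one can wave through; it is why the paper does not attempt a direct comparison of restricted Shtuka stacks across parahoric levels. Your ``hard part'' (the automorphism-group comparison at the residual gerbe) is therefore not even well-posed as stated.

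Two further problems. First, the finiteness argument: restricting a proper morphism to a locally closed subscheme (which the EKOR strata are) is not automatically proper, and equality of dimensions does not by itself yield quasi-finiteness; you would need to import the He-Rapoport/Zhou finiteness of fibers, which you do not invoke. Second, the final step ``perfection preserves \'{e}taleness\ldots yields \'{e}taleness of $\pi_{I,K}^x$'' runs the implication in the wrong direction: perfection inverts Frobenius, so for instance a relative Frobenius becomes an isomorphism after perfection without being \'{e}tale. The paper uses perfection only to extract topological consequences (e.g.\ closure relations, Corollary~\ref{C: closure relation}), precisely because one cannot in general descend \'{e}taleness from perfections.

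In short: your scheme is an interesting attempt at a global proof, but the key cross-level morphism of restricted Shtuka stacks is not available, the finiteness step is incomplete, and the final deperfection step is invalid. The paper's local-model-diagram proof sidesteps all three issues.
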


The \'{e}taleness of $\pi_{I,K}^x$ is also a conjecture of He and Rapoport in \cite{He-Rap}. They prove that $\pi_{I,K}^x$ is finite and surjective assuming their five axioms, and deduce the dimension formula from it. If $\pi_{I,K}^x$ is, in addition, \'{e}tale, then $\pi_{I,K}^x:\ES_{I,0}^x\rightarrow \ES_{K,0}^x$ becomes a finite \'{e}tale covering, and hence the smoothness of $\ES_{K,0}^x$ follows directly from that of $\ES_{I,0}^x$. We only need to prove the \'{e}taleness of $\pi_{I,K}^x$, as the finiteness is actually a consequence of the axioms (without axiom 4 (c)) together with the \'{e}taleness of $\pi_{I,K}^x$. 
\\

We can apply the results of EKOR strata to study Newton strata and central leaves for reductions of Shimura varieties with parahoric level. By \cite{class of F-isocrys}, using the universal $F$-isocrystal with $G$-structure in the Hodge type case, we can define the Newton stratification on $\ES_0$. Then we can extend to the case of abelian type as in \cite{stra abelian-good redu}. Similarly we can define central leaves in this setting.
In \cite{He-Rap} subsection 6.6, He and Rapoport proved that each Newton stratum contains a certain EKOR stratum in their setting, in particular for Shimura varieties with integral models satisfying their axioms.
In \cite{fully H-N decomp} G\"ortz-He-Nie introduced the notion of fully Hodge-Newton decomposable pairs $(G,\{\mu\})$, and they made a deep study for these pairs. In particular, under the assumption that the He-Rapoport axioms were verified,  they proved that for Hodge-Newton decomposable Shimura varieties, each Newton stratum is a union of certain EKOR strata. The methods to prove the above mentioned results in \cite{He-Rap} and \cite{fully H-N decomp} are group theoretic.
With our geometric constructions of EKOR strata at hand, these results of \cite{He-Rap} and \cite{fully H-N decomp} become unconditional for the Kisin-Pappas integral models.
\begin{introth}
	\begin{enumerate}
		\item {\rm (Corollary \ref{coro--first properties}, Corollary \ref{C: straight EKOR})} Each Newton stratum contains an EKOR stratum $\ES_0^x$ such that $x$ is $\sigma$-straight. Moreover, $\ES_0^x$ is a central leaf of dimension $\langle \rho, \nu(b)\rangle$. Here $\rho$ is the half sum of positive roots, and $\nu(b)$ is the Newton cocharacter of the Newton stratum. 
		\item {\rm (Theorem \ref{EKOR vs NP})} Assume that the attached pair $(G,\{\mu\})$ is fully Hodge-Newton decomposable. Then
		\begin{enumerate}
			\item each Newton stratum of $\ES_{0}$ is a union of EKOR strata;
			
			\item each EKOR stratum in a non-basic Newton stratum is an adjoint central leaf, and it is open and closed in the Newton stratum, in particular, non-basic Newton strata are smooth;
			
			\item the basic Newton stratum is a union of certain Deligne-Lusztig varieties.
		\end{enumerate}
	\end{enumerate}
\end{introth}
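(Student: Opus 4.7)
The plan is to upgrade the group-theoretic analyses of He--Rapoport \cite{He-Rap} \S 6.6 and G\"ortz--He--Nie \cite{fully H-N decomp} to geometric statements about $\ES_0$, using the unconditional constructions of EKOR strata from Theorems A, B and C. Those authors work at the combinatorial level of ${}^K\Admu$ and originally impose the five He--Rapoport axioms; our geometric machinery, in particular the perfectly smooth morphism $\upsilon_K:\ES_0^{pf}\to\Sht^{\loc(m,1)}_{\mu,K}$ and its enhancement to the non-truncated stack $\Sht^{\loc}_{\mu,K}$, allows us to remove the axiomatic hypothesis. A key preliminary observation is that the Newton stratification on $\ES_0$ factors through the map to $\Sht^{\loc}_{\mu,K}$ and, combinatorially, through $\upsilon_K$, because the Newton cocharacter of a geometric point is determined by the associated local $(\G,\mu)$-Shtuka.

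For part (1), the combinatorial input is He's theorem that every $[b]\in\BGmu$ admits a $\sigma$-straight representative $x\in{}^K\Admu$, and that any such $x$ satisfies $\ell(x)=\langle\rho,\nu(b)\rangle$. Non-emptiness of $\ES_0^x$ is Theorem C(1), and the Newton factorization places $\ES_0^x$ inside the Newton stratum of $[b]$. To show $\ES_0^x$ is a central leaf I would use the rigidity theorem of He stating that a $\sigma$-straight $\sigma$-conjugacy class in $\IW$ already pins down a single isomorphism class of local $(\G,\mu)$-Shtukas; via perfect smoothness of $\upsilon_K$ this forces the universal local Shtuka to be constant up to isomorphism on $\ES_0^x$, which is exactly the defining property of a central leaf. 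The dimension formula is then immediate from $\ell(x)=\langle\rho,\nu(b)\rangle$ and the dimension part of Theorem C(1). The main technical difficulty is the passage from the $(m,1)$-truncated universal Shtuka (which is what $\upsilon_K$ directly sees) to the full Shtuka: a generic $\sigma$-conjugacy class admits many non-isomorphic $(m,1)$-truncations, and it is precisely the $\sigma$-straight hypothesis on $x$ that collapses these into a single isomorphism class.

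For part (2), I would invoke the G\"ortz--He--Nie dichotomy for fully Hodge--Newton decomposable pairs: every EKOR type in a non-basic class $[b]\in\BGmu$ is $\sigma$-straight (so part (1) applies to each of them), and the basic class admits an explicit combinatorial parametrization by Deligne--Lusztig data. Pulling back along $\upsilon_K$ yields assertion (a). Each EKOR stratum in a non-basic Newton stratum is then a central leaf by part (1), and a reduction to $G^{\mathrm{ad}}$ upgrades this to an adjoint central leaf. Openness in $\ES_0^{[b]}$ follows because all EKOR types in $[b]$ share the same length $\langle\rho,\nu(b)\rangle$ and are therefore top-dimensional, and closedness follows from the closure relation of Theorem C(1) combined with the G\"ortz--He--Nie combinatorics: no $\sigma$-straight EKOR type in a non-basic class specializes to a different EKOR type within the same class, and specializations into other Newton classes are automatically outside $\ES_0^{[b]}$. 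Smoothness of non-basic $\ES_0^{[b]}$ then follows from that of its constituent EKOR strata. Finally, (c) is obtained by transporting the G\"ortz--He--Nie description of the basic locus of $\Sht^{\loc(m,1)}_{\mu,K}$ as a union of (quotients of) Deligne--Lusztig varieties through the perfectly smooth $\upsilon_K$; since perfection preserves topology and perfect smoothness preserves the stratification pattern, the decomposition lifts to the basic Newton stratum of $\ES_0$.
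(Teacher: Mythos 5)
Your treatment of part (1) reaches the right conclusion, but the machinery you invoke is heavier than what the paper actually uses, and you misidentify where the "technical difficulty" lies. The paper's argument is set-theoretic on $k$-points: He's rigidity theorem (Theorem \ref{He's result--straight vs fundmtl}(1), fundamental elements) combined with Theorem \ref{He's result--decompo}(1) shows directly that for $\sigma$-straight $x$, $\breve{K}_{\sigma}(\breve{K}_1 x\breve{K}_1)=\breve{K}_{\sigma}\cdot x$, so the fiber of $\CGmu\twoheadrightarrow{}^K\Admu$ over $x$ is a singleton. Since central leaves are by definition fibers of $\Upsilon_\K:\ES_\K(k)\to\CGmu$, the claim follows immediately from the commutative diagram of point-sets. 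You do not need perfect smoothness of $\upsilon_K$ here, and the "passage from $(m,1)$-truncation to the full Shtuka" you flag as the main obstacle is not really an obstacle in this argument: the relevant rigidity is already built into He's combinatorial result, not recovered by lifting along $\upsilon_K$. Your parts (2)(a) and (2)(b) follow the correct route (G\"ortz–He–Nie Theorem 2.3 plus the top-dimensionality and closure relations you cite).

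For part (2)(c), however, there is a genuine gap. You propose to obtain the Deligne--Lusztig description by "transporting the G\"ortz--He--Nie description of the basic locus of $\Sht^{\loc(m,1)}_{\mu,K}$ through the perfectly smooth $\upsilon_K$," but this cannot work: $\Sht^{\loc(m,1)}_{\mu,K}$ is a finite-type quotient stack whose underlying topological space is the finite poset ${}^K\Admu$; it does not contain (classical) Deligne--Lusztig varieties as substacks in any sense that one can pull back. The Deligne--Lusztig varieties in the G\"ortz--He--Nie picture live inside the affine Deligne--Lusztig variety $X(\mu,b)_K$ for basic $b$, and the correct bridge to the Shimura variety is the Rapoport--Zink uniformization morphism $\iota_{x_0}: X(\mu,b)_K\to \ES_0^b(k)$ (Proposition \ref{P: uniformization EKOR}, coming from Zhou's construction). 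The paper's proof of (c) goes exactly this way for Hodge type and then reduces abelian type to Hodge type along the lines of \cite{Sh} and \cite{Paroh} 4.6. Pulling back fiberwise along $\upsilon_K$ only tells you each EKOR stratum is a single nonempty locally closed smooth subvariety; it cannot reconstitute the global product-of-$\mathbf{P}^1$'s or Deligne--Lusztig structure across irreducible components of the basic locus.
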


We refer the reader to subsections \ref{subsection ADLV} and \ref{subsection ADLV global} for geometric constructions of EKOR strata for affine Deligen-Lusztig varieties, and to Propositions \ref{P: ADLV and EKOR} and \ref{P: uniformization EKOR} for the relation between local and global EKOR strata.
\\

In section \ref{section example} we discuss the case of Siegel modular varieties and study the example of Siegel threefolds in details. This is also an example of fully Hodge-Newton decomposable Shimura varieties.
\\

We briefly describe the structure of this article. 
In the first section we recollect some facts about $G$-zips, the Iwahori Weyl group and some related group theoretic sets, which will be used later. In section \ref{section loc mod}, we review some constructions of the Pappas-Zhu local models and the Kisin-Pappas integral models of Shimura varieties of abelian type, which are the objects to be studied in this paper. In section \ref{section EKOR loc}, we construct and study the EKOR stratification for Shimura varieties of Hodge type by a local method.
More precisely, we construct a $\G_0^{\rdt}$-zip and thus an EO stratification on each KR stratum. In section \ref{section global}, we give some global constructions of the EKOR strata by adapting and generalizing some ideas of Xiao-Zhu in \cite{XiaoZhu}. More precisely, we will introduce the notions of local $(\G,\mu)$-Shtukas and their truncations in level 1. We study their moduli and apply them to our Shimura varieties.
In section \ref{section abelian}, we extend our constructions to the abelian type case. We also apply the results of EKOR strata to the study of Newton strata and central leaves for these Shimura varieties. In section \ref{section example}, we discuss the Siegel case and investigate the example of $\GSp_4$ in details. Finally, in the appendix (Proposition \ref{PEL.8}) we verify He-Rapoport's axiom 4 (c) for Shimura varieties of
PEL-type, which improves some of our main
results (Theorem \ref{Thm C} (2) and Theorem \ref{intro thm--etale}) for the PEL-type case.\\
\\ 
\textbf{Acknowledgments.} 
The key ideas for the global constructions using moduli stacks of restricted local Shtukas in section \ref{section global} were inspired after the first author talked with Xinwen Zhu and attended George Pappas's talk in the Oberwolfach workshop ``Arithmetic of Shimura varieties'' in January 2019.  We would like to thank them sincerely.
We also thank the organizers of this workshop: Laurent Fargues, Ulrich G\"ortz, Eva Viehmann, and Torsten Wedhorn, for the invitation (for X. S. and C. Z.)  so that this discovery was made possible. 
The authors thank
Ulrich G\"ortz, Xuhua He, Mark Kisin, George Pappas, Michael Rapoport, Peter Scholze, Liang Xiao, Rong Zhou, Xinwen Zhu for their inspiring works on which the present article is partially based.
We also thank the referees for careful reading and suggestions on the improvements of exposition. 
The first author was partially supported by the Chinese Academy of Sciences grants 50Y64198900, 29Y64200900, the Recruitment Program of Global Experts of China, and the NSFC grants No. 11631009 and No. 11688101. The second author was partially supported by the MoST grants 107-2115-M-001-001-MY2 and 109-2115-M-001-002-MY3.

\newpage

\section[Group theoretic preparations]{Recollections of some group theoretic results}\label{section group}

In this section, we review and collect some known facts about $G$-zips, the Iwahori Weyl group and some related group theoretic sets, which will be used later. Fix a prime $p$.

\subsection[Algebraic zip data]{Algebraic zip data}\label{subsection-zip data}

For a linear algebraic group $H$ over a field, we denote its unipotent radical by $R_uH$. For an element $h\in H$, let $\ov{h}$ be its image in $H/R_uH$.
\begin{definition}(\cite[Definition 1.1]{zipdata})\label{D: alg zip data}
	An algebraic zip datum is a tuple $\mathcal {Z}=(G, P, Q, \sigma')$ consisting of a (connected) reductive group $G/\mathbb{F}_p$, together with parabolic subgroups $P$ and $Q$ defined over a finite extension $k/\mathbb{F}_p$, and an isogeny $\sigma':P/R_uP\rightarrow Q/R_uQ$. The group $$E_{\mathcal {Z}}:=\{(p,q)\in P\times Q\mid \sigma'(\overline{p})=\overline{q}\}$$ is called the \emph{zip group} attached to $\mathcal {Z}$; it acts on $G_k$ through the map $((p,q),g)\mapsto pgq^{-1}$.
\end{definition}

By abuse of notation, we still denote by $E_{\mathcal {Z}}$ the base change to $\ov{k}$ of the zip group attached to $\mathcal {Z}$.
We are interested in the decomposition of $G_{\ov{k}}$ into $E_\mathcal{Z}$-orbits. To describe it, we fix a Borel subgroup $B$ of $G_{\overline{k}}$, a maximal torus $T\subset B$ and an element $g\in G(\overline{k})$ such that $B\subset Q_{\ov{k}}$, ${}^gB\subset P_{\ov{k}}$, $\sigma'(\overline{{}^gB})=\overline{B}$ and $\sigma'(\overline{{}^gT})=\overline{T}$. Here $\overline{B}$ is the image of $B$ in $Q_{\ov{k}}/R_uQ_{\ov{k}}$, and similarly for the other objects. Let $W$ be the Weyl group of $G_{\ov{k}}$ with respect to $T$ and $\texttt{S}\subset W$ be the set of simple reflections corresponding to $B$. Let $J\subset \texttt{S}$ be the type of $P$, $W_J$ be the subgroup of $W$ generated by $J$, and ${}^JW$ be the set of minimal length representatives for $W_J\backslash W$. For $w\in {}^JW\subset W=N_G(T)/T$, let $\dot{w}\in N_G(T)$ be a representative which maps to $w$.
We set $$G^w:=E_{\mathcal {Z}}\cdot gB\dot{w}B.$$

\begin{theorem}{\rm(\cite[Theorem 1.3, Proposition 7.1, Proposition 7.3]{zipdata})} \label{thm on zip orbits} The subsets $G^w$ for $w\in {}^JW$ form a pairwise disjoint decomposition of $G_{\overline{k}}$ into locally closed smooth subvarieties. The dimension of $G^w$ is $\mathrm{dim}\ P+\ell(w)$. If the differential of $\sigma'$ at $1$ vanishes, $G^w$ is a single orbit of $E_\mathcal{Z}$.
\end{theorem}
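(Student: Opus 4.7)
The plan is to follow the strategy of Pink--Wedhorn--Ziegler in \cite{zipdata}: exploit the two surjections $E_{\mathcal{Z}} \twoheadrightarrow P$ (with kernel $\{1\} \times R_u Q$) and $E_{\mathcal{Z}} \twoheadrightarrow Q$ (with kernel $R_u P \times \{1\}$) to reduce the study of $E_{\mathcal{Z}}$-orbits on $G_{\overline k}$ to classical Bruhat combinatorics, and then use the coupling $\sigma'(\overline p) = \overline q$ to refine from the naive $W_J \backslash W / W_K$-invariant down to ${}^J W$, where $K \subset \texttt{S}$ is the type of $Q$.

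For coverage, I would work over $\overline k$ and set $P' := g^{-1} P g$; the compatibility ${}^g B \subset P$ gives $B \subset P'$, so $P'$ is the standard parabolic of type $J$. Given $h \in G$, Bruhat decompose $g^{-1} h \in B \dot{w'} B$ and factor $w' = w_J w$ with $w_J \in W_J$ and $w \in {}^J W$; since $\ell(w_J w) = \ell(w_J) + \ell(w)$, we have $B \dot{w_J} B \cdot \dot w B = B \dot{w'} B$. Now $\dot{w_J} \in P'$, so $p := g \dot{w_J}^{-1} g^{-1} \in P$, and choosing any $q \in Q$ with $\overline q = \sigma'(\overline p)$ (possible because $Q \to Q/R_u Q$ is surjective) gives $(p, q) \in E_{\mathcal{Z}}$ translating $h$ into $g B \dot w B$, whence $G_{\overline k} = \bigcup_{w \in {}^J W} G^w$.

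For disjointness, I would analyze an identity $g b_1 \dot{w_1} b_1' = p \cdot g b_2 \dot{w_2} b_2' \cdot q^{-1}$ with $w_1, w_2 \in {}^J W$ and $(p, q) \in E_{\mathcal{Z}}$: writing $g^{-1} p g \in P' = B W_J B$ and $q \in Q = B W_K B$ in Bruhat form and expanding each side as a union of $B$-double cosets, the naive $(P \times Q)$-action only distinguishes the class of $w_i$ in $W_J \backslash W / W_K$, but the extra constraint $\overline q = \sigma'(\overline p)$ couples the $W_J$-part on the left with the $W_K$-part on the right via $\sigma'$, forcing $w_1 = w_2$ as elements of ${}^J W$. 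The dimension and smoothness then follow from the orbit map $\alpha_w : E_{\mathcal{Z}} \times (gB\dot wB) \to G^w$: using $\dim E_{\mathcal{Z}} = \dim P + \dim R_u Q$ and computing the stabilizer $\mathrm{Stab}_{E_{\mathcal{Z}}}(g\dot w)$ as $P \cap (g\dot w) Q (g\dot w)^{-1}$ intersected with the graph of $\sigma'$, a direct bookkeeping yields $\dim \mathrm{Stab} = \dim R_u Q - \ell(w)$ and hence $\dim G^w = \dim P + \ell(w)$; smoothness is automatic because $G^w$ is an orbit of the algebraic group $E_{\mathcal{Z}}$ on the smooth variety $G$, and by Chevalley's theorem such orbits are smooth locally closed subvarieties.

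For the single-orbit claim under $d\sigma'|_1 = 0$, the vanishing differential forces $\sigma'$ to factor through the relative Frobenius on $P/R_u P$, which is bijective on $\overline k$-points. At the Lie-algebra level the coupling $d\sigma'(\overline{X_p}) = \overline{X_q}$ defining $\mathrm{Lie}(E_{\mathcal{Z}})$ degenerates to $\overline{X_q} = 0$, so $\mathrm{Lie}(E_{\mathcal{Z}}) = \mathrm{Lie}(P) \oplus \mathrm{Lie}(R_u Q)$ acquires its maximal possible dimension; the orbit map $E_{\mathcal{Z}} \to G^w$ at any point of $G^w$ then has surjective differential onto the tangent space, hence each orbit is open in $G^w$, and by irreducibility the single open orbit exhausts $G^w$. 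The main obstacle I anticipate is the disjointness step: the $P \times Q$-action alone distinguishes only $W_J \backslash W / W_K$-classes, and the sharper ${}^J W$-invariant must be extracted by a careful Bruhat-level analysis of how the coupling $\overline q = \sigma'(\overline p)$ restricts the reachable cells inside the product $B \dot{w_J} B \cdot \dot{w_2} B \cdot B \dot{w_K} B$; the dimension count and the single-orbit claim both rest on this having been established cleanly.
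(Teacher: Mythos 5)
This theorem is not proved in the paper; it is quoted from Pink--Wedhorn--Ziegler \cite{zipdata}, and the parenthetical citation points to their Theorem~1.3 and Propositions~7.1, 7.3. So the comparison is against their published proof.

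Your proposal takes a fundamentally different route, and it has a genuine gap. Pink--Wedhorn--Ziegler do \emph{not} attempt to extract the ${}^J W$-parametrization from a one-shot Bruhat expansion inside $G$. Instead, their entire ``zip'' machinery is built around a recursion: for $w \in {}^J W$ they pass to an induced zip datum $\mathcal Z_w$ on a Levi subgroup determined by $w$, $J$, and $\sigma'$, show that the $E_{\mathcal Z}$-orbits meeting $P\dot wQ$ biject with the $E_{\mathcal Z_w}$-orbits on the smaller Levi, and close by induction on $\dim G$, using the ``abstract zip data'' combinatorics for Coxeter groups developed in their Section~3 to keep track of how ${}^J W$ refines $W_J\backslash W/W_K$ (in your notation, $K$ the type of $Q$). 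Disjointness, the dimension formula, and the single-orbit statement under the Frobenius hypothesis are all carried by this recursion.

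This recursion is exactly the step your plan cannot supply, as you yourself flag. Expanding $gb_1\dot w_1 b_1' = p\,(gb_2\dot w_2 b_2')\,q^{-1}$ into Bruhat cells shows only that $w_1$ and $w_2$ lie in the same $(W_J,W_K)$-double coset; when $J$ and $K$ are both nonempty such a coset can contain several distinct elements of ${}^J W$, and the constraint $\overline q = \sigma'(\overline p)$ does not resolve the ambiguity ``in place''---one has to transport the residual freedom into a Levi and iterate, which is the zip recursion. Your coverage step already has the same soft spot: having produced $p\in P$ with $p^{-1}h\in g\dot wB$, you take ``any'' $q$ with $\overline q=\sigma'(\overline p)$, but $(p,q)^{-1}\cdot h=p^{-1}hq$ need not land in $gB\dot wB$ unless $q$ is chosen compatibly, and that compatibility is again coupled through $\sigma'$. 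The stabilizer ``bookkeeping'' and the single-orbit claim inherit the same gap. One further small slip: $E_{\mathcal Z}$ is always smooth (it is a base change of the smooth quotient $Q\to Q/R_uQ$ along $P\to Q/R_uQ$), so $\dim\mathrm{Lie}(E_{\mathcal Z})=\dim P+\dim R_uQ$ regardless of $d\sigma'$; the hypothesis $d\sigma'|_1=0$ changes the \emph{position} of $\mathrm{Lie}(E_{\mathcal Z})$ inside $\mathrm{Lie}(P)\times\mathrm{Lie}(Q)$ to $\mathrm{Lie}(P)\oplus\mathrm{Lie}(R_uQ)$, not its dimension, so your ``maximal possible dimension'' reasoning does not say what you want, and the open-orbit argument still needs the full-rank assertion for the orbit map at every point of $G^w$, which is precisely what the Levi induction delivers.
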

\begin{remark}
	Using the choice of $(B,T,g)$, we can identify $P/R_uP$ (resp. $Q/R_uQ$) with a Levi subgroup $L_P$ (resp. $L_Q$) of $P$ (resp. $Q$), and view $\sigma'$ as an isogeny $L_P\rightarrow L_Q$. We can rewrite $E_\mathcal{Z}$ as $$E_{\mathcal {Z}}=\{(u_1l,\ u_2\sigma'(l))\mid u_1\in R_uP,\ u_2\in R_uQ,\ l\in L_P \}.$$
\end{remark}
\begin{remark}\label{group--ord and supspe}
	The closure of $G^w$ is a union of $G^{w'}$s as described in \cite[Theorem 1.4]{zipdata}. In particular, there is a unique open dense stratum (given by the unique maximal element in ${}^JW$), called the ordinary locus; and there is a unique closed stratum (given by $\mathrm{id}\in {}^JW$), called the superspecial locus.
\end{remark}

We will need to construct morphisms from a scheme $S$ over $k$ to the quotient stack $[E_\mathcal{Z}\backslash G_k]$. We refer the reader to \cite{stackL} for some basics about algebraic stacks.
A morphism \[f:S\rightarrow [E_\mathcal{Z}\backslash G_k]\] is, by definition, an $E_\mathcal{Z}$-torsor $\mathcal{E}$ over $S$ together with an $E_\mathcal{Z}$-equivariant morphism $f^\#:\mathcal{E}\rightarrow G_k$. However, it is not always obvious how to construct $(\mathcal{E},f^\#)$ directly from certain structures (e.g. Dieudonn\'{e} modules) on $S$. The following notation will be used in this paper.

\begin{definition}(\cite[Definition 3.1]{zipaddi})\label{def--G-zip}
	A $G$-zip of type $J$ over $S$ is a tuple $(I,I_P,I_Q,\iota')$, where 
	\begin{itemize}
		\item $I$ is a right $G$-torsor over $S$, 
		\item $I_P\subset I$ is a right $P$-torsor over $S$, 
		\item $I_Q\subset I$ is a right $Q$-torsor over $S$, and 
		\item $\iota':I_P/R_uP\rightarrow I_Q/R_uQ$ is a $P/R_uP$-equivariant morphism over $S$, i.e. we have $\iota'(xp)=\iota'(x)\sigma'(p)$, for all $x\in I_P/R_uP$ and $p\in P/R_uP$.
	\end{itemize}
\end{definition}
Let $G$-$Zip_J$ be the stack of $G$-zips of type $J$ over $k$. 
By \cite[Theorem 12.7]{zipdata} and \cite[Theorem 1.5]{zipaddi}, we have an isomorphism of algebraic stacks \[G\text{-}Zip_J\simeq [E_\mathcal{Z}\backslash G_k],\]and they are smooth algebraic stacks of dimension 0 over $k$.
Thus
to give a morphism \[S\rightarrow [E_\mathcal{Z}\backslash G_k]\] is equivalent to give a $G$-zip of type $J$ over $S$. We will only explain how to construct a morphism $S\rightarrow [E_\mathcal{Z}\backslash G_k]$ from a $G$-zip $(I,I_P,I_Q,\iota')$. Let $\mathcal{E}$ be the cartesian product \[\xymatrix{\mathcal{\mathcal{E}}\ar[d]\ar[rr]& &I_Q\ar[d]\\
	I_P\ar[r]&I_P/R_uP\ar[r]^{\iota'}& I_Q/R_uQ.}\]
It is an $E_{\mathcal{Z}}$-torsor over $S$. There is a morphism $f^\#:\mathcal{E}\rightarrow G_k$ as follows. For $t=(t_1,t_2)\in \mathcal{E}(S)$ with $t_1\in I_P(S)$ and $t_2\in I_Q(S)$ such that $\iota'(t_1R_uP)=t_2R_uQ$, there is a unique $g\in G(S)$ such that $t_1g=t_2$. We set $f^\#(t):=g$. One checks easily that $f^\#$ is $E_{\mathcal{Z}}$-equivariant. We will write $f^\#$ as the composition \begin{subeqn}\label{map to Grdt}\xymatrix@C=2.7cm{\mathcal{E}\ar[r]^{(t_1,t_2)\mapsto (t_1,t_2)}& I\times I\ar[r]^{(f_1,f_2)\mapsto d(f_1,f_2)}& G_k}.\end{subeqn}
Here $d(f_1,f_2)$ is the unique element in $G_k$ which takes $f_1$ to $f_2$.
\begin{remark}\label{remark on zip-1}
	Let $\sigma'$ be of the form $i \circ \sigma$, where $\sigma:P/R_uP\rightarrow P^{(p)}/R_uP^{(p)}$ is the relative Frobenius, and $i:P^{(p)}/R_uP^{(p)}\rightarrow Q/R_uQ$ is an isomorphism of group varieties. It is clear that the differential of $\sigma'$ at $1$ vanishes in this case, and hence Theorem \ref{thm on zip orbits} applies. Moreover, a $G$-zip $(I,I_P,I_Q,\iota')$ is equivalent to the tuple $(I,I_P,I_Q,\iota)$, where $\iota$ is the induced $P^{(p)}/R_uP^{(p)}$-equivariant isomorphism \[(I_P/R_uP)^{(p)}\rightarrow I_Q/R_uQ.\]
\end{remark}

Let $\omega_0$ be the element of maximal length in $W$, and $\sigma: W\ra W$ be the Frobenius map. Set
$K:={}^{\omega_0}\sigma(J)$. Here we write ${}^g\!J$ for
$gJg^{-1}$. Let \[x\in {}^K\!W^{\sigma(J)}\] be the element of
minimal length in $W_K\omega_0W_{\sigma(J)}$. Then $x$ is the
unique element of maximal length in ${}^K\!W^{\sigma(J)}$ (see
\cite{VW} 5.2). There is a partial order $\preceq$ on ${}^JW$,
defined by $w'\preceq w$ if and only if there exists $y\in W_J$, such that
\[yw'x\sigma(y^{-1})x^{-1}\leq w\] (see \cite[Definition 5.8]{VW}). Here $\leq$ is the Bruhat order (see A.2 of
\cite{VW} for the definition). As usual, the partial order $\preceq$ makes
${}^JW$ into a topological space (see \cite[Proposition 2.1]{zipaddi} for example). On the other hand, for an algebraic stack $X$, we have its underlying topological space $|X|$ (see \cite{stackL} section 5). By \cite[Proposition 2.2]{zipaddi}, the above
Theorem \ref{thm on zip orbits} actually tells us that we have a homeomorphism of topological spaces
\[|[E_\mathcal{Z}\backslash G_{\ov{k}}]|\simeq {}^JW.\]

\subsection[The Iwahori Weyl group]{The Iwahori Weyl groups}\label{Iwahori Weyl}

Our notations in this subsection will be different from those in the previous one, namely, $G$ will be a reductive group over $\mathbb{Q}_p$, and $\mathcal{G}$ will be a parahoric group scheme over $\INT_p$. Let $K=\G(\Z_p)\subset G(\Q_p)$ be the associated parahoric subgroup.
We also set $k:=\overline{\mathbb{F}}_p$, $\breve{\INT}_p:=W(k)$, $\breve{\mathbb{Q}}_p:=\breve{\INT}_p[\frac{1}{p}]$, $\breve{K}:=\mathcal {G}(\breve{\mathbb{Z}}_p)$, and $\breve{G}:=G(\breve{\mathbb{Q}}_p)$. Let $\sigma: \breve{\INT}_p\ra \breve{\INT}_p$ be the Frobenius map, which induces Frobenius maps on $\breve{\mathbb{Q}}_p, \breve{K}, \breve{G}$ and related objects.

Let $\mathcal{B}$ be the Bruhat-Tits building of $G_{\breve{\Q}_p}$ and let $T\subset G$ be a maximal torus which after extension of scalars is contained in a Borel subgroup $B$ of $G_{\breve{\mathbb{Q}}_p}$. The split part of $T$ defines an apartment of $\mathcal{B}$. Let $\mathfrak{a}$ be an alcove of $\mathcal{B}$ inside this apartment. 
Let $I\subset \mathcal {G}(\mathbb{Z}_p)$ be an Iwahori subgroup,  such that $\breve{I}$ is the Iwahori subgroup of $G(\breve{\mathbb{Q}}_p)$ fixing the alcove $\mathfrak{a}$. Let $N(T)$ be the normalizer of $T$. The \emph{Iwahori Weyl group} is given by \[\widetilde{W}=N(T)(\breve{\mathbb{Q}}_p)/(T(\breve{\mathbb{Q}}_p)\cap \breve{I}),\] and the relative Weyl group of $G_{\breve{\mathbb{Q}}_p}$ is given by \[W_0=N(T)(\breve{\mathbb{Q}}_p)/T(\breve{\mathbb{Q}}_p).\] Choosing a special vertex in the alcove corresponding to $\breve{I}$ which will be fixed once and for all, we have \[\widetilde{W}\cong X_*(T)_{\Gamma_0}\rtimes W_0\] and \[\widetilde{W}\cong W_{a}\rtimes \pi_1(G)_{\Gamma_0},\] where $\Gamma_0=\mathrm{Gal}(\overline{\mathbb{Q}}_p/\breve{\mathbb{Q}}_p)$ and $W_a$ is the affine Weyl group, which is a Coxeter group. One can then use the Bruhat partial order (resp. length function) on $W_a$ to define a partial order (resp. length function) on $\widetilde{W}$. To be more precise, for $w=(w_a,t)\in \widetilde{W}$ with $w_a\in W_a$ and $t\in \pi_1(G)_{\Gamma_0}$, we set $\ell(w)=\ell(w_a)$, and $w=(w_a,t)\leq w'=(w'_a,t')$ if an only if $w_a\leq w'_a$ and $t=t'$.

We recall the definition of $\{\mu\}$-admissible sets. For $\mu'\in X_\ast(T)_{\Gamma_0}$, we write $t^{\mu'}$ when viewed as an element in $\widetilde{W}$. Let $\mu: \mathbb{G}_{m,\ov{\Q}_p}\ra G_{\ov{\Q}_p}$ be a cocharacter and $\{\mu\}$ be the associated conjugacy class.
The attached $\{\mu\}$-\emph{admissible set} is the following finite subset of $\IW$ introduced by Kottwitz and Rapoport (cf. \cite{KR, guide to red mod p, PRS}): 
\begin{subeqn}\label{admi mu}\mathrm{Adm}(\{\mu\})=\{w\in \widetilde{W}\mid w\leq t^{x(\underline{\mu})}\text{ for some }x\in W_0\}.\end{subeqn}
Here $\underline{\mu}$ is the image in $X_\ast(T)_{\Gamma_0}$ of a dominant representative in the conjugacy class $\{\mu\}$. We will be interested in some related sets taking into account the information of the parahoric subgroup $K$.
Let \[W_K=(N(T)(\breve{\mathbb{Q}}_p)\cap \breve{K})/(T(\breve{\mathbb{Q}}_p)\cap \breve{I}),\] then by \cite[Proposition 12]{HR} we have an isomorphism 
\[W_K\simeq W_{\mathcal{G}_0^{\mathrm{rdt}}},\]where $\mathcal{G}_0=\mathcal{G}\otimes_{\mathbb{Z}_p} k$,  $\mathcal{G}_0^{\mathrm{rdt}}$ is its reductive quotient, and $W_{\mathcal{G}_0^{\mathrm{rdt}}}$ is the associated Weyl group.
Let ${}^K \IW\subset \IW$ be the subset of minimal length representatives for $W_K\backslash \IW$. We set\footnote{We learned the notation ${}^K\Admu$ from \cite{GHR}.}
\begin{itemize}
	\item $\Admu^K=W_K\Admu W_K, \text{ \ a subset of }\IW;$
	
	\item $\Admu_K=W_K\backslash\Admu^K/W_K, \text{ \ a subset of }W_K\backslash\IW/W_K;$
	
	\item ${}^K\Admu=\Admu^K\cap {}^K \IW, \text{ \ a subset of }{}^K\IW.$
	
\end{itemize}
The natural map ${}^K\IW\ra W_K\backslash\IW/W_K$ induces a surjection \[{}^K\Admu\twoheadrightarrow\Admu_K.\]
We have the following important result due to He (\cite{K-R on union of aff D-L} ) and Haines-He (\cite{HH}):
\begin{theorem}{\rm(\cite[Theorem 6.1]{K-R on union of aff D-L}, \cite[Proposition 5.1]{HH})}\label{T: EKOR set}
	We have \[{}^K\Admu=\Admu\cap {}^K \IW.\] 
\end{theorem}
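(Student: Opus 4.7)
One inclusion is trivial: since $\Admu\subseteq W_K\Admu W_K=\Admu^K$ by definition, intersecting with ${}^K\IW$ gives $\Admu\cap{}^K\IW\subseteq \Admu^K\cap{}^K\IW={}^K\Admu$. All the content lies in the reverse inclusion ${}^K\Admu\subseteq\Admu$.

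The crucial observation is that
\[
\Admu\;=\;\bigcup_{x\in W_0}\{v\in\IW:v\le t^{x\underline{\mu}}\}
\]
is closed downward in the Bruhat order. Hence, for $w\in{}^K\IW\cap\Admu^K$, it suffices to produce a translation $t^{x\underline{\mu}}$ (with $x\in W_0$) that dominates $w$. Writing $w=k_1 v k_2$ with $v\in\Admu$ and $k_1,k_2\in W_K$, I would induct on $\ell(k_1)+\ell(k_2)$. The base case $k_1=k_2=1$ is trivial. For the inductive step, assume $k_1\neq 1$ and pick a simple reflection $s\in\mathbb{S}_K$ with $sk_1<k_1$; then $sw=(sk_1)\,v\,k_2$ has a strictly shorter decomposition of the same shape. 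Since $w\in{}^K\IW$ we automatically have $\ell(sw)=\ell(w)+1$, hence $w<sw$ in the Bruhat order. If one can conclude that $sw\in\Admu$, Bruhat closedness below then forces $w\in\Admu$.

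The main obstacle is that $sw$ need not itself lie in ${}^K\IW$, so the induction does not apply verbatim. To overcome this one iterates the left-multiplication procedure, producing an ascending chain $w<sw<s'sw<\cdots$ inside the left coset $W_Kw$ (of which $w$ is automatically the unique minimum); once the $k_1$-factor has been exhausted, one is reduced to treating elements of the form $u=vk_2$ with $v\in\Admu$ and $k_2\in W_K$. For this last step one exploits that $W_K\subseteq W_0$ normalizes the translation lattice via the formula $k\,t^\lambda k^{-1}=t^{k\lambda}$, which permits rewriting $vk_2$ (or, after Bruhat-enlarging $v$, an expression $t^{x\underline{\mu}}k_2$) as $k_2\,t^{k_2^{-1}x\underline{\mu}}$; combining this with the lifting property of the Bruhat order in the affine Weyl group, one bounds $u$ above by a genuine translation $t^{y\underline{\mu}}$. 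This is essentially the strategy of He \cite{K-R on union of aff D-L} Theorem 6.1 and Haines--He \cite{HH} Proposition 5.1; the most delicate combinatorial step is the bookkeeping for how the Bruhat order interacts with the two-sided parabolic structure on $W_K\backslash\IW/W_K$, but once that is in place the conclusion is formal.
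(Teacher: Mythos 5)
The paper does not prove this statement; it quotes it directly from He and Haines--He, so there is no in-text argument to compare against. Your sketch correctly isolates the easy inclusion $\Admu\cap{}^K\IW\subseteq{}^K\Admu$, and correctly observes that, since $w\in{}^K\IW$ is the minimum of its left coset $W_Kw$, one has $w\le k_1^{-1}w = vk_2$; hence it would suffice to show $vk_2\in\Admu$. But this is where the argument breaks down, in two places.

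First, the passage from $w$ to $u=vk_2$ is not a reduction: $u\geq w$, and since $\Admu$ is closed downward, proving $u\in\Admu$ is a \emph{stronger} assertion than $w\in\Admu$; in fact there is no reason for it to be true, and the induction on $\ell(k_1)+\ell(k_2)$ cannot be applied to $u$ because $u\notin{}^K\IW$ in general. Second, and more concretely, the proposed bound fails on length grounds. All the extremal translations $t^{y\underline{\mu}}$, $y\in W_0$, share the common length $\ell(t^{\underline{\mu}})=\langle 2\rho,\underline{\mu}\rangle$. Even granting the subword-property argument that $v\le t^{x\underline{\mu}}$ plus the right length additivity gives $vk_2\le t^{x\underline{\mu}}k_2$, the element $t^{x\underline{\mu}}k_2 = k_2\,t^{k_2^{-1}x\underline{\mu}}$ has length strictly larger than $\ell(t^{\underline{\mu}})$ whenever $k_2\neq 1$, so it cannot be bounded above by any $t^{y\underline{\mu}}$; the chain of Bruhat enlargements therefore terminates at an element that is itself inadmissible, and nothing has been proved about $vk_2$. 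The genuine content of the theorem is precisely this missing step, and in the references it rests on substantial input: Haines--He deduce it from their vertexwise admissibility criterion ($w\in\Admu$ iff $w\in\Admu^{\{a\}}$ for every vertex $a$ of the base alcove), while He's proof relies on the theory of $\sigma$-straight elements and $P$-alcove elements. The appeal to a generic ``lifting property of the Bruhat order'' does not substitute for either.
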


\subsubsection[The partial order $\leq_{K,\sigma}$ on ${}^K\Admu$]{The partial order $\leq_{K,\sigma}$ on ${}^K\Admu$}\label{partial order} There is a partial order $\leq_{K,\sigma}$ on ${}^K\IW$. It is defined by \[x_1\leq_{K,\sigma} x_2\quad \mathrm{if\;  there\; exists}\; y\in W_K\; \mathrm{such\; that}\quad yx_1\sigma(y)^{-1}\leq x_2.\] Here $\leq$ is the Bruhat order. By \cite[4.7]{mini leng double coset}, $\leq_{K,\sigma}$ is indeed a partial order. By \cite[Remark 6.14]{He-Rap}, $\leq_{K,\sigma}$ is finer than the Bruhat order in general. We will then restrict the partial order $\leq_{K,\sigma}$ on ${}^K\IW$ to ${}^K\Admu$.
\begin{proposition}{\rm(\cite[Proposition 2.1]{mu-ordi of SHV})}\label{P: max EKOR}
	The maximal elements in ${}^K\Admu$ with respect to the partial order $\leq_{K,\sigma}$ are $t^{\mu'}$, where $\mu'$ runs over elements in the $W_0$-orbit of $\ul{\mu}$ with $t^{\mu'}\in {}^K\IW$.
\end{proposition}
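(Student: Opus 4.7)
My plan has two main directions, corresponding to the two halves of the statement.

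\emph{Membership and maximality.} First I would verify that every candidate element indeed lies in ${}^K\Admu$: trivially $t^{\mu'} \leq t^{\mu'}$, so $t^{\mu'} \in \Admu$ by the definition (\ref{admi mu}), and combined with the hypothesis $t^{\mu'} \in {}^K\IW$ and Theorem~\ref{T: EKOR set} this gives $t^{\mu'} \in {}^K\Admu$. Next, to prove maximality, suppose $t^{\mu'} \leq_{K,\sigma} z$ with $z \in {}^K\Admu$, so that there is $y \in W_K$ with $y t^{\mu'} \sigma(y)^{-1} \leq z$ in the Bruhat order. Writing the left-hand side as $t^{y\mu'}\cdot v$ where $v = y\sigma(y)^{-1} \in W_K$, and combining with $z \leq t^\lambda$ for some $\lambda \in W_0\mu$ (from $z \in \Admu$), one obtains $t^{y\mu'} v \leq t^\lambda$. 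The key input is that $t^{\mu'} \in {}^K\IW$ forces length-additivity $\ell(y t^{\mu'}) = \ell(y) + \ell(t^{\mu'})$; combined with the above Bruhat chain and the fact that the translations $t^{\lambda'}$, $\lambda' \in W_0\mu$, are the length-maximal elements of $\Admu$, I would conclude $v = 1$ and $y\mu' = \lambda$, whence $z = y t^{\mu'} y^{-1}$. Uniqueness of the ${}^K\IW$-representative in each $W_K$-coset then forces $z = t^{\mu'}$.

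\emph{Every maximal element is a translation.} For the converse, let $x \in {}^K\Admu$ be maximal, and pick $\nu \in W_0\mu$ with $x \leq t^\nu$. If $t^\nu \in {}^K\IW$, then $x \leq_{K,\sigma} t^\nu$ via $y = 1$, so maximality forces $x = t^\nu$ and we are done. If $t^\nu \notin {}^K\IW$, decompose $t^\nu = u z'$ with $u \in W_K$, $z' \in {}^K\IW$, and $\ell(t^\nu) = \ell(u) + \ell(z')$. Then $z' \in \Admu^K \cap {}^K\IW = {}^K\Admu$. The subword property of the Bruhat order applied to the reduced factorization $t^\nu = u z'$ yields $x = x_1 x_2$ with $x_1 \leq u$ in $W_K$ and $x_2 \leq z'$. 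Since $x \in {}^K\IW$, an analysis of left-multiplication by $W_K$ on $x$ (together with the choice of a $\sigma$-twist to absorb $x_1$ into the $\leq_{K,\sigma}$ relation) produces a witness $x <_{K,\sigma} z''$ for some $z'' \in {}^K\Admu$, contradicting maximality unless $x$ was already a translation element in ${}^K\IW$.

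\emph{Main obstacle.} The technical heart is the combinatorial step in the maximality argument, extracting $v = y\sigma(y)^{-1} = 1$ from the Bruhat inequality $t^{y\mu'}v \leq t^\lambda$. This rests on length comparisons in $\IW$ and the distinguished-representative property of ${}^K\IW$; the cleanest framework is provided by the theory of minimal-length elements in $W_K$-$\sigma$-conjugacy classes developed by He and Nie, which is precisely the setting of the source paper \cite{mu-ordi of SHV}. A similar machinery handles the extraction of the upward witness $z''$ in the converse direction.
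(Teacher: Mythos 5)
The paper states this as a direct citation of \cite{mu-ordi of SHV} Proposition 2.1 and gives no proof of its own, so there is nothing internal to compare against; I assess your reconstruction on its own terms.

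Your first direction is essentially sound. The length estimate is exactly the right tool: $t^{\mu'}\in{}^K\IW$ gives $\ell(yt^{\mu'})=\ell(y)+\ell(t^{\mu'})$ for $y\in W_K$, hence $\ell(yt^{\mu'}\sigma(y)^{-1})\ge\ell(t^{\mu'})=\ell(t^\lambda)$, and the Bruhat chain $yt^{\mu'}\sigma(y)^{-1}\le z\le t^\lambda$ collapses to equality, giving $z=t^\lambda$. Two small repairs. First, your intermediate claim $v=y\sigma(y)^{-1}=1$ is not forced; $v$ could be a nontrivial translation inside $W_K$, though the conclusion $z=t^\lambda$ still follows since the finite-Weyl component of $z$ must vanish. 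Second, the final step should not appeal to uniqueness of the ${}^K\IW$-representative in a $W_K$-coset: $z=yt^{\mu'}\sigma(y)^{-1}$ puts $z$ and $t^{\mu'}$ only in the same $W_K$-\emph{double} coset. The clean finish is to observe $t^\lambda\le_{K,\sigma}t^{\mu'}$ via $y^{-1}$ and invoke antisymmetry of $\le_{K,\sigma}$ (He, \cite{mini leng double coset} 4.7).

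The second direction contains a genuine gap. Your subword argument correctly forces $x_1=1$, giving $x\le z'$ and hence $x\le_{K,\sigma}z'$ (with $z'\in{}^K\Admu$); maximality of $x$ then yields $x=z'$. But the argument stops there: $z'$ is merely the minimal-length representative of the left coset $W_Kt^\nu$, and there is no reason for it to be a translation element, let alone of the form $t^{\mu'}$ with $\mu'\in W_0\mu$ and $t^{\mu'}\in{}^K\IW$. Concretely, for $G=\mathrm{SL}_2$ split, $K$ hyperspecial and $\nu=-\alpha^\vee$, one has $t^\nu=s_1s_0$ with minimal coset representative $z'=s_0$, which is not a translation. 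The ``$\sigma$-twist to absorb $x_1$'' does no work since $x_1=1$ is already forced, and the claimed witness $z''$ with $x<_{K,\sigma}z''$ is asserted but never constructed. What is actually needed is the substantive content of He-Nie's argument: a reduction statement showing that every $x\in{}^K\Admu$ satisfies $x\le_{K,\sigma}t^{\mu'}$ for \emph{some} (not pre-chosen) $\mu'\in W_0\mu$ with $t^{\mu'}\in{}^K\IW$, obtained by repeatedly $\sigma$-conjugating by well-chosen simple reflections of $W_K$ to increase length while remaining inside $\Admu$. Fixing $\nu$ at the outset and hoping $t^\nu$ lands in ${}^K\IW$ is precisely the strategy that fails; the freedom to vary the conjugating element and the dominating translation simultaneously is what the cited machinery provides, and your sketch does not supply it.
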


\subsubsection[The element $\tau$]{The element $\tau$}\label{tau} There is a distinguished element $\tau_{\{\mu\}}\in \IW$ attached to the conjugacy class $\{\mu\}$. In terms of the isomorphism $\widetilde{W}\cong W_{a}\rtimes \pi_1(G)_{\Gamma_0}$, it corresponds to the element \[(\mathrm{id},\mu^{\#})\in W_{a}\rtimes \pi_1(G)_{\Gamma_0},\] where $\mu^{\#}$ is the common image of $\mu\in \{\mu\}$ in $\pi_1(G)_{\Gamma_0}$. It is then clear that $\tau_{\{\mu\}}$ is of length 0, and lies in ${}^K\Admu$. It is the minimal element in ${}^K\Admu$ with respect to the above partial order $\leq_{K,\sigma}$.

The conjugacy class $\{\mu\}$ will be fixed in our practical applications, and hence we will simply write $\tau$ for $\tau_{\{\mu\}}$.

\subsubsection[${}_K\widetilde{W}^K$ and ${}^K\widetilde{W}_K$]{${}^K \IW^K, {}_K\widetilde{W}^K$ and ${}^K\widetilde{W}_K$}\label{ordi rep of KR} 

Let ${}^K \IW^K\subset \IW$ be the subset of minimal length representatives for $W_K\backslash \IW/W_K$.
Since each double coset $W_KwW_K$ admits a unique element of minimal length, we will identify $\Admu_K$ as a subset of ${}^K \IW^K$, which is again denoted by $\Admu_K$. For $w\in \Admu_K$, by $\ell(w)$ we mean that we view $w\in{}^K \IW^K$ and  $\ell(w)$ is its length.  Sometimes we will also write $x_w\in {}^K \IW^K$ for the corresponding $w\in \Admu_K$ to distinguish the notation.

A certain subset ${}_K\widetilde{W}^K\subset \widetilde{W}$ is introduced in \cite{Richarz 1} to describe the dimension and closure of Schubert cells. For $w\in \widetilde{W}$, let $w^K$  be the unique element of minimal length in $wW_K$. Then by \cite[Lemma 1.6]{Richarz 1}, there is a unique element ${}_Kw^K$ of maximal length in $\{(vw)^K\mid v\in W_K\}$. We set \[{}_K\widetilde{W}^K:=\{{}_Kw^K\mid w\in \widetilde{W}\}.\] It seems that a slightly different subset ${}^K\widetilde{W}_K\subset \widetilde{W}$ is more convenient for our purpose. For $w\in \widetilde{W}$, let ${}^Kw$ be the unique element of minimal length in $W_Kw$, there is a unique element ${}^Kw_K$ of maximal length in $\{{}^K(wv)\mid v\in W_K\}$.
We set \[{}^K\widetilde{W}_K:=\{{}^Kw_K\mid w\in \widetilde{W}\}.\] The natural projection $\widetilde{W}\rightarrow W_K\backslash \widetilde{W}/W_K$ induces bijections \[{}_K\widetilde{W}^K\stackrel{\sim}{\longrightarrow} W_K\backslash \widetilde{W}/W_K\stackrel{\sim}{\longleftarrow} {}^K\widetilde{W}_K.\]

The following statement should be well known to experts. It follows essentially from a result of Howlett (see e.g. \cite[Theorem 4.18]{fini dim alg and qt gp}).
\begin{lemma}\label{a lemma about length}
	For $w\in \widetilde{W}$, we have $\ell({}_Kw^K)=\ell({}^Kw_K)$.
\end{lemma}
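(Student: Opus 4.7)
The plan is to invoke Howlett's theorem on minimal double coset representatives to give explicit formulas for both ${}^K w_K$ and ${}_K w^K$, and then to reduce the equality of lengths to a simple reflection-count.

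Let $\tau$ be the unique element of minimal length in the double coset $W_K w W_K$, and set $W_J := W_K \cap \tau W_K \tau^{-1}$ and $W_L := W_K \cap \tau^{-1} W_K \tau$. By Howlett's theorem, $W_J$ and $W_L$ are standard parabolic subgroups of $W_K$, and every $x \in W_K \tau W_K$ has a unique normal form $x = a \tau b$ with $a \in W_K$ and $b \in {}^L W_K$, satisfying $\ell(x) = \ell(a) + \ell(\tau) + \ell(b)$; symmetrically, $x = a'\tau b'$ with $a' \in W_K^J$ and $b' \in W_K$, again with lengths adding. Here $W_K^J$ and ${}^L W_K$ denote the sets of minimal length coset representatives for $W_K/W_J$ and $W_L\backslash W_K$, respectively.

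From length additivity in the first normal form, $x \in W_K \tau W_K$ is minimal in its left $W_K$-coset if and only if $a = 1$; hence the set $\{{}^K(wv) \mid v \in W_K\}$ coincides with $\{\tau b \mid b \in {}^L W_K\}$, so ${}^K w_K = \tau \cdot b_{\max}$, where $b_{\max}$ is the longest element of ${}^L W_K$. Using that the longest element of ${}^L W_K$ has length $\ell(w_K) - \ell(w_L)$ (where $w_K, w_L$ denote the longest elements of the finite Coxeter groups $W_K, W_L$), we obtain $\ell({}^K w_K) = \ell(\tau) + \ell(w_K) - \ell(w_L)$. Symmetrically, the second normal form yields $\ell({}_K w^K) = \ell(\tau) + \ell(w_K) - \ell(w_J)$. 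It thus suffices to prove $\ell(w_J) = \ell(w_L)$.

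For this last step, conjugation by $\tau$ in $\IW$ maps $W_L$ bijectively onto $W_J$; since it permutes the reflections of $\IW$, it restricts to a bijection between the reflections of $\IW$ lying in $W_L$ and those lying in $W_J$. For any finite standard parabolic subgroup, the length of the longest element equals the number of reflections it contains, so $\ell(w_J) = \ell(w_L)$, as required. The main technical point will be to verify Howlett's theorem in the extended setting $\IW = W_a \rtimes \pi_1(G)_{\Gamma_0}$; by separating off the $\pi_1(G)_{\Gamma_0}$-component of $\tau$, the question reduces to double cosets of two (possibly distinct) standard parabolic subgroups of the Coxeter group $W_a$, where Howlett's original theorem applies directly.
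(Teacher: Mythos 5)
Your proof is correct and follows essentially the same route as the paper: apply Howlett's theorem (\cite[Theorem 4.18]{fini dim alg and qt gp}) to the double coset $W_Kw W_K$ to get length-additive normal forms, conclude that $\ell({}^Kw_K)$ and $\ell({}_Kw^K)$ each equal $\ell(\tau)$ plus the length of the longest element in the relevant set of minimal coset representatives of $W_K$, and thereby reduce to comparing the longest elements of the two parabolic subgroups $W_K\cap\tau W_K\tau^{-1}$ and $W_K\cap\tau^{-1}W_K\tau$. Your reflection-counting argument for that final equality (via $\tau$-conjugation permuting reflections), and your remark on extending Howlett's theorem from the affine Weyl group $W_a$ to the extended group $\widetilde W = W_a\rtimes\pi_1(G)_{\Gamma_0}$, supply justifications that the paper asserts as immediate or leaves implicit.
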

\begin{proof}
	To simplify notations, we assume that $w$ is the unique element of minimal length in $W_KwW_K$, i.e. $w\in {}^K \IW^K$. Let $W_J=w^{-1}W_Kw\cap W_K$, and ${}^JW_K\subset W_K$ be the set of minimal length representatives of $W_J\backslash W_K$. For $v\in W_K$, we have a unique decomposition $v=v_J\cdot {}^Jv$ with $v_J\in W_J$ and ${}^Jv\in {}^JW_K$, and hence $$wv=wv_J\cdot {}^Jv=(wv_Jw^{-1})\cdot w\cdot{}^Jv.$$
	For $x\in {}^JW_K$, we have, by \cite[Theorem 4.18]{fini dim alg and qt gp} (and its proof), that $wx\in {}^K\widetilde{W}$ and $\ell(wx)=\ell(w)+\ell(x)$. In particular, noting that $wv_Jw^{-1}\in W_K$, we have ${}^K(wv)=w\cdot {}^Jv$, and hence ${}^Kw_K=wx_0$ with $\ell({}^Kw_K)=\ell(w)+\ell(x_0)$. Here $x_0$ is the unique element of maximal length in ${}^JW_K$.

	There is a similar description for ${}_Kw^K$. Namely, let $W_I=wW_Kw^{-1}\cap W_K$, and $W_K^I\subset W_K$ be the set of minimal length representatives of $W_K/W_I$, we have ${}_Kw^K=y_0w$ and $\ell({}_Kw^K)=\ell(w)+\ell(y_0)$, where $y_0$ is the maximal element in $W_K^I$. One sees immediately that $\ell({}_Kw^K)=\ell({}^Kw_K)$, as $\ell(x_0)=\ell(y_0)$.
\end{proof}

\subsubsection[]{}\label{subsubsection two sections}
By definition we have inclusion \[{}^K\widetilde{W}_K\subset {}^K\widetilde{W}.\] For any $w\in \Admu_K$, we get a unique representative of $w$ in ${}^K\widetilde{W}_K$, which we denote by ${}^Kw_K$. Then \[{}^Kw_K\in \Admu^K\cap {}^K \IW={}^K\Admu.\] In this way we get a section $w\mapsto {}^Kw_K$  of the surjection ${}^K\Admu \twoheadrightarrow \Admu_K$.
On the other hand, we have also the inclusion \[{}^K \IW^K\subset {}^K\widetilde{W}.\]
Recall that for $w\in \Admu_K$, we get a unique representative $x_w\in {}^K \IW^K\subset {}^K\widetilde{W}$. Then \[x_w\in \Admu^K\cap {}^K \IW={}^K\Admu.\]
In this way we get another section $w\mapsto x_w$ of the surjection ${}^K\Admu \twoheadrightarrow \Admu_K$.
We note that ${}^Kw_K$ and $x_w$ are the maximal and minimal elements respectively in $W_KwW_K\cap {}^K\wt{W}$, the fiber at $w$ of the surjection ${}^K\Admu\twoheadrightarrow\Admu_K$.


\subsubsection[The Newton map]{The Newton map}\label{nu on aff Weyl}  We identify $X_*(T)_{\Gamma_0,\Q}=X_*(T)^{\Gamma_0}_\RAT$. Let $X_*(T)_{\Gamma_0,\RAT}^+$ be the set of dominant elements in $X_*(T)_{\Gamma_0,\RAT}$ defined by the positive relative roots of $G_{\breve{\Q}_p}$ corresponding to $B$ (cf. \cite{geo and homplg aff DL} 1.7). The action of $\sigma$ on $X_*(T)_{\Gamma_0,\RAT}/W_0$ transfers to an action on $X_*(T)_{\Gamma_0,\RAT}^+$ (the so called L-action), and hence gives a subset $(X_*(T)_{\Gamma_0,\RAT}^+)^{\langle\sigma \rangle}$.

There is a map (cf. \cite{geo and homplg aff DL}) \[\nu:\IW\rightarrow (X_*(T)_{\Gamma_0,\RAT}^+)^{\langle\sigma \rangle}\] as follows. For $w\in \IW$, there exists $n\in \mathbb{N}$ such that $\sigma^n$ acts trivially on $\IW$ and that \[\lambda=w\sigma(w)\cdots \sigma^{n-1}(w)\in X_*(T)_{\Gamma_0}.\] The element \[\frac{1}{n}\lambda\in X_*(T)_{\Gamma_0,\Q}\] is independent of the choice of $n$. The map $\nu:\IW\rightarrow (X_*(T)_{\Gamma_0,\RAT}^+)^{\langle\sigma \rangle}$ is then given by setting $\nu(w)$ to be the unique dominant element in the $W_0$-orbit of $\frac{1}{n}\lambda$. It is called the Newton map. By construction, it is constant on each $\sigma$-conjugacy class of $\IW$.

\subsubsection[$\sigma$-straight elements]{$\sigma$-straight elements}\label{recall sigm straight}Let us recall definition and basic properties of $\sigma$-straight elements in $\IW$. An element $w\in \IW$ is called $\sigma$-straight if \[\ell(w\sigma(w)\sigma^2(w)\cdots \sigma^{m-1}(w))=m\ell(w)\] for all $m\in \mathbb{N}$. By \cite[Lemma 1.1]{min leng priprint}, it is equivalent to \[\ell(w)=\langle\nu(w),2\rho\rangle,\] where $\nu(w)$ is as in \ref{nu on aff Weyl}, and $\rho$ is the half sum of positive roots.

We denote by \[\Admu_{\sigma\text{-str}}\subset \Admu\] the subset of $\sigma$-straight elements. We also set $${}^K\Admu_{\sigma\text{-str}}:=\Admu_{\sigma\text{-str}}\cap {}^K\IW\subset {}^K\Admu,$$where the last inclusion is by Theorem \ref{T: EKOR set}.
One checks easily by definition that the element $\tau$ is $\sigma$-straight, so $\tau\in {}^K\Admu_{\sigma\text{-str}}$.

\subsection[$\CGmu$ and related quotients]{$\CGmu$ and related quotients}\label{subsubsec-C(G,mu)}
We continue the notations in the last subsection.
\subsubsection[The set $\BGmu$]{The set $\BGmu$}Let $B(G)$ be the set of $\sigma$-conjugacy classes in $\breve{G}$.
Kottwitz constructed two maps in \cite{isocys with addi}, namely, the Newton map $$\nu:B(G)\rightarrow (X_*(T)_{\Gamma_0,\RAT}^+)^{\langle\sigma \rangle}$$ and the Kottwitz map $$\kappa:B(G)\rightarrow \pi_1(G)_{\Gamma}.$$ Here $\Gamma:=\mathrm{Gal}(\overline{\RAT}_p/\RAT_p)$. 
An element $[b]\in B(G)$ is uniquely determined by its images $\nu([b])$ and $\kappa([b])$. The relation between the Newton maps on $B(G)$ and $\IW$ respectively is as follows.  A $\sigma$-conjugacy class of $\IW$ is called straight if it contains a $\sigma$-straight element.
Let $B(\IW)_{\sigma\text{-str}}$ be the set of straight $\sigma$-conjugacy classes of $\IW$.
By \cite[Theorem 3.3]{geo and homplg aff DL}, the map \[\Psi: B(\IW)_{\sigma\text{-str}}\rightarrow B(G)\] induced by the inclusion $N(T)(\breve{\mathbb{Q}}_p)\subset \breve{G}$ is bijective, and compatible with the Newton maps on both sides.

There is a partial order $\leq$ on $X_*(T)_\RAT$ defined as follows. For $v_1,v_2\in X_*(T)_\RAT$, $v_1\leq v_2$ if and only if $v_2-v_1$ is a non-negative $\RAT$-sum of positive relative coroots. One can the define a partial order on $B(G)$ as follows:
\[[b_1]\leq [b_2]\text{ if and only if }\kappa([b_1])=\kappa([b_2])\text{ and }\nu([b_1])\leq\nu([b_2]).\]One then define
\[\BGmu:=\{[b]\in B(G)\mid \kappa([b])=\mu^{\natural}, \nu([b])\leq \overline{\mu}\}.\]
Here $\mu^{\natural}$ is the common image of $\mu\in \mmu$ in $\pi_1(G)_{\Gamma}$, and $\overline{\mu}$ is the Galois average of a dominant representative of the image of an element of $\mmu$ in $X_*(T)_{\Gamma_0,\RAT}$ with respect to the L-action of $\sigma$ on $X_*(T)_{\Gamma_0,\RAT}^+$. By \cite[Proposition 4.1]{K-R on union of aff D-L}, the above $\Psi$ maps the image of $\Admu_{\sigma\text{-str}}$ in $B(\IW)_{\sigma\text{-str}}$ bijectively to $\BGmu$.

\subsubsection{The set $\CGmu$}\label{subsubsection CGmu}
We consider the (right) action of $\breve{K}\times \breve{K}$ on $\breve{G}$ given by $(g,(k_1,k_2))\mapsto k_1^{-1}gk_2$. Let $\breve{K}_{\sigma}\subset \breve{K}\times \breve{K}$ be the graph of the Frobenius map and set $C(\mathcal{G})= \breve{G}/\breve{K}_{\sigma}$. We have a natural surjection $C(\mathcal{G})\ra B(G)$. The subset \[\breve{K}\Admu \breve{K}\subset \breve{G}\] is stable with respect to the action of $\breve{K}_{\sigma}$. We denote by \[\CGmu=\breve{K}\Admu \breve{K}/\breve{K}_{\sigma}\] the set of its orbits. Then $\CGmu\subset C(\mathcal{G})$. 

Let $\breve{K}_1$ be the pro-unipotent radical of $\breve{K}$, then $\breve{K}_{\sigma}(\breve{K}_1\times \breve{K}_1)\subset \breve{K}\times \breve{K}$ is a subgroup. We define $$\mathrm{EKOR}(K,\mmu):=\breve{K}\Admu \breve{K}/\breve{K}_{\sigma}(\breve{K}_1\times \breve{K}_1),\quad\mathrm{KR}(K,\mmu):=\breve{K}\Admu \breve{K}/\breve{K}\times \breve{K}.$$
We then have natural maps \begin{subeqn}\label{proj from cent}\xymatrix{B(G,\{\mu\})&\CGmu\ar[r]\ar[l] &\mathrm{EKOR}(K,\mmu)\ar[r] & \mathrm{KR}(K,\mmu),}\end{subeqn}where all the arrows are surjections.

By the Bruhat-Tits decomposition, we see that the natural inclusion  $N(T)(\breve{\mathbb{Q}}_p)\rightarrow \breve{G}$ induces a bijection \[\Admu_K\stackrel{\sim}{\longrightarrow}\mathrm{KR}(K,\mmu).\] To identify  ${}^K\Admu$ and $\mathrm{EKOR}(K,\mmu)$, we need the following results due to Lusztig and He.

\begin{theorem}{\rm(\cite[Theorem 6.1]{He-Rap})} \label{He's result--decompo} Notations as above, we have
	\begin{enumerate}
		\item for any $x\in {}^K\widetilde{W}$, $\breve{K}_{\sigma}(\breve{K}_1x\breve{K}_1)=\breve{K}_{\sigma}(\breve{I}x\breve{I})$;
		\item $\breve{G}=\coprod_{x\in {}^K\widetilde{W}} \breve{K}_{\sigma}(\breve{K}_1x\breve{K}_1)=\coprod_{x\in {}^K\widetilde{W}} \breve{K}_{\sigma}(\breve{I}x\breve{I})$.
	\end{enumerate}
	In particular, the inclusion  $N(T)(\breve{\mathbb{Q}}_p)\rightarrow \breve{G}$ induces a bijection \[{}^K\Admu\stackrel{\sim}{\longrightarrow}\mathrm{EKOR}(K,\mmu).\]
\end{theorem}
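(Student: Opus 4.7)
The plan is to adapt the theory of $\sigma$-stable pieces developed by Lusztig and He to the affine setting, using the Iwahori decomposition of $\breve{K}$ together with the minimal-length properties of elements in ${}^K\widetilde{W}$. First I would prove part (1). The inclusion $\breve{K}_{\sigma}(\breve{K}_1 x \breve{K}_1) \subseteq \breve{K}_{\sigma}(\breve{I} x \breve{I})$ is immediate: since $\breve{I}$ is the preimage of a Borel of $\G_0^{\rdt}(k)$ under the reduction $\breve{K} \twoheadrightarrow \breve{K}/\breve{K}_1 = \G_0^{\rdt}(k)$, one has $\breve{K}_1 \subseteq \breve{I}$. For the reverse inclusion, writing $\breve{I} = \breve{B}_0 \cdot \breve{K}_1$ with $\breve{B}_0$ a set-theoretic lift of the Borel, any element of $\breve{I} x \breve{I}$ has the form $\breve{K}_1 \cdot \tilde{b}\, x\, \tilde{b}' \cdot \breve{K}_1$ with $\tilde{b}, \tilde{b}'$ lifts of Borel elements. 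The hypothesis $x \in {}^K\widetilde{W}$ is equivalent to $\ell(s x) > \ell(x)$ for every simple reflection $s \in W_K$, which translates into the containment $x^{-1} \breve{U}_\alpha x \subseteq \breve{K}_1$ for each corresponding simple affine root subgroup $\breve{U}_\alpha$. This lets one push the unipotent part of $\tilde{b}'$ across $x$ and absorb it into $\breve{K}_1$, reducing the problem to a finite-level question in $\G_0^{\rdt}(k)$. The remaining Borel factor is then removed by Lang's theorem applied to $B_0 \subset \G_0^{\rdt}(k)$, producing a $\sigma$-conjugator in $\breve{K}$ that carries the element into $\breve{K}_1 x \breve{K}_1$.

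For part (2), the covering $\breve{G} = \bigcup_{x \in {}^K\widetilde{W}} \breve{K}_{\sigma}(\breve{I} x \breve{I})$ follows from the Bruhat--Tits decomposition $\breve{G} = \coprod_{w \in \widetilde{W}} \breve{I} w \breve{I}$: writing each $w = v \cdot x$ with $v \in W_K$ and $x \in {}^K\widetilde{W}$ minimal, one has $\breve{I} w \breve{I} \subseteq \breve{K} x \breve{I}$, and a further application of Lang's theorem inside $\G_0^{\rdt}(k)$ $\sigma$-conjugates the $v$-part away modulo $\breve{K}_1$. For disjointness, suppose $\breve{K}_{\sigma}(\breve{K}_1 x \breve{K}_1) \cap \breve{K}_{\sigma}(\breve{K}_1 y \breve{K}_1) \neq \emptyset$ for $x, y \in {}^K\widetilde{W}$. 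Projecting to the untwisted double-coset space $\breve{K} \backslash \breve{G}/\breve{K}$ gives $W_K x W_K = W_K y W_K$, and a refined comparison modulo $\breve{K}_1$ via part (1), together with the minimality of $x, y$ as representatives of their left $W_K$-cosets, forces $x = y$. The ``in particular'' statement then follows by restricting (2) to $\breve{K} \Admu \breve{K}$: by Theorem \ref{T: EKOR set} this subset is the disjoint union of the pieces indexed by $\Admu \cap {}^K\widetilde{W} = {}^K\Admu$, which is exactly the required parametrization of $\mathrm{EKOR}(K, \mmu) = \breve{K}\Admu\breve{K}/\breve{K}_\sigma(\breve{K}_1 \times \breve{K}_1)$.

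The main obstacle is the reverse inclusion in part (1). The delicate point is to produce a $\sigma$-conjugator in $\breve{K}$ that carries $\breve{I} x \breve{I}$ into $\breve{K}_1 x \breve{K}_1$, and naive length-theoretic induction is insufficient: one must use the full combinatorial input of He's minimal length theory (cf.\ his work on minimal length elements in extended affine Weyl groups), which gives precise control over how the affine root subgroups conjugated by $x$ sit inside $\breve{I}$ and $\breve{K}_1$. Once this key step is in hand, the arguments for (2) and the \textsc{Ekor} bijection reduce to bookkeeping with the Bruhat--Tits decomposition together with repeated applications of (1) and Lang's theorem at the finite-reductive level.
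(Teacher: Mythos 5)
The paper does not actually prove this statement; it is cited verbatim from He--Rapoport \cite[Theorem 6.1]{He-Rap}, which in turn rests on Lusztig's and He's work on $G$-stable pieces and on minimal length elements in extended (affine) Weyl groups. So there is no ``paper's own proof'' to compare against, but your proposal can still be assessed on its merits, and it has a genuine gap at the crucial step.

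The problem is the translation you make in the reverse inclusion of part (1). You claim that $x \in {}^K\widetilde{W}$, i.e.\ $\ell(sx) > \ell(x)$ for every $s \in J_K$, ``translates into the containment $x^{-1}\breve{U}_\alpha x \subseteq \breve{K}_1$ for each corresponding simple affine root subgroup.'' This is false. The condition $\ell(sx) > \ell(x)$ for $s = s_\alpha$ only says $x^{-1}(\alpha)$ is a \emph{positive} affine root, hence $x^{-1}\breve{U}_\alpha x = \breve{U}_{x^{-1}(\alpha)} \subseteq \breve{I}$. For the conjugate to land inside $\breve{K}_1$ you would need $x^{-1}(\alpha)$ to have strictly positive level, and there is no reason for that: $x^{-1}(\alpha)$ can perfectly well be another level-zero root in $W_K$. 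As a result the proposed absorption of the unipotent part of $\tilde b'$ into $\breve{K}_1$ does not go through, and the reduction to a finite-level problem in $\G_0^{\rdt}(k)$ is not established. In addition, the Lang-type step is not quite right as stated: the relevant twisted conjugation is by $\sigma' = \sigma \circ \mathrm{Ad}(x)$ rather than by $\sigma$ itself (this is exactly the isogeny appearing in the zip datum $\mathcal{Z}_x$ of \ref{zip in EKOR}), so one must invoke the Lang--Steinberg theorem for $\sigma'$, and more importantly the residual structure on $\G_0^{\rdt}$ after your (flawed) reduction is a Levi in two different parabolics linked by $\sigma'$, not a single Borel.

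You do flag the reverse inclusion as ``the main obstacle'' and say that one must invoke He's minimal length theory for full control of how the root subgroups conjugated by $x$ sit inside $\breve{I}$ versus $\breve{K}_1$. That is the correct diagnosis, but it is not a proof: the actual argument in He--Rapoport proceeds by an induction on $\ell(x)$ using the $G$-stable piece formalism (the identities of the type $\breve{I}s\breve{I}w\breve{I} = \breve{I}sw\breve{I}$ or $\breve{I}sw\breve{I} \cup \breve{I}w\breve{I}$, together with He's length-additivity results for $sx\sigma(s)$), and it is precisely this inductive mechanism, not a one-shot conjugation, that carries $\breve{I}x\breve{I}$ into $\breve{K}_1 x \breve{K}_1$ up to $\breve{K}_\sigma$-conjugacy. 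Your part (2) is structurally fine (Bruhat--Tits covering plus minimal-length representatives plus part (1) for disjointness) and the ``in particular'' derivation via Theorem~\ref{T: EKOR set} is correct, but both depend on the unestablished part (1), so the proposal as written does not close.
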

We will identify ${}^K\Admu$ with $\mathrm{EKOR}(K,\mmu)$ and $\Admu_K$ with $\mathrm{KR}(K,\mmu)$ from now on.

Following \cite[Definition 13.1.1]{aff DL in aff flag}, an element $w\in \widetilde{W}$ is said to be \emph{fundamental} if $\breve{I}\dot{w}\breve{I}$ consists of a single $\breve{I}$-$\sigma$-conjugacy class. To explain relations between fundamental elements, $\sigma$-straight elements and $\CGmu$ (resp. $\BGmu$), we need the following results. Recall the set \[{}^K\Admu_{\sigma\text{-}\mathrm{str}}={}^K\Admu\cap \Admu_{\sigma\text{-str}}.\] The $\Psi$ above induces a map \[{}^K\Admu_{\sigma\text{-}\mathrm{str}} \longrightarrow\BGmu.\]
\begin{theorem}{\rm(\cite[Proposition 4.5]{geo and homplg aff DL}, \cite[Theorem 6.17]{He-Rap})} \label{He's result--straight vs fundmtl} Notations as above, we have
	\begin{enumerate}
		\item $\sigma$-straight elements are fundamental,
		\item the map ${}^K\Admu_{\sigma\text{-}\mathrm{str}} \longrightarrow\BGmu$ is surjective.
	\end{enumerate}
\end{theorem}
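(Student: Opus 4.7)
For part (1), I would prove that for a $\sigma$-straight $w$, the entire Iwahori double coset $\breve{I}\dot{w}\breve{I}$ collapses to a single $\breve{I}$-$\sigma$-conjugacy class. Writing an arbitrary element as $g=u_1\dot{w}u_2$ and setting $g_0:=\dot{w}u_2\sigma(u_1)$ (a $\sigma$-conjugate of $g$), the problem reduces to showing that $\dot{w}u$ is $\breve{I}$-$\sigma$-conjugate to $\dot{w}$ for every $u\in\breve{I}$. I would set this up as the fixed-point equation $h^{-1}\dot{w}u\sigma(h)=\dot{w}$, i.e.\ $\sigma(h)=u^{-1}\dot{w}^{-1}h\dot{w}$, and solve it by successive approximation along the congruence filtration $\breve{I}\supset\breve{I}_1\supset\breve{I}_2\supset\cdots$. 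The key point is that $\sigma$-straightness, encoded in the equivalent form $\ell(w)=\langle\nu(w),2\rho\rangle$ recalled in \ref{recall sigm straight}, guarantees that the affine root subgroup decomposition of $\breve{I}_n$ is moved strictly into deeper filtration levels after sufficiently many iterations of the contraction $\xi\mapsto\dot{w}^{-1}\sigma^{-1}(\xi)\dot{w}$, so the approximations converge $p$-adically to a genuine solution $h\in\breve{I}$.

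For part (2), the plan is to combine three inputs. First, by \cite{K-R on union of aff D-L} Proposition 4.1 recalled just above in the paper, the restriction of $\Psi$ to the image of $\Admu_{\sigma\text{-str}}$ in $B(\widetilde{W})_{\sigma\text{-str}}$ is a bijection onto $\BGmu$. Second, Theorem \ref{T: EKOR set} gives ${}^K\Admu=\Admu\cap{}^K\widetilde{W}$, so ${}^K\Admu_{\sigma\text{-str}}=\Admu_{\sigma\text{-str}}\cap{}^K\widetilde{W}$. Third, I would prove a reduction lemma: every $\sigma$-conjugacy class of $\sigma$-straight elements in $\Admu$ has a representative in ${}^K\widetilde{W}$. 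Granting this, a given $[b]\in\BGmu$ is lifted via the first input to some $w\in\Admu_{\sigma\text{-str}}$ with $\Psi([w])=[b]$, and the reduction lemma delivers $w'\in{}^K\Admu_{\sigma\text{-str}}$ in the same $\sigma$-conjugacy class, so $\Psi([w'])=[b]$.

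The hard part is the reduction lemma, which I expect to be the main obstacle. My plan for it is as follows: suppose $w\in\Admu_{\sigma\text{-str}}$ with $w\notin{}^K\widetilde{W}$, so that there exists a simple reflection $s\in W_K$ with $sw<w$. Set $w':=s\cdot w\cdot\sigma(s)^{-1}$, which lies in the same $\sigma$-conjugacy class as $w$. To check $w'\in\Admu_{\sigma\text{-str}}$: the Newton point $\nu$ is a $\sigma$-conjugation invariant on $\widetilde{W}$, so the characterization $\ell(\cdot)=\langle\nu(\cdot),2\rho\rangle$ forces $\ell(w')\leq\ell(w)$, with equality only when $w'$ is again $\sigma$-straight; admissibility of $w'$ is verified by a direct Bruhat-order argument using that $\Admu=W_0\cdot\{t^{x(\underline{\mu})}\}\cdot W_0$ is a lower set stable under conjugation by simple reflections on either side. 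Since $\ell(w')\leq\ell(w)$ and the number of potential $W_K$-reductions from $w$ is finite, iterating $w\rightsquigarrow w'$ terminates at an element of ${}^K\widetilde{W}\cap\Admu_{\sigma\text{-str}}={}^K\Admu_{\sigma\text{-str}}$, completing the proof.
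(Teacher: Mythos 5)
The paper does not prove this theorem; it is recalled from He's ``Geometric and homological properties of affine Deligne-Lusztig varieties'' (Proposition 4.5) and He-Rapoport (Theorem 6.17), so there is no in-paper proof to compare against. Evaluating your proposal on its own merits, both parts contain genuine gaps.

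For part (1), the fixed-point/contraction scheme is a plausible strategy, but the central claim---that $\sigma$-straightness forces the operator $\xi\mapsto\dot{w}^{-1}\sigma^{-1}(\xi)\dot{w}$ to move every affine root subgroup strictly deeper into the congruence filtration after finitely many iterations---is only clear when the Newton point $\nu(w)$ is regular dominant. For non-regular $\nu(w)$, there are affine root directions whose linear part is orthogonal to $\nu(w)$, and on these the iteration has zero net drift; you cannot conclude $p$-adic convergence without a finer argument, typically involving passage to the centralizer Levi $M_{\nu(w)}$ and induction. The proof in the literature avoids this by using the Deligne-Lusztig reduction method: one works through a reduced expression of $w$ and uses the length additivity $\ell(w\sigma(w)\cdots\sigma^{m-1}(w))=m\ell(w)$ to ensure each reduction step is of the ``non-decreasing'' type, never directly solving a fixed-point equation.

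For part (2), two problems. First, your derivation of $\ell(w')\le\ell(w)$ is incorrectly sourced: from $\nu(w')=\nu(w)$ and the inequality $\ell(x)\ge\langle\nu(x),2\rho\rangle$ (with equality characterizing $\sigma$-straightness) you only get $\ell(w')\ge\ell(w)$. The upper bound $\ell(sw\sigma(s)^{-1})\le\ell(w)$ when $sw<w$ is a separate, purely Bruhat-combinatorial fact ($\ell(sw)=\ell(w)-1$, then right-multiplication by $\sigma(s)^{-1}$ changes length by $\pm 1$), which you do not invoke. Combining both gives $\ell(w')=\ell(w)$ and that $w'$ is again $\sigma$-straight---fine---but this means your iteration preserves length rather than decreasing it. Second, and more seriously, the termination claim ``the number of potential $W_K$-reductions is finite, so iterating terminates at an element of ${}^K\widetilde{W}$'' does not follow: with length held constant the procedure can cycle, and the $W_K$-coset $W_K w$ changes at each step (since you multiply by $\sigma(s)^{-1}$ on the right), so there is no obvious monovariant. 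Establishing that some chain of such elementary moves reaches ${}^K\widetilde{W}$ is precisely the non-trivial content of He's partial conjugation theory (the machinery behind \cite[Theorem 6.1]{He-Rap} and \cite{mini leng double coset}); it requires the structure theory of minimal-length elements in $\sigma$-conjugacy classes, not a counting argument.
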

If $x\in {}^K\widetilde{W}$ is $\sigma$-straight, then we have  $\breve{K}_{\sigma}(\breve{K}_1x\breve{K}_1)=\breve{K}_{\sigma}(\breve{I}x\breve{I})$ by Theorem \ref{He's result--decompo} (1), and $\breve{I}x\breve{I}=\breve{K}_{\sigma}\cdot x$ by Theorem \ref{He's result--straight vs fundmtl} (1). In particular, we have \[\breve{K}_{\sigma}(\breve{K}_1x\breve{K}_1)=\breve{K}_{\sigma}\cdot x.\] This means that the natural surjection \[\CGmu\twoheadrightarrow{}^K\Admu\] admits a natural section (explicitly given as above) when restricting to the subset \[{}^K\Admu_{\sigma\text{-}\mathrm{str}}\subset{}^K\Admu.\] Combined with Theorem \ref{He's result--straight vs fundmtl} (2), we have the following commutative diagram:

\[\xymatrix{ & &{}^K\Admu\\
	{}^K\Admu_{\sigma\text{-str}}\ar@{^(->}[r]\ar@{^(->}[urr]\ar@{->>}[drr]&\CGmu\ar@{->>}[ur]\ar@{->>}[dr]\\
	& & \BGmu.}\]

\subsubsection[]{}\label{zip in EKOR}
Now we make the link between the theory of algebraic zip data in subsection \ref{subsection-zip data}, the theory of Iwahori Weyl groups \ref{Iwahori Weyl} and $\CGmu$.
We need some constructions which are slightly modified version of those in the proof of \cite[Theorem 6.1]{He-Rap}. Let $\widetilde{\mathbb{S}}$ be the set of simple reflections in $\IW$ and $J_K\subset \widetilde{\mathbb{S}}$ be the set of simple reflections in $W_K$. We have $\sigma(J_K)=J_K$ by construction.
Let $w\in {}^K\IW^K$, i.e., $w$ is of shortest length in $W_KwW_K$. We set $\sigma'=\sigma\circ \mathrm{Ad}(w)$.

The map $\breve{K}\rightarrow \breve{K}w\breve{K}$, $k\mapsto wk$ induces a bijection \[\breve{K}/(\breve{K}\cap w^{-1}\breve{K}w)_{\sigma'}\stackrel{\sim}{\longrightarrow} \breve{K}w\breve{K}/\breve{K}_\sigma.\] We remind the readers that, by our conventions at the beginning of \ref{subsubsec-C(G,mu)}, the $\breve{K}_\sigma$-action is given by \[g\cdot k=k^{-1}g\sigma(k),\] where $g\in \breve{K}w\breve{K}$ and $k\in \breve{K}$. The $(\breve{K}\cap w\breve{K}w^{-1})_{\sigma'}$-action is defined in the same way. 

Let $\mathcal{G}_0=\mathcal {G}\otimes \ov{\mathbb{F}}_p$ and $\mathcal{G}_0^{\mathrm{rdt}}$ be the maximal reductive quotient of $\mathcal{G}_0\otimes \ov{\mathbb{F}}_p$. Then $\mathcal{G}_0^{\mathrm{rdt}}$ is a reductive group defined over $\mathbb{F}_p$. Let $\overline{B}$ be the image in $\mathcal{G}_0^{\mathrm{rdt}}$ of $\breve{I}$ and $\overline{T}$ be a maximal torus of $\overline{B}$. Let \[J_w=J_K\cap \mathrm{Ad}(w^{-1})(J_K),\] which is a subset of simple reflections in the Weyl group of $\mathcal{G}_0^{\mathrm{rdt}}$ (with respect to $(\overline{B},\overline{T})$). Let $\overline{L}_{J_w}\subset \mathcal{G}_0^{\mathrm{rdt}}$ (resp. $\overline{P}_{J_w}\subset \mathcal{G}_0^{\mathrm{rdt}}$) be the standard Levi subgroup (resp. parabolic subgroup) of type $J_w$, and $\overline{L}_{\sigma'(J_w)}\subset \mathcal{G}_0^{\mathrm{rdt}}$ (resp. $\overline{P}_{\sigma'(J_w)}\subset \mathcal{G}_0^{\mathrm{rdt}}$) be the standard Levi subgroup (resp. parabolic subgroup) of type $\sigma'(J_w)$. Then we have a natural isogeny $\overline{L}_{J_w}\rightarrow \overline{L}_{\sigma'(J_w)}$ which is again denoted by $\sigma'$. The tuple
$$\mathcal {Z}_w:=(\mathcal{G}_0^{\mathrm{rdt}}, \overline{P}_{J_w}, \overline{P}_{\sigma'(J_w)}, \sigma':\overline{L}_{J_w}\rightarrow \overline{L}_{\sigma'(J_w)})$$
is an algebraic zip datum. Let \[E_{\mathcal {Z}_w}=(\overline{L}_{J_w})_{\sigma'}(U_{J_w}\times U_{\sigma'(J_w)}),\] where $U_{J_w}$ (resp. $U_{\sigma'(J_w)}$) is the unipotent radical of $\ov{P}_{J_w}$ (resp. $\ov{P}_{\sigma'(J_w)}$). It has a left action on $\mathcal{G}_0^{\mathrm{rdt}}$. Moreover, we have a natural map \begin{subeqn}\label{bij for fixed w}f_w:\breve{K}w\breve{K}/\breve{K}_\sigma\rightarrow E_{\mathcal {Z}_w}\backslash\mathcal{G}_0^{\mathrm{rdt}}, \ \ \ \ \ k_1wk_2\mapsto \overline{\sigma(k_2)k_1}\end{subeqn} which induces a bijection \[\breve{K}w\breve{K}/\breve{K}_\sigma(\breve{K}_1\times\breve{K}_1)\stackrel{\sim}{\longrightarrow} E_{\mathcal {Z}_w}\backslash \mathcal{G}_0^{\mathrm{rdt}}.\] The induced map will also be denoted by $f_w$. Let $w\in \Admu_K$ and view it as an element of ${}^K\IW^K$. Then $\breve{K}w\breve{K}/\breve{K}_\sigma(\breve{K}_1\times\breve{K}_1)$ is identical to the fiber of the surjection \[{}^K\Admu=\mathrm{EKOR}(K,\mmu) \twoheadrightarrow \mathrm{KR}(K,\mmu) = \Admu_K \] at $w$. On the other hand,
noting that $\sigma'=\sigma\circ \mathrm{Ad}(w)=\mathrm{Ad}(\sigma(w))\circ\sigma$, by Theorem \ref{thm on zip orbits} and Remark \ref{remark on zip-1}, the underlying space of $E_{\mathcal {Z}_w}\backslash\mathcal{G}_0^{\mathrm{rdt}}$ is the finite set \[{}^{J_w}W_K.\] One also sees easily that it is precisely the set ${}^JW_K$ in the proof of Lemma \ref{a lemma about length}.
Note that \[w({}^{J_w}W_K)=W_KwW_K\cap {}^K\wt{W}\]
by \cite{Lusztig1} 2.1 (b) (see also the proof of \cite[Theorem 6.1]{He-Rap}).

\section[Local models and Shimura varieties]{Local models and Shimura varieties}\label{section loc mod}

In this section, we review the Pappas-Zhu local models \cite{local model P-Z} and the Kisin-Pappas integral models \cite{Paroh} for certain Shimura varieties of abelian type. We refer to \cite{RZ, Goertz1, Goertz2, PR1, PR2, PR3, PRS} for more information on the theory of local models.

\subsection[Local models]{Local models}\label{subsection loc models}

\subsubsection[The loop groups]{Loop groups and affine flag varieties} Let $k=\overline{k}$ be an algebraically closed field.
Let $\mathcal{G}$ be an affine group scheme of finite type over $k[[t]]$. We set  $L\mathcal{G}$ and $L^+\mathcal{G}$ to be the group functors on the category of $k$-algebras given by $$LG(R):=\mathcal{G}(R((t)))=G(R((t)))\text{\ \ \ \ and \ \ \  }L^+\mathcal{G}(R):=\mathcal{G}(R[[t]])$$
respectively. Then $L^+\mathcal{G}$ is represented by an affine group scheme over $k$, and $LG$ is represented by an ind-affine ind-group scheme over $k$. Moreover, if $\mathcal{G}$ is smooth, then $L^+\mathcal{G}$ is reduced.

The notations in this part are different from other sections. We will switch back to our original notations from  \ref{subsubsec local mod-cons and prop}. Let $G/k((t))$ be a reductive group, and $\mathcal{G}/k[[t]]$ be the parahoric model of $G$ corresponding to a facet $\mathfrak{a}$ in the enlarged Bruhat-Tits building of $G$. The \emph{affine flag variety} $\Gr_\mathcal{G}$ is the fpqc-sheaf associated to the functor on the category of $k$-algebras given by \[R\mapsto LG(R)/L^+\mathcal{G}(R).\] It has a distinguished base point $e_0$ given by the identity section of $G$.

\begin{remark}\label{loop in mixed char+}
	If $\mathcal{G}$ is an affine group scheme of finite type over $W(k)$, we can also define, by abuse of notations, $L^+\mathcal{G}$ to be the group functor on the category of $k$-algebras given by $L^+\mathcal{G}(R):=\mathcal{G}(W(R))$. Like in the equi-characteristic case, it is represented by an affine group scheme over $k$ which is reduced if $\mathcal{G}$ is smooth. Moreover, the functor on the category of perfect $k$-algebras given by $LG(R):=\mathcal{G}(W(R)[1/p])=G(W(R)[1/p])$ is represented by an ind perfect scheme.
	We can consider the Witt vector affine flag variety $\Gr_\G=\Gr_\mathcal{G}^W$ which is given by \[\Gr_\mathcal{G}=LG/L^+\G\] on the category of perfect $k$-algebras. By \cite{zhu-aff gras in mixed char} and \cite{BS}, this is an ind-proper perfect scheme over $k$. We will work with these affine flag varieties in section \ref{section global}.
\end{remark}

\subsubsection[Schubert cells and Schubert varieties]{Affine Schubert cells and affine Schubert varieties}

As in  \ref{Iwahori Weyl}, we have the Iwahori Weyl group $\widetilde{W}$ with Bruhat order $\leq$, the finite Coxeter group $W_{\mathfrak{a}}$ (denoted by $W_K$ with $K=\G(k[[t]])$ in \ref{Iwahori Weyl} in the mixed characteristic setting) isomorphic to the Weyl group of $\mathcal{G}_0^\mathrm{rdt}$, the reductive quotient of $\G_0=\G\otimes k$, and the subset ${}_\mathfrak{a}\widetilde{W}^{\mathfrak{a}}\subset \widetilde{W}$ in bijection with $W_\mathfrak{a}\backslash\widetilde{W}/W_\mathfrak{a}$. For $w\in \widetilde{W}$, the orbit map \[L^+\mathcal{G}\rightarrow \Gr_\mathcal{G}, \quad g\mapsto g\cdot \dot{w}e_0\] factors through $\mathcal{G}_0$.  The $\mathcal{G}_0$-orbit of $\dot{w}e_0$, denoted by $\Gr_w=\Gr_{\mathcal{G},w}$, is a connected smooth and locally closed subscheme of $\Gr_\mathcal{G}$. Let $\Gr_w^{-}$ be its closure, which is a reduced closed subscheme of $\Gr_\mathcal{G}$. The variety $\Gr_w$ (resp. $\Gr_w^{-}$) is called the \emph{affine Schubert cell} (resp. affine Schubert variety) attached to $w$. By the Bruhat-Tits decomposition, we have a set-theoretically disjoint union of locally closed subsets
$$\Gr_\mathcal{G}=\coprod_{v\in W_\mathfrak{a} \backslash\widetilde{W}/W_\mathfrak{a}}\Gr_v=\coprod_{w\in {}_\mathfrak{a}\widetilde{W}^\mathfrak{a}}\Gr_w.$$

\begin{theorem}{\rm(\cite[Proposition 2.2]{Richarz 2})} \label{thm of richarz} For $w\in {}_\mathfrak{a}\widetilde{W}^\mathfrak{a}$, we have
	\begin{enumerate}
		\item $\mathrm{dim}\  \Gr_w=\ell(w)$;
		\item $\Gr_w^{-}$ is proper, and $\Gr_w^{-}=\coprod\limits_{v\in {}_\mathfrak{a}\widetilde{W}^\mathfrak{a},v\leq w}\Gr_v$.
	\end{enumerate}
\end{theorem}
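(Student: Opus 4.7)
The plan is to reduce the assertion to the classical Iwahori case by means of the proper projection $\pi : \Gr_\mathcal{I} \lra \Gr_\mathcal{G}$ induced by an Iwahori subgroup scheme $\mathcal{I} \subset \mathcal{G}$ attached to an alcove whose closure contains $\mathfrak{a}$. Since $L^+\mathcal{G}$ is reduced, reduction modulo $t$ identifies the fiber $L^+\mathcal{G}/L^+\mathcal{I}$ with the flag variety $\mathcal{G}_0^{\mathrm{rdt}}/\ov{B}$ of the reductive quotient; in particular, $\pi$ is a Zariski-locally trivial fiber bundle, hence proper and smooth of relative dimension $d := \ell(w_0^\mathfrak{a})$, where $w_0^\mathfrak{a}$ denotes the longest element of $W_\mathfrak{a}$.

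In the Iwahori case (so $W_\mathfrak{a}$ is trivial and ${}_\mathfrak{a}\widetilde{W}^\mathfrak{a} = \widetilde{W}$) both assertions are classical. Iterating the $\mathbb{P}^1$-fibrations attached to a reduced expression $x = s_{i_1}\cdots s_{i_r}$ identifies $\Gr_{\mathcal{I},x}$ with $\mathbb{A}_k^{\ell(x)}$, and the Bott-Samelson-Demazure resolution realizes $\Gr_{\mathcal{I},x}^-$ as a projective scheme from which one extracts the Bruhat-order closure formula $\Gr_{\mathcal{I},x}^- = \coprod_{y\leq x}\Gr_{\mathcal{I},y}$. Properness is immediate from ind-projectivity of $\Gr_\mathcal{I}$ (Faltings in equal characteristic; Bhatt-Scholze and Zhu in the Witt-vector setting).

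For a general parahoric $\mathfrak{a}$ and $w \in {}_\mathfrak{a}\widetilde{W}^\mathfrak{a}$, the Bruhat-Tits decomposition yields
\[
\pi^{-1}(\Gr_w) \;=\; \bigsqcup_{x \in W_\mathfrak{a} w W_\mathfrak{a}} \Gr_{\mathcal{I},x}.
\]
Smoothness of $\pi$ gives $\dim \pi^{-1}(\Gr_w) = \dim \Gr_w + d$, while the Iwahori dimension formula forces the left-hand side to equal $\max_x \ell(x) = \ell(w_{\max})$, where $w_{\max}$ is the longest element of the double coset. A standard Coxeter-theoretic identity (Howlett; see also Björner-Brenti) states $\ell(w_{\max}) = \ell({}_\mathfrak{a} w^\mathfrak{a}) + d$, whence $\dim \Gr_w = \ell({}_\mathfrak{a} w^\mathfrak{a}) = \ell(w)$. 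Properness of $\Gr_w^-$ follows because it is a closed subscheme of the finite-dimensional projective piece of $\Gr_\mathcal{G}$ containing it, or equivalently as the image under the proper map $\pi$ of the projective Schubert variety $\Gr_{\mathcal{I},w_{\max}}^-$.

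For the closure relation in (2), applying the closed morphism $\pi$ to $\Gr_{\mathcal{I},w_{\max}}^- = \coprod_{y \leq w_{\max}} \Gr_{\mathcal{I},y}$ yields
\[
\Gr_w^- \;=\; \pi\bigl(\Gr_{\mathcal{I},w_{\max}}^-\bigr) \;=\; \bigcup_{y\leq w_{\max}} \pi(\Gr_{\mathcal{I},y}) \;=\; \bigcup_{v} \Gr_v,
\]
the first equality using that $\pi(\Gr_{\mathcal{I},w_{\max}})$ is dense in $\Gr_w$ (same dimension $\ell(w)$) and $\pi$ closed, where $v$ ranges over the distinguished representatives in ${}_\mathfrak{a}\widetilde{W}^\mathfrak{a}$ of double cosets meeting $\{y : y \leq w_{\max}\}$. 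Rewriting this union as $\coprod_{v \in {}_\mathfrak{a}\widetilde{W}^\mathfrak{a},\; v \leq w}\Gr_v$ reduces to the combinatorial fact, standard in the theory of Coxeter groups (Deodhar, Howlett), that the assignment $y \mapsto {}_\mathfrak{a} y^\mathfrak{a}$ is monotone for the Bruhat order. I expect the main obstacle to lie precisely in this Coxeter-theoretic compatibility of Bruhat order with double-coset representatives, together with---in the mixed-characteristic setting---the perfect-scheme foundations of Bhatt-Scholze and Zhu required to make all the above dimension and closure arguments rigorous at the scheme level rather than on $k$-points.
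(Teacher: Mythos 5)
The paper does not contain a proof of this statement: it is quoted verbatim from Richarz, \emph{Affine Grassmannians and geometric Satake equivalences}, Proposition 2.2, and the authors simply cite it. There is therefore no in-paper argument to compare against; what follows is an assessment of your blind attempt on its own terms.

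Your strategy of reducing to the Iwahori case through the proper projection $\pi:\Gr_{\mathcal{I}}\to\Gr_{\mathcal{G}}$ is sound and is one of the standard ways to obtain this result, though Richarz's own proof in the cited proposition leans more directly on Demazure (Bott--Samelson) resolutions in the twisted parahoric setting. The fibration description of $\pi$ with fiber $\mathcal{G}_0^{\mathrm{rdt}}/\overline{B}$, the Bruhat--Tits refinement $\pi^{-1}(\Gr_w)=\bigsqcup_{x\in W_\mathfrak{a}wW_\mathfrak{a}}\Gr_{\mathcal{I},x}$, and the Kilmoyer-type identity $\ell(w_{\max})=\ell({}_\mathfrak{a}w^\mathfrak{a})+\ell(w_0^\mathfrak{a})$ are all correct; the latter is implicitly present in the paper's own Lemma~1.2.5 (which shows $\ell({}_Kw^K)=\ell({}^Kw_K)$ by analyzing the additive factorization $W_K\times{}^{J}W_K\xrightarrow{\sim}W_KwW_K$). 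Your closure argument is correct modulo the combinatorics you flag as the crux: one needs that $w_{\max}$ is the unique Bruhat-maximal element of $W_\mathfrak{a}wW_\mathfrak{a}$ and that the map $y\mapsto{}_\mathfrak{a}y^\mathfrak{a}$ is Bruhat-monotone, both of which are standard Deodhar-type lifting-property facts. Two small points worth noting: (i) this theorem in the paper is stated for $\mathcal{G}$ over $k[[t]]$ in equal characteristic, so the Bhatt--Scholze and Zhu perfect-scheme machinery you invoke at the end is not actually required at this stage (the Witt-vector affine flag varieties only enter in Section~4); (ii) your assertion that $\pi(\Gr_{\mathcal{I},w_{\max}})$ is dense in $\Gr_w$ deserves a brief justification beyond the dimension count, e.g. that $\Gr_{\mathcal{I},w_{\max}}$ is dense in $\pi^{-1}(\Gr_w)$ because $w_{\max}$ dominates every element of its double coset in Bruhat order, which again reduces to the Coxeter lemma you already cite. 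Overall the proposal is correct in structure and in the essential Coxeter inputs; it is not comparable to the paper's proof because the paper gives none.
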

\begin{remark}\label{length KR-equichar}
	As in \ref{Iwahori Weyl}, we also have ${}^\mathfrak{a}\widetilde{W}_{\mathfrak{a}}\subset \widetilde{W}$, and by Lemma \ref{a lemma about length}, $\mathrm{dim}\  \Gr_w=\ell({}^\mathfrak{a}w_{\mathfrak{a}})$ for all $w\in {}_\mathfrak{a}\widetilde{W}^\mathfrak{a}$.
\end{remark}

We consider the $\mathcal{G}_0$-action on $\Gr_w$.

\begin{lemma}\label{smooth stab}
	For $x\in \Gr_w(k)$, its stabilizer $\mathcal{G}_{0,x}$ is smooth.
\end{lemma}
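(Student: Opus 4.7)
The plan is to prove the Lemma by showing that the orbit morphism
$$\Phi \colon \mathcal{G}_0 \longrightarrow \Gr_w, \qquad g \longmapsto g\cdot x,$$
is smooth as a morphism of $k$-schemes, after which the smoothness of its fiber $\mathcal{G}_{0,x} = \Phi^{-1}(x)$ is automatic. Since $\mathcal{G}_0$ acts transitively on the $k$-points of $\Gr_w$ and $\Phi$ is equivariant with respect to left translation on the source, by homogeneity it suffices to check smoothness of $\Phi$ at the identity element $e\in\mathcal{G}_0$.

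First I would record that both $\mathcal{G}_0$ and $\Gr_w$ are smooth ($\mathcal{G}_0$ because $\mathcal{G}$ is smooth over $k[[t]]$, and $\Gr_w$ by construction as recalled just before Theorem \ref{thm of richarz}), and that $\dim\Gr_w = \ell(w)$ by the same theorem. Since source and target are both smooth, the smoothness of $\Phi$ at $e$ reduces to surjectivity of the tangent map
$$d\Phi_e \colon \mathrm{Lie}(\mathcal{G}_0) \longrightarrow T_x \Gr_w.$$
To obtain this surjectivity I would construct a scheme-theoretic local section of $\Phi$ near $x$: using the affine Bruhat decomposition and the Iwahori decomposition of $L^+\mathcal{G}$, one produces a closed subvariety $U_w \subset \mathcal{G}_0$, isomorphic to an affine space of dimension $\ell(w)$, built as the ordered product of the affine root subgroups corresponding to positive affine roots $\alpha$ with $w^{-1}\alpha < 0$ (cut down to the image of $L^+\mathcal{G}$ in $\mathcal{G}_0$); the composition $U_w \hookrightarrow \mathcal{G}_0 \xrightarrow{\Phi} \Gr_w$ is then an open immersion onto an open neighborhood of $x$. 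This realizes $T_e U_w$ as a tangent-space section of $d\Phi_e$, which in particular forces $d\Phi_e$ to be surjective.

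Combining the two steps gives smoothness of $\Phi$ at $e$, hence, by $\mathcal{G}_0$-equivariance, smoothness of $\Phi$ everywhere, and therefore smoothness of every fiber of $\Phi$, including $\mathcal{G}_{0,x}$. The essential technical input, and the main obstacle, is the explicit parametrization of an open neighborhood of $x$ in $\Gr_w$ by the affine space $U_w$; in the general parahoric (possibly tamely ramified) setting this is a nontrivial consequence of Bruhat-Tits theory and precisely the same machinery that underlies Theorem \ref{thm of richarz}, so once that structure result is granted the smoothness conclusion follows uniformly.
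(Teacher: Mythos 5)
Your proposal takes a genuinely different route from the paper, and it is worth comparing the two. The paper's proof does not touch the orbit map at all. After the same reduction to $x = \dot{w}e_0$, it observes that the $L^+\mathcal{G}$-stabilizer of $x$ is $L^+\mathcal{G} \cap \dot{w}\,L^+\mathcal{G}\,\dot{w}^{-1}$, which it then identifies as $L^+\mathcal{G}_{\mathfrak{b}}$ for the bounded subset $\mathfrak{b} = \mathfrak{a} \cup w\mathfrak{a}$ of the building; the map $\mathcal{G}_{\mathfrak{b}} \to \mathcal{G}$ then exhibits $\mathcal{G}_{0,x}$ as a homomorphic image of the smooth special fiber $\mathcal{G}_{\mathfrak{b},0}$, and a quotient of a smooth algebraic group over a perfect field is smooth. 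This is a one-line structural argument that needs no dimension count, no cell decomposition, and no tangent-space computation; it also produces a useful \emph{description} of the stabilizer (as the image of $\mathcal{G}_{\mathfrak{b},0}$ in $\mathcal{G}_0$) that the paper exploits again later (in the construction of the torsors $\widetilde{\mathcal{I}}^w_{\pm}$ in section 3). Your route, by contrast, proves smoothness of the orbit map via surjectivity of $d\Phi_e$, which you obtain from an explicit big-cell parametrization of a neighborhood of $x$ by an affine space of the right dimension.

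The outline you give is sound: transitivity plus equivariance reduces to the identity, and once a scheme-theoretic local section $U_w \hookrightarrow \mathcal{G}_0$ with $\Phi|_{U_w}$ an open immersion is in hand, surjectivity of $d\Phi_e$ and hence smoothness of $\Phi$ and of all its fibers follows. The weak point is exactly the one you flag: the existence of $U_w$. That fact, in the generality needed here (tamely ramified parahoric $\mathcal{G}$, $w$ the longest representative in $W_{\mathfrak{a}}\widetilde{W}W_{\mathfrak{a}}$, $U_w$ living in $\mathcal{G}_0$ rather than in $L^+\mathcal{G}$, and mapping onto an open subscheme rather than just a locally closed one on points), is of comparable depth to what is being proved, and you do not identify a reference or give an argument for it — you only assert it is ``the same machinery that underlies Theorem \parref{thm of richarz}''. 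The cited Richarz result gives $\dim\Gr_w = \ell(w)$ and the closure relation, not a cell isomorphism, so this needs to be nailed down: one should make precise which affine root subgroups survive the projection $L^+\mathcal{G} \twoheadrightarrow \mathcal{G}_0$, check that their images multiply to a closed affine subspace of the correct dimension, and verify the map to $\Gr_w$ is an open immersion scheme-theoretically (a pointwise bijection plus equal dimension is not automatically an open immersion in characteristic $p$). So your proposal is a plausible alternative proof strategy but is, as written, conditional on an unproved input that is arguably stronger than the lemma itself, whereas the paper's argument is self-contained given only the existence and smoothness of Bruhat-Tits group schemes.
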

\begin{proof}
	The $\mathcal{G}_0$-action on $\Gr_w$ is transitive, so we can take $x=we_0$. The $\mathcal{G}_0$-action on $\Gr_w$ is induced by the $L^+\mathcal{G}$-action, and the stabilizer of $x$ with respect to this $L^+\mathcal{G}$-action is $$L^+\mathcal{G}_w:=L^+\mathcal{G}\cap(\dot{w}\cdot L^+\mathcal{G}\cdot\dot{w}^{-1}).$$

	Let $\mathfrak{b}=\mathfrak{a}\cup w\mathfrak{a}$, where $w\mathfrak{a}$ is the $w$-translation of $\mathfrak{a}$, and $\mathcal{G}_\mathfrak{b}$ be the Bruhat-Tits group scheme attached to $\mathfrak{b}$. Then $L^+\mathcal{G}_w=L^+(\mathcal{G}_\mathfrak{b})$, and the homomorphism $\mathcal{G}_\mathfrak{b}\rightarrow \mathcal{G}$ induces a commutative diagram
	\[\xymatrix{
		L^+(\mathcal{G}_\mathfrak{b})\ar@{->>}[d]\ar[r]&L^+\mathcal{G}\ar@{->>}[d]\\
		\mathcal{G}_{\mathfrak{b},0}\ar[r]&\mathcal{G}_0.}\]
	In particular, $\mathcal{G}_{0,x}$ is a quotient of $\mathcal{G}_{\mathfrak{b},0}$, and hence smooth.
\end{proof}

\subsubsection[Constructions and properties of local models]{Constructions and properties of local models}\label{subsubsec local mod-cons and prop}
We refer the readers to \cite{PRS} for a general exposition on the theory of local models.
Here we will briefly recall the construction of the Pappas-Zhu local models in \cite{local model P-Z}. For a reductive group $G$ over $\mathbb{Q}_p$ and a prime number $p>2$, we denote by $\mathcal {B}(G,\mathbb{Q}_p)$ the (extended) Bruhat-Tits building of $G$. For $x\in \mathcal {B}(G,\mathbb{Q}_p)$, we write $\mathcal{G}$ for the parahoric group scheme attached to $x$, i.e. the connected stabilizer of $x$. It is then a linear algebraic groups over $\mathbb{Z}_p$ with generic fiber $G$. We will assume from now on that 
\begin{subeqn}\label{general hypo}
	G \text{ splits over a tamely ramified extension and } p\nmid |\pi_1(G^{\mathrm{der}})|.
\end{subeqn}

In \cite[\S 3]{local model P-Z}, there is a construction of a smooth affine group scheme $\underline{\mathcal{G}}$ over $\INT_p[u]$
which specializes to the parahoric group scheme $\mathcal{G}$ via the base
change $\INT_p[u]\rightarrow \INT_p$ given by $u\rightarrow p$ (see \cite[\S 4]{local model P-Z}), and such that $\underline{G}:=\underline{\mathcal{G}}|_{\INT_p[u,u^{-1}]}$ is
reductive. There is a corresponding ind-projective ind-scheme (the global affine
Grassmannian) \[\mathrm{Gr}_{\underline{\mathcal{G}},\mathbb{A}^1}\rightarrow \mathbb{A}^1=\Spec \INT_p[u]\] (see \cite[\S 6]{local model P-Z}). The base change $\mathrm{Gr}_{\underline{\mathcal{G}},\mathbb{A}^1}\times_{\mathbb{A}^1} \Spec \RAT_p$ given by $u\rightarrow p$ can be identified with
the affine Grassmannian $\mathrm{Gr}_{G}$ of $G$ over $\RAT_p$. (Recall that $\mathrm{Gr}_{G}$ represents the
fpqc sheaf associated to the quotient $R\mapsto G(R((t)))/G(R[[t]])$; the identification is
via $t = u-p$.)

Let $\mu:\mathbb{G}_{m,\overline{\RAT}_p}\longrightarrow G_{\overline{\RAT}_p}$ be a minuscule cocharacter, and $E/\mathbb{Q}_p$ be the local reflex field, i.e.
the field of definition of the conjugacy class $\{\mu\}$. The cocharacter $\mu$ defines a projective homogeneous space $G_{\overline{\RAT}_p}/P_{\mu^{-1}}$.
Here, $P_{\nu}$ denotes the parabolic subgroup that corresponds to the cocharacter $\nu$, i.e. the parabolic subgroup whose Lie algebra is the subset $\mathrm{Lie}(G_{\overline{\RAT}_p})$ consisting of elements of  non-negative weights with respect to $\nu$. Since the conjugacy class $\{\mu\}$ is defined over $E$, we can see that this homogeneous space has a canonical model $X_{\mu}$ defined over $E$. Since $\mu$ is minuscule, the corresponding Schubert cell $\Gr_{G_{\overline{\RAT}_p},\mu}$, whose set of $\overline{\RAT}_p$-points is by definition
\[G(\overline{\RAT}_p[[t]])\mu(t)G(\overline{\RAT}_p[[t]])/G(\overline{\RAT}_p[[t]]),\] is closed in the affine Grassmannian $\Gr_{G_{\overline{\RAT}_p}}$. Moreover, $\Gr_{G_{\overline{\RAT}_p},\mu}$ is defined over $E$ and can be $G_E$-equivariantly identified with $X_{\mu}$.

The \emph{Pappas-Zhu local model} $\mathrm{M}^\mathrm{loc}_{G,\mathcal{G},\{\mu\}}$ is the Zariski closure
of $X_\mu\subset \mathrm{Gr}_{G,E}$ in \[\mathrm{Gr}_{\underline{\mathcal{G}},\mathbb{A}^1}\times_{\mathbb{A}^1}\Spec O_E,\] where the base change $\INT_p[u]\rightarrow O_E$
is given by $u\mapsto p$. We will simply write $\Mloc$ for $\mathrm{M}^\mathrm{loc}_{G,\mathcal{G},\{\mu\}}$ when $\mathcal{G}$ and $\{\mu\}$ are fixed. By its construction, $\mathrm{M}^\mathrm{loc}_{G}$ is a projective flat scheme over $O_E$ which admits an action of the group scheme $\mathcal{G}_{O_E}$.

\begin{theorem}\label{the local model}{\rm (\cite[Theorem 9.1]{local model P-Z}, \cite[Corollary 2.1.3]{Paroh})} Under the assumptions in (\ref{general hypo}), the scheme $\Mloc$ is normal. The geometric special fiber of $\Mloc$ is
	reduced and admits a stratification with locally closed smooth strata; the closure of
	each stratum is normal and Cohen-Macaulay.
	Under the above assumptions, the base change $\Mloc\otimes_{O_E}O_L$
	is normal, for every finite extension $L/E$.
\end{theorem}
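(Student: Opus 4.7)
The plan is to exploit the construction of $\Mloc$ as the flat closure of the generic-fiber Schubert variety $X_\mu$ inside the global affine Grassmannian $\Gr_{\underline{\G},\mathbb{A}^1}$, so that all properties are controlled by the two fibers plus flatness. Since $\mu$ is minuscule, the generic fiber $X_\mu \cong G_E/P_{\mu^{-1}}$ is a smooth projective partial flag variety, hence normal. The main work is therefore to analyze the geometric special fiber $\Mloc_0$, which by construction sits inside the Witt-vector (or equi-characteristic) affine flag variety $\Gr_{\G_0}$ of the special fiber of $\underline{\G}$.

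First I would establish that, set-theoretically,
\[
|\Mloc_0| \;=\; \bigcup_{w\in \Admu} \Gr_{\G_0,w}^{-}.
\]
The inclusion $\subseteq$ is immediate by specializing the orbit $X_\mu$ across $\mathbb{A}^1$, noting that the affine Schubert variety corresponding to a dominant minuscule $\mu$ degenerates into Schubert varieties indexed by the $\mu$-admissible set. The reverse inclusion is the heart of the matter: following Pappas-Zhu, I would prove it via the Pappas-Rapoport coherence conjecture, i.e.\ by computing the dimensions of global sections of natural ample line bundles on both fibers of $\Mloc$ and checking they agree. The line bundles come from the determinant line bundle on $\Gr_{\underline{\G},\mathbb{A}^1}$; flatness of $\Mloc\to\Spec O_E$ forces the generic and special fibers to have sections of the same dimension, and combining with weight-multiplicity calculations (Littelmann-type combinatorics, using that the reduction is tame and $p\nmid|\pi_1(G^\der)|$) identifies $\Mloc_0$ with the admissible union.

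Granted that identification, the geometric statements follow from the classical theory of affine Schubert varieties. Each $\Gr_{\G_0,w}^{-}$ is normal, Cohen-Macaulay and Frobenius split by the work of Faltings and Mathieu on $p$-good groups, and the open Schubert cells $\Gr_{\G_0,w}$ provide the required smooth stratification with Bruhat-order closure relations (as in Theorem \ref{thm of richarz}). Reducedness of $\Mloc_0$ is inherited from Frobenius splittability of the components along their scheme-theoretic intersections (the union of Schubert varieties is reduced for minuscule $\mu$). For $\Mloc$ itself: it is flat over a DVR with reduced special fiber and smooth generic fiber, hence Cohen-Macaulay, and Serre's criterion $(R_1)+(S_2)$ then yields normality once one checks regularity in codimension one, which reduces to smoothness of the generic fiber plus normality of the irreducible components of $\Mloc_0$ in codimension zero of the closed fiber.

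For the base change $\Mloc \otimes_{O_E} O_L$, the point is that the construction of $\underline{\G}$ and $\Gr_{\underline{\G},\mathbb{A}^1}$ is insensitive to unramified base change, while ramified extensions are handled by noting that the special fiber is unchanged and the generic fiber becomes $X_\mu\otimes_E L$, still smooth; normality is preserved because the singularities are already tested over $O_E$. The hardest step by a wide margin is the coherence identification: it requires genuine representation-theoretic input and cannot be read off from the definition of $\Mloc$ as a Zariski closure. The tameness hypothesis and the condition $p\nmid|\pi_1(G^\der)|$ enter precisely at this step, allowing reduction to a simply connected cover where weight-multiplicity formulas for Demazure modules are well behaved, and ensuring that $\underline{\G}$ is built correctly to give the expected admissible locus as the special fiber.
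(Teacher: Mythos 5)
The paper does not prove Theorem~\ref{the local model}: it is quoted from Pappas--Zhu [Theorem 9.1] and Kisin--Pappas [Corollary 2.1.3], so there is no internal proof to compare against. What you have written is a reconstruction of the external Pappas--Zhu argument. As such, your outline is sound at the level of the major steps: (i) identify the reduced special fiber $\eM$ with $\bigcup_{w\in\Admu_K}\Gr_{w}^{-}$ by degenerating $X_\mu$ along $\mathbb{A}^1$ and invoking the coherence conjecture (Pappas--Rapoport, proved by X.~Zhu); (ii) use Frobenius splitting (Faltings; Pappas--Rapoport for twisted affine flag varieties) to get normality, Cohen--Macaulayness and compatible splitting of the closed strata, hence reducedness of their union; (iii) deduce normality of $\Mloc$ from Serre's criterion applied to a flat $O_E$-scheme with smooth generic fiber and reduced special fiber; (iv) run the same argument after base change to $O_L$, which works because the special fiber is geometrically reduced.

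Two small misstatements are worth flagging, neither fatal. First, ``flat over a DVR with reduced special fiber and smooth generic fiber, hence Cohen--Macaulay'' is not correct: those hypotheses give $S_2$ (by the depth formula $\operatorname{depth}_x X=\operatorname{depth}_{f(x)}S+\operatorname{depth}_x X_{f(x)}$ for a flat morphism, plus the fact that a reduced Noetherian scheme is $S_1$), but not $S_k$ for larger $k$. This is harmless, since Serre's criterion only needs $S_2$, and the theorem claims Cohen--Macaulayness only for the closed strata of the special fiber, not for $\Mloc$ itself. Second, your $R_1$ check is phrased as ``normality of the irreducible components of $\eM$ in codimension zero''; the actual argument uses reducedness plus flatness: at a generic point $\eta$ of a component of the special fiber, $\pi$ is a nonzerodivisor in $O_{\Mloc,\eta}$ and $O_{\Mloc,\eta}/\pi$ is a field (reducedness), so $O_{\Mloc,\eta}$ is a DVR, hence regular. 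Finally, minuscularity of $\mu$ plays no role in the reducedness of the admissible union --- any finite union of compatibly Frobenius-split Schubert varieties is reduced --- it is only used for smoothness of the generic fiber $X_\mu$.
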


\begin{remark}\label{remark--Goertz PEL}
	In the PEL type case (A) and (C), G\"ortz proved in \cite{Goertz1, Goertz2}  that if the group $G$ is unramified over $\Q_p$, then the naive local models defined by Rapoport-Zink in \cite{RZ} are flat over $O_E$, thus they coincide with the Pappas-Zhu local models above. In particular in these cases there exists a moduli interpretation for $\Mlo_G$.
\end{remark}

We will need the geometry of the geometric special fiber of $\Mloc$, denoted by $\eM$ for simplicity. Let $\underline{G}$ and $\underline{\mathcal{G}}$ be as at the beginning of  \ref{subsubsec local mod-cons and prop}. We set $G'=\underline{G}\times k((u))$, and $\mathcal{G}'=\underline{\mathcal{G}}\times k[[u]]$. Here both base-changes are induced by reduction mod $p$. Noting that $\mathcal{G}'_0\cong \mathcal{G}_{0,k}$, we view $W_K$ as the Weyl group of $\mathcal{G}'_0$. By \cite[\S 9.2.2]{Paroh}, the construction of $\underline{G}$ induces an isomorphism between the Iwahori Weyl groups $\widetilde{W}$ of $G_{\breve{\RAT}_p}$ and $\widetilde{W}'$ of $G'$, and hence we will view $\Admu\subset \widetilde{W}$ as an admissible subset of $\widetilde{W}'$.

We set \[\mathcal{S}_{\mathcal{G}',\mu}=\bigcup_{w\in \Admu_K}\Gr^{-}_{\mathcal{G}',w}\] with the reduced-induced scheme structure. It is a closed subvariety in $\Gr_{\mathcal{G}'}$. By \cite[Theorem 9.3]{local model P-Z}, \[\mathcal{S}_{\mathcal{G}',\mu}=\eM\] as closed subschemes of $\Gr_{\mathcal{G}'}$. In view of this identification, we write $\eM^w:=\Gr_{\mathcal{G}',w}$ and $\eM^{w,-}:=\Gr^{-}_{\mathcal{G}',w}$ for $w\in \Admu_K$. In particular, combined with Theorem \ref{thm of richarz}, Remark \ref{length KR-equichar} and Lemma \ref{smooth stab},  we have the following.
\begin{corollary}\label{local model-collect}
	There is a set-theoretically disjoint union of locally closed subsets
	$$\eM=\coprod_{w\in \Admu_K}\eM^w.$$ Here $\Admu_K$ is as in (\ref{admi mu}). Moreover,
	\begin{enumerate}
		\item $\eM^{w,-}=\coprod\limits_{v\in \Admu_K,v\leq w}\eM^v$;
		
		\item each $\eM^w$ consists of a single $\mathcal{G}_0$-orbit, and the stabilizer of each closed point is smooth;
		\item $\mathrm{dim}\  \eM^w=\ell({}^Kw_K)$;
	\end{enumerate}
\end{corollary}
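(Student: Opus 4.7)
The plan is to deduce Corollary \ref{local model-collect} directly from the results already established for affine Schubert cells in Theorem \ref{thm of richarz} and Lemma \ref{smooth stab}, combined with the equal-characteristic identification $\eM = \mathcal{S}_{\G',\mu} = \bigcup_{w\in\Admu_K}\Gr^{-}_{\G',w}$ recalled just before the statement. The only nontrivial bookkeeping is to translate between indexing conventions and to check that unions of closures remain a disjoint union of cells.

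First I would fix the identification of index sets: using the isomorphism $\wt{W}\cong\wt{W}'$ from \cite[\S 9.2.2]{Paroh}, view $\Admu_K\subset W_K\backslash\wt{W}/W_K$ as a subset of $W_\mathfrak{a}\backslash\wt{W}'/W_\mathfrak{a}$, each element being represented canonically by its unique minimal-length representative in ${}_\mathfrak{a}\wt{W}'^{\mathfrak{a}}$ (or equivalently, by Lemma \ref{a lemma about length}, we may parametrize orbits by $\Admu_K$ without ambiguity). Then from the Bruhat-Tits decomposition $\Gr_{\G'}=\coprod_{w\in{}_\mathfrak{a}\wt{W}'^{\mathfrak{a}}}\Gr_{\G',w}$ we extract the locally closed subvarieties $\eM^w:=\Gr_{\G',w}$ for $w\in\Admu_K$.

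For part (1), I would argue that $\Admu_K$ is downward-closed with respect to the Bruhat order on $W_\mathfrak{a}\backslash\wt{W}/W_\mathfrak{a}$ (immediate from the definition of the $\{\mu\}$-admissible set, since $W_\mathfrak{a}\backslash\wt{W}/W_\mathfrak{a}$ inherits the order and $\Admu$ itself is a union of Bruhat intervals). Combined with Theorem \ref{thm of richarz}(2), this yields
\[
\eM \;=\; \bigcup_{w\in\Admu_K}\eM^{w,-} \;=\; \bigcup_{w\in\Admu_K}\coprod_{v\leq w}\eM^v \;=\; \coprod_{w\in\Admu_K}\eM^w,
\]
where the last equality uses that each $v\leq w$ with $w\in\Admu_K$ itself lies in $\Admu_K$. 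The same computation applied to a single $w$ gives part (2).

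Part (3) is immediate: by construction of affine Schubert cells, $\eM^w=\Gr_{\G',w}$ is a single $\G_0$-orbit (the orbit map factors through $\G_0$ as recalled before Theorem \ref{thm of richarz}), and smoothness of stabilizers on every closed point is exactly Lemma \ref{smooth stab}. For part (4), Theorem \ref{thm of richarz}(1) gives $\dim\eM^w=\ell(w)$ with $w$ taken in ${}_\mathfrak{a}\wt{W}'^{\mathfrak{a}}={}_K\wt{W}^K$, and Remark \ref{length KR-equichar} (i.e.\ Lemma \ref{a lemma about length}) rewrites this as $\ell({}^K w_K)$, which is the convention used in the statement.

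The only real point of care is the order-theoretic compatibility: one must confirm that the Bruhat order used on $\Admu_K\subset W_K\backslash\wt{W}/W_K$ agrees under the bijection ${}_K\wt{W}^K\simeq W_K\backslash\wt{W}/W_K$ with the Bruhat order used in Theorem \ref{thm of richarz}(2), and that passage through the isomorphism $\wt{W}\cong\wt{W}'$ preserves $\Admu_K$. Both are standard and are already implicit in the statement $\mathcal{S}_{\G',\mu}=\eM$ cited from \cite[Theorem 9.3]{local model P-Z}; once they are invoked the corollary follows purely formally.
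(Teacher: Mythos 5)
Your proposal is correct and matches the paper's (entirely implicit) argument, which simply cites Theorem~\ref{thm of richarz}, Remark~\ref{length KR-equichar}, and Lemma~\ref{smooth stab} after identifying $\eM$ with $\mathcal{S}_{\mathcal{G}',\mu}$; your explicit check that $\Admu_K$ is downward-closed under $\leq$ is exactly the point the paper leaves to the reader. One terminological slip: ${}_\mathfrak{a}\widetilde{W}'^{\mathfrak{a}}={}_K\widetilde{W}^K$ is \emph{not} the set of minimal-length double-coset representatives (that set is ${}^K\widetilde{W}^K$); it is the Richarz representative set $\{{}_Kw^K\}$, whose elements are generally longer than the minimal ones, but since you only use the fact that the double cosets are parametrized bijectively by this set and that the orders are compatible, the argument is unaffected.
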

\begin{remark}
	In \cite{He-Pap-Rap} 2.6, there is a modified version of the Pappas-Zhu local models for which we can remove the condition $p\nmid |\pi_1(G^{\mathrm{der}})|$. Moreover, in the abelian type case,  by \cite[Theorem 2.15]{He-Pap-Rap}, the associated small $v$-sheaves (in the sense of Scholze) satisfy Scholze's conjecture in \cite{SW} (Conjecture 21.4.1). Therefore, up to a mild modification as in \cite{He-Pap-Rap} there exist embeddings of (the perfection of) the special fibers $\eM$ of the above local models into the Witt vector affine flag varieties $\Gr_\mathcal{G}^W$, cf. the above Remark \ref{loop in mixed char+}. We will come back to this point of view in section \ref{section global}.
\end{remark}

\subsection[Integral models of Hodge type]{Integral models for Shimura varieties of Hodge type}\label{subsection integral Hodge}

Let $(G,X)$ be a Shimura datum of Hodge type and $p>2$. For $x\in \mathcal {B}(G,\mathbb{Q}_p)$ (the extended Bruhat-Tits building of $G_{\Q_p}$ over $\Q_p$), we write $\G=\mathcal{G}_x$ for the stabilizer of $x$ and $\mathcal{G}^\circ$ for its connected component of the identity. Then $\mathcal{G}^\circ$ is the parahoric group scheme attached to $x$, which is an integral model of $G$ over $\Z_p$.  We will assume
\begin{subeqn}\label{general hypo--refined}
	G_{\mathbb{Q}_p} \text{ splits over a tamely ramified extension, } p\nmid |\pi_1(G_{\mathbb{Q}_p}^{\mathrm{der}})|\, \text{and } \mathcal{G}=\mathcal{G}^\circ.
\end{subeqn}

Let $\E=E(G,X)$ be the reflex field of $(G,X)$, and $v$ be a place of $\E$ over $p$ that will be fixed once and for all. Let $E=\E_v$ and $O_E$ be its ring of integers. The residue field of $O_E$ will be denoted by $\kappa$ as usual. Fixing $K_p:=\mathcal{G}(\mathbb{Z}_p)$, for $K^p\subset G(\mathbb{A}_f^p)$ small enough, we set $\K:=K_pK^p$, and we are interested in certain integral models of $\Sh_\K(G,X)_E$.


By \cite[4.1.5,  4.1.6]{Paroh}, there is a symplectic embedding \[i:(G,X)\hookrightarrow (\mathrm{GSp}(V,\psi),S^\pm)\] such that there is a $\mathbb{Z}$-lattice $V_{\INT}\subset V$ such that $V_{\INT}\subset V_{\INT}^\vee$, and that the base-change to $\mathbb{Z}_p$ of $\widetilde{\mathcal{G}}$, the Zariski closure of $G$ in $\mathrm{GL}(V_{\INT_{(p)}})$, is $\mathcal{G}$. For here and after, we simply write $\GSp=\GSp(V,\psi)$ and $V_R=V_{\INT}\otimes R$ for an algebra $R$. Let $g=\frac{1}{2}\mathrm{dim}V$, $H=H_pH^p$ where $H_p$ is the subgroup of $\mathrm{GSp}(\mathbb{Q}_p)$ leaving $V_{\INT_p}$ stable, and $H^p$ is a compact open subgroup of $\mathrm{GSp}(\mathbb{A}_f^p)$ containing $K^p$, leaving $V_{\widehat{\mathbb{Z}}^p}$ stable and small enough. Let $\ES_{H}(\GSp,S^\pm)$ be the moduli scheme over $\Z_{(p)}$ of isomorphism classes of certain triples $(\A,\lambda, \varepsilon^p)$ consisting of a $g$-dimensional abelian scheme $\A$ equipped with a polarization $\lambda: \A\ra \A^t$ of degree $|V_{\INT}^\vee/V_{\INT}|$ and a level structure $\varepsilon^p$; for more details see \cite{CIMK} 2.3.3.
The above symplectic embedding induces an embedding \[i:\Sh_\K(G,X)_E\hookrightarrow\ES_{H}(\GSp,S^\pm)_{O_E},\] and we write $\ES^-_K(G,X)$ for the closure of $\Sh_\K(G,X)_E$ and \[\ES_\K(G,X)\] for the normalization of $\ES^-_\K(G,X)$. In particular we have morphisms
\[\ES_\K(G,X)\ra \ES^-_\K(G,X)\subset \ES_{H}(\GSp,S^\pm)_{O_E}.\]
In particular we get a ``universal'' abelian scheme $\A\ra \ES_\K(G,X)$ by pulling back the universal abelian scheme over $\ES_{H}(\GSp,S^\pm)_{O_E}$ under the composition of the above morphisms.


\subsubsection[An alternative construction of local models]{An alternative construction of local models}\label{subsubsec-alter local model}We will follow \cite{Paroh} 4.1.5. Notations and assumptions as before, the Shimura datum $(G,X)$ gives a $G(\overline{\mathbb{Q}}_p)$-conjugacy class $\{\mu\}$ of cocharacters. Its induced $\mathrm{GL}(V_{\mathbb{Z}_p})(\overline{\mathbb{Q}}_p)$-conjugacy class contains a cocharacter $\mu'$ defined over $\mathbb{Z}_p$. Let $P$ (resp. $P'$) be the parabolic subgroup of $G_{\overline{\mathbb{Q}}_p}$ (resp. $\mathrm{GL}(V_{\mathbb{Z}_p})$) with non-negative weights with respect to $\mu'$ (resp. $\mu$). We denote by $\mathrm{M}_{G,X}^{\mathrm{loc}}$ the closure of the $G$-orbit of $y$ in $\mathrm{GL}(V_{\mathbb{Z}_p})/P'$, where $y$ is the point corresponding to $P$. Then $\mathrm{M}_{G,X}^{\mathrm{loc}}$ is defined over $O_E$ and equipped with an action of $\mathcal {G}$. By \cite[Corollary 2.3.16]{Paroh}, it is the Pappas-Zhu local model attached to $(G_{\mathbb{Q}_p},\{\mu\},\G)$.

To explain properties of $\ES_\K(G,X)$ and its relations with $\mathrm{M}_{G,X}^{\mathrm{loc}}$, we need the following data. By \cite[Lemma 1.3.2]{CIMK}, there is a tensor $s\in V_{\mathbb{Z}_{(p)}}^\otimes$ defining $\mathcal{G}$.  Consider \[\V=\Hdr(\mathcal {A}/\ES_\K(G,X)), \quad \V_E=\Hdr(\mathcal {A}/\Sh_\K(G,X)_E).\] Let  $\V^1\subset \V$ be the Hodge filtration, and $s_{\mathrm{dR},E}\in \V_E^\otimes$ be the section induced by $s$.
\begin{theorem}\label{result P-Z and K-P}
	Notations an assumptions as above, we have the following.
	
	\begin{enumerate}
		\item $\mathrm{M}_{G,X}^{\mathrm{loc}}$ is normal with reduced geometric special fiber. Moreover, $\mathrm{M}_{G,X}^{\mathrm{loc}}$ depends only on the pair $(G_{\mathbb{Q}_p}, \{\mu\})$.
		
		\item {\rm (\cite[Proposition 4.2.6]{Paroh})} The tensor $s_{\mathrm{dR},E}$ extends to a tensor $s_{\mathrm{dR}}\in \V^\otimes$. The $\ES_\K(G,X)$-scheme $\pi: \widetilde{\ES}_\K(G,X)\rightarrow \ES_\K(G,X)$ which classifies isomorphisms $f:V_{\mathbb{Z}_p}^\vee\rightarrow \V$ mapping $s$ to $s_{\mathrm{dR}}$ is a $\mathcal {G}$-torsor.
		
		\item {\rm (\cite[Theorem 4.2.7]{Paroh})} The morphism $q:\widetilde{\ES}_\K(G,X)\rightarrow \mathrm{GL}(V_{\mathbb{Z}_p})/P'$, $f\mapsto f^{-1}(\V^1)$ factors through $\mathrm{M}_{G,X}^{\mathrm{loc}}$. Moreover, it is smooth. In particular we get the following local model diagram
		\[\xymatrix{&\widetilde{\ES}_\K(G,X)\ar[ld]_\pi\ar[rd]^q&\\
			\ES_\K(G,X) & &	\mathrm{M}_{G,X}^{\mathrm{loc}}.
		}\]
	\end{enumerate}
\end{theorem}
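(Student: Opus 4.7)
The theorem bundles three assertions, of which parts (2) and (3) are cited directly from \cite{Paroh}, so my plan concentrates the real work on part (1) and treats the other two parts as quotations.

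For part (1), my strategy is to identify $\mathrm{M}^{\mathrm{loc}}_{G,X}$ as defined in \ref{subsubsec-alter local model} with the Pappas--Zhu local model $\mathrm{M}^{\mathrm{loc}}_{G_{\Qp},\G,\mmu}$ constructed in \ref{subsubsec local mod-cons and prop}; once this identification is in place, both the normality and the reducedness of the geometric special fiber transfer from Theorem \ref{the local model}. Both objects are, by construction, the Zariski closure of the same $G_E$-orbit $X_\mu \simeq G_E/P_{\mu^{-1}}$: the Pappas--Zhu closure sits inside the global affine Grassmannian $\mathrm{Gr}_{\underline{\G},\mathbb{A}^1} \times_{\mathbb{A}^1} \Spec O_E$, while the alternative closure sits inside the classical Grassmannian $\GL(V_{\Zp})/P'$. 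The bridge between them is that $\mu$ is minuscule: this forces the affine Schubert cell $\mathrm{Gr}_{G,\mu}$ to be closed inside $\mathrm{Gr}_G$, allows it to be $G_E$-equivariantly identified with $X_\mu$, and via the chosen symplectic embedding yields a $\G_{O_E}$-equivariant closed immersion of the corresponding integral closures. The precise comparison is \cite[Corollary 2.3.16]{Paroh}, which I would simply invoke. The second assertion of (1), namely that $\mathrm{M}^{\mathrm{loc}}_{G,X}$ depends only on $(G_{\Qp},\mmu)$ (equivalently, is independent of the symplectic embedding $i$), is then automatic, because the Pappas--Zhu side manifestly depends only on the group-theoretic triple $(G_{\Qp}, \G, \mmu)$, and both $\G$ (via the hypothesis $\G = \G^\circ$ and $K_p = \G(\Zp)$) and $\mmu$ (via the Shimura datum) are fixed from the outset.

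For part (2), I would quote \cite[Proposition 4.2.6]{Paroh}. The tensor $s \in V^{\otimes}_{\Z_{(p)}}$ cuts out $\G$ inside $\GL(V_{\Zp})$ by \cite[Lemma 1.3.2]{CIMK}, so once one knows that the generic-fiber section $s_{\dr,E}$ extends to a global section $s_{\dr}$ of $\V^\otimes$ on $\ES_\K(G,X)$, the torsor structure on $\wt{\ES}_\K(G,X)$ is formal. The nontrivial input is precisely this extension, which in loc.\ cit.\ uses the normalization step in the Kisin--Pappas construction of $\ES_\K(G,X)$, together with purity of the Hodge tensor across the special fiber.

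For part (3), the smoothness of $q: \wt{\ES}_\K(G,X) \to \mathrm{M}^{\mathrm{loc}}_{G,X}$ is \cite[Theorem 4.2.7]{Paroh}; its proof matches the formal deformation theory of the crystalline package $(\V, s_{\dr}, \V^1)$ at a closed geometric point with the formal deformation theory of the corresponding point of the local model, using the $\G$-torsor structure from (2). The main obstacle in my plan is really concentrated in the comparison step of part (1): although \cite[Corollary 2.3.16]{Paroh} packages the needed equivalence, the genuine content is that the $G_E$-equivariant identification of the minuscule affine Schubert cell with $X_\mu$ on the generic fiber extends to a $\G_{O_E}$-equivariant isomorphism of the integral Zariski closures in two a priori unrelated ambient spaces, and this extension is where the tameness hypothesis and the assumption $\G = \G^\circ$ are essential.
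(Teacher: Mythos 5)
Your proposal follows the same route as the paper: the theorem is stated in the paper as a collection of cited facts, with part (1) reduced to the identification of $\mathrm{M}^{\mathrm{loc}}_{G,X}$ with the Pappas--Zhu local model via \cite[Corollary 2.3.16]{Paroh} and then Theorem~\ref{the local model}, while parts (2) and (3) are quoted directly from \cite[Proposition 4.2.6, Theorem 4.2.7]{Paroh}. Your identification of the comparison step as the only genuine content is exactly right, and your reading of the independence assertion as independence from the Siegel embedding matches the paper's intent.
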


\begin{remark}\label{remark--good siegel ebd}
	For $(V_{\INT}, \psi)$ as above, we set $V'_{\INT}:=(V_{\INT}\oplus V_{\INT}^\vee)^4$ and $V':=V'_{\INT}\otimes \RAT$. By Zarhin's trick, there is a perfect alternating form $\psi'$ on $V'_{\INT}$, such that the (faithful) representation of $G$ on $V'$ factors through $\GSp(V',\psi')$, and hence induces a closed immersion $\mathcal{G}\rightarrow \GSp(V'_{\INT_p},\psi')$. One can then use this $(V'_{\INT}, \psi')$ as $(V_{\INT}, \psi)$, and the integral model thus obtained also satisfies all the properties in the above theorem.
\end{remark}

\subsubsection[]{}\label{general integral models Hodge type}
Now we consider the general case that $\G^\circ\subset \G$ not necessarily equal. Let $K_p^\circ=\G^\circ(\Z_p)\subset$ $ K_p=\G(\Z_p)$ be the associated open compact subgroups of $G(\Q_p)$. Let $K^p\subset G(\Ab_f^p)$ be a sufficiently small open compact subgroup. Set $\K=K_pK^p$, and $\K^\circ=K_p^\circ K^p$. We get the natural  projection \[\Sh_{\K^\circ}(G,X)\ra \Sh_\K(G,X).\] Denote by $\ES_{\K^\circ}(G,X)$ the normalization of $\ES_\K(G,X)$ in $\Sh_{\K^\circ}(G,X)$. Since $K^p$ is sufficiently small, by \cite[Proposition 4.3.7]{Paroh}, the covering \[\ES_{\K^\circ}(G,X)\ra \ES_\K(G,X)\] is \'etale, and it splits over an unramified extension of $O_E$. Let $\widetilde{\ES}_{\K^\circ}(G,X)\ra \ES_{\K^\circ}(G,X)$ be the pullback of the $\G$-torsor over $\ES_\K(G,X)$. By abuse of notation we still denote this morphism by $\pi$, and the composition $\widetilde{\ES}_{\K^\circ}(G,X)\ra\widetilde{\ES}_\K(G,X)\ra 	\mathrm{M}_{G,X}^{\mathrm{loc}}$ by $q$. Thus we get a 
diagram
\[\xymatrix{&\widetilde{\ES}_{\K^\circ}(G,X)\ar[ld]_\pi\ar[rd]^q&\\
	\ES_{\K^\circ}(G,X) & &	\mathrm{M}_{G,X}^{\mathrm{loc}},
}\]
where $\pi$ is a $\G$-torsor, and $q$ is $\G$-equivariant and smooth of relative dimension $\dim G$. As conjectured in \cite{Paroh} 4.3.10, this $\G$-torsor $\widetilde{\ES}_{\K^\circ}(G,X)\ra \ES_{\K^\circ}(G,X)$ should have a reduction to a $\G^\circ$-torsor.

\subsection[Deformations of $p$-divisible groups with crystalline tensors]{Deformations of $p$-divisible groups with crystalline tensors}\label{subsection deformation}
To understand the local geometric structures of $\ES_\K=\ES_\K(G,X)$ or $\mathrm{M}_{G,X}^{\mathrm{loc}}$, we need to study the deformation theory of $p$-divisible groups with crystalline tensors. In this subsection we will mainly follow \cite{Paroh} section 3. As previously we assume $p>2$. Let $k$ be either a finite extension of $\mathbb{F}_p$ or $\ov{\mathbb{F}}_p$.

Let $R$ be a complete local ring with residue field $k$ and maximal ideal $\mathfrak{m}$. We set \[\W(R):=W(k)\oplus \W(\mathfrak{m})\subset W(R),\] where $\W(\mathfrak{m})$ consists of Witt vectors $(w_i)_{i\geq 1}$ such that $w_i\in \mathfrak{m}$ and $(w_i)_{i\geq 1}$ goes to 0 $\mathfrak{m}$-adically. Let $I_R:=\mathrm{ker}(\W(R)\rightarrow R)$. We denote by $\sigma$ the Frobenius endomorphism on $\W(R)$, and $\sigma_1:I_R\rightarrow \W(R)$ the inverse of the Verschiebung $\upsilon$.

\begin{definition}{\rm (\cite[Definition 1]{disp pdiv})}\label{def--dieu disp}
	A Dieudonn\'{e} display over $R$ is a tuple $(M,M_1,\varphi,\varphi_1)$, where
	\begin{itemize}
		\item $M$ is a finite free $\W(R)$-module;
		\item $M_1\subset M$ is a $\W(R)$-submodule such that $I_RM\subset M_1\subset M$ and $M/M_1$ is a projective $R$-module;
		\item $\varphi:M\rightarrow M$ is a $\sigma$-linear map;
		\item $\varphi_1:M_1\rightarrow M$ is a $\sigma$-linear map whose image generates $M$ as a $\W(R)$-module, and which satisfies $\varphi_1(\upsilon(w)m)=w\varphi(m)$, for all $w\in \W(R)$ and $m\in M.$
	\end{itemize}
\end{definition}

It is constructed in \cite{disp pdiv} a functor from the category of $p$-divisible groups over $R$ to the category of Dieudonn\'{e} displays over $R$. Moreover, by the main results there, this functor induces an equivalence of categories.

We are particularly interested in cases when $W(R)$, and hence $\W(R)$, is $p$-torsion free. This holds when $R$ is $p$-torsion free, or $pR=0$ and $R$ is reduced. In this case, one can recover $(M,M_1,\varphi,\varphi_1)$ from $(M,M_1,\varphi_1)$ by taking $\varphi(m):=\varphi_1(\upsilon(1)m)$. We will also call the tuple $(M,M_1,\varphi_1)$ satisfying related conditions in Definition \ref{def--dieu disp} a Dieudonn\'{e} display.

Let $\widetilde{M}_1$ be the image of the homomorphism
$$\sigma^*(i):\sigma^*M_1:=\W(R)\otimes_{\sigma,\W(R)}M_1\rightarrow \sigma^*M=\W(R)\otimes_{\sigma,\W(R)}M$$
induced by the inclusion $i:M_1\rightarrow M$.
\begin{lemma}{\rm (\cite[Lemma 3.1.5]{Paroh})}\label{lemma--linear phi} Suppose that $W(R)$ is $p$-torsion free. Then
	\begin{enumerate}
		\item for a Dieudonn\'{e} display $(M,M_1,\varphi_1)$ over $R$, the linearization of $\varphi_1$ factors as a composition $\sigma^*M_1\rightarrow \widetilde{M}_1\stackrel{\Psi}{\rightarrow}M$ with $\Psi$ an isomorphism;
		\item given an isomorphism $\Psi:\widetilde{M}_1\rightarrow M$, there is a unique Dieudonn\'{e} display $(M,M_1,\varphi_1)$ over $R$ which produces $(M,M_1,\Psi)$ via the construction in {\rm (1)}.
	\end{enumerate}
\end{lemma}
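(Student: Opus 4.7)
My plan is to prove both parts via a \emph{normal decomposition} of $(M,M_1)$, i.e.\ a $\W(R)$-module direct sum $M = L \oplus T$ with $M_1 = L \oplus I_R T$. Such a decomposition is standard in Zink's treatment of (Dieudonn\'{e}) displays and exists because $M/M_1$ is projective over $R$: one lifts an $R$-basis of $M/M_1$ to elements of $M$ generating a $\W(R)$-submodule $T$ with $T \cap M_1 = I_R T$, and then chooses a complement $L \subset M_1$.

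For part (1), using $\sigma \circ V = p \cdot \mathrm{id}$ (so $\sigma(I_R)\W(R) = p\W(R)$), the image of $\sigma^*(I_R T)$ inside $\sigma^* T$ is $p\,\sigma^* T$, giving the explicit description
\[
\widetilde{M}_1 \;=\; \sigma^* L \,\oplus\, p\,\sigma^* T \;\subset\; \sigma^* L \oplus \sigma^* T \;=\; \sigma^* M.
\]
The $p$-torsion freeness of $\W(R)$ makes multiplication by $p$ injective on the free module $\sigma^* T$, so $p\,\sigma^* T$ is free of rank $\rank T$ and $\widetilde M_1$ is a free $\W(R)$-module of rank $\rank M$. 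To see that the linearization $\sigma^* M_1 \to M$ of $\varphi_1$ descends to $\widetilde M_1$, I check on the $\sigma^*(I_R T)$-summand: for a basis element $t_i$ of $T$, any relation $\sum_j p w_j a_j = 0$ in the free module $\W(R)$ forces $\sum_j w_j a_j = 0$ by $p$-torsion freeness, and hence $\sum_j w_j \varphi_1(V(a_j) t_i) = \sum_j w_j a_j \varphi(t_i) = 0$ by the display axiom $\varphi_1(V(a)t) = a\varphi(t)$. The resulting map $\Psi \colon \widetilde M_1 \to M$ is surjective because $\varphi_1(M_1)$ generates $M$ over $\W(R)$ by the Dieudonn\'{e} display axioms, and a $\W(R)$-linear surjection between free modules of the same finite rank is an isomorphism by Vasconcelos's theorem.

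For part (2), given an isomorphism $\Psi \colon \widetilde M_1 \xrightarrow{\sim} M$, I define
\[
\varphi_1(m) := \Psi\bigl(\sigma^*(i)(1 \otimes m)\bigr) \ \ (m \in M_1), \qquad \varphi(n) := \Psi(p \otimes n) \ \ (n \in M),
\]
where $p \otimes n$ lies in $\widetilde M_1$ as the image of $1 \otimes V(1) n \in \sigma^* M_1$ (noting $V(1) n \in I_R M \subset M_1$ and $\sigma(V(1)) = p$). Both maps are $\sigma$-linear. The defining relation $\varphi_1(V(w) n) = w\varphi(n)$ follows from the identity $\sigma^*(i)(1 \otimes V(w) n) = \sigma(V(w)) \otimes n = pw \otimes n$ in $\sigma^* M$, giving $\varphi_1(V(w)n) = \Psi(pw \otimes n) = w\Psi(p \otimes n) = w\varphi(n)$. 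That $\varphi_1(M_1)$ generates $M$ over $\W(R)$ is equivalent to surjectivity of $\Psi$, which holds by hypothesis. Uniqueness is immediate: the formula $\varphi_1(m) = \Psi(\sigma^*(i)(1 \otimes m))$ is forced by the requirement that the construction in (1) reproduce the given $\Psi$.

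The principal delicate point throughout is endowing $\widetilde M_1$ with the structure of a free $\W(R)$-module of rank $\rank M$; this is precisely where the $p$-torsion freeness of $W(R)$ (hence of $\W(R)$) is used, and it simultaneously guarantees the well-definedness of the factorization of $\varphi_1$'s linearization through $\widetilde M_1$ in part (1) and the applicability of Vasconcelos's theorem.
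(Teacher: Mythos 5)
Your proof is correct, and the normal-decomposition argument is exactly the standard approach used by Kisin--Pappas (following Zink's display theory); since the present paper only cites \cite[Lemma 3.1.5]{Paroh} without reproducing a proof, your write-up coincides in substance with the cited source's. The key computations — that $\widetilde{M}_1 = \sigma^*L \oplus p\,\sigma^*T$ is free of rank $\mathrm{rank}\,M$, that $\varphi_1^{\mathrm{lin}}$ kills $\ker(\sigma^*M_1 \to \sigma^*M)$ via $p$-torsion-freeness, and that $\Psi$ is then a surjection of free modules of equal rank, hence an isomorphism — are all correctly carried out, and the inverse construction in (2) with $\varphi(n) := \Psi(p\otimes n)$ agrees with the paper's convention $\varphi(m) = \varphi_1(\upsilon(1)m)$.
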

We will also call a triple $(M,M_1,\Psi)$ as above a Dieudonn\'{e} display.

\subsubsection[Versal deformations]{Versal deformations} Let $\mathscr{G}_0$ be a $p$-divisible group over $k$, and $(N,N_1,\phi,\phi_1)$ be the attached Dieudonn\'{e} display. By Lemma \ref{lemma--linear phi} (and using similar notations), this is given by an isomorphism $\Psi_0:\widetilde{N}_1\stackrel{\sim}{\longrightarrow} N$. We will describe a certain versal deformation of $\mathscr{G}_0$ constructed in \cite{Paroh} using Dieudonn\'{e} displays. The Hodge filtration on $N\otimes k$ gives a parabolic subgroup $P_0\subset \GL(N\otimes k)$. We will fix a parabolic subgroup $P\subset \GL(N)$ lifting $P_0$. Let $\Mlo=\GL(N)/P$, and \[\Mlohat=\Spf R\] be the completion of $\Mlo$ along the image of identity in $\GL(N\otimes k)$. In particular, $R$ is a power series ring over $W(k)$.

Let $M=N\otimes_{W(k)} \W(R)$ and $\overline{M}_1\subset M/I_RM$ be the filtration corresponding to the parabolic \[\mathfrak{g}P\mathfrak{g}^{-1}\subset \GL(N_R),\] where $\mathfrak{g}\in (\GL(N)/P)(R)$ is the universal point. Let $M_1$ be the preimage of $\overline{M}_1$ in $M$, $\widetilde{M}_1$ be as in Lemma \ref{lemma--linear phi}, and $\Psi: \widetilde{M}_1\stackrel{\sim}{\longrightarrow} M$ be an isomorphism which reduces to $\Psi_0$ modulo $\W(\mathfrak{m})$. The triple \[(M,M_1,\Psi)\] gives a Dieudonn\'{e} display, and hence a $p$-divisible group $\mathscr{G}$ over $R$ which deforms $\mathscr{G}_0$.

Let $\mathfrak{a}_R=\mathfrak{m}^2+pR\subset R$. By \cite[Lemma 3.1.9]{Paroh}, there is a natural identification
\begin{subeqn}\label{a cano iso}\xymatrix{\widetilde{M}_1\otimes_{\W(R)}\W(R/\mathfrak{a}_R)\ar[r]^{\cong}& \widetilde{N}_1\otimes_{W(k)}\W(R/\mathfrak{a}_R)}
\end{subeqn}
making the following diagram commutative.
$$\xymatrix{
	\widetilde{M}_1\otimes_{\W(R)}\W(R/\mathfrak{a}_R)\ar[d]^{\cong}\ar[r]&\sigma^*\big(M\otimes_{\W(R)}\W(R/\mathfrak{a}_R)\big)\ar@{=}[d]\\
	\widetilde{N}_1\otimes_{W(k)}\W(R/\mathfrak{a}_R)\ar[r]&\sigma^*(N)\otimes_{W(k)}\W(R/\mathfrak{a}_R).}$$
As in \cite[3.1.11]{Paroh}, $\Psi$ is said to be \emph{constant modulo} $\mathfrak{a}_R$ if the composition
$$\widetilde{N}_1\otimes_{W(k)}\W(R/\mathfrak{a}_R)\cong \widetilde{M}_1\otimes_{\W(R)}\W(R/\mathfrak{a}_R)\stackrel{\Psi}{\longrightarrow}M\otimes_{\W(R)}\W(R/\mathfrak{a}_R)\cong N\otimes_{W(k)}\W(R/\mathfrak{a}_R)$$is equal to $\Phi_0\otimes 1$. A useful class of versal deformations is as follows.
\begin{lemma}{\rm (\cite[Lemma 3.1.12]{Paroh})}
	If $\Psi$ is constant modulo $\mathfrak{a}_R$ then the deformation $\mathscr{G}$ is versal.
\end{lemma}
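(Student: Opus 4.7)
The plan is to verify versality at the level of tangent spaces at the closed point, exploiting that both the deformation space $\Spf R$ and the universal deformation space $\mathrm{Def}(\mathscr{G}_0)$ are formally smooth over $W(k)$ of the same dimension. Concretely, $R$ is a power series ring over $W(k)$ of relative dimension $\dim(\GL(N)/P)$, which equals $\dim_k \Hom_k(N/N_1, N_1 \otimes k / pN)$, i.e.\ the dimension of $\mathrm{Def}(\mathscr{G}_0)$ by the standard deformation theory of $p$-divisible groups (via Grothendieck--Messing or equivalently Zink's theory of Dieudonn\'e displays used in Definition~\ref{def--dieu disp}). Given this matching of dimensions and formal smoothness, it suffices to prove that the classifying map $\Spf R \to \mathrm{Def}(\mathscr{G}_0)$ induces a bijection on tangent spaces at the closed point.

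First I would identify the two tangent spaces. The tangent space to $\Spf R$ at the origin is $\Lie(\GL(N)/P) \otimes_{W(k)} k \cong \Hom_k(N_1/pN,\, N/N_1)$, with a class $X$ realized by the universal element $\mathfrak{g} = 1 + \epsilon X \in (\GL(N)/P)(k[\epsilon]/\epsilon^2)$. The tangent space to $\mathrm{Def}(\mathscr{G}_0)$ is, by Grothendieck--Messing / Zink, the set of lifts of the Hodge filtration $N_1/pN \subset N/pN$ over $k[\epsilon]/\epsilon^2$, which is canonically the same $\Hom_k(N_1/pN,\, N/N_1)$. The Kodaira--Spencer map is thus a $k$-linear endomorphism of $\Hom_k(N_1/pN,\, N/N_1)$, and I must show it is the identity.

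Next I would use the hypothesis. Pull back the Dieudonn\'e display $(M, M_1, \Psi)$ along $\W(R) \to \W(R/\mathfrak{a}_R)$. By construction $M_1$ modulo $\mathfrak{a}_R$ is exactly the lift of the Hodge filtration determined by $\mathfrak{g}$, and under the canonical identification~(\ref{a cano iso}) the map $\Psi$ becomes $\Psi_0 \otimes 1$ by the constancy hypothesis. Hence the first--order Dieudonn\'e display attached to $\mathscr{G}$ over $k[\epsilon]/\epsilon^2$ (any tangent vector factors through $R/\mathfrak{a}_R$) has $\Psi$-structure pulled back from the closed fiber and only the Hodge filtration deforms, namely by $\mathfrak{g}$ applied to the constant filtration. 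Translating back through Grothendieck--Messing, the Kodaira--Spencer map sends $X \in \Hom_k(N_1/pN, N/N_1)$ to the class of the filtration determined by $X$, i.e.\ $X$ itself. Therefore $\mathrm{KS}$ is the identity, hence an isomorphism, and versality follows.

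The main obstacle I expect is ensuring that the constancy of $\Psi$ modulo $\mathfrak{a}_R$ really does imply that no first--order deformation data is hidden in the Frobenius direction; this is exactly what the canonical identification~(\ref{a cano iso}) is designed to record, so once that isomorphism is invoked cleanly the argument reduces to the unobstructedness of Grothendieck--Messing deformations. The dimension count $\dim \GL(N)/P = \dim \mathrm{Def}(\mathscr{G}_0)$ and the formal smoothness of both sides then promote the tangent--space isomorphism to versality of $\mathscr{G}/R$.
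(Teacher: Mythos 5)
Your proposal is correct and takes essentially the same approach as the cited proof in Kisin--Pappas (\cite[Lemma 3.1.12]{Paroh}): one reduces to a tangent-space / Kodaira--Spencer computation, using that any first-order deformation factors through $R/\mathfrak{a}_R$, where by hypothesis $\Psi$ is constant so that only the Hodge filtration varies, and the identification of deformations of $p$-divisible groups with lifts of the Hodge filtration then gives that the classifying map to the universal deformation ring induces an isomorphism on mod-$p$ tangent spaces, hence is an isomorphism by formal smoothness and the dimension count.
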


\subsubsection[Deformations with crystalline tensors]{Deformations with crystalline tensors} 
Let $N\otimes k\supset \mathrm{Fil}^1(N\otimes k)$ be the Hodge filtration. By \cite[4C, 4G]{zipaddi}, one has a natural descending filtration on $N^\otimes\otimes k$, and in particular a subspace $\mathrm{Fil}^0\subset N^\otimes\otimes k$.
Now we assume that there is a tensor $s_{\mathrm{cris}}\in N^\otimes$ which is $\phi$-invariant and its image in $N^\otimes\otimes k$ lies in $\mathrm{Fil}^0$. A deformation theory is developed in \cite[3.2]{Paroh} in a more general setting, but to apply to current problems, we simply assume that there is an isomorphism \[f:V_{\INT_p}\otimes W(k)\rightarrow N\] mapping $s$ to $s_{\mathrm{cris}}$. Here $V_{\INT_p}$ and $s$ are as in Theorem \ref{result P-Z and K-P}. We will fix the isomorphism $f$ in discussions here. In particular, we have \[\Mloc\subset (\GL(N)/P)_{O_E}\] which is normal with reduced fibers.

Let $\Spf\,R_G$ be the completion of $\Mloc$ along the image of identity in $\GL(N\otimes k)$. Then $R_G$ is a quotient of $R_E:=R\otimes_{W(k)} O_E$. We set \[M_{R_E}:=M\otimes_{\W(R)}\W(R_E), \quad M_{R_G}:=M\otimes_{\W(R)}\W(R_G),\] and $\widetilde{M}_{R_G,1}\subset M_{R_G}$ be constructed the same way as $\widetilde{M}_{1}$. Note that $R_G$ is $p$-torsion free, since $\Mloc$ is normal with reduced fibers and $\Spf\,R_G$ is the completion of $\Mloc$ along a closed point. In particular,  $W(R_G)$ is $p$-torsion free. Then we have, by \cite[3.1.6]{Paroh}, that \[\widetilde{M}_{R_G,1}=\widetilde{M}_{1}\otimes_{\W(R)}\W(R_G).\]
\begin{lemma}{\rm \cite[Corollary 3.2.11]{Paroh}}\label{lemma--torsor M-1} The tensor $s$ also lies in $\widetilde{M}_{R_G,1}^\otimes$. Moreover, $$\mathcal{T}:=\Isom_{\W(R_G)}\big((\widetilde{M}_{R_G,1},s),(M_{R_G},s)\big)$$ is a trivial $\mathcal{G}$-torsor.
\end{lemma}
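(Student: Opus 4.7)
The strategy is to exploit the description of $\Mloc$ as the flat closure of the $\mathcal{G}_E$-orbit $X_\mu \cong \mathcal{G}_E/(P_{\mathcal{G}})_E$ inside $\GL(N)/P$, which allows us to reduce to an essentially ``standard'' situation. First I would observe that the orbit map $\mathcal{G}\to \Mloc$, $g\mapsto g\cdot e$, is smooth at the identity. Since $R_G$ is complete henselian with separably closed residue field $k$, the universal point of $\mathrm{Spf}\,R_G$ admits a lift $g\in \mathcal{G}(R_G)$; using smoothness of $\mathcal{G}$ together with the topological nilpotence of $I_{R_G}=\ker(\W(R_G)\twoheadrightarrow R_G)$, this further lifts to some $\widetilde g\in \mathcal{G}(\W(R_G))$.

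With $\widetilde g$ fixed, the filtration on $M_{R_G}$ admits the explicit description
\[
M_{R_G,1} \;=\; \widetilde g\cdot\bigl(F^1\otimes_{W(k)}\W(R_G) + I_{R_G}\cdot M_{R_G}\bigr),
\]
where $F^1\subset N$ is the $\mu$-filtration. Applying $\sigma^*$ gives the analogous description of $\widetilde{M}_{R_G,1}\subset \sigma^*M_{R_G}$ in terms of $\sigma(\widetilde g)$ and the filtration attached to $\sigma\mu$, which remains a cocharacter of $\mathcal{G}$. Since $s$ is $\mathcal{G}$-invariant it has weight zero under every $\mathcal{G}$-cocharacter, so it lies in the zeroth filtration piece attached to any $\mathcal{G}$-conjugate or $\sigma$-twist of $\mu$. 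This yields $s\in \widetilde{M}_{R_G,1}^\otimes$, establishing the first assertion.

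For the torsor claim, $\Aut_{\W(R_G)}(M_{R_G},s)$ is canonically $\mathcal{G}_{\W(R_G)}$ by the defining property of $\mathcal{G}\subset \GL(V_{\Z_p})$, so $\mathcal{T}$ is naturally a $\mathcal{G}$-torsor. To exhibit a global section I would use the composition
\[
\widetilde{M}_{R_G,1}\;\xrightarrow{\sigma(\widetilde g)^{-1}}\;\sigma^*M^{\mathrm{std}}_{R_G,1}\;\xrightarrow{\Psi^{\mathrm{std}}}\;M^{\mathrm{std}}_{R_G}\;\xrightarrow{\widetilde g}\;M_{R_G},
\]
where $\Psi^{\mathrm{std}}$ is the isomorphism provided by Lemma~\ref{lemma--linear phi} for the trivial Dieudonn\'e display on $N\otimes\W(R_G)$ coming from the Frobenius of the Dieudonn\'e module $N$. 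The outer two arrows preserve $s$ by $\mathcal{G}$-invariance, and $\Psi^{\mathrm{std}}$ preserves $s$ by the $\phi$-invariance of $s_{\mathrm{cris}}$. The residual ambiguity in $\widetilde g$ lies in $P_{\mathcal{G}}(\W(R_G))$, which fixes both the standard filtration and the tensor $s$, so the constructed section is independent of choices and provides the required trivialization.

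The main obstacle I expect is the careful bookkeeping combining the Frobenius pullback $\sigma^*$ with the Witt vector base change: one must verify that the standard Dieudonn\'e display on $N\otimes\W(R_G)$ is compatible with $\Psi^{\mathrm{std}}$ under the identification $\widetilde{M}_{R_G,1}=\widetilde{M}_1\otimes_{\W(R)}\W(R_G)$ noted in the text, and one must ensure $P_{\mathcal{G}}$-invariance of the section so that it descends unambiguously. These amount to a delicate but routine extension of the standard-deformation picture from Subsection~\ref{subsection deformation} across the entire formal neighborhood parameterized by $R_G$, where the crucial input is that $\Mloc$ is built from $\mathcal{G}$-data and not merely $\GL(N)$-data.
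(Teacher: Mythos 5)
The central step of your argument — lifting the universal point of $\Spf R_G$ to $g \in \mathcal{G}(R_G)$ and then to $\widetilde g \in \mathcal{G}(\W(R_G))$ — does not go through, and this is not a repairable gap in the bookkeeping but the crux of the lemma. The paper itself flags exactly this issue in Remark~\ref{key remark}~(1): ``One can not assume the lifting $\mathfrak{g}$ (and hence $\widetilde{\mathfrak{g}}$) to be an element in $\mathcal{G}$ in general, as we don't have smooth coverings $\mathcal{G}_{O_E}\rightarrow \Mloc$.'' The orbit map $\mathcal{G}_{O_E} \to \Mloc$, $g \mapsto g\cdot e$, is \emph{not} smooth at the identity: on the special fiber the $\mathcal{G}_0$-orbit of the base point is in general a proper locally closed stratum $\eM^w$ of strictly smaller dimension than $\eM$ (cf.\ Corollary~\ref{local model-collect}), so the stabilizer scheme jumps in dimension between the generic and special fibers; the orbit map therefore fails to be flat, and the infinitesimal lifting criterion via smoothness + henselianness that you invoke does not apply. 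A lift $\widetilde{\mathfrak{g}}$ into $\mathcal{G}(\W(A))$ is only available after restricting to a sub-formal-scheme $\Spf A \subset \Spf R_G$ that factors through a single KR stratum $\eM^w$, whereas the lemma must hold over all of $R_G$.

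Because this non-existent lift carries both halves of your proof, both halves have gaps. The first assertion, $s\in \widetilde{M}_{R_G,1}^\otimes$, is deduced in your write-up from the identity $M_{R_G,1} = \widetilde g\cdot(F^1\otimes\W(R_G) + I_{R_G} M_{R_G})$ with $\widetilde g \in \mathcal{G}(\W(R_G))$; and the section trivializing $\mathcal{T}$ is the composite $\widetilde g \circ \Psi^{\mathrm{std}} \circ \sigma(\widetilde g)^{-1}$, which requires the same $\widetilde g$. Absent the lift, the weight-zero argument for $s$ only tells you $s$ sits in the $0$-th piece of a filtration conjugated by a \emph{$\GL(N)$}-valued lift $\widetilde{\mathfrak{g}}$, which is indeed available over all of $R_G$ — but then $s$ is a priori only controlled modulo $\GL$, and the crucial content of the lemma is precisely that the twisted filtration $\widetilde M_{R_G,1}$ is \emph{$\mathcal{G}$}-compatible even at points where the universal filtration leaves the $\mathcal{G}$-orbit of the base point. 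The genuine argument has to proceed by a more indirect route: one works with the ambient $\GL$-lift over $R_E$, exploits the flatness and $\mathcal{G}$-stability of $\Mloc$ inside $\GL(V_{\Z_p})/P'$, and derives the tensor compatibility on $R_G$ from that — roughly the chain of results culminating in \cite[Cor.~3.2.11]{Paroh}. I will also note that once $\mathcal{T}$ is known to be a (possibly non-trivial) $\mathcal{G}$-torsor over $\W(R_G)$, triviality is comparatively soft, since $\W(R_G)$ is henselian local with residue field $\ol{\mathbb{F}}_p$ and $H^1(\ol{\mathbb{F}}_p, \mathcal{G}_0)=0$ by Lang's theorem for the smooth connected group $\mathcal{G}_0$; the real difficulty is establishing that $\mathcal{T}$ is a torsor at all, i.e.\ that $s$ lands in $\widetilde{M}_{R_G,1}^\otimes$ and that $\mathcal{T}$ is fppf-locally nonempty.
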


Let $\mathfrak{a}_E:=\mathfrak{m}_{R_E}^2+\pi R_E$, where $\pi\in O_E$ is a uniformizer, then $R_E/\mathfrak{a}_E\cong R/\mathfrak{a}_R$. By \cite[3.2.12]{Paroh}, there is an isomorphism $\Psi:\widetilde{M}_{R_E,1}\rightarrow M_{R_E}$ which is constant modulo $\mathfrak{a}_E$ and such that $$\Psi_{R_G}:=\Psi\otimes_{\W(R_E)} \W(R_G):\widetilde{M}_{R_G,1}\rightarrow M_{R_G}$$ respects the tensor $s$.

As in the previous case, $(M_{R_E}, M_{R_E,1}, \Psi)$ gives a Dieudonn\'{e} display, and hence a $p$-divisible group $\mathscr{G}$ over $R_E$ deforming $\mathscr{G}_0$. Moreover, by the previous lemma, any deformations thus obtained are isomorphic, and hence the deformation $\mathscr{G}/R_E$ is versal. For a finite extension $L/E$, an $O_L$-point of $\Spf R_E$ factors through the subspace $\Spf R_G$ if the deformation $\mathscr{G}_{O_L}$ is compatible with the tensor $s$. We refer to \cite[Proposition 3.2.17]{Paroh} for the precise statement. If $\mathscr{G}_0$ is the $p$-divisible group attached to a point $x\in \ES_\K(k)$, the $p$-divisible group $\A_{\widehat{O}_x}[p^\infty]$ is a deformation of $\mathscr{G}_0$, and hence, by versality of $\mathscr{G}$, is given by a morphism $\Spf \widehat{O}_x\rightarrow \Spf R_E$. By \cite[Proposition 4.2.2]{Paroh}, any such morphism induces an isomorphism \[\Spf \widehat{O}_x\stackrel{\sim}{\longrightarrow} \Spf R_G.\]

\subsubsection[Some variations]{Some variations}\label{subsub variations}
Recall that we defined $M$ to be $N\otimes\W(R)$, and we used this in a direct way almost everywhere in previous descriptions. In particular, the Dieudonn\'{e} display $(M_{R_E}, M_{R_E,1}, \Psi)$, and hence \[(M_{R_G}, M_{R_G,1},\Psi_{R_G})\] which determines the $p$-divisible group  $\A_{\widehat{O}_x}[p^\infty]$, is based on this identification. However, it is sometimes more convenient to use it in a less direct way.

We start with the universal element of $(\GL(V_{\INT_p})/P)(R)$ restricted to $\Spf R_G$. It lifts to an element in $\mathfrak{g}\in \GL(V_{\INT_p})(R_G)$ such that $\mathfrak{g}\equiv \mathrm{id}\ \mathrm{mod}\ \mathfrak{m}_{R_E}$, and then to an element $\widetilde{\mathfrak{g}}\in \GL(V_{\INT_p})(\W(R_G))$. Let $N^1\subset N$ be the filtration corresponding to $P$. Identifying $N\otimes \W(R_G)$ and $M_{R_G}$ as before, then \[\mathfrak{g}\cdot (N^1\otimes R_G)\subset N\otimes R_G\] is the Hodge filtration of $M_{R_G}\otimes R_G$. Let $(N\otimes\W(R_G))_1$ be the preimage of $N^1\otimes R_G$ in $N\otimes\W(R_G)$, then \[N_1\otimes\W(R_G)\subset (N\otimes\W(R_G))_1\] and \[\widetilde{\mathfrak{g}}\cdot (N\otimes\W(R_G))_1=M_{R_G,1}.\] We have a natural identification \[\widetilde{N}_1\otimes\W(R_G)\cong (\widetilde{N\otimes\W(R_G)})_1,\] and  $\sigma(\widetilde{\mathfrak{g}})$ induces an isomorphism from $\widetilde{N}_1\otimes\W(R_G)$ to $\widetilde{M}_{R_G,1}$, denoted by $\sigma(\widetilde{\mathfrak{g}})_{\flat}$.
Now the Dieudonn\'{e} display $(M_{R_G}, M_{R_G,1},\Psi_{R_G})$ becomes
\begin{subeqn}\label{new discp for versal display}\big(N\otimes \W(R_G),(N\otimes \W(R_G))_1, \Psi'_{R_G}\big), \text{\ \ \ where\ \ \ } \Psi'_{R_G}:=\widetilde{\mathfrak{g}}^{-1}\circ\Psi_{R_G}\circ \sigma(\widetilde{\mathfrak{g}})_{\flat}.
\end{subeqn}
\begin{remark}\label{key remark}
We would like to describe $\Psi'_{R_G}$ restricted to some interesting sub-formal-schemes of $\Spf R_G$.

\begin{enumerate}
	\item One can not assume the lifting $\mathfrak{g}$ (and hence $\widetilde{\mathfrak{g}}$) to be an element in $\mathcal{G}$ in general, as we don't have smooth coverings $\mathcal{G}_{O_E}\rightarrow \Mloc$. For a sub-formal-scheme $\Spf A\subset \Spf R_G$ such that $\W(A)$ is $p$-torsion free, we can describe the restriction of $(M_{R_G},M_{R_G,1},\Psi_{R_G})$ to $A$ in the same way as in \ref{subsub variations}, and get a triple $\big(N\otimes \W(A),(N\otimes \W(A))_1, \Psi'_{A}\big)$. Here $(N\otimes \W(A))_1$ is the preimage of $N^1\otimes A$ in $N\otimes \W(A)$, and \[\Psi'_{A}=\widetilde{\mathfrak{g}_A}^{-1}\circ\Psi_{R_G}\circ \sigma(\widetilde{\mathfrak{g}_A})_{\flat}\] with $\mathfrak{g}_A\in \GL(V_{\INT_p})(A)$ a lifting of the tautological morphism $\Spf A\rightarrow \Mloc$, and $\widetilde{\mathfrak{g}_A}\in \GL(V_{\INT_p})(\W(A))$ a lifting of $\mathfrak{g}_A$. If $\Spf A$ is contained in the KR stratum of $\Spf R_G$ containing the closed point, i.e. if the morphism $\Spf A\subset \Spf R_G\rightarrow \Mloc$ factors through $\eM^w\subset \Mloc$, then we can choose $\mathfrak{g}_A\in \mathcal{G}(A)$ and $\widetilde{\mathfrak{g}_A}\in \mathcal{G}(\W(A))$ for $\Psi_{A}$.
	
	\item Let $\Spf A\subset \Spf R_G$ be a sub-formal-scheme such that $\W(A)$ is $p$-torsion free. Noting that $$\sigma(\widetilde{\mathfrak{g}_A})_{\flat}=\mathrm{id} \in\GL(V_{\INT_p})(\W(A/\mathfrak{a}_EA))$$ by the canonical identification (\ref{a cano iso}), we have $$\Psi'_{A}=\widetilde{\mathfrak{g}_A}^{-1}\circ(\Psi_0\otimes 1)\in \GL(V_{\INT_p})(\W(A/\mathfrak{a}_EA))$$ as $\Psi$ is assumed to be constant mod $\mathfrak{a}_E$. If $(A,m_A)$ is the complete local ring at $x$ of the KR stratum containing it, then over $A$, we can choose $\widetilde{\mathfrak{g}_A}\in\mathcal{G}(\W(A))$ and find that  $$\Psi'_{A}=\widetilde{\mathfrak{g}_A}^{-1}\circ(\Psi_0\otimes 1)\in\mathcal{G}(\W(A/m^2_A)).$$
\end{enumerate}

\end{remark}


\subsubsection[The global crystalline tensor and related torsors]{The global crystalline tensor and related torsors}\label{subsubsection global cystalline tensor} 
We recall the global crystalline tensor constructed in \cite{Ham-Kim}. For a ring $R$, one can define displays over $R$ as in Definition \ref{def--dieu disp}, but working with $W(R)$ and setting $I_R=\mathrm{ker}(W(R)\rightarrow R)$. If $R$ is $p$-adically complete, Lau \cite[Proposition 2.1]{smooth truncated display} constructed a natural functor $\mathcal{M}$ from the category of $p$-divisible groups over $R$ to that of displays over $R$. If $R$ is, in addition, a complete local ring, then we have a canonical isomorphism \[\mathcal{M}\cong\mathbb{M}\otimes_{\W(R)}W(R).\] Here $\mathbb{M}$ is the Dieudonn\'{e} display attached to a $p$-divisible group $X$, and $\mathcal{M}=\mathcal{M}(X)$ is the attached display. By descent theory of displays in \cite[\S1.3]{disp formal pdiv}, one can define displays over $p$-adic formal schemes. In particular, we have a natural functor form the category of $p$-divisible groups over a $p$-adic formal scheme $\mathcal{S}$ to that of displays over $\mathcal{S}$.

Let $\ES_\K^\wedge$ be the $p$-adic completion of the integral model $\ES_\K/O_{\breve{E}}$, and $(\underline{M}, \underline{M}_1, \underline{\Psi})$ be the display attached to the $p$-divisible group $\A[p^\infty]\mid_{\ES_\K^\wedge}$. Here \[\underline{\Psi}:\widetilde{\underline{M}}_1\rightarrow \underline{M}\] is an isomorphism constructed in a similar way as in Lemma \ref{lemma--linear phi}.

\begin{proposition}{\rm (\cite[Proposition 3.3.1]{Ham-Kim})}\label{global cris} There is a tensor $\underline{s}_{\cris}\in \underline{M}^\otimes$ which is also in $\underline{M}_1^\otimes$, such that $\underline{s}_{\cris}$ is $\underline{\Psi}$-invariant, and that for each $z\in \ES_\K(k)$, its restriction to $\Spf R_G$ is $s_{\cris}$.
\end{proposition}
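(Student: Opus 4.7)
The plan is to globalize the pointwise crystalline tensors of subsection~\ref{subsection deformation} into a section of $\underline{M}^\otimes$ on $\ES_\K^\wedge$, using the globally defined de Rham tensor $s_{\dr} \in \V^\otimes$ from Theorem~\ref{result P-Z and K-P}(2) as the universal reduction mod $I$.

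First, I would record the comparison between the global display $\underline{M}$ and the de Rham cohomology sheaf $\V$. Via the Zink--Lau theory of displays over $p$-adic formal schemes recalled at the start of \ref{subsubsection global cystalline tensor}, there is a natural identification of $\underline{M}/I_{\ES_\K^\wedge}\underline{M}$ with (the appropriate twist of) $\V|_{\ES_\K^\wedge}$, carrying the Hodge filtration $\underline{M}_1/I\underline{M}$ to $\V^1$. Under this identification the global tensor $s_{\dr}$ gives a section $\bar{s}$ of $(\underline{M}/I\underline{M})^\otimes$, which is the required reduction mod $I$ of the sought-for $\underline{s}_{\cris}$.

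Second, I would produce the lift locally. For each $z \in \ES_\K(k)$, the isomorphism $\Spf \widehat{\O}_z \cong \Spf R_G$ (recorded at the end of subsection~\ref{subsection deformation}) transports the crystalline tensor $s_{\cris}$ constructed there to a tensor $s_{\cris,z}$ on the formal neighborhood of $z$; this $s_{\cris,z}$ is $\underline{\Psi}$-invariant, lies in $\underline{M}_1^\otimes$, and reduces modulo $I$ to $\bar{s}$. To patch these formal-local lifts into a global section, I would invoke rigidity of $\underline{\Psi}$-invariant lifts: two such lifts of $\bar{s}$ in $\underline{M}^\otimes$ differ by an element of $I \cdot \underline{M}^\otimes$, and the compatibility of $\underline{\Psi}$ with this filtration (through the identity $\varphi_1 \circ \upsilon = \mathrm{id}$ together with $p$-adic completeness of $\W(R)$) forces such an element to vanish. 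Consequently the $s_{\cris,z}$ glue to a well-defined global section $\underline{s}_{\cris}$ of $\underline{M}^\otimes$ on $\ES_\K^\wedge$.

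The main obstacle is this rigidity step. Although morally a standard Frobenius-rigidity argument, in the display setting one must carefully set up the action of $\underline{\Psi}$---formulated as an isomorphism $\widetilde{\underline{M}}_1 \cong \underline{M}$ rather than a $\sigma$-linear endomorphism of $\underline{M}$---on tensor powers and on the kernel of the reduction-mod-$I$ map, using the structure maps $\varphi$ and $\varphi_1$ of Definition~\ref{def--dieu disp}. Once this uniqueness is in hand, the three required properties of $\underline{s}_{\cris}$---lying in $\underline{M}_1^\otimes$, being $\underline{\Psi}$-invariant, and restricting to $s_{\cris}$ at every $z$---follow directly from the pointwise construction, so no further argument is needed.
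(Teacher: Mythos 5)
The paper does not prove Proposition \ref{global cris}: it is quoted from Hamacher--Kim (\cite[Proposition 3.3.1]{Ham-Kim}) and then used as a black box in Corollary \ref{coro--cris torsor}, so there is no in-text proof to compare against. Your outline---reduce mod $I$ to the de Rham tensor $s_{\dr}$, lift pointwise via the identification $\Spf\widehat{\O}_z\cong\Spf R_G$, and glue by uniqueness of $\underline{\Psi}$-invariant lifts---is a reasonable sketch of the kind of Frobenius-rigidity argument one expects, and it is consistent with the pointwise constructions in subsections \ref{subsection deformation} and \ref{subsection stratifications hodge}.

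Two points need repair, however. First, the identity you invoke, ``$\varphi_1\circ\upsilon=\mathrm{id}$'', is not the relation of Definition \ref{def--dieu disp}: what is stated there is $\varphi_1(\upsilon(w)m)=w\varphi(m)$, and it is $\sigma_1$ (defined as the inverse of $\upsilon$ on $I_R$), not $\varphi_1$, that inverts $\upsilon$. The contraction you are actually after comes from $\sigma(\upsilon(w))=pw$ on $W(R)$ together with $p$-adic completeness---and note the global display in \ref{subsubsection global cystalline tensor} is taken over $W(R)$, not $\W(R)$. Second, and more seriously: rigidity gives uniqueness of the $\underline{\Psi}$-invariant lift on each $\Spf\widehat{\O}_z$, but that does not by itself produce a global section. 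The $s_{\cris,z}$ live on disjoint formal completions at closed points, not on members of an open cover, so there are no overlaps on which to invoke uniqueness. You must independently establish existence of a $\underline{\Psi}$-invariant lift of $\bar s$ over a Zariski-open $\Spf R\subset\ES_\K^\wedge$, and only then does uniqueness at each closed point identify its restrictions with the $s_{\cris,z}$. That existence step, over a non-local $p$-adic base, is precisely where the Lau--Zink theory of displays over $p$-adic formal schemes quoted at the start of \ref{subsubsection global cystalline tensor} carries the load in \cite{Ham-Kim}; presenting it as a consequence of pointwise uniqueness reverses the order of the argument.
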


Let $R$ be $p$-adic ring, $I_R=\mathrm{Im}(V)$ as above, then by \cite[Proposition 3]{disp formal pdiv}, $W(R)$ is $p$-adic and $I_R$-adic. So an element $a\in W(R)$ is a unit if and only if its image in $R$ is a unit. Now for $\mathfrak{n}\in \Max W(R)$, its image in $R$ is a proper ideal, and hence a maximal ideal. In particular, the map $\Max R\rightarrow \Max W(R), \mathfrak{m}\mapsto \widetilde{\mathfrak{m}}$, where $\widetilde{\mathfrak{m}}$ is the pre-image in $W(R)$ of $\mathfrak{m}$, is a bijection.

\begin{lemma}\label{a lemma of flatness}
	Let $R$ be a $p$-adic Noetherian ring which is reduced and $X$ be a scheme over $W(R)$ of finite presentation. Then $X$ is flat over $W(R)$ if $X\otimes_{W(R)}W(R_{\mathfrak{m}}^{\wedge})$ is flat over $W(R_{\mathfrak{m}}^{\wedge})$ for all $\mathfrak{m}\in \Max R$, where $R_{\mathfrak{m}}^{\wedge}$ is the $\mathfrak{m}$-adic completion of $R_{\mathfrak{m}}$.
\end{lemma}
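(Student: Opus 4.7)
The plan is to reduce the global flatness assertion to a localization at each maximal ideal of $W(R)$ and then use the given hypothesis together with a faithfully flat descent. The bijection $\Max R \to \Max W(R)$, $\mathfrak{m} \mapsto \widetilde{\mathfrak{m}}$, established in the paragraph immediately preceding the lemma, will play the organizing role: it lets us index the local pieces of $W(R)$ by the maximal ideals of $R$ that appear in the hypothesis.

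First I would use the standard fact that a morphism of finite presentation is flat if and only if it is flat at every point, and that this can be verified at maximal ideals. So it suffices to prove, for each $\widetilde{\mathfrak{m}} \in \Max W(R)$ with corresponding $\mathfrak{m} \in \Max R$, that $X \times_{W(R)} \Spec W(R)_{\widetilde{\mathfrak{m}}}$ is flat over $W(R)_{\widetilde{\mathfrak{m}}}$. The functoriality of the Witt vector construction applied to the composition $R \to R_{\mathfrak{m}} \to R_{\mathfrak{m}}^{\wedge}$ provides a natural ring homomorphism $W(R)_{\widetilde{\mathfrak{m}}} \to W(R_{\mathfrak{m}}^{\wedge})$, through which the base change in the hypothesis factors.

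The key technical step will be to establish that this map $W(R)_{\widetilde{\mathfrak{m}}} \to W(R_{\mathfrak{m}}^{\wedge})$ is faithfully flat. Granting this, faithfully flat descent of flatness immediately yields the desired flatness of $X \times_{W(R)} \Spec W(R)_{\widetilde{\mathfrak{m}}}$ from the assumed flatness of $X \otimes_{W(R)} W(R_{\mathfrak{m}}^{\wedge})$. For the flatness itself, one works level by level with the truncated Witt vectors $W_n$, using that as sets $W_n(R) = R^n$ and that the transition maps $W_n \to W_{n-1}$ have kernel $V^{n-1} W_n(R) \cong R$ via the Verschiebung; this allows one to reduce flatness of $W_n(R)_{\widetilde{\mathfrak{m}}} \to W_n(R_{\mathfrak{m}}^{\wedge})$ inductively to the (classical) faithful flatness of $R_{\widetilde{\mathfrak{m}} \cap R} \to R_{\mathfrak{m}}^{\wedge}$, which holds because $R$ is Noetherian. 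The passage from $W_n$ to $W$ then uses that $W(R) = \varprojlim W_n(R)$ and the fact that, for a Noetherian base at each truncation, the inverse limit of a compatible system of faithfully flat maps preserves faithful flatness in our setting.

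The main obstacle I expect is precisely this faithful flatness claim: the Witt vector functor does not in general commute with localization, and $W(R)$ need not even be Noetherian, so one cannot cite standard commutative algebra directly. The reducedness of $R$ enters to ensure the requisite injectivity properties of the natural maps (in particular, that the localizations $W(R)_{\widetilde{\mathfrak{m}}}$ embed suitably into the completed Witt vector rings), while the Noetherian and $p$-adic hypotheses on $R$ are what make the inductive argument at each truncation level go through cleanly.
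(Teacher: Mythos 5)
The approach you describe diverges from the paper's and contains a serious gap. Your strategy is to prove that $W(R)_{\widetilde{\mathfrak{m}}} \to W(R_{\mathfrak{m}}^{\wedge})$ is \emph{faithfully flat} and then invoke faithfully flat descent. The paper deliberately avoids that claim: it only establishes \emph{injectivity} of (a refinement of) this map and then applies a flatness-detection criterion of Raynaud--Gruson (\cite{crit flat}, Theorem 4.2.8(a)), which allows one to conclude flatness of a finitely presented module from flatness after base change along an injective local homomorphism, with no flatness hypothesis on that homomorphism. This distinction is not cosmetic. Your proposed induction on truncated Witt vectors is not carried out, and the claim that flatness of the $W_n$-level maps reduces to faithful flatness of $R_{\mathfrak{m}} \to R_{\mathfrak{m}}^{\wedge}$ glosses over the fact that the successive quotients $V^{n-1}W_n(R) \cong R$ carry a Frobenius-twisted module structure, so the short exact sequences $0 \to R \to W_n \to W_{n-1} \to 0$ do not straightforwardly transport flatness between levels. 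Worse, your final step asserts that ``the inverse limit of a compatible system of faithfully flat maps preserves faithful flatness in our setting'' — but inverse limits do not in general preserve flatness, and you give no argument for why your setting would be an exception. Since $W(R)$ is not Noetherian and the Witt functor does not commute with localization (a point you yourself flag), the burden is on you to supply such an argument, and nothing in the proposal does so.

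You also omit the paper's reduction via minimal primes $\mathfrak{p}_1,\dots,\mathfrak{p}_n$ of $R$. This reduction is what lets the injectivity claim be split into the two tractable cases where $R_i := R/\mathfrak{p}_i$ is either an $\mathbb{F}_p$-algebra (so $W(R_i)$ is a domain) or $p$-torsion free (so the ghost map $W(R_i) \hookrightarrow \prod_{j\geq 0} R_i$ is injective). Your proposal notes only vaguely that reducedness ``ensures the requisite injectivity properties,'' but without the minimal-primes decomposition there is no clean route to those injectivity statements. In short: the organizing idea of replacing the paper's ``injectivity plus Raynaud--Gruson'' with ``faithful flatness plus descent'' is a genuinely different (and a priori more demanding) route, and the technical heart of that route — faithful flatness of $W(R)_{\widetilde{\mathfrak{m}}} \to W(R_{\mathfrak{m}}^{\wedge})$ — is left unproved, with the sketched argument containing a false step about inverse limits.
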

\begin{proof}
	Let us denote by $A^h$ the henselization of a ring $A$. It suffices to show that $X\otimes_{W(R)}W(R)^h_{\widetilde{\mathfrak{m}}}$ is flat over $W(R)^h_{\widetilde{\mathfrak{m}}}$ for all $\mathfrak{m}\in \Max R$. Let $\{\mathfrak{p}_1,\cdots,\mathfrak{p}_n\}$ be the minimal primes of $R$, then we have $R\hookrightarrow \prod_iR_i$ and hence $W(R)\hookrightarrow \prod_iW(R_i)$, where $R_i:=R/\mathfrak{p}_i$. Let $\mathfrak{m}_i\in \Max R_i$ be the image of $\mathfrak{m}$ in $R_i$, we have \[W(R)_{\widetilde{\mathfrak{m}}}\hookrightarrow \prod_iW(R_i)_{\widetilde{\mathfrak{m}}_i}.\] For each $i$, we have a natural homomorphism $f_i:W(R_i)_{\widetilde{\mathfrak{m}}_i}\rightarrow W(R^\wedge_{i,\mathfrak{m}_i})$ by the universality of localizations. We claim that $f_i$ is injective. The lemma follows from this injectivity: we will then have injections
	\[W(R)^h_{\widetilde{\mathfrak{m}}}\hookrightarrow \prod_iW(R_i)^h_{\widetilde{\mathfrak{m}}_i}\hookrightarrow \prod_iW(R^\wedge_{i,\mathfrak{m}_i}).\]
	By \cite[Theorem 4.2.8 (a)]{crit flat}, flatness over $W(R)^h_{\widetilde{\mathfrak{m}}}$ reduces to that over $\prod_iW(R^\wedge_{i,\mathfrak{m}_i})$ which is clear by our assumption.

	Now we prove the injectivity of $f_i$. To simplify notations, we remove the subscript $i$. Noting that $R$ is an integral domain, we see that $R$ is either an $\mathbb{F}_p$-algebra or $p$-torsion free.
	
	\begin{enumerate}
		\item 

	 If $R$ is an $\mathbb{F}_p$-algebra, then $W(R)$ and $W(R^\wedge_{\mathfrak{m}})$ are integral domains with $W(R)\subset$ $W(R^\wedge_{\mathfrak{m}})$. The induced homomorphism $W(R)_{\widetilde{\mathfrak{m}}}\subset W(R^\wedge_{\mathfrak{m}})$ is clearly injective.

	\item If $R$ is $p$-torsion free, then we have a commutative diagram of injective homomorphisms
	$$\xymatrix{
		W(R)\ar[r]\ar[d] &\prod\limits_{i=0}^\infty R\ar[d]\\
		W(R^\wedge_{\mathfrak{m}})\ar[r] &\prod\limits_{i=0}^\infty R^\wedge_{\mathfrak{m}}.}$$
	Here the horizontal maps are given by the Witt polynomials. Elements in $W(R)-\widetilde{\mathfrak{m}}$ are of the form $r=(r_0, r_1,\cdots)$ with $r_0\in R-\mathfrak{m}$. Its image in $\prod_{i=0}^\infty R$ is of the form \[(r_0, r_0^p+pr'_1, \cdots, r_0^{p^n}+pr'_n, \cdots).\] Noting that $p\in \mathfrak{m}$, $r$ is not a zero-divisor in $\prod_{i=0}^\infty R$, and hence $W(R)_{\widetilde{\mathfrak{m}}}\subset W(R^\wedge_{\mathfrak{m}})$.
		\end{enumerate}
\end{proof}

Let $\ES_\K/O_{\breve{E}}$, and $(\underline{M}, \underline{M}_1, \underline{\Psi})$ be as before. We will simply work with its base-change to $R$, where $\Spf R\subset \ES_\K^\wedge$ is open affine.
\begin{corollary}\label{coro--cris torsor}Notations as above, we have
	\begin{enumerate}
		\item the scheme $\widetilde{\ES}_{\K,\cris}^\wedge:=\IIsom_{W(R)}\big((V^\vee_{\INT_p},s)\otimes W(R), (\underline{M}, \underline{s}_\cris)\big)$ is a $\mathcal{G}$-torsor over $W(R)$;
		\item 
		the scheme $\widetilde{\ES}'^\wedge_{\K,\cris}:=\IIsom_{W(R)}\big((V^\vee_{\INT_p},s)\otimes W(R), (\widetilde{\underline{M}}_1, \underline{s}_\cris)\big)$ is a $\mathcal{G}$-torsor over $W(R)$.
	\end{enumerate}
\end{corollary}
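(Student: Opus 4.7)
The plan is to verify the $\mathcal{G}$-torsor property by faithfully flat descent from the completions of $R$ at its maximal ideals, where the local deformation theory applies directly. First I would observe that $\widetilde{\ES}_{\K,\cris}^\wedge$ is represented by an affine scheme of finite presentation over $\Spec W(R)$ (a closed subscheme of the Isom scheme of underlying $W(R)$-modules, cut out by the tensor condition), and that it carries a natural $\mathcal{G}_{W(R)}$-action by precomposition that is free, since the stabilizer of $s$ in $\GL(V^\vee_{\INT_p})$ equals $\mathcal{G}$ by the very definition of $\mathcal{G}$. It then suffices to exhibit an fpqc cover of $\Spec W(R)$ over which the scheme becomes a trivial $\mathcal{G}$-torsor.

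For each $\mathfrak{m} \in \Max R$, let $R^\wedge_\mathfrak{m}$ denote the completion of $R_\mathfrak{m}$. The injectivity analysis carried out in the proof of Lemma \ref{a lemma of flatness}, combined with $R$ being reduced (which follows from the normality of $\ES_\K$ together with Theorem \ref{the local model}), shows that the induced map $W(R) \to \prod_{\mathfrak{m} \in \Max R} W(R^\wedge_\mathfrak{m})$ is faithfully flat. It therefore remains to trivialize the Isom scheme after base change along each $W(R) \to W(R^\wedge_\mathfrak{m})$. For the closed point $x \in \ES_\K(k)$ corresponding to $\mathfrak{m}$, the canonical isomorphism $\widehat{O}_x \cong R_G$ recalled at the end of subsection \ref{subsection deformation} identifies the restriction of the display $\underline{M}$ to $W(R_G)$ with $M_{R_G} \otimes_{\W(R_G)} W(R_G)$, via the display/Dieudonn\'e display comparison from \ref{subsubsection global cystalline tensor}. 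Moreover, Proposition \ref{global cris} guarantees that under this identification, $\underline{s}_\cris$ corresponds to the local tensor $s_\cris$.

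Now for part (1), the fixed isomorphism $f: V_{\INT_p} \otimes W(k) \xrightarrow{\sim} N$ mapping $s$ to $s_\cris$ (chosen at the beginning of subsection \ref{subsection deformation}) trivializes $\IIsom_{\W(R_G)}\bigl((V^\vee_{\INT_p}, s) \otimes \W(R_G), (M_{R_G}, s_\cris)\bigr)$ as a $\mathcal{G}$-torsor, and base-changing from $\W(R_G)$ to $W(R_G)$ yields the required trivialization of $\widetilde{\ES}_{\K,\cris}^\wedge \otimes_{W(R)} W(R^\wedge_\mathfrak{m})$. For part (2), Lemma \ref{lemma--torsor M-1} provides both that $s \in \widetilde{M}_{R_G,1}^\otimes$ and that the Isom scheme with source $(\widetilde{M}_{R_G,1}, s)$ and target $(M_{R_G}, s)$ is a trivial $\mathcal{G}$-torsor; composing with the trivialization from part (1) and then base-changing to $W(R_G)$ gives the analogous trivialization of $\widetilde{\ES}'^\wedge_{\K,\cris} \otimes_{W(R)} W(R^\wedge_\mathfrak{m})$.

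The main obstacle I anticipate is precisely the local-global comparison for the tensor $\underline{s}_\cris$: one must ensure that its restriction to each completion $W(R^\wedge_\mathfrak{m})$ is genuinely the local crystalline tensor $s_\cris$ of subsection \ref{subsection deformation}, so that the pointwise trivializations supplied by Lemma \ref{lemma--torsor M-1} patch consistently along the faithfully flat cover. This compatibility is exactly the content of Proposition \ref{global cris}, which is the crucial input from \cite{Ham-Kim}; without it the descent argument would not glue, and hence this proposition (together with the display/Dieudonn\'e display comparison) carries the real weight of the proof, while the rest is essentially bookkeeping with Lemma \ref{a lemma of flatness}.
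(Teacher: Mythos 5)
Your proposal identifies the right ingredients — Lemma \ref{a lemma of flatness} for the local-to-global step, Proposition \ref{global cris} for the compatibility of $\underline{s}_\cris$ with the pointwise crystalline tensors, and Lemma \ref{lemma--torsor M-1} for the pointwise trivializations — but the descent mechanism you propose has a genuine gap. You assert that ``the injectivity analysis carried out in the proof of Lemma \ref{a lemma of flatness}, combined with $R$ being reduced, shows that the induced map $W(R) \to \prod_{\mathfrak{m}} W(R^\wedge_\mathfrak{m})$ is faithfully flat.'' Injectivity does not imply flatness, and the proof of Lemma \ref{a lemma of flatness} does not establish flatness of this map; it is in fact unclear (and quite plausibly false in general) that $W(R) \to W(R^\wedge_\mathfrak{m})$ is flat, since Witt vectors do not commute with localization or completion and $W(R)$ is typically non-Noetherian. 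Consequently, the fpqc descent you invoke to transport the triviality of the Isom scheme from the completions down to $W(R)$ is not available as stated.

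The paper avoids this difficulty precisely by \emph{not} running an fpqc descent. The Isom scheme $\widetilde{\ES}_{\K,\cris}^\wedge$ is, by construction, a pseudo-$\mathcal{G}$-torsor over $W(R)$ (the action map $\mathcal{G}\times \widetilde{\ES}_{\K,\cris}^\wedge \to \widetilde{\ES}_{\K,\cris}^\wedge \times_{W(R)} \widetilde{\ES}_{\K,\cris}^\wedge$ is an isomorphism because two trivializations differ uniquely by an element of the stabilizer $\mathcal{G}$). To upgrade a pseudo-torsor to a torsor one needs only faithful flatness of $\widetilde{\ES}_{\K,\cris}^\wedge \to \Spec W(R)$, and this is exactly what Lemma \ref{a lemma of flatness} delivers: flatness of the Isom scheme over $W(R)$ is checked after base change to each $W(R^\wedge_\mathfrak{m})$, where it holds because the base change is a trivial torsor by Proposition \ref{global cris} and Lemma \ref{lemma--torsor M-1}; surjectivity is clear on points. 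The underlying tool in Lemma \ref{a lemma of flatness} is the Raynaud–Gruson criterion (\cite[Theorem 4.2.8 (a)]{crit flat}), which requires only a pure injection into a product of local rings, not faithful flatness of the covering map. So the fix to your argument is to drop the descent framing altogether: observe the pseudo-torsor structure directly, reduce the torsor property to flatness plus surjectivity, and then cite Lemma \ref{a lemma of flatness} for flatness exactly as the paper does.
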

\begin{proof}
	Let $R^\wedge$ be the complete local ring at a closed point of $\Spf R$. Then $\widetilde{\ES}_{\K,\cris}^\wedge\otimes W(R^\wedge)$ is a $\mathcal{G}$-torsor by Proposition \ref{global cris}, and hence statement (1) follows from Lemma \ref{a lemma of flatness}. By Lemma \ref{lemma--torsor M-1}, $\widetilde{\ES}'^\wedge_{\K,\cris}\otimes W(R^\wedge)$ is a $\mathcal{G}$-torsor, and hence statement (2) follows the same way.
\end{proof}

\subsection[Integral models of abelian type]{Integral models for Shimura varieties of abelian type}\label{subsection integral abelian}

Recall that a Shimura datum $(G, X)$ is said to be of abelian type, if there is a Shimura datum of Hodge type $(G_1, X_1)$ and a central isogeny $G_1^{\mathrm{der}}\rightarrow G^{\mathrm{der}}$ which induces an isomorphism of adjoint Shimura data $(G_1^{\mathrm{ad}},X_1^{\mathrm{ad}})\stackrel{\sim}{\rightarrow} (G^{\mathrm{ad}},X^{\mathrm{ad}})$. 

Let $(G, X)$ be a Shimura datum of abelian type and fix a choice of a Hodge type datum $(G_1, X_1)$ as above. As before $p>2$.

\subsubsection[]{}
Let $x\in \B(G,\Q_p)$ and $x_1\in \B(G_1,\Q_p)$ such that $x^{\ad}=x_1^\ad\in \B(G^\ad,\Q_p)$. We denote the model of $G$ (resp. $G_1$) over $\Z_p$ defined as the stabilizer of $x$ (resp. $x_1$) by $\G$ (resp. $\G_1$), with connected model $\G^\circ$ (resp. $\G_1^\circ$). We have group schemes $G_{\Z_{(p)}}$ (resp. $G_{1,\Z_{(p)}}$) and $G_{\Z_{(p)}}^\circ$ (resp. $G_{1,\Z_{(p)}}^\circ$) over $\Z_{(p)}$ corresponding to $\G$ (resp. $\G_1$) and $\G^\circ$ (resp. $\G_1^\circ$).
Write $K_p=\G(\Z_p)$, $K_p^\circ=\G^\circ(\Z_p)$, $K_{1,p}=\G_1(\Z_p)$, and $K_{1,p}^\circ=\G_1^\circ(\Z_p)$.

Let $G^{\ad\circ}_{\Z_{(p)}}$ be the parahoric model of $G^\ad_{\Q_p}$ over $\Z_{(p)}$ defined by $x^\ad\in \B(G^\ad,\Q_p)$. Similarly we have $G^{\ad\circ}_{1,\Z_{(p)}}$.
Then \[G^{\ad\circ}_{\Z_{(p)}}=G^{\ad\circ}_{1,\Z_{(p)}}\] as $G^\ad_{\Q_p}=G^\ad_{1,\Q_p}$ and $x^{\ad}=x_1^\ad$ by assumption.  Set $G^\ad_{\Z_{(p)}}=G_{\Z_{(p)}}/Z_{\Z_{(p)}}$, where $Z_{\Z_{(p)}}$ is the closure of the center $Z_G$ of $G$ in $G_{\Z_{(p)}}$. Similarly, we have $G^\ad_{1,\Z_{(p)}}=G_{1,\Z_{(p)}}/Z_{1,\Z_{(p)}}$.
In general, $G^{\ad\circ}_{\Z_{(p)}}$ is not equal to the neutral component $(G^\ad_{\Z_{(p)}})^\circ$ of $G^\ad_{\Z_{(p)}}$. However, suppose that either the center $Z_G$ is connected or that $Z_{G^\mathrm{der}}$ has rank prime to $p$, then by \cite[Lemma 4.6.2 (2)]{Paroh}, $G^{\ad\circ}_{\Z_{(p)}}= (G^\ad_{\Z_{(p)}})^\circ$. In what follows, we will assume that either $Z_{G_1}$ is connected or that $Z_{G^\mathrm{der}_1}$ has rank prime to $p$. Then by the above discussion, we have \[G^{\ad\circ}_{\Z_{(p)}}=G^{\ad\circ}_{1,\Z_{(p)}}=(G^\ad_{1,\Z_{(p)}})^\circ,\] which is the neutral component of $G^\ad_{1,\Z_{(p)}}$.

\subsubsection[]{}\label{subsubsection connected adjoint}
Recall (\cite{varideshi}) that the group \[\mathscr{A}(G)=G(\Ab_f)/Z_G(\Q)^-\ast_{G(\Q)_+/Z_G(\Q)}G^\ad(\Q)^+\] acts on $\Sh(G,X)=\varprojlim_{K}\Sh_{K}(G,X)$. Set \[\mathscr{A}(G)^\circ=G(\Q)^-_+/Z_G(\Q)^-\ast_{G(\Q)_+/Z_G(\Q)}G^\ad(\Q)^+,\] which depends only on $G^{\mathrm{der}}$ and not on $G$. Similarly for $G_1$ we have $\mathscr{A}(G_{1})$ and $\mathscr{A}(G_{1})^\circ$. 

Using the integral models $G^\circ=G_{\Z_{(p)}}^\circ$ and $G^{\mathrm{ad}\circ}=G^{\mathrm{ad}\circ}_{\Z_{(p)}}$ we introduce $$\mathscr{A}(G_{\Z_{(p)}})=G(\mathbb{A}_f^p)/Z_G(\Z_{(p)})^-*_{G^\circ(\Z_{(p)})_+/Z_G(\Z_{(p)})}G^{\mathrm{ad}\circ}(\Z_{(p)})^+$$
and
$$\mathscr{A}(G_{\Z_{(p)}})^\circ=G(\Z_{(p)})_+^-/Z_G(\Z_{(p)})^-*_{G^\circ(\Z_{(p)})_+/Z_G(\Z_{(p)})}G^{\mathrm{ad}\circ}(\Z_{(p)})^+.$$
Similarly for $G_1$ we have $\mathscr{A}(G_{1,\Z_{(p)}})$ and $\mathscr{A}(G_{1,\Z_{(p)}})^\circ$.

\subsubsection[]{}\label{subsubsection Hodge to abelian}
Let $\Sh_{K_p^\circ}(G,X)=\varprojlim_{K^p}\Sh_{K_p^\circ K^p}(G,X)$ be the limit with the $G(\Ab_f^p)$-action. We want to construct an integral model \[\ES_{K_p^\circ}(G,X)\] of $\Sh_{K_p^\circ}(G,X)$ together with the $G(\Ab_f^p)$-action, such that for any $K^p\subset G(\Ab_f^p)$, \[\ES_{\K^\circ}(G,X):=\ES_{K_p^\circ}(G,X)/K^p\] defines an integral model of $\Sh_{\K^\circ}(G,X)$ with $\K^\circ=K_p^\circ K^p$. 

Consider the Hodge type datum $(G_1,X_1)$ and the associated integral models $\ES_{K^\circ_{1,p}K_1^p}(G_1,X_1)$. Set \[\ES_{K^\circ_{1,p}}(G_1,X_1)=\varprojlim_{K_1^p}\ES_{K^\circ_{1,p}K_1^p}(G_1,X_1).\] This is an integral model of $\Sh_{K_{1,p}^\circ}(G_1,X_1)=\varprojlim_{K^p_1}\Sh_{K_{1,p}^\circ K^p_1}(G_1,X_1)$ by subsection \ref{subsection integral Hodge}.

Let $X^+_1\subset X_1$ be a connected component. For $\K_1=K_{1,p}^\circ K_1^p$, let \[\Sh_{\K_1^\circ}(G_1,X_1)^+\subset \Sh_{\K_1^\circ}(G_1,X_1)\] be the geometrically connected component which is the image of $X^+_1\times 1$. Then the scheme $\Sh_{K_1^\circ}(G_1,X_1)^+=\varprojlim_{K^p_1}\Sh_{K_{1,p}^\circ K^p_1}(G_1,X_1)^+$ is defined over $\E_1^p$, where $\E_1$ is the reflex field of $(G_1,X_1)$, and $\E_1^p$ is the maximal extension of $\E_1$ which is unramified at $p$. Let $O_{(p)}$ be the localization at $(p)$ of the ring of integers of $\E_1^p$. We write \[\ES_{\K_1^\circ}(G_1,X_1)^+\] for the closure of $\Sh_{\K_1^\circ}(G_1,X_1)^+$ in $\ES_{\K_1^\circ}(G_1,X_1)\otimes O_{(p)}$, and set \[\ES_{K_{1,p}^\circ}(G_1,X_1)^+:=\varprojlim_{K_1^p}\ES_{\K_1^\circ}(G_1,X_1)^+,\] which is an integral model of $\Sh_{K_1^\circ}(G_1,X_1)^+$ over $O_{(p)}$.
The $G_1^{\mathrm{ad}}(\Z_{(p)})^+$-action on $\Sh_{K_{1,p}^\circ}(G_1,X_1)^+$ extends to $\ES_{K_{1,p}^\circ}(G_1,X_1)^+$, which (after converting to a right action) induces an action of $\mathscr{A}(G_{1,\Z_{(p)}})^\circ$ on $\ES_{K_{1,p}}(G_1,X_1)^+$. 

By \cite[Lemma 4.6.10]{Paroh}  we have an injection
\[\mathscr{A}(G_{1,\Z_{(p)}})^\circ\backslash \mathscr{A}(G_{\Z_{(p)}})\hookrightarrow \mathscr{A}(G_1)^\circ\backslash \mathscr{A}(G)/K^\circ_p. \]
Let $J\subset G(\Q_p)$ denote a set which maps bijectively to a set of coset representatives for the image of $\mathscr{A}(G_{\Z_{(p)}})$ in $\mathscr{A}(G)^\circ\backslash \mathscr{A}(G)/K^\circ_p$. Let $\E'=\E(G,X)\E(G_1,X_1)$.
By \cite[Corollary 4.6.15]{Paroh}, the $O_{\E^{'p},(v)}=O_{\E^{'p}}\otimes O_{(v)}$-scheme
\[\ES_{K^\circ_p}(G,X)=\Big[[\ES_{K_{1,p}^\circ}(G_1,X_1)^+\times\mathscr{A}(G_{\Z_{(p)}})]/\mathscr{A}(G_{1,\Z_{(p)}})^\circ \Big]^{|J|} \] has a natural structure of a $O_{(v)}'=O_{\E'}\otimes O_{(v)}$-scheme with $G(\Ab_f^p)$-action, and is a model of $\Sh_{K_p^\circ}(G,X)$. Moreover, if we denote $E'=\E'_{v'}$ for any $v'|v|p$ place of $\E'$, by \cite[Corollary 4.6.18]{Paroh}, there is a diagram of $O_{E'}$-schemes with $G(\Ab_f^p)$-action
\[\xymatrix{&\widetilde{\ES}^\ad_{K^\circ_p}(G,X)\ar[ld]_\pi\ar[rd]^q&\\
	\ES_{K^\circ_p}(G,X) & &	\mathrm{M}_{G_1,X_1}^{\mathrm{loc}},
}\]
where $G(\Ab_f^p)$ acts trivially on $\mathrm{M}_{G_1,X_1}^{\mathrm{loc}}$,  $\pi$ is a $G^\ad_{1,\Z_p}$-torsor, and $q$ is $G^\ad_{1,\Z_p}$-equivariant. Any sufficiently small open compact $K^p\subset G(\Ab_f^p)$ acts freely on $\widetilde{\ES}^\ad_{K^\circ_p}(G,X)$, and the morphism $\widetilde{\ES}^\ad_{K^\circ_p}(G,X)/K^p\ra \mathrm{M}_{G_1,X_1}^{\mathrm{loc}},$ is smooth of relative dimension $\dim G^\ad_1$.

\subsubsection[]{}\label{subsubsection choice of Hodge datum}
Suppose that $G$ splits over a tamely ramified extension of $\Q_p$. Then by \cite[Lemma 4.6.22]{Paroh}, we can choose the Hodge type datum $(G_1,X_1)$ as above such that it satisfies the following conditions:
\begin{enumerate}
	\item $\pi_1(G^{\mathrm{der}}_1)$ is a $2$-group, and is trivial if $(G^\ad, X^\ad)$ has no factors of type $D^\mathbb{H}$.
	\item $G_1$ splits over a tamely ramified extension of $\Q_p$.
	\item Let $\E=E(G,X), \E_1=E(G_1,X_1)$ and $\E'=\E\cdot \E_1$, then any primes $v|p$ of $\E$ splits completely in $\E'$.
	\item $Z_{G_1}$ is a torus.
	\item $X_\ast(G^{\mathrm{ab}}_1)_{\Gamma_0}$ is torsion free, where $\Gamma_0\subset \Gamma$ is the inertia subgroup.
\end{enumerate}	

\subsubsection[]{}

Let $(G,X)$ be as above such that $G$ splits over a tamely ramified extension of $\Q_p$. Then we can choose a Hodge type datum $(G_1,X_1)$ as in \ref{subsubsection choice of Hodge datum}, such that we get a $G(\Ab_f^p)$-equivariant $O_{E}$-scheme
$\ES_{K^\circ_p}(G,X)$ as in \ref{subsubsection Hodge to abelian}. Moreover, we have

\begin{theorem}{\rm (\cite[Theorem 4.6.23]{Paroh})}\label{T: KP abelian}
	\begin{enumerate}
		\item $\ES_{K^\circ_p}(G,X)$  is \'etale locally isomorphic to $\mathrm{M}_{G_1,X_1}^{\mathrm{loc}}$; if $p \nmid |\pi_1(G^{\mathrm{der}})|$, then $\ES_{K^\circ_p}(G,X)$  is \'etale locally isomorphic to $\mathrm{M}_{G,X}^{\mathrm{loc}}$.
		\item For any discrete valuation ring $R$ of mixed characteristic $(0,p)$, the map \[\ES_{K^\circ_p}(G,X)(R)\ra \ES_{K^\circ_p}(G,X)(R[1/p])\] is a bijection.
		
		\item If $(G^\ad, X^\ad)$ has no factors of type $D^\mathbb{H}$, then $(G_1,X_1)$  can be chosen so that there exists a diagram of $O_{E}$-schemes with $G(\Ab_f^p)$-action
		\[\xymatrix{&\widetilde{\ES}^\ad_{K^\circ_p}(G,X)\ar[ld]_\pi\ar[rd]^q&\\
			\ES_{K^\circ_p}(G,X) & &	\mathrm{M}_{G_1,X_1}^{\mathrm{loc}},
		}\]where $G(\Ab_f^p)$ acts trivially on $\mathrm{M}_{G_1,X_1}^{\mathrm{loc}}$, $\pi$ is a $G^{\ad\circ}_{\Z_p}$-torsor, $q$ is $G^{\ad\circ}_{\Z_p}$-equivariant, and for any sufficiently small open compact $K^p\subset G(\Ab_f^p)$ acts freely on $\widetilde{\ES}^\ad_{K^\circ_p}(G,X)$, and the morphism \[\widetilde{\ES}^\ad_{K^\circ_p}(G,X)/K^p\ra \mathrm{M}_{G_1,X_1}^{\mathrm{loc}}\] is smooth of relative dimension $\dim G^\ad$.
		\item If $G$ is unramified over $\Q_p$, and there exists $x'\in \B(G,\Q_p)$ with $\G_{x^{'\ad}}=\G^\circ_{x^{\ad}}$, then we can choose $(G_1,X_1)$ such that the above construction applies with $x'$ in place of $x$ and gives rise to an $O_{E}$-scheme $\ES_{K^\circ_p}(G,X)$ satisfying the conclusion of (3) above.
	\end{enumerate}
\end{theorem}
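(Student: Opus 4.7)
The plan is to derive each assertion by transferring the corresponding statement from the Hodge-type model $\ES_{K_{1,p}^\circ}(G_1,X_1)^+$ through the explicit quotient construction
\[\ES_{K^\circ_p}(G,X)=\Big[[\ES_{K_{1,p}^\circ}(G_1,X_1)^+\times\mathscr{A}(G_{\Z_{(p)}})]/\mathscr{A}(G_{1,\Z_{(p)}})^\circ \Big]^{|J|}\]
from \ref{subsubsection Hodge to abelian}. The point I would first establish is that $\mathscr{A}(G_{1,\Z_{(p)}})^\circ$ acts through the image of $G_1^{\mathrm{ad}\circ}(\Z_{(p)})^+$, which operates by automorphisms inducing isomorphisms on complete local rings; consequently, étale locally on source and target, $\ES_{K^\circ_p}(G,X)$ is a disjoint union of copies of $\ES_{K_{1,p}^\circ}(G_1,X_1)^+$. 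This will be the workhorse for propagating local properties.

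For part (1), I would combine the previous reduction with the Hodge-type local model diagram of Theorem \ref{result P-Z and K-P}, which gives an étale-local isomorphism between $\ES_{K_{1,p}^\circ}(G_1,X_1)^+$ and $\mathrm{M}_{G_1,X_1}^{\mathrm{loc}}$ (the latter since the morphism $q$ there is smooth and $\G_1$-equivariant for a torsor $\pi$). To refine this to $\mathrm{M}_{G,X}^{\mathrm{loc}}$ in the case $p\nmid |\pi_1(G^{\mathrm{der}})|$, I would use that the Pappas–Zhu local model depends only on the triple $(G_{\Q_p},\{\mu\},\G)$ through a construction (\cite{local model P-Z}, and its restatement in Theorem \ref{the local model}) that behaves well under central isogenies whose kernel has order prime to $p$; the tameness hypothesis together with $p\nmid |\pi_1(G^{\mathrm{der}})|$ then gives a canonical identification $\mathrm{M}_{G_1,X_1}^{\mathrm{loc}}\simeq \mathrm{M}_{G,X}^{\mathrm{loc}}$ compatible with the parahoric structures.

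For part (2), I would establish the extension property first for $\ES_{K_{1,p}^\circ}(G_1,X_1)^+$: an $R[1/p]$-point yields an abelian variety with polarization and level structure, which extends to $R$ by the Néron mapping property (using normality of $\ES_\K(G_1,X_1)$ and normality of $\mathrm{M}_{G_1,X_1}^{\mathrm{loc}}$ over $O_E$, together with the compatibility of the de Rham tensors via Proposition \ref{global cris}). Since taking geometrically connected components, forming the quotient by $\mathscr{A}(G_{1,\Z_{(p)}})^\circ$, and re-assembling the finitely many $J$-translates all preserve the valuative extension property, the statement passes to $\ES_{K^\circ_p}(G,X)$.

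The main obstacle is part (3): one must upgrade the natural $\G_{1,\Z_p}^\circ$-torsor $\widetilde{\ES}_{K_{1,p}^\circ}(G_1,X_1)^+\to \ES_{K_{1,p}^\circ}(G_1,X_1)^+$ to a $G^{\mathrm{ad}\circ}_{\Z_p}$-torsor on $\ES_{K^\circ_p}(G,X)$ that is compatible with the twisting. My plan is to take the pushout of the Hodge-type torsor along the quotient map $\G_{1,\Z_p}^\circ\twoheadrightarrow G^{\mathrm{ad}\circ}_{\Z_p}$, then descend the resulting object through the $\mathscr{A}(G_{1,\Z_{(p)}})^\circ$-quotient. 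Here the specific choices in \ref{subsubsection choice of Hodge datum} become essential: the hypothesis that $\pi_1(G_1^{\mathrm{der}})$ is trivial (valid when $(G^{\mathrm{ad}},X^{\mathrm{ad}})$ has no $D^{\mathbb{H}}$-factors) eliminates the obstruction to this descent; the splitting of $v$ in $\E'$ lets one descend from $O_{\E'}$ to $O_E$; and the torus condition on $Z_{G_1}$ together with torsion-freeness of $X_\ast(G_1^{\mathrm{ab}})_{\Gamma_0}$ ensures that the adjoint group scheme $G_1^{\mathrm{ad}\circ}$ coincides with $G^{\mathrm{ad}\circ}$ over $\Z_{(p)}$, identifying the two parahoric structures needed for the torsor. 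Finally, part (4) will follow by the same mechanism but applied at the auxiliary point $x'$: the condition $\G_{x'^{\mathrm{ad}}}=\G_{x'^{\mathrm{ad}}}^\circ$ makes the parahoric at $x'$ already connected, so the unramifiedness of $G$ suffices to carry out the analogue of \ref{subsubsection choice of Hodge datum} and run the argument of (3) verbatim.
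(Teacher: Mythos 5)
This statement is not proved in the paper: Theorem~\ref{T: KP abelian} is quoted verbatim from \cite{Paroh} Theorem~4.6.23, and the authors simply cite Kisin--Pappas for it. There is therefore no ``paper's own proof'' to compare against; what the paper does instead is recall in \ref{subsubsection connected adjoint}--\ref{subsubsection choice of Hodge datum} the construction of $\ES_{K^\circ_p}(G,X)$ as the twisted quotient of $\ES_{K_{1,p}^\circ}(G_1,X_1)^+\times\mathscr{A}(G_{\Z_{(p)}})$ by $\mathscr{A}(G_{1,\Z_{(p)}})^\circ$, and then appeal to Kisin--Pappas for the stated properties.

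Read as a reconstruction of the Kisin--Pappas argument, your sketch is broadly along the right lines. The workhorse step---that the $\mathscr{A}(G_{1,\Z_{(p)}})^\circ$-action is free and induces isomorphisms on complete local rings, so properties of $\ES_{K_{1,p}^\circ}(G_1,X_1)^+$ transfer \'etale-locally to $\ES_{K^\circ_p}(G,X)$---is indeed how they proceed. For (1), the justification of the identification $\mathrm{M}_{G_1,X_1}^{\mathrm{loc}}\simeq\mathrm{M}_{G,X}^{\mathrm{loc}}$ should be stated more carefully: the Pappas--Zhu local model depends only on the adjoint triple $(G^\ad_{\Q_p},\{\mu^\ad\},\G^{\ad\circ})$, and both $G$ and $G_1$ have the same adjoint datum by construction; the hypothesis $p\nmid|\pi_1(G^{\der})|$ is needed so that $\mathrm{M}_{G,X}^{\mathrm{loc}}$ is defined at all, not because of some argument about central isogenies of prime-to-$p$ kernel. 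For (3), your pushforward-and-descent description of the $G^{\ad\circ}_{\Z_p}$-torsor is essentially right, but the sentence about $\pi_1(G_1^{\der})$ trivial ``eliminating the obstruction to descent'' conflates two things: the actual role of the $D^{\mathbb{H}}$ hypothesis (via \ref{subsubsection choice of Hodge datum}(1)) is to allow a choice of $(G_1,X_1)$ for which $\G_1=\G_1^\circ$, i.e.\ $K_{1,p}^\circ=K_{1,p}$, which is needed so that the Hodge-type local model diagram of Theorem~\ref{result P-Z and K-P} applies directly without modification. The descent of the pushed-forward torsor then uses the compatibility recorded in \ref{subsub--gamma act}, not simple connectedness per se. These are fixable imprecisions, not fatal gaps, but if you mean to present this as a proof rather than a pointer to the literature you should be explicit on both.
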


We note that in the proof of (3) of the above theorem, $(G_1,X_1)$ is choosen to satisfy all the conditions in \ref{subsubsection choice of Hodge datum} together with a compatible parahoric subgroup $K_{1,p}^\circ\subset G_1(\Q_p)$ such that $K_{1,p}^\circ=K_{1,p}$. In the diagram, the related group is \[G_{\Z_p}^{\ad\circ}=G_{1,\Z_p}^{\ad\circ}=(G_{1,\Z_p}^{\ad})^\circ=G_{1,\Z_p}^{\ad},\]  as in the diagram in \ref{subsubsection Hodge to abelian}. Here the first two equalities follow from \ref{subsubsection connected adjoint}, and the last equality follows from \cite[Proposition 1.1.4]{Paroh}, see also \ref{subsubsection connected hodge adjoint}.
In particular, we can apply the above theorem to a Hodge type datum $(G,X)$ with $\G^\circ\subsetneq \G$ not equal as in \ref{general integral models Hodge type}. In this case, we need to assume that $(G^\ad, X^\ad)$ has no factors of type $D^\mathbb{H}$: we choose another Hodge type datum $(G_1,X_1)$ as above with $\G_1^\circ=\G_1$ to get the $G^{\ad\circ}_{\Z_p}$-local model diagram for $(G,X)$.

\section[EKOR Hodge type]{EKOR strata of  Hodge type: local constructions}\label{section EKOR loc}

In this section, we will construct and study the EKOR stratification for Shimura varieties of Hodge type using the theory of $G$-zips. This will be a local construction, in the sense that we will construct EKOR strata by constructing an EO stratification on each KR stratum. In the next section we will give some global constructions for the whole special fiber.

We will assume $p>2$ throughout the rest of the paper.
For technical reasons, we will always assume condition (\ref{general hypo--refined}). Let $(G,X)$ be a Shimura datum of Hodge type, and $\ES_\K=\ES_\K(G,X)$ be the integral model introduced in \ref{subsection integral Hodge} with $\K=K_pK^p, K_p=\G(\Z_p)$ and $\G=\G^\circ$. Let $\ES_{K_p}=\varprojlim_{K^p}\ES_{K_pK^p}$.
We are interested in the special fibers $\ES_{\K,0}$ of $\ES_\K$, and $\ES_{K_p,0}$ of $\ES_{K_p}$ respectively, over $k=\ov{\mathbb{F}}_p$. We sometimes denote them simply by $\ES_0$ when the level is clear.
We will simply write $K=K_p$. We keep the notations as in subsection \ref{subsection integral Hodge}. In particular, we have the $\Z$-lattice $V_\Z\subset V$, the tensor $s\in V_{\Z_{(p)}}^\otimes$ defining $\G$ and the conjugacy class of cocharacter $\{\mu\}$.

\subsection[Definitions of some stratifications]{Definitions of some stratifications}\label{subsection stratifications hodge}
Notations as in \ref{subsubsec-C(G,mu)}, recall that we have the set $\CGmu$ which is a subset of $C(\mathcal{G})$. As before, let $k=\ov{\mathbb{F}}_p$. Consider the set of $k$-valued points $\ES_{\K,0}(k)=\ES_\K(k)$.
We will construct a map \[\Upsilon_\K:\ES_\K(k)\rightarrow C(\mathcal {G},\{\mu\}).\]  Let $x\in \ES_\K(k)$ and $\widetilde{x}\in \ES_\K(O_F)$ be a lifting of $x$, where $F$ is a finite extension of $\breve{\RAT}_p$. The starting point is the following Theorem \ref{kisin mod}, for which we need to fix some notations.

We set $\mathfrak{S}:=\breve{\INT}_p[[u]]$, with a Frobenius $\varphi$ which is the usual Frobenius on $\breve{\INT}_p$ and $u\mapsto u^p$ on indeterminate. Let $E(u)$ be the Eisenstein polynomial for a fixed uniformizer $\pi\in F$. Then $E(u)\in \mathfrak{S}$. Consider the Galois group
$\Gamma_F=\mathrm{Gal}(\overline{\RAT}_p/F)$. Let $\mathrm{Rep}_{\Gamma_F}^{\mathrm{cris}\circ}$ be the category of $\Gamma_F$-stable $\INT_p$-lattices spanning a crystalline $\Gamma_F$-representation, and $\mathrm{Mod}^{\varphi}_{/\mathfrak{S}}$ be the category of finite free $\mathfrak{S}$-modules $\mathfrak{M}$ with a Frobenius semi-linear isomorphism $$1\otimes \varphi:\varphi^*(\mathfrak{M})[1/E(u)]\rightarrow \mathfrak{M}[1/E(u)].$$
Note that the maximal ideal of $\mathfrak{S}$ is $(p,u)$.
\begin{theorem}{\rm(\cite[Theorem 3.3.2]{Paroh})} \label{kisin mod} There is a fully faithful tensor functor $$\mathfrak{M}:\mathrm{Rep}_{\Gamma_F}^{\mathrm{cris}\circ}\rightarrow\mathrm{Mod}^{\varphi}_{/\mathfrak{S}}$$which is compatible with formation of symmetric and exterior powers, and exact when restricting to $D^\times:=\Spec \mathfrak{S}\backslash \{(p,u)\}$. Moreover,
	
	\begin{enumerate}
		\item for a $p$-divisible group $\mathscr{G}$ over $O_F$, set $L=T_p\mathscr{G}^\vee:=\Hom_{{\INT}_p}(T_p\mathscr{G}, \INT_p)$ and $\mathfrak{M}:=\mathfrak{M}(L)$, then there 
		are canonical isomorphisms
		\[\mathbb{D}(\mathscr{G})(O_F)\simeq O_F\otimes_{\mathfrak{S}}\varphi^\ast(\mathfrak{M}), \quad D_{\mathrm{dR}}(L\otimes\Q_p)\simeq F\otimes_{\mathfrak{S}}\varphi^\ast(\mathfrak{M})\] such that the induced
		injection \[\mathbb{D}(\mathscr{G})(O_F)\rightarrow D_{\mathrm{dR}}(L\otimes\Q_p)\] is compatible with filtrations;
		\item for $\mathscr{G}_0:=\mathscr{G}\otimes k$, $\mathbb{D}(\mathscr{G}_0)(\breve{\INT}_p)$ is canonically identified with  $\varphi^*(\mathfrak{M}/u\mathfrak{M})$.
	\end{enumerate}
\end{theorem}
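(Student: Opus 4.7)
The plan is to follow Kisin's approach from his paper on crystalline representations and $F$-crystals, adapted to the integral setting, and combine it with Breuil's classification of $p$-divisible groups. Working rationally first, I would construct for each crystalline representation $V = L[1/p]$ a $\varphi$-module $\mathcal{M}(V)$ over the ring $\mathcal{O}$ of rigid-analytic functions bounded by $1$ on the open unit disk. This uses Berger's theorem that weakly admissible filtered $\varphi$-modules correspond to $\varphi$-modules over the Robba ring, together with descent from the Robba ring to $\mathcal{O}$ via Kedlaya's slope filtration theorem. The defining feature is that $1 \otimes \varphi : \varphi^{*}\mathcal{M}(V)[1/E(u)] \to \mathcal{M}(V)[1/E(u)]$ is an isomorphism, which encodes admissibility.

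Next, I would produce the integral Breuil-Kisin module $\mathfrak{M}(L)$ as a $\varphi$-stable $\mathfrak{S}$-lattice in $\mathcal{M}(V)$, using the embedding $\mathfrak{S} \hookrightarrow W(R)$ where $R$ is the tilt of $\mathcal{O}_{\mathbb{C}_p}$, and requiring $\mathfrak{M}(L) \otimes_{\mathfrak{S}} W(R) = L \otimes_{\mathbb{Z}_p} W(R)$ as $\varphi$-submodules of $\mathcal{M}(V) \otimes W(R)[1/p]$. Existence and uniqueness of such a lattice give functoriality of $\mathfrak{M}$, and compatibility with symmetric and exterior powers follows because each step of the construction is itself a tensor operation.

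For fully faithfulness, the essential input is the reconstruction formula $L \cong (\mathfrak{M}(L) \otimes_{\mathfrak{S}} W(R))^{\varphi = 1}$, which recovers morphisms of representations from morphisms of Breuil-Kisin modules. Exactness on $D^{\times} = \mathrm{Spec}\,\mathfrak{S} \setminus \{(p,u)\}$ is handled by the standard covering of $D^\times$ by the loci $\{p \neq 0\}$ and the formal neighbourhood of $V(u)$, where exactness is controlled respectively by Berger's equivalence and by the comparison with $A_{\mathrm{cris}}$. For part (1), I would invoke Faltings' crystalline comparison $\mathbb{D}(\mathscr{G})(O_F)[1/p] \cong D_{\mathrm{dR}}(L \otimes \mathbb{Q}_p)$ as filtered modules, and then produce the integral injection by tracking the natural integral lattice on each side through Breuil's classification of $p$-divisible groups by strongly divisible modules. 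For part (2), the identification $\mathbb{D}(\mathscr{G}_0)(\breve{\mathbb{Z}}_p) \cong \varphi^{*}(\mathfrak{M}/u\mathfrak{M})$ reflects that $\mathfrak{S}/u = \breve{\mathbb{Z}}_p$ corresponds to passage to the special fiber, with the Frobenius twist dictated by the sign convention relating Breuil-Kisin modules to the contravariant Dieudonné module.

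The main obstacle is the integral refinement together with exactness on $D^\times$. Rationally, Berger's equivalence produces $\mathcal{M}(V)$ almost directly, but showing that a $\Gamma_F$-stable lattice $L \subset V$ descends to a $\varphi$-stable $\mathfrak{S}$-lattice in $\mathcal{M}(V)$ with the prescribed $E(u)$-integrality demands a delicate interplay between the Frobenius and the Galois action on $W(R)$, in particular a careful analysis of how the non-Galois extension $\mathfrak{S} \subset W(R)$ interacts with the crystalline filtration. This is really where the bulk of the technical work in Kisin's original treatment lies, and it is what makes the classification of crystalline lattices — rather than merely representations — subtle.
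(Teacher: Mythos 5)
The paper does not prove this theorem at all: it is quoted verbatim from Kisin--Pappas \cite[Theorem 3.3.2]{Paroh}, and the argument there relies on Kisin's earlier construction of Breuil--Kisin modules together with the comparison between these and Dieudonn\'e theory for $p$-divisible groups. So there is no ``paper's own proof'' to compare against; what you have produced is a reconstruction of Kisin's argument.

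As a reconstruction your sketch identifies the right ingredients (Berger's equivalence, Kedlaya's slope filtration, descent from the Robba ring to $\mathfrak{S}$, and the crystalline comparison for parts (1) and (2)), but two technical points are stated inaccurately in a way that would matter in a written-out proof. First, the formula $L \cong (\mathfrak{M}(L)\otimes_{\mathfrak{S}} W(R))^{\varphi=1}$ is not the correct reconstruction: the $\varphi$-module $\mathfrak{M}[1/u]^{\wedge}_p$ recovers only $L|_{G_{F_\infty}}$ as an \'etale $\varphi$-module, and full faithfulness of $\mathfrak{M}$ then rests on the genuinely separate fact that restriction from $G_F$ to $G_{F_\infty}$ is fully faithful on crystalline representations --- this is not a formality and cannot be absorbed into a one-line $\varphi$-invariants statement. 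Second, exactness on $D^{\times}$ is not obtained by a naive cover argument; the content is that kernels and cokernels of morphisms of $\mathfrak{M}$'s are finite-length $\mathfrak{S}$-modules supported at the maximal ideal $(p,u)$, which requires the torsion-freeness of the $\mathfrak{M}$'s and a careful analysis of the $\varphi$-structure, not merely Berger's equivalence plus $A_{\mathrm{cris}}$-comparison. These are the places where the real work in Kisin's proof is concentrated, and your sketch treats them as routine.
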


We consider the $p$-divisible group $\mathscr{G}=\A_{\widetilde{x}}[p^\infty]$ over $O_F$. Notations as in the above theorem, by \cite[4.1.7]{Paroh}, the tensor $s\in V_{\INT_p}^\otimes$ induces a tensor $s_{\text{\'{e}t}, \widetilde{x}}\in L^\otimes$ which is $\Gamma_F$-invariant. Applying the theorem, $s_{\text{\'{e}t},\wt{x}}$ induces a tensor $\wt{s}_{\wt{x}}\in \mathfrak{M}(L)^\otimes$, and thus a tensor $s_{\mathrm{cris},x} \in D_x^\otimes$, which is $\varphi_x$-invariant. Here $D_x:=\mathbb{D}(\mathscr{G}_0)(\breve{\INT}_p)$, and $\varphi_x$ is the Frobenius. We remark that, by \cite[Proposition 1.3.9 (2)]{LRKisin}, $s_{\mathrm{cris},x} \in D_x^\otimes$ is independent of choices of $\widetilde{x}$. Now by \cite[Corollary 3.3.6]{Paroh}, $$I_x:=\IIsom_{\breve{\mathbb{Z}}_p}((V^\vee_{\mathbb{Z}_{(p)}}\otimes \breve{\mathbb{Z}}_p,s),(D_x,s_{\mathrm{cris},x}))$$
is a trivial $\mathcal {G}$-torsor. Thus we can take a $t\in I_x(\breve{\mathbb{Z}}_p)$. Then the pull back of $\varphi_x$ via \[t: (V^\vee_{\mathbb{Z}_{(p)}}\otimes \breve{\mathbb{Z}}_p,s)\ra(D_x,s_{\mathrm{cris},x})\] is of the form \[(\mathrm{id}\otimes \sigma)\circ g_{x,t},\] for some $g_{x,t}\in G(\breve{\mathbb{Q}}_p)$. The image of $g_{x,t}$ in $C(\mathcal {G})$, denote by $g_x\in C(\mathcal {G})$, is independent of $t$. In particular, \[x\mapsto g_x\] gives a well defined map \[\Upsilon_\K:\ES_\K(k)\rightarrow C(\mathcal{G})\] whose fibers are called \emph{central leaves}.

Let us check that $\Upsilon_\K$ factors through $\CGmu$. It suffices to prove the following statement.

\begin{lemma}\label{L:frob}
	The element $g_{x,t}\in G(\breve{\mathbb{Q}}_p)$ as above is of the form $\breve{K}\Admu\breve{K}$.
\end{lemma}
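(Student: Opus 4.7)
The plan is to use the local model diagram (Theorem \ref{result P-Z and K-P}) to identify the coset $g_{x,t}\breve{K}\in\breve{G}/\breve{K}$ with a Hodge-filtration datum lying in the geometric special fiber $\eM$ of $\Mloc$, and then invoke Corollary \ref{local model-collect} to conclude.

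First, I would unravel the definition of $g_{x,t}$. By construction it satisfies $\varphi_x = t\circ((\mathrm{id}\otimes\sigma)\circ g_{x,t})\circ t^{-1}$ on $D_x[1/p]$. Under $t$, the lattice $\varphi_x^{-1}(D_x)\subset D_x[1/p]$ is thus identified with $\sigma^{-1}(g_{x,t}^{-1})\cdot(V^\vee_{\Z_p}\otimes\breve{\Z}_p)$, so the coset $g_{x,t}\breve{K}\in\breve{G}/\breve{K}$ records, via $t$, the position of this lattice. Its reduction modulo $p$ recovers the Hodge filtration $\mathrm{Fil}^1\subset D_x\otimes k$ by the standard Dieudonn\'e dictionary applied to $\mathscr{G}_0$.

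Second, I would invoke the local model diagram of Theorem \ref{result P-Z and K-P}(3). The lift $\wt{x}\in \ES_\K(O_F)$ together with $t\in I_x(\breve{\Z}_p)$ produces a $k$-point of $\wt{\ES}_\K$ whose image under $q$ is $t^{-1}(\V^1_x)\in \Mloc(k)$; since $t$ respects the tensors $s$ and $s_{\cris,x}$, and via the canonical comparison $D_x\otimes k\cong \V_x$ (Theorem \ref{kisin mod} and Proposition \ref{global cris}) this matches the datum $s_{\dr}$, the image in fact lies in $\Mloc(k)$ (rather than only in $(\GL(V_{\Z_p})/P')(k)$), and coincides with the Hodge filtration read off from $g_{x,t}$ in step one. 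Consequently, $g_{x,t}\breve{K}$ lies in $\eM(k)$ viewed inside $\breve{G}/\breve{K}$ via the Pappas-Zhu identification of equi- and mixed-characteristic affine flag varieties (equivalently, via the Witt-vector affine Grassmannian, cf.\ the remark after Corollary \ref{local model-collect}).

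Third, Corollary \ref{local model-collect} gives $\eM=\coprod_{w\in\Admu_K}\eM^w$ with $\eM^w=\breve{K}\dot{w}\breve{K}/\breve{K}$. Hence $g_{x,t}\in\breve{K}w\breve{K}$ for some $w\in\Admu_K$, and since $\breve{K}\Admu_K\breve{K}=\breve{K}W_K\Admu W_K\breve{K}=\breve{K}\Admu\breve{K}$, the conclusion follows. The main obstacle is the compatibility asserted in step two: matching the Dieudonn\'e-theoretic Hodge filtration (the mod-$p$ reduction of $\varphi_x^{-1}(D_x)$) with the de Rham filtration $\V^1_x$ parametrized by the local model diagram, together with the matching of tensors $s_{\cris,x}$ and $s_{\dr}$. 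This is guaranteed by Theorem \ref{kisin mod}, the construction of the $\mathcal{G}$-torsor $\wt{\ES}_\K\to\ES_\K$ in Theorem \ref{result P-Z and K-P}(2), and the global crystalline tensor of Proposition \ref{global cris} together with Corollary \ref{coro--cris torsor}.
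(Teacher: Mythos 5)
Your proposal is correct and takes essentially the same approach as the paper's proof: both reduce the claim to the observation that the Hodge filtration $t^{-1}(\ker\bar\varphi_x)=\ker(\bar g_{x,t})$ lands in the local model $\eM=\coprod_{w\in\Admu_K}\eM^w$ via the local model diagram, whence the double coset of $g_{x,t}$ lies in $\Admu_K\subset W_K\backslash\IW/W_K$ and hence $g_{x,t}\in\breve{K}\Admu^K\breve{K}=\breve{K}\Admu\breve{K}$. Your write-up spells out the comparison $D_x\otimes k\cong\V_x$ and the tensor-matching (via Theorem \ref{kisin mod}, Theorem \ref{result P-Z and K-P} and Proposition \ref{global cris}) that the paper's two-line argument leaves implicit.
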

\begin{proof}
	By the Bruhat-Tits decomposition, we know that $g_{x,t}\in \breve{K}w\breve{K}$, for some $w\in {}^K\IW^K$. We view $g_{x,t}$ as a homomorphism $V_k^\vee\ra V_k^\vee$.
	The Hodge filtration $t^{-1}(\mathrm{Ker}(\varphi_x))=\mathrm{Ker}(g_{x,t})$ gives, by the local model diagram, an element in $$\Admu_K=W_K\backslash\Admu^K/W_K.$$In particular, $w\in \Admu^K$.
\end{proof}

We will also write $\Upsilon_\K$ for the map $\ES_\K(k)\rightarrow \CGmu$. Composing $\Upsilon_\K$ with the natural maps in \ref{proj from cent}, we get maps
\begin{itemize}
	\item $\delta_\K:\ES_\K(k)\rightarrow \BGmu$ whose fibers are called \emph{Newton strata};
	
	\item $\lambda_\K:\ES_\K(k)\rightarrow \Admu_K$ whose fibers are called \emph{Kottwitz-Rapoport strata}, or KR strata for short;
	
	\item $\upsilon_\K:\ES_\K(k)\rightarrow {}^K\Admu$ whose fibers are called \emph{Ekedahl-Kottwitz-Oort-Rapoport strata}, or EKOR strata for short.
\end{itemize}
Recall that by the local diagram in Theorem \ref{result P-Z and K-P} we can also define the KR strata (see \cite[Corollary 4.2.12]{Paroh}). Now we explain that the two constructions for KR strata are compatible.
Let the notations be as in Lemma \ref{L:frob}. For $x\in \mathscr{S}_\K(k)$, a section $t\in I_x(\breve{\mathbb{Z}}_p)$ induces an element $g_{x,t}\in \breve{K}\Admu\breve{K}$. Let $M_0=\mathrm{M}_{G,X}^{\mathrm{loc}}\otimes_{O_E} k$. Assigning to  $g_{x,t}$ the point in $M_0(k)$ corresponding to the filtration $\mathrm{Ker}(g_{x,t})\subset V^\vee_{ k}$ induces a bijection \[\breve{K}\backslash \breve{K}\Admu\breve{K}/\breve{K}\stackrel{\simeq}{\rightarrow}\mathcal{G}_0(k)\backslash M_0(k).\] This identifies fibers of $\lambda_K$ with the KR strata defined using the local model diagram as in Theorem \ref{result P-Z and K-P}.

Combined with the diagram for group theoretic data obtained in subsection \ref{subsubsec-C(G,mu)}, we have the following commutative diagram.
\[\xymatrix{ &\Admu_K &{}^K\Admu\ar@{->>}[l] \\
	\ES_\K(k)\ar[ur]^{\lambda_\K}\ar[rr]^{\Upsilon_\K} \ar[urr]^{\upsilon_\K} \ar[drr]^{\delta_\K}
	& &\CGmu\ar@{->>}[u]\ar@{->>}[d]
	&\ {}^K\Admu_{\sigma\text{-str}}\ar@{_(->}[l] \ar@{_(->}[ul]\ar@{->>}[dl]\\
	& &B(G, \{\mu\}) }\]
By construction, it is clear that when the prime to $p$ level $K^p$ varies, the maps $\Upsilon_\K, \delta_\K,\upsilon_\K$ and $\lambda_\K$ are compatible for the natural projections.

We will need geometric structures on the above sets:
\begin{itemize}
	
	\item One can use the $F$-crystal attached to the universal abelian scheme $\A\ra \ES_0$ together with the $G$-additional structure to show that the fibers of $\Upsilon_\K$ and $\delta_\K$ are the sets of $k$-valued points of locally closed reduced subschemes of $\ES_{0}$, see for example \cite[Corollary 3.3.8]{Ham-Kim}. We also call these locally closed reduced subschemes central leaves and Newton strata respectively. 
	\item The local model diagram in Theorem \ref{result P-Z and K-P} shows that the fibers of $\lambda_\K$ are the sets of $k$-valued points of locally closed reduced subschemes of $\ES_{0}$, which we call equally the KR (Kottwitz-Rapoport) strata. 
	\item Our main task in this section is to show that there exists also geometric structures on the fibers of $\upsilon_\K$, cf. Theorem \ref{sm of zeta}.
\end{itemize}

As for the non-emptiness of the above various strata, we list the following results.
\begin{enumerate}
	\item In the hyperspecial case, Viehmann-Wedhorn proved the non-emptiness of Newton strata in the PEL type case (\cite{VW}), and Dong-Uk Lee proved the non-emptiness of Newton strata in the Hodge type case \cite{Newton non-epty}.
	\item Kisin-Madapusi Pera-Shin (\cite{KMS}) and C.-F. Yu (\cite{Yu}) proved in the Hodge type case for $G$ quasi-split over $\Q_p$, that the Newton strata are non-empty.
	\item R. Zhou proved in \cite[Proposition 8.2]{zhou isog parahoric} that all KR strata are non-empty by applying the key input from \cite{KMS} that the basic Newton strata are non-empty (see also \cite{Yu}), and then all Newton strata are non-empty in this case.
	\item Based on these results (especially the non-emptiness of KR strata in \cite{zhou isog parahoric}), we will show later all EKOR strata are non-empty, cf. Corollary \ref{coro--non-emp and closure}.
\end{enumerate}

\subsection[He-Rapoport axioms]{He-Rapoport axioms}\label{He-Rap axioms}We turn to a different (abstract) setting.
He and Rapoport introduced five axioms in \cite{He-Rap} for integral models of Shimura varieties with parahoric levels to study stratifications in the special fibers. We will not recall all the axioms here, but just introduce some of them which will be used in this paper. Let $(G,X)$ be a Shimura datum. Assume that the for each parahoric subgroup $K_p\subset G(\Q_p)$, we already have an integral model $\ES_{\K}=\ES_\K(G,X)$ for the associated Shimura variety $\Sh_\K=\Sh_\K(G,X)$ over $O_E$, the ring of integers of a local reflex field at $p$. Here $\K:=K_pK^p$ with $K^p$ small enough.

\subsubsection[Axiom 1]{Axiom 1}\label{ax 1} Let $K_p\supset K_p'$ be another parahoric subgroup, and $\K':=K'_pK^p$. Then there is a natural morphism \[\pi_{\K',\K}:\ES_{\K'}\rightarrow \ES_{\K}\] which is proper surjective and extends the natural morphism on the generic fibers.

Let $\ES_{\K',0}$ and $\ES_{\K,0}$ be their special fibers respectively, axiom 1 implies that the induced morphism $\pi_{\K',\K,0}:\ES_{\K',0}\rightarrow \ES_{\K,0}$ is proper surjective.

\subsubsection[Axiom 4 (c)]{Axiom 4 (c)}\label{ax 4-c} Let $\ES_{\K,0}^c\subset \ES_{\K,0}$ and $\ES_{\K',0}^{c'}\subset \ES_{\K',0}$ be central leaves with \[\pi_{\K,\K',0}(\ES_{\K,0}^c)\subset \ES_{\K',0}^{c'}.\] Then the induced morphism $\pi_{\K,\K',0}^c:\ES_{\K,0}^c\rightarrow \ES_{\K',0}^{c'}$ is surjective with finite fibers.
\subsubsection[Axiom 5]{Axiom 5}\label{ax 5} Let $I\subset G(\Q_p)$ be an Iwahori subgroup, and $\tau$ be as in \ref{tau}, then the intersection of the KR stratum $\ES_{IK^p,0}^\tau$ with each geometrically connected component of $\ES_{IK^p,0}$ is non empty.

\subsubsection[Known cases]{Known cases}\label{ax checked}
\begin{itemize}
	\item For Siegel modular varieties the axioms are verified by He-Rapoport in \cite{He-Rap}.
	\item For PEL-type Shimura varieties associated to unramified groups of type $A$ and $C$ and to odd ramified unitary groups, the axioms are verified by He-Zhou in \cite{HZ}.
	
	\item For Shimura varieties of Hodge type satisfying condition (\ref{general hypo--refined}), it has been checked by R. Zhou, see \cite{zhou isog parahoric}, that all the five axioms  except for the surjectivity in axiom 4 (c) hold for the Kisin-Pappas integral models as in \ref{subsection integral Hodge}. If $G_{\RAT_p}$ is in addition residually split, then all the five axioms hold. 
\end{itemize}

\subsection[Pointwise constructions]{$\mathcal{G}_0^\mathrm{rdt}$-zips attached to points}\label{subsec--pointwise constr}
We come back to the setting as that at the beginning of this section.
We start with some constructions generalizing those in \cite[3.2.6]{EOZ}. For $w\in \IW$, we simply write the same symbol for a representative in $G(\breve{\RAT}_p)$. Let $\mathfrak{a}$ be the facet corresponding to $\mathcal{G}$, we set \[\mathfrak{b}:=\mathfrak{a}\cup w\mathfrak{a} \quad \textrm{and} \quad \mathfrak{c}:=\mathfrak{a}\cup \sigma(w)^{-1}\mathfrak{a}.\] Here $w\mathfrak{a}$ (resp. $\sigma(w)^{-1}\mathfrak{a}$) is the translation by $w$ (resp. $\sigma(w)^{-1}$) of $\mathfrak{a}$. Let $\mathcal{G}_{\mathfrak{b}}$ and $\mathcal{G}_{\mathfrak{c}}$ be the corresponding Bruhat-Tits group schemes respectively, then working with reduced $k$-algebras, we see by Remark  \ref{loop in mixed char+} that $\sigma'=\sigma\circ \mathrm{Ad}(w)$ induces a homomorphism \[L^+\mathcal{G}_{\mathfrak{b}}\rightarrow L^+\mathcal{G}_{\mathfrak{c}}.\] In particular, we have an isogeny of smooth group varieties over $k$ \[\mathcal{G}_{\mathfrak{b},0}\rightarrow \mathcal{G}_{\mathfrak{c},0},\] which is again denoted by $\sigma'$. Recall that we have homomorphisms
\[\mathcal{G}_{\mathfrak{b},0}\ra \G_{\mathfrak{a},0}, \quad \mathcal{G}_{\mathfrak{c},0}\ra \G_{\mathfrak{a},0}. \]
We will also set $\mathcal{G}_{0,w}$ (resp. $\mathcal{G}_{0,\sigma(w)^{-1}}$) to be the image of $\mathcal{G}_{\mathfrak{b},0}$ (resp. $\mathcal{G}_{\mathfrak{c},0}$) in $\mathcal{G}_{0}=\G_{\mathfrak{a},0}$. It is a quotient of $\mathcal{G}_{\mathfrak{b},0}$ (resp. $\mathcal{G}_{\mathfrak{c},0}$), and hence is a smooth group scheme over $k$.

Before moving into detailed constructions, we would like to say a few words about the main ideas. Our strategy here is the same as that in \cite[\S3.2]{EOZ}, and we will also use $F$-zips introduced by Moonen and Wedhorn (\cite{disinv}) freely. The definition of an $F$-zip will not be recalled, as it will be used in explicit forms here. Recall for each $w\in\Admu_K$, we have constructed an algebraic zip datum \[\mathcal {Z}_w=(\mathcal{G}_0^{\mathrm{rdt}}, \overline{P}_{J_w}, \overline{P}_{\sigma'(J_w)}, \sigma':\overline{L}_{J_w}\rightarrow \overline{L}_{\sigma'(J_w)})\] in \ref{zip in EKOR}.
We will construct first an $F$-zip from the element $w\in \Admu_K$, and use it as a standard $F$-zip attached to $w$. Properties of this standard $F$-zip will also be studied. There are also $F$-zips attached to points of $\ES_0$. We will then show that one gets desired $\mathcal{G}_0^\mathrm{rdt}$-zips by ``comparing'' the $F$-zips at points with the standard ones.

\subsubsection[A standard $F$-zip]{A standard $F$-zip}
For $w\in \Admu_K$, we will always fix a representative of it in $G(\breve{\Q}_p)\subset\GL(V)(\breve{\RAT}_p)$ without changing notations. Then $w$ has a left action on $V_{\breve{\RAT}_p}$ which induces,
 by taking inverse dual definition, an endomorphism of $V_{\breve{\INT}_p}^\vee$ which is again denoted by $w$. Consider the reduction modulo $p$   of $V_{\breve{\INT}_p}^\vee$, the $k$-vector space $V_k^\vee$.
Let $w_k$ be the induced map on $V_k^\vee$, we have an $F$-zip structure $(C^\bullet_w,D^w_\bullet, \varphi_\bullet^w)$ on $V_k^\vee$ given by
\begin{itemize}
	\item $C_w^0:=\dkV\supset C^1_w:=\mathrm{Ker}(w_k)\supset C^2_w:=0;$

	\item $D_{-1}^w:=0\subset D_0^w:=\mathrm{Im}((\sigma(w))_k)\subset D_1^w:=\dkV;$

	\item $\varphi_0^w:C^0_w/C^1_w\rightarrow D_0^w$ and $\varphi_1^w:C^1_w\rightarrow D_1^w/D_0^w$, which are $\sigma$-linear isomorphisms whose linearizations are induced by $(\sigma(w))_k$ and the inverse of the isomorphism $(\frac{p}{\sigma(w)})\otimes k: D_1^w/D_0^w\rightarrow C_w^{1,\sigma}$ respectively.
\end{itemize}
We remark that the filtration $C^\bullet_w$ (resp. $D_\bullet^w$) depends only on the image of $w$ (resp. $\sigma(w)$) in $G(\breve{\RAT}_p)/\breve{K}$ (resp. $\breve{K}\backslash G(\breve{\RAT}_p)$).
\begin{lemma}\label{lemma--zip datum attached to w}
	Let $\Aut(C^\bullet_w,s)$ be the group scheme of automorphisms on $V_k^\vee$ respecting both the filtration $C^\bullet_w$ and the tensor $s\in V^\otimes_k$, similarly for $\Aut(D_\bullet^w,s)$. We have the following.
	\begin{enumerate}
		\item $\Aut(C^\bullet_w,s)=\mathcal{G}_{0,w}$ and $\Aut(D_\bullet^w,s)=\mathcal{G}_{0,\sigma(w)^{-1}}$.
		\item Let $\mathcal{G}_{0,w}^L$ be the image of $\mathcal{G}_{0,w}$ in $\Aut(\oplus_i\gr_i(C^\bullet_w))$ and similarly for $\mathcal{G}_{0,\sigma(w)^{-1}}^L$, then $\sigma'$ induces an isomorphism \[\mathcal{G}_{0,w}^{L,(p)}\rightarrow \mathcal{G}_{0,\sigma(w)^{-1}}^L\] which is again denoted by $\sigma'$.
		\item The isomorphism \[\oplus_i\gr_i(C^{\bullet,(p)}_w)\rightarrow \oplus_i\gr_i(D_\bullet^w)\] induced by $\varphi_0^w\oplus \varphi_1^w$ is equivariant with respect to $\sigma'$.
	\end{enumerate}
\end{lemma}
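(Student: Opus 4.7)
The approach is to match both sides of (1) via Bruhat–Tits theory, then deduce (2) and (3) by transport of structure.

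For (1), a standard computation identifies $\mathcal{G}_{0,w}$, the image of $\mathcal{G}_{\mathfrak{b},0}$ in $\mathcal{G}_0$, with the subgroup of $\mathcal{G}_0$ whose image in $\mathcal{G}_0^{\rdt}$ is the standard parabolic $\overline{P}_{J_w}$ of type $J_w = J_K\cap \mathrm{Ad}(w^{-1})(J_K)$ from \ref{zip in EKOR}. On the other side, under the embedding into $\GL(V^\vee_{\breve{\INT}_p})$ coming from the tensor $s$, the subgroup $\mathcal{G}_{\mathfrak{b},0}(\breve{\INT}_p) = \mathcal{G}(\breve{\INT}_p)\cap w\mathcal{G}(\breve{\INT}_p)w^{-1}$ is cut out by the extra condition of stabilizing the sublattice $w\cdot V^\vee_{\breve{\INT}_p}\subset V^\vee_{\breve{\INT}_p}$. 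Since $w\in\Admu_K$ and $\mu$ is minuscule, one has $pV^\vee_{\breve{\INT}_p}\subset w\cdot V^\vee_{\breve{\INT}_p}\subset V^\vee_{\breve{\INT}_p}$, so the mod-$p$ image of $w\cdot V^\vee_{\breve{\INT}_p}$ in $V_k^\vee$ corresponds precisely (after the standard identification) to the subspace $\mathrm{Ker}(w_k)=C_w^1$. Hence $\mathcal{G}_{0,w}\subset \Aut(C_w^\bullet,s)$, and the reverse inclusion follows from a type/dimension count in $\mathcal{G}_0^{\rdt}$, both sides mapping onto a parabolic of the same type $J_w$. The argument for $D_\bullet^w$ and $\mathcal{G}_{0,\sigma(w)^{-1}}$ is symmetric, using $\sigma(w)^{-1}$ in place of $w$ and $\sigma'(J_w)$ in place of $J_w$.

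For (2), the loop-group homomorphism $\sigma'=\sigma\circ \mathrm{Ad}(w):L^+\mathcal{G}_\mathfrak{b}\to L^+\mathcal{G}_\mathfrak{c}$ constructed at the beginning of this subsection specializes to an isogeny of smooth group varieties $\mathcal{G}_{\mathfrak{b},0}\to \mathcal{G}_{\mathfrak{c},0}$, which, by (1), descends through the unipotent radicals to an isogeny between the Levi quotients $\overline{L}_{J_w}$ and $\overline{L}_{\sigma'(J_w)}$ of $\mathcal{G}_{0,w}$ and $\mathcal{G}_{0,\sigma(w)^{-1}}$. The $\sigma$-semilinear nature of $\sigma'$ repackages it as an honest $k$-isomorphism $\mathcal{G}_{0,w}^{L,(p)}\stackrel{\sim}{\longrightarrow}\mathcal{G}_{0,\sigma(w)^{-1}}^L$.

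Part (3) is a direct check. By construction, the linearizations of $\varphi_0^w$ and $\varphi_1^w$ are induced by the reductions of $\sigma(w)$ and $\sigma(w)/p$ on the appropriate graded pieces, so the assembled linear isomorphism $\oplus_i\gr_i(C_w^{\bullet,(p)})\to \oplus_i\gr_i(D_\bullet^w)$ is implemented by the action of $\sigma(w)$ on lattice subquotients. For $\tilde g\in \mathcal{G}_{\mathfrak{b},0}$ descending to $g\in \mathcal{G}_{0,w}^L$, equivariance reduces to the lattice-level identity $\sigma(w)\cdot \tilde g = \sigma'(\tilde g)\cdot \sigma(w)$, which is the definition of $\sigma'=\sigma\circ \mathrm{Ad}(w)$. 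The main obstacle is the identification in (1): verifying that $\Aut(C_w^\bullet,s)$ maps onto a parabolic of type exactly $J_w$ (rather than some larger or smaller subgroup) in $\mathcal{G}_0^{\rdt}$, which requires careful bookkeeping of the relative position of $V^\vee_{\breve{\INT}_p}$ and $w\cdot V^\vee_{\breve{\INT}_p}$ controlled by $w\in\Admu_K$ and the minuscule hypothesis on $\mu$, along the lines of \cite[\S3.2]{EOZ}.
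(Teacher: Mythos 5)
Your overall architecture matches the paper's — identify $\mathcal{G}_{0,w}$ as a lattice-chain stabilizer for part (1), then propagate through the Levi quotients for (2) and transport the Frobenius identity for (3) — but there is a genuine gap in (1).

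For the reverse inclusion $\Aut(C^\bullet_w,s)\subseteq\mathcal{G}_{0,w}$, you assert that a ``type/dimension count'' suffices because ``both sides map onto a parabolic of the same type $J_w$.'' This is exactly what needs to be proved. A priori $\mathcal{G}_0\cap P^0=\Aut(C_w^\bullet,s)$ is some closed subgroup of $\mathcal{G}_0$ containing $\mathcal{G}_{0,w}$; its image in $\mathcal{G}_0^\rdt$ is a parabolic $\overline{P}_{J'}$ with $J'\supseteq J_w$, and nothing forces equality without a further input. The paper's proof supplies precisely this input: it passes to the equal-characteristic global affine Grassmannian of Pappas--Zhu and invokes the fact, from the proof of \cite[Proposition 8.1]{local model P-Z}, that $\Gr_{\mathcal{G}'}\to\Gr_{\GL(k[[t]])}$ is a closed immersion. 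This is where the tame-ramification and $p\nmid|\pi_1(G^{\mathrm{der}})|$ hypotheses of (\ref{general hypo}) enter: the injectivity of the map of affine Grassmannians gives that the orbit map $\mathcal{G}_0/\mathcal{G}_{0,w}\to\GL(V_k)/P^0$ is an immersion, hence $\mathcal{G}_{0,w}$ is the full preimage of $P^0$ in $\mathcal{G}_0$, i.e.\ equals $\Aut(C_w^\bullet,s)$. Your sketch never appeals to this or to any equivalent fact, and a bare dimension count is not available, since one would need to compare Schubert-cell dimensions in $\Gr_{\mathcal{G}'}$ versus $\Gr_{\GL}$ — again something the closed-immersion theorem is needed for.

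A secondary issue: you write that ``the mod-$p$ image of $w\cdot V^\vee_{\breve{\INT}_p}$ in $V_k^\vee$ corresponds precisely to $\mathrm{Ker}(w_k)=C_w^1$.'' If $pV^\vee\subset wV^\vee\subset V^\vee$, then the reduction of $wV^\vee$ is $\mathrm{Im}(w_k)$, not $\mathrm{Ker}(w_k)$; the lattice whose reduction is $C^1_w=\mathrm{Ker}(w_k)$ is $w^{-1}(pV^\vee)$. The paper's lattice chain $\mathscr{C}$ with middle term $\frac{t}{w'}V^\vee_{k[[t]]}$ reflects this; depending on how you track the ``inverse dual'' convention this may be a bookkeeping slip rather than a fatal error, but as written it conflates two distinct subspaces and muddies the identification you are trying to make. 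Parts (2) and (3) are sketched correctly and in the same spirit as the paper's argument.
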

\begin{proof}
	For (1), it is more convenient to use equal-characteristic results. Let $G'/k((t))$ and $\mathcal{G}'/k[[t]]$ be as in subsection \ref{subsection loc models} (after Remark \ref{remark--Goertz PEL}). In particular, $w$ is identified with an element $w'$ in the Iwahori Weyl group of $G'$. Let $\mathfrak{a}'$ be the facet corresponding to $\mathcal{G}'$, we set, as before \[\mathfrak{b}':=\mathfrak{a}'\cup w'\mathfrak{a}' \quad \textrm{and} \quad \mathfrak{c}':=\mathfrak{a}'\cup \sigma(w')^{-1}\mathfrak{a},\]
	and get Bruhat-Tits group schemes $\mathcal{G}'_{\mathfrak{b}'}$ and $\mathcal{G}'_{\mathfrak{c}'}$. The homomorphisms
	$\mathcal{G}_{\mathfrak{b},0}\ra \G_{0}$ and $\mathcal{G}_{\mathfrak{c},0}\ra \G_{0}$ are identified with $\mathcal{G}'_{\mathfrak{b}',0}\ra \G'_{0}=\G_0$ and $\mathcal{G}'_{\mathfrak{c}',0}\ra \G'_{0}=\G_0$, so $\mathcal{G}_{0,w}$ and $\mathcal{G}_{0,\sigma(w)^{-1}}$ are identified with $\mathcal{G}'_{0,w'}$ and $\mathcal{G}'_{0,\sigma(w')^{-1}}$ respectively. Let \[\mathscr{C}:=(\cdots\subset tV_{k[[t]]}^\vee\subset \frac{t}{w'}V_{k[[t]]}^\vee\subset V_{k[[t]]}^\vee\subset\cdots)\]  and \[ \mathscr{D}:=(\cdots\subset tV_{k[[t]]}^\vee\subset \sigma(w')V_{k[[t]]}^\vee\subset V_{k[[t]]}^\vee\subset\cdots)\]
	be the periodic lattice chain corresponding to $C_w^\bullet=C_{w'}^\bullet$ and $D^w_\bullet=D^{w'}_\bullet$ respectively. To simplify notations, we will write $\mathcal{H}$ for $\GL(V_{k[[t]]})$, $\breve{K}'^0$ for $\mathcal{H}(k[[t]])$ and $P^0$ (resp. $Q^0$) for the stabilizer of $C_w^\bullet$ (resp. $D^w_\bullet$) in $\GL(V_k)$. The Bruhat-Tits group scheme corresponding to $\breve{K}'^0\cap w'\breve{K}'^0 w'^{-1}$ is then $\GL(\mathscr{C})$ which is parahoric. We have a commutative diagram
	$$\xymatrix{
		L^+\mathcal{G}'w'L^+\mathcal{G}'/L^+\mathcal{G}'\ar[r]^{\cong}\ar[d]&L^+\mathcal{G}'/L^+\mathcal{G}'_{\mathfrak{b}'}\ar[r]^{\cong}\ar[d]&\mathcal{G}_0/\mathcal{G}_{0,w}\ar[d]\\
		L^+\mathcal{H}w'L^+\mathcal{H}/L^+\mathcal{H}\ar[r]^{\cong}&L^+H/L^+\GL(\mathscr{C})\ar[r]^{\cong}&\GL(V_k)/P^0.}$$
	By the arguments in the proof of \cite[Proposition 8.1]{local model P-Z} (more precisely, in the middle of page 207), the induced morphism of affine Grassmannians $\Gr_{\mathcal{G}'}\rightarrow \Gr_{\GL(k[[t]])}$ is a closed immersion, and hence the left vertical arrow in the above diagram is an immersion. But then the right vertical arrow will also be an immersion, and hence the first identification in (1) follows.
	
	The periodic lattice chain corresponding to $C_w^{\bullet,(p)}$ is
	\[(\cdots\subset tV_{k[[t]]}^\vee\subset \DF{t}{\sigma(w')}V_{k[[t]]}^\vee\subset V_{k[[t]]}^\vee\subset\cdots).\] Noting that the left action of $\sigma(w')$ maps the lattice chain of $C_w^{\bullet,(p)}$ to that of $D^w_\bullet$, we have, as in the previous case, a commutative diagram
	$$\xymatrix{
		L^+\mathcal{G}'\sigma(w')^{-1}L^+\mathcal{G}'/L^+\mathcal{G}'\ar[r]^{\cong}\ar@{_(->}[d]&L^+\mathcal{G}'/L^+\mathcal{G}'_{\mathfrak{c}'}\ar[r]^{\cong}\ar@{_(->}[d]&\mathcal{G}_0/\mathcal{G}_{0,\sigma(w)^{-1}}\ar@{_(->}[d]\\
		L^+\mathcal{H}\sigma(w')^{-1}L^+\mathcal{H}/L^+\mathcal{H}\ar[r]^{\cong}&L^+\mathcal{H}/L^+\GL(\mathscr{D})\ar[r]^{\cong}&\GL(V_k)/Q^0.}$$
	By the same reason, the second identification in (1) follows.
	
	For (2) and (3), both the mixed characteristic method and the equi characteristic one work. We will use the mixed one. Let $L^0=\Aut(\oplus_i\gr_i(C^{\bullet}_w))$ (resp. $L'^0=\Aut(\oplus_i\gr_i(D_\bullet^w))$) be the Levi quotient attached to $P^0$ (resp. $Q^0$). The action of $\sigma(w)$ induces an isomorphism from the parahoric subgroup (of $\GL(V)(\breve{\RAT}_p)$) of $C_w^{\bullet,(p)}$ to that of $D^w_\bullet$, and hence induces an isomorphism $L^{0,(p)}\stackrel{\cong}{\longrightarrow}L'^0$. Moreover, it induces an commutative diagram $$\xymatrix{
		\mathcal{G}_{\mathfrak{b},0}^{(p)}\ar[r]\ar[d]&\mathcal{G}_{\mathfrak{c},0}\ar[d]\\
		L^{0,(p)}\ar[r]&L'^0,}$$
	where the vertical arrows factor through $\mathcal{G}_{0,w}^{(p)}$ and $\mathcal{G}_{0,\sigma(w)^{-1}}$ respectively. In particular, (2) follows.
	
	The isomorphism $\oplus_i\gr_i(C^{\bullet,(p)}_w)\rightarrow \oplus_i\gr_i(D_\bullet^w)$ induced by $\varphi_0^w\oplus \varphi_1^w$ is also induced by the action of $\sigma(w)$, and hence is equivariant with respect to the isomorphism $L^{0,(p)}\rightarrow L'^0$. In particular, (3) follows.
	
\end{proof}


The natural homomorphisms $\mathcal{G}_{0,w}\rightarrow \overline{P}_{J_w}$ and $\mathcal{G}_{0,\sigma(w)^{-1}}\rightarrow \overline{P}_{\sigma'(J_w)}$ are compatible with the isomorphisms $\sigma'$s. The tuple $\widetilde{\mathcal {Z}}_w:=(\mathcal{G}_0, \mathcal{G}_{0,w}, \mathcal{G}_{0,\sigma(w)^{-1}} , \sigma')$ could be viewed as a generalization or ``lifting'' of the zip datum $\mathcal {Z}_w=(\mathcal{G}_0^{\mathrm{rdt}}, \overline{P}_{J_w}, \overline{P}_{\sigma'(J_w)}, \sigma')$. It is natural and necessary to work with structures related to $\widetilde{\mathcal {Z}}_w$.

\subsubsection[$F$-zips attached to points]{$F$-zips attached to points}\label{global F-zip} Fix a sufficiently small $K^p$ and write $\ES_0=\ES_{\K,0}$ with $\K=KK^p$.
Let $\A\ra \ES_{0}$ be the universal abelian scheme and consider the vector bundle $\V_0=H^1_{\dr}(\A/\ES_0)$ over $\ES_0$.
We have an $F$-zip structure $(C^\bullet, D_\bullet, \varphi_\bullet)$ on $\V_0$, given by
\begin{itemize}
	\item $C^0:=\V_0\supseteq C^1:=\V^1_0\supseteq C^2:=0$, which is the Hodge filtration;

	\item $D_{-1}:=0\subset D_0:=\mathrm{Im}(F)\subset D_1:=\V_0$, which is the conjugate filtration;

	\item $\varphi_0:C^0/C^1\rightarrow D_0$ and $\varphi_1:C^1\rightarrow D_1/D_0$, which are induced by Frobenius and the inverse of Verschiebung respectively.
\end{itemize}

Now we will fix a $w\in\Admu_K$ and a $k$-point $x$ in the KR-stratum $\ES_0^w$, and state the main results of our pointwise constructions. We will denote by $(C^\bullet_x, D_{\bullet,x}, \varphi_{\bullet,x})$ the pull back to $x$ of $(C^\bullet, D_\bullet, \varphi_\bullet)$. In the following we will talk about torsors over $k$, which are actually trivial since $k=\Fpbar$.
\begin{proposition}\label{prop--lift G-zip at point}
	Let $\widetilde{\I}_x$ be the $\mathcal{G}_0$-torsor $\IIsom_k((V_k^\vee,s), (\V_{0,x},s_{\dr}))$ over $k$.
	\begin{enumerate}
		\item Let $\widetilde{\I}_{x,+}:=\IIsom_k((C^\bullet_w,s), (C^\bullet_x,s_{\dr}))\subset \widetilde{\I}_x$, then it is a $\mathcal{G}_{0,w}$-torsor over $k$.
		\item Let $\widetilde{\I}_{x,-}:=\IIsom_k((D_\bullet^w,s), (D_{\bullet,x},s_{\dr}))\subset \widetilde{\I}_x$, then it is a $\mathcal{G}_{0,\sigma(w)^{-1}}$-torsor over $k$.
		\item Let $\mathcal{G}_{0,w}^U =\mathrm{Ker}(\mathcal{G}_{0,w}\twoheadrightarrow \mathcal{G}_{0,w}^L)$ and similarly for $\mathcal{G}_{0,\sigma(w)^{-1}}^U$. The Dieudonn\'{e} module structure on $\V_x$ induces an isomorphism \[\widetilde{\iota}:\widetilde{\I}_{x,+}^{(p)}/\mathcal{G}_{0,w}^{U,(p)}\rightarrow \widetilde{\I}_{x,-}/\mathcal{G}_{0,\sigma(w)^{-1}}^U\] which is equivariant with respect to the isomorphism $\sigma':\mathcal{G}_{0,w}^{L,(p)}\rightarrow \mathcal{G}_{0,\sigma(w)^{-1}}^L$.
	\end{enumerate}
\end{proposition}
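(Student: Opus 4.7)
The plan is to reduce the torsor statements (1) and (2) to non-emptiness assertions, using the identifications in Lemma \ref{lemma--zip datum attached to w}(1), and then deduce (3) from the compatibilities established in parts (2) and (3) of that same lemma. To set up, $\widetilde{\I}_x$ is a $\mathcal{G}_0$-torsor (the pointwise restriction of Corollary \ref{coro--cris torsor}(1)), hence trivial since $\mathcal{G}_0$ is smooth and connected over $k=\bar{\mathbb{F}}_p$. By Lemma \ref{lemma--zip datum attached to w}(1), the $\mathcal{G}_0$-stabilizers of $(C^\bullet_w,s)$ and $(D^w_\bullet,s)$ in $\mathcal{G}_0$ are precisely $\mathcal{G}_{0,w}$ and $\mathcal{G}_{0,\sigma(w)^{-1}}$. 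Thus, once we verify that $\widetilde{\I}_{x,+}$ and $\widetilde{\I}_{x,-}$ are non-empty, the torsor structures in (1) and (2) follow formally: any rational point trivializes the relevant subscheme and identifies it with the stabilizer subgroup.

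For non-emptiness in (1), I would invoke the local model diagram of Theorem \ref{result P-Z and K-P}(3) together with Corollary \ref{local model-collect}(2), which identifies $\eM^w$ with a single $\mathcal{G}_0$-orbit. Since $x\in \ES_0^w$, for any trivialization $f\in \widetilde{\I}_x(k)$ the transported filtration $f^{-1}(C^\bullet_x)$ represents a point of $\eM^w(k)$; by construction the standard filtration $C^\bullet_w$ represents another point of $\eM^w$, so the two are related by some $h\in\mathcal{G}_0(k)$, and then $f\circ h^{-1}\in\widetilde{\I}_{x,+}$.

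For non-emptiness in (2), I would work on the crystalline side. Choose a trivialization $t$ of the $\mathcal{G}$-torsor from Corollary \ref{coro--cris torsor}(1) so that the Frobenius on the Dieudonn\'e module $D_x$ takes the form $v\mapsto g\,\sigma(v)$ with $g\in G(\breve{\mathbb{Q}}_p)$; the hypothesis $x\in\ES_0^w$ forces $g\in\breve{K}w\breve{K}$. Reducing mod $p$, the conjugate filtration $D_{\bullet,x}$ corresponds under $t\otimes k$ to $(g\,V^\vee_{\breve{\mathbb{Z}}_p}+pV^\vee_{\breve{\mathbb{Z}}_p})/pV^\vee_{\breve{\mathbb{Z}}_p}$ inside $V_k^\vee$. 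Writing $g=k_1\dot{w}k_2$ with $k_i\in\breve{K}$ and invoking the identifications with affine Schubert cells already exploited in the proof of Lemma \ref{lemma--zip datum attached to w}(1), this subspace is seen to lie in the same $\mathcal{G}_0$-orbit as $D^w_0=\mathrm{Im}((\sigma(\dot{w}))_k)$, yielding an element of $\widetilde{\I}_{x,-}$ after further modification by $\mathcal{G}_0(k)$.

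Finally, for (3), given $\bar{f}\in \widetilde{\I}^{(p)}_{x,+}/\mathcal{G}^{U,(p)}_{0,w}$, any lift $f$ produces isomorphisms of the Frobenius-twisted graded pieces $\gr^i(C^\bullet_w)^{(p)}\to \gr^i(C^\bullet_x)^{(p)}$ depending only on $\bar{f}$, since the unipotent radical acts trivially on associated gradeds. Composing with the $F$-zip isomorphisms $\varphi^w_\bullet$ and $\varphi_{\bullet,x}$ (which are isomorphisms by construction of the $F$-zips) produces an isomorphism of graded pieces $\gr D^w_\bullet\to \gr D_{\bullet,x}$, and such a graded isomorphism compatible with $s$ is exactly the datum of a class in $\widetilde{\I}_{x,-}/\mathcal{G}^U_{0,\sigma(w)^{-1}}$; this defines $\widetilde{\iota}$. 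The $\sigma'$-equivariance of $\widetilde{\iota}$ is then read off from Lemma \ref{lemma--zip datum attached to w}(3). The principal obstacle is the identification in (2) of the naive image $\mathrm{Im}(\dot{w}_k)$ (arising from the Frobenius) with the prescribed $D^w_0=\mathrm{Im}((\sigma(\dot{w}))_k)$ up to the $\mathcal{G}_0$-action, which ultimately reduces to a Bruhat-Tits-type computation with affine Schubert cells of the same nature as in the proof of Lemma \ref{lemma--zip datum attached to w}.
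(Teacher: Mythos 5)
Your approach is broadly in the same spirit as the paper's (use the crystalline trivialization from the $\G$-torsor over $\breve{\Z}_p$ and the identifications in Lemma~\ref{lemma--zip datum attached to w}), but the execution differs in a way that introduces a real gap in (3), and is unnecessarily indirect for (1)--(2).

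For (1) and (2), the paper chooses the crystalline trivialization $t\in\widetilde{\widetilde{\I}}_x(\breve{\Z}_p)$ in a \emph{normalized} form $t^*\varphi=\sigma\circ g\dot{w}$ with $g\in\mathcal{G}(\breve{\Z}_p)$ (possible because $\sigma:\mathcal{G}(\breve{\Z}_p)\to\mathcal{G}(\breve{\Z}_p)$ is surjective, so one can absorb the right-$\breve{K}$-factor of the Frobenius into $t$). With this single choice, both $\overline{t}\in\widetilde{\I}_{x,+}$ and $\overline{g\cdot t}\in\widetilde{\I}_{x,-}$ are immediate, and (1), (2) follow in one stroke from Lemma~\ref{lemma--zip datum attached to w}(1). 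You instead prove non-emptiness of $\widetilde{\I}_{x,+}$ from the local model diagram and then handle $\widetilde{\I}_{x,-}$ by a separate Bruhat--Tits computation with $g\in\breve{K}\dot{w}\breve{K}$ left unnormalized. This works, but it obscures the point and makes (2) heavier than it needs to be; you should state and use the normalization of $t$, since you will anyway need exactly this normalized $\overline{t}$ and $\overline{g\cdot t}$ for a clean statement of Remark~\ref{remark--point g} and for the smoothness argument in Theorem~\ref{sm of zeta}.

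For (3) there is a genuine gap. You define $\widetilde{\iota}(f)$ as the composite of $f$ on graded pieces with the (inverse) linearized $F$-zip maps, and then assert that the resulting graded isomorphism ``compatible with $s$ is exactly the datum of a class in $\widetilde{\I}_{x,-}/\mathcal{G}^U_{0,\sigma(w)^{-1}}$.'' That is not automatic: $\widetilde{\I}_{x,-}^L=\widetilde{\I}_{x,-}/\mathcal{G}^U_{0,\sigma(w)^{-1}}$ is by definition a $\mathcal{G}^L_{0,\sigma(w)^{-1}}$-torsor, namely the image of $\widetilde{\I}_{x,-}$ inside $\IIsom_k(\oplus_i\gr_i(D^w_\bullet),\oplus_i\gr_i(D_{\bullet,x}))$, and one cannot simply identify it with the full scheme of graded isomorphisms respecting $s$ — that would require the projection $\mathcal{G}_{0,\sigma(w)^{-1}}=\Aut(D^w_\bullet,s)\twoheadrightarrow\Aut(\oplus\gr_i D^w_\bullet,\bar s)$ to be surjective, which is not established and is delicate precisely because $\mathcal{G}_0$ is the special fiber of a parahoric, not a reductive group. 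The paper deliberately avoids this question: it first shows that $\widetilde{\iota}$ lands in $\IIsom_k(\oplus\gr_i D^w_\bullet,\oplus\gr_i D_{\bullet,x})$ and is $\sigma'$-equivariant via Lemma~\ref{lemma--zip datum attached to w}(3), and then checks that $\widetilde{\iota}$ factors through $\widetilde{\I}_{x,-}^L$ by verifying this for the \emph{single} element $f=\overline{t}^{(p)}$ (where it is clear, given the normalization above), after which equivariance propagates the conclusion to all of $\widetilde{\I}^{L,(p)}_{x,+}$. You should replace the unjustified identification with this equivariance-plus-base-point argument, which in turn requires the normalized $t$ from (1)--(2).
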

\begin{proof}
	Let $D_x$ be the Dieudonn\'{e} module of $\A_x[p^{\infty}]$ and $\widetilde{\widetilde{\I}}_x$ be the (trivial) $\mathcal{G}$-torsor given by \[\widetilde{\widetilde{\I}}_x:=\IIsom_{\breve{\INT}_p}((V_{\breve{\INT}_p}^\vee,s), (D_x,s_{\mathrm{cris}})).\] We fix a section $t\in \widetilde{\widetilde{\I}}_x(\breve{\INT}_p)$ such that $t^*\varphi=\sigma\circ gw$ for some $g\in \mathcal{G}(\breve{\INT}_p)$. Let $\overline{t}$ be the image of $t$ in $\widetilde{\I}_x$, one sees immediately that $$\widetilde{\I}_{x,+}=\mathcal{G}_{0,w}\cdot \overline{t}, \quad \text{ and }\quad \widetilde{\I}_{x,-}=\mathcal{G}_{0,\sigma(w)^{-1}}\cdot \overline{g\cdot t}.$$In particular, (1) and (2) hold.

	Let $\widetilde{\I}_{x,+}^L$ be the image of $\widetilde{\I}_{x,+}$ in $\IIsom_k(\oplus_i\gr_i (C^\bullet_w), \oplus_i\gr_i (C^\bullet_x))$, then $\widetilde{\I}_{x,+}^L=\widetilde{\I}_{x,+}/\mathcal{G}_{0,w}^{U}$. Similarly, we have $\widetilde{\I}_{x,+}^{L,(p)}$ and $\widetilde{\I}_{x,-}^{L}$, as well as identifications $\widetilde{\I}_{x,+}^{L,(p)}=\widetilde{\I}_{x,+}^{(p)}/\mathcal{G}_{0,w}^{U,(p)}$ and $\widetilde{\I}_{x,-}^{L}=\widetilde{\I}_{x,-}/\mathcal{G}_{0,\sigma(w)^{-1}}^U$. Now we can define a morphism \[\widetilde{\iota}:\widetilde{\I}_{x,+}^{L,(p)}\rightarrow \IIsom_k(\oplus_i\gr_i (D_\bullet^w), \oplus_i\gr_i (D_{\bullet,x}))\] as follows. It is induced by mapping $f:\oplus_i\gr_i (C^{\bullet,(p)}_w)\stackrel{\cong}{\longrightarrow}\oplus_i\gr_i (C^{\bullet,(p)}_x)$ to the composition
	\begin{subeqn}\label{def of lift iota}
		\widetilde{\iota}(f):\oplus_i\gr_i (D_\bullet^w)\stackrel{\alpha}{\longrightarrow}\oplus_i\gr_i (C^{\bullet,(p)}_w)\stackrel{f}{\longrightarrow}\oplus_i\gr_i (C^{\bullet,(p)}_x)\stackrel{\oplus_i\varphi^\mathrm{lin}_\bullet}{\longrightarrow}\oplus_i\gr_i (D_{\bullet,x}).
	\end{subeqn}
	Here $\alpha$ is the inverse of the linearization of $\oplus_i\varphi_{w,\bullet}$, and $\varphi^\mathrm{lin}_\bullet$ is the linearization of $\varphi_\bullet$.

	Noting that $\widetilde{\iota}$ is equivariant with respect to $\sigma':\mathcal{G}_{0,w}^{L,(p)}\rightarrow \mathcal{G}_{0,\sigma(w)^{-1}}^L$ as $\alpha$ is so by Lemma \ref{lemma--zip datum attached to w} (3), so to prove statement (3) here, it suffices to check that $\widetilde{\iota}$ factors through $\widetilde{\I}_{x,-}^{L}$. To see this, one can simply take $f$ to be the map induced by $\overline{t}^{(p)}$, then it is clear that $\widetilde{\iota}(f)\in \widetilde{\I}_{x,-}^{L}$.
\end{proof}
Recall that in \ref{zip in EKOR} we have constructed an algebraic zip datum
\[\mathcal {Z}_w=(\mathcal{G}_0^{\mathrm{rdt}}, \overline{P}_{J_w}, \overline{P}_{\sigma'(J_w)}, \sigma':\overline{L}_{J_w}\rightarrow \overline{L}_{\sigma'(J_w)}).\]
In this paper, we are mainly interested in structures related to $\mathcal {Z}_w$. One can pass easily from the tuple $(\widetilde{\I}_x,\widetilde{\I}_{x,+}, \widetilde{\I}_{x,-},\widetilde{\iota})$ constructed in Proposition \ref{prop--lift G-zip at point} to a certain structure related to $\mathcal {Z}_w$. More precisely, we take \begin{subeqn}\label{G-zip at point}\I_x:=\widetilde{\I}_x\times^{\mathcal{G}_0}\mathcal{G}_0^{\mathrm{rdt}},\ \  \I_{x,+}:=\widetilde{\I}_{x,+}\times^{\mathcal{G}_{0,w}}\overline{P}_{J_w},\ \  \I_{x,-}:=\widetilde{\I}_{x,-}\times^{\mathcal{G}_{0,\sigma(w)^{-1}}}\overline{P}_{\sigma'(J_w)}\end{subeqn}
and \[\iota:\I_{x,+}^{(p)}/U_{J_w}^{(p)}\rightarrow \I_{x,-}/U_{\sigma'(J_w)}\] be the isomorphism induced by $\widetilde{\iota}$. The following statement is straightforward.
\begin{corollary}\label{corollary--G-zip at point}
	The tuple $(\I_x,\I_{x,+}, \I_{x,-},\iota)$ is a $\mathcal{G}_0^{\mathrm{rdt}}$-zip of type $J_w$ over $k$.
\end{corollary}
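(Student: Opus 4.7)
The plan is to verify directly the four defining properties of a $\mathcal{G}_0^{\rdt}$-zip of type $J_w$ in the sense of Definition \ref{def--G-zip}, by pushing forward the sub-torsor structures from Proposition \ref{prop--lift G-zip at point} along the homomorphisms to $\mathcal{G}_0^{\rdt}$, $\overline{P}_{J_w}$ and $\overline{P}_{\sigma'(J_w)}$.

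The first step is to record a compatibility that is implicit in \ref{zip in EKOR} and the construction of $\widetilde{\mathcal{Z}}_w$ in \S\ref{subsec--pointwise constr}: the canonical homomorphism $\mathcal{G}_{0,w}\to\mathcal{G}_0$ composed with the quotient $\mathcal{G}_0\twoheadrightarrow\mathcal{G}_0^{\rdt}$ factors through a homomorphism $\mathcal{G}_{0,w}\to \overline{P}_{J_w}$, and analogously $\mathcal{G}_{0,\sigma(w)^{-1}}\to\overline{P}_{\sigma'(J_w)}$. Moreover these homomorphisms are compatible with the (unipotent radical, Levi) decompositions in the sense that $\mathcal{G}_{0,w}^U$ maps to $U_{J_w}$ and $\mathcal{G}_{0,w}^L$ surjects onto $\overline{L}_{J_w}$, and similarly for the ``minus'' side. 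This last point is Lemma \ref{lemma--zip datum attached to w}(2), and the factorization through $\overline{P}_{J_w}$ follows from the standard identification of the image of $L^+\mathcal{G}\cap w L^+\mathcal{G} w^{-1}$ in $\mathcal{G}_0^{\rdt}$ with the standard parabolic of type $J_w=J_K\cap\mathrm{Ad}(w^{-1})(J_K)$.

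With this in hand, the verification proceeds as follows. (1) Since $\widetilde{\I}_x$ is a $\mathcal{G}_0$-torsor, $\I_x=\widetilde{\I}_x\times^{\mathcal{G}_0}\mathcal{G}_0^{\rdt}$ is a $\mathcal{G}_0^{\rdt}$-torsor by definition. (2) The factorization above produces a canonical morphism
\[\I_{x,+}=\widetilde{\I}_{x,+}\times^{\mathcal{G}_{0,w}}\overline{P}_{J_w}\longrightarrow \widetilde{\I}_x\times^{\mathcal{G}_0}\mathcal{G}_0^{\rdt}=\I_x,\]
and $\I_{x,+}$ is a $\overline{P}_{J_w}$-torsor by the standard pushout argument; its image is precisely the $\overline{P}_{J_w}$-orbit of the image of $\widetilde{\I}_{x,+}$, so the map is a locally closed immersion identifying $\I_{x,+}$ with an $\overline{P}_{J_w}$-sub-torsor of $\I_x$. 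The analogous argument gives $\I_{x,-}\subset\I_x$ as an $\overline{P}_{\sigma'(J_w)}$-sub-torsor.

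(3) Finally, we deduce $\iota$ from $\widetilde{\iota}$. Using $\mathcal{G}_{0,w}^U\to U_{J_w}$ and $\mathcal{G}_{0,w}^L\twoheadrightarrow\overline{L}_{J_w}$, we obtain the canonical identifications
\[\I_{x,+}/U_{J_w}=\widetilde{\I}_{x,+}\times^{\mathcal{G}_{0,w}}\overline{L}_{J_w}=(\widetilde{\I}_{x,+}/\mathcal{G}_{0,w}^U)\times^{\mathcal{G}_{0,w}^L}\overline{L}_{J_w},\]
and likewise for $\I_{x,-}/U_{\sigma'(J_w)}$. Applying the functor $(-)\times^{\mathcal{G}_{0,w}^{L,(p)}}\overline{L}_{J_w}^{(p)}$ to $\widetilde{\iota}$ and using the $\sigma'$-equivariance stated in Proposition \ref{prop--lift G-zip at point}(3) together with the compatibility of $\sigma':\mathcal{G}_{0,w}^{L,(p)}\to\mathcal{G}_{0,\sigma(w)^{-1}}^L$ with $\sigma':\overline{L}_{J_w}^{(p)}\to\overline{L}_{\sigma'(J_w)}$ (Lemma \ref{lemma--zip datum attached to w}(2)), we obtain a well-defined isomorphism $\iota:\I_{x,+}^{(p)}/U_{J_w}^{(p)}\to\I_{x,-}/U_{\sigma'(J_w)}$ that is equivariant for $\sigma':\overline{L}_{J_w}\to\overline{L}_{\sigma'(J_w)}$, as required.

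The only non-formal point is the factorization $\mathcal{G}_{0,w}\to\overline{P}_{J_w}\subset\mathcal{G}_0^{\rdt}$ and its Levi/unipotent compatibilities; everything else is the usual yoga of extension of structure group for torsors. I expect this identification of images to be the main (and only) technical input, and it is essentially already encoded in Lemma \ref{lemma--zip datum attached to w} together with the definitions recalled in \ref{zip in EKOR}.
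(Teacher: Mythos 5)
The paper does not give a proof of this corollary: it treats it as an immediate consequence of the definitions in (\ref{G-zip at point}) together with the compatibilities stated in the paragraph just before (``The natural homomorphisms $\G_{0,w}\to\ov{P}_{J_w}$ and $\G_{0,\sigma(w)^{-1}}\to\ov{P}_{\sigma'(J_w)}$ are compatible with the isomorphisms $\sigma'$\ldots''), labelling it ``straightforward''. Your proposal is a correct and faithful unpacking of exactly that: you verify the four axioms of a $\G_0^{\rdt}$-zip by pushing the sub-torsor structures of Proposition \ref{prop--lift G-zip at point} forward along $\G_{0,w}\to\ov{P}_{J_w}$ and $\G_{0,\sigma(w)^{-1}}\to\ov{P}_{\sigma'(J_w)}$ and then descending $\wt\iota$ to the Levi quotients. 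You also correctly identify the single non-formal ingredient, the factorization $\G_{0,w}\to\ov{P}_{J_w}$ together with the matching of unipotent radicals and Levi quotients.

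One small inaccuracy in attribution: you cite Lemma \ref{lemma--zip datum attached to w}(2) for the assertions that $\G_{0,w}^U$ maps into $U_{J_w}$ and $\G_{0,w}^L$ surjects onto $\ov{L}_{J_w}$, but that lemma only records that $\sigma'$ induces an isomorphism $\G_{0,w}^{L,(p)}\to\G_{0,\sigma(w)^{-1}}^L$; it does not itself relate $\G_{0,w}^L$ to $\ov{L}_{J_w}$ or $\G_{0,w}^U$ to $U_{J_w}$. The parabolic containment is what the paper justifies in the proof of Lemma \ref{lemma--global lift zip +} (image of $\G_{0,w}$ in $\G_0^{\rdt}$ lies in $P_{J_w}$ by smoothness and checking $k$-points), and the fact that $\G_{0,w}^U$ lands inside $U_{J_w}$ is needed for the pushout identity $\wt\I_{x,+}\times^{\G_{0,w}}\ov{L}_{J_w}=(\wt\I_{x,+}/\G_{0,w}^U)\times^{\G_{0,w}^L}\ov{L}_{J_w}$ and deserves its own one-line justification (for instance: $\G_{0,w}^U$ is a connected unipotent normal subgroup of $\G_{0,w}$, hence its image in $\G_0^{\rdt}$ is a connected unipotent normal subgroup of the image of $\G_{0,w}$, which is contained in the unipotent radical $U_{J_w}$ of $\ov P_{J_w}$). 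With that sentence added, the argument is complete and matches the approach the paper has in mind.
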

\begin{remark}\label{remark--point g}
	Notations as in the previous proposition, let $\overline{t}^{\mathrm{rdt}}$ (resp. $\overline{g}^{\mathrm{rdt}}$) be the image of $\overline{t}$ (resp. $\overline{g}$) in $\I_x$ (resp. $\G_0^{\mathrm{rdt}}$). Let $E_{\mathcal{Z}_w}$ be the zip group attached to $\mathcal{Z}_w$, and $\mathbb{E}_x$ be the $E_{\mathcal{Z}_w}$-torsor attached to $(\I_x,\I_{x,+}, \I_{x,-},\iota)$ (see e.g. our discussions after Definition \ref{def--G-zip}). Then \[(\overline{t}^{\mathrm{rdt}},\overline{\sigma(g)}^{\mathrm{rdt}} \overline{t}^{\mathrm{rdt}})\in \mathbb{E}_x(k),\] and its image in $\G_0^{\mathrm{rdt}}$ is $\overline{\sigma(g)}^{\mathrm{rdt}}$.
\end{remark}

\subsection[The EO stratification in a KR stratum]{The EO stratification in a KR stratum}\label{subsection EO in KR}

\subsubsection[The conjugate local model]{The conjugate local model}\label{subsection conjugate loc mod} Let $G'/k((t))$ and $\mathcal{G}'/k[[t]]$ be as in subsection \ref{subsection loc models} (after Remark \ref{remark--Goertz PEL}). In particular, $w$ (resp. $\Admu$) is identified with an element (resp. a subset) in the Iwahori Weyl group of $G'$, which will be denoted by the same symbol. Let $$\eM^\vee:=\bigcup_{w\in\Admu}L^+\G'\backslash L^+\G' w L^+\G',\text{\ \ \ and\ \ \ } \eMc:=\bigcup_{w\in\Admu}L^+\G' \sigma(w)^{-1} L^+\G'/L^+\G'.$$
As in the right quotient case (cf. Corollary \ref{local model-collect}), $\eM^\vee$ is a reduced closed subscheme of $L^+\G'\backslash LG'$ of dimension $\mathrm{dim}(\eM)$. Noting that $\eMc$, with the reduced scheme structure, is the image of $\eM^\vee$ under the homeomorphism $L^+\G'\backslash LG'\rightarrow LG'/L^+\G', x\mapsto \sigma(x)^{-1}$, so it is also of dimension $\mathrm{dim}(\eM)$. Moreover, as in the proof of Lemma \ref{lemma--zip datum attached to w}, $\eMc$ is a subscheme of $\Gr(V_{k})$. The scheme $\eMc$ will be called the \emph{conjugate local model}.


We come back to notations introduced in \ref{global F-zip}. We start with the local model diagram \[\xymatrix{
	&\wt{\ES}_0\ar[ld]_\pi\ar[rd]^q&\\
	\ES_0& & \eM,
}\] which is obtained from the local model in Theorem \ref{result P-Z and K-P} by taking the special fiber.
We will look at a different map \[q^c:\widetilde{\ES}_0\rightarrow \Gr(V_{k}),\quad f\mapsto f^{-1}(D_0\subset \V_0).\] One sees easily that $q^c$ factors through $\eMc$ at the level of $k$-points, and hence factors through $\eMc$, as $\widetilde{\ES}_0$ is reduced. The induced morphism $\widetilde{\ES}_0\rightarrow \eMc$ will still be denoted by $q^c$, and the diagram \[\xymatrix{
	&\wt{\ES}_0\ar[ld]_\pi\ar[rd]^{q^c}&\\
	\ES_0& & \eMc
}\] will be called the \emph{conjugate local model diagram}. One of our main tasks in this subsection is to study basic properties of the conjugate local model diagram.

\begin{theorem}\label{conj local model diag}
	The morphism $q^c: \widetilde{\ES}_{0}\rightarrow \eMc$ is smooth.
\end{theorem}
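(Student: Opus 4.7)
The plan is to reduce smoothness of $q^c$ to that of $q$ by exploiting the universal Frobenius on de Rham cohomology to explicitly relate the conjugate filtration to the Hodge filtration, working formal-locally at each closed point.

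First, I would fix $y \in \widetilde{\ES}_0(k)$ lying over $x \in \ES_0(k)$ with KR-type $w \in \Admu_K$, and reduce to the formal neighborhood of $y$ using the local model identification (Theorem \ref{result P-Z and K-P}) together with the $\mathcal{G}_0$-torsor structure of $\pi$. Next, using the universal trivialization $f : V_k^\vee \otimes \O_{\widetilde{\ES}_0} \xrightarrow{\sim} \pi^\ast\V_0$, I would pull back the Frobenius $F : \V_0^{(p)} \to \V_0$ to a $\sigma$-linear morphism $\tilde F$ on $V_k^\vee \otimes \O_{\widetilde{\ES}_0}$. By definition $q^c(f) = \tilde F(V_k^{\vee,(p)})$, and by compatibility with the crystalline tensor (Proposition \ref{global cris}, Corollary \ref{coro--cris torsor}), over the stratum $\widetilde{\ES}_0^w$ one obtains a factorization
\[ q^c|_{\widetilde{\ES}_0^w}:\; \widetilde{\ES}_0^w \xrightarrow{\;g\;} \mathcal{G}_0 \xrightarrow{\;h \mapsto h\cdot D_0^w\;} \eMc^w, \]
where $D_0^w$ is the standard conjugate filtration from Lemma \ref{lemma--zip datum attached to w} and $g$ is the $\mathcal{G}_0$-valued regular function encoding $\tilde F$.

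The orbit map in this factorization is smooth, since its isotropy subgroup $\mathcal{G}_{0,\sigma(w)^{-1}}$ is smooth (by the analog of Lemma \ref{smooth stab} for $\eMc$, established via the equi-characteristic translation of \S\ref{subsection conjugate loc mod}). Smoothness of the $\mathcal{G}_0$-valued function $g$ would then follow from the versal deformation description of \S\ref{subsubsec-alter local model}: since $\Psi'_{R_G} \equiv \Psi_0 \otimes 1 \pmod{\mathfrak{a}_E}$ by Remark \ref{key remark}(2), the function $g$ arises as the natural smooth $\mathcal{G}_0$-torsor structure perturbed by the regular display data. To pass from stratumwise smoothness to smoothness of $q^c$ globally, I would exploit that $q^c$ respects the KR indexing, namely $q^c(\widetilde{\ES}_0^w) \subset \eMc^w$, and that both sides are reduced (Theorem \ref{the local model}) of matching dimensions, together with the $\mathcal{G}_0$-equivariance of $q^c$.

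The main obstacle is making the informal identification $\tilde F = g \cdot \sigma(w)$ precise as a morphism of schemes rather than merely on $k$-points, so that smoothness of $g$ can be rigorously transported from the smooth local-model geometry of $q$: this requires simultaneously invoking the crystalline tensor globalized via Proposition \ref{global cris}, the display-level description of $\widetilde{\ES}_0$ from Corollary \ref{coro--cris torsor}, and the inversion-plus-Frobenius comparison between $\eM$ and $\eMc$ of \S\ref{subsection conjugate loc mod}. A subtle secondary point is that, because $\eMc$ is not smooth, one cannot argue purely by tangent-space dimension counts; the stratumwise orbit description is essential.
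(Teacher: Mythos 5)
Your overall plan — work formal-locally at a closed point of $\ES_0$, invoke the versal-deformation description, and exploit Remark \ref{key remark}(2) to control the first-order variation of the conjugate filtration — isolates the same key input as the paper's proof, but the paper's execution is substantially more direct than the route you propose. The paper fixes a tensor-preserving section $t$ of $\pi$ over $\Spec O_{\ES_0,x}^\wedge\cong\Spec R_{G,0}$ and shows, via Remark \ref{key remark}(2), that the conjugate filtration $t^{-1}(\mathrm{D}_0)$ moves to first order exactly by the universal element $\mathfrak{u}$. This gives at once that $q^c\circ t:\Spec R_{G,0}\to\eMc$ is an isomorphism on tangent spaces, hence (combined with $\dim\ES_0=\dim\eMc$ and reducedness) an isomorphism of complete local rings; smoothness of $q^c$ then drops out from the $\mathcal{G}_0$-torsor structure of $\pi$. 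There is no factorization through $\mathcal{G}_0$, no reference to the KR stratum of $x$, and no stratumwise patching. In particular, your concluding remark that ``one cannot argue purely by tangent-space dimension counts'' is backwards: the paper's argument is exactly a tangent-space computation plus a dimension count, and the non-smoothness of $\eMc$ does not obstruct this because what is established along $t$ is an \emph{isomorphism} of complete local rings, not mere surjectivity on tangent spaces.

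The route you propose has a genuine gap at the final step: smoothness of the restrictions $q^c|_{\widetilde{\ES}_0^w}:\widetilde{\ES}_0^w\to\eMc^w$ does \emph{not} imply smoothness of $q^c:\widetilde{\ES}_0\to\eMc$. Restricting to a locally closed stratum discards the tangent directions transversal to that stratum, which is precisely where smoothness can fail: consider $t\mapsto t^2:\mathbb{A}^1\to\mathbb{A}^1$ with the stratification $\mathbb{A}^1=\{0\}\sqcup\mathbb{G}_m$ — smooth on each stratum, yet not smooth globally. Reducedness, dimension-matching, and $\mathcal{G}_0$-equivariance do not repair this, since the $\mathcal{G}_0$-action preserves each stratum of both $\widetilde{\ES}_0$ and $\eMc$ and so cannot supply transversal tangent vectors; one must compute the \emph{full} derivative of $q^c$ at a point, which is exactly what the paper's section $t$ and Remark \ref{key remark}(2) accomplish. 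A secondary problem is the $\mathcal{G}_0$-valued function $g$ in your factorization, which you rightly flag as the ``main obstacle'': it is only well-defined up to $\mathcal{G}_{0,\sigma(w)^{-1}}$, and realizing it as a smooth morphism of schemes is in essence the torsor structure of Lemma \ref{lemma--global lift zip -}, which the paper proves \emph{afterwards} and \emph{using} Theorem \ref{conj local model diag} — so proceeding this way risks circularity. The paper sidesteps this entirely by manipulating the conjugate filtration as a sub-sheaf of $\V_0$ rather than extracting a Frobenius matrix $g$.
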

\begin{proof}
	We will show that, for each closed point $x\in \ES_0(k)$, denoting by $O_{\ES_0,x}^\wedge$ the attached complete local ring, there is a section $t:\Spec O_{\ES_0,x}^\wedge \rightarrow\widetilde{\ES}_{0}$ such that the composition $\Spec O_{\ES_0,x}^\wedge \stackrel{t}{\rightarrow}\widetilde{\ES}_{0}\stackrel{q^c}{\rightarrow} \eMc$ induces an isomorphism of complete local rings. 
	
	Let $R_G$ and $R$ be as in the discussions after Lemma \ref{lemma--torsor M-1}, and $R_{G,0}$ and $R_0$ be their reduction modulo $p$. Then there is an isomorphism $ O_{\ES_0,x}^\wedge\cong R_{G,0}$ such that $\A[p^\infty]|_{O_{\ES_0,x}^\wedge}\cong \mathcal{B}|_{R_{G,0}}$, where $\mathcal{B}$ is the versal $p$-divisible group over $R_E$ determined by the Dieudonn\'{e} display $(M_{R_E}, M_{R_E,1}, \Psi)$ as in loc. cit. Let $\mathrm{C}^1\subset M_{R_0}$ and $\mathrm{D}_0\subset M_{R_0}$ be the Hodge filtration and conjugate filtration respectively. We fix an isomorphism $t:V^\vee_k\rightarrow M_{R_0,x}$ respecting tensors, and view it as an isomorphism over $R_0$ by base-change. The filtration $t^{-1}(\mathrm{D}_0)\subset V^\vee_{R_0}$ induces a morphism $\Spec R_0\rightarrow \Gr(V_k)$. 
	
	Let $m_{R_0}$ be the maximal ideal of $R_0$, then by Remark \ref{key remark} (2), we have $$\big(t^{-1}(\mathrm{D}_0)\subset V^\vee_{R_0}\big)=\big((\mathfrak{u}\cdot t^{-1}(\mathrm{D}_{0,x}))\subset V^\vee_{R_0}\big)$$ over $R_0/m_{R_0}^2$. So $\Spec R_0\rightarrow \Gr(V_k)$  is an isomorphism at the level of complete local rings, and hence the induced morphism $\Spec O_{\ES_0,x}^\wedge \stackrel{t}{\rightarrow}\widetilde{\ES}_{0}\stackrel{q^c}{\rightarrow} \eMc$ is an isomorphism onto its image after taking completion. Noting that $\ES_0$ and $\eMc$ have the same dimension, the above morphism induces an isomorphism of complete local rings.
\end{proof}

Recall our notations in \ref{zip in EKOR}. Let $\mathcal{G}_0=\mathcal {G}\otimes k$ and $\mathcal{G}_0^{\rdt}$ be the maximal reductive quotient of $\mathcal{G}_0\otimes k$. Then it is a reductive group defined over $\mathbb{F}_p$. Let $\overline{B}$ be the image in $\mathcal{G}_0^{\rdt}$ of $\breve{I}$ and $\overline{T}$ be a maximal torus of $\overline{B}$. For $w\in {}^K\IW^K$ which is a minimal length representative of a member in $\Admu_K$, we set \[\sigma'=\sigma\circ \mathrm{Ad}(w),\quad \textrm{and}\quad J_w=J_K\cap \mathrm{Ad}(w^{-1})(J_K),\] where $J_K\subset \IW$ is the set of simple reflections in $W_K$. Let $\overline{L}_{J_w}\subset \mathcal{G}_0^{\rdt}$ (resp. $\overline{P}_{J_w}\subset \mathcal{G}_0^{\rdt}$) be the standard Levi subgroup (resp. parabolic subgroup) of type $J_w$, and $\overline{L}_{\sigma'(J_w)}\subset \mathcal{G}_0^{\rdt}$ (resp. $\overline{P}_{\sigma'(J_w)}\subset \mathcal{G}_0^{\rdt}$) be the standard Levi subgroup (resp. parabolic subgroup) of type $\sigma'(J_w)$. Then we have a natural isogeny $\overline{L}_{J_w}\rightarrow \overline{L}_{\sigma'(J_w)}$ which is again denoted by $\sigma'$. The tuple $$\mathcal {Z}_w:=(\mathcal{G}_0^{\rdt}, \overline{P}_{J_w}, \overline{P}_{\sigma'(J_w)}, \sigma':\overline{L}_{J_w}\rightarrow \overline{L}_{\sigma'(J_w)})$$ is an algebraic zip datum. Set \[E_{\mathcal {Z}_w}=(\overline{L}_{J_w})_\sigma(U_{J_w}\times U_{\sigma'(J_w)}),\] where $U_{J_w}$ (resp. $U_{\sigma'(J_w)}$) is the unipotent radical of $\ov{P}_{J_w}$ (resp. $\ov{P}_{\sigma'(J_w)}$). It has a left action on $\mathcal{G}_0^{\rdt}$, and hence induces a quotient stack $[E_{\mathcal {Z}_w}\backslash\mathcal{G}_0^{\mathrm{rdt}}]$.

Since we will work over $k$ and with a fixed KR stratum $\ES_0^w$, sometimes we will simply write $L_{J_w}$ (resp. $L_{\sigma'({J_w})}$, $P_{J_w}$, $P_{\sigma'(J_w)}$) for $\overline{L}_{J_w}$ (resp. $\overline{L}_{\sigma'(J_w)}$, $\overline{P}_{J_w}$, $\overline{P}_{\sigma'(J_w)}$). We will construct a morphism \[\zeta_w:\ES_0^w\rightarrow [E_{\mathcal {Z}_w}\backslash\mathcal{G}_0^{\mathrm{rdt}}].\] As explained in  \ref{subsection-zip data}, it is equivalent to construct a $\mathcal{G}_0^{\mathrm{rdt}}$-zip of type $J_w$ over $\ES_0^w$.
We will also assume, from now on, that $\ES_0^w\neq\emptyset$, as otherwise, $\zeta$ exists automatically. In subsection \ref{subsec--pointwise constr} Corollary \ref{corollary--G-zip at point}, for each $x\in \ES_0^w(k)$, we have constructed a $\mathcal{G}_0^{\mathrm{rdt}}$-zip of type $J_w$. Here we will need a family version.

To construct $\mathcal{G}_0^{\mathrm{rdt}}$-zips, we will slightly change the notations and write $\wt{\I}=\wt{\ES}_0$.
We start with the local model diagram \[\xymatrix{
	&\widetilde{\I}\ar[ld]_\pi \ar[rd]^q &\\ 
	\ES_0 & &\eM,
}\] 
where $\pi$ is a $\G_0$-torsor and $q$ is $\G_0$-equivariant.
The $\mathcal{G}_0$-orbit $\eMw\subset \eM$ induces a $\mathcal{G}_0$-torsor \[\widetilde{\I}^w:=q^{-1}(\eMw)=\widetilde{\I}|_{\ES_0^w}\] over $\ES_0^w$. 
We have the induced diagram
\[\xymatrix{
	&\widetilde{\I}^w\ar[ld]_\pi \ar[rd]^q &\\ 
	\ES_0^w & &\eM^w.
}\] 
We get a $\mathcal{G}_0^{\mathrm{rdt}}$-torsor over $\ES_0^w$ by simply taking
\begin{subeqn}\label{G0rdt-torsor}\I^w:=\widetilde{\I}^w\times^{\mathcal{G}_0}\mathcal{G}_0^{\mathrm{rdt}}.\end{subeqn}
We remark that $\I^w$ is the pull back to $\ES_0^w$ of the $\mathcal{G}_0^{\mathrm{rdt}}$-torsor $\I:=\widetilde{\I}\times^{\mathcal{G}_0}\mathcal{G}_0^{\mathrm{rdt}}\ra \ES_0$.

Noting that $w$ is an element in $\eMw(k)$, we consider $q^{-1}(w)$, the fiber of $w$ in $\widetilde{\I}^w$. We also consider the variation of the structure defined in Proposition \ref{prop--lift G-zip at point} (1), i.e. we set $$\widetilde{\I}^w_{+}:=\IIsom_{\ES_0^w}((C^\bullet_w,s), (C^\bullet,s_{\dr}))\subset \widetilde{\I}^w$$

\begin{lemma}\label{lemma--global lift zip +}
	We have $q^{-1}(w)=\widetilde{\I}^w_{+}$ which is a $\mathcal{G}_{0,w}$-torsor over $\ES_0^w$. Here $\mathcal{G}_{0,w}\subset \mathcal{G}_0$ is the stabilizer of $w$ as before.
\end{lemma}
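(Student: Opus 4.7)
The plan is to identify $q^{-1}(w)$ with $\widetilde{\I}^w_+$ by unwinding definitions, and then to deduce the $\mathcal{G}_{0,w}$-torsor property from the $\G_0$-equivariance of $q$ together with the homogeneity of $\eM^w$.

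First I will trace through the morphism $q$. By Theorem~\ref{result P-Z and K-P}\,(3), $q$ sends an isomorphism $f:(V^\vee_{\Z_p},s)\otimes\O\isoto(\V,s_{\dr})$ to the subsheaf $f^{-1}(\V^1)$; after restriction to $\ES_0^w$ and to the special fibre of the local model, this becomes $q(f)=f^{-1}(C^1)$. Via the embedding $\eM\hookrightarrow \GL(V_{\Z_p})/P'$ recalled in \ref{subsubsec-alter local model}, the $k$-point $w\in \eM^w$ corresponds precisely to the filtration $C^1_w=\ker(w_k)\subset V^\vee_k$ that underlies the standard $F$-zip introduced in \ref{subsec--pointwise constr}. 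Hence the condition $q(f)=w$ is equivalent to the equality $f^{-1}(C^1)=C^1_w$, i.e., $f$ carries $C^\bullet_w$ isomorphically to $C^\bullet$. Since sections of $\widetilde{\I}^w\subset \widetilde{\I}$ already respect the tensor $s$ by construction, this is exactly the defining condition of $\widetilde{\I}^w_+=\IIsom_{\ES_0^w}\bigl((C^\bullet_w,s),(C^\bullet,s_{\dr})\bigr)$. This yields $q^{-1}(w)=\widetilde{\I}^w_+$ as subschemes of $\widetilde{\I}^w$.

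Next I will deduce the torsor property. Recall that $\pi:\widetilde{\I}^w\to\ES_0^w$ is a $\mathcal{G}_0$-torsor and $q$ is $\mathcal{G}_0$-equivariant. By Corollary~\ref{local model-collect}\,(2), $\eM^w$ is a single $\mathcal{G}_0$-orbit with smooth stabilizer $\mathcal{G}_{0,w}$ at $w$, giving a $\mathcal{G}_0$-equivariant identification $\eM^w\cong \mathcal{G}_0/\mathcal{G}_{0,w}$. Trivialising $\widetilde{\I}^w$ étale-locally on $\ES_0^w$, the restriction of $q$ becomes the base change of the natural projection $\mathcal{G}_0\to\mathcal{G}_0/\mathcal{G}_{0,w}$, whose fibre over $w$ is $\mathcal{G}_{0,w}$. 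Thus $\widetilde{\I}^w_+$ is étale-locally isomorphic to $\mathcal{G}_{0,w}\times\ES_0^w$, and it inherits a free and transitive $\mathcal{G}_{0,w}$-action from the $\mathcal{G}_0$-action on $\widetilde{\I}^w$; smoothness of $\mathcal{G}_{0,w}$ then ensures that $\widetilde{\I}^w_+\to\ES_0^w$ is indeed a $\mathcal{G}_{0,w}$-torsor.

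The main obstacle, such as it is, lies in pinning down the correspondence between the abstract element $w\in \Admu_K$ and its filtration-theoretic avatar $C^1_w\subset V^\vee_k$ entering the definition of $\widetilde{\I}^w_+$; once that matching is made, both the set-theoretic identification and the torsor conclusion are formal consequences of the local model diagram and the homogeneity of $\eM^w$ already established in Corollary~\ref{local model-collect}.
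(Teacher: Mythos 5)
Your proof is correct but genuinely differs from the paper's in both halves. For the identification $q^{-1}(w)=\widetilde{\I}^w_+$, you directly unwind the definition of $q$: both are cut out as subschemes of $\widetilde{\I}^w$ by the same functorial condition $f^{-1}(C^1)=C^1_w$, once the $k$-point $w\in\eM^w$ is identified with the filtration $C^1_w$ under the embedding of $\eM$ into the finite-type Grassmannian. The paper instead argues indirectly: $\widetilde{\I}^w_+\hookrightarrow q^{-1}(w)$ is a closed immersion, the two schemes have the same $k$-points fibre by fibre over $\ES_0^w$, and $q^{-1}(w)$ is smooth (hence reduced) because $q$ is smooth by Theorem~\ref{result P-Z and K-P}, so equality follows. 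For the torsor property, the paper notes that the $\G_{0,w}$-action is free and then checks flatness of $\widetilde{\I}^w_+\to\ES_0^w$ by miracle flatness: both are smooth varieties, and one compares dimensions using Corollary~\ref{local model-collect}. You argue by \'etale-local triviality instead, which is more geometric, but one step is glossed over: after trivialising $\widetilde{\I}^w$ over $U\to\ES_0^w$, the restriction of $q$ is $(u,g)\mapsto g\cdot q_0(u)$ for a section $q_0:U\to\eM^w$, so $q^{-1}(w)|_U$ is a \emph{twist} of $U\times\G_{0,w}$ rather than literally the base change of $\G_0\to\G_0/\G_{0,w}$. To obtain the untwisted picture one must further lift $q_0$ through the smooth surjection $\G_0\to\eM^w$ (possible after another \'etale base change precisely because $\G_{0,w}$ is smooth) and renormalise the trivialisation so that $q_0$ is the constant map to $w$; equivalently, one can observe directly that $q^{-1}(w)|_U$ is the pullback along $q_0$ of a $\G_{0,w}$-torsor. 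With this small patch your route is valid and offers a cleaner, more conceptual alternative to the paper's dimension count.
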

\begin{proof}
	The natural morphism $\widetilde{\I}^w_{+}\rightarrow q^{-1}(w)$ is a closed immersion, as they are both closed subschemes of $\widetilde{\I}^w$. For $z\in \ES_0^w(k)$, we set $\widetilde{\I}^w_{+,z}$ and $q^{-1}(w)_z$ to be its fibers in $\widetilde{\I}^w_{+}$ and $q^{-1}(w)$ respectively. It is easy to see that $\widetilde{\I}^w_{+,z}(k)=q^{-1}(w)_z(k)$, and hence $\widetilde{\I}^w_{+}= q^{-1}(w)$ as $q^{-1}(w)$ is a smooth variety.

	The $\mathcal{G}_{0,w}$-action on $\widetilde{\I}^w_{+}$ is by definition faithful and free, so to see that it is a $\mathcal{G}_{0,w}$-torsor over $\ES_0^w$, we only need to check the flatness of $\pi:\widetilde{\I}^w_{+}\rightarrow \ES_0^w$. Noting that they are both smooth varieties, we only need to check that $$\mathrm{dim}(\widetilde{\I}^w_{+})=\mathrm{dim}(\ES_0^w)+\mathrm{dim}(\widetilde{\I}^w_{+,z}),\ \ \ \ \text{ for all }z\in \ES_0^w(k).$$
	We have $$\mathrm{dim}(\ES_0^w)=\mathrm{dim}(\eMw),\  \mathrm{dim}(\widetilde{\I}^w_{+,z})=\mathrm{dim}(\mathcal{G}_{0,w})\ \text{ and }\ \mathrm{dim}(\widetilde{\I}^w_{+})=\mathrm{dim}(\eMw)+\mathrm{dim}(\mathcal{G}_{0,w}),$$ where the second equality follows from Proposition \ref{prop--lift G-zip at point} (1),
	so the above equality is clear.
\end{proof}

The group $\mathcal{G}_{0,w}$ is smooth, and hence its image in $\mathcal{G}_0^{\mathrm{rdt}}$ lies in $P_{J_w}$, as it is so for $k$-points. We get as in (\ref{G-zip at point}) a $P_{J_w}$-torsor by taking
\begin{subeqn}\label{PJ-torsor}\I^w_+:=q^{-1}(w)\times^{\mathcal{G}_{0,w}}P_{J_w}=\widetilde{\I}^w_+\times^{\mathcal{G}_{0,w}}P_{J_w}.\end{subeqn}

\begin{remark}\label{remark--Grdt zip 1}Let us consider the composition \[\widetilde{\I}^w\rightarrow \eMw=\mathcal{G}_0/\mathcal{G}_{0,w}\twoheadrightarrow \mathcal{G}_0^{\mathrm{rdt}}/P_{J_w}.\] It is equivariant with respect to the action of $\mathcal{G}_0$ on $\widetilde{\I}^w$ and that of $\mathcal{G}_0^{\mathrm{rdt}}$ on $\mathcal{G}_0^{\mathrm{rdt}}/P_{J_w}$, and hence induces a $\mathcal{G}_0^{\mathrm{rdt}}$-equivariant morphism \[q_{\#}:\I^w=\widetilde{\I}^w\times^{\mathcal{G}_0}\mathcal{G}_0^{\mathrm{rdt}}\rightarrow\mathcal{G}_0^{\mathrm{rdt}}/P_{J_w}.\] One checks immediately that $\I^w_+=q_{\#}^{-1}(\overline{w})$, where $\overline{w}$ is the image of $w$ in $\mathcal{G}_0^{\mathrm{rdt}}/P_{J_w}$.
\end{remark}
Similarly, we can consider the conjugate local model diagram \[\xymatrix{
	&\widetilde{\I}\ar[ld]_\pi \ar[rd]^{q^c} &\\ 
	\ES_0 & &\eMc.
}\]   Let $\eM^{c,w}$ be the $\mathcal{G}_0$-orbit of the point $\varpi$ corresponding to the filtration $D_\bullet^w$. Then we have $\widetilde{\I}^w=q^{c,-1}(\eM^{c,w})$. We also have $q^{c,-1}(\varpi)$, the fiber of $\varpi$ in $\widetilde{\I}^w$ and \[\widetilde{\I}^w_{-}:=\IIsom_{\ES_0^w}((D_\bullet^w,s), (D_\bullet,s_{\dr}))\subset \widetilde{\I}^w.\] Based on Theorem \ref{conj local model diag} and Proposition \ref{prop--lift G-zip at point} (2), the same proof of Lemma \ref{lemma--global lift zip +} implies the following.
\begin{lemma}\label{lemma--global lift zip -}
	We have $q^{c,-1}(\varpi)=\widetilde{\I}^w_{-}$ which is a $\mathcal{G}_{0,\sigma(w)^{-1}}$-torsor over $\ES_0^w$. Here $\mathcal{G}_{0,\sigma(w)^{-1}}\subset \mathcal{G}_0$ is as in the second paragraph of \ref{subsec--pointwise constr}.
\end{lemma}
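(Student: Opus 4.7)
The plan is to transport the proof of Lemma \ref{lemma--global lift zip +} essentially verbatim to the conjugate setting, with Theorem \ref{conj local model diag} playing the role of the local model diagram and Proposition \ref{prop--lift G-zip at point} (2) replacing part (1).

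First I would establish the scheme-theoretic equality $q^{c,-1}(\varpi) = \widetilde{\I}^w_{-}$. Both are closed subschemes of $\widetilde{\I}^w$, so the tautological inclusion $\widetilde{\I}^w_{-}\hookrightarrow q^{c,-1}(\varpi)$ is a closed immersion. Pointwise, for any $z\in\ES_0^w(k)$, Proposition \ref{prop--lift G-zip at point} (2) identifies $\widetilde{\I}^w_{-,z}$ with the set of trivializations carrying $D_{\bullet,z}$ to $D_\bullet^w$, which is exactly $q^{c,-1}(\varpi)_z$. Since Theorem \ref{conj local model diag} asserts that $q^c$ is smooth, the fiber $q^{c,-1}(\varpi)$ is smooth and in particular reduced, so equality on $k$-points upgrades to the required equality of subschemes of $\widetilde{\I}^w$.

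Next I would verify the torsor structure. Freeness and faithfulness of the $\mathcal{G}_{0,\sigma(w)^{-1}}$-action are immediate from the $\IIsom$-definition. For flatness of $\pi\colon\widetilde{\I}^w_{-}\to\ES_0^w$, both source and target are smooth (the source because it is a fiber of the smooth morphism $q^c$ restricted above the smooth $\mathcal{G}_0$-orbit $\eM^{c,w}$), so by miracle flatness it suffices to check equidimensionality. Using that $\widetilde{\I}^w$ is a $\mathcal{G}_0$-torsor over $\ES_0^w$, that $q^c\colon\widetilde{\I}^w\to\eM^{c,w}$ is smooth, together with $\dim\eM^{c,w}=\dim\eMw=\dim\ES_0^w$ from subsection \ref{subsection conjugate loc mod} and $\dim\mathcal{G}_0=\dim\mathcal{G}_{0,\sigma(w)^{-1}}+\dim\eM^{c,w}$, one computes
\[\dim\widetilde{\I}^w_{-} \;=\; \dim\widetilde{\I}^w - \dim\eM^{c,w} \;=\; \dim\ES_0^w + \dim\mathcal{G}_{0,\sigma(w)^{-1}},\]
which matches $\dim\widetilde{\I}^w_{-,z}=\dim\mathcal{G}_{0,\sigma(w)^{-1}}$ from Proposition \ref{prop--lift G-zip at point} (2) and so yields the required fiberwise dimension.

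The only delicate point is making the identification $q^{c,-1}(\varpi)=\widetilde{\I}^w_{-}$ scheme-theoretic rather than merely set-theoretic; this is precisely what the smoothness assertion of Theorem \ref{conj local model diag} is there to supply, since without reducedness of the fiber there would be no direct route from pointwise coincidence to equality as subschemes of $\widetilde{\I}^w$.
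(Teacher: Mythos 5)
Your proposal is correct and is essentially the paper's own argument: the paper simply states that ``based on Theorem \ref{conj local model diag} and Proposition \ref{prop--lift G-zip at point} (2), the same proof of Lemma \ref{lemma--global lift zip +} implies the followings,'' and your write-up is precisely that transported proof, with the closed-immersion-plus-reducedness step and the miracle-flatness dimension count matching the argument used for Lemma \ref{lemma--global lift zip +}.
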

We get, as above, a $P_{\sigma'(J_w)}$-torsor by taking
\begin{subeqn}\label{PsigJ-torsor}\I^w_-:=q^{c,-1}(\varpi)\times^{\mathcal{G}_{0,\sigma(w)^{-1}}}P_{\sigma'(J_w)}=\widetilde{\I}^w_-\times^{\mathcal{G}_{0,\sigma(w)^{-1}}}P_{\sigma'(J_w)}.\end{subeqn}
Moreover, the composition \[\widetilde{\I}^w\rightarrow \eM^{c,w}=\mathcal{G}_0/\mathcal{G}_{0,\sigma(w)^{-1}}\twoheadrightarrow \mathcal{G}_0^{\mathrm{rdt}}/P_{\sigma'(J_w)}\] induces a $\mathcal{G}_0^{\mathrm{rdt}}$-equivariant morphism \[q^c_{\#}:\I^w=\widetilde{\I}^w\times^{\mathcal{G}_0}\mathcal{G}_0^{\mathrm{rdt}}\rightarrow\mathcal{G}_0^{\mathrm{rdt}}/P_{\sigma'(J_w)},\] and we have $\I^w_-=q_{\#}^{c,-1}(\overline{\varpi})$, where $\overline{\varpi}$ is the image of $\varpi$ in $\mathcal{G}_0^{\mathrm{rdt}}/P_{\sigma'(J_w)}$.

Notations as in Proposition \ref{prop--lift G-zip at point}, we consider the map $$\widetilde{\iota}:\widetilde{\I}_{+}^{w,(p)}/\mathcal{G}_{0,w}^{U,(p)}\rightarrow \IIsom_{\ES_0^w}(\oplus_i\gr_i (D_\bullet^w), \oplus_i\gr_i (D_{\bullet}))$$ defined as in (\ref{def of lift iota}).
\begin{lemma}\label{lemma--global lift zip}
	$\widetilde{\iota}$ induces an isomorphism $\widetilde{\I}_{+}^{w,(p)}/\mathcal{G}_{0,w}^{U,(p)}\rightarrow \widetilde{\I}_{-}^w/\mathcal{G}_{0,\sigma(w)^{-1}}^U$ which is equivariant with respect to the isomorphism $\sigma':\mathcal{G}_{0,w}^{L,(p)}\rightarrow \mathcal{G}_{0,\sigma(w)^{-1}}^L$.
\end{lemma}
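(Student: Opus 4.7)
The plan is to deduce the lemma from the pointwise analogue in Proposition \ref{prop--lift G-zip at point}~(3), promoted to a family via reducedness together with Lemma \ref{lemma--zip datum attached to w}~(3).

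First I would observe that $\widetilde{\iota}$ is already defined on the whole of $\widetilde{\I}_{+}^{w,(p)}/\mathcal{G}_{0,w}^{U,(p)}$ by the same formula (\ref{def of lift iota}) applied in families: the linearization of $\oplus_i\varphi^w_\bullet$ is an isomorphism of $k$-vector spaces (hence is invertible when pulled back to $\ES_0^w$), while $\oplus_i\varphi^{\mathrm{lin}}_\bullet$ is an isomorphism of vector bundles on $\ES_0^w$ since $(C^\bullet,D_\bullet,\varphi_\bullet)$ is an $F$-zip. So it lands a priori in $\IIsom_{\ES_0^w}(\oplus_i\gr_i(D^w_\bullet),\oplus_i\gr_i(D_\bullet))$ as stated.

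Next I would check that $\widetilde{\iota}$ factors through $\widetilde{\I}_{-}^w/\mathcal{G}_{0,\sigma(w)^{-1}}^U$. Both source and target are smooth schemes over $\ES_0^w$, and $\widetilde{\I}_{-}^w/\mathcal{G}_{0,\sigma(w)^{-1}}^U$ is a closed subscheme of $\IIsom_{\ES_0^w}(\oplus_i\gr_i(D^w_\bullet),\oplus_i\gr_i(D_\bullet))$, cut out by the condition of respecting the tensor $s_{\dr}$ on each graded piece. Since $\ES_0^w$ is reduced (being smooth), it suffices to verify the factorization on $k$-points, and this is exactly the content of Proposition \ref{prop--lift G-zip at point}~(3) applied pointwise.

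Equivariance with respect to $\sigma':\mathcal{G}_{0,w}^{L,(p)}\to \mathcal{G}_{0,\sigma(w)^{-1}}^L$ follows from the very definition of $\widetilde{\iota}$: in the composition (\ref{def of lift iota}) the middle map $f$ carries the $\mathcal{G}_{0,w}^{L,(p)}$-action, while the outer maps $\alpha$ and $\oplus_i\varphi^{\mathrm{lin}}_\bullet$ intertwine $\sigma'$ by Lemma \ref{lemma--zip datum attached to w}~(3) (applied to $\widetilde{\mathcal{Z}}_w$ as opposed to merely $\mathcal{Z}_w$). Finally, to conclude that the resulting morphism
\[
\widetilde{\I}_{+}^{w,(p)}/\mathcal{G}_{0,w}^{U,(p)}\longrightarrow \widetilde{\I}_{-}^w/\mathcal{G}_{0,\sigma(w)^{-1}}^U
\]
is an isomorphism, I would note that source and target are respectively $\mathcal{G}_{0,w}^{L,(p)}$- and $\mathcal{G}_{0,\sigma(w)^{-1}}^L$-torsors over $\ES_0^w$ (by Lemmas \ref{lemma--global lift zip +} and \ref{lemma--global lift zip -} after quotienting by the unipotent radicals), and $\widetilde{\iota}$ is equivariant along the isogeny $\sigma'$. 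Thus on fibres $\widetilde{\iota}$ is a bijection of $\mathcal{G}_{0,w}^{L,(p)}$-torsors under $\sigma'$, i.e.\ a pullback of $\sigma'$ itself, which is an isomorphism because $\mathcal{G}_{0,w}^L$ is smooth. The main (mild) obstacle is the factorization step: one must recognise that the tensor $s_{\dr}$ that appears globally on $\V_0$ is compatible with the construction pointwise, but this is automatic from the very definition of $\widetilde{\I}^w_\pm$ together with the fact that $s_{\dr,x}$ is the fibre at $x$ of $s_{\dr}$.
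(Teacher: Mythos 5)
Your proof matches the paper's argument step for step: factorization is checked on $k$-points via Proposition~\ref{prop--lift G-zip at point}~(3) and then promoted to the whole scheme by reducedness of the source, equivariance is obtained exactly as in the pointwise case from Lemma~\ref{lemma--zip datum attached to w}~(3), and the isomorphism property follows because $\widetilde{\iota}$ is an equivariant map of torsors along $\sigma'$. One small correction to the last step: the map $\sigma'\colon\mathcal{G}_{0,w}^{L,(p)}\to\mathcal{G}_{0,\sigma(w)^{-1}}^L$ is an isomorphism by Lemma~\ref{lemma--zip datum attached to w}~(2), not merely because $\mathcal{G}_{0,w}^L$ is smooth; the untwisted isogeny $\mathcal{G}_{0,w}^L\to\mathcal{G}_{0,\sigma(w)^{-1}}^L$ has vanishing differential and is certainly not an isomorphism, so the Frobenius twist of the source is essential here.
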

\begin{proof}
	By Proposition \ref{prop--lift G-zip at point} (3), the induced map (by $\widetilde{\iota}$) on the sets of $k$-points factors through $\widetilde{\I}_{-}^w/\mathcal{G}_{0,\sigma(w)^{-1}}^U\subset \IIsom_{\ES_0^w}(\oplus_i\gr_i (D_\bullet^w), \oplus_i\gr_i (D_{\bullet}))$, so does $\widetilde{\iota}$. By the same reason as in the proof of [loc. cit], $\widetilde{\iota}$ is equivariant with respect to the isomorphism $\sigma'$, and hence is an isomorphism.
\end{proof}
As in the pointwise case, the following statement is straightforward.
\begin{corollary}\label{corollary--G-zip on KR}
	Let $\iota:\I_{+}^{w,(p)}/U_{J_w}^{(p)}\rightarrow \I_{-}^w/U_{\sigma'(J_w)}$ be the isomorphism induced by $\widetilde{\iota}$. Then the tuple $(\I^w,\I_{+}^{w}, \I_{-}^w,\iota)$ is a $\mathcal{G}_0^{\mathrm{rdt}}$-zip of type $J_w$.
\end{corollary}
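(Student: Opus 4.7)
The plan is to show that everything reduces to an assembly of the lemmas immediately preceding the corollary, with the only nontrivial task being to descend the map $\widetilde{\iota}$ from Lemma \ref{lemma--global lift zip} to the promised map $\iota$ between the Levi quotients of the parabolic torsors, and to verify its equivariance.

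First I would check the three torsor conditions. By construction $\I^w = \widetilde{\I}^w \times^{\mathcal{G}_0}\mathcal{G}_0^{\mathrm{rdt}}$ is a $\mathcal{G}_0^{\mathrm{rdt}}$-torsor over $\ES_0^w$, since $\widetilde{\I}^w$ is a $\mathcal{G}_0$-torsor (recall $\widetilde{\I}^w = q^{-1}(\eMw)$ is just the restriction of the local-model torsor). By Lemma \ref{lemma--global lift zip +}, $\widetilde{\I}_+^w$ is a $\mathcal{G}_{0,w}$-torsor, and since the map $\mathcal{G}_{0,w}\to \mathcal{G}_0^{\mathrm{rdt}}$ factors through $\overline{P}_{J_w}$ (as noted right after that lemma), the formula (\ref{PJ-torsor}) produces a $\overline{P}_{J_w}$-torsor $\I_+^w$, and the natural inclusion $\widetilde{\I}_+^w\hookrightarrow \widetilde{\I}^w$ together with $\overline{P}_{J_w}\hookrightarrow \mathcal{G}_0^{\mathrm{rdt}}$ gives an embedding $\I_+^w\hookrightarrow \I^w$ as a $\overline{P}_{J_w}$-subtorsor. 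The same argument, using Lemma \ref{lemma--global lift zip -} and (\ref{PsigJ-torsor}), produces the $\overline{P}_{\sigma'(J_w)}$-subtorsor $\I_-^w\subset \I^w$.

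Next I would construct $\iota$ by descent from $\widetilde{\iota}$. The key identification is that the contracted-product presentations (\ref{PJ-torsor}), (\ref{PsigJ-torsor}) yield canonical isomorphisms
\[
\I_+^w/U_{J_w} \;\cong\; \widetilde{\I}_+^w/\mathcal{G}_{0,w}^U, \qquad
\I_-^w/U_{\sigma'(J_w)} \;\cong\; \widetilde{\I}_-^w/\mathcal{G}_{0,\sigma(w)^{-1}}^U,
\]
because the kernel of $\mathcal{G}_{0,w}\twoheadrightarrow \mathcal{G}_{0,w}^L\hookrightarrow \overline{L}_{J_w}$ is exactly $\mathcal{G}_{0,w}^U$, which maps into $U_{J_w}$, and similarly on the minus side. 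Frobenius pull-back of these isomorphisms identifies the source and target of $\widetilde{\iota}$ in Lemma \ref{lemma--global lift zip} with the source and target of the desired $\iota$, so $\widetilde{\iota}$ descends to give
\[
\iota:\I_+^{w,(p)}/U_{J_w}^{(p)}\stackrel{\sim}{\longrightarrow} \I_-^w/U_{\sigma'(J_w)}.
\]

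Finally I would verify the equivariance required by Definition \ref{def--G-zip}. The $\overline{P}_{J_w}^{(p)}/U_{J_w}^{(p)}=\overline{L}_{J_w}^{(p)}$-action on the source and the $\overline{L}_{\sigma'(J_w)}$-action on the target are those induced from the contracted-product constructions, and by construction they extend the $\mathcal{G}_{0,w}^{L,(p)}$- and $\mathcal{G}_{0,\sigma(w)^{-1}}^L$-actions on $\widetilde{\I}_+^{w,(p)}/\mathcal{G}_{0,w}^{U,(p)}$ and $\widetilde{\I}_-^w/\mathcal{G}_{0,\sigma(w)^{-1}}^U$. Lemma \ref{lemma--global lift zip} already states that $\widetilde{\iota}$ is equivariant for the isogeny $\sigma':\mathcal{G}_{0,w}^{L,(p)}\to \mathcal{G}_{0,\sigma(w)^{-1}}^L$, and this isogeny extends (by Lemma \ref{lemma--zip datum attached to w}(2) together with the definition of $\mathcal{Z}_w$ and $E_{\mathcal{Z}_w}$ in \ref{zip in EKOR}) to the isogeny $\sigma':\overline{L}_{J_w}^{(p)}\to \overline{L}_{\sigma'(J_w)}$ of the zip datum. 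Hence $\iota$ is equivariant in the sense of Remark \ref{remark on zip-1}, so $(\I^w,\I_+^w,\I_-^w,\iota)$ is a $\mathcal{G}_0^{\mathrm{rdt}}$-zip of type $J_w$.

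The only point requiring care — and the one I would write out explicitly — is the compatibility of the three Levi structures: one must check that under the maps $\mathcal{G}_{0,w}\to \overline{P}_{J_w}$ and $\mathcal{G}_{0,\sigma(w)^{-1}}\to \overline{P}_{\sigma'(J_w)}$, the groups $\mathcal{G}_{0,w}^L$ and $\mathcal{G}_{0,\sigma(w)^{-1}}^L$ identify with the images of $\mathcal{G}_{0,w}$ and $\mathcal{G}_{0,\sigma(w)^{-1}}$ inside $\overline{L}_{J_w}$ and $\overline{L}_{\sigma'(J_w)}$, so that the descended isogeny agrees with the zip-datum isogeny. This is the main bookkeeping step but causes no real obstacle once Lemma \ref{lemma--zip datum attached to w} is in place.
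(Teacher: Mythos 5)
Your proposal is correct and follows the same route the paper has in mind, since the paper only says ``as in the pointwise case, the following statement is straightforward'' and leaves the verification to the reader: the torsor conditions come directly from Lemma \ref{lemma--global lift zip +}, Lemma \ref{lemma--global lift zip -} and the contracted-product constructions \parref{G0rdt-torsor}, \parref{PJ-torsor}, \parref{PsigJ-torsor}, while $\iota$ is obtained from $\widetilde{\iota}$ of Lemma \ref{lemma--global lift zip} and its equivariance descends to the required $\overline{L}_{J_w}^{(p)}$-equivariance.

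One small imprecision worth tightening: the claim of a ``canonical isomorphism'' $\I_+^w/U_{J_w}\cong\widetilde{\I}_+^w/\mathcal{G}_{0,w}^{U}$ is not quite right as stated, since the left side is an $\overline{L}_{J_w}$-torsor while the right side is a $\mathcal{G}_{0,w}^L$-torsor, and $\mathcal{G}_{0,w}^L$ need only embed into $\overline{L}_{J_w}$ rather than equal it. What one actually has is the canonical pushforward $\bigl(\widetilde{\I}_+^w/\mathcal{G}_{0,w}^{U}\bigr)\times^{\mathcal{G}_{0,w}^L}\overline{L}_{J_w}\cong\I_+^w/U_{J_w}$, and $\iota$ is then obtained by pushing $\widetilde{\iota}$ forward along the compatible pair of embeddings $\mathcal{G}_{0,w}^{L,(p)}\hookrightarrow\overline{L}_{J_w}^{(p)}$ and $\mathcal{G}_{0,\sigma(w)^{-1}}^{L}\hookrightarrow\overline{L}_{\sigma'(J_w)}$. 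The equivariance of $\widetilde{\iota}$ for $\sigma'$ on the smaller groups, together with the compatibility in Lemma \ref{lemma--zip datum attached to w}~(2), is exactly what guarantees the pushforward is well-defined; this is the content behind the ``compatibility of the three Levi structures'' you rightly flagged as the main bookkeeping step, so the argument is unaffected.
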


The tuple $(\I^w,\I_{+}^{w}, \I_{-}^w,\iota)$ then induces a morphism of stacks \[\zeta_w:\ES_0^w\rightarrow [E_{\mathcal {Z}_w}\backslash\mathcal{G}_0^{\mathrm{rdt}}],\] whose fibers are precisely the EKOR strata in $\ES_0^w$ by our previous discussions in \ref{subsection stratifications hodge} and \ref{zip in EKOR}.
\begin{theorem}\label{sm of zeta}
	The morphism $\zeta_w$ is smooth.
\end{theorem}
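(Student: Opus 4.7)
My strategy is to pass from $\zeta_w$ to the equivalent $E_{\mathcal{Z}_w}$-equivariant morphism $\zeta_w^{\#}\colon \mathbb{E}^w\to \mathcal{G}_0^{\rdt}$ between schemes (rather than from a scheme to a stack), lift this to a smooth cover coming from both the local model diagram and the conjugate local model diagram, and verify smoothness by an explicit formal calculation using the deformation theory of Subsection \ref{subsection deformation}.

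Concretely, the $\mathcal{G}_0^{\rdt}$-zip $(\I^w,\I_+^w,\I_-^w,\iota)$ of Corollary \ref{corollary--G-zip on KR} corresponds, via the construction after Definition \ref{def--G-zip}, to the $E_{\mathcal{Z}_w}$-torsor $\mathbb{E}^w = \I_+^w\times_{\I_-^w/U_{\sigma'(J_w)}}\I_-^w$ together with an $E_{\mathcal{Z}_w}$-equivariant morphism $\zeta_w^{\#}\colon \mathbb{E}^w\to \mathcal{G}_0^{\rdt}$. Because $\mathcal{G}_0^{\rdt}\to [E_{\mathcal{Z}_w}\backslash \mathcal{G}_0^{\rdt}]$ is a smooth cover, smoothness of $\zeta_w$ reduces to smoothness of $\zeta_w^{\#}$.

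To analyze $\zeta_w^{\#}$ I would use the sub-torsors $\wt{\I}_+^w=q^{-1}(w)$ and $\wt{\I}_-^w=q^{c,-1}(\varpi)$ provided by Lemmas \ref{lemma--global lift zip +} and \ref{lemma--global lift zip -}, which both sit inside the common $\mathcal{G}_0$-torsor $\wt{\I}^w=\wt{\ES}_0|_{\ES_0^w}$. A suitable smooth surjective cover $\wt{\mathbb{E}}^w\to \mathbb{E}^w$ can be built from $\wt{\I}_+^w\times P_{J_w}$ and $\wt{\I}_-^w\times P_{\sigma'(J_w)}$ modulo the zip compatibility; the key observation is that any $(\wt{t}_1,\wt{t}_2)\in \wt{\I}_+^w\times_{\ES_0^w}\wt{\I}_-^w$, viewed inside $\wt{\I}^w$, determines a unique $\mathfrak{g}\in \mathcal{G}_0$ with $\wt{t}_1\cdot \mathfrak{g}=\wt{t}_2$. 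This upgrades $\zeta_w^{\#}$ to a lift $\wt{d}\colon \wt{\mathbb{E}}^w\to \mathcal{G}_0$ whose composition with the smooth projection $\mathcal{G}_0\twoheadrightarrow \mathcal{G}_0^{\rdt}$ (a torsor under the unipotent radical) coincides with the pullback of $\zeta_w^{\#}$. It therefore suffices to show $\wt{d}$ is smooth, and then descend smoothness along $\wt{\mathbb{E}}^w\to \mathbb{E}^w$.

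Smoothness of $\wt{d}$ I would check at each closed point $x\in \ES_0^w(k)$ via an explicit formal calculation. By Theorem \ref{result P-Z and K-P}(3) the complete local ring at $x$ is isomorphic to that of $\eM^w$ at $w$, and by Remark \ref{key remark}(2) the universal element $\widetilde{\mathfrak{g}}$ of the versal deformation on the KR stratum can be taken in $\mathcal{G}(\W(A))$, so that the Dieudonn\'{e} display assumes the form $\Psi'_{R_G}=\widetilde{\mathfrak{g}}^{-1}\circ (\Psi_0\otimes 1)$ inside $\mathcal{G}$. Using this together with the smoothness of $q$ and $q^c$ (Theorems \ref{result P-Z and K-P}(3) and \ref{conj local model diag}), $\wt{d}$ identifies formally with a manifestly smooth map induced by translation in $\mathcal{G}_0$ by the universal element. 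The hardest step is precisely this last identification: ensuring that the combined Hodge and conjugate filtration data indeed assemble into a smooth morphism valued in $\mathcal{G}_0$, rather than only in $\GL(V_k)$. The crucial input is Remark \ref{key remark}(2), valid specifically on the KR stratum, which places the universal Frobenius lift inside $\mathcal{G}$; without this refinement one would only obtain smoothness of the analogous map into the $\GL$-version of the target, not into $[E_{\mathcal{Z}_w}\backslash \mathcal{G}_0^{\rdt}]$.
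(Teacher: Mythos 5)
Your overall reduction is the same as the paper's: pass from $\zeta_w$ to the $E_{\mathcal{Z}_w}$-equivariant map $\zeta_w^{\#}\colon\mathbb{E}^w\to\mathcal{G}_0^{\rdt}$ and verify smoothness point-by-point via the deformation theory of Subsection~\ref{subsection deformation}, with Remark~\ref{key remark}(2) supplying the explicit form of the Frobenius lift on the KR stratum. That much is correct and matches the paper. But there is a genuine gap at the last step, where you assert that (after invoking Remark~\ref{key remark}(2) and the smoothness of $q$ and $q^c$) the lifted map $\wt{d}$ ``identifies formally with a manifestly smooth map induced by translation in $\mathcal{G}_0$ by the universal element.'' This is not manifest, and the reason is exactly the Frobenius twist that is hidden in the zip compatibility.

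Concretely, once you trivialize over $A/\mathfrak{m}^2$, the map on first-order deformations takes the shape $(h,\overline{\mathfrak{g}})\mapsto h\cdot(\overline{\mathfrak{g}}^{\rdt}\overline{g}^{\rdt})$ with $h\in E_{\mathcal{Z}_w}$ and $\overline{\mathfrak{g}}$ the universal element. Surjectivity of the tangent map then requires two further facts that your write-up does not supply. First, because $\sigma'$ factors through the relative Frobenius, its differential on $L_{J_w}$ vanishes, which is what lets the two factors of $\mathrm{Lie}(E_{\mathcal{Z}_w})$ contribute \emph{independent} directions $\mathrm{Lie}(P_{J_w})$ and $\mathrm{Ad}(\overline{g}^{\rdt})\,\mathrm{Lie}(U_{\sigma'(J_w)})$ rather than a single diagonal copy. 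Second, these directions still miss $\mathrm{Lie}(U_{J_w}^-)$, and one needs the smooth cover $\eM^w\to\mathcal{G}_0^{\rdt}/P_{J_w}$ from the local model diagram to show that $d\overline{\mathfrak{g}}$ surjects onto $\mathrm{Lie}(U_{J_w}^-)$, and then a Lie-algebra spanning argument (this is the last paragraph of the proof of \cite[Theorem 4.1.2]{EOZ}, which the paper cites) to conclude the total tangent map is onto. Your proposal's ``translation'' heuristic would be perfectly fine if the zip group $E_{\mathcal{Z}_w}$ were replaced by a product of the two parabolics acting freely, but the fiber constraint through $\iota$ — even after the Frobenius has annihilated its derivative in the ``Hodge'' direction — still ties the two factors together through the common Levi, and untangling this is precisely the content of the missing computation. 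You correctly flag that the hardest step is the last identification, but your diagnosis attributes the difficulty to the passage from $\GL(V_k)$ to $\mathcal{G}_0^{\rdt}$ (resolved by Remark~\ref{key remark}(2)), whereas the actual difficulty is the tangent-space spanning just described.
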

\begin{proof}
Let $\mathbb{E}^w$ be formed by the following cartesian diagram:
\[\xymatrix{\mathbb{E}^w\ar[d]\ar[rrr] & & & \I_{-}^w\ar[d]\\
	\I_{+}^{w}\ar[r]&\I_{+}^{w,(p)}\ar[r]&\I_{+}^{w,(p)}/U_{J_w}^{(p)}\ar[r]^{\iota}&\I_{-}^w/U_{\sigma'(J_w)}},\]
where the first map of the bottom line is the relative Frobenius map. Then $\mathbb{E}^w$
 is an $E_{\mathcal{Z}_w}$-torsor over $\ES_0^w$. In particular, $\mathbb{E}^w$ is smooth over $k$. The morphism $\zeta_w$ is equivalent to an $E_{\mathcal{Z}_w}$-equivariant morphism \[\zeta_w^{\#}:\mathbb{E}^w\rightarrow \mathcal{G}_0^{\mathrm{rdt}},\] and the smoothness of $\zeta_w$ is equivalent to that of $\zeta_w^{\#}$, which is, furthermore, equivalent to the surjectivity of the induced map on tangent spaces  for all points of $\mathbb{E}^w(k)$. Noting that $\zeta_w^{\#}$ is $E_{\mathcal{Z}_w}$-equivariant, we only need to show that for any $y\in \ES_0^w(k)$ with some lifting $z\in \mathbb{E}^w(k)$, the induced map of tangent spaces \[T_z\mathbb{E}^w\rightarrow T_{\zeta_w^{\#}(z)}\mathcal{G}_0^{\mathrm{rdt}}\] is surjective.

Let $(A,\mathfrak{m})$ be the complete local ring of $\ES_0^w$ at $y$. The $E_{\mathcal{Z}_w}$-torsor $\mathbb{E}^w$ becomes trivial on $\Spec A$, and hence $\mathbb{E}^w_A\cong E_{\mathcal{Z}_w}\times_k \Spec A$ once we fix a section of it. Let $D_x$ be the Dieudonn\'{e} module of the $p$-divisible group at $x$, and $t: V^\vee_{\breve{\INT}_p}\rightarrow D_x$ be an isomorphism respecting tensors such that the linearization of Frobenius is of the form $g\circ \sigma(w)$, where $g\in \mathcal{G}(\breve{\Z}_p)$ is certain element. We denote by $\overline{t}^{\rdt}$ (resp. $\overline{g}^\rdt$) the image of $t$ (resp. $g$) in $\I$ (resp. $\mathcal{G}_0^\rdt$). By our choice, $z:=(\overline{t}^{\rdt},\overline{g}^\rdt\cdot\overline{t}^\rdt)$ is in $\mathbb{E}^w(k)$, and its image in $\mathcal{G}^{\mathrm{rdt}}_0(k)$ is $\overline{g}^\rdt$.


By Remark \ref{key remark} (2), viewing $z$ as a section over $A$, the morphism \[\zeta_w^{\#}:\mathbb{E}^w_{A/\mathfrak{m}^2}\cong E_{\mathcal{Z}_w}\times_k \Spec A/\mathfrak{m}^2\rightarrow \mathcal{G}^{\mathrm{rdt}}_0\] is given by $(h,\overline{\mathfrak{g}})\mapsto h\cdot (\overline{\mathfrak{g}}^\rdt \overline{g}^\rdt),$
here $\overline{\mathfrak{g}}$ is the image in $\mathcal{G}(A/\mathfrak{m}^2)$ of the element $\widetilde{\mathfrak{g}}^{-1}$ in [loc. cit], and $\overline{\mathfrak{g}}^\rdt$ is the image of $\overline{\mathfrak{g}}$ in $\mathcal{G}^\rdt(A/\mathfrak{m}^2)$. Let $U_{J_w}^-\subset \mathcal{G}_0^\rdt$ be the unipotent radical of the opposite parabolic subgroup of $P_{J_w}$. The natural morphism $\eMw\rightarrow \mathcal{G}_0^\rdt/P_{J_w}$ is a smooth covering, and hence the induced map on tangent spaces at identities is surjective. In particular, identifying $U_{J_w}^-$ as an open subscheme of $\mathcal{G}_0^\rdt/P_{J_w}$ containing the (image of) identity, $\overline{\mathfrak{g}}^\rdt$ induces a surjection $\mathfrak{m}/\mathfrak{m}^2\rightarrow \mathrm{Lie}(U_{J_w}^-)$. Noting that $\sigma':L_{J_w}\rightarrow L_{\sigma'(J_w)}$ is trivial on elements of the form $\mathrm{id}+\mathrm{Lie}(L_{J_w})$, by the last paragraph of the proof of \cite[Theorem 4.1.2]{EOZ}, the induced map on tangent spaces $T_z\mathbb{E}^w\rightarrow T_{\overline{g}^\rdt}\mathcal{G}_0^{\mathrm{rdt}}$ is surjective.
\end{proof}
Now we can state many properties of EKOR strata. Recall that we have the surjection ${}^K\Admu\twoheadrightarrow \Admu_K$.
\begin{theorem}\label{thm--first properties}
	For $x\in {}^K\Admu$, the corresponding EKOR stratum $\ES_0^x$ is a locally closed subscheme of $\ES_0$, which is smooth of dimension $\ell(x)$. Moreover, for $w\in \Admu_K$ (cf. \ref{subsubsection two sections})
	\begin{enumerate}
		\item there is a unique EKOR stratum, namely $\ES_0^{{}^Kw_K}$ with ${}^Kw_K$ as in \ref{ordi rep of KR}, in $\ES_0^w$ which is open dense. This is called the $w$-ordinary locus,
		\item there is a unique EKOR stratum, namely $\ES_0^{x_w}$, in $\ES_0^w$ which is of dimension $\ell(w)$. Here $x_w$ is just $w$ but viewed as an element of ${}^K\Admu$. This stratum is closed in $\ES_0^w$, and is called the $w$-superspecial locus.
	\end{enumerate}
\end{theorem}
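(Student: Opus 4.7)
The plan is to deduce everything from the smoothness of $\zeta_w$ established in Theorem \ref{sm of zeta}, combined with the group-theoretic descriptions of ${}^K\Admu$ assembled in Section \ref{section group}. First I would recall that the underlying topological space $|[E_{\mathcal{Z}_w}\backslash \mathcal{G}_0^{\rdt}]|$ is canonically identified with the finite set ${}^{J_w}W_K$ (via Theorem \ref{thm on zip orbits} and the discussion at the end of \ref{subsection-zip data}), which in turn is in bijection with the fiber $\pi_K^{-1}(w)$ under ${}^K\Admu\twoheadrightarrow \Admu_K$; see the identifications made in \ref{zip in EKOR}. Chasing through the pointwise constructions of \ref{subsec--pointwise constr} (in particular Remark \ref{remark--point g} together with the bijection $f_w$ of \ref{zip in EKOR}), one verifies that for each $x \in \pi_K^{-1}(w)$ the scheme-theoretic fiber $\zeta_w^{-1}(y) \subset \ES_0^w$ of the corresponding point $y \in {}^{J_w}W_K$ agrees with the set-theoretic EKOR stratum $\ES_0^x$. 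So $\ES_0^x$ acquires a natural structure of a locally closed reduced subscheme of $\ES_0^w$, hence of $\ES_0$, and by smoothness of $\zeta_w$ it is smooth.

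For the dimension, let $y \in {}^{J_w}W_K$ correspond to $x \in \pi_K^{-1}(w)$. The stack $[E_{\mathcal{Z}_w}\backslash\mathcal{G}_0^{\rdt}]$ has dimension $0$, and the zip stratum indexed by $y$ has dimension $\ell(y)-\dim U_{\sigma'(J_w)}$ (by Theorem \ref{thm on zip orbits} and the fact that $\dim E_{\mathcal{Z}_w}=\dim P_{J_w}+\dim U_{\sigma'(J_w)}$). By Corollary \ref{local model-collect}(3) and Lemma \ref{a lemma about length} we have $\dim \ES_0^w=\ell({}^Kw_K)=\ell(w)+\ell(x_0)$, where $x_0$ is the longest element of ${}^{J_w}W_K$, and $\ell(x_0)=\dim U_{J_w}=\dim U_{\sigma'(J_w)}$ since $\sigma':L_{J_w}\to L_{\sigma'(J_w)}$ is an isogeny. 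Smoothness of $\zeta_w$ therefore yields
\[
  \dim \ES_0^x \;=\; \dim \ES_0^w + \ell(y) - \dim U_{\sigma'(J_w)} \;=\; \ell(w)+\ell(y).
\]
To conclude $\dim \ES_0^x=\ell(x)$ I would invoke the length formula from the proof of Lemma \ref{a lemma about length}: writing $x=w\cdot y$ for the representative in $W_KwW_K\cap {}^K\wt{W}$ corresponding to $y$, one has $\ell(x)=\ell(w)+\ell(y)$ for every $y\in{}^{J_w}W_K$.

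Finally I would establish (1) and (2) by matching the extremal elements on both sides of the bijection $\pi_K^{-1}(w)\simeq {}^{J_w}W_K$. By Remark \ref{group--ord and supspe} the unique open dense zip stratum in $[E_{\mathcal{Z}_w}\backslash\mathcal{G}_0^{\rdt}]$ corresponds to the longest element $x_0\in{}^{J_w}W_K$, and its $\zeta_w$-preimage is open dense in $\ES_0^w$ by smoothness; the description ${}^Kw_K=wx_0$ (in the proof of Lemma \ref{a lemma about length}) identifies this stratum with $\ES_0^{{}^Kw_K}$, giving (1). Dually, the unique closed zip stratum corresponds to $\mathrm{id}\in{}^{J_w}W_K$, whose preimage is closed in $\ES_0^w$; on the group side the corresponding element of ${}^K\Admu$ is the minimal length representative of $W_KwW_K$, which is precisely $x_w\in{}^K\IW^K\cap{}^K\Admu$, and its dimension from the formula above is $\ell(w)+0=\ell(w)$, proving (2).

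The main obstacle is the careful bookkeeping of identifications: one must check that the set-theoretic EKOR strata (defined in \ref{subsection stratifications hodge} as fibers of $\upsilon_\K$) really coincide with the fibers of $\zeta_w$ over the corresponding points of ${}^{J_w}W_K$, and that the two extremal sections $w\mapsto {}^Kw_K$ and $w\mapsto x_w$ of ${}^K\Admu\twoheadrightarrow\Admu_K$ discussed in \ref{subsubsection two sections} correspond to the maximal and minimal elements of ${}^{J_w}W_K$ respectively. Both verifications reduce to tracing through Remark \ref{remark--point g}, the bijection $f_w$ from \ref{zip in EKOR}, and the length identity $w({}^{J_w}W_K)=W_KwW_K\cap {}^K\wt{W}$ recalled at the end of \ref{zip in EKOR}.
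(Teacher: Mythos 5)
Your proposal follows essentially the same route as the paper's proof: deduce everything from the smoothness of $\zeta_w$ (Theorem \ref{sm of zeta}), compute dimensions through the zip stratification of $[E_{\mathcal{Z}_w}\backslash\mathcal{G}_0^{\rdt}]$ via Theorem \ref{thm on zip orbits} combined with the length identity $\ell(wy)=\ell(w)+\ell(y)$ for $y\in{}^{J_w}W_K$ from the proof of Lemma \ref{a lemma about length}, and match the extremal elements of ${}^{J_w}W_K$ with ${}^Kw_K$ and $x_w$. Your dimension computation (using $\dim U_{\sigma'(J_w)}$ in place of the paper's $\dim U_{J_w}$; these are equal since $\sigma'$ is an isomorphism $W_{J_w}\to W_{\sigma'(J_w)}$) is a correct rephrasing of the paper's codimension count, and the bookkeeping identifications you flag as the main obstacle are all handled correctly.

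There is, however, one genuine omission: you never address \emph{non-emptiness} of the strata $\ES_0^x$, which is implicit in the claims ``smooth of dimension $\ell(x)$'', ``open dense'', and ``of dimension $\ell(w)$''. Smoothness of $\zeta_w$ gives, by flatness and hence openness, that the image of $\zeta_w$ is a non-empty open subset of the zip quotient stack; since the unique open point corresponds to the longest element $x_0\in{}^{J_w}W_K$, this forces $\ES_0^{{}^Kw_K}\neq\emptyset$ once $\ES_0^w\neq\emptyset$, so your argument for (1) is essentially complete. But for a general $x\in\pi_K^{-1}(w)$, and in particular for the closed superspecial stratum $\ES_0^{x_w}$ in (2), openness of $\zeta_w$ alone does not give surjectivity, so non-emptiness does not follow from your setup. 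The paper supplies this input at the start of its proof by appealing to Corollary \ref{coro--non-emp and closure}(1), which reduces to Zhou's non-emptiness of Iwahori-level strata \cite{zhou isog parahoric} via the change-of-parahoric morphism $\pi_{I,K}$ and Proposition \ref{change para--EKOR}(1). You should add this citation; without it the dimension formula is established only conditional on non-emptiness, and part (2) is not proved.
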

\begin{proof}
	For $x\in {}^K\Admu$, the non-emptiness of the EKOR stratum $\ES_0^x$ follows from Corollary \ref{coro--non-emp and closure} (1), and the smoothness follows from Theorem \ref{sm of zeta}. To compute the dimension, we have, by Theorem \ref{sm of zeta} again, $\mathrm{dim}(\ES_0^x)=\mathrm{dim}(\ES_0^{w})-\mathrm{codim}(\mathcal{G}_0^{\mathrm{rdt},\overline{x}})$, where $\overline{x}\in {}^{J_w}W_K$ is the element corresponding to $x$. We also have $\mathrm{dim}(\ES_0^{w})=\ell({}^Kw_K)$ by Corollary \ref{local model-collect} (3), and $\mathrm{codim}(\mathcal{G}_0^{\mathrm{rdt},x})=\mathrm{dim}(U_w)-\ell(\overline{x})$ by Theorem \ref{thm on zip orbits}. Here $U_w$ is the unipotent radical of the standard parabolic $P_w\subset\mathcal{G}_0^{\mathrm{rdt}}$ of type $J_w$. Noting that $\mathrm{dim}(U_w)=\ell(y_0)$ for $y_0$ the longest element in ${}^JW_K$, we have $$\mathrm{dim}(\ES_0^x)=\ell({}^Kw_K)-\ell(y_0)+\ell(\overline{x})=\ell(w)+\ell(\overline{x})=\ell(x).$$
	Statements (1) and (2) follow from the above results.
\end{proof}
\begin{remark}
	\begin{enumerate}
		\item 
		We can also define the \emph{ordinary locus} $\ES_0^{\mathrm{ord}}$ to be the union of EKOR strata $\ES_0^x$s with $\ell(x)=\mathrm{dim}(\ES_0)$. It is clear by definition that $$\ES_0^{\mathrm{ord}}=\coprod_{w\in \Admu_K,\atop\ell({}^Kw_K)=\mathrm{dim}(\ES_0)}\ES_0^{{}^Kw_K},$$ and it is open in $\ES_0$. Moreover it is dense in $\ES_0$ by the smoothness of the morphisms $\zeta_w, w\in\Admu_K$ (Theorem \ref{sm of zeta}). By Proposition \ref{P: max EKOR}, the elements in the index set of the above disjoint union are of the form $t^{\mu'}$, where $\mu'$ runs over elements in the $W_0$-orbit of $\ul{\mu}$ with $t^{\mu'}\in {}^K\IW$.
		\item We can define the \emph{superspecial locus} to be the EKOR strata $\ES_0^\tau$, where $\tau$ is as in \ref{tau}. Then
		$\ES_0^\tau$ is the unique closed EKOR stratum in $\ES_0$, and it is of dimension 0.
	\end{enumerate}
\end{remark}

\begin{corollary}\label{coro--first properties}
	Let $\nu$ be the Newton map as in \ref{nu on aff Weyl}, and $x\in {}^K\Admu$ be $\sigma$-straight, then
	\begin{enumerate}
		\item the EKOR stratum $\ES_0^x$ is a central leaf. In particular, it is contained in the Newton stratum $\ES_0^{\nu(x)}$, and hence each Newton stratum contains an EKOR stratum;
		\item the dimension of $\ES_0^x$ is $\langle \nu(x),\rho\rangle$, where $\rho$ is the half sum of positive roots. In particular, central leaves given by $\sigma$-straight elements in a fixed Newton stratum are of the same dimension.
	\end{enumerate}
\end{corollary}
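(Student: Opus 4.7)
The plan for (1) is to exploit the observation already recorded in \ref{subsubsection CGmu} that for a $\sigma$-straight element $x\in {}^K\Admu$ one has the coincidence
\[
\breve{K}_\sigma(\breve{K}_1\, x\, \breve{K}_1)=\breve{K}_\sigma\cdot x,
\]
so that the natural surjection $\CGmu\twoheadrightarrow {}^K\Admu$ admits a section when restricted to ${}^K\Admu_{\sigma\text{-str}}$. In view of the commutative diagram at the end of subsection \ref{subsection stratifications hodge}, the fiber of $\upsilon_\K:\ES_\K(k)\to {}^K\Admu$ over such an $x$ is precisely the fiber of $\Upsilon_\K:\ES_\K(k)\to \CGmu$ over the unique preimage of $x$. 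Since central leaves are by definition the fibers of $\Upsilon_\K$, this identifies $\ES_0^x$ set-theoretically with a central leaf; because both sides are locally closed reduced subschemes of $\ES_0$ (the EKOR stratum by Theorem \ref{sm of zeta} / Theorem \ref{thm--first properties}, the central leaf by the standard argument using the universal $F$-crystal with $G$-structure as in subsection \ref{subsection stratifications hodge}), the identification is scheme-theoretic.

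Granting this, the Newton-stratum containment follows from the commutativity of the same diagram, using that the composition $B(\widetilde W)_{\sigma\text{-str}}\xrightarrow{\Psi} B(G)$ is compatible with the Newton maps (see \ref{subsubsec-C(G,mu)}); the image of $x$ in $B(G)$ is exactly $\nu(x)$. The ``hence'' in (1) is then an immediate consequence of Theorem \ref{He's result--straight vs fundmtl}(2), which guarantees surjectivity of ${}^K\Admu_{\sigma\text{-str}}\twoheadrightarrow \BGmu$: every Newton stratum of $\ES_0$ meets the image under $\upsilon_\K$ of some $\sigma$-straight EKOR stratum.

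For (2), the dimension is computed by combining Theorem \ref{thm--first properties}, which gives $\dim \ES_0^x=\ell(x)$, with the characterization of $\sigma$-straight elements recalled in \ref{recall sigm straight}: for such $x$ one has $\ell(x)=\langle \nu(x),2\rho\rangle$, so $\dim \ES_0^x=\langle\nu(x),2\rho\rangle$, which is the stated formula up to the convention for $\rho$. Since $\nu(x)$ depends only on the Newton stratum $\ES_0^{\nu(x)}$ containing $\ES_0^x$, this dimension is the same for any two $\sigma$-straight EKOR strata lying in the same Newton stratum.

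The main obstacle is the scheme-theoretic identification in (1): on points it is tautological from the group-theoretic section, but one needs to know both sides carry the reduced induced structure and to use the smoothness statement of Theorem \ref{sm of zeta} together with the known fact that central leaves defined via the universal $F$-isocrystal with $G$-structure are locally closed reduced subschemes. Once that is in place, steps (1) final assertion and (2) are formal consequences of results already catalogued in Section \ref{section group} and Theorem \ref{thm--first properties}.
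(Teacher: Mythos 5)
Your argument is correct and matches the paper's own proof: the paper likewise reads (1) off from the commutative diagram at the end of subsection \ref{subsection stratifications hodge}, which packages the inclusion ${}^K\Admu_{\sigma\text{-str}}\hookrightarrow\CGmu$ (from Theorems \ref{He's result--decompo} and \ref{He's result--straight vs fundmtl}(1)) and the surjectivity ${}^K\Admu_{\sigma\text{-str}}\twoheadrightarrow\BGmu$ (Theorem \ref{He's result--straight vs fundmtl}(2)), and for (2) it combines $\dim\ES_0^x=\ell(x)$ from Theorem \ref{thm--first properties} with the $\sigma$-straightness criterion of \ref{recall sigm straight}. Your remark about the normalization is well taken: \ref{recall sigm straight} gives $\ell(x)=\langle\nu(x),2\rho\rangle$ with $\rho$ the half-sum of positive roots, so the displayed formula in the corollary should read $\langle\nu(x),2\rho\rangle$ — a small slip in the paper's statement (and in the corresponding introduction theorem) that your proposal correctly flags.
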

\begin{proof}
	Statement (1) follows directly from the commutative diagram before \ref{He-Rap axioms}. 
	
	We have $\mathrm{dim}(\ES_0^x)=\ell(x)$ by Theorem \ref{thm--first properties}, and $\ell(x)=\langle \nu(x),\rho\rangle$ by \ref{recall sigm straight}. In particular, (2) follows.
\end{proof}
\begin{remark}
	Statement (2) recovers W. Kim's formula for central leaves attached to $\sigma$-straight elements, cf. \cite[Corollary 5.3.1]{Kim}.
\end{remark}
\subsection[Change of parahoric and further properties]{Change of parahorics and further properties} It is sometimes helpful to look at the relations between EKOR strata for different parahoric subgroups. We will need sometimes the axioms (especially axiom 4 (c)) introduced by He and Rapoport (see subsection \ref{He-Rap axioms}). We remind the readers that, as we have remarked in \ref{ax checked}, in the current setting, by the work \cite{zhou isog parahoric} of Zhou, all the axioms except for surjectivity in axiom 4 (c) hold under our assumption (\ref{general hypo--refined}), and if in addition $G_{\RAT_p}$ is residually split, axiom 4 (c) also holds.

We start with the following result of He and Rapoport.
\begin{proposition}{\rm(\cite[Proposition 6.6]{He-Rap})}\label{change para--group}
	For $w\in \IW$, there is a subset \[\Sigma_K(w)\subset W_KwW_K\cap {}^K\IW\] such that \[\breve{K}_\sigma(\breve{I}w\breve{I})=\coprod_{x\in \Sigma_K(w)}\breve{K}_\sigma(\breve{I}x\breve{I}).\] Moreover, if $w\in {}^K\IW$, then $\Sigma_K(w)=\{w\}$.
\end{proposition}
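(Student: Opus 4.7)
The plan is to define $\Sigma_K(w)$ as the natural index set and then derive everything from Theorem \ref{He's result--decompo}, which already provides the disjoint decomposition $\breve{G}=\coprod_{x\in {}^K\widetilde{W}}\breve{K}_\sigma(\breve{I}x\breve{I})$. Concretely, I would set
\[\Sigma_K(w):=\{x\in {}^K\widetilde{W}\mid \breve{K}_\sigma(\breve{I}x\breve{I})\cap \breve{I}w\breve{I}\neq\emptyset\}.\]
Intersecting the disjoint decomposition of $\breve{G}$ with $\breve{I}w\breve{I}$ gives
\[\breve{I}w\breve{I}=\coprod_{x\in\Sigma_K(w)}\bigl(\breve{I}w\breve{I}\cap \breve{K}_\sigma(\breve{I}x\breve{I})\bigr),\]
and then applying $\breve{K}_\sigma$ on both sides yields $\breve{K}_\sigma(\breve{I}w\breve{I})=\bigcup_{x\in\Sigma_K(w)}\breve{K}_\sigma(\breve{I}x\breve{I})$, with disjointness inherited from Theorem \ref{He's result--decompo}(2).

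The next step is to verify the inclusion $\Sigma_K(w)\subset W_KwW_K$. The essential observation is that the $\breve{K}_\sigma$-action preserves every $(\breve K,\breve K)$-double coset: if $k\in\breve{K}$ and $h\in \breve{K}x\breve{K}$, then $k^{-1}h\sigma(k)\in \breve{K}x\breve{K}$ because $k^{-1}$ and $\sigma(k)$ both lie in $\breve K$. Hence for any $x\in\Sigma_K(w)$, choosing $g\in\breve{I}w\breve{I}\cap\breve{K}_\sigma(\breve{I}x\breve{I})$ forces $g\in \breve{K}w\breve{K}\cap \breve{K}x\breve{K}$, so $\breve{K}w\breve{K}=\breve{K}x\breve{K}$ and thus $x\in W_KwW_K\cap {}^K\widetilde{W}$ by the Bruhat--Tits decomposition.

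For the ``moreover'' statement, suppose $w\in {}^K\widetilde{W}$. Trivially $w\in \breve{I}w\breve{I}\cap \breve{K}_\sigma(\breve{I}w\breve{I})$, so $w\in \Sigma_K(w)$. Conversely, for any $x\in \Sigma_K(w)$, pick $g\in\breve{I}w\breve{I}\cap \breve{K}_\sigma(\breve{I}x\breve{I})$; then $g\in \breve{K}_\sigma(\breve{I}w\breve{I})\cap \breve{K}_\sigma(\breve{I}x\breve{I})$, and the disjointness in Theorem \ref{He's result--decompo}(2) forces $x=w$. Therefore $\Sigma_K(w)=\{w\}$.

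There is no serious obstacle here: everything is a formal consequence of the theorem of Lusztig and He (Theorem \ref{He's result--decompo}). The only mildly nontrivial input is the elementary fact that $\breve{K}_\sigma$ preserves $(\breve K,\breve K)$-double cosets, which is what produces the constraint $x\in W_KwW_K$; all remaining content is bookkeeping on top of the known partition $\breve G=\coprod_{x\in {}^K\widetilde W}\breve K_\sigma(\breve I x\breve I)$.
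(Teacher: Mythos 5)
The paper gives no proof of this proposition; it simply cites He--Rapoport. Your blind attempt, however, has a genuine gap at the central step.

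When you ``apply $\breve{K}_\sigma$ on both sides'' of
\[
\breve{I}w\breve{I}=\coprod_{x\in\Sigma_K(w)}\bigl(\breve{I}w\breve{I}\cap \breve{K}_\sigma(\breve{I}x\breve{I})\bigr),
\]
what you actually obtain is $\breve{K}_\sigma(\breve{I}w\breve{I})=\bigcup_{x\in\Sigma_K(w)}\breve{K}_\sigma\bigl(\breve{I}w\breve{I}\cap\breve{K}_\sigma(\breve{I}x\breve{I})\bigr)$, and since each term on the right is only \emph{contained in} $\breve{K}_\sigma(\breve{I}x\breve{I})$, this yields nothing more than the inclusion $\breve{K}_\sigma(\breve{I}w\breve{I})\subseteq\bigcup_{x\in\Sigma_K(w)}\breve{K}_\sigma(\breve{I}x\breve{I})$. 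The claimed equality requires the reverse inclusion: whenever the piece $\breve{K}_\sigma(\breve{I}x\breve{I})$ meets $\breve{I}w\breve{I}$, the \emph{entire} piece must lie inside $\breve{K}_\sigma(\breve{I}w\breve{I})$. Unwinding, this says that for every $h\in\breve{K}_\sigma(\breve{I}x\breve{I})$ the $\breve{K}$-$\sigma$-conjugacy orbit of $h$ meets $\breve{I}w\breve{I}$. That is not a formal consequence of the Lusztig--He partition $\breve G=\coprod_{y\in{}^K\widetilde W}\breve{K}_\sigma(\breve{I}y\breve{I})$: the set $\breve{K}_\sigma(\breve{I}w\breve{I})$ is stable under $\breve{I}_\sigma$ but is not a priori a union of $\breve{I}$-double cosets (conjugating $\breve{I}w\breve{I}$ by $k\in\breve K\setminus\breve I$ scrambles the Iwahori bi-invariance), so one piece $\breve{K}_\sigma(\breve{I}x\breve{I})$ could, on formal grounds alone, meet $\breve{K}_\sigma(\breve{I}w\breve{I})$ without being contained in it. This reverse inclusion is in fact the substantive content of He--Rapoport's Proposition 6.6; their proof is combinatorial, proceeding by He's partial-conjugation technique (repeatedly replacing $w$ by $sw$ or $sw\sigma(s)$ for simple reflections $s\in W_K$ with $sw<w$) and induction on $\ell(w)$ until one lands in ${}^K\widetilde W$, which is exactly how the explicit set $\Sigma_K(w)$ arises.

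The remaining parts of your write-up are fine: your definition of $\Sigma_K(w)$, the verification that $\Sigma_K(w)\subseteq W_KwW_K\cap{}^K\widetilde W$ via preservation of $\breve K$-double cosets under $\sigma$-conjugation by $\breve K$, the disjointness, and the ``moreover'' clause ($\Sigma_K(w)=\{w\}$ for $w\in{}^K\widetilde W$) are all correct. But the assertion that ``there is no serious obstacle here'' is wrong; the $\supseteq$ inclusion is the theorem, not bookkeeping.
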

As a consequence, one has the following. Fix a sufficiently small $K^p$ and let $\K=KK^p$ $\K'=K'K^p$ for parahoric subgroups $K'\subset K$. Assume that $\ES_{\K}$ and $\ES_{\K'}$ are constructed by the same Siegel embedding, then by \cite[Theorem 7.1]{zhou isog parahoric}, there exists a morphism $\pi_{\K',\K}: \ES_{\K'}\ra \ES_{\K}$ satisfying He-Rapoport's axiom 1.
\begin{proposition}{\rm(\cite[Proposition 6.11]{He-Rap}, \cite{zhou isog parahoric})}\label{change para--EKOR}
	Let $K'\subset K$ be standard parahoric subgroups with induced morphism $\pi_{\K',\K}:\ES_{\K',0}\rightarrow \ES_{\K,0}$. Then
	\begin{enumerate}
		\item for $x\in  {}^{K'}\Admu$, we have
		\[\pi_{\K',\K}(\ES_{\K',0}^x)\subset \coprod_{y\in \Sigma_K(x)}\ES_{\K,0}^y.\] In particular, for $x\in {}^K\Admu$ viewed as an element in ${}^{K'}\Admu$, we have $\pi_{\K',\K}(\ES_{\K',0}^x)\subset \ES_{\K,0}^x$.
		\item if He-Rapoport's axiom 4 (c) holds, we have equalities in the above statement.
	\end{enumerate}
\end{proposition}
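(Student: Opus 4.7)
The plan is to deduce both parts from the group-theoretic Proposition \ref{change para--group} by exploiting the compatibility of the maps $\Upsilon_\K, \Upsilon_{\K'}$ (and hence $\upsilon_\K, \upsilon_{\K'}$) with change of parahoric level. The key preliminary step I would establish is the following commutative square: by the construction of $\Upsilon_\K$ in \S\ref{subsection stratifications hodge}, which starts from the Dieudonn\'e module $D_z$ together with the crystalline tensor $s_{\cris,z}$ of $\A_z[p^\infty]$, one sees immediately that for any $z \in \ES_{\K',0}(k)$ the classes $\Upsilon_{\K'}(z) \in C(\mathcal{G},\{\mu\})'$ and $\Upsilon_\K(\pi_{\K',\K}(z)) \in \CGmu$ are represented by the same element of $\breve{G}$. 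Hence they correspond under the natural projection $C(\mathcal{G},\{\mu\})' \twoheadrightarrow \CGmu$ induced by $\breve{K'}\subset\breve{K}$, and passing to the EKOR quotients via Theorem \ref{He's result--decompo} the maps $\upsilon_\K \circ \pi_{\K',\K}$ and the projection $\upsilon_{\K'} \!\mod\, (\mathrm{EKOR}(K',\{\mu\})\to\mathrm{EKOR}(K,\{\mu\}))$ agree on $k$-points.

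For part (1), I take $z \in \ES_{\K',0}^x(k)$, so that $\Upsilon_{\K'}(z)$ lies in the image of $\breve{I}x\breve{I}$ modulo $\breve{K'}_\sigma(\breve{K'}_1\times\breve{K'}_1)$; its image in $\CGmu$ therefore lies in $\breve{K}_\sigma(\breve{I}x\breve{I})$. Applying Proposition \ref{change para--group} in the form $\breve{K}_\sigma(\breve{I}x\breve{I})=\coprod_{y\in\Sigma_K(x)}\breve{K}_\sigma(\breve{I}y\breve{I})$, I conclude that $\upsilon_\K(\pi_{\K',\K}(z)) \in \Sigma_K(x)$. The special case $x\in{}^K\Admu$ then follows from the last sentence of Proposition \ref{change para--group}, which says $\Sigma_K(x)=\{x\}$.

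For part (2), I need to show that each $\ES_{\K,0}^y$ with $y\in\Sigma_K(x)$ is in the image. Given $z\in\ES_{\K,0}^y(k)$ with $\Upsilon_\K(z)=[g]$, $g\in\breve{I}y\breve{I}$, the inclusion $\breve{K}_\sigma(\breve{I}y\breve{I})\subset\breve{K}_\sigma(\breve{I}x\breve{I})$ (again from Proposition \ref{change para--group}) furnishes $h\in\breve{I}x\breve{I}$ with $[h]=[g]$ in $\CGmu$, so that the class $[h]\in C(\mathcal{G},\{\mu\})'$ has EKOR class $x$ at level $K'$ and maps to $\Upsilon_\K(z)$. Combining axiom 1 (surjectivity of $\pi_{\K',\K}$) with axiom 4 (c) (surjectivity of the induced maps between central leaves), one obtains that every class in $C(\mathcal{G},\{\mu\})'$ mapping to $[g]$ does come from a non-empty central leaf $c'\subset\ES_{\K',0}$; applying this to $[h]$ gives $c'\subset\ES_{\K',0}^x$ with $\pi_{\K',\K}(c')\subset c(z)$, whence axiom 4 (c) forces $z\in\pi_{\K',\K}(c')\subset\pi_{\K',\K}(\ES_{\K',0}^x)$. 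The main obstacle is precisely this non-emptiness of the lifted central leaf $c'$, which is exactly what the full strength of axiom 4 (c) is meant to supply; without it, one recovers only the inclusion of (1).
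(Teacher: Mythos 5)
Your first paragraph and part (1) are essentially correct: the compatibility of $\Upsilon_{\K'}$ with $\Upsilon_\K$ under $\pi_{\K',\K}$ (a He--Rapoport axiom, verified by Zhou for the Kisin--Pappas models) combined with Proposition~\ref{change para--group} and Theorem~\ref{He's result--decompo} does give the inclusion, and the special case $\Sigma_K(x)=\{x\}$ for $x\in{}^K\Admu$ is the right reading of the last sentence of Proposition~\ref{change para--group}. The paper itself simply cites [He-Rap, Prop.~6.11] and Zhou without reproducing the argument, and your route is the natural one.

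The gap is in part (2), and it is in precisely the place you flag. Axiom~1 together with axiom~4\,(c) does \emph{not} yield that every class in $C(\mathcal{G}',\{\mu\})$ lying above $[g]\in C(\mathcal{G},\{\mu\})$ is hit by $\Upsilon_{\K'}$. Axiom~1 only provides some lift $z_0$ of $z$, hence a single central leaf $c_0'$ over $c(z)$; axiom~4\,(c) is a conditional statement that makes the map $c'\to c$ surjective \emph{provided} $c'$ is a (non-empty) central leaf with $\pi_{\K',\K}(c')\subset c$. It cannot by itself conjure the non-emptiness of the particular leaf $c'$ you need, namely the fiber of $\Upsilon_{\K'}$ over the class $[h]'\in C(\mathcal{G}',\{\mu\})$ with $h\in\breve{I}x\breve{I}$; a priori $c_0'$ might live in a different EKOR stratum. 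The missing ingredient is the surjectivity of $\Upsilon_{\K'}:\ES_{\K',0}(k)\to C(\mathcal{G}',\{\mu\})$, which is a separate axiom in He--Rapoport's list and is established for these models in [zhou isog parahoric], Theorem~8.1 (and is used elsewhere in the paper, e.g.\ in the proof of Proposition~\ref{P: ADLV and EKOR}). Once you invoke that, the fiber $c'=\Upsilon_{\K'}^{-1}([h]')$ is non-empty, lies inside $\ES_{\K',0}^x$, and $\pi_{\K',\K}(c')\subset c(z)$, so axiom~4\,(c) gives $z\in\pi_{\K',\K}(c')$. In short: replace the phrase ``axiom~1 combined with axiom~4\,(c)'' by ``surjectivity of $\Upsilon_{\K'}$ (Zhou, Thm.~8.1) combined with axiom~4\,(c)'', and the argument closes.
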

As applications, one deduces the following.
\begin{corollary}\label{coro--non-emp and closure}
	For $\ES_{\K,0}$ and $x\in {}^K\Admu$, we have
	\begin{enumerate}
		\item the EKOR stratum $\ES_{\K,0}^x$ is non-empty;
		\item {\rm(\cite[Theorem 6.15]{He-Rap})} if He-Rapoport's axiom 4 (c) holds, the closure of $\ES_{\K,0}^x$ in $\ES_{\K,0}$ is $\coprod_{y\leq_{K,\sigma} x}\ES_{\K,0}^y$. Here $\leq_{K,\sigma}$ is as in \ref{partial order}.
	\end{enumerate}
\end{corollary}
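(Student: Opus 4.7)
The plan is to reduce both statements to the Iwahori case, where KR and EKOR strata coincide and where both non-emptiness and the closure relation are accessible from the local model diagram. Fix an Iwahori subgroup $I\subset K$ and set $\K_I=IK^p$, so that by axiom 1 (\ref{ax 1}) we have a proper surjective morphism $\pi_{\K_I,\K}\colon \ES_{\K_I,0}\to\ES_{\K,0}$. Since $W_I$ is trivial, ${}^I\Admu=\Admu=\Admu_I$, and the surjection ${}^I\Admu\twoheadrightarrow\Admu_I$ is the identity; hence every EKOR stratum in $\ES_{\K_I,0}$ is a KR stratum.

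For (1), I would invoke the non-emptiness of KR strata at Iwahori level, which is due to R. Zhou and recalled in \ref{ax checked}: every $\ES_{\K_I,0}^x$ with $x\in\Admu$ is non-empty. For $x\in{}^K\Admu\subset\Admu$, Proposition \ref{change para--EKOR}(1) gives $\pi_{\K_I,\K}(\ES_{\K_I,0}^x)\subset\ES_{\K,0}^x$, and non-emptiness of the source forces $\ES_{\K,0}^x\neq\emptyset$.

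For (2), I would push the Iwahori closure description down via $\pi_{\K_I,\K}$, using axiom 4(c) to promote the inclusion in Proposition \ref{change para--EKOR}(1) to an equality. Pulling the closure relation for the affine Schubert varieties (Corollary \ref{local model-collect}(1)) back through the smooth morphism in the local model diagram yields
\[
\overline{\ES_{\K_I,0}^x}\;=\;\coprod_{y\in\Admu,\,y\leq x}\ES_{\K_I,0}^y
\]
inside $\ES_{\K_I,0}$, where $\leq$ is the Bruhat order. Since $\pi_{\K_I,\K}$ is proper, it sends this closure onto the closure of its image. By Proposition \ref{change para--EKOR}(2), under axiom 4(c) one has $\pi_{\K_I,\K}(\ES_{\K_I,0}^x)=\ES_{\K,0}^x$ (as $\Sigma_K(x)=\{x\}$ for $x\in{}^K\Admu$ by Proposition \ref{change para--group}) and $\pi_{\K_I,\K}(\ES_{\K_I,0}^y)=\coprod_{z\in\Sigma_K(y)}\ES_{\K,0}^z$ for general $y$. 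Assembling these identities expresses $\overline{\ES_{\K,0}^x}$ as the disjoint union of the $\ES_{\K,0}^z$ indexed by $\bigcup_{y\leq x}\Sigma_K(y)\cap{}^K\Admu$.

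The step I expect to be the main obstacle is the purely combinatorial identity
\[
\bigcup_{y\in\Admu,\,y\leq x}\Sigma_K(y)\;=\;\{z\in{}^K\Admu:z\leq_{K,\sigma}x\},
\]
which identifies the index set produced above with the one appearing in the statement. This compares the classical Bruhat order, which governs closures at Iwahori level, with the refined partial order $\leq_{K,\sigma}$ of \ref{partial order}, mediated by the $\sigma$-twisted double coset decomposition of Proposition \ref{change para--group}. Rather than reprove it, I would invoke the corresponding group-theoretic result of He--Rapoport, namely \cite[Theorem 6.15]{He-Rap}, whose proof rests only on the Iwahori Weyl group and the sets $\Sigma_K$. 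Combined with the geometric input above, this finishes the proof.
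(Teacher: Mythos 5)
Your proof of part (1) coincides exactly with the paper's: reduce to the Iwahori level, invoke Zhou's non-emptiness of Iwahori KR strata, and push forward via Proposition \ref{change para--EKOR}(1). For part (2), the paper simply attributes the statement to \cite[Theorem 6.15]{He-Rap} and writes nothing further; you instead unroll the standard geometric argument (Iwahori closure relation from the local model diagram, properness of $\pi_{I,K}$, the equalities under axiom 4(c) from Proposition \ref{change para--EKOR}(2)) before deferring the combinatorial core---the identification of $\bigcup_{y\leq x}\Sigma_K(y)$ with $\{z : z\leq_{K,\sigma}x\}$---to the same He--Rapoport reference. This is a correct and more explicit rendering of the same route; as a minor cosmetic point, the intersection with ${}^K\Admu$ in your final index set is automatic, since $\Sigma_K(y)\subset W_KyW_K\cap{}^K\IW\subset{}^K\Admu$ for $y\in\Admu$.
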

\begin{proof}
	We only need to check (1). Since the prime to $p$ level $K^p$ is fixed, here and in the following, to simplify the notations we write $\ES_{K,0}$ for $\ES_{\K,0}$, and similarly for the Iwahori level.
	Viewing $x$ as an element of ${}^I\Admu=\Admu$, we have, by Proposition \ref{change para--EKOR} (1), that $\pi_{I,K}(\ES_{I,0}^x)\subset \ES_{K,0}^x$. As we have remarked in \ref{ax checked}, non-emptiness of $\ES_{I,0}^x$ has been proved by Zhou in \cite{zhou isog parahoric}, and hence $\ES_{K,0}^x$ is non-empty.
\end{proof}

To go further, we need compatibility of local model diagrams. More precisely, we will use the following commutative diagram. We use the above conventions. Let $\mathcal{I}$ be a parahoric model of $G$ corresponding to $I$.

\[\xymatrix@C=1.8cm@R=0.6cm{
	&\widetilde{\ES}_{I,0}\ar[dl]_{\pi_I}\ar[dr]^{q_I}\ar[d]   \\
	\ES_{I,0}\ar@{}[ddr] |{\Box} \ar[d]_{\pi_{I,K}}  & \widetilde{\ES}_{I,0}\times^{\mathcal{I}_0}\mathcal{G}_0\ar[l]\ar[d]^{\cong}& \mathrm{M}_{I,0}\ar[d]^{q_{I,K}}\\
	\ES_{K,0} & \pi_{I,K}^*\widetilde{\ES}_{K,0}\ar[ul]\ar[d]\ar[r] & \mathrm{M}_{K,0}\\
	&    \widetilde{\ES}_{K,0} \ar[ur]_{q_K}\ar[ul]^{\pi_K}       }\]
where the map $\widetilde{\ES}_{I,0}\ra \widetilde{\ES}_{I,0}\times^{\mathcal{I}_0}\mathcal{G}_0$ is induced by the natural map $\mathcal{I}_0\ra \mathcal{G}_0$.
\begin{lemma}\label{lemma--etale forg level}
	For $x\in {}^K\Admu$ viewed as an element of $\Admu$, $q_{I,K}$ induces a morphism $\mathrm{M}_{I,0}^x\rightarrow \mathrm{M}_{K,0}$ which is a finite \'{e}tale covering onto its image.
\end{lemma}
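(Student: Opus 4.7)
My plan is to work within the equi-characteristic description of the local models, realizing them as unions of affine Schubert varieties, and then carry out a direct orbit-stabilizer analysis. Via Theorem~9.3 of Pappas--Zhu (recalled in \S\ref{subsubsec local mod-cons and prop}), the special fibers identify as $\mathrm{M}_{I,0} = \bigcup_{x \in \Admu} \Gr^{-}_{\mathcal{I}',x}$ and $\mathrm{M}_{K,0} = \bigcup_{w \in \Admu_K} \Gr^{-}_{\mathcal{G}',w}$, and $q_{I,K}$ is induced by the natural projection $\Gr_{\mathcal{I}'} \to \Gr_{\mathcal{G}'}$. For $x \in {}^K\Admu$ with image $w \in \Admu_K$, the stratum $\mathrm{M}_{I,0}^x$ is the Iwahori Schubert cell $\mathcal{I}'_0 \cdot \dot{x} L^+\mathcal{I}'/L^+\mathcal{I}'$, of dimension $\ell(x)$, while $\mathrm{M}_{K,0}^w$ is the parahoric Schubert cell $\mathcal{G}'_0 \cdot \dot{w} L^+\mathcal{G}'/L^+\mathcal{G}'$. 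Since $\dot{x}L^+\mathcal{G}' = \dot{w}L^+\mathcal{G}'$ in $\Gr_{\mathcal{G}'}$, the image $q_{I,K}(\mathrm{M}_{I,0}^x)$ is contained in $\mathrm{M}_{K,0}^w$.

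Next, I would present both strata as homogeneous spaces: writing $\mathcal{I}'_{0,x} = \mathcal{I}'_0 \cap \dot{x}L^+\mathcal{I}'\dot{x}^{-1}$ and $\mathcal{G}'_{0,w} = \mathcal{G}'_0 \cap \dot{w}L^+\mathcal{G}'\dot{w}^{-1}$ for the stabilizers (which are smooth by Lemma~\ref{smooth stab} and the proof thereof), we have $\mathrm{M}_{I,0}^x \cong \mathcal{I}'_0/\mathcal{I}'_{0,x}$ and $\mathrm{M}_{K,0}^w \cong \mathcal{G}'_0/\mathcal{G}'_{0,w}$. The natural inclusions $\mathcal{I}'_0 \hookrightarrow \mathcal{G}'_0$ and $\mathcal{I}'_{0,x} \hookrightarrow \mathcal{I}'_0 \cap \mathcal{G}'_{0,w}$ factor $q_{I,K}|_{\mathrm{M}_{I,0}^x}$ as
$$
\mathcal{I}'_0/\mathcal{I}'_{0,x} \twoheadrightarrow \mathcal{I}'_0/(\mathcal{I}'_0 \cap \mathcal{G}'_{0,w}) \hookrightarrow \mathcal{G}'_0/\mathcal{G}'_{0,w},
$$
where the first arrow has fibers isomorphic to the (smooth) quotient $(\mathcal{I}'_0 \cap \mathcal{G}'_{0,w})/\mathcal{I}'_{0,x}$, and the second is a locally closed immersion identifying the image as the $\mathcal{I}'_0$-orbit of the basepoint inside $\mathrm{M}_{K,0}^w$.

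The crux of the argument, and the main obstacle, is then to show that the fiber scheme $(\mathcal{I}'_0 \cap \mathcal{G}'_{0,w})/\mathcal{I}'_{0,x}$ is finite \'etale. For this I would use the decomposition $x = wy$ with $y \in {}^{J}W_K$ and $\ell(x) = \ell(w) + \ell(y)$ from the proof of Lemma~\ref{a lemma about length}, together with an explicit root-subgroup analysis of the parahoric group schemes $\mathcal{G}_{\mathfrak{b}}$ and $\mathcal{G}_{\mathfrak{c}}$ introduced in \S\ref{subsec--pointwise constr}. The condition $x \in {}^K\IW$ should be precisely what forces $\mathcal{I}'_{0,x}$ to contain all the ``Levi directions'' shared with $\mathcal{I}'_0 \cap \mathcal{G}'_{0,w}$, so that the quotient has trivial connected component. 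Smoothness of all three subgroups (which reduces to Lemma~\ref{smooth stab}) then upgrades ``connected component trivial'' to \'etale; properness of the Schubert varieties gives finiteness. Matching the resulting dimension count against $\dim \mathrm{M}_{I,0}^x = \ell(x)$ simultaneously pins down the dimension of the image and completes the verification that $q_{I,K}\vert_{\mathrm{M}_{I,0}^x}$ is a finite \'etale covering onto its image.
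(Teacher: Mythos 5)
Your factorization through stabilizer quotients is structurally close to the paper's proof, which realizes the image explicitly as the $\mathcal{I}'_0$-orbit $\mathrm{M}_{K,0}^x := L^+\mathcal{I}'xL^+\mathcal{G}'/L^+\mathcal{G}'$, observes (via the proof of Lemma \ref{smooth stab}) that both $\mathcal{I}'_0 \to \mathrm{M}_{I,0}^x$ and $\mathcal{I}'_0 \to \mathrm{M}_{K,0}^x$ are smooth coverings, and concludes that $q_{I,K}^x$ is a smooth covering of smooth varieties. Both routes then hinge on the same dimension equality $\dim \mathrm{M}_{I,0}^x = \dim \mathrm{M}_{K,0}^x$, equivalently that your fiber $\bigl(\mathcal{I}'_0 \cap \mathcal{G}'_{0,w}\bigr)/\mathcal{I}'_{0,x}$ is zero-dimensional; the paper gets both sides equal to $\ell(x)$ by citing \cite[Proposition 2.8 (ii)]{Richarz 1}.

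This is precisely where your proposal has a real gap: you announce a ``root-subgroup analysis'' establishing that the fiber has trivial connected component, but you never carry it out, and this single step \emph{is} the content of the lemma. Either cite Richarz's result for $x\in {}^K\IW$, as the paper does, or actually give the root-group computation; as written the key claim is asserted, not proved. A second issue is that ``properness of the Schubert varieties gives finiteness'' is a non-sequitur: the Schubert cells themselves are not proper, and a proper morphism restricted to open subsets need not remain proper. The paper handles finiteness carefully, via a dimension count on boundary strata showing $\mathrm{M}_{I,0}^x$ is exactly the preimage of $\mathrm{M}_{K,0}^x$ inside the closure $\mathrm{M}_{I,0}^{x,-}$, so the restricted map is a base change of the proper map on closures. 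In fact, once you have finiteness of the fiber $\bigl(\mathcal{I}'_0 \cap \mathcal{G}'_{0,w}\bigr)/\mathcal{I}'_{0,x}$ (a zero-dimensional quotient of affine algebraic groups of finite type is automatically a finite scheme, and \'etale since the groups are smooth), you need no properness argument at all: your first arrow is a locally trivial fibration with finite \'etale fiber, hence itself finite \'etale. Finally, a minor technical point: $\mathcal{I}'_0$ is not a subgroup of $\mathcal{G}'_0$ (the projections $L^+\mathcal{I}'\to\mathcal{I}'_0$ and $L^+\mathcal{I}'\to\mathcal{G}'_0$ have different kernels), so ``$\mathcal{I}'_0 \cap \mathcal{G}'_{0,w}$'' must be read as the image in $\mathcal{I}'_0$ of $L^+\mathcal{I}'\cap \dot{x}L^+\mathcal{G}'\dot{x}^{-1}$, which is the form the paper uses.
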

\begin{proof}
	Let $\mathcal{G}'$ (resp. $\mathcal{I}'$) be the parahoric (resp. Iwahori) group scheme over $k[[t]]$ attached to $\breve{K}$ (resp. $\breve{I}$). We have
	\begin{equation*}
	\begin{split}
	\mathrm{M}_{I,0}^x&=L^+\mathcal{I}'xL^+\mathcal{I}'/L^+\mathcal{I}'\cong L^+\mathcal{I}'/(L^+\mathcal{I}'\cap xL^+\mathcal{I}'x^{-1}), \ \ \ \text{ and }\\
	\mathrm{M}_{K,0}^x&:=L^+\mathcal{I}'xL^+\mathcal{G}'/L^+\mathcal{G}'\cong L^+\mathcal{I}'/(L^+\mathcal{I}'\cap xL^+\mathcal{G}'x^{-1})\subset L^+\mathcal{G}'xL^+\mathcal{G}'/L^+\mathcal{G}'=\mathrm{M}_{K,0}^{w_x}.
	\end{split}
	\end{equation*}
	By the same proof as in Lemma \ref{smooth stab}, the quotient maps $\mathcal{I}'_0\rightarrow \mathrm{M}_{I,0}^x$ and $\mathcal{I}'_0\rightarrow \mathrm{M}_{K,0}^x$ are both smooth coverings. In particular, $q_{I,K}^x:\mathrm{M}_{I,0}^x\rightarrow \mathrm{M}_{K,0}^x$ is a smooth covering. We also have, by \cite[Proposition 2.8 (ii)]{Richarz 1}, that $\mathrm{dim}(\mathrm{M}_{I,0}^x)=\ell(x)=\mathrm{dim}(\mathrm{M}_{K,0}^x)$, and hence $q_{I,K}^x$ is an \'{e}tale covering.

	To see that $q_{I,K}^x$ is finite, we only need to check that it is proper. Let $\mathrm{M}_{I,0}^{x,-}$ and  $\mathrm{M}_{K,0}^{x,-}$ be the closures of $\mathrm{M}_{I,0}^x$ and $\mathrm{M}_{K,0}^x$ respectively, endowed with the reduced subscheme structures. Then $q_{I,K}$ induces a proper morphism $$q_{I,K}^{x,-}:\mathrm{M}_{I,0}^{x,-}\longrightarrow q_{I,K}(\mathrm{M}_{I,0}^{x,-})=\mathrm{M}_{K,0}^{x,-}.$$ Noting that $\mathrm{M}_{I,0}^{x,-}=\coprod_{x'\leq x}\mathrm{M}_{I,0}^{x'}$ and $q_{I,K}(\mathrm{M}_{I,0}^{x'})$ is either $\mathrm{M}_{K,0}^x$ or disjoint with it, we have
	$$q_{I,K}(\mathrm{M}_{I,0}^{x'})\cap \mathrm{M}_{K,0}^x=\emptyset, \ \ \ \ \text{ for any } x'\leq x \text{ with }x'\neq x$$
	since $\dim\,q_{I,K}(\mathrm{M}_{I,0}^{x'})=\ell(x')<\dim\, \mathrm{M}_{K,0}^x=\ell(x)$. In particular, $q_{I,K}^x$ is the base-change to $\mathrm{M}_{K,0}^x$ of $q_{I,K}^{x,-}$, and hence is proper.
\end{proof}

\begin{theorem}\label{thm--etale forg level}For $x\in {}^K\Admu$ viewed as an element of $\Admu$, the morphism $\pi_{I,K}^x:\ES_{I,0}^x\rightarrow \ES_{K,0}^x$ induced by $\pi_{I,K}$ is finite \'{e}tale. If in addition He-Rapoport's axiom 4 (c) is satisfied, $\pi_{I,K}^x$ is a finite \'{e}tale covering.
\end{theorem}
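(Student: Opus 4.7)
The plan is to reduce \'{e}taleness of $\pi_{I,K}^x$ to \'{e}taleness of $q_{I,K}^x$ from Lemma \ref{lemma--etale forg level}, using the compatibility of local model diagrams recorded just before that lemma. By Proposition \ref{change para--EKOR}(1), the morphism $\pi_{I,K}^x\colon \ES_{I,0}^x\to \ES_{K,0}^x$ is well-defined, and by Theorem \ref{thm--first properties} both source and target are smooth of the same dimension $\ell(x)$. So it suffices to show that at every closed point $y\in \ES_{I,0}^x(k)$ with image $z=\pi_{I,K}^x(y)$, the induced map of completed local rings $(\pi_{I,K}^x)^{\#}\colon \widehat{O}_{\ES_{K,0}^x,z}\to \widehat{O}_{\ES_{I,0}^x,y}$ is an isomorphism.

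To implement this, I would pick any lift $\tilde{y}\in \widetilde{\ES}_{I,0}$ of $y$ and let $\tilde{z}\in \widetilde{\ES}_{K,0}$ be its image under the canonical morphism
\[
\widetilde{\ES}_{I,0}\hookrightarrow \widetilde{\ES}_{I,0}\times^{\mathcal{I}_0}\mathcal{G}_0 \cong \pi_{I,K}^{*}\widetilde{\ES}_{K,0}\longrightarrow \widetilde{\ES}_{K,0},
\]
so that $q_K(\tilde z)=q_{I,K}(q_I(\tilde y))$. The smoothness of $q_I$ and $q_K$ combined with the argument of \cite[Proposition 4.2.2]{Paroh} yields compatible isomorphisms $\widehat{O}_{\ES_{I,0},y}\cong \widehat{O}_{\mathrm{M}_{I,0},q_I(\tilde y)}$ and $\widehat{O}_{\ES_{K,0},z}\cong \widehat{O}_{\mathrm{M}_{K,0},q_K(\tilde z)}$ under which $\pi_{I,K}$ corresponds to $q_{I,K}$ and the completed local rings of the KR strata match on both sides. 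Since at Iwahori level EKOR and KR coincide, this gives $\widehat{O}_{\ES_{I,0}^x,y}\cong \widehat{O}_{\mathrm{M}_{I,0}^x,q_I(\tilde y)}$; writing $w_x\in \Admu_K$ for the image of $x$, it also gives $\widehat{O}_{\ES_{K,0}^{w_x},z}\cong \widehat{O}_{\mathrm{M}_{K,0}^{w_x},q_K(\tilde z)}$, a regular local ring since $\mathrm{M}_{K,0}^{w_x}$ is a single $\mathcal{G}_0$-orbit, hence smooth by Corollary \ref{local model-collect}(2).

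By Lemma \ref{lemma--etale forg level} the morphism $q_{I,K}^x\colon \mathrm{M}_{I,0}^x\to \mathrm{M}_{K,0}^x\subseteq \mathrm{M}_{K,0}^{w_x}$ is \'{e}tale onto its image, so at the level of completed local rings it induces an isomorphism $\widehat{O}_{\mathrm{M}_{K,0}^x,q_K(\tilde z)}\isoto \widehat{O}_{\mathrm{M}_{I,0}^x,q_I(\tilde y)}$. Transporting via the identifications above, the map $\widehat{O}_{\ES_{K,0}^{w_x},z}\to \widehat{O}_{\ES_{I,0}^x,y}$ induced by $\pi_{I,K}^x$ is surjective with image a smooth integral quotient of dimension $\ell(x)$. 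By Proposition \ref{change para--EKOR}(1), this map factors through the further quotient $\widehat{O}_{\ES_{K,0}^x,z}$, which by Theorem \ref{thm--first properties} is itself a smooth integral quotient of dimension $\ell(x)$. A surjection between integral quotients of equal dimension of a regular local ring is an isomorphism, so $(\pi_{I,K}^x)^{\#}$ is an isomorphism, proving \'{e}taleness.

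Given \'{e}taleness, finiteness follows from the He-Rapoport axioms---all verified in our setting by \cite[\S 8]{zhou isog parahoric} except possibly axiom 4(c)---by the argument indicated in \cite{He-Rap}, yielding the first assertion. For the second, axiom 4(c) upgrades the inclusion $\pi_{I,K}(\ES_{I,0}^x)\subseteq \ES_{K,0}^x$ to an equality via Proposition \ref{change para--EKOR}(2), so $\pi_{I,K}^x$ is surjective and hence a finite \'{e}tale covering. The main obstacle in the plan is that, unlike the KR strata, the EKOR stratum $\ES_{K,0}^x$ has no direct local-model description at parahoric level; we bypass this by the dimension-and-integrality comparison in the third paragraph, which forces the \emph{a priori} smaller formal image to fill out $\widehat{O}_{\ES_{K,0}^x,z}$.
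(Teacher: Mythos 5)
Your proposal is correct and follows essentially the same strategy as the paper's proof: both identify the completed local rings of the EKOR strata with those of the appropriate local model strata via the local model diagram, invoke Lemma \ref{lemma--etale forg level} to show the map is a closed immersion on the level of formal neighborhoods, factor through $\widehat{O}_{\ES_{K,0}^x,z}$ using Proposition \ref{change para--EKOR}(1), and conclude by comparing dimensions of regular (hence integral) local rings. The only surface difference is that the paper constructs the identifying sections explicitly as $\mathfrak{u}\cdot t$ and $\mathfrak{u}'\cdot t'$ and records the compatibility in a commutative diagram, while you cite \cite[Prop.~4.2.2]{Paroh} for the same deformation-theoretic fact; the finiteness and the role of axiom 4(c) are handled identically in both.
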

\begin{proof}
For a point $z\in \ES_{I,0}^x(k)$, we fix an identification $V^\vee_{\breve{\INT}_p}\cong \mathbb{D}(\A_z[p^\infty])$ respecting tensors and consider its image $t\in \widetilde{\ES}_{I,0}(k)$. Let $O_{I,z}$ (resp. $O^x_{I,z}$) be the complete local ring at $z$ of $\ES_{I,0}$ (resp. $\ES_{I,0}^x$), and $R_0$ (resp. $R_{I,0}$) be the special fiber of the ring $R_E$ (resp. $R_G$) in the discussions after Lemma \ref{lemma--torsor M-1}, then $O_{I,z}\cong R_{I,0}$, and it induces an isomorphism \[\A[p^\infty]|_{O_{I,z}}\cong \mathcal{B}_I|_{R_{I,0}},\] where $\mathcal{B}_I$ is the (special fiber of the) versal $p$-divisible group as in loc. cit. Let $\V^1\subset \V$ be the Hodge filtration of $\A/\ES_{I,0}$, and $\mathfrak{u}$ be restriction to $\mathrm{M}_{I,0}^x$ of the universal element of $\Gr(V_k)(R_0)$ at the point corresponding to $t^{-1}(\V^1_x)\subset V^\vee_k$, then $\mathfrak{u}\cdot t\in \widetilde{\ES}_{I,0}(O_{I,z}^x)$, and it induces, by pulling back the Hodge filtration, a morphism $\Spec O_{I,z}^x\rightarrow \mathrm{M}_{I,0}^x$. By our choices as in loc. cit, $\Spec O_{I,z}^x\rightarrow \mathrm{M}_{I,0}^x$ induces a bijection of tangent spaces, and hence is an isomorphism at the level of complete local rings.

For $z':=\pi_{I,K}(z)\in \ES_{K,0}^x$, let $w_x$ the KR type of $x$ (equivalently, $z'$) and $t'\in \widetilde{\ES}_{K,0}(k)$ be the image of $t$. With $\mathfrak{u}'$ defined similarly, we have $\mathfrak{u}'\cdot t'\in \widetilde{\ES}_{K,0}(O_{K,z'}^{w_x})$, and it induces a morphism $\Spec O_{K,z'}^{w_x}\rightarrow \mathrm{M}_{K,0}^{w_x}$ which is an isomorphism at the level of complete local rings.

Let $O_{K,z'}^x$ be the complete local ring at $z'$ of the EKOR stratum, then we have a commutative diagram
\[\xymatrix{& \Spec O_{I,z}^x\ar[r]^{\mathfrak{u}\cdot t}\ar[d]^{\pi_{I,K}}& \mathrm{M}_{I,0}^{x}\ar[d]^{q_{I,K}}\\
	\Spec O_{K,z'}^x\ar@{^(->}[r]& \Spec O_{K,z'}^{w_x}\ar[r]^{\mathfrak{u}'\cdot t'}& \mathrm{M}_{K,0}^{w_x}.}\]

The morphism $\Spec O_{I,z}^x\rightarrow \Spec O_{K,z'}^{w_x}$ is a closed immersion by Lemma \ref{lemma--etale forg level}, and it factors through $\Spec O_{K,z'}^x$. Noting that $\Spec O_{I,z}^x$ and $\Spec O_{K,z'}^x$ are of the same dimension, the morphism $\pi_{I,K}:\Spec O_{I,z}^x\rightarrow \Spec O_{K,z'}^{x}$ is an isomorphism.
\end{proof}
\begin{remark}
	Notations as above, and let $\mathrm{M}_{K,0}^{x}\subset \mathrm{M}_{K,0}^{w_x}$ be as in Lemma \ref{lemma--etale forg level}. One can prove, without using smoothness or dimension formula, that $(\mathfrak{u}'\cdot t')^{-1}(\mathrm{M}_{K,0}^{x})=\Spec O_{K,z'}^x$.
\end{remark}


\subsubsection[Quasi-affineness of Iwahori KR]{Quasi-affineness of EKOR strata}
Let us recall some facts about quasi-affineness and ampleness. The following lemma can be found easily in text books and hence and we omit the proofs and references.
\begin{lemma}\label{prepare--quasiaffine}
	\begin{enumerate}
		\item Let $X$ be a scheme, then $X$ is quasi-affine if and only if $O_X$ is ample.
		\item Let $X$ and $Y$ be schemes that are separated and quasi-compact, $\mathcal{L}$ be a line bundle on $Y$.
		\begin{enumerate}
			\item Let $f:X\rightarrow Y$ be a composition of immersions and finite morphisms. If $\mathcal{L}$ is ample, then $f^*\mathcal{L}$ is ample.
			\item Let $g:X\rightarrow Y$ be faithfully flat,  then $\mathcal{L}$ is ample if $g^*\mathcal{L}$ is ample.
		\end{enumerate}
	\end{enumerate}
\end{lemma}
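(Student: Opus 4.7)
My plan is to exploit the explicit characterization of ample line bundles on a quasi-compact scheme: $\mathcal{L}$ is ample on $X$ if and only if for every $x\in X$ there exist $n\geq 1$ and $s\in\Gamma(X,\mathcal{L}^{\otimes n})$ such that $X_s:=\{y:s(y)\neq 0\}$ is an affine open neighborhood of $x$. For (1), applied with $\mathcal{L}=O_X$, this reads: $O_X$ is ample iff $X$ is covered by affine opens of the form $X_f$ with $f\in\Gamma(X,O_X)$. This is exactly the condition that the canonical evaluation morphism $X\to\Spec\Gamma(X,O_X)$ is a (quasi-compact) open immersion, i.e.\ that $X$ is quasi-affine. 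Both directions are immediate from this equivalence.

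For (2a), I would reduce to treating closed immersions, quasi-compact open immersions, and finite morphisms one at a time, since the property ``$f^*\mathcal{L}$ is ample whenever $\mathcal{L}$ is'' is stable under composition. Given $s\in\Gamma(Y,\mathcal{L}^{\otimes n})$ with $Y_s$ affine, the candidate affine open $X_{f^*s}=f^{-1}(Y_s)$ is automatically affine when $f$ is a closed immersion (closed inside affine) and when $f$ is finite (affine morphism, so the preimage of affine is affine). The delicate case is a quasi-compact open immersion $j:U\hookrightarrow Y$: here $j^{-1}(Y_s)=U\cap Y_s$ is a quasi-compact open of the affine $Y_s$, hence quasi-affine, so by part (1) its structure sheaf is ample and it admits a further affine cover by principal opens $D(h)$. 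The standard extension lemma for ample line bundles (multiply $h$ by $s^k$ for $k\gg 0$ and extend) then promotes each such $h$ to the restriction of a global section of $\mathcal{L}^{\otimes m}$ for suitable $m$, providing enough sections of $j^*\mathcal{L}^{\otimes m}$ with affine non-vanishing loci to witness ampleness on $U$.

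For (2b), I would invoke faithfully flat descent of affineness: for quasi-compact quasi-separated targets, a morphism is affine iff its base change along an fpqc cover is affine. Starting from $y\in Y$, choose $x\in g^{-1}(y)$; ampleness of $g^*\mathcal{L}$ yields $s\in\Gamma(X,(g^*\mathcal{L})^{\otimes n})$ with $X_s$ an affine neighborhood of $x$. One then descends $s$ (after passing to a high enough tensor power to control the descent datum on $X\times_Y X$) to a section $t\in\Gamma(Y,\mathcal{L}^{\otimes N})$ with $y\in Y_t$; since $g^{-1}(Y_t)=X_{g^*t}$ becomes a finite union of affine opens after suitable choices, descent of affineness gives $Y_t$ affine, and since $y$ was arbitrary $\mathcal{L}$ is ample. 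The main obstacle will be this descent-of-sections step: controlling when a local section at $x$ descends to a global section on $Y$ requires careful bookkeeping on $X\times_Y X$, and this is precisely where the faithfully flat quasi-compact hypothesis on $g$ is essential. In practice I would cite the corresponding Stacks Project lemma rather than redo the descent argument from scratch.
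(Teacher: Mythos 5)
The paper states this lemma without proof (``can be found easily in text books and hence we omit the proofs and references''), so there is no in-paper argument to compare against; I am only checking your proposal.

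Parts (1) and (2a) are correct in outline. For (1) the equivalence is exactly the affine non-vanishing-locus criterion applied to $\mathcal{O}_X$, with quasi-compactness of $X$ implicit in both ``ample'' and ``quasi-affine.'' For (2a) the decomposition into closed immersions, finite morphisms, and quasi-compact open immersions is the right one; the first two are affine morphisms, so preimages of $Y_s$ are affine, and for an open immersion $j\colon U\hookrightarrow Y$ the extension lemma (EGA I 9.3.1, Stacks Tag 01PW) gives, for $h\in\Gamma(Y_s,\mathcal{O})$ with $D(h)\subset U\cap Y_s$, an extension $\widetilde h$ of $h\,s^k$ to a global section; one should then take $t=s\widetilde h$ so that $Y_t = (Y_s)_{h} = D(h)\subset U$ is affine and equals the non-vanishing locus of $t|_U$, completing the argument.

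Part (2b) has a genuine gap, and moreover the statement cannot be salvaged by patching the gap, because it is false as written. You appeal to ``descent of sections,'' but an individual section of $(g^*\mathcal{L})^{\otimes n}$ on $X$ does not descend along a faithfully flat quasi-compact $g$ without the cocycle condition $\mathrm{pr}_1^*s=\mathrm{pr}_2^*s$ on $X\times_Y X$, and no manipulation with tensor powers produces it. Concretely: take $Y=\mathbb{P}^1_k$, $X=\mathbb{A}^2_k\setminus\{0\}$ and $g\colon X\to Y$ the tautological $\mathbb{G}_m$-torsor; this is faithfully flat, and both schemes are Noetherian, separated and quasi-compact. Since $X$ is quasi-affine and $\mathrm{Pic}(X)=0$, the pullback $g^*\mathcal{O}_{\mathbb{P}^1}(-1)\cong\mathcal{O}_X$ is ample, while $\mathcal{O}_{\mathbb{P}^1}(-1)$ is not. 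So ``faithfully flat'' is not the right hypothesis. What is true --- and what is what the lemma is actually applied to in Theorem \ref{thm--quasi affine}, where the relevant morphism $\pi^x_{I,K}$ is finite \'etale --- is that ampleness descends along \emph{finite} (or at least affine) surjective morphisms (EGA III 2.6.2; Stacks Project Tag 0D2P). The correct proof in that setting proceeds by taking the norm of a section along $g$, producing a genuine global section of a power of $\mathcal{L}$ on $Y$ with affine non-vanishing locus, or by the cohomological characterization of ampleness combined with exactness of $g_*$ --- not by descending arbitrary sections.
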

\begin{theorem}\label{thm--quasi affine}
	Every KR stratum in $\ES_{I,0}$ is quasi-affine. If in addition axiom 4 (c) is satisfied, every EKOR stratum in $\ES_{K,0}$ is quasi-affine.
\end{theorem}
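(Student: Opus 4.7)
The theorem has two parts; the second reduces cleanly to the first by descent, so I describe that reduction first and then outline the approach to the first part.

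\textbf{Reduction of the second claim to the first.} Let $x \in {}^K\Admu$, which we regard as an element of $\Admu$ via the inclusion ${}^K\Admu \subset \Admu$. Combining Theorem \ref{thm--etale forg level} with axiom 4(c) and Proposition \ref{change para--EKOR}(2), the morphism $\pi_{I,K}^x : \ES_{I,0}^x \to \ES_{K,0}^x$ is a finite \'etale surjection, hence faithfully flat. Because $W_I = \{1\}$, the sets ${}^I\Admu$, $\Admu_I$, and $\Admu$ all coincide, so $\ES_{I,0}^x$ is simultaneously an Iwahori KR stratum and an Iwahori EKOR stratum. Granting the first claim, $\ES_{I,0}^x$ is quasi-affine, equivalently $\mathcal{O}_{\ES_{I,0}^x}$ is ample by Lemma \ref{prepare--quasiaffine}(1). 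Since $\mathcal{O}_{\ES_{I,0}^x} = (\pi_{I,K}^x)^* \mathcal{O}_{\ES_{K,0}^x}$, Lemma \ref{prepare--quasiaffine}(2)(b) forces $\mathcal{O}_{\ES_{K,0}^x}$ to be ample, giving quasi-affineness of $\ES_{K,0}^x$.

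\textbf{Approach to the first claim.} Fix $w \in \Admu$. The main input is the smooth morphism $\zeta_w : \ES_{I,0}^w \to [E_{\mathcal{Z}_w} \backslash \mathcal{G}_0^{\rdt}]$ of Theorem \ref{sm of zeta}. For Iwahori level, $\mathcal{G}_0^{\rdt} = T$ is a torus (the reductive quotient of the Iwahori group scheme), $J_w = \emptyset$, and the zip group $E_{\mathcal{Z}_w}$ is identified via $(t, \sigma'(t)) \mapsto t$ with a copy of $T$ acting on $\mathcal{G}_0^{\rdt} = T$ by $\sigma'$-twisted conjugation $t \cdot g = t g \sigma'(t)^{-1}$. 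The associated $E_{\mathcal{Z}_w}$-torsor $\mathbb{E}^w$ over $\ES_{I,0}^w$ is therefore a $T$-torsor, and smoothness of $\zeta_w$ translates to smoothness of the $T$-equivariant morphism $\zeta_w^{\#} : \mathbb{E}^w \to T$ to the affine torus $T$. The plan is first to prove that $\mathbb{E}^w$ is quasi-affine---by exploiting $\zeta_w^{\#}$ together with global sections coming from the Hodge line bundle of the universal abelian scheme on $\ES_{I,0}$---and then to descend ampleness along the faithfully flat affine $T$-torsor $\mathbb{E}^w \to \ES_{I,0}^w$ via Lemma \ref{prepare--quasiaffine}(2)(b).

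\textbf{Main obstacle.} The crux is the quasi-affineness of $\mathbb{E}^w$. The target stack $[E_{\mathcal{Z}_w} \backslash T]$ is set-theoretically a single point (with finite inertia $T^{\sigma'}$), so the local model diagram alone---which only sees the image of $\ES_{I,0}^w$ in $[\mathcal{G}_0 \backslash M^{\mathrm{loc}}]$---cannot witness quasi-affineness of Iwahori KR strata. The extra geometric information supplied by $\zeta_w$, namely the $T$-torsor $\mathbb{E}^w$ together with the smooth $T$-equivariant map $\zeta_w^{\#}$ to the affine torus $T$, is exactly what is needed, consistent with the authors' remark that $\zeta_w$ carries geometric information beyond the local model diagram. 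The technical heart of the proof is thus to promote the smoothness of $\zeta_w^{\#}$ and the affineness of its target into ampleness of $\mathcal{O}_{\mathbb{E}^w}$, presumably by combining pullbacks of regular functions on $T$ via $\zeta_w^{\#}$ with Hasse-type invariants built from powers of the Hodge line bundle.
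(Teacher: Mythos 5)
Your reduction of the second claim to the first is exactly what the paper does: $\pi_{I,K}^x:\ES_{I,0}^x\to\ES_{K,0}^x$ is a finite \'etale covering by Theorem~\ref{thm--etale forg level} and axiom 4(c), an Iwahori EKOR stratum is an Iwahori KR stratum, and ampleness of $\mathcal{O}$ descends along the faithfully flat map via Lemma~\ref{prepare--quasiaffine}(2)(b). That part is fine.

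For the first claim, however, you have only sketched a strategy and left the ``technical heart'' unfinished (``presumably\ldots''). This is a genuine gap: without the step that converts the data $(\zeta_w^{\#},f^*\omega)$ into ampleness of $\mathcal{O}$, nothing has been proved. The missing ingredient in the paper's argument is a precise statement about line bundles on $[T_{\sigma'}\backslash T]$, not about the total space $\mathbb{E}^w$. Concretely: the ample line bundle $f^*\omega|_{\ES_{I,0}^w}$ (the determinant of the Hodge filtration, pulled back from the Siegel space, hence ample by Lemma~\ref{prepare--quasiaffine}(2)(a) since $\ES_{I,0}^w\hookrightarrow\ES_{I,0}\to\ES_{H,0}$ is an immersion followed by a finite map) arises as $\zeta_I^*\mathcal{L}_\eta$ for a character $\eta$ of $T=\mathcal{I}_0^{\rdt}$. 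One then computes $\mathrm{Pic}([T_{\sigma'}\backslash T])$: by \cite{mu-ordi Hasse} and $\mathrm{Pic}(T)=0$ this equals $X^*(\ker h)$, where $h:T\to T$, $h(t)=\sigma'(t)/t$. The decisive point is that $dh|_e=d\sigma|_e+di_w|_e$ is invertible because $d\sigma|_e=0$ and $i_w$ is an isomorphism, so $\ker h$ is finite, $\mathrm{Pic}([T_{\sigma'}\backslash T])$ is finite, $\mathcal{L}_\eta$ is torsion, and $\mathcal{O}_{\ES_{I,0}^w}=(f^*\omega)^N$ is ample. You gestured at ``finite inertia $T^{\sigma'}$'' without proving it, and that finiteness is precisely the content of the $dh|_e$ calculation.

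A secondary comment on your proposed alternative route (via quasi-affineness of $\mathbb{E}^w$): it can be made to work, and it would sidestep the computation of $\mathrm{Pic}([T_{\sigma'}\backslash T])$. Indeed, since $\zeta_w^{\#}:\mathbb{E}^w\to T$ lifts $\zeta_I$ through the atlas $T\to[T_{\sigma'}\backslash T]$ and $\mathrm{Pic}(T)=0$, one has $g^*(f^*\omega)\cong(\zeta_w^{\#})^*(a^*\mathcal{L}_\eta)\cong\mathcal{O}_{\mathbb{E}^w}$, where $g:\mathbb{E}^w\to\ES_{I,0}^w$ is the torsor and $a:T\to[T_{\sigma'}\backslash T]$ the atlas; since $g$ is affine, $g^*(f^*\omega)$ is ample, so $\mathcal{O}_{\mathbb{E}^w}$ is ample, and then Lemma~\ref{prepare--quasiaffine}(2)(b) descends ampleness of $\mathcal{O}$ to $\ES_{I,0}^w$. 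But this requires the lemma that pullback along an affine (not merely finite) morphism preserves ampleness, which is not among the statements in Lemma~\ref{prepare--quasiaffine}, and you would still need the structural identification $f^*\omega|_{\ES_{I,0}^w}=\zeta_I^*\mathcal{L}_\eta$; neither appears in your write-up.
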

\begin{proof}
	Notations as before, for $x\in {}^K\Admu$ viewed as an element in $\Admu$, the morphism $\ES_{I,0}^x\rightarrow \ES_{K,0}^x$ is a finite \'{e}tale covering if axiom 4 (c) is satisfied, and hence by Lemma \ref{prepare--quasiaffine}, it suffices to show that $\ES_{I,0}^w$ is quasi-affine for all $w\in \Admu$.

	We work with a symplectic embedding $\mathcal{I}\rightarrow \GSp(V_{\INT_p},\psi)$ as in Remark \ref{remark--good siegel ebd}, the induced morphism \[f:\ES_{I,0}\rightarrow \ES_{H,0}(\GSp,S^\pm)\] is finite. Let $\V^1\subset \V$ be the Hodge filtration attached to the universal abelian scheme on $\ES_{H,0}(\GSp,S^\pm)$, then $\omega:=\mathrm{det}(\V^1)$ is ample, and hence $f^*\omega\cong \mathrm{det}(\V^1_{\ES_{I,0}})$ is ample. Let $P^0$ be the stabilizer in $\GL(V_k)$ attached to the filtration $C^\bullet_w$, it is well known that $\omega$ comes from a character of $P^0$, denoted by $\eta$. The induced character of $\mathcal{I}_{0,w}$ factors through the quotient $\mathcal{I}_{0,w}^\rdt=\mathcal{I}_{0}^\rdt$ which is a torus (over $k$ and defined over $\mathbb{F}_p$) denoted by $T$.

	The $\mathcal{I}_0$-zip on $\ES_{I,0}^w$ induces a morphism \[\zeta_I:\ES_{I,0}^w\rightarrow [T_{\sigma'}\backslash T],\] and $f^*\omega|_{\ES_{I,0}^w}$ is the pullback via $\zeta_I$ of the line bundle $\mathcal{L}_\eta$ on $[T_{\sigma'}\backslash T]$ induced by the character $\eta:T\rightarrow \mathbb{G}_m$. Noting that $f^*\omega|_{\ES_{I,0}^w}$ is ample, to prove that $\ES_{I,0}^w$ is quasi-affine, it suffices to show that $\mathcal{L}_\eta\in \mathrm{Pic}([T_{\sigma'}\backslash T])$ is of finite order.

	The action of $T$ on itself is induced by the homomorphism $h:T\rightarrow T$, $x\mapsto \sigma'(x)/x$. Let $\mathrm{H}:=\mathrm{Ker}(h)$, we have an isomorphism of stacks $[T_{\sigma'}\backslash T]\cong [\mathrm{H} \backslash \mathrm{Spec}\ k]$. So it suffices to show that $\mathrm{Pic}([\mathrm{H} \backslash \mathrm{Spec}\ k])$ is finite.

	
	Recall that $\sigma'=\sigma\circ \mathrm{Ad}(w)$, under the invertible substitution $x=w^{-1}yw$, $h:T_{\sigma'}\rightarrow T$ becomes $y\mapsto \sigma(y)/(w^{-1}yw)$. We compute its tangent map at the identity. Let $m:T\times T\rightarrow T$ be the multiplication map, and $i_w:T\rightarrow T$ be the isomorphism induced by $y\mapsto (w^{-1}yw)^{-1}$. Then $$dh|_e=d\big(m\circ (\sigma,i_w)\big)=dm|_{(e,e)}\circ d\big(\sigma,i_w\big)|_e=d\sigma|_e+di_w|_e.$$
	Noting that $i_w$ is an isomorphism, so $di_w|_e$ is invertible, while $d\sigma|_e=0$, so $dh|_e$ is invertible. In particular, $\mathrm{H}=\mathrm{Ker}(h)$ is finite \'{e}tale.
	
	The (trivial) action of $\mathrm{H}$ on $\mathrm{Spec}\ k$ induces the trivial action of $\mathrm{H}(k)$ on $k^\times=O(\mathrm{Spec}\ k)^\times$. By the second exact sequence of \cite[Theorem 2.2.1]{mu-ordi Hasse}, $\mathrm{Pic}([\mathrm{H} \backslash \mathrm{Spec}\ k])\cong H_{\mathrm{alg}}^1(\mathrm{H}, k^\times)$, where $H_{\mathrm{alg}}^1(\mathrm{H}, k^\times)\subset H^1(\mathrm{H}(k), k^\times)$ is the subgroup of classes of cocycles induced by a morphism $\mathrm{H}\rightarrow \mathbb{G}_{m,k}$. Noting that $H^1(\mathrm{H}(k), k^\times)=\mathrm{Hom}(\mathrm{H}(k),k^\times)$ as we are using the trivial action, we have $H_{\mathrm{alg}}^1(\mathrm{H}, k^\times)\cong X^*(\mathrm{H})$. So $\mathrm{Pic}([\mathrm{H} \backslash \mathrm{Spec}\ k])\cong X^*(\mathrm{H})$, and hence is finite.
	
\end{proof}
\begin{remark}
	The element $\mathcal{L}_\eta\in\mathrm{Pic}([E_{\mathcal{Z}_w}\backslash\mathcal{G}^{\mathrm{rdt}}_0])$ is not of finite order in general, since otherwise
	the whole KR stratum (which is the whole special fiber if $K$ is hyperspecial) would be quasi-affine, which is absurd.
\end{remark}

\subsubsection{}
We can also talk about the relations between KR strata for different parahoric subgroups. Consider the projection morphism $\pi_{I,K}: \ES_{I,0}\ra\ES_{K,0}$.
Let $w\in \Admu_K$ which we view as an element of ${}^K\wt{W}^K$. Then \[\pi_{I,K}^{-1}(\ES_{K,0}^w)=\coprod_{x\in W_KwW_K\cap\Admu}\ES_{I,0}^x. \]
Thus we get an induced morphism
\[ \coprod_{x\in W_KwW_K\cap\Admu}\ES_{I,0}^x \lra \ES_{K,0}^w= \coprod_{y\in W_KwW_K\cap {}^K\wt{W}}\ES_{K,0}^y.\]
Note that by Theorem \ref{T: EKOR set}, $W_KwW_K\cap {}^K\wt{W}\subset {}^K\Admu=\Admu \cap {}^K\wt{W}$, thus
\[ W_KwW_K\cap {}^K\wt{W}\subset W_KwW_K\cap\Admu. \]
By Proposition \ref{change para--EKOR}, we have
\begin{itemize}
	\item 
for $x\in W_KwW_K\cap {}^K\wt{W}$,  $\pi_{I,K}(\ES_{I,0}^x)\subset \ES_{K,0}^x$; 
\item for $x\in W_KwW_K\cap\Admu \setminus W_KwW_K\cap{}^K\wt{W}$,
$\pi_{I,K}(\ES_{I,0}^x)\subset \coprod_{y\in \Sigma_K(w)} \ES_{K,0}^y$. 
\end{itemize}
If axiom 4 (c) holds, then both inclusions above are in fact equalities.

\subsection{Affine Deligne-Lusztig varieties}\label{subsection ADLV}
In this subsection, we discuss the local counterparts of our previous constructions and some local-global compatibilities of our constructions.
\subsubsection[]{}
We turn to a general local setting to discuss affine Deligne-Lusztig varieties.
Let $(G, [b], \{\mu\})$ be a triple where $G$ is a connected reductive group over $\Q_p$, $\{\mu\}$ is a conjugacy class of cocharacters $\mu: \mathbb{G}_{m,\ov{\Q}_p}\ra G_{\ov{\Q}_p}$, $[b]$ is a $\sigma$-conjugacy class such that $[b] \in B(G,\{\mu\})$.
Fix a representative $b\in G(\breve{\Q}_p)$ of $[b]$ and a parahoric subgroup $K\subset G(\Q_p)$. Let $\breve{K}\subset G(\breve{\Q}_p)$ be the associated parahoric subgroup.
We get the associated affine Deligne-Lusztig set \[X(\mu,b)_K=\{g\in G(\breve{\Q}_p)|\, g^{-1}b\sigma(g)\in \bigcup_{w\in \mathrm{Adm}(\{\mu\})_K}\breve{K}w\breve{K}\}/\breve{K},\]
which admits a geometric structure as a perfect scheme over $k=\ov{\mathbb{F}}_p$ by \cite{zhu-aff gras in mixed char} and \cite{BS}. See also the following \ref{subsection ADLV global}.  On the other hand, for any $w\in \Admu_K$, consider
\[X_w(b)_K=\{g\in G(\breve{\Q}_p)|\, g^{-1}b\sigma(g)\in \breve{K}w\breve{K}\}/\breve{K}.\]Then we have the KR decomposition
\[X(\mu,b)_K=\coprod_{w\in\Admu_K}X_w(b)_K. \]
We remark that in the above decomposition it can happen that some $X_w(b)_K$ is empty. We view $X(\mu,b)_K$ as a perfect scheme over $k$. Let $M^\loc=\bigcup_{w\in \mathrm{Adm}^K(\{\mu\})}\breve{K}w\breve{K}/\breve{K}$ which we view as a perfect scheme over $k$, cf. the following \ref{subsection loc sht}.
By construction, we have a tautological (``local model'') diagram (cf. \cite{guide to red mod p} p. 299)
\[\xymatrix{
	&\wt{X}(\mu,b)_K\ar[ld]_\pi\ar[rd]^q&\\
	X(\mu,b)_K& & M^\loc.
}\]Therefore, for any $X_w(b)_K\neq \emptyset$, we can view it as a locally closed perfect subscheme of $X(\mu,b)_K$. 

Following \cite{coxeter type} 3.4 and \cite{fully H-N decomp} 1.4, for $x\in {}^K\wt{W}$ we set
\[X_{K,x}(b)=\{g\in G(\breve{\Q}_p)|\, g^{-1}b\sigma(g)\in \breve{K}\cdot_\sigma\breve{I}x\breve{I}\}/\breve{K}.\]Then we have the EKOR decomposition 
\[ X(\mu,b)_K=\coprod_{x\in {}^K\Admu}X_{K,x}(b),\]
which is finer than the above KR decomposition. 

\subsubsection[]{}\label{subsubsec loc zeta}
Now we come back to the setting as in the beginning of this section.
Let $(G,X)$ be a Shimura datum of Hodge type. After fixing an element $[b]\in B(G,\{\mu\})$ and a representative $b\in G(\breve{\Q}_p)$ of $[b]$, we have the affine Deligne-Lusztig variety $X(\mu,b)_K$ as above.
Our construction in subsection \ref{subsection EO in KR} has a local analogue for $X(\mu,b)_K$, so that for each non empty KR stratum $X_w(b)_K$, we have a $\G_0^\rdt$-zip on it and thus a morphism of algebraic stacks
\[\zeta_w: X_w(b)_K\ra [E_{\mathcal{Z}_w}\backslash \G_0^\rdt]^{pf}.\]
Here $[E_{\mathcal{Z}_w}\backslash \G_0^\rdt]^{pf}$ is the perfection of the quotient stack $[E_{\mathcal{Z}_w}\backslash \G_0^\rdt]$, see \cite{XiaoZhu} Appendix (and also the following \ref{subsection app Shim}).
This gives us a geometric meaning for the above EKOR decomposition.
In the case that $K$ is hyperspecial and $X(\mu,b)_K$ is isomorphic to the perfection of the special fiber of a formal Rapoport-Zink space,  there is a related construction in 
\cite[Theorem 7.1]{Sh}.

Fix a sufficiently small $K^p$ and write $\ES_0=\ES_{\K,0}$ with $\K=KK^p$. Let $\ES_0^b$ be the Newton stratum attached to $[b]$.
Now the links between the local and global KR and EKOR strata are as follows.
\begin{proposition}\label{P: ADLV and EKOR}
	We have:
	\begin{enumerate}
		\item  $X_w(b)_K\neq \emptyset \Leftrightarrow \ES_0^w(k)\cap \ES_0^b(k)\neq \emptyset$.
		\item  $X_{K,x}(b)\neq \emptyset\Leftrightarrow \ES_0^x(k)\cap \ES_0^b(k)\neq \emptyset$.
	\end{enumerate}
\end{proposition}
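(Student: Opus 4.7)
The strategy is to combine the Dieudonn\'e-theoretic construction of the map $\Upsilon_\K$ with a Rapoport-Zink--type uniformization of Newton strata at parahoric level; one direction is essentially formal from the definitions, while the other requires transferring information from the affine Deligne-Lusztig side back to the Shimura variety.

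For the ``only if'' direction in each statement, I begin with a point $y\in\ES_0^w(k)\cap\ES_0^b(k)$ (resp.\ $y\in\ES_0^x(k)\cap\ES_0^b(k)$). Choosing a trivialization $t$ of the Dieudonn\'e module $D_y$ respecting the crystalline tensor $s_{\mathrm{cris},y}$, the construction recalled in \S\ref{subsection stratifications hodge} produces an element $g_{y,t}\in G(\breve{\Q}_p)$ whose image in $\CGmu$ encodes both the KR invariant $w$ and the Newton invariant $[b]$. Since $[g_{y,t}]=[b]\in B(G)$, I can write $g_{y,t}=h^{-1}b\sigma(h)$ for some $h\in G(\breve{\Q}_p)$; the condition $g_{y,t}\in\breve{K}w\breve{K}$ is then precisely the statement $h\in X_w(b)_K$, proving (1). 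For (2), I refine the choice of $t$ so that $g_{y,t}\in \breve{K}_\sigma\cdot \breve{I}x\breve{I}$ (possible by Theorem \ref{He's result--decompo} since $y$ has EKOR type $x$), and conclude analogously that $h\in X_{K,x}(b)$.

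For the converse, I fix a point $y_0\in\ES_0^b(k)$, whose existence follows from the non-emptiness results for Newton strata recalled in \S\ref{subsection stratifications hodge}. The essential input is a Rapoport-Zink uniformization compatible with the KR and EKOR stratifications: to any $g\in X(\mu,b)_K$ one attaches a point $y(g)\in\ES_0^b(k)$ whose Dieudonn\'e data are obtained from those of $y_0$ by the modification encoded by $g$, and such that the KR (resp.\ EKOR) type of $y(g)$ agrees with that of $g$. Concretely, both $\ES_{0}^{pf}$ and $X(\mu,b)_K$ map to the stack $\Sht_{\mu,K}^{\mathrm{loc}(m,1)}$ of restricted local $(\G,\mu)$-Shtukas via the morphism $\upsilon_K$ introduced in Section~\ref{section global}, and the compositions with $\Sht_{\mu,K}^{\mathrm{loc}(m,1)}\ra [\G_0\backslash M^{\loc}]$ record the EKOR and KR types respectively; applying this uniformization to any $g\in X_w(b)_K$ (resp.\ $g\in X_{K,x}(b)$) then produces the desired point of $\ES_0^w(k)\cap\ES_0^b(k)$ (resp.\ $\ES_0^x(k)\cap\ES_0^b(k)$).

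The principal obstacle is the construction and stratification-compatibility of the parahoric-level uniformization. At hyperspecial level this is classical, but at general parahoric level it rests on the theory of restricted local Shtukas developed in Section~\ref{section global} and will be packaged as Proposition~\ref{P: uniformization EKOR}; once that result is established, compatibility with both the KR and EKOR stratifications follows from the fact that all the maps in question factor through the common stack $\Sht_{\mu,K}^{\mathrm{loc}(m,1)}$, and the two equivalences of the proposition follow simultaneously.
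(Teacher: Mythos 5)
Your ``only if'' direction is correct and in fact parallels the key observation in the paper's proof: from a point $y\in\ES_0^w(k)\cap\ES_0^b(k)$ one gets $g_{y,t}\in\breve{K}w\breve{K}$ with $[g_{y,t}]=[b]$, and unwinding the $\sigma$-conjugacy gives an element of $X_w(b)_K$. (One small inaccuracy: you do not need to ``refine the choice of $t$'' in part (2) --- the class of $g_{y,t}$ in $\breve{G}/\breve{K}_\sigma$ is independent of $t$, and $y$ having EKOR type $x$ means this class lies in $\breve{K}_\sigma(\breve{I}x\breve{I})/\breve{K}_\sigma$ for every choice of $t$, by Theorem~\ref{He's result--decompo}.)

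The ``if'' direction as you wrote it has a real gap. You propose to produce the desired point of $\ES_0^w(k)\cap\ES_0^b(k)$ from an element of $X_w(b)_K$ via the uniformization map $\iota_{x_0}\colon X(\mu,b)_K\to\ES_0^b(k)$. But the uniformization map at parahoric level, as invoked in this paper (the discussion preceding Corollary~\ref{P: uniformization EKOR}, citing \cite{zhou isog parahoric} Proposition 6.5), is only constructed under the additional hypothesis that \emph{either $G$ is residually split or $[b]$ is basic}. Proposition~\ref{P: ADLV and EKOR} is asserted without any such hypothesis, so a proof of the ``if'' direction built on $\iota_{x_0}$ would only cover a special case. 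There is also a structural awkwardness: Corollary~\ref{P: uniformization EKOR} itself appears \emph{after} the present proposition and is essentially a statement about compatibility of $\iota_{x_0}$ with the KR/EKOR decompositions, so using it here would put the results in an odd logical order. The paper instead proves both directions at once by a strictly cheaper and unconditional mechanism: by \cite{zhou isog parahoric} Theorem 8.1, the map $\Upsilon_K\colon\ES_0(k)\to C(\G,\{\mu\})$ is \emph{surjective}, which immediately gives $\ES_0^w(k)\cap\ES_0^b(k)\neq\emptyset\Leftrightarrow C^w\cap C^b\neq\emptyset$, where $C^w,C^b\subset C(\G,\{\mu\})$ are the fibers of the projections to $\Admu_K$ and $B(G,\{\mu\})$; and $C^w\cap C^b\neq\emptyset$ is visibly equivalent to $\breve{K}w\breve{K}\cap[b]\neq\emptyset$, which by definition is the non-emptiness of $X_w(b)_K$. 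If you want to salvage your approach, replace the appeal to $\iota_{x_0}$ by an appeal to the surjectivity of $\Upsilon_K$; otherwise your argument only establishes the proposition under the residually-split-or-basic hypothesis.
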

\begin{proof}
	For (1), we have \[X_w(b)_K\neq \emptyset \Leftrightarrow \breve{K}w\breve{K}\cap [b]\neq \emptyset\] by definition. By \cite[Theorem 8.1]{zhou isog parahoric}, the map $\Upsilon_K: \ES_0(k)\ra C(\G,\{\mu\})$ is surjective. Let $C^w$ and $C^b$ be the inverse images of $w$ and $[b]$ under the natural surjections $C(\G,\{\mu\})\ra \Admu_K$ and $C(\G,\{\mu\})\ra B(G,\{\mu\})$ respectively. Then the surjectivity of $\Upsilon_K$ implies that \[\ES_0^w(k)\cap \ES_0^b(k)\neq \emptyset\Leftrightarrow C^w\cap C^b\neq \emptyset.\] Finally, one sees easily that \[C^w\cap C^b\neq \emptyset\Leftrightarrow \breve{K}w\breve{K}\cap [b]\neq \emptyset.\]
	
	The proof for (2) is similar.
\end{proof}

Fix a point $x_0\in \ES_{0}^b(k)$ and assume further \emph{either $G$ is residually split or $[b]$ is basic}. Then by \cite[Proposition 6.5]{zhou isog parahoric}, there exists a unique uniformization map
\[\iota_{x_0}: X(\mu,b)_K\ra \ES_0^b(k).\]One can check directly the following local-global compatibilities under the uniformization map:
\begin{corollary}\label{P: uniformization EKOR}
	The above morphism $\iota_{x_0}$ induces
	\begin{enumerate}
		\item for any $w\in \Admu_K$  a map \[X_w(b)_K\ra  \ES_0^w(k)\cap \ES_0^b(k),\]
		\item for any $x\in {}^K\Admu$ a map \[X_{K,x}(b)\ra \ES_0^x(k)\cap \ES_0^b(k).\] 
	\end{enumerate}
\end{corollary}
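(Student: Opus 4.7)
The plan is to verify that the uniformization map $\iota_{x_0}$ is compatible with the stratification maps $\lambda_\K$ and $\upsilon_\K$ of subsection \ref{subsection stratifications hodge} on the global side and with the tautological KR/EKOR decompositions of $X(\mu,b)_K$ on the local side. Both statements then reduce to unwinding what $\Upsilon_\K(\iota_{x_0}(g\breve{K}))\in\CGmu$ is.

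First I would recall that for $y\in\ES_0(k)$ the KR type $\lambda_\K(y)\in\Admu_K$ (resp.\ the EKOR type $\upsilon_\K(y)\in {}^K\Admu$) is obtained by projecting $\Upsilon_\K(y)\in\CGmu=\breve{K}\Admu\breve{K}/\breve{K}_\sigma$ along the left arrow (resp.\ the middle arrow) in the diagram (\ref{proj from cent}). Next, using the construction of $\iota_{x_0}$ via the isogeny class attached to $x_0$ (see the proof of \cite[Proposition 6.5]{zhou isog parahoric}), I would choose a trivialization $t_0\in\wt{\wt{\I}}_{x_0}(\breve{\INT}_p)$ so that $t_0^{*}\varphi_{x_0}=\sigma\circ b$. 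For $g\breve{K}\in X(\mu,b)_K$ the point $y=\iota_{x_0}(g\breve{K})$ carries the trivialization $g^{-1}\cdot t_0$, and the Frobenius on its Dieudonné module becomes $\sigma\circ (g^{-1}b\sigma(g))$. Consequently
\[
\Upsilon_\K(\iota_{x_0}(g\breve{K}))\;=\;[\,g^{-1}b\sigma(g)\,]\quad\in\;\CGmu.
\]

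With this identification, part (1) is immediate: $g\in X_w(b)_K$ means exactly $g^{-1}b\sigma(g)\in\breve{K}w\breve{K}$, so $\lambda_\K(\iota_{x_0}(g\breve{K}))=w$, and since the image already lies in $\ES_0^b(k)$ by definition of $\iota_{x_0}$, we conclude $\iota_{x_0}(X_w(b)_K)\subset \ES_0^w(k)\cap \ES_0^b(k)$. For part (2), the projection $\CGmu\twoheadrightarrow {}^K\Admu$ under the identification $\mathrm{EKOR}(K,\{\mu\})={}^K\Admu$ sends $[h]$ to the unique $x\in {}^K\Admu$ with $h\in \breve{K}_\sigma(\breve{K}_1 x\breve{K}_1)$. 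By Theorem \ref{He's result--decompo}(1), this equals $\breve{K}_\sigma(\breve{I}x\breve{I})$, which is precisely the defining condition for $g\in X_{K,x}(b)$; hence $\upsilon_\K(\iota_{x_0}(g\breve{K}))=x$, giving the desired inclusion.

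The only delicate point — and thus the main thing to write up carefully — is the identification $\Upsilon_\K(\iota_{x_0}(g\breve{K}))=[g^{-1}b\sigma(g)]$. All the necessary bookkeeping (compatibility of the crystalline tensor $s_{\cris,x_0}$ with the quasi-isogeny, independence of the choice of the trivialization $t_0$, and invariance of $[g^{-1}b\sigma(g)]$ under right multiplication of $g$ by $\breve{K}$) is already built into the definition of $\Upsilon_\K$ and the construction of $\iota_{x_0}$ in \cite{zhou isog parahoric}, so no new input beyond Theorem \ref{He's result--decompo} is needed.
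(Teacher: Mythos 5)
The paper gives no proof of this corollary at all; it merely remarks that the local–global compatibility ``can be checked directly''. Your argument supplies exactly that check, and it is the correct and natural one: the entire content is the identity $\Upsilon_\K(\iota_{x_0}(g\breve{K}))=[g^{-1}b\sigma(g)]$ in $C(\G,\{\mu\})$, after which (1) follows from the commutativity of the projections in (\ref{proj from cent}) together with the Bruhat--Tits decomposition, and (2) follows from Theorem~\ref{He's result--decompo}(1), which identifies $\breve{K}_\sigma(\breve{K}_1 x\breve{K}_1)$ with $\breve{K}_\sigma(\breve{I}x\breve{I})$ and hence matches the defining condition of $X_{K,x}(b)$.

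One small caveat worth writing out explicitly: depending on how the convention for the pullback Frobenius ($t^*\varphi=(\mathrm{id}\otimes\sigma)\circ g_{x,t}$, see \ref{subsection stratifications hodge}) interacts with the normalization of $\iota_{x_0}$, the element attached to $\iota_{x_0}(g\breve{K})$ a priori comes out as either $g^{-1}b\sigma(g)$ or a $\sigma$-shift of it. You should confirm (as is indeed the case, cf.\ the construction in \cite{zhou isog parahoric} Proposition~6.5) that the framing $t_0$ is chosen so as to make this element precisely $g^{-1}b\sigma(g)$, since the definitions of $X_w(b)_K$ and $X_{K,x}(b)$ involve $g^{-1}b\sigma(g)$ and not its $\sigma^{-1}$-twist, and $b$ need not be $\sigma$-invariant. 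Also note that it is not necessary to arrange $t_0^*\varphi_{x_0}=\sigma\circ b$ on the nose: replacing $t_0$ by $t_0 h$ for $h\in\breve{K}$ replaces $g^{-1}b\sigma(g)$ by its $\sigma$-$\breve{K}$-conjugate, which leaves the class in $C(\G,\{\mu\})$ unchanged, so only the $\sigma$-$\breve{K}$-conjugacy class of $t_0^*\varphi_{x_0}$ matters. With these points tightened, the proof is complete.
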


\section{EKOR strata of Hodge type: global constructions}\label{section global}
In this section, we will give some global constructions of the EKOR strata by adapting and generalizing some ideas of Xiao-Zhu in \cite{XiaoZhu}. More precisely, we will introduce the notions of local $(\G,\mu)$-Shtukas and their truncations in level 1: the so called $(m,1)$-restricted local $(\G,\mu)$-Shtukas, generalizing those in \cite{XiaoZhu} in the case of good reductions. We will construct a smooth morphism from the perfection $\ES_{\K,0}^{pf}$ of $\ES_{\K,0}$ to the moduli stack of $(m,1)$-restricted local $(\G,\mu)$-Shtukas, such that its fibers give the EKOR strata on $\ES_{\K,0}^{pf}$. This global construction will enable us to prove the closure relation for the EKOR strata, which is independent of the He-Rapoport axioms in \cite{He-Rap}. In the hyperspecial case, see \cite{Yan} for a different construction of the EO stratification using classical affine Grassmannians.

\subsection{Local $(\G,\mu)$-Shtukas and their moduli}\label{subsection loc sht}
We return to the setting of subsection \ref{Iwahori Weyl}. 
Let $G$ be a reductive group over $\mathbb{Q}_p$, and $\mathcal{G}$ be a parahoric group scheme over $\INT_p$ with generic fiber $G$. Let $K=\G(\Z_p)\subset G(\Q_p)$ be the associated parahoric subgroup.

\subsubsection[]{Witt vector affine flag varieties}
Let $k=\ov{\mathbb{F}}_p$ and $\Aff_k^{pf}$ be the category of perfect $k$-algebras. We will work with the fpqc topology on $\Aff_k^{pf}$, and we refer the reader to the appendices of \cite{zhu-aff gras in mixed char} and \cite{XiaoZhu} to some generalities on perfect algebraic geometry. Recall that we have the Witt vector loop groups $L^+\G$, $LG$, and the Witt vector affine flag variety $\Gr_\G$, such that
for any $R\in \Aff_k^{pf}$,  $L^+\G(R)=\G(W(R))$, $LG(R)=\G(W(R)[1/p])=G(W(R)[1/p])$, and 
\[\Gr_\G(R)=\{(\El,\beta)|\,\El:  \G\text{-torsor on}\; W(R), \beta: \El[1/p]\stackrel{\sim}{\ra}\El_0[1/p]\}\]	
where $\El_0$ is the trivial $\G$-torsor. 

Let $\mu: \mathbb{G}_{m,\ov{\Q}_p}\ra G_{\ov{\Q}_p}$ be a cocharacter and $\{\mu\}$ be the associated conjugacy class. After choosing a Borel subgroup $B$ of $G_{\ov{\Q}_p}$, we may assume that $\mu$ is dominant with respective to $B$. Recall that we have the finite subset $\Admu_K\subset W_K\backslash \wt{W}/W_K$. For any $w\in W_K\backslash \wt{W}/W_K$, we have the associated affine Schubert cell $\Gr_w\subset \Gr_\G$.
Let \[M^{\loc}=\bigcup_{w\in \Admu_K}\Gr_w\subset \Gr_\G\] be the closed subscheme associated to $\Admu_K$. In subsection \ref{subsection app Shim}, it will be the (perfection of the) special fiber of the Pappas-Zhu local model. 

\subsubsection[]{Local $\G$-Shtukas in mixed characteristic}
Motivated by \cite{SW} Definitions 11.4.1 and 23.1.2 and \cite{Pappas Ober},
we have the following generalization of local Shtukas in mixed characteristic of \cite[Definition 5.2.1]{XiaoZhu}. Recall that we write $\sigma: W(R)\ra W(R)$ for the Frobenius map, as in subsection \ref{subsection deformation}.
\begin{definition}\label{D: shtuka}
	Let	$R\in \Aff_k^{pf}$.
	A local $(\G,\mu)$-Shtuka (or, a local $\G$-Shtuka of type $\mu$) over $R$ is a pair $(\El,\beta)$, where
	\begin{itemize}
		
		\item $\El$ is a $\G$-torsor over $\Spec W(R)$, 
		\item $\beta$ is a modification of the $\G$-torsors $\sigma^\ast\El$ and $\El$ over $\Spec W(R)$, i.e. an isomorphism \[\beta: \sigma^\ast\El|_{\Spec W(R)[1/p]}\stackrel{\sim}{\ra}\El|_{\Spec W(R)[1/p]},\]
	\end{itemize}
	such that for any geometric point $x$ of $\Spec R$, we have $Inv_x(\beta)\in \Admu_K$.
	Sometimes we will also write the modification $\beta$ as $\beta: \sigma^\ast\El\dashrightarrow \El$.
\end{definition}
The last condition may need a few explanations: take any trivializations $\alpha: \El_{0,x} \stackrel{\sim}{\ra}\sigma^\ast\El_x  $ and $\gamma: \El_x\stackrel{\sim}{\ra} \El_{0,x} $, then the composition $\gamma\circ\beta\circ \alpha$ defines an element $g\in G(W(k(x))_\Q)$. For different choices of $\alpha'$ and $\gamma'$, we get $g'=h_1gh_2$ for some $h_1, h_2\in \G(W(k(x)))$. Thus the modification $\beta$ defines a well defined element \[Inv_x(\beta):=[g]\in  \G(W(k(x)))\backslash G(W(k(x))_\Q)/\G(W(k(x)))\simeq \breve{K}\backslash G(\breve{\Q}_p)/\breve{K}\simeq W_K\backslash \wt{W}/W_K.\] The above condition is that for all geometric points $x$ of $\Spec R$, we require the elements $Inv_x(\beta)$ to be in the finite subset $\Admu_K$.

Let $\Sht_{\mu,K}^{\loc}$ be  the prestack of $\G$-Shtukas of type $\mu$. To describe it, first recall that by construction we have
$\Gr_\G=LG/L^+\G$. In the following we will need to consider the $\sigma$-conjugation action of $L^+\G$ on $LG$: for any $R\in \Aff^{pf}_k, g\in LG(R)=G(W(R)_\Q)$, and $h\in L^+\G(R)=\G(W(R))$, \[h\cdot g: =hg\sigma(h^{-1})\in LG(R).\]
Similar to \cite{XiaoZhu} (5.3.7) and \cite{Zhu2} (4.1.1), we can describe $\Sht_{\mu,K}^{\loc}$ as follows.
\begin{lemma}\label{L: pretacks sht}
	We have the following isomorphism of prestacks
	\[\Sht_{\mu,K}^{\loc}\simeq \Big[\frac{M^{\loc,\infty}}{Ad_\sigma L^+\G}\Big],\]where $M^{\loc,\infty}\subset LG$ is the pre-image of $M^{\loc}\subset \Gr_\G=LG/L^+\G$, which is stable by the $\sigma$-conjugation action of $L^+\G$ on $LG$, and the quotient means that we take the $\sigma$-conjugation action of $L^+\G$ on $M^{\loc,\infty}$.
\end{lemma}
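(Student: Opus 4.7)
The plan is to exhibit an equivalence of groupoids over each perfect $k$-algebra $R$ by unraveling both sides, the only non-formal input being a comparison between $\G$-torsors on $\Spec W(R)$ and $L^+\G$-torsors on $\Spec R$.

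\textbf{Step 1: unfolding the quotient stack.} By definition, an $R$-point of $[M^{\loc,\infty}/Ad_\sigma L^+\G]$ is a pair $(P,f)$, where $P$ is an $L^+\G$-torsor on $R$ (for the fpqc topology on $\Aff_k^{pf}$) and $f\colon P\to M^{\loc,\infty}$ is an $L^+\G$-equivariant morphism, the source carrying the right translation action and the target the $\sigma$-conjugation action $h\cdot g = h g\sigma(h)^{-1}$.

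\textbf{Step 2: torsors on $W(R)$ vs.\ $L^+\G$-torsors on $R$.} The functor $P\mapsto P\times^{L^+\G}\G$, where $L^+\G$ acts on $\G$ via $L^+\G(R')=\G(W(R'))$, yields a $\G$-torsor $\El$ on $\Spec W(R)$. Conversely, since $\G$ is smooth and affine and $R$ is perfect, any $\G$-torsor $\El$ on $\Spec W(R)$ is fpqc locally on $R$ trivial (this uses that smooth $\G$-torsors on $W(R')$ lift from $R'$-torsors for $\G_0$, together with the smoothness of $L^+\G$ and standard descent, as in \cite{zhu-aff gras in mixed char, BS, XiaoZhu}); its sheaf of trivializations $R'\mapsto \Isom_{W(R')}(\G,\El\otimes W(R'))$ then defines an $L^+\G$-torsor. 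These assignments are mutually inverse equivalences.

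\textbf{Step 3: matching the modifications.} Given a local $(\G,\mu)$-Shtuka $(\El,\beta)$ over $R$, let $P$ be the associated $L^+\G$-torsor. Fpqc-locally on $R$ choose a trivialization $\alpha\colon \El\overset{\sim}{\to}\El_0$; it induces, via $\sigma$, a trivialization $\sigma^*\alpha\colon\sigma^*\El\overset{\sim}{\to}\El_0$, and the composition $\alpha\circ\beta\circ(\sigma^*\alpha)^{-1}\in\Aut(\El_0|_{W(R)[1/p]})=LG(R)$ defines an element $g(\alpha)$. The bound $\Invar_x(\beta)\in\Admu_K$ forces $g(\alpha)\in M^{\loc,\infty}(R)$. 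Replacing $\alpha$ by $h\alpha$ with $h\in L^+\G(R)$ replaces $g(\alpha)$ by $hg(\alpha)\sigma(h)^{-1}$, which is exactly the $\sigma$-conjugation action. Hence $\alpha\mapsto g(\alpha)$ descends to an $L^+\G$-equivariant morphism $f\colon P\to M^{\loc,\infty}$. In the reverse direction, starting from $(P,f)$, define $\El$ as in Step 2, and define $\beta$ by reversing the above construction and descending from a trivialization; the bound on $f$ guarantees that $\beta$ lands in the prescribed admissible locus pointwise. One checks that the two constructions are mutually quasi-inverse and compatible with morphisms; functoriality in $R$ is immediate.

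\textbf{Main obstacle.} The only non-formal point is the comparison of Step 2: one needs the fpqc local triviality of $\G$-torsors on $\Spec W(R)$ for $R$ perfect in order to pass freely between $\G$-torsors over $W(R)$ and $L^+\G$-torsors over $R$. Once this is granted, all remaining verifications are a direct and essentially formal unwinding of the definition of a local $(\G,\mu)$-Shtuka and of the quotient prestack.
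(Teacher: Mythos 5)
Your argument is correct and is essentially the same as the paper's: both pass through the identification, after fpqc-locally trivializing $\El$, of $\alpha\circ\beta\circ(\sigma^\ast\alpha)^{-1}$ with an $R$-point of $M^{\loc,\infty}$, and both check that changing the trivialization by $h\in L^+\G(R)$ effects the $\sigma$-conjugation $g\mapsto hg\sigma(h)^{-1}$. The paper merely packages this by introducing the framed prestack $\Sht_{\mu,K}^{\loc,\square}$ as an $L^+\G$-torsor over $\Sht_{\mu,K}^{\loc}$ and identifying it with $M^{\loc,\infty}$, whereas you unfold the quotient prestack directly; these are the same computation read in opposite directions. Your Step 2 makes explicit the one non-formal ingredient (fpqc-local triviality of $\G$-torsors over $W(R)$ for $R$ perfect, hence the equivalence between such torsors and $L^+\G$-torsors over $R$) that the paper leaves implicit by citing \cite{Zhu2}.
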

\begin{proof}
	This is similar to \cite{Zhu2} 4.1. We briefly sketch the arguments. First we note $M^{\loc,\infty}$ is stable under the $\sigma$-conjugation action of $L^+\G$.
	Let $\Sht_{\mu,K}^{\loc, \square}$ denote the $L^+\G$-torsor over $\Sht_{\mu,K}^{\loc}$ classifying local $(\G,\mu)$-Shtukas together with a trivialization $\epsilon: \El\simeq \El_0$. Then the composition $\epsilon\circ\beta\circ\sigma(\epsilon^{-1})$ defines an element $g\in M^{\loc,\infty}$ and induces an isomorphism $\Sht_{\mu,K}^{\loc, \square}\simeq M^{\loc,\infty}$, under which the $L^+\G$-action on $\Sht_{\mu,K}^{\loc, \square}$ is identified with the $\sigma$-conjugation action of $L^+\G$ on $M^{\loc,\infty}$. This gives the desired isomorphism of prestacks.
\end{proof}
Recall that in \ref{subsubsection CGmu} we have introduced the set $C(\G,\{\mu\})=\breve{K}\Admu\breve{K}/\breve{K}_\sigma$. Since $k=\ov{\mathbb{F}}_p$, we get $L^+\G(k)=\G(W(k))=\breve{K}, M^{\loc,\infty}(k)=\breve{K}\Admu\breve{K}$.
In particular, by Lemma \ref{L: pretacks sht} we have \[\Sht_{\mu,K}^{\loc}(k)=C(\G,\{\mu\}).\]

\subsubsection[]{An alternative description}
Let $(\El,\beta)$ be a local $(\G,\mu)$-Shtuka. Set $\overleftarrow{\El}:=\sigma^\ast\El$, $\overrightarrow{\El}:=\El$, then 
$(\El,\beta)$ is equivalent to the following data $(\gamma:  \overleftarrow{\El}\dashrightarrow \overrightarrow{\El}, \psi)$, where
\begin{itemize}
	\item $\gamma:  \overleftarrow{\El}\dashrightarrow \overrightarrow{\El}$ is a modification of $\G$-torsors over $\Spec W(R)$ of type $\mu$,
	\item $\psi:  \sigma^\ast \overrightarrow{\El}\simeq \overleftarrow{\El} $ is an isomorphism of $\G$-torsors over $\Spec W(R)$.
\end{itemize}
Consider the associated local Hecke stack $Hk^{\loc}_{\mu,K}$: for any $R\in \Aff^{pf}_k$, $Hk^{\loc}_{\mu,K}(R)$ classifies the modifications $\gamma:  \overleftarrow{\El}\dashrightarrow \overrightarrow{\El}$ of $\G$-torsors over $\Spec W(R)$ of type $\mu$. We have the isomorphism of stacks
\[Hk^{\loc}_{\mu,K}\simeq [L^+\G\backslash M^{\loc}].\]
Let $BL^+\G$ be the stack of $L^+\G$-torsors, and  \[\ovla{t}: Hk^{\loc}_{\mu,K}\ra  BL^+\G  \quad (\text{resp.} \ovra{t}: Hk^{\loc}_{\mu,K}\ra  BL^+\G)\] be the functor which sends $\gamma$ to $\overleftarrow{\El}$ (resp. $\overrightarrow{\El}$).
Then we have the following cartesian digram
\[\xymatrixcolsep{5pc}\xymatrix{
	\Sht_{\mu,K}^{\loc}\ar[r]\ar[d]& Hk^{\loc}_{\mu,K}\ar[d]^{\ovla{t}\times \ovra{t}}\\
	BL^+\G\ar[r]^{\sigma\times 1} & BL^+\G\times BL^+\G.
}\]
For later use, we will also view the functors $\ovla{t}$ and $\ovra{t}$ in the following way. Recall that $M^{\loc,\infty}\ra M^{\loc}$ is a $L^+\G$-torsor. Then $\ovra{t}$ is given by the $L^+\G$-torsor \[M^{\loc}\ra [L^+\G\backslash M^{\loc}]=Hk^{\loc}_{\mu,K},\] and $\ovla{t}$ is given by the 
$L^+\G$-torsor \[[L^+\G\backslash M^{\loc,\infty}]\ra [L^+\G\backslash M^{\loc,\infty}/L^+\G]=[L^+\G\backslash M^{\loc}]=Hk^{\loc}_{\mu,K}.\] 

\subsection{Moduli of $(m,1)$-restricted local $(\G,\mu)$-Shtukas}\label{subsection restricted loc sht}
To construct EKOR strata, we need ``truncation in level 1'' of local $(\G,\mu)$-Shtukas. 
\subsubsection[]{$(\infty,1)$-restricted local Shtukas}
Consider the reductive 1-truncation group $L^{1-\rdt}\G$, i.e. for any $R\in \Aff^{pf}_k$, \[L^{1-\rdt}\G(R)=\G_0^{\rdt}(R).\] Let \[L^+\G^{(1)-\rdt}:=\ker (L^+\G\ra L^{1-\rdt}\G)\]
and \[M^{\loc,(1)-\rdt}\subset LG/L^+\G^{(1)-\rdt}\] be the image of $M^{\loc,\infty}\subset LG$ under the projection $LG\ra LG/L^+\G^{(1)-\rdt}$.  Therefore, the natural morphism $LG/L^+\G^{(1)-\rdt}\ra LG/L^+\G$ induces a morphism
\[ M^{\loc,(1)-\rdt}\ra M^{\loc},\]which is a $\G_0^{\rdt}$-torsor. Motivated by Lemma \ref{L: pretacks sht}, we consider the prestack
\[ \Sht_{\mu,K}^{\loc(\infty,1)}= \Big[\frac{M^{\loc,(1)-\rdt}}{Ad_\sigma L^+\G}\Big].\] Let $B\G^{\rdt}_0$ be the stack of $\G_0^{\rdt}$-torsors. To describe $\Sht_{\mu,K}^{\loc(\infty,1)}$, let us first note that there are morphisms \[(\ovla{t}^{1-\rdt}, \ovra{t}): Hk^{\loc}_{\mu,K} \ra  B\G^{\rdt}_0\times BL^+\G, \]
where $\ovra{t}$ is given by the $L^+\G$-torsor \[M^{\loc}\ra [L^+\G\backslash M^{\loc}]=Hk^{\loc}_{\mu,K},\] and $\ovla{t}^{1-\rdt}$ is given by the $\G_0^{\rdt}$-torsor \[[L^+\G\backslash M^{\loc,(1)-\rdt}]\ra [L^+\G\backslash M^{\loc,(1)-\rdt}/\G_0^{\rdt}]=[L^+\G\backslash M^{\loc}]=Hk^{\loc}_{\mu,K}.\]
Then
we have a similar cartesian digram \[\xymatrixcolsep{5pc}\xymatrix{
	\Sht_{\mu,K}^{\loc(\infty,1)}\ar[r]\ar[d]& Hk^{\loc}_{\mu,K}\ar[d]^{\ovla{t}^{1-\rdt}\times res^\infty_1\circ \ovra{t}}\\
	B\G^{\rdt}_0\ar[r]^{\sigma\times 1} & B\G^{\rdt}_0\times B\G^{\rdt}_0,
}\]where \[res^\infty_1: BL^+\G\ra B\G_0^{\rdt}\] is the map induced by the composition of projections $L^+\G\ra L^1G=\G_0\ra \G_0^{\rdt}$.
In particular,
$\Sht_{\mu,K}^{\loc(\infty,1)}$ has the following moduli interpretation. 
For any $R\in \Aff^{pf}_k$, $\Sht_{\mu,K}^{\loc(\infty,1)}(R)$ classifies \begin{itemize}
	\item a modification $\beta: \overleftarrow{\El}\dashrightarrow\overrightarrow{\El}$ of $\G$-torsors over $\Spec W(R)$ of type $\mu$, i.e. an element $\beta\in Hk^{\loc}_{\mu,K}(R)$,
	\item an isomorphism $\psi: \sigma^\ast(\overrightarrow{\El})|^{\rdt}_R\simeq \overleftarrow{\El}|^{\rdt}_R$ of $\G_0^{\rdt}$-torsors over $\Spec R$.
\end{itemize}
Unfortunately, $\Sht_{\mu,K}^{\loc(\infty,1)}$ is not algebraic, since $L^+\G$ is infinite dimensional.

\subsubsection[]{$(m,1)$-restricted local Shtukas}
We need some variants of the above construction. For any integer $m\geq 1$, consider the $m$-truncation group $L^m\G$, i.e. for any $R\in \Aff^{pf}_k$, $L^m\G (R)=\G(W_m(R))$. We have the natural projection morphism \[\pi_{m,1-\rdt}: L^m\G\ra L^{1-\rdt}\G.\] Before proceeding further, let us note that the action of $L^+\G$ on $M^{\loc}$ factors through $L^m\G$ for some sufficiently large integer $m$. Indeed, recall $M^{\loc}=\bigcup_{w\in \Admu_K}\Gr_w$ and for each $w\in \Admu_K$, we have \[\Gr_w\simeq L^+\G/(L^+\G\cap n_w L^+\G n_w^{-1}),\] where $n_w\in LG(k)$ are some representatives for all the $w\in  \Admu_K$.
Consider the intersection $\bigcap_{w\in \Admu_K}L^+\G\cap n_w L^+\G n_w^{-1}$. Let \[L^+\G^{(m)}=\ker (L^+\G\ra L^m\G).\]   Since the descending chain of subgroups $L^+\G^{(m)}$ forms a topological basis of neighborhoods of the identity, we find that for  some sufficiently large integer $m$, \[L^+\G^{(m)}\subset \bigcap_{w\in \Admu_K}L^+\G\cap n_w L^+\G n_w^{-1},\] i.e.  the action of $L^+\G$ on $M^{\loc}$ factors through $L^m\G$. 

Let $m_0$ be the minimal integer satisfying the above. Take any integer $m\geq m_0+1$.
Now we consider the $Ad_\sigma$-action of $L^m\G$ on $M^{\loc,(1)-\rdt}$: for any $R\in \Aff^{pf}$, $g\in L^m\G(R)$ and $x\in M^{\loc,(1)-\rdt}(R)$,
\[g\cdot x=gx\sigma(\pi_{m,1-\rdt}(g))^{-1}.\]
\begin{definition}\label{definition (m,1) sht}
	For any sufficiently large integer $m$, set \[\Sht_{\mu,K}^{\loc(m,1)}= \Big[\frac{M^{\loc,(1)-\rdt}}{Ad_\sigma L^m\G}\Big].\] This is an algebraic stack over $k$.
\end{definition}
Let $m$ be as above and $Hk^{\loc(m)}_{\mu,K}=[L^m\G\backslash M^{\loc}]$, which is called the $m$-restricted local Hecke stack. Similarly as before, we have morphisms \[(\ovla{t}^{1-\rdt}, \ovra{t}): Hk^{\loc(m)}_{\mu,K} \ra  B\G^{\rdt}_0\times BL^m\G, \]
where $\ovra{t}$ is given by the $L^m\G$-torsor \[M^{\loc}\ra [L^m\G\backslash M^{\loc}]=Hk^{\loc}_{\mu,K},\] which we denote by $\overrightarrow{\El}|_{D_m}$,
and $\ovla{t}^{1-\rdt}$ is given by the $\G_0^{\rdt}$-torsor \[[L^m\G\backslash M^{\loc,(1)-\rdt}]\ra [L^m\G\backslash M^{\loc,(1)-\rdt}/\G_0^{\rdt}]=[L^m\G\backslash M^{\loc}]=Hk^{\loc(m)}_{\mu,K},\]which we denote by $\overleftarrow{\El}|^{\rdt}_{D_1}$.
Then we have a similar cartesian digram \[\xymatrixcolsep{5pc}\xymatrix{
	\Sht_{\mu,K}^{\loc(m,1)}\ar[r]\ar[d]& Hk^{\loc(m)}_{\mu,K}\ar[d]^{\ovla{t}^{1-\rdt}\times res^m_1\circ \ovra{t}}\\
	B\G^{\rdt}_0\ar[r]^{\sigma\times 1} & B\G^{\rdt}_0\times B\G^{\rdt}_0,
}\]where $res^m_1: BL^m\G\ra B\G^{\rdt}_0$ is the map induced by the projection $L^m\G\ra \G_0^{\rdt}$. $\Sht_{\mu,K}^{\loc(m,1)}$ has the following moduli interpretation. 
For any $R\in \Aff^{pf}_k$, $\Sht_{\mu,K}^{\loc(m,1)}(R)$ classifies \begin{itemize}
	\item a point of $Hk^{\loc(m)}_{\mu,K}(R)$,
	\item an isomorphism $\psi: \sigma^\ast(\overrightarrow{\El}|_{D_m})|^{\rdt}_R\simeq \overleftarrow{\El}|^{\rdt}_R$ of $\G_0^{\rdt}$-torsors over $\Spec R$.
\end{itemize}

\begin{lemma}\label{L: top restricted sht}
	For any $m$ sufficiently large as above,
	we have homeomorphisms of 
	underlying topological spaces 
	\[ |\Sht_{\mu,K}^{\loc(\infty,1)}|\simeq |\Sht_{\mu,K}^{\loc(m,1)}|\simeq {}^K\Admu.\]
\end{lemma}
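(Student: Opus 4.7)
The plan is to handle the two claimed homeomorphisms in turn, treating them essentially independently.

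First I would establish $|\Sht_{\mu,K}^{\loc(\infty,1)}|\simeq |\Sht_{\mu,K}^{\loc(m,1)}|$. The projection $L^+\G\twoheadrightarrow L^m\G$ induces a morphism of quotient stacks, and for $m\geq m_0+1$ the $Ad_\sigma$-action of $L^+\G$ on $M^{\loc,(1)-\rdt}$ factors through $L^m\G$ (the kernel $L^+\G^{(m)}$ acts trivially on $M^{\loc}$ by the choice of $m_0$, and the remaining right translation by $L^+\G^{(m)}$ on $M^{\loc,\infty}$ is absorbed by the quotient by $L^+\G^{(1)-\rdt}$ since $L^+\G^{(m)}\subset L^+\G^{(1)-\rdt}$). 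Hence this map is the identity on underlying topological spaces.

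Next I would identify $|\Sht_{\mu,K}^{\loc(\infty,1)}|$ with ${}^K\Admu$ on the level of sets. By Lemma \ref{L: pretacks sht} adapted to the $(1)$-reductive truncation, the $k$-points are the $Ad_\sigma \breve{K}$-orbits on $M^{\loc,(1)-\rdt}(k)$. Lifting through the surjection $M^{\loc,\infty}(k)\twoheadrightarrow M^{\loc,(1)-\rdt}(k)$, whose source is $\breve{K}\Admu\breve{K}$ and whose fibres are right $\breve{K}_1$-cosets (with $\breve{K}_1:=L^+\G^{(1)-\rdt}(k)$ identified with the pro-unipotent radical of $\breve{K}$), the orbit set becomes
\[\breve{K}\Admu\breve{K}\big/\breve{K}_\sigma\cdot(1\times\breve{K}_1).\]
A short computation using the normality of $\breve{K}_1$ in $\breve{K}$ shows $\breve{K}_\sigma\cdot(1\times\breve{K}_1)=\breve{K}_\sigma(\breve{K}_1\times\breve{K}_1)$, so this orbit space is exactly $\mathrm{EKOR}(K,\mmu)$, which is in bijection with ${}^K\Admu$ by Theorem \ref{He's result--decompo}.

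Finally I would match the topologies. The topology on $|\Sht_{\mu,K}^{\loc(m,1)}|$ is the quotient topology under the $Ad_\sigma L^m\G$-action on $M^{\loc,(1)-\rdt}$, so I must verify that the resulting specialization order on ${}^K\Admu$ agrees with $\leq_{K,\sigma}$. The strategy is to stratify by $\Admu_K$: over each $\Gr_w\subset M^{\loc}$ the preimage in $M^{\loc,(1)-\rdt}$ is a $\G_0^{\rdt}$-torsor, and the $Ad_\sigma L^m\G$-orbits on it are precisely indexed by the fibre $\pi_K^{-1}(w)\subset {}^K\Admu$ via the zip datum $\mathcal{Z}_w$ of \ref{zip in EKOR} (this is the content of the forthcoming Proposition \ref{P: G zip and shtukas}). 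Degenerations within a single KR fibre therefore follow the partial order on ${}^{J_w}W_K$ from Theorem \ref{thm on zip orbits}, while degenerations across KR pieces are governed by the Bruhat order on $\Admu_K$. Combining these two inputs via He's combinatorial description of $\breve{K}_\sigma$-orbit closures on $\breve{K}\Admu\breve{K}$ (building on \cite{K-R on union of aff D-L}, \cite{He-Rap} \S6) identifies the combined order with $\leq_{K,\sigma}$.

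The hard part will be this last topological matching: while the set-theoretic bijection is essentially a repackaging of Theorem \ref{He's result--decompo}, checking that the specialization order coming from the stack structure genuinely refines the Bruhat order on $\Admu_K$ in the manner prescribed by $\leq_{K,\sigma}$ requires a delicate use of the $G$-stable piece decomposition of Lusztig-He, and it is here that the reductive $1$-truncation (rather than the Iwahori $1$-truncation) is essential.
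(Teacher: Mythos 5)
Your proposal follows the same basic route as the paper's proof: compute $k$-points, reduce the $\breve{K}_\sigma$-orbit set to $\breve{K}\Admu\breve{K}/\breve{K}_\sigma(\breve{K}_1\times\breve{K}_1)$, invoke Theorem \ref{He's result--decompo} for the bijection with ${}^K\Admu$, and then observe that the $Ad_\sigma$-action factors through $L^m\G$ to pass from $(\infty,1)$ to $(m,1)$. The set-theoretic computation and the short verification that $\breve{K}_\sigma\cdot(1\times\breve{K}_1)=\breve{K}_\sigma(\breve{K}_1\times\breve{K}_1)$ are both fine (note the computation uses $\sigma$-stability of $\breve{K}_1$, not just normality). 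Two points deserve tightening.

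On the $(\infty,1)$ vs.\ $(m,1)$ step, you split $g\cdot x = gx\sigma(g)^{-1}$ into a left translation ``absorbed since $L^+\G^{(m)}$ acts trivially on $M^{\loc}$'' and a right translation ``absorbed by $L^+\G^{(1)-\rdt}$.'' This is too coarse. Triviality of $L^+\G^{(m)}$ on $M^{\loc}$ only yields $x^{-1}gx\in L^+\G$, whereas for triviality of the $Ad_\sigma$-action on $M^{\loc,(1)-\rdt}$ one needs $x^{-1}gx\in L^+\G^{(1)-\rdt}$, i.e.\ the strictly finer inclusion $L^+\G^{(m)}\subset xL^+\G^{(1)-\rdt}x^{-1}$ for all $x\in M^{\loc,\infty}$. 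This is precisely why $m\geq m_0+1$ rather than $m\geq m_0$; the paper establishes the required inclusion (via normality of the congruence filtration and the chain $L^+\G^{(n+1)}\subset \dot w L^+\G^{(n)}\dot w^{-1}$) in its later $m=2$ lemma for the Shimura-variety local model.

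On the topology, the description you sketch --- closure governed by the Bruhat order on $\Admu_K$ across KR pieces, glued to the ${}^{J_w}W_K$-order from Theorem \ref{thm on zip orbits} within each fibre --- under-characterizes the specialization order: as the paper already records in \ref{partial order}, $\leq_{K,\sigma}$ is strictly finer than the Bruhat order in general, and closures of $\breve{K}_\sigma$-orbits can cross KR pieces in ways not visible from such a product of suborders. You do fall back on He's combinatorial description at the end, which is the correct source, but it should be imported as the ambient statement rather than obtained by ``combining the two inputs.'' The paper's own treatment here is also terse --- it passes from He's set-theoretic decomposition directly to ``this gives the homeomorphism'' --- and the implicit reference is the same orbit-closure result from He's work, not the stratification argument you propose. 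Finally, if you do want to invoke Proposition \ref{P: G zip and shtukas}, note that it appears after this lemma in the paper, so you should remark that its proof is independent.
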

\begin{proof}
	As always, set $k=\ov{\mathbb{F}}_p$. 
	Let $\breve{K}_1$ be the pro-unipotent radical of $\breve{K}$, then by definition $L^+\G^{(1)-\rdt}(k)=\breve{K}_1$. Since $M^{\loc,\infty}(k)=\breve{K}\Admu\breve{K}$, we have \[M^{\loc,(1)-\rdt}(k)=\breve{K}\Admu\breve{K}/\breve{K}_1.\]
	Then \[\Sht_{\mu,K}^{\loc(\infty,1)}(k)=\frac{\breve{K}\Admu \breve{K}/\breve{K}_1}{\breve{K}_{\sigma}}=\frac{\breve{K}\Admu \breve{K}}{\breve{K}_{\sigma}(\breve{K}_1\times \breve{K}_1)}.\]
	By Theorem \ref{He's result--decompo} (works of Lusztig and He), we have
	\[\breve{K}\Admu \breve{K}/\breve{K}_{\sigma}(\breve{K}_1\times \breve{K}_1) \simeq \Admu^K\cap  {}^K\wt{W}={}^K\Admu.\]
	This gives a bijiection \[|\Sht_{\mu,K}^{\loc(\infty,1)}|\simeq {}^K\Admu.\]  
	To show it is a homeomorphism of topological spaces, we apply
 \cite[Proposition 3.5]{He} (see also loc. cit. page 3248, paragraph 3.5), which implies that the closure relation on $|\Sht_{\mu,K}^{\loc(\infty,1)}|$ is exactly given by the partial order $\leq_{K,\sigma}$.
	Since for any $m$ sufficiently large, the $Ad_\sigma$-action of $L^+\G$ on $M^{\loc,(1)-\rdt}$ factors through $L^m\G$, we have the homeomorphism $|\Sht_{\mu,K}^{\loc(\infty,1)}|\simeq |\Sht_{\mu,K}^{\loc(m,1)}|$.
\end{proof}

Consider the $1$-restricted local Hecke stack $Hk^{\loc(1)}_{\mu,K}=[\G_0\backslash M^{\loc}]$. Sometimes we also write it as $\Sht_{\mu,K}^{\loc(1,0)}$. The underlying topological space is \[|[\G_0\backslash M^{\loc}]|=\Admu_K.\]
We have natural maps
\[\Sht_{\mu,K}^{\loc}\ra \Sht_{\mu,K}^{\loc(\infty,1)}\ra\Sht_{\mu,K}^{\loc(m,1)}\stackrel{\pi_K}{\lra}[\G_0\backslash M^{\loc}].\]
\begin{proposition}
	All the arrows above are perfectly smooth.
\end{proposition}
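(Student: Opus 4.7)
The plan is to analyze each of the three arrows separately, identifying each---after passage through a suitable intermediate stack if necessary---as a torsor or gerbe under a perfectly smooth (pro-unipotent) group, so that perfect smoothness will follow from standard descent for such group actions.

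The first arrow $\Sht_{\mu,K}^{\loc}\to \Sht_{\mu,K}^{\loc(\infty,1)}$ is induced by the canonical projection $M^{\loc,\infty}\to M^{\loc,(1)-\rdt}$, which is by construction a right $L^+\G^{(1)-\rdt}$-torsor. This projection is $Ad_\sigma L^+\G$-equivariant: left multiplication by $L^+\G$ commutes with right quotienting by $L^+\G^{(1)-\rdt}$, and the $\sigma$-twist $\sigma(\pi_{m,1-\rdt}(g))^{-1}$ depends only on the image of $g$ in $\G_0^{\rdt}$, hence descends. The first arrow therefore inherits the structure of an $L^+\G^{(1)-\rdt}$-torsor of stacks, perfectly smooth since $L^+\G^{(1)-\rdt}$ is the perfection of a pro-smooth pro-unipotent group scheme.

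For the second arrow $\Sht_{\mu,K}^{\loc(\infty,1)}\to \Sht_{\mu,K}^{\loc(m,1)}$, both stacks are quotients of the same space $M^{\loc,(1)-\rdt}$, only the acting group changing along $L^+\G\twoheadrightarrow L^m\G$. I will verify that for $m$ sufficiently large the kernel $L^+\G^{(m)}$ acts trivially on $M^{\loc,(1)-\rdt}$ via $Ad_\sigma$: the $\sigma$-twist factor vanishes because $L^+\G^{(m)}\subset L^+\G^{(1)-\rdt}$, while for the left-multiplication factor I combine the defining property of $m_0$ (the $L^+\G$-action on $M^{\loc}$ factors through $L^{m_0}\G$) with a Bruhat-cell analysis giving $n_w^{-1}L^+\G^{(m)}n_w\subset L^+\G^{(1)-\rdt}$ for each $w\in\Adm(\{\mu\})$, valid for all sufficiently large $m$ since the $L^+\G^{(m)}$ form a basis of neighborhoods of the identity and $\Adm(\{\mu\})$ is finite. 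The second arrow is then a trivial $BL^+\G^{(m)}$-gerbe, perfectly smooth by pro-unipotence of $L^+\G^{(m)}$.

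For the third arrow $\pi_K$, I factor it as
\[\Sht_{\mu,K}^{\loc(m,1)}\longrightarrow \Big[M^{\loc}/Ad_\sigma L^m\G\Big]\longrightarrow [\G_0\backslash M^{\loc}].\]
The first map is induced by the $\G_0^{\rdt}$-torsor $M^{\loc,(1)-\rdt}\to M^{\loc}$---after descent to $M^{\loc}$ the $\sigma$-twist becomes trivial so that $Ad_\sigma L^m\G$ reduces to the left-multiplication action of $L^m\G$---and inherits the structure of a $\G_0^{\rdt}$-torsor of stacks, hence is perfectly smooth. The second map is induced by the smooth surjection $L^m\G\twoheadrightarrow L^1\G=\G_0$ with pro-unipotent kernel $L^m\G^{(1)}$ and is a gerbe under this kernel, again perfectly smooth. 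The main technical obstacle I anticipate lies precisely in this last step: carefully matching the $\G_0$-action on $M^{\loc}$ supplied by the Pappas--Zhu local model against the left $L^m\G$-action coming from the Witt-vector embedding $M^{\loc}\subset \Gr_\G$, so that the second map genuinely is an $L^m\G^{(1)}$-gerbe---this compatibility is what ultimately pins down the perfect smoothness of $\pi_K$.
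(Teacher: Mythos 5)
Your decomposition of the three arrows — a $L^+\G^{(1)-\rdt}$-torsor, then a $L^+\G^{(m)}$-gerbe (after checking trivial action), then a $\G_0^{\rdt}$-torsor followed by a $\ker(L^m\G\to\G_0)$-gerbe via the intermediate stack $[M^{\loc}/Ad_\sigma L^m\G]$ — is exactly the paper's argument, just written out in more detail than its one-sentence treatment. Your anticipated ``main technical obstacle'' about matching the $\G_0$-action with the left $L^m\G$-action is not actually an issue for this proposition (both are the left-translation action descended from $L^+\G$); that compatibility only becomes substantive later when the morphism $\upsilon_K$ from the Shimura variety is constructed in Theorem~\ref{T: perf smooth}.
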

\begin{proof}
	The first arrow is perfectly smooth, since $M^{\loc,\infty}\ra M^{\loc, 1-\rdt}$ is a $L^{(1)-\rdt}\G$-torsor; the morphism $\Sht_{\mu,K}^{\loc(\infty,1)}\ra\Sht_{\mu,K}^{\loc(m,1)}$ is a $L^{(m)}\G$-gerbe, thus it is perfectly smooth; the morphism $\Sht_{\mu,K}^{\loc(m,1)}\stackrel{\pi_K}{\lra}[\G_0\backslash M^{\loc}]$ is a composition of a $\G_0^\rdt$-torsor and a $\ker(L^m\G\ra L^1\G=\G_0)$-gerbe, thus it is also perfectly smooth.
\end{proof}
When $K=I$ is an Iwahori subgroup, then by \cite[Corollary 6.2]{He-Rap}, the morphism \[\pi_I: \Sht_{\mu,I}^{\loc(m,1)}\ra[\G_0\backslash M^{\loc}]\] induces a homeomorphism of underlying topological spaces \[|\Sht_{\mu,I}^{\loc(m,1)}|\simeq|[\G_0\backslash M^{\loc}]|\simeq \Admu.\]

The following proposition gives the links between moduli stacks of $(m,1)$-restricted local $(\G,\mu)$-Shtukas and $\G_0^{\rdt}$-zips. For any $w\in \Admu_K$, recall that in subsection \ref{subsection EO in KR} we have the algebraic stack $\G_0^{rdt}\text{-Zip}_{J_w}$ of $\G_0^{rdt}$-Zips of type $J_w$. Let $\G_0^{rdt}\text{-Zip}_{J_w}^{pf}$ be its perfection.
\begin{proposition}\label{P: G zip and shtukas}
	For any $ w\in [\G_0\backslash M^{\loc}]$, there exists a natural perfectly smooth map of algebraic stacks
	\[\pi_K^{-1}(w)\lra \G_0^{rdt}\text{-Zip}_{J_w}^{pf} \]
	inducing a homeomorphism of the underlying topological spaces $|\pi_K^{-1}(w)|\simeq |\G_0^{rdt}\text{-Zip}_{J_w}^{pf}|$.
\end{proposition}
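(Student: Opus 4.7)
The plan is to realize the morphism by constructing, on the fiber $\pi_K^{-1}(w)$, a $\mathcal{G}_0^{\rdt}$-zip of type $J_w$, in parallel with the construction of subsection \ref{subsection EO in KR}, and then identifying the induced map to $\mathcal{G}_0^{\rdt}\text{-}Zip_{J_w}^{pf}\simeq [E_{\mathcal{Z}_w}\backslash\mathcal{G}_0^{\rdt}]^{pf}$ with the desired morphism. As a preliminary I would record the explicit presentation
\[\pi_K^{-1}(w)=\left[\frac{M^{\loc,(1)-\rdt,w}}{Ad_\sigma L^m\mathcal{G}}\right],\]
where $M^{\loc,(1)-\rdt,w}\subset M^{\loc,(1)-\rdt}$ is the preimage of the KR stratum $M^{\loc,w}$; note that the projection $M^{\loc,(1)-\rdt,w}\to M^{\loc,w}$ is a $\mathcal{G}_0^{\rdt}$-torsor, so on $k$-points this fiber identifies with $\breve{K}w\breve{K}/\breve{K}_\sigma(\breve{K}_1\times\breve{K}_1)$.

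Over $\pi_K^{-1}(w)$ we have the universal $(m,1)$-restricted local $(\mathcal{G},\mu)$-Shtuka $(\ovra{\mathcal{E}},\ovla{\mathcal{E}},\beta,\psi)$, with $\beta$ of KR type $w$. Pushing $\ovra{\mathcal{E}}|_R$ forward along $\mathcal{G}\to\mathcal{G}_0\to\mathcal{G}_0^{\rdt}$ gives the underlying $\mathcal{G}_0^{\rdt}$-torsor $\mathcal{I}$ of the zip. Mimicking Lemma \ref{lemma--global lift zip +}, the locus of trivializations of $\ovra{\mathcal{E}}$ that identify $\beta$ with the base point $w\in M^{\loc,w}$ cuts out a sub-$\mathcal{G}_{0,w}$-torsor $\wt{\mathcal{I}}_+\subset\ovra{\mathcal{E}}|_R$, whose push-out along $\mathcal{G}_{0,w}\to\ov{P}_{J_w}$ yields the sub-$\ov{P}_{J_w}$-torsor $\mathcal{I}_+\subset\mathcal{I}$. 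Mimicking Lemma \ref{lemma--global lift zip -}, the ``conjugate'' side of $\beta$ produces a sub-$\mathcal{G}_{0,\sigma(w)^{-1}}$-torsor $\wt{\mathcal{I}}_-\subset\ovla{\mathcal{E}}|_R$, whose push-out, transported via the identification $\psi:\sigma^\ast\mathcal{I}\simeq\ovla{\mathcal{E}}|_R^{\rdt}$, provides the $Q$-part of the zip. The isogeny $\iota$ is built from $\psi$ together with the tautological identification $\oplus_i\gr_i C^\bullet_w\simeq\oplus_i\gr_i D^w_\bullet$ induced by $\sigma(w)$, exactly as in Lemma \ref{lemma--global lift zip}. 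This tuple defines the desired morphism $\pi_K^{-1}(w)\to[E_{\mathcal{Z}_w}\backslash\mathcal{G}_0^{\rdt}]^{pf}$.

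For perfect smoothness I would follow the strategy of Theorem \ref{sm of zeta}: after trivializing the pulled-back $E_{\mathcal{Z}_w}$-torsor, it suffices to check that the tangent map of the underlying morphism $M^{\loc,(1)-\rdt,w}\to\mathcal{G}_0^{\rdt}$ is surjective at each closed point, and this follows from the fact that the natural map $M^{\loc,w}\to\mathcal{G}_0^{\rdt}/\ov{P}_{J_w}$ is perfectly smooth combined with the tangent-space calculation in the proof of Theorem \ref{sm of zeta}. For the homeomorphism on underlying topological spaces, the construction is designed so that on $k$-points it reproduces the assignment $k_1wk_2\mapsto\overline{\sigma(k_2)k_1}$ of subsection \ref{zip in EKOR}; by Theorem \ref{He's result--decompo} this gives a bijection
\[|\pi_K^{-1}(w)|\simeq\breve{K}w\breve{K}/\breve{K}_\sigma(\breve{K}_1\times\breve{K}_1)\stackrel{f_w}{\simeq}{}^{J_w}W_K\simeq|[E_{\mathcal{Z}_w}\backslash\mathcal{G}_0^{\rdt}]^{pf}|,\]
and combined with perfect smoothness this bijection upgrades to a homeomorphism.

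The main obstacle I anticipate is the family analogue of Proposition \ref{prop--lift G-zip at point}(3) in the Shtuka setting: specifically, verifying that the interplay between $\psi$ and the reductions $\wt{\mathcal{I}}_\pm$ induced by $\beta$ produces precisely a $\mathcal{G}_0^{\rdt}$-zip of type $J_w$ (and not of some smaller type), with $\iota$ well-defined on Levi quotients and equivariant under the isogeny $\sigma':\ov{L}_{J_w}\to\ov{L}_{\sigma'(J_w)}$. This is essentially the family version of the elementary but delicate bookkeeping underlying the definition of $f_w$.
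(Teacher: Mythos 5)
Your plan is correct in outline, but it reaches the morphism by a different route than the paper, and the paper's route is cleaner at exactly the spot you flag as the obstacle. The paper does not build a $\G_0^{\rdt}$-zip object on $\pi_K^{-1}(w)$ out of the universal restricted Shtuka. Instead, starting from the same presentation $\pi_K^{-1}(w)\simeq\bigl[\wt{\Gr_w}/\mathrm{Ad}_\sigma L^m\G\bigr]$, it constructs \emph{directly at the atlas level} a perfectly smooth morphism
\[\wt{\Gr_w}\lra \Bigl[(\G_0^{\rdt}/U_{J_w}\times\G_0^{\rdt}/U_{\sigma'(J_w)})/L_{J_w}\Bigr]^{pf}\]
intertwining the $\mathrm{Ad}_\sigma L^m\G$-action on the source with the diagonal $\G_0^{\rdt}$-action on the target (this step is modelled on \cite{XiaoZhu} Lemma 5.3.6), and then identifies the $\G_0^{\rdt}$-quotient of the target with $\G_0^{\rdt}$-$Zip_{J_w}^{pf}$ by the double-coset presentation of the zip stack in \cite{zipdata} Theorem 12.7. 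Passing to quotients gives the morphism and its perfect smoothness at once, with no need to verify separately that one has produced a zip of the right type with a well-defined $\iota$; the homeomorphism on topological spaces is then read off from \ref{zip in EKOR}, exactly as you propose.

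By comparison, your route — pushing forward $\ovra{\El}|_{D_m}$, reducing along $\beta$ and $\psi$ à la Lemmas \ref{lemma--global lift zip +}--\ref{lemma--global lift zip}, and computing tangent spaces à la Theorem \ref{sm of zeta} — would work, but two points deserve caution. First, the ``bookkeeping'' you worry about (that $\iota$ is equivariant and the type is exactly $J_w$, not smaller) is precisely the family analogue of Proposition \ref{prop--lift G-zip at point}(3) that the paper's equivariant-atlas construction avoids having to verify; if you insist on the zip-object packaging you should carry it out rather than defer it. Second, for perfect smoothness you cannot import verbatim the tangent-space computation of Theorem \ref{sm of zeta}, since that proof uses the versal Dieudonn\'e display from the Shimura-variety side (via Remark \ref{key remark}); the relevant computation here is a purely local-model one, as in the last paragraph of the proof of Theorem \ref{T: perf smooth}, done on a non-perfected model of $\wt{\Gr_w}\times L^m_p\G\to\G_0^{\rdt}$ and then perfected. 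With those two points handled, your proof would indeed yield the proposition; the paper simply achieves the same end more economically by working at the atlas level.
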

\begin{proof}
	For any $w\in |[\G_0\backslash M^{\loc}]|=\Admu_K$, let $\wt{\Gr_w}\ra \Gr_w$ be the $\G_0^{\rdt}$-torsor induced by $M^{\loc, (1)-\rdt}\ra M^{\loc}$ over $\Gr_w\subset M^{\loc}$. Then we have \[\pi_K^{-1}(w)\simeq \Big[\frac{\wt{\Gr_w}}{Ad_\sigma L^m\G}\Big].\] Let $J_w$ be as above. We get parabolic subgroups $P_{J_w}$ and $P_{\sigma'(J_w)}$ of $\G_0^{\rdt}$ with corresponding unipotent radicals $U_{J_w}$ and $U_{\sigma'(J_w)}$ as in \ref{zip in EKOR}. Let $L_{J_w}$ be the common Levi subgroup of $P_{J_w}$ and $P_{\sigma'(J_w)}$.
	Then similar to \cite[Lemma 5.3.6]{XiaoZhu}, we have a perfectly smooth morphism
	\[\wt{\Gr_w}\ra \Big[(\G_0^{\rdt}/U_{J_w}\times \G_0^{\rdt}/U_{\sigma'(J_w)})/L_{J_w}\Big]^{pf},\]
	which intertwines the $L^m\G$-action on the left hand side to the $\G_0^{\rdt}$-action on the right hand side.
	Hence we get a perfectly smooth morphism
	\[ \Big[\frac{\wt{\Gr_w}}{Ad_\sigma L^m\G}\Big]\ra \G_0^{\rdt}\backslash \Big[(\G_0^{\rdt}/U_{J_w}\times \G_0^{\rdt}/U_{\sigma'(J_w)})/L_{J_w}\Big]^{pf}= \G_0^{rdt}\text{-Zip}_{J_w}^{pf},\]
	which induces a homeomorphism between the underlying topological spaces by \ref{zip in EKOR}. Here the last equality comes from \cite[Theorem 12.7]{zipdata}.
\end{proof}

When $K$ is a hyperspecial subgroup, then $|[\G_0\backslash M^{\loc}]|$ consists of a single point, thus we have $\pi_K^{-1}(w)=\Sht_{\mu,K}^{\loc(m,1)}$, and the above proposition recovers \cite[Lemma 5.3.6]{XiaoZhu}.

\subsection{Application to affine Deligne-Lusztig varieties}\label{subsection ADLV global}
Let $(G, [b], \{\mu\})$ be a triple as in subsection \ref{subsection ADLV}. Fix a representative $b\in G(\breve{\Q}_p)$ and a parahoric subgroup $K\subset G(\Q_p)$. Let $\G$ be the parahoric model over $\Z_p$ of $G$ corresponding to $K$.
We get the associated affine Deligne-Lusztig variety $X(\mu,b)_K$ as in subsection \ref{subsection ADLV}. This is a closed subscheme of $\Gr_\G$ which can be described as follows. For any $R\in \Aff_k^{pf}$, we have \[
\begin{split}
X(\mu,b)_K(R)=\{(\El,\beta)\in \Gr_\G(R)\,&|\,\forall\, \text{geometric point}\, x \,\text{of} \,\Spec R,\\ &Inv_x(\beta^{-1}b\sigma(\beta))\in \Admu_K\}.\end{split}\]
Fix an element $w\in \Admu_K$, then the locally closed subscheme $X_w(b)_K$ introduced in \ref{subsection ADLV} can be described similarly: for any $R\in \Aff_k^{pf}$, we have \[\begin{split}
X_w(b)_K(R)=\{(\El,\beta)\in \Gr_\G(R)\,&|\,\forall\, \text{geometric point}\, x \,\text{of} \,\Spec R,\\ &Inv_x(\beta^{-1}b\sigma(\beta))=w\}.\end{split}\]
The KR stratification in this setting is the following \[X(\mu,b)_K=\coprod_{w\in\Admu_K}X_w(b)_K.\]

The element $b$ defines a local $(\G,\mu)$-Shtuka by the modification \[b: \sigma^\ast\El_0=\El_0\dashrightarrow\El_0.\] For any point $(\El,\beta)\in X(\mu,b)_K(R)$, we get another local $(\G,\mu)$-Shtuka by the modification \[\beta^{-1}b\sigma(\beta): \sigma^\ast\El\dashrightarrow\El.\] In this way
we get a natural morphism of prestacks \[X(\mu,b)_K\ra \Sht_{\mu,K}^{\loc}.\] Fix an integer $m$ which is sufficiently large as in the last subsection.
Composing the above morphism with the restriction map $\Sht_{\mu,K}^{\loc}\ra \Sht_{\mu,K}^{\loc(m,1)}$, we get a morphism of
algebraic stacks
\[\upsilon_K: X(\mu,b)_K\ra \Sht_{\mu,K}^{\loc(m,1)},\]
which interpolates the morphisms $\zeta_w$ in \ref{subsubsec loc zeta} when $w$ varies. The fibers of $\upsilon_K$ are then the EKOR strata of $X(\mu,b)_K$. This gives a geometric meaning of the EKOR decomposition in \cite{coxeter type} 3.4 and \cite{fully H-N decomp} 1.4.

\subsection{Application to Shimura varieties}\label{subsection app Shim}
Now we come back to Shimura varieties and the setting at the beginning of section \ref{section EKOR loc}. Let $(G,X)$ be a Shimura datum of Hodge type, and $\ES_\K=\ES_\K(G,X)$ be the integral model introduced in \ref{subsection integral Hodge} with $\K=K_pK^p, K_p=\G(\Z_p)$ and $\G=\G^\circ$. 
We are interested in the special fibers $\ES_{\K,0}$ of $\ES_\K$, and we consider its perfection \[\Sh_\K:=(\ES_{\K,0})^{pf}=\varprojlim_{\sigma}\ES_{\K,0}.\] In the following, since $K^p$ is fixed, we will simply write $\Sh_\K$ as $\Sh_K$, and the subscript of related morphisms with source $\Sh_K$ will also be written simply as $K$.
By \cite[Corollary 21.6.10]{SW} and \cite[Theorem 2.15]{He-Pap-Rap}, the perfection of the special fiber of the Pappas-Zhu local model can be identified with the
closed subscheme $M^{\loc}$ in subsection \ref{subsection loc sht}.
As usual, we get the conjugacy class of minuscule characters $\{\mu\}$.
\begin{proposition}\label{P: univ sht}
	There exists a $\G$-Shtuka of type $\mu$ over $\Sh_K$. In particular, we get a morphism of prestacks \[\Sh_K \ra \Sht_{\mu,K}^{\loc}.\]
\end{proposition}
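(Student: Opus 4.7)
The plan is to construct the local $(\G,\mu)$-Shtuka over $\Sh_K$ directly from the display of the universal abelian scheme, cutting the structure group down from $\GL$ to $\G$ by means of the Hamacher-Kim global crystalline tensor, and then verify the boundedness condition pointwise via the local model diagram.

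For a perfect $k$-algebra $R$ equipped with a morphism $\Spec R\to \Sh_K=\ES_{K,0}^{pf}$, I would pull back the universal abelian scheme to obtain a $p$-divisible group over $R$, and apply the display functor (Zink's construction, extended by Lau as in the discussion preceding Proposition \ref{global cris}) to produce a display $(M_R,M_{R,1},\Psi_R)$ over $W(R)$. By Proposition \ref{global cris}, applied affine-locally on $\ES_K^\wedge$ and then pulled back along the perfection, this display carries a canonical, globally defined, $\Psi_R$-invariant tensor $s_{\cris,R}\in M_R^\otimes$ whose restriction at each closed point recovers the crystalline tensor $s_{\cris}$ used throughout subsection \ref{subsection integral Hodge}.

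I would then set
\[
\El_R:=\IIsom_{W(R)}\bigl((V^\vee_{\INT_p},s)\otimes W(R),\ (M_R,s_{\cris,R})\bigr),
\]
and invoke Corollary \ref{coro--cris torsor} to conclude that $\El_R$ is a $\G$-torsor over $\Spec W(R)$. The Frobenius datum of the display, in the form of the isomorphism $\widetilde{M}_{R,1}\stackrel{\sim}{\to}M_R$ of Lemma \ref{lemma--linear phi}, preserves $s_{\cris,R}$ and becomes an isomorphism after inverting $p$; it therefore induces a modification $\beta_R\colon\sigma^{\ast}\El_R[\tfrac{1}{p}]\stackrel{\sim}{\lra}\El_R[\tfrac{1}{p}]$. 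Functoriality in $R$ of both the display functor and of $s_{\cris}$ turns the assignment $R\mapsto(\El_R,\beta_R)$ into a morphism of prestacks, so what remains is the boundedness condition, namely that at every geometric point $x$ of $\Spec R$ the invariant $Inv_x(\beta_R)$ lies in $\Admu_K$. This is exactly the pointwise statement from subsection \ref{subsection stratifications hodge}, where a trivialization $t$ of the $\G$-torsor is used to write the linearized Frobenius in the form $(\mathrm{id}\otimes\sigma)\circ g_{x,t}$ with $g_{x,t}\in\breve{K}\Admu\breve{K}$; the argument reduces via $t$ to reading off the Hodge filtration through the local model diagram of Theorem \ref{result P-Z and K-P}(3) and using the identification of the geometric special fiber of $\Mloc$ with $\coprod_{w\in\Admu_K}\eM^w$ from Corollary \ref{local model-collect}.

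The main obstacle, and the reason this proposition is not automatic from the Hodge-type inputs recalled in section \ref{section loc mod}, is the global availability of the tensor $s_{\cris}$: pointwise crystalline tensors exist by Kisin-Pappas, but they have to be interpolated across $\Sh_K$ in a $\Psi$-compatible way, which is precisely the content of \cite{Ham-Kim} (and, in a slightly different language, of \cite{Pappas Ober}). Passing to the perfection $\Sh_K$ is essential for this step, since there $W(-)$ behaves well on the pro-\'etale site and Lemma \ref{a lemma of flatness} reduces the $\G$-torsor property of $\El_R$ to the complete-local case already settled in \cite{Paroh}.
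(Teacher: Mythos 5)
Your proposal is essentially the paper's own proof: both arguments take the display $(\underline{M},\underline{M}_1,\underline{\Psi})$ of the universal $p$-divisible group, cut the $\GL$-torsor of trivializations down to a $\G$-torsor $\El$ by means of the Hamacher--Kim global crystalline tensor $\underline{s}_{\cris}$ via Corollary \ref{coro--cris torsor}, let the linearized display Frobenius produce the rational modification $\beta$, and verify the boundedness $Inv_x(\beta)\in\Admu_K$ pointwise through the local model diagram. The one place you deviate is the normalization of the torsor: the paper sets $\El:=\sigma^{-1,\ast}\IIsom_{W(R)}\bigl((L^\vee,s)\otimes W(R),(\underline{M},\underline{s}_{\cris})\bigr)$ with a Frobenius pullback, so that $\sigma^\ast\El$ is exactly the untwisted Isom scheme and the linearized Frobenius $\widetilde{M}_1\to M$ furnishes $\beta:\sigma^\ast\El\dashrightarrow\El$ in the direction required by Definition \ref{D: shtuka}; without the twist one must check that one really lands in $\Admu_K$ rather than in a Frobenius-shifted set when computing $Inv_x$. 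This is a bookkeeping matter (and the two choices give isomorphic Shtukas after relabelling), but it should be made explicit so that the modification has the direction the definition asks for.
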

\begin{proof}
	Recall that in \ref{subsubsection global cystalline tensor} we have the display $(\underline{M}, \underline{M}_1, \underline{\Psi})$ attached to the $p$-divisible group $\A[p^\infty]\mid_{\ES_\K^\wedge}$, and by Proposition \ref{global cris} there is a tensor $\ul{s}_{\cris}\in \ul{M}^\otimes$.
	For any $R\in \Aff^{pf}_k$, by Corollary \ref{coro--cris torsor} \[\El:=\sigma^{-1,\ast}\IIsom_{W(R)}\big((L^\vee,s)\otimes W(R), (\underline{M}, \underline{s}_\cris)\big)\] is a $\mathcal{G}$-torsor over $W(R)$. Then $\sigma^\ast\El$ is also a $\mathcal{G}$-torsor over $W(R)$, and the linearization of the Frobenius on $\ul{M}$ induces a modification $\beta: \sigma^\ast\El \dashrightarrow \El$.
	The local model diagram over $k$ implies that for any geometric point $x\in \Spec R$, we have $Inv_x(\beta)\in\Admu_K$.
	Thus we get a $\G$-Shtuka of type $\mu$ over $\Sh_K$.
	
\end{proof}

Now consider the moduli stack $\Sht_{\mu,K}^{\loc(m,1)}$ of $(m,1)$-restricted local $(\G,\mu)$-Shtukas as in Definition \ref{definition (m,1) sht} for the current setting.
\begin{lemma}
	The minimal $m$ for the definition of the above $\Sht_{\mu,K}^{\loc(m,1)}$ is $m=2$.
\end{lemma}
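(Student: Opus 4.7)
The plan is to show that $m_0 = 1$; since the minimal $m$ for which $\Sht_{\mu,K}^{\loc(m,1)}$ is defined is $m_0 + 1$, this will yield $m = 2$. First I would unpack the definition: $m_0$ is the smallest non-negative integer such that $L^+\G^{(m_0)} \subset \bigcap_{w \in \Admu_K}(L^+\G \cap n_w L^+\G n_w^{-1})$. Since $L^+\G = L^+\G^{(0)}$ acts non-trivially on any affine Schubert cell of positive dimension in $M^{\loc}$, we have $m_0 \geq 1$. The substantive content is therefore to prove that $L^+\G^{(1)} \subset L^+\G \cap n_w L^+\G n_w^{-1}$ for every $w \in \Admu_K$. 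Since $L^+\G^{(1)}$ is normal in $L^+\G$ and the $L^+\G$-action on each Schubert cell $\Gr_w \subset M^{\loc}$ is transitive, this reduces to verifying the inclusion for a single chosen representative per double coset in $\Admu_K$.

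The key input is the minuscule property of $\{\mu\}$. For $w \in \Admu_K$, I would pick a representative of the form $t^\lambda \dot v$ with $\dot v$ a lift of some $v \in W_0$; the Kottwitz-Rapoport description of the admissible set (cf.~\ref{Iwahori Weyl}) ensures that $\lambda$ lies in the convex hull of the $W_0$-orbit of $\mu$, whence minuscularity yields $|\langle\alpha,\lambda\rangle| \leq 1$ for every finite root $\alpha$. By the affine root decomposition of $LG$, conjugation by $t^\lambda$ carries the affine root subgroup $U_{\alpha + n\delta}$ to $U_{\alpha + (n - \langle\alpha,\lambda\rangle)\delta}$, and hence sends $L^+\G^{(1)}$---generated by the $U_{\alpha + n\delta}$ with $n \geq 1$ together with the first congruence subgroup of the torus---into $L^+\G$. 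Moreover, $\dot v$ may be taken in $\G(W(k)) = L^+\G(k)$, so conjugation by $\dot v$ preserves $L^+\G^{(1)}$, yielding the desired inclusion $n_w L^+\G^{(1)} n_w^{-1} \subset L^+\G$.

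The main obstacle will be to make the affine root group manipulations rigorous in the Witt vector setting, where $LG$ is an ind-perfect scheme and the relevant root subgroups must be phrased via the Bruhat-Tits group scheme $\G$. This is handled by the identification of the Iwahori Weyl group of $G_{\breve{\Q}_p}$ with that of the equi-characteristic $G'/k((u))$ recalled in \ref{subsubsec local mod-cons and prop}, together with the compatibility between the Witt vector affine flag variety and its equi-characteristic analogue noted in Remark \ref{loop in mixed char+}; these reductions let the classical equi-characteristic root-group calculation transfer to the present setting without change.
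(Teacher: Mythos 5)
Your argument establishes only the first, and easier, half of what the lemma requires. You show (via an explicit affine-root calculation using the minuscule property) that $L^+\G^{(1)} \subset n_w L^+\G n_w^{-1}$ for all $w \in \Admu_K$, i.e.\ that the left $L^+\G$-action on $M^{\loc}$ factors through $\G_0 = L^1\G$, so $m_0 = 1$. The paper disposes of this step in one sentence, by citing the construction of the Pappas-Zhu local model, so your root-group computation is a legitimate (and arguably more self-contained) alternative to that citation.

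But the lemma is about $\Sht_{\mu,K}^{\loc(m,1)} = [M^{\loc,(1)-\rdt}/Ad_\sigma L^m\G]$, so what actually has to be verified is that the (left, and hence $Ad_\sigma$) action of $L^+\G$ on $M^{\loc,(1)-\rdt}$ --- not on $M^{\loc}$ --- factors through $L^2\G$, i.e.\ that $L^+\G^{(2)}$ acts trivially on $M^{\loc,(1)-\rdt}$. You never address this. You appeal to the stipulation ``take $m \ge m_0 + 1$'' as though the passage from $m_0$ to $m_0+1$ were a previously established general fact, but in the paper that stipulation is not proved beforehand; it is precisely what the lemma's proof is verifying in the case at hand. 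The missing step is not formal: $M^{\loc,(1)-\rdt}$ is a $\G_0^{\rdt}$-torsor over $M^{\loc}$, and triviality of the $L^+\G^{(1)}$-action downstairs does not by itself give triviality of the $L^+\G^{(1)}$-action upstairs (indeed that action is typically nontrivial, which is why the index goes up by one). The paper's proof fills this gap by arguing on Lie algebras: from $L^+\G^{(1)}(k) \subset w L^+\G(k) w^{-1}$ one deduces $p\cdot\mathfrak{g} \subset w\mathfrak{g}w^{-1}$ and hence $L^+\G^{(n+1)}(k) \subset w L^+\G^{(n)}(k) w^{-1}$ for all $n \geq 1$; then, for $g \in L^+\G^{(2)}(k)$ and $g_1 w g_2 \in M^{\loc,\infty}(k)$, normality of $L^+\G^{(2)}$ and the previous inclusion show that $(g_1 w g_2)^{-1} g (g_1 w g_2) \in L^+\G^{(1)}(k) \subset L^+\G^{(1)-\rdt}(k)$, which is exactly the triviality of the left $L^+\G^{(2)}$-action on $\wt{\Gr}_w(k) = L^+\G(k)\,w\,L^+\G(k)/L^+\G^{(1)-\rdt}(k)$. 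You would need to include this bootstrapping argument (or some equivalent) for the proof to be complete.
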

\begin{proof}
	We will prove that the left action of $L^+\G$ on $M^{\loc,(1)-\rdt}$ factors through $L^2\G$, and the $Ad_\sigma$-action is similar.

	For any $m\geq 2$, as before, let $L^+\G^{(m)}=\ker (L^+\G\ra L^m\G)$. We need to check that $L^+\G^{(2)}$ acts trivially on $M^{\loc,(1)-\rdt}$. This comes from the fact the $L^+\G^{(1)}$ acts trivially on $M^{\loc}$, since by construction of the Pappas-Zhu local model, the action of $L^+\G$  on $M^{\loc}$ factors through $\G_0$. 
	
	More precisely,	let $k=\ov{\mathbb{F}}_p$ and $\mathfrak{g}$ be the Lie algebra of $\mathcal{G}_{W(k)}$, then $L^+\mathcal{G}^{(n)}(k)=1+p^n\cdot \mathfrak{g}$ for all positive integer $n$. The action of $L^+\mathcal{G}^{(1)}$ on $M^{\mathrm{loc}}$ is trivial, so for any $w\in \mathrm{Adm}(\{\mu\})_K$, \[L^+\mathcal{G}^{(1)}(k)\subset w\cdot L^+\mathcal{G}(k)\cdot w^{-1},\] and hence $p\cdot \mathfrak{g}\subset w\mathfrak{g} w^{-1}$. In particular, we have $$L^+\mathcal{G}^{(n+1)}(k)\subset w\cdot L^+\mathcal{G}^{(n)}(k)\cdot w^{-1}, \text{\ for\ all\ positive\ integer\ } n \text{\ \ and all\ } w\in \mathrm{Adm}(\{\mu\})_K.$$
	We use the notation in the proof of Proposition \ref{P: G zip and shtukas}. For any $w\in \mathrm{Adm}(\{\mu\})_K$, \[\wt{\Gr_w}(k)=\breve{K}w\breve{K}/\breve{K}_1=L^+\G(k)wL^+\G(k)/L^+\G^{(1)-\rdt}(k).\]
	For any $g\in L^+\mathcal{G}^{(2)}(k), w\in \mathrm{Adm}(\{\mu\})_K$ and $g_1,g_2\in L^+\mathcal{G}(k)$, we have \[g\cdot g_1wg_2=g_1wg_2\cdot (g_1wg_2)^{-1}\cdot g\cdot g_1wg_2.\] Noting that $L^+\mathcal{G}^{(2)}\subset L^+\mathcal{G}$ is normal, we see by the above inclusion that $g\cdot g_1wg_2$ is of the form $g_1wg_2 \cdot g_3$ for some $g_3\in L^+\mathcal{G}^{(1)}(k)\subset L^+\G^{(1)-\rdt}(k)$. This proves that the usual action of $L^+\G^{(2)}$ on $M^{\loc,(1)-\rdt}$ is trivial. 
\end{proof}

Fix an integer $m\geq 2$. Composing the morphism $\Sh_K \ra \Sht_{\mu,K}^{\loc}$ in Proposition \ref{P: univ sht} with $
\Sht_{\mu,K}^{\loc}\ra \Sht_{\mu,K}^{\loc(m,1)}$, we get a morphism of stacks
\[\upsilon_K: \Sh_K\ra \Sht_{\mu,K}^{\loc(m,1)}. \]
Recall that by the local model diagram, we have the morphism of stacks \[\lambda_K: \Sh_K\ra [\G_0\backslash M^{\loc}],\] which is perfectly smooth.
\begin{theorem}\label{T: perf smooth}
	The following diagram commutes:
	\[\xymatrixcolsep{5pc}\xymatrix{
		\Sh_K\ar[r]^{\upsilon_K}\ar[rd]_{\lambda_K}&\Sht_{\mu,K}^{\loc(m,1)}\ar[d]^{\pi_K}\\
		& [\G_0\backslash M^{\loc}].
	}\]Moreover, $\upsilon_K$ is perfectly smooth.
\end{theorem}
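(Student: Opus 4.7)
\medskip

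\noindent\emph{Proof sketch.} The proof splits into two independent tasks: verifying that the triangle commutes, and proving that $\upsilon_K$ is perfectly smooth.

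Commutativity is essentially tautological. For a test point $x \in \Sh_K$, the morphism $\upsilon_K$ constructs a $(m,1)$-restricted local $(\G,\mu)$-Shtuka built from the display $(\underline{M},\underline{M}_1,\underline{\Psi})$ equipped with the crystalline tensor $\underline{s}_\cris$ of Proposition \ref{global cris}; the modification $\beta$ is produced by the linearization of $\underline{\Psi}$ as in Proposition \ref{P: univ sht}. Applying $\pi_K$ forgets the reductive truncation $\psi$ and remembers only the class of the modification modulo the $L^+\G$-action, i.e.\ the Hodge filtration class, which is precisely $\lambda_K(x)$.

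For perfect smoothness of $\upsilon_K$ I would work locally and trivialize the relevant torsors. Cover $\Sht_{\mu,K}^{\loc(m,1)}$ by its natural atlas $M^{\loc,(1)-\rdt}$ and choose an affine open $\Spf R \subset \ES_\K^\wedge$ such that, after passing to the perfection, the crystalline $\G$-torsor $\widetilde{\ES}_{\K,\cris}^\wedge$ of Corollary \ref{coro--cris torsor} admits a section $\tau$ (arranged perfectly-\'etale locally). Using $\tau$, the universal local Shtuka over $\Spec R^{pf}$ rigidifies to an explicit element $g_\tau \in M^{\loc,\infty}(R^{pf})$, whose image $\bar g_\tau \in M^{\loc,(1)-\rdt}(R^{pf})$ provides the desired lift of $\upsilon_K$ to the atlas. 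It then suffices to show $\bar g_\tau$ is perfectly smooth. Projecting along the $\G_0^{\rdt}$-torsor $M^{\loc,(1)-\rdt} \to M^{\loc}$, the composition is, after the $\tau$-trivialization, the perfection of the map $\widetilde{\ES}_{\K,0} \to M^{\loc}$ coming from the local model diagram of Theorem \ref{result P-Z and K-P}, which is perfectly smooth. For the fiber direction, the canonical identification \eqref{a cano iso} together with the constancy of $\Psi$ modulo $\mathfrak{a}_E$ used in the construction of the versal deformation (see Remark \ref{key remark}) pins down the section of the $\G_0^{\rdt}$-torsor coming from $\psi$ as the one reducing to the identity. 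Combining these, $\bar g_\tau$ is a perfectly smooth morphism.

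The main obstacle lies in identifying the $\G_0^{\rdt}$-torsor structure coming from $\psi$ (the reductive truncation of the Frobenius $\underline{\Psi}$ on the universal display) with the canonical one on $M^{\loc,(1)-\rdt} \to M^{\loc}$. This amounts to comparing the display/Dieudonn\'e-theoretic perspective used by Kisin-Pappas with the Shtuka perspective used in the construction of $\upsilon_K$; it is essentially a globalization via $\underline{s}_\cris$ of the canonical identification \eqref{a cano iso}, combined with the versality statement of Lemma \ref{lemma--torsor M-1}. Once this compatibility is in hand, perfect smoothness of $\upsilon_K$ follows, and the rest of the statement is formal.
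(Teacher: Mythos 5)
There is a genuine gap in your argument for perfect smoothness, stemming from a misinterpretation of what it means for a morphism to a quotient stack to be smooth.

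You claim that, after trivializing the crystalline $\G$-torsor by a section $\tau$, it suffices to show that the resulting morphism $\bar g_\tau\colon \Spec R^{pf}\to M^{\loc,(1)-\rdt}$ is perfectly smooth. But this cannot be: $\dim M^{\loc,(1)-\rdt} = \dim M^{\loc} + \dim \G_0^{\rdt}$ while $\dim \Spec R^{pf} = \dim \ES_{\K,0} = \dim M^{\loc}$, so $\bar g_\tau$ would have \emph{negative} relative dimension unless $\G_0^{\rdt}$ is trivial. Smoothness of $\upsilon_K\colon \Sh_K\to \Sht_{\mu,K}^{\loc(m,1)}=[M^{\loc,(1)-\rdt}/Ad_\sigma L^m\G]$ is equivalent to smoothness of the induced $L^m\G$-equivariant map $q(m,1)\colon \Sh_K^{(m,1)\square}\to M^{\loc,(1)-\rdt}$, where $\Sh_K^{(m,1)\square}=\Sh_K\times_{\Sht_{\mu,K}^{\loc(m,1)}}M^{\loc,(1)-\rdt}$ is the $L^m\G$-torsor over $\Sh_K$ classifying trivializations $\epsilon\colon\overrightarrow{\El}|_{D_m}\simeq\El_{0,D_m}$ --- it is \emph{not} equivalent to smoothness of a single section. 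Choosing a section (your $\tau$) trivializes $\Sh_K^{(m,1)\square}|_U\simeq U\times L^m\G$ \'etale-locally, but the map on the torsor is then $(u,g)\mapsto g\,a(m,1)(u)\,\pi_{m,1-\rdt}(\sigma(g))^{-1}$, and it is precisely the extra $L^m\G$ direction, together with the $\sigma$-twist, that supplies the surjectivity on tangent spaces onto the $\G_0^{\rdt}$-torsor fiber direction. Your ``fiber direction'' step --- pinning down the $\G_0^{\rdt}$-section via \eqref{a cano iso} and constancy of $\Psi$ modulo $\mathfrak{a}_E$ --- explains why a particular section exists but does not produce the needed first-order surjectivity, which only appears once you include the $L^m\G$-factor.

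The paper's proof (following \cite{XiaoZhu} Proposition 7.2.4) does exactly this: it reduces to showing $q(m,1)$ is perfectly smooth, passes through the \'etale map $U\to M^{\loc}$ from the local model diagram to reduce further to $U\times L^m\G\to U\times_{M^{\loc}}M^{\loc,(1)-\rdt}$, de-perfects to the Greenberg transform $L_p^m\G$ and the descent $U'^{(1)}$, and then shows the fiberwise map $f_x\colon L_p^m\G\to U'^{(1)}_x$ has differential identified with the projection $L_p^m\G\to\G_0^{\rdt}$ (the $\sigma(g)$-term contributing nothing infinitesimally). Your argument omits the $L^m\G$-torsor/fiber-product step, so it cannot close. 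Your trivialization of $\widetilde{\ES}_{\K,\cris}^\wedge$ over $W(R^{pf})$ (rather than the $m$-truncation) also requires more care than ``arranged perfectly-\'etale locally,'' but this is a minor issue compared to the dimension mismatch above.
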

\begin{proof}
	The above commutative diagram comes from the fact that the perfection of the special fiber of the Pappas-Zhu local model can be identified with the
	closed subscheme $M^{\loc}$ in subsection \ref{subsection loc sht}, by \cite[Corollary 21.6.10]{SW} and \cite[Theorem 2.15]{He-Pap-Rap}.
	
	Now we prove that $\upsilon_K$ is perfectly smooth. The arguments in the proof of \cite[Proposition 7.2.4]{XiaoZhu} apply here. For the reader's convenience, we recall how to adapt the arguments of loc. cit. to our situation as follows. First, similarly as in the proof of Lemma \ref{L: pretacks sht}, let $\Sht_{\mu,K}^{\loc(m,1),\square}$ be the $L^m\G$-torsor over $\Sht_{\mu,K}^{\loc(m,1)}$ such that for any $R\in \Aff_k^{pf}$, $\Sht_{\mu,K}^{\loc(m,1),\square}(R)$ classifies the trivialization $\epsilon: \overrightarrow{\El}|_{D_m(R)}\simeq \El_{0,D_m(R)}$, where $D_m(R)=\Spec W_m(R)$. Then standard arguments show that \[\Sht_{\mu,K}^{\loc(m,1),\square}\simeq M^{\loc,(1)-\rdt}\] such that the $L^m\G$-action on the left hand side corresponds to the $Ad_\sigma L^m\G$-action on the right hand side. On the other hand, we can consider the $L^m\G$-torsor $\Sh_K^{(m,1)\square}$ over $\Sh_K$ such that for any $R\in \Aff_k^{pf}$, $\Sh_K^{(m,1)\square}(R)$ classifies the trivialization $\epsilon: \overrightarrow{\El}|_{D_m(R)}\simeq \El_{0,D_m(R)}$. 
	Clearly we have \[\Sh_K^{(m,1)\square}=\Sh_K\times_{\Sht_{\mu,K}^{\loc(m,1)}}\Sht_{\mu,K}^{\loc(m,1),\square}. \]
	Let $\pi(m,1): \Sh_K^{(m,1)\square}\ra \Sh_K$ be the projection, which is a $L^m\G$-torsor. Consider the composition map
	\[q(m,1): \Sh_K^{(m,1)\square}\ra \Sht_{\mu,K}^{\loc(m,1),\square} \simeq M^{\loc,(1)-\rdt}.\] Then the map $\upsilon_K: \Sh_K\ra \Sht_{\mu,K}^{\loc(m,1)}$ is equivalent to the following diagram
	\[\xymatrix{
		&\Sh_K^{(m,1)\square}\ar[ld]_{\pi(m,1)}\ar[rd]^{q(m,1)}&\\
		\Sh_K& & M^{\loc,(1)-\rdt},
	}\]
	which is $L^m\G$-equivariant for the actions of $L^m\G$ on $\Sh_K^{(m,1)\square}$ and $M^{\loc,(1)-\rdt}$ as above. 
	To prove $\upsilon_K$ is perfectly smooth, we need only to show $q(m,1)$ is perfectly smooth.
	
	Let $x^\square$ be a $k$-point of $\Sh_K^{(m,1)\square}$  with image $x\in 
	\Sh_K$. Then we can find an \'etale neighborhood $a: U\ra \Sh_K$ of $x$ such that the pullback of the $L^m\G$-torsor $\overrightarrow{\El}|_{D_m}$ to $U$ is trivial. Fix such a trivialization  $\epsilon: \overrightarrow{\El}|_{D_m}\simeq \El_{0,D_m}$, which is equivalent to a lifting $a^{(m)}: U\ra \Sh_K^{(m,1)\square}$ of $a$. Then the map
	\[a^\square: U\times L^m\G\ra \Sh_K^{(m,1)\square}, \quad (u,g)\mapsto (a(u), g\epsilon)\]
	is \'etale and gives an \'etale neighborhood of $x^\square$. It suffices to show the composition
	\[a^\square(m,1)=q(m,1)\circ a^\square:  U\times L^m\G\ra \Sh_K^{(m,1)\square}\ra M^{\loc,(1)-\rdt}\]
	is perfectly smooth. We can make the above map more explicit.
	Consider the composition
	\[a(m,1)=q(m,1)\circ a^{(m)}: U\ra \Sh_K^{(m,1)\square}\ra  M^{\loc,(1)-\rdt}.\]
	Then for any $(u,g)\in U\times L^m\G$, we have \[a^\square(m,1)(u,g)=ga(m,1)(u)\pi_{m,1-\rdt}(\sigma(g)^{-1}).\]
	
	The perfection of the local model diagram over $k$ gives us
	\[\xymatrix{
		&\Sh_K^{(1,0)\square}\ar[ld]_{\pi(1,0)}\ar[rd]^{q(1,0)}&\\
		\Sh_K& & M^{\loc},
	}\]
	with $\Sh_K^{(1,0)\square}=\wt{\Sh_K}:=(\wt{\ES}_{K,0})^{pf}, \pi(1,0)=\pi^{pf}$, and $q(1,0)=q^{pf}$. The morphism $a^{(m)}: U\ra \Sh_K^{(m,1)\square}$ naturally induces a morphism
	$a^{(1)}: U\ra \Sh_K^{(1,0)\square}$. The local model diagram implies that the composition
	\[U\ra  \Sh_K^{(1,0)\square}\ra M^{\loc} \] is 
	\'etale. Now we have the following commutative diagram
	\[\xymatrixcolsep{5pc}\xymatrix{
		U\times L^m\G\ar[r]^{a^\square(m,1)}\ar[d]& M^{\loc,(1)-\rdt}\ar[d]\\
		U\ar[r]& M^{\loc},
	}\]
	where $U\times L^m\G\ra U$ is the natural projection. Since the bottom line is \'etale, to show $a^\square(m,1)$ is perfectly smooth, it suffices to show the induced map
	\[\wt{a^\square(m,1)}: U\times L^m\G\ra U\times_{M^{\loc}}M^{\loc,(1)-\rdt}\] is perfectly smooth.  Note that the left hand side is a trivial $L^m\G$-torsor over $U$, the right hand side is a $\G_0^{\rdt}$-torsor over $U$, and the above map is morphism over $U$. 
	
	Let $L^m_p\G$ be the usual Greenberg transform of $\G\otimes{\Z/p^m\Z}$, then $L^m\G$ is the perfection of $L^m_p\G$. The morphism $a: U\ra M^{\loc}$ descends to an \'etale morphism $a': U'\ra M^{\loc}$. Let $U^{'(1)}$ be the trivial $\G_0^{\rdt}$-torsor over $U'$. Then $(U^{'(1)})^{pf}\simeq U\times_{M^{\loc}}M^{\loc,(1)-\rdt}$, and the above morphism $\wt{a^\square(m,1)}$ is the perfection of 
	\[f: U'\times L^m_p\G\ra  U^{'(1)}, \quad (u,g)\mapsto g a(m,1)(u)\pi_{m,1-\rdt}(\sigma(g)^{-1}),\]
	which is a morphism over $U'$. 
	It suffices to show this morphism is smooth over $U'$. 
	Noting that $L_p^m\mathcal{G}$ is smooth and that $U'^{(1)}$ is smooth over $U'$, we only need to show that for each $x\in U'(k)$, the induced map on fibers $f_x:L_p^m\mathcal{G}\rightarrow U'^{(1)}_x$ is smooth. On the level of tangent spaces, we can ignore the action of $\sigma(g)$ and the infinitesimal action induced by $f_x$ is identified with the projection $L_p^m\mathcal{G}\rightarrow \mathcal{G}_0^{\mathrm{rdt}}$. So $f_x$ and hence $f$ is smooth.
	

\end{proof}

Consider the morphism of stacks
\[\upsilon_K: \Sh_K\ra \Sht_{\mu,K}^{\loc(m,1)}. \]
By Lemma \ref{L: top restricted sht} $|\Sht_{\mu,K}^{\loc(m,1)}|\simeq {}^K\Admu$ and by Proposition \ref{P: G zip and shtukas},
the fibers of $\upsilon_K$ are then the  EKOR strata of $\Sh_K$. Note that we have the identification of underling topological spaces \[|\Sh_K|=|\ES_{\K,0}|.\]
Since by Theorem \ref{T: perf smooth} $\upsilon_K$ is perfectly smooth, we get the closure relation for EKOR strata on $\ES_{\K,0}$, which is independent of the axioms of \cite{He-Rap} (compare Corollary \ref{coro--non-emp and closure} (2)):
\begin{corollary}\label{C: closure relation}
	For any $x\in {}^K\Admu$, the Zariski closure of the EKOR stratum $\ES_{\K,0}^x$ is given by
	\[\ov{\ES_{\K,0}^x}=\coprod_{x'\leq_{K,\sigma}x}\ES_{\K,0}^{x'}.\]
\end{corollary}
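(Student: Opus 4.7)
The plan is to deduce the closure relation directly from Theorem~\ref{T: perf smooth} together with the topological description of $|\Sht_{\mu,K}^{\loc(m,1)}|$ provided by Lemma~\ref{L: top restricted sht}. First, recall that the topology on ${}^K\Admu$ is the one defined by the partial order $\leq_{K,\sigma}$, so under the homeomorphism $|\Sht_{\mu,K}^{\loc(m,1)}|\simeq {}^K\Admu$ the closure of a point $x$ is precisely the set $\{x'\in {}^K\Admu\mid x'\leq_{K,\sigma}x\}$. Moreover, by construction the EKOR strata are exactly the fibers of $\upsilon_K$, so up to perfection we have $\ES_{\K,0}^{x}=\upsilon_K^{-1}(x)$.

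Next, Theorem~\ref{T: perf smooth} tells us that $\upsilon_K$ is perfectly smooth; in particular, the induced map on underlying topological spaces is continuous and open. Combined with Corollary~\ref{coro--non-emp and closure}~(1), which guarantees that each EKOR stratum $\ES_{\K,0}^{x'}$ is non-empty, we get that $|\upsilon_K|$ is surjective. For any continuous open surjective map $f\colon X\to Y$ of topological spaces and any $y\in Y$, one has the identity
\[
\overline{f^{-1}(y)}=f^{-1}(\overline{\{y\}}),
\]
the non-trivial inclusion being proved by a direct neighborhood argument: for $x'\in f^{-1}(\overline{\{y\}})$ and any open $U\ni x'$, the image $f(U)$ is an open neighborhood of $f(x')\in\overline{\{y\}}$, hence contains $y$, so $U\cap f^{-1}(y)\neq\emptyset$.

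Applying this to $\upsilon_K$ and to $x\in{}^K\Admu$, we obtain
\[
\overline{\ES_{\K,0}^{x,pf}}=\upsilon_K^{-1}\Big(\overline{\{x\}}\Big)=\coprod_{x'\leq_{K,\sigma}x}\upsilon_K^{-1}(x')=\coprod_{x'\leq_{K,\sigma}x}\ES_{\K,0}^{x',pf},
\]
as an equality inside $|\Sh_K|$. Finally, since perfection does not change the underlying topological space, the identity $|\Sh_K|=|\ES_{\K,0}|$ transports the equality of closures to $\ES_{\K,0}$, yielding the desired formula. The key step, and essentially the only one requiring verification, is the compatibility of $\upsilon_K$ with the specialisation order (i.e.\ openness and surjectivity of $|\upsilon_K|$); this is why Theorem~\ref{T: perf smooth} and the non-emptiness statement in Corollary~\ref{coro--non-emp and closure} are doing all the work, while the partial order $\leq_{K,\sigma}$ furnishes the combinatorial answer through Lemma~\ref{L: top restricted sht}.
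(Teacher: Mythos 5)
Your proof is correct and follows exactly the route the paper takes: it reads the closure relation off from the perfect smoothness of $\upsilon_K$ (Theorem~\ref{T: perf smooth}), the homeomorphism $|\Sht_{\mu,K}^{\loc(m,1)}|\simeq {}^K\Admu$ with the $\leq_{K,\sigma}$-topology (Lemma~\ref{L: top restricted sht}), and the invariance of the underlying topological space under perfection. The only remark worth making is that your topological lemma $\overline{f^{-1}(y)}=f^{-1}(\overline{\{y\}})$ does not actually require surjectivity of $f$, only continuity and openness (the neighborhood argument you give never uses surjectivity); surjectivity, or equivalently the non-emptiness from Corollary~\ref{coro--non-emp and closure}(1), is what guarantees that every stratum appearing in the coproduct is genuinely non-empty, which is a supplementary point rather than a logical prerequisite for the set-theoretic closure identity.
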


\section[EKOR abelian type]{EKOR strata of abelian type}\label{section abelian}
In this section, we will extend the construction of the EKOR stratification to Shimura varieties of abelian type.

\subsection{Some group theory}

\subsubsection[]{}\label{subsubsection connected hodge adjoint}
Let $\alpha: G_1\ra G_2$ be a central extension between connected reductive groups over $\Q_p$ with kernel $Z$. Then $\alpha$ induces a canonical $G_1(\Q_p)$-equivariant map \[\alpha_\ast: \B(G_1,\Q_p)\ra \B(G_2,\Q_p).\] Let $x\in \B(G_1,\Q_p)$ with associated group schemes $\G_{1,x}$ and $\G_{1,x}^\circ$. Set $y=\alpha_\ast(x)$. Then $\alpha$ extends to group scheme homomorphisms
\[\alpha: \G_{1,x}\ra \G_{2,y},\quad \alpha: \G_{1,x}^\circ\ra\G_{2,y}^\circ.\]
Let $\mathcal{Z}$ be the schematic closure of $Z$ in $\G_{1,x}^\circ$.
\begin{proposition}{\rm(\cite[Proposition 1.1.4]{Paroh})}
	Suppose that $G_1$ splits over a tamely ramified extension of $\Q_p$ and that $Z$ is either a torus or is finite of rank prime to $p$. Then $\mathcal{Z}$  is smooth over $\Z_p$ and it fits in an exact sequence
	\[1\ra \mathcal{Z}\ra \G_{1,x}^\circ\ra\G_{2,y}^\circ \ra 1\] of group schemes over $\Z_p$. If $Z$ is a torus which is a direct summand of an induced torus, then $\mathcal{Z}=\mathcal{Z}^\circ$ is the connected Neron model of $Z$.  
\end{proposition}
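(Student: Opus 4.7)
The plan is to work locally on the Bruhat-Tits building and reduce to a situation where Yu's functorial construction of parahoric group schemes (using the tameness hypothesis) can be applied. First I would pass to $\breve{\Z}_p$: both smoothness of $\mathcal{Z}$ and the exactness of the sequence can be checked after the faithfully flat base change $\Z_p \to \breve{\Z}_p$, and the tameness of $G_1$ is preserved. I would also fix a tame Galois extension $L/\Q_p$ splitting $G_1$ (and hence splitting $G_2$ and $Z$), so that over $O_L$ everything reduces to the split case where the parahoric group schemes admit explicit Chevalley-type descriptions, and one recovers the result over $\breve{\Z}_p$ by Galois descent.

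For the case where $Z$ is finite of rank prime to $p$, the scheme $Z$ spreads out to a finite \'etale $\Z_p$-subgroup scheme $Z_{\Z_p} \subset G_{1,\Z_p}$, automatically smooth. The key point is to verify that $Z_{\Z_p}$ is contained in $\G_{1,x}^\circ$, not merely in $\G_{1,x}$; this would be shown by checking that the composite $Z(\breve{\Z}_p) \to \G_{1,x}(\breve{\Z}_p) \to \pi_0(\G_{1,x})(\ov{\mathbb{F}}_p)$ is trivial, which follows because the latter component group has order a power of $p$ in the tame case while $|Z|$ is prime to $p$. Then $\mathcal{Z} = Z_{\Z_p}$, which is smooth, and the exact sequence reduces to surjectivity of $\G_{1,x}^\circ \to \G_{2,y}^\circ$. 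This surjectivity is verified by combining smoothness of the target with surjectivity on $\ov{\mathbb{F}}_p$-points, which in turn follows from Yu's description of the special fiber as an extension of a reductive group by a unipotent radical, each layer of which is visibly compatible with the central isogeny.

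For the case where $Z$ is a torus, the central input is the Bruhat-Tits theory of tori under tame ramification: the connected N\'eron model $\mathcal{Z}^{\mathrm{Ner},\circ}$ of $Z$ over $\Z_p$ is smooth, and over $O_L$ it is the split N\'eron model. I would identify $\mathcal{Z}$ with $\mathcal{Z}^{\mathrm{Ner},\circ}$ by using the N\'eron mapping property: the inclusion $Z \hookrightarrow G_1 \subset \G_{1,x}^\circ[1/p]$ extends uniquely to a map $\mathcal{Z}^{\mathrm{Ner},\circ} \to \G_{1,x}^\circ$ because the target is $\Z_p$-smooth, and this map is a closed immersion since it is so generically and $\mathcal{Z}^{\mathrm{Ner},\circ}$ is $\Z_p$-flat. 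This simultaneously gives smoothness of $\mathcal{Z}$. The exact sequence is then again reduced to surjectivity on special fibers, which follows by comparing Yu's root-theoretic descriptions of $\G_{1,x}^\circ$ and $\G_{2,y}^\circ$ layer by layer; the central torus $Z$ contributes only to the Levi part, where surjectivity is standard. Finally, if $Z$ is a direct summand of an induced torus, its N\'eron model is already connected, so $\mathcal{Z} = \mathcal{Z}^\circ$.

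The main obstacle is the coherent bookkeeping required by Yu's (or Landvogt's) functorial construction of parahoric schemes under tame ramification: one must verify that every layer in the filtration defining $\G_{1,x}^\circ$ (Levi quotient, root group filtrations, integral structures on tori) is compatible with the central extension $\alpha$, and that the comparison survives Galois descent from $O_L$ to $\Z_p$. None of the individual verifications is deep, but their assembly is delicate, and this is where the hypotheses (tameness of $G_1$ and the rank/structure of $Z$) are genuinely used.
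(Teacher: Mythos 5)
The paper itself gives no proof of this statement --- it is a verbatim citation of Kisin--Pappas, Proposition~1.1.4 --- so there is no ``paper's own proof'' to compare against, and I can only evaluate the internal soundness of your sketch. Your overall architecture (tame descent to $\breve{\Z}_p$ and a splitting field, N\'eron theory for the torus part, explicit layer-by-layer Bruhat--Tits comparison for surjectivity on special fibers) is a reasonable outline and broadly consonant with the Kisin--Pappas method, but two of the claims carrying the argument are incorrect.

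In the finite case you assert that the component group $\pi_0(\G_{1,x})(\ov{\mathbb{F}}_p)$ ``has order a power of $p$ in the tame case,'' and deduce that $Z$ lands in the neutral component. This is false: by Haines--Rapoport, $\G_{1,x}(\breve{\Z}_p)/\G_{1,x}^\circ(\breve{\Z}_p)$ is a subgroup of $\pi_1(G_1)_I^{\mathrm{tors}}$, and tameness places no constraint making this torsion $p$-primary --- already for split $\mathrm{PGL}_n$ (trivially tame) the Iwahori stabilizer has component group of order $n$. The correct route is to use functoriality of the Kottwitz homomorphism for $Z\subset T_1\subset G_1$: a bounded element of $Z(\breve{\Q}_p)$ lies in $\ker\kappa_Z\subset\ker\kappa_{T_1}$, hence in $T_1(\breve{\Q}_p)_1\subset\G_{1,x}^\circ(\breve{\Z}_p)$, and the rank-prime-to-$p$ hypothesis is then what makes the flat closure finite \'etale. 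In the torus case your invocation of the N\'eron mapping property is inverted: that property lets one extend maps from smooth schemes \emph{into} a N\'eron model, whereas you want to extend a map \emph{out of} $\mathcal{Z}^{\mathrm{Ner},\circ}$ into $\G_{1,x}^\circ$, which is not a N\'eron model, and ``the target is $\Z_p$-smooth'' does not furnish the extension. What actually produces the extension is again a boundedness statement, namely $\mathcal{Z}^{\mathrm{Ner},\circ}(\breve{\Z}_p)\subset\G_{1,x}^\circ(\breve{\Z}_p)$ (via the same Kottwitz functoriality), combined with normality of $\mathcal{Z}^{\mathrm{Ner},\circ}$ and affineness of $\G_{1,x}^\circ$; and even then, identifying the resulting map onto the schematic closure $\mathcal{Z}$ as an isomorphism requires a further argument --- ``flat and generically a closed immersion'' does not by itself imply closed immersion.
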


In particular, if $G$ splits over a tamely ramified extension of $\Q_p$, $Z=Z_G$ is either a torus or $Z_{G^{\mathrm{der}}}$ has rank prime to $p$, for $x\in \B(G,\Q_p)$ with associated integral model $\G=\G^\circ$, $\G^\ad=\G/\mathcal{Z}$ is connected, and it can be identified with the parahoric model of $G^\ad_{\Q_p}$ attached to $x^{\ad}$, i.e. $\G^\ad=\G^{\ad\circ}$.

\subsubsection[]{}
Let $G$ be a connected reductive group over $\Q_p$ with a parahoric model $\G$ over $\Z_p$. Let $\{\mu\}$ be the conjugacy class of a cocharacter $\mu: \mathbb{G}_{m,\ov{\Q}_p}\ra G_{\ov{\Q}_p}$. As in \ref{Iwahori Weyl}, we get the associated $\{\mu\}$-admissible set $\Admu\subset \wt{W}$. Let $K=\G(\Z_p)$. We have also the sets ${}^K\Admu, \Admu_K$ together with the surjection ${}^K\Admu\twoheadrightarrow \Admu_K$.

Let $(G^\ad,\{\mu^\ad\})$ be the associated adjoint group with the induced conjugacy class of cocharacter. Let $K^\ad=\G^{\ad\circ}(\Z_p)$, then we have the associated sets $\mathrm{Adm}(\{\mu^\ad\})$, $\Adm(\{\mu^\ad\})_{K^\ad}$ and ${}^{K^\ad}\Adm(\{\mu^\ad\})$.
\begin{lemma}\label{L: adjoint bijections}
	The natural map $(G, \{\mu\}, K)\ra (G^\ad, \{\mu^\ad\}, K^\ad)$ induces bijections
	\begin{enumerate}
		\item $\Admu\stackrel{\sim}{\ra}\mathrm{Adm}(\{\mu^\ad\})$, 
		\item $\Admu_K \stackrel{\sim}{\ra}\Adm(\{\mu^\ad\})_{K^\ad}$, 
		\item ${}^K\Admu \stackrel{\sim}{\ra} {}^{K^\ad}\Adm(\{\mu^\ad\})$.
	\end{enumerate}
\end{lemma}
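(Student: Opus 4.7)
The plan is to exploit the semidirect product decomposition $\widetilde{W}\cong W_a\rtimes\pi_1(G)_{\Gamma_0}$ (resp.\ for $G^{\ad}$). The crucial structural observation is that the affine Weyl group is determined by the relative coroot lattice, which is identical for $G$ and $G^{\ad}$, so $W_a=W_a^{\ad}$; and the finite parahoric Weyl group is determined by the Weyl group of the reductive quotient of the special fiber of the parahoric. Under the running hypotheses recorded in \ref{subsubsection connected hodge adjoint}, one has $\mathcal{G}^{\ad}=\mathcal{G}/\mathcal{Z}$ with $\mathcal{Z}$ central, so $\mathcal{G}_0^{\ad,\rdt}$ is a central quotient of $\mathcal{G}_0^{\rdt}$, and hence $W(\mathcal{G}_0^{\rdt})=W(\mathcal{G}_0^{\ad,\rdt})$; combined with the identification of \cite{HR} Proposition 12 this gives $W_K=W_{K^{\ad}}$. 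Moreover the natural map $\widetilde{W}\to\widetilde{W}^{\ad}$ sends $(w_a,\tau)\in W_a\rtimes\pi_1(G)_{\Gamma_0}$ to $(w_a,\overline{\tau})$ where $\overline{\tau}$ denotes the image of $\tau$ in $\pi_1(G^{\ad})_{\Gamma_0}$, so it preserves both the length function and the Bruhat order.

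For (1), I would observe that every element of $\Admu$ has the same image $\mu^{\#}\in\pi_1(G)_{\Gamma_0}$ (since this holds for the generators $t^{x(\underline{\mu})}$ and the Bruhat order preserves the $\pi_1$-component). Hence $\Admu$ sits inside a single fiber of $\widetilde{W}\to\pi_1(G)_{\Gamma_0}$, which is a $W_a$-torsor via the semidirect product. The natural map restricts on this fiber to the identification $w_a\cdot t^{x(\underline{\mu})}\mapsto w_a\cdot t^{x(\underline{\mu}^{\ad})}$ induced by the common affine Weyl group $W_a=W_a^{\ad}$, where $\underline{\mu}^{\ad}$ denotes the image of $\underline{\mu}$ in $X_\ast(T^{\ad})_{\Gamma_0}$. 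Since admissibility is characterized purely in terms of the Bruhat order relative to the translations $t^{x(\underline{\mu})}$, which map to the corresponding translations $t^{x(\underline{\mu}^{\ad})}$ on the adjoint side, this restriction implements the asserted bijection $\Admu\xrightarrow{\sim}\mathrm{Adm}(\{\mu^{\ad}\})$.

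For (2) and (3) I would invoke the equality $W_K=W_{K^{\ad}}$ as subgroups of the common $W_a$. The double-coset quotient $\Admu_K=W_K\backslash\Admu^K/W_K$ and the subset ${}^K\Admu$ of minimal-length representatives in $W_K\backslash\widetilde{W}$ are defined using only the $W_K$-action, the Bruhat order, and the length function, all of which agree under the map $\widetilde{W}\to\widetilde{W}^{\ad}$ by the preceding paragraph. Hence the bijection from (1) descends to a bijection of $W_K$-double cosets, yielding (2), and restricts to a bijection on minimal-length representatives (using Theorem \ref{T: EKOR set} to rewrite ${}^K\Admu=\Admu\cap{}^K\widetilde{W}$), yielding (3).

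Since the whole argument is a formal consequence of the semidirect product description and the identifications $W_a=W_a^{\ad}$, $W_K=W_{K^{\ad}}$, I do not expect any genuine obstacle. The only minor subtlety is compatibility of the auxiliary choices (special vertex, maximal torus, Iwahori) on $G$ and $G^{\ad}$, which is arranged by taking the image under $G\to G^{\ad}$ of the data fixed on $G$.
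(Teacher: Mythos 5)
Your proof follows essentially the same route as the paper: the semidirect product decomposition $\widetilde{W}\cong W_a\rtimes\pi_1(G)_{\Gamma_0}$, the canonical identification $W_a(G)\simeq W_a(G^\ad)$, the observation that $\Admu$ lies in the single coset $W_a\tau$, the identification $W_K\simeq W_{K^\ad}$, and Theorem~\ref{T: EKOR set} for part (3). The only cosmetic difference is that for (3) you argue directly that the minimal-length condition is preserved, whereas the paper phrases this by showing the map commutes with the surjections onto $\Admu_K\simeq\Adm(\{\mu^\ad\})_{K^\ad}$ and is bijective on fibers—these are equivalent; also note that $W_K\simeq W_{K^\ad}$ holds in the general setting of \S5.1.2 and does not actually require the hypotheses of \ref{subsubsection connected hodge adjoint}.
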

\begin{proof}
	Let $\wt{W}$ and $\wt{W}^\ad$ be the Iwahori Weyl group of $G$ and $G^\ad$ respectively. Then the natural map $G\ra G^\ad$ induces a map $\wt{W}\ra \wt{W}^\ad$, which restricts to a bijection on the affine Weyl groups $W_a(G)\stackrel{\sim}{\ra}W_a(G^\ad)$. By definition, $\Admu\subset W_a(G)\tau$, $\mathrm{Adm}(\{\mu^\ad\})\subset W_a(G^\ad)\tau^\ad$, with the elements $\tau$ and $\tau^\ad$ attached to $\{\mu\}$ and $\{\mu^\ad\}$ in \ref{tau} respectively. Since $W_0(G_{\breve{\Q}_p})\simeq W_0(G_{\breve{\Q}_p}^\ad)$, one sees that  $W_a(G)\tau\stackrel{\sim}{\ra}W_a(G^\ad)\tau^\ad$ restricts to a bijection
	\[\Admu\stackrel{\sim}{\ra}\mathrm{Adm}(\{\mu^\ad\}).\] On the other hand, the natural map $K\ra K^\ad$ induces a bijection on finite Weyl groups $W_K\stackrel{\sim}{\ra}W_{K^\ad}$. Thus we get a bijection \[W_K\backslash W_K\Admu W_K/W_K \stackrel{\sim}{\ra}W_{K^\ad}\backslash W_{K^\ad} \mathrm{Adm}(\{\mu^\ad\})W_{K^\ad}/W_{K^\ad}.\] The maps $\wt{W}\ra \wt{W}^\ad$ and $W_K\stackrel{\sim}{\ra}W_{K^\ad}$
	induce a map ${}^{K} \wt{W}\ra \, {}^{K^\ad}\wt{W}$. Thus we get a map \[\Admu\cap {}^{K} \wt{W}\ra \mathrm{Adm}(\{\mu^\ad\})\cap {}^{K^\ad}\wt{W},\] which is a bijection since the two sides admit surjections to $\Admu_K$ and $\Admu_{K^\ad}$ respectively, but we have just shown $\Admu_K\simeq \Admu_{K^\ad}$, and the above map induces bijections between the fibers on both hand sides.
	We  can conclude since ${}^K\Admu=\Admu\cap {}^{K} \wt{W}$ and similarly ${}^{K^\ad}\Adm(\{\mu^\ad\})=\mathrm{Adm}(\{\mu^\ad\})\cap {}^{K^\ad}\wt{W}$ by Theorem \ref{T: EKOR set}.
\end{proof}

\subsection{The adjoint group action on KR strata}
In this and the next subsection,
let $(G,X)$ be a Shimura datum of Hodge type such that $G$ splits over a tamely ramified extension of $\Q_p$ and the center $Z=Z_G$ is a torus. 
\subsubsection[]{}\label{subsubsection adjoint local model}
Fix a Siegel  embedding \[i:(G,X)\hookrightarrow (\mathrm{GSp},S^\pm)\] as in subsection \ref{subsection integral Hodge}. Fix a point $x\in \mathcal {B}(G,\mathbb{Q}_p)$. We write $\G=\mathcal{G}_x$ for the Bruhat-Tits group scheme attached to $x$. We assume $\G=\G^\circ$.
Then $\G^\ad=\G^{\ad\circ}$ and $G_{\Z_{(p)}}^\ad=G_{\Z_{(p)}}^{\ad\circ}$ (cf. \ref{subsubsection connected hodge adjoint}).
Let $K_p:=\mathcal{G}(\mathbb{Z}_p)$, for $K^p\subset G(\mathbb{A}_f^p)$ small enough, we set $\K:=K_pK^p$. Let $K_p'\subset \GSp(\Q_p)$ be the stabilizer of the lattice $V_{\Z_p}$ as in \ref{subsection integral Hodge}. For $K^p$ above, we can find an open compact subgroup $K^{'p}\subset \GSp(\Ab_f^p)$  and set $\K'=K'_pK^{'p}$, such that
we have a morphism of schemes over $O_E$
\[\ES_\K(G,X)\ra  \ES_{\K'}(\GSp,S^\pm)_{O_E},\]which induces a closed embedding $\Sh_{\K}(G,X)\hookrightarrow \Sh_{\K'}(\GSp,S^\pm)_E$ on generic fibers.
Taking the limit over $K^p$, we get a morphism
$\ES_{K_p}(G,X)\ra  \ES_{K'_p}(\GSp,S^\pm)_{O_E}$.

By \cite[Lemma 4.5.9]{Paroh}, we have the following diagram
\[\xymatrix{&\widetilde{\ES}_{K_p}^\ad(G,X)\ar[ld]_\pi\ar[rd]^q&\\
	\ES_{K_p}(G,X) & &	\mathrm{M}_{G,X}^{\mathrm{loc}},
}\]
where $\pi$ is a $\G^\ad$-torsor, and $q$ is $\G^\ad$-equivariant. The $\G^\ad$-torsor $\widetilde{\ES}_{K_p}^\ad(G,X)$ is induced from the $\G$-torsor $\widetilde{\ES}_{K_p}(G,X)$ in Theorem \ref{result P-Z and K-P} by the natural map $\G\ra \G^\ad$.
For any sufficiently small $K^p$, the induced map $\widetilde{\ES}_{K_p}^\ad(G,X)/K^p\ra 	\mathrm{M}_{G,X}^{\mathrm{loc}}$ is smooth of relative dimension $\dim G^\ad$. 

\subsubsection[]{}\label{subsub--gamma act}
By \cite{Paroh} subsections 4.4 and 4.5, we can describe the action of $G^{\ad\circ}(\Z_{(p)})^+=G^{\ad}(\Z_{(p)})^+$ on $\ES_{K_p}(G,X)$ as follows. Let $\gamma\in G^{\ad}(\Z_{(p)})^+$ and $\P$ the fibre of $G_{\Z_{(p)}}\ra G_{\Z_{(p)}}^\ad$ over $\gamma$. Then $\P$ is a $Z_{G_{\Z_{(p)}}}$-torsor. The element $\gamma$ induces a morphism on generic fibers \[\gamma: \Sh_{K_p}(G,X)\ra \Sh_{K_p}(G,X).\] Let $T$ be a $O_E$-scheme and $x\in \ES_{K_p}(G,X)(T)$. As in \cite{Paroh} 4.5.1,
we get a triple \[(\A_x,\lambda_x, \varepsilon_x^p)\] by the morphism $\ES_{K_p}(G,X)\ra \ES_{K'_p}(\GSp,S^\pm)_{O_E}$, where $\A_x$ is an abelian scheme over $T$ up to $\Z_{(p)}$-isogeny, equipped with a weak $\Z_{(p)}$-polarization $\lambda_x$, and \[\varepsilon_x^p\in \varprojlim_{K^p}\Gamma(T, \Isom(V_{\Ab_f^p}, \wh{V}^p(\A_x)_\Q)).\] 
Then by \cite{Paroh} Lemmas 4.4.6, 4.4.8 and 4.5.4, we get another triple $(\A_x^\P,\lambda_x^\P, \varepsilon_x^{p,\P})$, and by loc. cit. Lemma 4.5.7, the assignment
\[(\A_x,\lambda_x, \varepsilon_x^p)\mapsto (\A_x^\P,\lambda_x^\P, \varepsilon_x^{p,\P}) \]induces a map \[\gamma: \ES_{K_p}(G,X)\ra \ES_{K_p}(G,X) \]whose generic fibre agrees with the map induced by conjugation by $\gamma$.

Combining the $G^{\ad}(\Z_{(p)})^+$-action with the natural action of $G(\Ab_f^p)$ on $\ES_{K_p}(G,X)$ induces an action of $\mathscr{A}(G_{\Z_{(p)}})$ (cf. \ref{subsubsection connected adjoint}) on $\ES_{K_p}(G,X)$.

Now, following \cite[Lemma 4.5.9]{Paroh}, we explain how to lift the $G^{\ad}(\Z_{(p)})^+$-action on $\ES_{K_p}(G,X)$ to an action on $\widetilde{\ES}_{K_p}^\ad(G,X)$. Fix a Galois extension $F|\Q$ such that $\P$ admits an $O_{F,(p)}=O_F\otimes \Z_{(p)}$-point $\tilde{\gamma}$.
We have an isomorphism of abelian schemes
\[\alpha_{\tilde{\gamma}}: \A_x^\P\otimes O_F\ra \A_x\otimes O_F,\]
which is $O_F$-linear for the natural $O_F$-actions on both side. Passing to the de Rham cohomology, we get an $O_F$-linear isomorphism
\[\alpha_{\tilde{\gamma}}^{-1}: H^1_{\dr}(\A_x/T)\otimes O_F\stackrel{\sim}{\ra}H^1_{\dr}(\A_x^\P/T)\otimes O_F.\]
Let $(x,f)\in \wt{\ES}_{K_p}(G,X)(T)$ be a point which lifts $x\in \ES_{K_p}(G,X)(T)$. Then \[f: V_{\Z_p}^\vee\otimes O_T\ra H^1_{\dr}(\A_x/T)\] is an isomorphism with $f^\otimes (s_\alpha)=s_{\alpha,\dr}$. The composition
\[\alpha_{\tilde{\gamma}}^{-1}\circ (f\otimes 1): V_{\Z_p}^\vee\otimes O_T\otimes O_F\stackrel{f\otimes 1}{\ra} H^1_{\dr}(\A_x/T)\otimes O_F\stackrel{\alpha_{\tilde{\gamma}}^{-1}}{\ra} H^1_{\dr}(\A_x^\P/T)\otimes O_F\]
induces a well defined element in $\wt{\ES}^\ad_{K_p}(G,X)(T)$ (cf. the proof of \cite[Lemma 4.5.9]{Paroh}),
which depends only on the image of $(x,f)$ in $\wt{\ES}^\ad_{K_p}(G,X)(T)$ and on $\gamma$.

\begin{proposition}\label{P: adjoint action on KR}
	Let $\ES_0$ be the special fiber of $\ES_{K_p}(G,X)$ over $k=\ov{\mathbb{F}}_p$ and $K=K_p$.
	Consider the KR stratification $\ES_{0}=\coprod_{w\in \Admu_K} \ES_0^w$. Then each KR stratum $\ES_0^w$ is stable under the action of $G^{\ad}(\Z_{(p)})^+$ on $\ES_0$.
\end{proposition}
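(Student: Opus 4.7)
The plan is to prove the stronger statement that the composition $\lambda_K\colon \ES_0\to[\G_0\backslash M^{\loc}]$ is invariant under the $G^{\ad}(\Z_{(p)})^+$-action on $\ES_0$, which immediately gives that each KR stratum is preserved. For this I will show that the morphism $q\colon \wt{\ES}_{K_p}^{\ad}(G,X)\to\mathrm{M}_{G,X}^{\loc}$ is invariant under the $G^{\ad}(\Z_{(p)})^+$-action that was lifted to $\wt{\ES}_{K_p}^{\ad}$ at the end of \ref{subsub--gamma act}.

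First I would fix $\gamma\in G^{\ad}(\Z_{(p)})^+$, a Galois extension $F/\Q$ over which the $Z_{G_{\Z_{(p)}}}$-torsor $\P$ acquires a point $\tilde{\gamma}$, and a $T$-valued lift $(x,f)\in\wt{\ES}_{K_p}(G,X)(T)$ of a point of $\ES_{K_p}(G,X)(T)$. Writing $[\widetilde{\gamma\cdot(x,f)}]\in\wt{\ES}_{K_p}^{\ad}(G,X)(T)$ for the image constructed in \ref{subsub--gamma act}, by the very definition of $q$ on $\wt{\ES}_{K_p}^{\ad}$ (which factors through $q$ on $\wt{\ES}_{K_p}$ because the $\mathcal{G}$-action on $\mathrm{M}_{G,X}^{\loc}$ factors through $\mathcal{G}^{\ad}$, $Z_G$ being contained in the relevant parabolic $P_{\mu^{-1}}$), I compute after base change to $T\otimes O_F$:
\[
q\bigl([\widetilde{\gamma\cdot(x,f)}]\bigr)\otimes O_F
=(\alpha_{\tilde{\gamma}}^{-1}\circ f)^{-1}\bigl(\V^1_{\A_x^{\P}\otimes O_F}\bigr)
=f^{-1}\circ\alpha_{\tilde{\gamma}}\bigl(\V^1_{\A_x^{\P}\otimes O_F}\bigr).
\]
The key input is that $\alpha_{\tilde{\gamma}}\colon\A_x^{\P}\otimes O_F\stackrel{\sim}{\to}\A_x\otimes O_F$ is an isomorphism of abelian schemes, hence the induced map on de Rham cohomology respects the Hodge filtration, so that $\alpha_{\tilde{\gamma}}(\V^1_{\A_x^{\P}\otimes O_F})=\V^1_{\A_x\otimes O_F}$. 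This yields $q([\widetilde{\gamma\cdot(x,f)}])\otimes O_F=f^{-1}(\V^1_{\A_x})\otimes O_F=q(x,f)\otimes O_F$ inside $\mathrm{M}_{G,X}^{\loc}(T\otimes O_F)$.

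Since $T\to T\otimes O_F$ is faithfully flat and $\mathrm{M}_{G,X}^{\loc}$ is separated, faithful flat descent gives $q([\widetilde{\gamma\cdot(x,f)}])=q(x,f)$ in $\mathrm{M}_{G,X}^{\loc}(T)$. This identity is independent of the choices of $\tilde{\gamma}$ and of the lift $f$, so it descends to the statement that the diagram
\[
\xymatrix{
\ES_{K_p}(G,X)\ar[r]^{\gamma}\ar[rd]_{\lambda_K\circ\pi}&\ES_{K_p}(G,X)\ar[d]^{\lambda_K\circ\pi}\\
&[\mathcal{G}_0\backslash\mathrm{M}^{\loc}]
}
\]
commutes on the special fiber, where we use that $\lambda_K$ is defined by the local model diagram and that the $\G^{\ad}$-action on $\mathrm{M}_{G,X}^{\loc}$ preserves each Schubert orbit. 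Consequently, for any $w\in\Admu_K$ the preimage $\ES_0^w=\lambda_K^{-1}(w)$ is stable under $\gamma$, which is the desired conclusion. The only place requiring care is to verify that the formula for $q$ on $\wt{\ES}_{K_p}^{\ad}$ makes sense functorially and that the Hodge-filtration-preservation of $\alpha_{\tilde{\gamma}}$ remains valid after reduction to $\kappa$, both of which follow from the fact that $\alpha_{\tilde{\gamma}}$ comes from an honest isomorphism of abelian schemes together with the functoriality of the Hodge filtration on $H^1_{\dr}$.
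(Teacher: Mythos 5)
Your proof is correct and takes essentially the same approach as the paper: lift the $G^{\ad}(\Z_{(p)})^+$-action to the $\G^{\ad}$-torsor $\wt{\ES}^\ad$, observe that $q$ is invariant (equivalently, equivariant for the trivial action on $\mathrm{M}^{\loc}$), and deduce the stability of KR strata. The paper simply asserts the equivariance of $\pi$ and $q$ in one sentence, whereas you correctly unpack the reason — that $\alpha_{\tilde{\gamma}}$ is an honest isomorphism of abelian schemes over $O_F$ and hence preserves the Hodge filtration on $H^1_{\dr}$, followed by fppf descent along $T\to T\otimes O_F$ — so your write-up supplies exactly the details the paper leaves implicit.
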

\begin{proof}
	Let $\widetilde{\ES}^\ad_0$ and $\mathrm{M}_0$ be the special fibers of $\widetilde{\ES}_{K_p}^\ad(G,X)$ and $\mathrm{M}_{G,X}^{\mathrm{loc}}$ respectively.
	Consider the following diagram
	\[\xymatrix{&\widetilde{\ES}^\ad_0\ar[ld]_\pi\ar[rd]^q&\\
		\ES_0 & &	\mathrm{M}_{0}
	}\] induced from that in \ref{subsubsection adjoint local model}
	on the special fibers. By the above, the $G^{\ad}(\Z_{(p)})^+$-action on $\ES_0$ can be lifted to an action on $\widetilde{\ES}^\ad_0$. Let $G^{\ad}(\Z_{(p)})^+$ act trivially on $	\mathrm{M}_{0}$. Then $\pi$ and $q$ in the above diagram are $G^{\ad}(\Z_{(p)})^+$ -equivariant. Therefore each KR stratum $\ES_0^w$ is stable under the action of $G^{\ad}(\Z_{(p)})^+$ on $\ES_0$.
\end{proof}

\subsection{The adjoint group action on Zips}
As before, let $\ES_0$ be the special fiber of $\ES_{K_p}(G,X)$ over $k$.
We change our notations slightly and write $\tilde{\I}=\widetilde{\ES}_0$ and $\tilde{\I}^\ad=\widetilde{\ES}^\ad_0$. 
Consider the following diagram
\[\xymatrix{
	&\widetilde{\I}^\ad\ar[ld]\ar[rd]&\\
	\ES_0 & &	\mathrm{M}_{0}
}\]introduced in \ref{subsection conjugate loc mod} and in the proof of Proposition \ref{P: adjoint action on KR}. As $\G_0^\ad$-torsors over $\ES_0$, we have $\tilde{\I}^\ad=\wt{\I}\times^{\G_0}\G_0/Z_k$.

We write $K=K_p$.
Fix an element \[w\in \Admu_K=W_K\backslash W_K\Admu W_K/W_K=[\G_0\backslash M_0](k),\] and consider the associated KR stratum $\ES_0^w\subset \ES_0$.
Let \[\tilde{\I}^w\ra \ES_0^w, \quad \mathrm{and}\quad \tilde{\I}^{\ad,w}\ra \ES_0^w\]  be the pullbacks of $\tilde{\I}\ra \ES_0$ and $\tilde{\I}^\ad\ra \ES_0$ respectively under the inclusion of the KR stratum $\ES_0^w\subset \ES_0$. As in \ref{subsection EO in KR}, we get the map
\[\tilde{\I}_+^w\ra \ES_0^w,\] which is a $\G_{0,w}$-torsor. Set $\G_{0,w}^\ad=\G_{0,w}/Z_k$ and \[\tilde{\I}^{\ad,w}_+=\wt{\I}^w_+\times^{\G_{0,w}}\G_{0,w}^\ad.\] Then $\tilde{\I}_+^{\ad,w}\ra \ES_0^w$ is a $\G_{0,w}^\ad$-torsor .

By Proposition \ref{P: adjoint action on KR} we have a $G^{\ad}(\Z_{(p)})^+$-action on $ \ES_0^w$. We get an induced $G^{\ad}(\Z_{(p)})^+$-action on $\tilde{\I}^{\ad,w}$, which is by definition $f\mapsto \alpha_{\tilde{\gamma}}^{-1}\circ (f\otimes 1)$, for $f\in \tilde{\I}^{\ad,w}$.

Similarly, we can
consider the conjugate local model diagram on special fibers (cf. \ref{subsection conjugate loc mod})
\[\xymatrix{ 
	&\widetilde{\I}^\ad\ar[ld]\ar[rd]&\\
	\ES_0 & &	\mathrm{M}_{0}^{c},
}\]
which is $G^{\ad}(\Z_{(p)})^+$-equivariant (where $G^{\ad}(\Z_{(p)})^+$ acts trivially on $\mathrm{M}_{0}^{c}$). Similarly as above, we get \[\tilde{\I}_{-}^w\ra \ES_0^w,\quad \mathrm{and}\quad \tilde{\I}_{-}^{\ad,w}\ra \ES_0^w.\]

Consider the universal abelian scheme $\A\ra \ES_0^w$ and its de Rham cohomology $H^1_{\dr}(\A/\ES_0^w)$. The $F$-$V$-module structure on $H^1_{\dr}(\A/\ES_0^w)$ induces an isomorphism
\[\wt{\iota}: \wt{\I}_+^{w,(p)}/\G_{0,w}^{U,(p)}\ra \wt{\I}_{-}^w/\G_{0,\sigma(w)^{-1}}^U, \]which is equivariant with respective to the isomorphism \[\G_{0,w}/\G_{0,w}^{U,(p)} \stackrel{\sim}{\lra} \G_{0,\sigma(w)^{-1}}/\G_{0,\sigma(w)^{-1}}^U,\]where $\G_{0,w}^{U}$ and $\G_{0,\sigma(w)^{-1}}^U$ are the groups defined in Proposition \ref{prop--lift G-zip at point} (3). Passing to $\G_0^\rdt$ and $\G_0^{\rdt,\ad}$ we get a $\G_0^\rdt$-zip of type $J_w$ over $\ES_0^w$ (cf. Corollary \ref{corollary--G-zip on KR}): \[(\I^w,\I^w_+,\I^w_{-}, \iota),\] and a $\G_0^{\rdt,\ad}$-zip of type $J_w$ over $\ES_0^w$: \[(\I^{\ad,w},\I^{\ad,w}_+,\I^{\ad,w}_{-}, \iota).\] In particular,
as in the proof of Theorem \ref{sm of zeta}, we get a diagram
\[\xymatrix{ \mathbb{E}^{\ad,w}\ar[r]\ar[d]& \G_0^{\rdt,\ad}\ar[d] \\
	\ES_0^w \ar[r]& [E_{\mathcal{Z}_w^\ad}\backslash \G_0^{\rdt,\ad}],
}\]
with $\mathbb{E}^{\ad,w}\ra \ES_0^w$ an $E_{\mathcal{Z}_w^\ad}$-torsor, where $\mathcal{Z}_w^\ad$ is the algebraic zip datum induced from $\mathcal{Z}_w$ by $\G_0^\rdt\ra \G_0^{\rdt,\ad}$. By Proposition \ref{P: adjoint action on KR}, $G^{\ad}(\Z_{(p)})^+$ acts on $\ES_0^w$.
\begin{proposition}\label{P: adjoint action on zip cover}
	The $G^{\ad}(\Z_{(p)})^+$-action on $\ES_0^w$ lifts to an action on $\mathbb{E}^{\ad,w}$. Moreover, the above diagram is $G^{\ad}(\Z_{(p)})^+$-equivariant, where $G^{\ad}(\Z_{(p)})^+$ acts trivially on $\G_0^{\rdt,\ad}$.
\end{proposition}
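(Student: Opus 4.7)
The plan is to promote the $G^\ad(\Z_{(p)})^+$-action used in the proof of Proposition~\ref{P: adjoint action on KR} from $\wt{\ES}_0^\ad$ through each of the sub-structures that make up the $\G_0^{\rdt,\ad}$-zip $(\I^{\ad,w},\I^{\ad,w}_+,\I^{\ad,w}_-,\iota)$, and then to check triviality on the zip-section scheme $\G_0^{\rdt,\ad}$. Fix $\gamma\in G^\ad(\Z_{(p)})^+$; after choosing a lift $\tilde\gamma$ over $O_{F,(p)}$ as in~\ref{subsub--gamma act}, the action on $\wt{\I}^{\ad,w}(T)$ is $f\mapsto \alpha_{\tilde\gamma}^{-1}\circ (f\otimes 1)$, where $\alpha_{\tilde\gamma}^{-1}\colon H^1_{\dr}(\A_x/T)\otimes O_F\to H^1_{\dr}(\A_x^{\mathcal{P}}/T)\otimes O_F$ is induced by the $O_F$-linear quasi-isogeny attached to $\tilde\gamma$; the image descends from $O_F$ to $T$ precisely because we have passed to the adjoint torsor (cf.\ \cite[Lemma~4.5.9]{Paroh}), and in particular does not depend on the choice of $\tilde\gamma$.

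Next I would verify that this action preserves $\wt{\I}^{\ad,w}_+$ and $\wt{\I}^{\ad,w}_-$. Since $\alpha_{\tilde\gamma}$ comes from a quasi-isogeny of abelian schemes, its action on $H^1_{\dr}$ is strict with respect to both the Hodge filtration and the conjugate filtration, so $f\mapsto \alpha_{\tilde\gamma}^{-1}\circ (f\otimes 1)$ sends an isomorphism carrying $C^\bullet_w$ (resp.\ $D^w_\bullet$) to the Hodge (resp.\ conjugate) filtration of $H^1_{\dr}(\A_x/T)$ to one with the same property for $H^1_{\dr}(\A_x^{\mathcal{P}}/T)\otimes O_F$, which descends by the same argument. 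Compatibility with the zip isomorphism $\iota$ of~\eqref{def of lift iota} follows from a third functoriality property of $\alpha_{\tilde\gamma}$, namely that its pull-back on $H^1_{\dr}$ is horizontal for Frobenius and the inverse of Verschiebung (because $\alpha_{\tilde\gamma}$ is a morphism of abelian schemes, not only of $p$-divisible groups up to quasi-isogeny), hence it intertwines the graded maps $\oplus_i\varphi_\bullet^{\lin}$ on source and target. Assembling these three compatibilities lifts the $\gamma$-action from $\ES_0^w$ through the defining cartesian square in the proof of Theorem~\ref{sm of zeta} to an action of $G^\ad(\Z_{(p)})^+$ on the $E_{\mathcal{Z}_w^\ad}$-torsor $\mathbb{E}^{\ad,w}$ over $\ES_0^w$.

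It remains to check that the resulting map $\mathbb{E}^{\ad,w}\to \G_0^{\rdt,\ad}$ is $\gamma$-invariant. By the definition recalled in~\eqref{map to Grdt}, a $T$-point of $\mathbb{E}^{\ad,w}$ is a pair $(f_1,f_2)$ of sections of $\I^{\ad,w}$ related via the zip datum, and its image in $\G_0^{\rdt,\ad}$ is the unique $g$ with $f_1\cdot g=f_2$. Applying $\gamma$ replaces $(f_1,f_2)$ by $(\alpha_{\tilde\gamma}^{-1}\circ (f_1\otimes 1),\,\alpha_{\tilde\gamma}^{-1}\circ (f_2\otimes 1))$; but the left operation $\alpha_{\tilde\gamma}^{-1}\circ(-\otimes 1)$ and the right $\G_0^{\rdt,\ad}$-action on $\I^{\ad,w}$ act on opposite sides of the composition defining $f_i$, so they commute, and the element $g$ is unchanged. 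The main technical obstacle I anticipate is the descent of these three torsor-level compatibilities from $O_F$ down to $T$ and the check that they are insensitive to the choice of $\tilde\gamma$; this should reduce essentially to the argument that was already used in \cite[Lemmas 4.4.6, 4.4.8, 4.5.9]{Paroh} to define the $G^\ad(\Z_{(p)})^+$-action on $\wt{\ES}_0^\ad$ in the first place, together with the observation that the sub-torsors $\I^{\ad,w}_{\pm}$ and the isomorphism $\iota$ are all \'etale-local manifestations of the same de Rham–crystalline data.
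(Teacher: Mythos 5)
Your proof takes essentially the same route as the paper's: both act on the adjoint de Rham torsor by $f\mapsto\alpha_{\tilde\gamma}^{-1}\circ(f\otimes 1)$, observe that $\alpha_{\tilde\gamma}^{-1}$ preserves the Hodge and conjugate filtrations and commutes with Frobenius and Verschiebung (hence preserves $\I_{\pm}^{\ad,w}$ and intertwines $\iota$), and deduce triviality on $\G_0^{\rdt,\ad}$ from the fact that the $\gamma$-action and the torsor right-action operate on opposite sides of the composition. The only cosmetic difference is that you phrase the argument directly at the $\G_0^{\rdt,\ad}$-torsor level, whereas the paper carries it out at the $\G_0$-level on $\wt{\mathbb{E}}^w$ and then passes to the $Z_k$-quotient; the substance is identical.
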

\begin{proof}
	Notations as in Lemma \ref{lemma--global lift zip}, we work with the tuple $(\widetilde{\I}^w,\widetilde{\I}_{+}^{w},\widetilde{\I}_{-}^w, \widetilde{\iota})$. Let $\widetilde{\mathbb{E}}^w$ be formed by the following cartesian diagram.
	\[\xymatrix{\widetilde{\mathbb{E}}^w\ar[d]\ar[rrr] & & & \widetilde{\I}_{-}^w\ar[d]\\
		\widetilde{\I}_{+}^{w}\ar[r]&\widetilde{\I}_{+}^{w,(p)}\ar[r]&\widetilde{\I}_{+}^{w,(p)}/\mathcal{G}_{0,w}^{U,(p)}\ar[r]^{\widetilde{\iota}}&\widetilde{\I}_{-}^w/\mathcal{G}_{0,\sigma(w)^{-1}}^U}\]
	There is a morphism $\widetilde{\mathbb{E}}^w\rightarrow \G_0$ induced by $(f_+,f_-)\mapsto g$ where $g$ is the unique element in $\G_0$ mapping $f_+$ to $f_-$. Passing to quotients by $Z_k$, we have $(\widetilde{\I}^{\ad,w},\widetilde{\I}_{+}^{\ad,w},\widetilde{\I}_{-}^{\ad,w},\widetilde{\iota}^\ad)$, and hence $\widetilde{\mathbb{E}}^{\ad,w}$ with a morphism to $\G_0^\ad:=\G_0/Z_k$.
	
	It suffices to show that the $G^{\ad}(\Z_{(p)})^+$-action on $\ES_0^w$ lifts to an action on $\widetilde{\mathbb{E}}^{\ad,w}$, and the above morphism is $G^{\ad}(\Z_{(p)})^+$-equivariant, where $G^{\ad}(\Z_{(p)})^+$ acts trivially on $\G_0^{\ad}$.
	
	For $\gamma\in G^{\ad}(\Z_{(p)})^+$, $T$ a scheme over $\ES_0^w$ and $(x,f)\in \widetilde{\I}^w(T)$ with $x\in \ES^w_0(T)$, notations as in \ref{subsub--gamma act}, we have an isomorphism of abelian schemes
	\[\alpha_{\tilde{\gamma}}: \A_x^\P\otimes O_F\ra \A_x\otimes O_F,\]which is $O_F$-linear for the natural $O_F$-actions on both sides. Passing to the de Rham cohomology, we get an $O_F$-linear isomorphism respecting Hodge-Tate tensors
	\[\alpha_{\tilde{\gamma}}^{-1}: H^1_{\dr}(\A_x/T)\otimes O_F\stackrel{\sim}{\ra}H^1_{\dr}(\A_x^\P/T)\otimes O_F.\]
	Moreover, $\big(\gamma(x), \alpha_{\tilde{\gamma}}^{-1}\circ (f\otimes 1)\big)\in \widetilde{\I}^w(T\otimes O_F)$ descends to an element in $\widetilde{\I}^{\ad,w}(T)$ which depends only on the image of $(x,f)$ in $\widetilde{\I}^{\ad,w}(T)$ and on $\gamma$.
	
	The isomorphism $\alpha_{\tilde{\gamma}}^{-1}$ is induced by an isomorphism of abelian schemes, and hence commutes with Frobenius and Verschiebung. So, if $(x,f)\in \widetilde{\I}^{w}_+(T)$, then \[\big(\gamma(x), \alpha_{\tilde{\gamma}}^{-1}\circ (f\otimes 1)\big)\in \widetilde{\I}^w(T\otimes O_F)\] descends to an element in $\widetilde{\I}^{\ad,w}_+(T)$. The same holds if we change $+$ to $-$. Moreover, as $\widetilde{\iota}$ is induced by Frobenius and Verschiebung, we have a commutative diagram
	\[\xymatrix{\widetilde{\I}_{+}^{w,\ad,(p)}/\mathcal{G}_{0,w}^{U,(p)}\ar[r]^{\widetilde{\iota}^\ad}\ar[d]^{\alpha_{\tilde{\gamma}}^{-1}}& \widetilde{\I}_{-}^{\ad,w}/\mathcal{G}_{0,\sigma(w)^{-1}}^U\ar[d]^{\alpha_{\tilde{\gamma}}^{-1}}\\
		\widetilde{\I}_{+}^{w,\ad,(p)}/\mathcal{G}_{0,w}^{U,(p)}\ar[r]^{\widetilde{\iota}^\ad}& \widetilde{\I}_{-}^{\ad,w}/\mathcal{G}_{0,\sigma(w)^{-1}}^U,}\]
	and hence a $G^{\ad}(\Z_{(p)})^+$-action on $\widetilde{\mathbb{E}}^{\ad,w}$. Noting that $$\widetilde{\mathbb{E}}^{w}(T)\ni(x,f_+,f_-)\mapsto (\gamma(x),\ \alpha_{\tilde{\gamma}}^{-1}\circ (f_+\otimes 1),\  \alpha_{\tilde{\gamma}}^{-1}\circ (f_-\otimes 1))$$
	via $\gamma$, 
	they have the same image in $\G_0^\ad$, as it is the unique $g\in \G_0^\ad(T)$ such that $f_+\circ g=f_-\circ g$ in $\widetilde{\I}^{\ad}$.
	
\end{proof}

\subsection{EKOR strata of abelian type}
We return to the setting of subsection \ref{subsection integral abelian}. Let $(G,X)$ be a Shimura datum of abelian type such that
\begin{itemize}
	\item either
	$(G^\ad, X^\ad)$ has no factors of type $D^\mathbb{H}$,
	\item or $G$ is unramified over $\Q_p$ and $K_p$ is contained in some hyperspecial subgroup of $G(\Q_p)$.
\end{itemize}
We take an associated Hodge type datum $(G_1,X_1)$ as in Theorem \ref{T: KP abelian} (3) or (4) according to the above cases. 
We will apply the constructions in the last two subsections to $(G_1,X_1)$. 

\subsubsection[]{}
Let $x\in \B(G,\Q_p)$ and $x_1\in \B(G_1,\Q_p)$ such that $x^{\ad}=x_1^\ad\in \B(G^\ad,\Q_p)$. We denote the model of $G$ (resp. $G_1$) defined as the stabilizer of $x$ (resp. $x_1$) by $\G$ (resp. $\G_1$), with connected model $\G^\circ$ (resp. $\G_1^\circ$). As in the proof of \cite[Theorem 4.6.23]{Paroh}, we can and we do choose $(G_1,X_1)$ and $x_1$ such that $Z_{G_1}$ is a torus and $\G_1=\G_1^\circ$.
We have group schemes $G_{\Z_{(p)}}$, $G_{\Z_{(p)}}^\circ$  and $G_{1,\Z_{(p)}}$ over $\Z_{(p)}$ corresponding to $\G$, $\G^\circ$ and $\G_1$ respectively.
Write $K_p=\G(\Z_p), K_p^\circ=\G^\circ(\Z_p)$ and $K_{1,p}=\G_1(\Z_p)$.
By the discussion in \ref{subsubsection connected hodge adjoint},
we have \[G^{\ad\circ}_{\Z_{(p)}}=G^\ad_{1,\Z_{(p)}} \] as  group schemes over $\Z_{(p)}$. In particular, we have $G^{\ad\circ}(\Z_{(p)})^+=G_1^\ad(\Z_{(p)})^+$, and \[\G_0^{\circ,\rdt,\ad}=\G_0^{\ad\circ,\rdt}=\G_{1,0}^{\ad,\rdt}=\G_{1,0}^{\rdt,\ad}\]
as reductive adjoint groups over $k$.

Let $\ES_{K_p^\circ,0}$ (resp. $\ES_{K_{1,p},0}$ ) be the special fiber of $\ES_{K^\circ_p}(G,X)$ (resp. $\ES_{K_{1,p}}(G_1,X_1)$) over $k=\ov{\mathbb{F}}_p$.
Recall that we have (cf. \ref{subsubsection Hodge to abelian})	\[\ES_{K^\circ_p,0}=\Big[[\ES_{K_{1,p},0}^+\times\mathscr{A}(G_{\Z_{(p)}})]/\mathscr{A}(G_{1,\Z_{(p)}})^\circ \Big]^{|J|}, \] 
and the following diagram (cf. Theorem \ref{T: KP abelian} (3))
\[\xymatrix{
	&\wt{\ES}^\ad_{K^\circ_p,0}\ar[ld]_\pi\ar[rd]^q&\\
	\ES_{K^\circ_p,0}& &\mathrm{M}^{\mathrm{loc}}_{G_1,X_1,0},
}\]
where $\pi$ is a $\G_0^{\ad\circ}=\G_{1,0}^\ad$-torsor and $q$ is $\G_0^{\ad\circ}$-equivariant.

\begin{corollary}\label{C: KR abelian type}
	We have the KR stratification $\ES_{K^\circ_p,0}=\coprod_{w\in \Admu_K}\ES_{K^\circ_p,0}^w$, such that for each $w$, the stratum $\ES_{K^\circ_p,0}^w$ is non empty, smooth, equi-dimensional of dimension $\dim \ES_{K^\circ_p,0}^w=\ell({}^Kw_K)$.
\end{corollary}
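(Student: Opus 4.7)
The plan is to define the KR stratification on $\ES_{K^\circ_p,0}$ via the adjoint local model diagram provided by Theorem \ref{T: KP abelian} (3)-(4), and deduce its properties by combining the smoothness of this diagram with the already established properties in the Hodge type case.

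First, I would define, for each $w\in \Admu_K$, the stratum $\ES_{K^\circ_p,0}^w$ as $\pi\big(q^{-1}(\mathrm{M}^{\mathrm{loc},w_1}_{G_1,X_1,0})\big)$, where $w_1\in \Adm(\{\mu_1\})_{K_{1}}$ is the image of $w$ under the chain of bijections
\[
\Admu_K \;\stackrel{\sim}{\longrightarrow}\; \Adm(\{\mu^\ad\})_{K^\ad} \;=\; \Adm(\{\mu_1^\ad\})_{K_1^\ad} \;\stackrel{\sim}{\longleftarrow}\; \Adm(\{\mu_1\})_{K_1}
\]
coming from Lemma \ref{L: adjoint bijections} applied to both $(G,K)$ and $(G_1,K_1)$, together with $(G^\ad,X^\ad)=(G_1^\ad,X_1^\ad)$. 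By Corollary \ref{local model-collect}, $\mathrm{M}^{\mathrm{loc},w_1}_{G_1,X_1,0}$ is locally closed, smooth and equi-dimensional of dimension $\ell({}^{K_1}w_{1,K_1})$; since $\pi$ is a $\G^{\ad\circ}_{\Z_p}$-torsor and $q$ descends, for sufficiently small $K^p$, to a smooth morphism of the same relative dimension $\dim G_1^\ad$, the stratum $\ES_{K^\circ_p,0}^w$ is locally closed, smooth, and equi-dimensional. Its dimension equals $\ell({}^{K_1}w_{1,K_1}) = \ell({}^Kw_K)$: both Iwahori Weyl groups, the parahoric subgroups $W_K$ and $W_{K_1}$, the Bruhat order, and hence the length function match under the identifications coming from the common adjoint data $(G^\ad,K^\ad)=(G_1^\ad,K_1^\ad)$.

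Second, I would establish non-emptiness using the quotient description
\[
\ES_{K^\circ_p,0}=\Big[[\ES_{K_{1,p},0}^+\times\mathscr{A}(G_{\Z_{(p)}})]/\mathscr{A}(G_{1,\Z_{(p)}})^\circ \Big]^{|J|}.
\]
By R. Zhou \cite{zhou isog parahoric} (recalled after \ref{subsection stratifications hodge}), every KR stratum of $\ES_{K_{1,p},0}$ is non-empty, and the same holds for its intersection with the geometrically connected component $\ES_{K_{1,p},0}^+$, since the $\mathscr{A}(G_1)$-action is transitive on connected components and preserves the KR stratification. The $\mathscr{A}(G_{1,\Z_{(p)}})^\circ$-action on $\ES_{K_{1,p},0}^+$ preserves KR strata: the prime-to-$p$ Hecke part does so by functoriality of the local model diagram under level change, while the $G_1^{\ad}(\Z_{(p)})^+$-part does so by Proposition \ref{P: adjoint action on KR}. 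Hence every KR stratum in $\ES_{K_{1,p},0}^+$ descends to a non-empty KR stratum of $\ES_{K^\circ_p,0}$, and the KR decomposition is indeed indexed by all of $\Admu_K$.

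The main delicate point will be to verify that the stratum $\ES_{K^\circ_p,0}^w$ defined via the abelian-type local model diagram coincides with the descent of the Hodge-type KR stratum $\ES_{K_{1,p},0}^{w_1}\cap \ES_{K_{1,p},0}^+$ through the quotient construction; this is the bridge that transports both smoothness/dimension (directly from the abelian-type diagram) and non-emptiness (from the Hodge-type side) to the same object. The compatibility is built into the construction of $\widetilde{\ES}^\ad_{K^\circ_p}(G,X)$ in \cite{Paroh} Corollary 4.6.18, which is obtained by descending the $\G_1^\ad$-torsor $\widetilde{\ES}_{K_{1,p}}(G_1,X_1)\times^{\G_1}\G_1^\ad$ along the equivalence relation defining the quotient; together with the fact that $\mathrm{M}^{\mathrm{loc}}_{G_1,X_1,0}$ is unchanged by this descent, it implies the desired compatibility and completes the proof.
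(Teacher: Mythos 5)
Your proof is correct and follows essentially the same route as the paper: it uses the adjoint local model diagram of Theorem \ref{T: KP abelian} (3)-(4) to obtain the stratification together with smoothness and the dimension formula, and then descends the Hodge-type KR stratum $\ES_{K_{1,p},0}^{w,+}$ through the quotient construction using Proposition \ref{P: adjoint action on KR} to prove non-emptiness. The extra paragraph about the compatibility between the abelian-type local model diagram and the descent construction is a point the paper leaves implicit, but you have correctly identified that it follows from how $\widetilde{\ES}^\ad_{K^\circ_p}(G,X)$ is built in \cite{Paroh}.
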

\begin{proof}
	Since 	$\mathrm{M}^{\mathrm{loc}}_{G_1,X_1,0}$ only depends on the associated adjoint datum, and the $\G_{1,0}^\ad$-orbits on $\mathrm{M}^{\mathrm{loc}}_{G_1,X_1,0}$ are parametrized by $\Adm(\{\mu_1^\ad\})_{K_{1,p}^\ad}=\Admu_K$,
	the above diagram gives a KR stratification 
	$\ES_{K^\circ_p,0}=\coprod_{w\in \Admu_K}\ES_{K^\circ_p,0}^w$, with each stratum $\ES_{K^\circ_p,0}^w$ smooth, of equi-dimension with $\dim \ES_{K^\circ_p,0}^w=\ell({}^Kw_K)$.
	
	Moreover, for $w\in \Admu_K=\Adm(\{\mu_1^\ad\})_{K_{1,p}^\ad}$,
	we can make the link between the KR strata $\ES_{K^\circ_p,0}^w$ and $\ES_{K_{1,p},0}^w$. Let $\ES_{K_{1,p},0}^{w,+}\subset \ES_{K_{1,p},0}^{+}$ be the pullback of $\ES_{K_{1,p},0}^{w}$ to the connected component $\ES_{K_{1,p},0}^{+}\subset \ES_{K_{1,p},0}$. By Proposition \ref{P: adjoint action on KR}, the $G_1^\ad(\Z_{(p)})^+$-action on $\ES_{K_{1,p},0}^{+}$  stabilizes $\ES_{K_{1,p},0}^{w,+}$, we get an extended action of $\mathscr{A}(G_{1,\Z_{(p)}})^\circ$ on it. The construction in \ref{subsubsection Hodge to abelian} gives us
	\[\ES_{K^\circ_p,0}^w=\Big[[\ES_{K_{1,p},0}^{w,+}\times\mathscr{A}(G_{\Z_{(p)}})]/\mathscr{A}(G_{1,\Z_{(p)}})^\circ \Big]^{|J|}. \] 
	We get the non emptiness since each
	$\ES_{K_{1,p},0}^{w,+}$ is non empty.
\end{proof}

\subsubsection[]{}
Consider the diagram
\[\xymatrix{ \mathbb{E}^{\ad,w}_{K_{1,p}}\ar[r]\ar[d]& \G_{1,0}^{\rdt,\ad} \\
	\ES_{K_{1,p},0}^w &
}\]
for the KR stratum $\ES_{K_{1,p},0}^w$ as in the paragraph above Proposition \ref{P: adjoint action on zip cover} (see also
the proof of Theorem \ref{sm of zeta}). Here $\mathbb{E}^{\ad,w}_{K_{1,p}}\ra \ES_{K_{1,p},0}^w$ is an $E_{\mathcal{Z}_{1,w}^\ad}$-torsor, with $\mathcal{Z}_{1,w}^\ad$ the algebraic zip datum induced from $\mathcal{Z}_{1,w}$ by $\G_{1,0}^\rdt\ra \G_{1,0}^{\rdt,\ad}$.
We get \[\mathbb{E}^{\ad,w,+}_{K_{1,p}}\ra \ES_{K_{1,p},0}^{w,+}\] by pulling back $\mathbb{E}^{\ad,w}_{K_{1,p}}\ra \ES_{K_{1,p},0}^w$ along the inclusion $\ES_{K_{1,p},0}^{w,+}\subset \ES_{K_{1,p},0}^w$. Since the $G_1^\ad(\Z_{(p)})^+$-action stabilizes $\ES_{K_{1,p},0}^{w,+}$, by Proposition \ref{P: adjoint action on zip cover}, we have a lift of this action to $\mathbb{E}^{\ad,w,+}_{K_{1,p}}$.
Then we get an induced action of $\mathscr{A}(G_{1,\Z_{(p)}})^\circ$ on $\mathbb{E}^{\ad,w,+}_{K_{1,p}}$.
Set
\[ \mathbb{E}^{\ad,w}_{K_{p}^\circ}=\Big[[\mathbb{E}^{\ad,w,+}_{K_{1,p}}\times\mathscr{A}(G_{\Z_{(p)}})]/\mathscr{A}(G_{1,\Z_{(p)}})^\circ \Big]^{|J|}.\]  By (the proof of) the above Corollary \ref{C: KR abelian type}, we get a diagram
\[\xymatrix{ \mathbb{E}^{\ad,w}_{K_p^\circ}\ar[r]\ar[d]& \G_{1,0}^{\rdt,\ad} \\
	\ES_{K_p^\circ,0}^w. &
}\]
Here $\mathbb{E}^{\ad,w}_{K_{p}^\circ}\ra \ES_{K_{p}^\circ,0}^w$ is an $E_{\mathcal{Z}_{w}^\ad}$-torsor, with $\mathcal{Z}_{w}^\ad$ the algebraic zip datum induced from $\mathcal{Z}_{w}$ by $G_1\ra G_1^{\ad}$. We have $\G_{0}^{\circ,\rdt,\ad}=\G_{1,0}^{\rdt,\ad}$, $\mathcal{Z}_{w}^\ad=\mathcal{Z}_{1,w}^\ad$ and thus $E_{\mathcal{Z}_{w}^\ad}=E_{\mathcal{Z}_{1,w}^\ad}$.
In particular, as in \ref{subsection EO in KR} we have a morphism of stacks
\[\zeta_w:\ES_{K_p^\circ,0}^w\rightarrow [E_{\mathcal {Z}_w^\ad}\backslash\mathcal{G}_{1,0}^{\mathrm{rdt},\ad}],\] 
from which we
can define an EO stratification to get the EKOR strata in $\ES_{K_p^\circ,0}^w$. Letting $w\in\Admu_K$ vary, we get the EKOR stratification on $\ES_{K^\circ_p,0}$.

\subsubsection[]{}
By Lemma \ref{L: adjoint bijections} we can identify the sets $\Admu_K=\Adm(\{\mu_1^\ad\})_{K_{1,p}^\ad}$ and ${}^K\Admu=$ $ {}^{K_{1,p}^\circ}\Adm(\{\mu_1\})$. For $w\in \Admu_K$,
we have the following diagram
\[\xymatrix{
	\ES_{K_{1,p},0}^{w,+} \ar[d]\ar[r]& \ES_{K_{1,p},0}^w\ar[r]& [E_{\mathcal{Z}_{1,w}}\backslash \G_{1,0}^{\rdt}]\ar[d]\\
	\ES_{K_{p}^\circ,0}^{w,+}\ar[r]& \ES_{K_{p}^\circ,0}^w\ar[r]& [E_{\mathcal{Z}_{w}^\ad}\backslash \G_{1,0}^{\rdt,\ad}],
}\]
where $\ES_{K_{1,p},0}^{w,+}\ra \ES_{K_{p}^\circ,0}^{w,+}$ is a pro-\'etale cover and \[\ES_{K_{p}^\circ,0}^{w,+}=\ES_{K_{1,p},0}^{w,+}/\Delta,\] with \[\Delta=\ker (\mathscr{A}(G_{1,\Z_{(p)}})^\circ\ra \mathscr{A}(G_{\Z_{(p)}})^\circ).\] The map $[E_{\mathcal{Z}_{1,w}}\backslash \G_{1,0}^{\rdt}]\ra [E_{\mathcal{Z}_{w}^\ad}\backslash \G_{1,0}^{\rdt,\ad}]$ is a homeomorphism. 
For $x\in {}^K\Admu$, we have also a pro-\'etale cover \[\ES_{K_{1,p},0}^{x,+}\ra \ES_{K_{p}^\circ,0}^{x,+},\quad \ES_{K_{p}^\circ,0}^{x,+}=\ES_{K_{1,p},0}^{x,+}/\Delta,\] and 
\[\ES_{K^\circ_p,0}^x=\Big[[\ES_{K_{1,p},0}^{x,+}\times\mathscr{A}(G_{\Z_{(p)}})]/\mathscr{A}(G_{1,\Z_{(p)}})^\circ \Big]^{|J|}. \] 
Therefore, we can deduce results for EKOR strata of abelian type from those in the Hodge type case. We summarize the results as follows.
\begin{theorem}\label{T: EKOR abelian}
	Let $(G,X)$ be a Shimura datum of abelian type such that $(G^\ad, X^\ad)$ has no factors of type $D^\mathbb{H}$. Let $\K=K_p^\circ K^p$ and $\ES_0=\ES_{\K,0}$.
	\begin{enumerate}
		\item We have the EKOR stratification
		\[\ES_0=\coprod_{x\in {}^K\Admu}\ES_0^x,\]
		where for each $x\in {}^K\Admu$, the stratum $\ES_0^x$ is a non-empty, locally closed smooth subscheme of $\ES_0$, which is equi-dimensional of dimension $\ell(x)$.
		Moreover, we have the closure relation
		\[\ov{\ES_0^x}=\coprod_{x'\leq_{K,\sigma}x}\ES_0^{x'}.\]
		
		\item Every KR stratum in $\ES_{I,0}$ is quasi-affine. If in addition axiom 4 (c) is satisfied, every EKOR in $\ES_{K,0}$ is quasi-affine.
		\item For $x\in {}^K\Admu$ viewed as an element of $\Admu$, the morphism \[\pi_{I,K}^x:\ES_{I,0}^x\rightarrow \ES_{K,0}^x\] induced by $\pi_{I,K}$ is finite \'{e}tale. If in addition axiom 4 (c) is satisfied, $\pi_{I,K}^x$ is a finite \'{e}tale covering.
		
	\end{enumerate}
\end{theorem}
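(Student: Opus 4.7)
The plan is to reduce all three statements to the corresponding results for Shimura varieties of Hodge type established earlier in the paper (Theorem~\ref{thm--first properties}, Theorem~\ref{thm--quasi affine}, Theorem~\ref{thm--etale forg level}, and Corollary~\ref{C: closure relation}) via the presentation
\[
\ES_{K^\circ_p,0}^x=\Big[[\ES_{K_{1,p},0}^{x,+}\times\mathscr{A}(G_{\Z_{(p)}})]/\mathscr{A}(G_{1,\Z_{(p)}})^\circ \Big]^{|J|}
\]
together with the pro-\'etale cover $\ES_{K_{1,p},0}^{x,+}\to \ES_{K_p^\circ,0}^{x,+}=\ES_{K_{1,p},0}^{x,+}/\Delta$. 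The bijection ${}^K\Admu \simeq {}^{K_{1,p}^\circ}\Adm(\{\mu_1\})$ from Lemma~\ref{L: adjoint bijections} ensures that the indexing sets match, and the construction of the EKOR stratification on $\ES_{K_p^\circ,0}$ via the $\G_{1,0}^{\mathrm{rdt},\ad}$-zip precisely transports the Hodge-type EKOR strata on $\ES_{K_{1,p},0}$ through this presentation.

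For part (1), I would first apply Theorem~\ref{thm--first properties} to $(G_1,X_1)$ to conclude that each $\ES_{K_{1,p},0}^x$ is non-empty, locally closed, smooth, and equi-dimensional of dimension $\ell(x)$. The preimage $\ES_{K_{1,p},0}^{x,+}$ inside a geometric connected component inherits these properties as an open and closed piece. Since $\Delta$ acts freely (because the pro-\'etale cover is schematic and \'etale on its image once $K^p$ is small enough), the quotient $\ES_{K_p^\circ,0}^{x,+}=\ES_{K_{1,p},0}^{x,+}/\Delta$ remains smooth and equi-dimensional of the same dimension, and non-emptiness is preserved under the surjection. The finite disjoint union giving $\ES_{K^\circ_p,0}^x$ then preserves all of these properties.

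For part (2), quasi-affineness of KR strata at Iwahori level in the Hodge case is Theorem~\ref{thm--quasi affine}. Quasi-affineness is preserved under passage to an open-and-closed subscheme (giving the $+$ version) and under finite \'etale quotients by a finite group via Lemma~\ref{prepare--quasiaffine}: if $Y \to X$ is such a quotient and $\mathcal{O}_Y$ is ample on $Y$, then $\mathcal{O}_X$ is ample on $X$ by faithfully flat descent, hence $X$ is quasi-affine. Finite disjoint unions preserve quasi-affineness trivially. Under axiom 4(c), the same chain of arguments transfers quasi-affineness of EKOR strata from the Hodge case to the abelian case. For part (3), the projection $\pi_{I,K}$ on the abelian side is constructed from the corresponding $\pi_{I_1,K_1}$ on the Hodge side through the same $[\cdot]^{|J|}$-construction, so one applies Theorem~\ref{thm--etale forg level} to $(G_1,X_1)$ and transports finite \'etaleness through the pro-\'etale cover and the disjoint union.

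The main obstacle I expect lies in part (1), specifically in verifying that the action of $\Delta \subset \mathscr{A}(G_{1,\Z_{(p)}})^\circ$ on the KR/EKOR-strata portion $\ES_{K_{1,p},0}^{x,+}$ is free and that the quotient morphism is actually pro-\'etale on each stratum; this requires that the $\mathscr{A}(G_{1,\Z_{(p)}})^\circ$-action stabilizes each stratum (which follows from Proposition~\ref{P: adjoint action on KR} and Proposition~\ref{P: adjoint action on zip cover}, so that the zip morphism descends) and that the induced action on the perfected Shimura variety is compatible with the one used in the global construction of $\upsilon_K$. Once these compatibilities are pinned down, the transfer of each property becomes formal; granted them, the rest is bookkeeping with disjoint unions and adjoint identifications.
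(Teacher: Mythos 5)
Your proposal is correct and follows essentially the same route as the paper: Lemma~\ref{L: adjoint bijections} identifies the indexing sets, Propositions~\ref{P: adjoint action on KR} and~\ref{P: adjoint action on zip cover} show the $G^\ad(\Z_{(p)})^+$-action preserves strata and lifts to the adjoint zip cover, and the quotient presentation
$\ES_{K^\circ_p,0}^x=\bigl[[\ES_{K_{1,p},0}^{x,+}\times\mathscr{A}(G_{\Z_{(p)}})]/\mathscr{A}(G_{1,\Z_{(p)}})^\circ \bigr]^{|J|}$
with the pro-\'etale cover $\ES_{K_{1,p},0}^{x,+}\to \ES_{K_{p}^\circ,0}^{x,+}$ is exactly what the paper uses to transfer Theorem~\ref{thm--first properties}, Theorem~\ref{thm--quasi affine}, and Theorem~\ref{thm--etale forg level} from the Hodge-type case. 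The one thing you flag that turns out to be unnecessary for this statement is the compatibility with the global morphism $\upsilon_K$: the closure relation (which does use $\upsilon_K$ in the Hodge case via Corollary~\ref{C: closure relation}) is not part of Theorem~\ref{T: EKOR abelian}; the paper treats the abelian-type global construction only as a remark at the end of the section and does not invoke it for this theorem, so that worry is not an obstruction here.
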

We believe that our global constructions in section \ref{section global} can be generalized to the abelian type case, except we should work with the associated $\G^{\ad\circ}$-torsors instead, as \cite{Paroh} subsection 4.6. In case of good reductions, see \cite{Loverig 2} and \cite{stra abelian-good redu} for some related constructions. We leave the general case to the interested readers.

\subsection{Relations with central leaves and Newton strata}
We continue the assumptions and notations of the last subsection.
\subsubsection[]{}
Recall that in subsection \ref{subsection stratifications hodge} we have defined \[\Upsilon_{K_{1,p}}: \ES_{K_{1,p}}(k)\rightarrow C(\G_1,\{\mu_1\})\] and \[\delta_{K_{1,p}}: \ES_{K_{1,p}}(k)\rightarrow B(G_1,\{\mu_1\}).\]
The fibers of $\Upsilon_{K_{1,p}}$ and $\delta_{K_{1,p}}$ are called central leaves and Newton strata respectively, both of which are the sets of $k$-valued points of some locally closed reduced subschemes of $\ES_{K_{1,p},0}$.

By the method of \cite{stra abelian-good redu} sections 3 and 4, we can define central leaves and Newton strata for the abelian type case $\ES_{K_{p}^\circ,0}$. More precisely, consider the composition of $\Upsilon_{K_{1,p}}$ with the natural map \[C(\G_1,\{\mu_1\})\ra C(\G^\ad_1,\{\mu^\ad_1\}).\] We call the fibers of \[\ES_{K_{1,p}}(k)\rightarrow  C(\G^\ad_1,\{\mu^\ad_1\})\] \emph{adjoint central leaves}, which are finite (set theoretically) disjoint unions of the  central leaves defined above, by the following lemma. Therefore, we may consider them as locally closed reduced subschemes of $\ES_{K_{1,p},0}$, which we will also call adjoint central leaves.
\begin{lemma}
	We write simply $G=G_1$, $\G=\G_1$ and $\G^\ad=\G^\ad_1$.
	The natural map $C(\G)\ra C(\G^\ad)$ is finite to one. If $Z_{\Z_{p}}$, the closure of $Z_G$ in $\G$, has connected fibers, then it is bijective.
\end{lemma}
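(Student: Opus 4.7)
The map is induced by the quotient $\pi\colon G\to G^{\ad}$ with kernel $Z=Z_G$, which passes to $\breve G\to \breve G^{\ad}$ and to $\breve K\to \breve K^{\ad}$. My first step is to record the relevant Galois/flat cohomology input: since $Z^{\circ}$ is a connected torus and $\breve{\Q}_p$ has cohomological dimension $\leq 1$, Steinberg's theorem gives $H^{1}(\breve{\Q}_p,Z^{\circ})=0$, so $\mathrm{coker}(\breve G\to\breve G^{\ad})$ injects into the finite group $H^{1}(\breve{\Q}_p,\pi_{0}(Z))$. Integrally, $\mathcal Z:=Z_{\Z_p}$ is smooth (by the proposition of \cite{Paroh} recalled in \ref{subsubsection connected hodge adjoint}), so by smooth base change $H^{1}(\breve{\Z}_p,\mathcal Z)=H^{1}(k,\mathcal Z_{0})$; this is finite, and trivial by Lang's theorem when $\mathcal Z_{0}$ is connected. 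Consequently $\breve K^{\ad}/\pi(\breve K)$ is finite, and a singleton in the connected-fibres case.

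\textbf{Fiber analysis.} Fix $[\bar g]\in C(\mathcal G^{\ad})$ in the image of the map, and a lift $\tilde g\in\breve G$ of $\bar g$. For any $[g]$ in the fibre, write $\pi(g)=\bar k^{-1}\bar g\sigma(\bar k)$ with $\bar k\in \breve K^{\ad}$. Up to the (finite, resp.\ trivial) cokernel above, one may take $\bar k=\pi(k)$ with $k\in\breve K$, and then $g=k^{-1}\tilde g\sigma(k)\,z$ for a unique $z\in Z(\breve{\Q}_p)$. Centrality of $z$ lets us $\sigma$-conjugate by $k$ to obtain $[g]=[\tilde g\,z]$. Thus the fibre is a quotient of $Z(\breve{\Q}_p)$ by the equivalence relation $z_{1}\sim z_{2}$ iff $z_{2}z_{1}^{-1}$ lies in the image of the twisted Lang map
\[
\phi_{\tilde g}\colon \breve K\longrightarrow \breve G,\qquad k\longmapsto \tilde g^{-1}k^{-1}\tilde g\,\sigma(k),
\]
intersected with $Z(\breve{\Q}_p)$. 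Restricting $\phi_{\tilde g}$ to $\mathcal Z(\breve{\Z}_p)\subset\breve K$ yields the ordinary Lang map on $\mathcal Z$, whose image equals $\mathcal Z(\breve{\Z}_p)$ by Lang's theorem for the smooth connected group $\mathcal Z_{0}$; hence the fibre injects into a further quotient of $Z(\breve{\Q}_p)/\mathcal Z(\breve{\Z}_p)$, which is canonically $X_{\ast}(Z)_{\Gamma_{0}}$.

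\textbf{Finiteness and the connected case.} For finiteness, the key point is that the relevant set is not all of $C(\mathcal G)$ but, as used in the surrounding construction of (adjoint) central leaves, only the image of $\ES_{K_{1,p}}(k)$, which by the Kisin--Pappas integral model lies inside $C(\mathcal G,\{\mu\})=\breve K\,\Adm(\{\mu\})\,\breve K/\breve K_{\sigma}$. Admissibility forces the class of $\tilde g z$ modulo $\mathcal Z(\breve{\Z}_p)$ to lie in the image of the finite set $\Adm(\{\mu\})$ under the map to $X_{\ast}(Z)_{\Gamma_{0}}$, so only finitely many $z$ survive, giving the finite-to-one statement. When $\mathcal Z$ has connected fibres, the preliminary cokernel of $\breve K\to\breve K^{\ad}$ vanishes, the Lang surjectivity in the preceding paragraph is exact, and the admissibility constraint pins down the coset of $z$ uniquely; combining these yields bijectivity.

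\textbf{Main obstacle.} The subtle point is to control the equivalence $\tilde g z_{1}\sim \tilde g z_{2}$ coming from \emph{all} of $\breve K$, not merely its central part: additional identifications can in principle arise from elements $k\in\breve K$ for which $\tilde g^{-1}k^{-1}\tilde g\,\sigma(k)$ happens to be central without $k$ being central. Showing that any such element already lies in the Lang image of $\mathcal Z(\breve{\Z}_p)$ is exactly where the exactness of $1\to\mathcal Z\to\mathcal G\to\mathcal G^{\ad}\to 1$ as smooth $\breve{\Z}_p$-group schemes, together with Lang--Steinberg for $\mathcal Z_{0}$, is used in an essential way; this is the only place where the hypothesis that $\mathcal Z$ has connected fibres strengthens finiteness to bijectivity.
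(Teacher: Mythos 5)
Your argument takes a genuinely different route from the paper's, and is in fact more careful at the key step. The paper writes $h=gw$ with $g\in\breve{K}$, $w\in\wt{W}$, and then asserts that the preimage of $[h^\ad]$ is already $\{[zh] : z\in Z_{\Z_p}(\breve{\Z}_p)\}$; finiteness, and triviality of the fibre in the connected-fibres case, are then deduced from Lang's theorem, phrased via compatible Greenberg-transform covers $\phi_m\colon Z_m\to Z_m$. You instead identify the fibre as $Z(\breve{\Q}_p)$ modulo the central part of the twisted Lang image $\phi_{\tilde g}$, which a priori only injects into a quotient of the usually infinite group $X_*(Z)_{\Gamma_0}$; you then recover finiteness by imposing $\mu$-admissibility, effectively replacing $C(\G)\to C(\G^\ad)$ by the map $C(\G,\{\mu\})\to C(\G^\ad,\{\mu^\ad\})$ that the application to adjoint central leaves actually uses, together with the bijection $\Admu\simeq\Adm(\{\mu^\ad\})$ of Lemma~\ref{L: adjoint bijections}. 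This supplies a reduction that the paper's proof elides: without fixing the Kottwitz invariant, $z$ can range over all of $Z(\breve{\Q}_p)$, and nothing in the paper's argument prevents the unrestricted fibre from being infinite. Two small remarks. First, the cokernel of $\breve{K}\to\breve{K}^\ad$ is always trivial once $Z_{\Z_p}$ is smooth, because torsors under a smooth affine group scheme over $\breve{\Z}_p$ are trivial (algebraically closed residue field); so your ``finite, resp.\ trivial'' hedge is unnecessary. Second, you should flag explicitly that your finiteness and bijectivity arguments establish the assertion for the restricted map $C(\G,\{\mu\})\to C(\G^\ad,\{\mu^\ad\})$ rather than for $C(\G)\to C(\G^\ad)$ as literally stated, since that restriction is what your admissibility step controls and is also all that is used in the sequel.
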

\begin{proof}
	Recall that we always assume that $Z_{\Z_{(p)}}$ is smooth.
	Let $h\in G(\breve{\Q}_p)$ with image $h^\ad\in G^\ad(\breve{\Q}_p)$. Denote by $[h]$ and $[h^\ad]$ the associated classes in $C(\G)$ and $C(\G^\ad)$. 
	Up to $\sigma\text{-}\breve{K}$-conjugacy, we can assume that $h=gw$ with $g\in \breve{K}$ and $w\in \wt{W}$. Then $h^\ad=g^\ad w^\ad$. The preimage of $[h^\ad]$ under the map $C(\G)\ra C(\G^\ad)$  is the set \[\{[zh]|\,z\in Z_{\Z_{p}}(\breve{\Z}_p)\}.\] If $Z_{\Z_{p}}$ has connected special fiber, for any $m\geq 1$, consider the level-$m$ Greenberg transformation $Z_m$ of $Z_{\Z_{p}}$, which is a connected smooth scheme over $\mathbb{F}_p$. Noting that $Z_m$ is connected, the morphism \[\phi_m: Z_m\ra Z_m,\quad x\mapsto x^{-1}\sigma(x)\] is a finite \'etale cover.
	Let $\phi_m': Z_m'\ra Z_{m+1}$ be the pullback of $\phi_m: Z_m\ra Z_m$ under the natural projection $Z_{m+1}\ra Z_m$. Since the following diagram
	\[\xymatrix{Z_{m+1}\ar[d]_{\phi_{m+1}}\ar[r]& Z_m\ar[d]^{\phi_m}\\
		Z_{m+1}\ar[r]& Z_m
	}\]
	is commutative, we get a morphism $\phi_{m+1,m}: Z_{m+1}\ra Z_m'$ such that $\phi_{m+1}=\phi_m'\circ\phi_{m+1,m}$. The morphisms $\phi_{m+1}$ and $\phi_m'$ are both finite \'etale covers, thus so is $\phi_{m+1,m}$. We deduce that for any $z\in Z_{\Z_{p}}(\breve{\Z}_p)$, there exists $x\in Z_{\Z_{p}}(\breve{\Z}_p)$ such that $z=x^{-1}\sigma(x)$. Therefore the set $\{[zh]|\,z\in Z_{\Z_{p}}(\breve{\Z}_p)\}$ consists of one class. On the other hand, by Steinberg's theorem $G(\breve{\Q}_p)\ra G^\ad(\breve{\Q}_p)$ is surjective if $Z_{\Z_{p}}$ has connected generic fiber, and $\breve{K}\ra \breve{K}^\ad$ is surjective since $Z_{\Z_{p}}$ is smooth.
	We conclude that if $Z_{\Z_{p}}$ has connected fibers,  $C(\G)\ra C(\G^\ad)$ is bijective.
	
	In general, let $Z_{\Z_{p}}^\circ\subset Z_{\Z_{p}}$ be the neutral connected component. Then $Z_{\Z_{p}}^\circ(\breve{\Z}_p)$ is of finite index in $Z_{\Z_{p}}(\breve{\Z}_p)$. From the above paragraph we deduce that $C(\G)\ra C(\G^\ad)$ is finite to one.
\end{proof}

\subsubsection[]{Adjoint central leaves and Newton strata of abelian type}
Consider the map \[\ES_{K_{1,p}}(k)\rightarrow  C(\G^\ad_1,\{\mu^\ad_1\})\]as above.
For each $c\in C(\G^\ad_1,\{\mu^\ad_1\})$, let $\ES_{K_{1,p},0}^{c}$ be the fiber of the above map at $c$, considered as a subscheme of $\ES_{K_{1,p},0}$.
Consider the connected component $\ES_{K_{1,p},0}^+\subset \ES_{K_{1,p},0}$.
Let \[\ES_{K_{1,p},0}^{c,+}\subset \ES_{K_{1,p},0}^{c}\] be the pullback of the adjoint leaf $\ES_{K_{1,p},0}^{c}$  under the inclusion $\ES_{K_{1,p},0}^+\subset \ES_{K_{1,p},0}$.
\begin{proposition}\label{P: adjoint action on leaves}
	The $G^{\ad}_1(\Z_{(p)})^+$-action on $\ES_{K_{1,p},0}^{+}$ stabilizes 
	$\ES_{K_{1,p},0}^{c,+}$. 
\end{proposition}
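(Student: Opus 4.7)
The plan is to verify that the composite map $\ES_{K_{1,p}}(k) \xrightarrow{\Upsilon_{K_{1,p}}} C(\G_1,\{\mu_1\}) \to C(\G_1^\ad,\{\mu_1^\ad\})$ is $G_1^\ad(\Z_{(p)})^+$-invariant; since the fibers of this composite are precisely the adjoint central leaves, such invariance yields the stability statement. The connectedness restriction to $\ES_{K_{1,p},0}^+$ plays no essential role at this step (the $G_1^\ad(\Z_{(p)})^+$-action on $\ES_{K_{1,p},0}$ stabilizes $\ES_{K_{1,p},0}^+$ by construction), so the content is really about the behavior of $\Upsilon_{K_{1,p}}$ under $\gamma \in G_1^\ad(\Z_{(p)})^+$.

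First I would unpack the $\gamma$-action on the crystalline side. Fix $\gamma$ with a lift $\tilde{\gamma} \in \P(O_{F,(p)})$ to the $Z_{G_1,\Z_{(p)}}$-torsor $\P$ over a suitable Galois extension $F/\Q$, as in \ref{subsub--gamma act}. The quasi-isogeny $\alpha_{\tilde{\gamma}} : \A_x^\P \otimes O_F \to \A_x \otimes O_F$ is a morphism of abelian schemes up to $\Z_{(p)}$-isogeny, hence induces on Dieudonné modules an isomorphism $\alpha_{\tilde{\gamma}}^{-1} : D_x \otimes_{\breve{\Z}_p} \breve{\Z}_p[1/p] \stackrel{\sim}{\to} D_{\gamma(x)} \otimes_{\breve{\Z}_p} \breve{\Z}_p[1/p]$ which commutes with Frobenius.

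Next I would trace how the class $[g_x] \in C(\G_1)$ transforms. Pick a trivialization $t \in I_x(\breve{\Z}_p)$ with $t^*\varphi_x = (\mathrm{id} \otimes \sigma) \circ g_{x,t}$. As in the last paragraph of \ref{subsub--gamma act}, the composition $t^\P := \alpha_{\tilde{\gamma}}^{-1} \circ (t \otimes 1)$ defines an element of $\wt{\ES}^\ad_{K_{1,p}}$, i.e.\ an element of the $\G_1^\ad$-torsor at $\gamma(x)$ (the ambiguity in choosing an integral lift to the $\G_1$-torsor is exactly the $Z_{G_1,\Z_{(p)}}$-torsor $\P$). Because $\alpha_{\tilde{\gamma}}$ commutes with Frobenius, a direct calculation gives
\[
(t^\P)^* \varphi_{\gamma(x)} = (\mathrm{id} \otimes \sigma) \circ (z_{\tilde{\gamma}} \cdot g_{x,t}),
\]
for some $z_{\tilde{\gamma}}$ which a priori lies in $G_1(\breve{\Q}_p)$; the key claim is that $z_{\tilde{\gamma}} \in Z_{G_1}(\breve{\Q}_p)$. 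This follows from the fact that $\tilde{\gamma}$ differs from the identity only by the action of the $Z_{G_1,\Z_{(p)}}$-torsor $\P$, so the discrepancy between the two trivializations of the $\G_1$-torsor at $\gamma(x)$ (coming from two choices of integral lift of $t^\P$) is an element of $Z_{G_1}(\breve{\Z}_p)$, and the resulting ambiguity in $g_{\gamma(x),t^\P}$ is exactly by $Z_{G_1}(\breve{\Q}_p)$.

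The hard part will be making the identification $t^\P \leftrightarrow \alpha_{\tilde{\gamma}}^{-1} \circ t$ rigorous at the integral level: one must descend from a trivialization defined over $W(k_F)$ to $\breve{\Z}_p$, check that the resulting $z_{\tilde{\gamma}}$ actually lies in the center (rather than merely in a coset modulo the parahoric), and verify compatibility with the tensors $s$ defining $\G_1$. These checks are bookkeeping using that $\tilde{\gamma}$ acts trivially on the $G_1$-invariant tensor $s$ and that $\P$ is a $Z_{G_1,\Z_{(p)}}$-torsor. Once the centrality of $z_{\tilde{\gamma}}$ is established, passage to $C(\G_1^\ad,\{\mu_1^\ad\})$ kills this factor, so $[g_x]$ and $[g_{\gamma(x)}]$ have the same image in $C(\G_1^\ad,\{\mu_1^\ad\})$, which completes the proof.
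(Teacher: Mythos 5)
Your strategy (show that the composite $\ES_{K_{1,p}}(k)\to C(\G_1,\{\mu_1\})\to C(\G_1^{\ad},\{\mu_1^{\ad}\})$ is $G_1^{\ad}(\Z_{(p)})^+$-invariant, because $\alpha_{\tilde\gamma}^{-1}$ commutes with Frobenius and the $\tilde\gamma$-ambiguity is central) is morally the same as the paper's, but the execution differs in a way worth noting. You try to lift the $\gamma$-action to the $\G_1$-torsor $I_{\gamma(x)}$ by choosing an integral lift $t^\P$ of $\alpha_{\tilde\gamma}^{-1}\circ(t\otimes 1)$, and then track the resulting center factor $z_{\tilde\gamma}$ through the Frobenius-matrix computation. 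This is a valid plan, but the ``hard part'' you flag --- verifying centrality of $z_{\tilde\gamma}$, descending from $O_F\otimes\breve{\Z}_p$ to $\breve{\Z}_p$, and commuting the $z$'s past $\varphi^{\lin}_{\gamma(x)}$ --- is genuinely delicate bookkeeping. The paper avoids it entirely by passing to the adjoint torsor $I_x^{\ad}$ from the start: there the $\gamma$-action is already canonically defined (the $Z_{G_1}(O_F)$-ambiguity in $\tilde\gamma$ dies in $\G_1^{\ad}$, and the element then Galois-descends to $\breve{\Z}_p$, exactly as recorded in \ref{subsub--gamma act}), and the desired equivariance of the map $\mathcal{S}^{\ad}\to\breve{G}_1^{\ad}$ drops out of a single commutative square built from the Frobenius-equivariance of $\alpha_{\tilde\gamma}^{-1}$ --- the two inner conjugations by $\alpha_{\tilde\gamma}$ cancel on the nose, so no central factor even appears. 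Your route is essentially forcing yourself to re-derive, at the level of $G_1(\breve\Q_p)$, information that is automatically trivial once you pass to $\breve{G}_1^{\ad}$; if you do pursue it, the cleaner organizing observation is that using $\alpha_{\tilde\gamma}^{-1}\circ(t\otimes 1)$ directly (without first lifting to $\breve{\Z}_p$) gives \emph{exact} equality of the $g$-matrices in $G_1(O_F\otimes\breve{\Q}_p)$, and the center factor is introduced only by the subsequent choice of $\breve{\Z}_p$-form --- so there is nothing to check once you work adjoint-side, which is the paper's move.
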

\begin{proof}
	For $x\in \ES_{K_{1,p},0}(k)$, we denote by $D_x$ the Dieudonn\'{e} module of the $p$-divisible group at $x$, and $I_x$ the \emph{set} of trivializations respecting the crystalline tensors. In particular, $I_x$ is a $\breve{K}_1$-torsor. Let $\xi:V^\vee_{\breve{\INT}_p}\rightarrow V^{\vee,\sigma}_{\breve{\INT}_p}$ be the the isomorphism given by $v\otimes k\mapsto v\otimes 1\otimes k$, then the assignment $$t\mapsto t^*(\varphi_x^\mathrm{lin}):=(V^\vee_{\breve{\INT}_p}\stackrel{\xi}{\longrightarrow} V^{\vee,(\sigma)}_{\breve{\INT}_p}\stackrel{t^{(\sigma)}}{\longrightarrow} D_x^{(\sigma)}\stackrel{\varphi_x^\mathrm{lin}}{\longrightarrow}D_x\stackrel{t^{-1}}{\longrightarrow}V^\vee_{\breve{\INT}_p})$$
	induces a $\breve{K}_1$-equivariant map $I_x\rightarrow \breve{G}_1$. Here $\varphi_x^\mathrm{lin}$ is the linearization of $\varphi_x$, and the $\breve{K}_1$-action on $\breve{G}_1$ is via $\sigma$-conjugacy.
	
	When $x$ varies, we have a diagram \[\xymatrix{\mathcal{S}:=\coprod\limits_{x\in \ES_{K_{1,p},0}(k)} I_x\ar[r]^(0.7){\widetilde{\Upsilon}}\ar[d] &\breve{G}_1\\
		\ES_{K_{1,p},0}(k) &
	}\]
	which, after passing to adjoint, induces
	\[\xymatrix{\mathcal{S}^\ad:=\coprod\limits_{x\in \ES_{K_{1,p},0}(k)} I_x^\ad\ar[r]^(0.7){\widetilde{\Upsilon}^\ad}\ar[d] &\breve{G}^\ad_1\\
		\ES_{K_{1,p},0}(k). &
	}\]
	
	We claim that the $G^{\ad}_1(\Z_{(p)})^+$-action on $\ES_{K_{1,p},0}(k)$ lifts to $\mathcal{S}^\ad$, and $\widetilde{\Upsilon}^\ad$ is $G^{\ad}_1(\Z_{(p)})^+$-equivariant. Here $\breve{G}^\ad_1$ is equipped with the trivial $G^{\ad}_1(\Z_{(p)})^+$-action.
	
	Notations as in \ref{subsub--gamma act}, we have an isomorphism of abelian schemes
	\[\alpha_{\tilde{\gamma}}: \A_x^\P\otimes O_F\ra \A_x\otimes O_F,\]which is $O_F$-linear for the natural $O_F$-actions on both sides. Passing to Dieudonn\'{e} modules, we get an $O_F$-linear isomorphism respecting Hodge-Tate tensors
	\[\alpha_{\tilde{\gamma}}^{-1}: D_x\otimes O_F\stackrel{\sim}{\ra}D_{\gamma(x)}\otimes O_F.\]
	
	Noting that $\alpha_{\tilde{\gamma}}^{-1}$ commutes with Frobenius, we have a commutative diagram
	\[\xymatrix{V^\vee_{\breve{\INT}_p}\otimes O_F\ar[r]^{\xi} & V^{\vee,(\sigma)}_{\breve{\INT}_p}\otimes O_F\ar[r]^{t^{(\sigma)}} & D_x^{(\sigma)}\otimes O_F\ar[r]^{\varphi_x^{\mathrm{lin}}}\ar[d]^{\alpha_{\tilde{\gamma}}^{-1}} & D_x\otimes O_F\ar[r]^{t^{-1}}\ar[d]^{\alpha_{\tilde{\gamma}}^{-1}} & V^\vee_{\breve{\INT}_p}\otimes O_F\\
		& & D_x^{(\sigma)}\otimes O_F\ar[r]^{\varphi_{\gamma(x)}^{\mathrm{lin}}} & D_x\otimes O_F.
	}\]
	The images in $\breve{G}^\ad_1$ of $t$ and $\gamma(t)$ are precisely the images of the two compositions of isomorphisms from left to right, and hence coincide. This proves the claim, and the proposition follows formally as before.
\end{proof}
Therefore, we can extend the $G^{\ad}_1(\Z_{(p)})^+$-action to an action of $\mathscr{A}(G_{1,\Z_{(p)}})^\circ$ on $\ES_{K_{1,p},0}^{c,+}$. 
Recall that $\G^{\ad\circ}=\G_1^\ad$, $\{\mu^\ad\}=\{\mu_1^\ad\}$ and thus $C(\G^{\ad\circ},\{\mu^\ad\})=C(\G^\ad_1,\{\mu^\ad_1\})$. For any $c\in C(\G^{\ad\circ},\{\mu^\ad\})$,
we
define the associated adjoint central leaf of abelian type
\[ \ES_{K^\circ_p,0}^c=\Big[[\ES_{K_{1,p},0}^{c,+}\times\mathscr{A}(G_{\Z_{(p)}})]/\mathscr{A}(G_{1,\Z_{(p)}})^\circ \Big]^{|J|}, \]
with \[\ES_{K_{p}^\circ,0}^{c,+}=\ES_{K_{1,p},0}^{c,+}/\Delta,\]where as before $\Delta=\ker (\mathscr{A}(G_{1,\Z_{(p)}})^\circ\ra \mathscr{A}(G_{\Z_{(p)}})^\circ).$ 
In particular, we have a map 
\[\ES_{K^\circ_p}(k)\ra C(\G^{\ad\circ},\{\mu^\ad\}),\]which is surjective.

Since $B(G,\{\mu\}) \simeq B(G^\ad, \{\mu^\ad\})\simeq B(G_1,\{\mu_1\})$ (cf. \cite{isocys with addi 2} 6.5.1), for each $b\in B(G,\{\mu\})$ (which we identify with an element of $B(G_1,\{\mu_1\})$), we consider the associated Newton strata of Hodge type on the connected component $\ES_{K_{1,p}^\circ,0}^{b,+}\subset \ES_{K_{1,p}^\circ,0}^{+}$. By the above Proposition \ref{P: adjoint action on leaves}, the $G^{\ad}_1(\Z_{(p)})^+$-action on $\ES_{K_{1,p}^\circ,0}^{+}$ stabilizes 
$\ES_{K_{1,p}^\circ,0}^{b,+}$. 
We can then extend it to an action of $\mathscr{A}(G_{1,\Z_{(p)}})^\circ$ on $\ES_{K_{1,p}^\circ,0}^{b,+}$, and
define similarly Newton strata of abelian type
\[ \ES_{K^\circ_p,0}^b=\Big[[\ES_{K_{1,p},0}^{b,+}\times\mathscr{A}(G_{\Z_{(p)}})]/\mathscr{A}(G_{1,\Z_{(p)}})^\circ \Big]^{|J|}, \]
with \[\ES_{K_{p}^\circ,0}^{b,+}=\ES_{K_{1,p},0}^{b,+}/\Delta,\]for the above $\Delta$.
We have then  a map 
\[\delta_K: \ES_{K^\circ_p}(k)\ra B(G,\{\mu\}), \]
which factors through the above $\ES_{K^\circ_p}(k)\twoheadrightarrow C(\G^{\ad\circ},\{\mu^\ad\})$ under the projection \[C(\G^{\ad\circ},\{\mu^\ad\})\twoheadrightarrow B(G^\ad, \{\mu^\ad\})\simeq B(G,\{\mu\}).\]
We get the following decomposition
\[\ES_{K_p^\circ,0}=\coprod_{b\in \BGmu}\ES_{K_p^\circ,0}^b,\]
which we call the Newton stratification of $\ES_{K_p^\circ,0}$.
\subsubsection[]{}
Let $K=K_p^\circ$.
By Lemma \ref{L: adjoint bijections} we have ${}^K\Admu \stackrel{\sim}{\ra} {}^{K^\ad}\Adm(\{\mu^\ad\})$.
The map \[\upsilon_K:\ES_{K_p^\circ}(k)\rightarrow {}^K\Admu\] factors through $C(\G^{\ad\circ},\{\mu^\ad\})$.  We get the following commutative diagram
\[ \xymatrix{ & & B(G,\{\mu\})\\
	\ES_{K^\circ_p}(k)\ar@{->>}[r]\ar@{->>}[urr]^{\delta_K}\ar@{->>}[drr]_{\upsilon_K}& C(\G^{\ad\circ},\{\mu^\ad\})\ar@{->>}[ur]\ar@{->>}[dr]\\
	& & {}^K\Admu,} \]
and the composition \[\lambda_K:	\ES_{K^\circ_p}(k)\ra {}^K\Admu\twoheadrightarrow \Admu_K\] gives the KR stratification. Recall the subset of $\sigma$-straight elements \[{}^K\Admu_{\sigma\text{-str}}=\Admu_{\sigma\text{-str}}\cap  {}^K\wt{W} \subset {}^K\Admu\] was introduced in \ref{recall sigm straight}. By Theorem \ref{He's result--straight vs fundmtl}, we have the following analogue of Corollary \ref{coro--first properties}.
\begin{corollary}\label{C: straight EKOR}
	\begin{enumerate}
		\item For $x\in {}^K\Admu_{\sigma\text{-str}}$, $\ES_{K_p^\circ,0}^x$ is an adjoint central leaf.
		\item For any $b\in \BGmu$, the Newton stratum $\ES_{K_p^\circ,0}^b$ contains an EKOR stratum $\ES_{K_p^\circ,0}^x$ such that $x$ is $\sigma$-straight.
	\end{enumerate}
\end{corollary}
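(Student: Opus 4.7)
My plan is to reduce both parts to the group-theoretic statements collected in subsection \ref{subsubsec-C(G,mu)} (particularly the discussion at the very end, on $\sigma$-straight elements and the section of $\CGmu \twoheadrightarrow {}^K\Admu$, and Theorem \ref{He's result--straight vs fundmtl}), combined with the commutative diagram displayed just before the statement of the corollary, together with the fact that $B(G,\{\mu\}) \simeq B(G^{\ad},\{\mu^{\ad}\})$ and the bijection $\,{}^K\Admu \simeq {}^{K^{\ad}}\Adm(\{\mu^{\ad}\})\,$ from Lemma \ref{L: adjoint bijections}. Since $\sigma$-straightness, being fundamental, and the definition of the Newton map are all preserved under the natural maps induced by $G \to G^{\ad}$ (which induce an isomorphism of Iwahori--Weyl groups modulo torsion in $\pi_1$), both parts of the corollary become consequences of their group-theoretic analogues plus the non-emptiness of EKOR strata proved in Theorem \ref{T: EKOR abelian}(1).

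For part (1), I would fix $x \in {}^K\Admu_{\sigma\text{-str}}$, viewed equivalently as an element of ${}^{K^{\ad}}\Adm(\{\mu^{\ad}\})_{\sigma\text{-str}}$ under Lemma \ref{L: adjoint bijections}. The key point, already recorded at the end of \ref{subsubsec-C(G,mu)}, is that for $\sigma$-straight $x$ one has the identity
\[
\breve{K}^{\ad}_{\sigma}\bigl(\breve{K}^{\ad}_1 x \breve{K}^{\ad}_1\bigr) \;=\; \breve{K}^{\ad}_{\sigma}\cdot x,
\]
so that the fiber of $C(\G^{\ad\circ},\{\mu^{\ad}\}) \twoheadrightarrow {}^K\Admu$ over $x$ is a single point $c_x$. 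Consequently, in the commutative diagram displayed before the corollary, the preimages of $x$ and of $c_x$ in $\ES_{K^\circ_p}(k)$ coincide. By definition of the EKOR stratum and of the adjoint central leaf $\ES_{K^\circ_p,0}^{c_x}$, this gives the equality of $k$-points; since both sides are locally closed reduced subschemes of $\ES_{K^\circ_p,0}$, this equality promotes to an equality of schemes, proving (1).

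For part (2), I would invoke Theorem \ref{He's result--straight vs fundmtl}(2) (applied to the adjoint data if needed via Lemma \ref{L: adjoint bijections}) to pick, for any given $b \in B(G,\{\mu\}) \simeq B(G^{\ad},\{\mu^{\ad}\})$, an element $x \in {}^K\Admu_{\sigma\text{-str}}$ mapping to $b$ under ${}^K\Admu_{\sigma\text{-str}} \twoheadrightarrow B(G,\{\mu\})$. By Theorem \ref{T: EKOR abelian}(1), the EKOR stratum $\ES^x_{K^\circ_p,0}$ is non-empty, and commutativity of the diagram before the corollary forces $\ES^x_{K^\circ_p,0} \subset \ES^b_{K^\circ_p,0}$, giving (2).

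The main thing to verify carefully—this is where I expect the only real content beyond invoking previous results—is that the group-theoretic assertions of \ref{subsubsec-C(G,mu)} and Theorem \ref{He's result--straight vs fundmtl}, which were stated for $(G,\{\mu\},K)$, transfer correctly along $G \twoheadrightarrow G^{\ad}$ to the adjoint triple $(G^{\ad},\{\mu^{\ad}\},K^{\ad})$ used to define adjoint central leaves and Newton strata in the abelian type setting. This compatibility, however, is exactly what is packaged into Lemma \ref{L: adjoint bijections} and the identification $B(G,\{\mu\}) \simeq B(G^{\ad},\{\mu^{\ad}\})$ (cf.\ \cite{isocys with addi 2} 6.5.1), so once these bijections are set up the argument is essentially formal. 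No dimension or smoothness input is needed; only the compatibility of the various projections in the commutative diagram with the adjoint map.
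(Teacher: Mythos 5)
Your proof is correct and follows essentially the same route as the paper: the paper itself only says ``By Theorem~\ref{He's result--straight vs fundmtl}, we have the following analogue of Corollary~\ref{coro--first properties},'' and the content you supply --- that the $\sigma$-straightness forces the fiber of $C(\G^{\ad\circ},\{\mu^\ad\}) \twoheadrightarrow {}^K\Admu$ over $x$ to be a single point so that EKOR stratum and adjoint central leaf have the same preimage, plus the surjectivity of ${}^K\Admu_{\sigma\text{-str}} \twoheadrightarrow \BGmu$ --- is exactly the argument implicitly invoked via the reference to Corollary~\ref{coro--first properties} and the commutative diagram preceding the statement. Your extra care in checking that these group-theoretic facts transfer along $G\to G^{\ad}$ via Lemma~\ref{L: adjoint bijections} is sound and is the only genuinely new step the abelian-type setting requires.
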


\subsubsection[]{Fully Hodge-Newton decomposable Shimura varieties}

We will change our setting to include also our results in section 3. Let $(G,X)$ be a Shimura datum of abelian type, $\K=K_pK^p\subset G(\Ab_f)$ an open compact subgroup with $K^p\subset G(\Ab_f^p)$ sufficently small and $K_p\subset G(\Q_p)$ a parahoric subgroup. 
Let $x$ be a point of the Bruhat-Tits building $\B(G,\Q_p)$, with the attached Bruhat-Tits stabilizer group scheme $\G=\G_x$ and its neutral connected component $\G^\circ=\G_x^\circ$, such that $K_p=\G^\circ(\Z_p)$. We will consider the following cases:
\begin{itemize}
	\item $(G,X)$ is of Hodge type and $\G=\G^\circ$.
	\item $(G,X)$ is of abelian type such that $(G^\ad,X^\ad)$ has no factors of type $D^\mathbb{H}$.
	\item $(G,X)$ is of abelian type such that $G$ is unramified over $\Q_p$ and $K_p$ is contained in some hyperspecial subgroup of $G(\Q_p)$.
\end{itemize}
We write $\ES_0=\ES_{\K,0}$ for the special fiber of the associated Kisin-Pappas integral model. 

Recall that the notion of fully Hodge-Newton decomposable pairs $(G,\{\mu\})$ is introduced in \cite{fully H-N decomp}. Roughly speaking, it says that for any non basic element $[b']\in B(G,\{\mu\})$, the pair $([b'],\{\mu\})$ satisfies the Hodge-Newton condition. We refer the readers to \cite{fully H-N decomp} Definition 2.1 for the precise definition of a fully Hodge-Newton decomposable pair $(G,\{\mu\})$  and loc. cit. Theorem 2.5 for a complete classification of all such pairs.
Under the assumption that the pair attached to the Shimura datum is fully Hodge-Newton decomposable, with our geometric constructions at hand, we have the following results (see also \cite{fully H-N decomp} section 6, where the results are conditional on the He-Rapoport axioms), which generalizes the corresponding results of \cite{stra abelian-good redu} in the good reduction case.

\begin{theorem}\label{EKOR vs NP}
	Let the notations be as above.  Assume that the attached pair $(G,\{\mu\})$ is fully Hodge-Newton decomposable. Then
	\begin{enumerate}
		\item each Newton stratum of $\ES_{0}$ is a union of EKOR strata;
		
		\item each EKOR stratum in a non-basic Newton stratum is an adjoint central leaf, and it is open and closed in the Newton stratum, in particular, non-basic Newton strata are smooth;
		
		\item the basic Newton stratum is a union of certain Deligne-Lusztig varieties.
	\end{enumerate}
\end{theorem}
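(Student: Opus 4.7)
The plan is to deduce all three statements from the group-theoretic results of G\"ortz-He-Nie in \cite{fully H-N decomp} by transferring them across the geometric maps $\lambda_K, \upsilon_K, \delta_K$ and the commutative diagram relating them through $C(\mathcal{G}^{\ad\circ},\{\mu^\ad\})$ from the previous subsection. The geometric constructions already allow us to promote set-theoretic statements about $\Admu, {}^K\Admu, C(\G,\{\mu\})$ and $\BGmu$ into statements about locally closed subschemes of $\ES_0$; the work of G\"ortz-He-Nie supplies the required group-theoretic input under the full Hodge-Newton decomposability hypothesis, and via these identifications this gives unconditional geometric statements for the Kisin-Pappas integral models.

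For (1), the claim is that Newton type is constant on each EKOR stratum, equivalently that $\delta_K$ factors through $\upsilon_K$. By \cite{fully H-N decomp}, in the fully Hodge-Newton decomposable case the natural projection $C(\G,\{\mu\}) \twoheadrightarrow \BGmu$ factors through ${}^K\Admu$, i.e.\ each $x \in {}^K\Admu$ has a well-defined Newton class $[b_x] \in \BGmu$. Combined with the commutative diagram preceding Corollary \ref{C: straight EKOR}, this yields the decomposition $\ES_0^b = \coprod_{x \mapsto [b]} \ES_0^x$.

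For (2), the key group-theoretic fact (loc. cit.) is that for fully Hodge-Newton decomposable $(G,\{\mu\})$, every $x \in {}^K\Admu$ lying over a non-basic Newton class $[b]$ is automatically $\sigma$-straight; moreover, the partial order $\leq_{K,\sigma}$ has no non-trivial comparisons between distinct elements over the same non-basic $[b]$. By Corollary \ref{C: straight EKOR}, each such $\ES_0^x$ is an adjoint central leaf. Closedness in the Newton stratum follows from the closure relation Corollary \ref{C: closure relation} together with the above incomparability, and openness follows because the complement of $\ES_0^x$ in $\ES_0^b$ is a finite union of EKOR strata, each closed by the same argument; hence $\ES_0^x$ is open and closed in $\ES_0^b$. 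Smoothness of non-basic Newton strata is then immediate from Theorem \ref{T: EKOR abelian} (1).

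For (3), the plan is to exploit the geometric interpretation of EKOR strata as fibers of $\upsilon_K$ over ${}^K\Admu$: by Proposition \ref{P: G zip and shtukas}, for each $w \in \Admu_K$ the restriction $\pi_K^{-1}(w)$ is perfectly smoothly related to the $\G_0^{\rdt}$-Zip stack $[E_{\mathcal{Z}_w}\backslash \G_0^{\rdt}]^{pf}$, whose underlying stratification is by fine Deligne-Lusztig varieties in $\G_0^{\rdt}$ (Theorem \ref{thm on zip orbits}). In the fully Hodge-Newton decomposable case, \cite{fully H-N decomp} provides an explicit description of the basic preimage in ${}^K\Admu$ in terms of such pieces, and transferring this through $\upsilon_K$ (smooth by Theorem \ref{T: perf smooth} in the Hodge type case, with its abelian type variant) realizes each basic EKOR stratum as a Deligne-Lusztig variety fibered over a lower-dimensional base. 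The main obstacle I anticipate is the passage from the abstract Deligne-Lusztig description of the zip strata to a genuine Deligne-Lusztig variety inside $\ES_0^b$ in the sense of \cite{fully H-N decomp}; this should follow by combining the perfect smoothness of $\upsilon_K$ with the identification of central leaves in the basic locus via uniformization (Corollary \ref{P: uniformization EKOR}), but the bookkeeping in the abelian type case, where we work with $\G^{\ad\circ}$-torsors, will require some care.
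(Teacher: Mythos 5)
Your proposal follows essentially the same strategy as the paper's proof: transfer the group-theoretic results of G\"ortz-He-Nie (\cite{fully H-N decomp} Theorem 2.3) through the geometric maps built in this paper, and use the Rapoport-Zink uniformization to handle the basic locus. The paper's own proof is very terse (it mostly points to GHN Theorem 2.3, to \cite{stra abelian-good redu} section 6 for the template of the transfer, and to the uniformization morphism of \cite{zhou isog parahoric} for assertion (3), with the abelian type case reduced to Hodge type via \cite{Sh} and \cite{Paroh}). Your version fills in more of the detail for (1) and (2), and the argument you sketch is sound: the factoring of $C(\mathcal{G},\{\mu\}) \twoheadrightarrow \BGmu$ through ${}^K\Admu$ in the fully HN decomposable case gives (1); for (2), that the relevant $x$ are $\sigma$-straight and therefore yield adjoint central leaves (Corollary \ref{C: straight EKOR}) is exactly the input, and your incomparability claim is correct because any strict comparison $x' <_{K,\sigma} x$ forces $\dim \ES_0^{x'} < \dim \ES_0^x$ (the open density of the maximal stratum in its closure), yet all $\sigma$-straight elements over a fixed non-basic $[b]$ have the same length $\langle \nu(b), 2\rho\rangle$.

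For (3), your detour through the zip stack $[E_{\mathcal{Z}_w}\backslash \G_0^{\rdt}]^{pf}$ is somewhat tangential and not quite what the paper does. The zip strata $G^w$ are $E_{\mathcal{Z}_w}$-orbits, not classical Deligne-Lusztig varieties, and it is not straightforward to promote them to the DL varieties appearing in the conclusion. The actual mechanism is more direct: GHN Theorem B (condition (5)) shows, purely group-theoretically, that in the fully HN decomposable case the affine Deligne-Lusztig variety $X(\mu,b)_K$ attached to the basic $[b]$ is a union of \emph{classical} Deligne-Lusztig varieties; the uniformization morphism $\iota_{x_0}\colon X(\mu,b)_K \to \ES_0^b(k)$ (Corollary \ref{P: uniformization EKOR}, \cite{zhou isog parahoric} Proposition 6.4) then transfers this description to the basic Newton stratum of the Shimura variety. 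The zip stack interpretation plays no essential role here. You do correctly identify the abelian type descent as the main remaining bookkeeping, and the paper handles it exactly as you anticipate, via the $\G^{\ad\circ}$-torsor framework of \cite{Paroh} subsection 4.6 and the arguments of \cite{Sh} section 6. So the proposal is correct in outline and in most details, but I would drop the zip-stack framing in (3) and instead lead with GHN's DL decomposition of $X(\mu,b)_K$ combined with uniformization.
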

\begin{proof}
	With our geometric constructions of EKOR strata at hand, the above statements (1) and (2) follow from \cite[Theorem 2.3]{fully H-N decomp}, see \cite{stra abelian-good redu} section 6 for example (which also works in the parahoric level case). 
	
	The assertion (3) also follows from \cite[Theorem 2.3]{fully H-N decomp}, but we use the informal version (5) as in loc. cit. Theorem B and the paragraph above 4.11 there. 
	Indeed,
	in the Hodge type case, we can use the uniformization morphism constructed in \cite[Proposition 6.4]{zhou isog parahoric} (see also our Proposition \ref{P: uniformization EKOR}) to prove that all irreducible components of the basic Newton stratum are certain Deligne-Lusztig varieties. For the general abelian type case, one can use arguments as in \cite{Sh} section 6 and \cite{Paroh} subsection 4.6 to deduce it from the Hodge type case.
\end{proof}
In fact, by \cite{fully H-N decomp} Theorems 2.3 and 6.4, the above assertions (1)-(3) are equivalent to each other, and any of them characterizes the condition that $(G,\{\mu\})$ is fully Hodge-Newton decomposable.

\section{EKOR strata for Siegel modular varieties}\label{section example}

In this section, we shall discuss the case of Siegel modular varieties in more details. Namely, we consider the Shimura variety attached to
$(\GSp_{2g},S^\pm, \K)$ with $\K=KK^p$ and $K\subset \GSp_{2g}(\Q_p)$ a parahoric subgroup.  In the case $g=2$, we describe explicitly the geometry.

\subsection{Moduli spaces of polarized abelian varieties with parahoric level structure}
\label{sec:Siegel.1}
Let $g\ge 1$ be a positive integer, $p$ a rational prime, and
$N\ge 3$ an integer with $(p,N)=1$. 
For a primitive
$N$th root of unity $\zeta_N\in \Qbar$, let $\calA_{g,1,\zeta_N}$ be the moduli space of $g$-dimensional principally polarized abelian varieties with a symplectic level-$N$ structure over $\Fpbar$. This is a smooth connected scheme over $\Fpbar$. We have the following decomposition \[\ES_{K(N), 0}(\GSp_{2g}, S^\pm)=\coprod_{\zeta_N}\calA_{g,1,\zeta_N},\]
where $\ES_{K(N), 0}(\GSp_{2g}, S^\pm)$ is the special fiber (over $\Fpbar$) of the canonical integral model of the Shimura varieties associated to $(\GSp_{2g}, S^\pm, K(N))$ with $K(N)$ the principal level-$N$ congruence subgroup, and $\zeta_N$ runs through the set of primitive $N$th root of unity. From now on, fix a choice of primitive
$N$th root of unity $\zeta_N\in \Qbar \subset \C$ and 
fix an embedding $\Qbar\hookrightarrow \Qbar_p$. We simply write $\calA_{g,1,N}=\calA_{g,1,\zeta_N}$. More generally, for any integer $d\geq 0$, we can consider $\calA_{g,p^d,N}$, the moduli space of $g$-dimensional abelian varieties with a degree $p^d$ polarization and a symplectic level-$N$ structure over $\Fpbar$.

Let $I:=\{0,1,\dots,g\}$. 
Let $\calA_I$ denote the Siegel moduli space 
with Iwahori level structure  
over $\Fpbar$ with respect to $\zeta_N$. It parametrizes 
the equivalence classes of objects
\[ (A_0\stackrel{\alpha}{\to} A_1\stackrel{\alpha}{\to}\dots
\stackrel{\alpha}{\to} A_g, \lambda_0,\lambda_g,\eta), \]
where 
\begin{itemize}
	\item each $A_i$ is a $g$-dimensional abelian variety,
	\item $\alpha$ is an isogeny of degree $p$,
	\item $\lambda_0$ and $\lambda_g$ are principal polarizations on $A_0$
	and $A_g$, respectively, such that $(\alpha^g)^* \lambda_g=p\lambda_0$.
	\item $\eta$ is a symplectic level-$N$ structure on $A_0$
	with respect to $\zeta_N$. 
\end{itemize}
Put $\eta_0:=\eta$, $\eta_i:=\alpha_* \eta_{i-1}$ for $i=1,\dots, g$,
and $\lambda_{i-1}:=\alpha^* \lambda_i$ for $i=g,\dots, 2$. Let $\ul
A_i:=(A_i,\lambda_i,\eta_i)$. Then $\calA_I$ parametrizes equivalence
classes of  objects
\[ (\ul A_0\stackrel{\alpha}{\to} \ul A_1\stackrel{\alpha}{\to}\dots
\stackrel{\alpha}{\to} \ul A_g), \]
where $\ul A_0\in \calA_{g,1,N}$, and for $i\neq 0$, 
\[ \ul A_i\in \calA'_{g,p^{g-i},N}:=\{\ul A\in \calA_{g,p^{g-i},N}\,
|\, \ker \lambda\subset A[p]\, \}. \]

For any non-empty subset $J=\{i_0,\dots, i_r\}\subset I$ with
$i_0<\dots <i_r$, let $\calA_J$
be the moduli space over $\Fpbar$ parameterizing equivalence classes of 
objects
\[  (\ul A_{i_0} \stackrel{\alpha}{\to} \ul
A_{i_1}\stackrel{\alpha}{\to}\dots 
\stackrel{\alpha}{\to} \ul A_{i_r}), \]
where $\ul A_{i_0}\in \calA_{g,1,N}$ if $i_0=0$, and $\ul A_{i_j}\in
\calA'_{g,p^{g-i_j},N}$ for others which satisfies the natural
compatibility condition. The moduli space $\calA_J$ is the
Siegel moduli space (over $\Fpbar$) with parahoric level structure of
type $J$. For $J_2\subset J_1$, let \[\pi_{J_1,J_2}:\calA_{J_1}\to \calA_{J_2}\]
be the natural projection, which is proper and surjective.  

We also write $\calA_J$ as $\calA_{J,\zeta_N}$ if we want to emphasize that it is relative to the choice of $\zeta_N$. When $\zeta_N$ varies, we get a similar decomposition as above
\[\ES_{K_JK^p(N), 0}(\GSp_{2g}, S^\pm)=\coprod_{\zeta_N}\calA_{J,\zeta_N}, \]
where $K_J\subset \GSp_{2g}(\Qp)$ is the
parahoric subgroup corresponding to the lattice chains of type
$J$, $K^p(N)\subset \GSp_{2g}(\mathbb{A}_f^p)$ is the principal level-$N$ congruence subgroup outside $p$, and $\ES_{K_JK^p(N), 0}(\GSp_{2g}, S^\pm)$ is the special fiber (over $\Fpbar$) of the integral model of the Shimura varieties associated to $(\GSp_{2g}, S^\pm, K_JK^p(N))$ defined by the moduli problem as \cite[chapter 6]{RZ} (see also the following appendix).
\\

In the following we fix our choice of $\zeta_N$ as before. We will study the geometry of $\calA_J=\calA_{J,\zeta_N}$.





\begin{theorem} \label{Siegel.1}\
	\begin{itemize}
		\item[(1)] The ordinary locus $\calA_J^{\rm ord}\subset \calA_J$ is dense.
		
		\item[(2)] $\calA_J$ is equi-dimensional of dimension $g(g+1)/2$. 
		
		\item [(3)] $\calA_J$ is irreducible if $|J|=1$, and for
		$|J|\ge 2$, $\calA_J$ has $(k_1+1)\dots(k_r+1)$ irreducible
		components, where $k_j:=i_j-i_{j-1}$. 
		
		\item [(4)] $\calA_J$ is connected.   
	\end{itemize}
\end{theorem}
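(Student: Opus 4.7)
First, I would derive (1) and (2) at once from the local model theory. By Theorem \ref{result P-Z and K-P} together with Remark \ref{remark--Goertz PEL}, the scheme $\calA_J$ admits a local model diagram identifying it \'etale-locally with the special fiber $\eM$ of the Pappas--Zhu local model, and Corollary \ref{local model-collect} says $\eM$ is reduced and equidimensional of dimension $g(g+1)/2$; this gives (2). For (1), the ordinary locus is by definition the union of the top-dimensional EKOR strata $\calA_J^x$ with $\ell(x)=g(g+1)/2$, which by Proposition \ref{P: max EKOR} are exactly the strata indexed by $t^{\mu'}$ with $\mu'\in W_0(\mu)$ and $t^{\mu'}\in{}^K\widetilde W$. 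Each is non-empty, smooth, and open dense in the ambient top-dimensional KR stratum by Theorem \ref{thm--first properties}(1); since the union of the top-dimensional KR strata is open dense in the equidimensional $\calA_J$ (closures of strictly lower-dimensional KR strata are closed of smaller dimension), so is the ordinary locus.

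For (3), I would count irreducible components via the ordinary locus. Since $\calA_J^{\rm ord}$ is smooth, open, and dense in $\calA_J$, its connected components biject with the irreducible components of $\calA_J$. The forgetful projection $\calA_J^{\rm ord}\to \calA_{g,1,N}^{\rm ord}$ (factoring through $\calA_I^{\rm ord}$ when $0\notin J$) is smooth and surjective over the geometrically connected base $\calA_{g,1,N}^{\rm ord}$. Over an ordinary $\ul A_0$ the canonical splitting $A_0[p^\infty]\cong(\Q_p/\Z_p)^g\oplus\mu_{p^\infty}^g$ decomposes the $j$-th isogeny step $\ul A_{i_{j-1}}\to\ul A_{i_j}$ according to the rank $a_j$ of the \'etale part of its kernel, with $0\le a_j\le k_j$. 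For a fixed tuple $(a_1,\ldots,a_r)$ the resulting locus is a tower of Grassmannian bundles over $\calA_{g,1,N}^{\rm ord}$, hence geometrically connected, while distinct tuples give disjoint loci because the invariants $a_j$ are locally constant on $\calA_J^{\rm ord}$. This yields $(k_1+1)\cdots(k_r+1)$ irreducible components of $\calA_J$, reducing to $1$ when $r=0$.

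For (4), I would argue that all these components meet at a common superspecial point. Choose any $x\in\calA_J^\tau$, which is non-empty by Corollary \ref{coro--non-emp and closure}(1). The \'etale local model diagram identifies $\Spec\widehat O_{\calA_J,x}$ with the formal completion of $\eM$ at a point of $\eM^\tau$. Since $\tau$ is the unique minimal element of $\Admu$, Theorem \ref{thm of richarz}(2) shows that $\eM^\tau$ lies in the closure of \emph{every} top-dimensional $\G_0$-orbit of $\eM$, so all top-dimensional irreducible components of $\eM$ pass through the chosen point. By the count in (3), this accounts for every irreducible component of $\calA_J$, which therefore all meet at $x$, and $\calA_J$ is connected. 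The main obstacle here is justifying that each \'etale-local irreducible component of $\widehat O_{\calA_J,x}$ lifts uniquely to a global irreducible component of $\calA_J$; this uses the normality of the irreducible components of the strict henselizations recorded in \cite{Paroh} Corollary 0.3 together with the equidimensionality from (2), ensuring that local and global irreducible components match in number.
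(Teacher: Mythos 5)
Your derivation of (1) and (2) from the local model diagram is correct and is a reasonable, self-contained replacement for the paper's own proof, which simply cites \cite{Goertz2, NG, yu:gamma, yu:KR} for (1)--(3). For (3), the decomposition of $\calA_J^{\mathrm{ord}}$ by the \'etale ranks $(a_1,\dots,a_r)$ of the isogeny kernels is the right invariant and does give open-and-closed loci, but the claim that each such locus is ``a tower of Grassmannian bundles, hence geometrically connected'' is not justified as stated: over the ordinary locus the choices are $\mathbb{F}_p$-subspaces of the finite \'etale local systems $A_0[p]^{\mathrm{et}}$ and its Cartier dual, so the loci are finite \'etale covers of $\calA_{g,1,N}^{\mathrm{ord}}$ and their connectedness is precisely a $p$-adic monodromy statement. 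That monodromy computation is the hard content of the cited references \cite{NG, yu:gamma} and cannot be waved through.

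For (4) the strategy of exhibiting a superspecial point through which every irreducible component passes is sound, and the local model does have all its irreducible components passing through any point of $\eM^\tau$. However the bridge you offer -- that normality of the irreducible components of the strict henselization (\cite{Paroh} Corollary 0.3) ``ensures that local and global irreducible components match in number'' -- is not correct. Normality of the local branches does not imply that a global irreducible component is unibranch at $x$: a nodal plane curve has one global component and two normal local branches at the node. What actually closes the gap is the KR stratification: via the local model diagram, distinct local branches of $\widehat{O}_{\calA_J,x}$ are generically contained in distinct top-dimensional KR strata $\calA_J^w$, while any global irreducible component of $\calA_J$ is generically contained in exactly one such stratum, so distinct local branches belong to distinct global components; combined with (3) and the (unverified in your sketch) combinatorial identity that $\Admu_K$ has exactly $(k_1+1)\cdots(k_r+1)$ maximal elements, this forces every component to pass through $x$. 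By comparison, the paper's own proof of (4) avoids all of this by reducing, via the surjection $\pi_{I,J}:\calA_I\to\calA_J$, to the Iwahori case: there each of the $2^g$ irreducible components is the closure of a maximal KR stratum, all of which contain $\calA_I^\tau$, so any two components intersect and $\calA_I$ is connected. That route is shorter and requires no local-to-global comparison of branches.
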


\begin{proof}
	For statements (1)-(3), see
	\cite{Goertz2,  NG, yu:gamma} and also see \cite[Theorem
	2.1]{yu:KR}. (4) It suffices to show that $\calA_I$ is connected
	because the map $\pi_{I,J}$ is surjective. The moduli space
	$\calA_I$ is a union of $2^g$ irreducible components and each
	irreducible component is the closure of a maximal KR stratum, because every maximal KR stratum is irreducible \cite{yu:gamma}.
	Since the closure of every maximal KR stratum contains the the minimal
	KR stratum, every two irreducible components intersect. 
	It follows that $\calA_I$ is connected. 
\end{proof}


\subsection{The KR and EKOR stratifications}
\label{sec:Siegel.2}

Let $(V=\Q_p^{2g},\psi)$ a symplectic space of dimension $2g$, where
the alternating pairing $\psi :V\times V\to \Qp$ is represented by 
$
\begin{bmatrix}
0 & \wt I_g \\ -\wt I_g & 0
\end{bmatrix}$ and $\wt I_g=\text{anti-diag}(1,\dots,1)$. 
Let
$G:=\GSp_{2g}\subset \GL_{2g}$ and $T\subset G$ be the diagonal 
maximal torus. Let $\wt
W:=N_G(T)(\Qp)/T(\Zp)$ be the Iwahori-Weyl group of $\GSp_{2g}$ with
respect to $T$. The torus $T$ is contained in the diagonal maximal
torus $T_0$ of $\GL_{2g}$ and the cocharacter group $X_*(T)$ is
contained in $X_*(T_0)$, which is equal to $\Z^{2g}$.  
Then
$X_*(T)_\mathbb{R}=\{(u_i)\in X_*(T_0)_\mathbb{R}=\mathbb{R}^{2g} \mid u_1+u_{2g}=\dots
=u_{g}+u_{g+1}\, \}$.
We fix a base point in the apartment corresponding to the maximal
torus $T$ and identify it with $X_*(T)_\mathbb{R}$. Then $\wt W=X_*(T) \rtimes
W$, where $W$ is the Weyl group of $G$.
We fix the base alcove 
\[ \gra:=\{(u_i)\in X_*(T)_\mathbb{R}
\mid 1+u_1>u_{2g}>\dots>u_{g+1}> u_g\, \}, \]
and let $s_0,\dots, s_g$ be the simple reflections with respect to
the facets of
$\gra$. The affine Weyl group $W_a$ then is the Coxeter group
generated by $s_0,\dots, s_g$ and we have $\wt W=W_a \rtimes\Omega$,
where $\Omega\subset \wt W$ is the stabilizer of $\gra$. The length
function and Bruhat order on $W_a$ are naturally extended to those on
$\wt W$. 

Let $\mu=(1^{(g)},0^{(g)})\in X_*(T)$ be the standard minuscule coweight.
We denote by $\Adm(\mu):=\Admu\subset \wt W$ the set of $\mu$-admissible
elements. Let $\tau$ be the unique minimal element in $\Adm(\mu)$. We
have 
\[ s_i=(i,i+1) (2g+1-i, 2g-i), \quad i=1,\dots,g-1,\]
\[ s_g=(g,g+1),\quad s_0=\big((-1,0,\dots,0,1), (1,2g)\big), \]
\[ \tau=\big((0,\dots,0,1,\dots, 1), (1,g+1)(2,g+2)\dots(g,2g)\big). \]   

For any non-empty subset $J\subset I$, let 
\[ \calA_J=\coprod_{y \in \Adm(\mu)_J} \calA_{J,y} \]
be the KR stratification, where $\Adm(\mu)_J:= W_{J^c}\backslash
W_{J^c}\Adm(\mu)W_{J^c}/W_{J^c}$ is the image of $\Adm(\mu)$ in $W_{J^c}\backslash
\wt W/W_{J^c}$ under the map $\wt W\to W_{J^c}\backslash
\wt W/W_{J^c}$  and $W_{J^c}\subset \wt W$ is the subgroup generated by
$s_i$ for $i\in J^c:=I-J$. Put ${}^J\Adm(\mu):=\Adm(\mu)\cap
{}^{J^c} \wt W$, where ${}^{J^c} \wt W\subset \wt W$ is the set of minimal
length coset representatives for $W_{J^c}\backslash \wt W$. Let
\begin{equation}
\label{eq:Siegel.1}
\calA_J=\coprod_{x\in {}^J\Adm(\mu)} \calA_{J}^x
\end{equation}
be the EKOR stratification. If $J=I$, then $\calA_I^x=\calA_{I,x}$ and
\eqref{eq:Siegel.1} is the KR stratification.

\begin{theorem}\label{Siegel.2} For all $x\in  {}^J\Adm(\mu)$, one has the following statements. \ 
	
	{\rm (1)} The corresponding EKOR stratum $\calA_{J}^x$ is quasi-affine, smooth and
	equi-dimensional of 
	dimension $\ell(x)$. Every point in $\calA_{J}^x$ has the same
	$p$-rank.  
	
	{\rm (2)} If the EKOR stratum $\calA_{J}^x$ is not 
	supersingular (not contained in the
	supersingular locus of $\calA_J$), then it is irreducible. 
	
	{\rm (3)} Every non-supersingular KR stratum $\calA_{J,y}$, where $y\in \Adm(\mu)_J$, is
	irreducible. 
\end{theorem}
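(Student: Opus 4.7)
For statement (1), the assertions that $\calA_J^x$ is quasi-affine, smooth, and equi-dimensional of dimension $\ell(x)$ follow directly from Theorems~\ref{thm--first properties} and \ref{thm--quasi affine} applied to the Siegel case, in view of the verification of the He-Rapoport axiom~4 (c) for Siegel modular varieties in \cite[\S7]{He-Rap}. The $p$-rank statement will follow from the constancy of the attached $\G_0^{\rdt}$-zip (equivalently, the underlying $F$-zip) on the fibres of the zip morphism $\zeta_w$ constructed in Section~\ref{section EKOR loc}, together with the fact that the $p$-rank of an abelian variety depends only on the isomorphism class of its mod-$p$ $F$-zip.

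For statements (2) and (3), the plan is to reduce both to the irreducibility of non-supersingular Iwahori KR strata. For (2), given $x\in {}^J\Adm(\mu)$ viewed inside ${}^I\Adm(\mu)=\Adm(\mu)$, Theorem~\ref{thm--etale forg level} (using axiom~4 (c)) provides a finite \'etale surjection $\pi_{I,J}^x : \calA_I^x \to \calA_J^x$. Since the continuous image of an irreducible space is irreducible, it suffices to prove that the Iwahori stratum $\calA_{I,x}=\calA_I^x$ is irreducible in the non-supersingular case; note that the Newton polygon of an abelian variety is unchanged by $\pi_{I,J}$, so the non-supersingularity hypothesis passes to $\calA_{I,x}$. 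For (3), Theorem~\ref{thm--first properties}~(1) shows that the $y$-ordinary EKOR stratum $\calA_J^{{}^{J^c}y_{J^c}}$ is open dense in $\calA_{J,y}$, so irreducibility of $\calA_{J,y}$ is equivalent to irreducibility of this EKOR stratum; and the non-supersingular open dense locus of $\calA_{J,y}$ meets it, forcing $\calA_J^{{}^{J^c}y_{J^c}}$ to be non-supersingular as well. Combined with the reduction from (2), both statements reduce to the single assertion $(\ast)$: \emph{for every non-supersingular $x\in \Adm(\mu)$, the Iwahori KR stratum $\calA_{I,x}$ is irreducible.}

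The main obstacle is establishing $(\ast)$, which does not follow from the abstract machinery of this paper and requires Siegel-specific arguments. The plan is to invoke \cite[Theorem~2.1]{yu:KR}, whose proof exploits the explicit moduli description of $\calA_I$ as a space of polarised isogeny chains $\ul A_0 \to \ul A_1 \to \cdots \to \ul A_g$. The two key geometric inputs are (a) the classical irreducibility of non-supersingular EO (or Newton) strata on $\calA_{g,1,N}$, due to Oort, Ekedahl-van der Geer and Harashita, and (b) a fibrewise analysis of the forgetful map $\pi_{I,\{0\}} : \calA_I \to \calA_{g,1,N}$: over a non-supersingular point the Dieudonn\'e lattice-chain parameter space controlling the corresponding fibre of the KR stratum remains irreducible. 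Combining these with the smoothness established in (1) yields $(\ast)$ and hence completes (2) and (3).
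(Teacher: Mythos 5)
Your proposal is correct and follows essentially the same logical skeleton as the paper: transfer along the finite \'etale surjection $\pi_{I,J}^x$ to reduce parts (1) and (2) to the Iwahori level, where the irreducibility of non-supersingular KR strata is a known Siegel-specific fact; then deduce (3) from (2) via the unique open dense maximal EKOR stratum inside each KR stratum. The main difference is cosmetic: for (1) you deduce quasi-affineness, smoothness, and the dimension formula from the paper's own Theorems~\ref{thm--first properties} and \ref{thm--quasi affine} together with axiom~4~(c), and argue $p$-rank constancy via constancy of the $F$-zip; the paper instead cites the Iwahori-level literature (\cite{NG} and \cite[Theorem~1.5]{Iwahoric Siegel-Gor-Yu}) for $J=I$ and transfers by $\pi_{I,J}$. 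Both routes work, and yours is arguably more self-contained within the paper's framework. For $(\ast)$, the paper points to \cite[Theorems~1.4 and 1.5]{Iwahoric Siegel-Gor-Yu} rather than \cite[Theorem~2.1]{yu:KR}; the latter is what the paper cites for Theorem~\ref{Siegel.1}, so your citation is slightly misplaced, but the substance (irreducibility of non-supersingular Iwahori KR strata is an external Siegel-specific input) is what both proofs rely on. Finally, the paper notes that (3) is also \cite[Proposition~4.4]{GoertzHoeve} and that its EKOR-based argument is an alternative proof; worth mentioning for context, though not a gap in your reasoning.
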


\begin{proof}  
	(1) The case where $J=I$ is proved  
	in \cite{NG} and \cite[Theorem 1.5]{Iwahoric Siegel-Gor-Yu}. 
	Since $\pi_{I,J}:
	\calA_{I,x}\to \calA_{J}^x$ is finite, \'etale and surjective, the
	stratum $\calA_{J}^x$ also share the same properties.
	
	
	(2) For the case where $J=I$ the irreducibility of $\calA_{I,x}$ 
	is proved 
	in \cite[Theorem 1.4 and Theorem
	1.5]{Iwahoric Siegel-Gor-Yu}. For
	arbitrary $J$, since $\calA_{I,x}$ is irreducible and $\pi_{I,J}:
	\calA_{I,x}\to \calA_J^x$ is surjective, $\calA_J^x$ is
	irreducible.
	
	(3) This is \cite[Proposition 4.4]{GoertzHoeve}. 
	We give a different proof using EKOR
	strata. The KR stratum $\calA_{J,y}$ is a union of EKOR strata, and by
	Theorem \ref{thm--first properties} (1), 
	it contains a unique maximal EKOR stratum, called the $y$-ordinary locus, which is
	open and dense in the KR stratum $\calA_{J,y}$. Since this
	EKOR stratum is non-supersingular, it is irreducible by (2).
	Therefore, the KR stratum $\calA_{J,y}$ is irreducible. 
\end{proof}


\subsection{Geometry of Siegel threefolds}
\label{sec:Siegel.3}

We restrict ourselves to the case $g=2$. Let $K_J\subset G(\Qp)$ denote the
parahoric subgroup corresponding to the lattice chains of type
$J$. The group $K_J$ is conjugate to $K_{J^\vee}$ in $G(\Qp)$, where
$J^\vee:=\{g-j |j\in J\}$. Thus, one can only consider 
the cases
$J=\{0\}, \{1\}, \{0,1\}, \{0,2\},\{0,1,2\}$, which correspond to
hyperspecial, paramodular, Klingen parahoric, Siegel parahoric 
and Iwahori open compact subgroups, respectively.

Recall that the $p$-rank function is constant on each KR stratum of
$\calA_I$, so it induces a map $\text{$p$-rank}: \Adm(\mu)\to \Z_{\ge
	0}$. For each integer $0\le f \le g=2$, 
set 
\[ \Adm^f(\mu):=\{x\in \Adm(\mu) \mid
\text{$p$-rank}(x)=f\}. \] 
We denote by $\bar x=(\bar x)_J$ the image of $x\in \Adm(\mu)$
in $\Adm(\mu)_J$, and 
\[ [x]=[x]_J:=\{z\in \Adm(\mu) \mid \bar z=\bar
x\}=W_{J^c} x W_{J^c} \cap \Adm(\mu).\]
By abuse of notation, we also write $\calA_{[x]_J}$ for the KR
stratum $\calA_{J,\bar x}$.
For each $0\le f\le g=2$, let $\calA_J^{f}\subset \calA_J$ 
(resp. $\calA_J^{\le f}\subset \calA_J$) be the
subvariety consisting of objects with $p$-rank $f$ (resp. $p$-rank
less than or equal to $f$). For any locally closed subvariety $X\subset
\calA_J$, denote by $\ol X$ the Zariski
closure of $X$ in $\calA_J$. 
Let $\calS_2=\calS_{\{0\}}\subset \calA_2 =\calA_{\{0\}}$ 
be the supersingular locus of $\calA_2$.
Note that for $g=2$, the Newton strata
coincide with the $p$-rank strata. The relationship between the Newton
strata and EKOR strata can be described by that of the $p$-rank strata
and EKOR strata. \\

{\bf (1)} {\bf Case $J=I=\{0,1,2\}$ (Iwahori level).} 
In this case the EKOR strata coincide with the KR strata. 
The following are the elements in the set
${\rm Adm}(\mu)$ together with the Bruhat order 
(cf.~\cite[Section 6]{yu:prank} and \cite[Section 4.1]{yu:KR}):

\begin{figure}[h]
	\begin{center}
		\scalebox{0.6}{\includegraphics{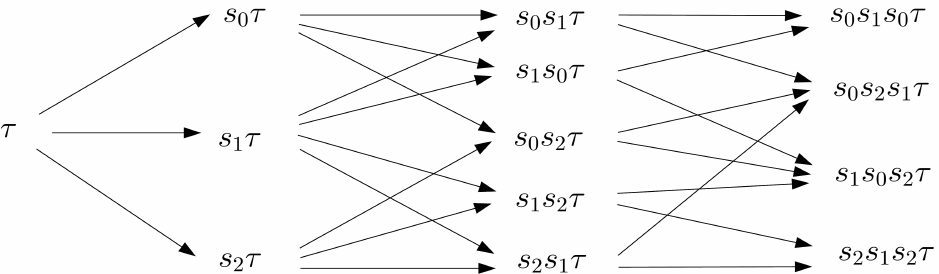}}.  
	\end{center}
\end{figure}
Here we write $x\to y$ for two elements $x, y \in W_a \tau$ if $x\le y$
in the Bruhat order. Using the $p$-rank formula \cite{NG} we obtain
\begin{equation}\label{eq:Siegel.2}
\begin{split}
&{\rm Adm}^2(\mu)=\{s_0s_1s_0\tau, \ s_1s_0s_2\tau,\  
s_2s_1s_2\tau,\ s_0s_2s_1\tau \},\\
&{\rm Adm}^1(\mu)=\{ s_0s_1\tau, \  s_1s_2\tau, \  
s_2s_1\tau,\ s_1s_0\tau\}, \\
&{\rm Adm}^0(\mu)=\{\tau,\ s_1 \tau,\ s_0 \tau, \ 
s_2\tau,\ s_0s_2\tau  \}. \\
\end{split}
\end{equation}
In the following we shall write $s_{j_1 j_2 \dots j_r}$ for the
element $s_{j_1} s_{j_2} \cdots s_{j_r}$ in the affine Weyl group $W_a$. 



\begin{proposition}\label{Siegel.3} \ 
	
	{\rm (1)} The $p$-rank two stratum $\calA_I^{2}$ is smooth of pure
	dimension $3$. It is a disjoint union of $4$ irreducible components
	indexed by $s_{010}\tau$, $s_{102}\tau$, $s_{212}\tau$ and
	$s_{021}\tau$. It is the smooth locus of $\calA_I$. 
	The closure $\ol {\calA_I^{2}}$ is equal to $\calA_I$ and is connected. 
	
	{\rm (2)} The $p$-rank one stratum $\calA_I^{1}$ is smooth of pure
	dimension 
	$2$. It is a disjoint union of $4$ irreducible components
	indexed by $s_{01}\tau$, $s_{12}\tau$, $s_{21}\tau$ and
	$s_{10}\tau$. The
	closure $\ol {\calA_{I}^{1}}$ of $\calA_{I}^{1}$ is connected. 
	One has the
	decomposition  
	$\calA_I^{\le 1}=\ol {\calA_{I}^{1}} \coprod \calA_{I, s_{02}\tau}$. 
	In particular, the $p$-rank one stratum $\calA_I^{1}\subset
	\calA_I^{\le 1}$ is not dense. 
	
	{\rm (3)} The supersingular locus $\calS_I=\calA_I^0\subset \calA_I$ 
	consists of
	one-dimensional components (the closure $\ol {\calA_{s_1\tau}}$) 
	and two-dimensional components (the closure $\ol
	{\calA_{s_{02}\tau}}$). Each connected component of $\ol
	{\calA_{s_1\tau}}$ is isomorphic to $\bfP^1$, and each connected
	component of $\ol 
	{\calA_{s_{02}\tau}}$ is isomorphic to $\bfP^1\times \bfP^1$. The
	supersingular locus $\calS_I$ is connected.
	
	{\rm (4)} The projection morphism $\calS_I\to \calS_{2}\subset \calA_2$ 
	is not finite. Therefore, the morphism $\calA_I\to \calA_2$ is
	proper but not finite.  
	
\end{proposition}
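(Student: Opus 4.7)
The plan is to combine Theorem~\ref{Siegel.2} with the Bruhat diagram displayed above \eqref{eq:Siegel.2}, the closure relation of Corollary~\ref{C: closure relation} (which in the Iwahori case reduces to the usual Bruhat order by Theorem~\ref{He's result--decompo}), and Theorem~\ref{EKOR vs NP}(3) to identify the supersingular Deligne--Lusztig varieties; the $p$-rank classification \eqref{eq:Siegel.2} then controls how each stratum sits inside the Newton stratification.

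For part~(1), the four elements of ${\rm Adm}^2(\mu)$ are exactly the length-$3$ members of $\Adm(\mu)$, so by Theorem~\ref{Siegel.2} the corresponding KR strata $\calA_{I,x}$ (= EKOR strata since $J=I$) are smooth, irreducible, and $3$-dimensional; they are open in $\calA_I$, pairwise disjoint, and their union is $\calA_I^{2}$. Every $y\in\Adm(\mu)$ lies below some $x\in{\rm Adm}^2(\mu)$ in the Bruhat order (visible in the diagram), giving $\ol{\calA_I^{2}}=\calA_I$; connectedness is Theorem~\ref{Siegel.1}(4). Smoothness of $\calA_I$ at every $p$-rank-$2$ point is inherited from the local model diagram, since each length-$3$ KR stratum of $M^{\rm loc}$ is a single $\mathcal{G}_0$-orbit in the open (smooth) locus of $M^{\rm loc}$; conversely $M^{\rm loc}$ has genuine singularities along the complement of that locus, so the smooth locus of $\calA_I$ is exactly $\calA_I^{2}$. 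Part~(2) is parallel: the four length-$2$ elements of ${\rm Adm}^1(\mu)$ give smooth irreducible $2$-dimensional KR strata. The subtle point is that $s_{02}\tau\in{\rm Adm}^0(\mu)$ is also of length $2$, hence $\calA_{I,s_{02}\tau}$ has the same dimension $2$ as $\calA_I^{1}$ but is supersingular; as a locally closed stratum of maximal dimension inside the $2$-dimensional locus $\calA_I^{\le 1}$ it cannot lie in $\ol{\calA_I^{1}}$, giving the stated decomposition and the non-density assertion. Connectedness of $\ol{\calA_I^{1}}$ follows because each of the four relevant KR closures contains the $0$-dimensional stratum $\calA_{I,\tau}$ (as $\tau$ is Bruhat-minimal).

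For part~(3), Corollary~\ref{C: closure relation} together with the Bruhat diagram yields
\[
\ol{\calA_{I,s_{02}\tau}}=\calA_{I,s_{02}\tau}\cup\calA_{I,s_0\tau}\cup\calA_{I,s_2\tau}\cup\calA_{I,\tau},\qquad \ol{\calA_{I,s_1\tau}}=\calA_{I,s_1\tau}\cup\calA_{I,\tau},
\]
whence $\calS_I=\calA_I^{0}=\ol{\calA_{I,s_{02}\tau}}\cup\ol{\calA_{I,s_1\tau}}$ decomposes into the claimed $2$- and $1$-dimensional pieces. The pair $(\GSp_4,\{\mu\})$ is fully Hodge--Newton decomposable (by the classification in \cite{fully H-N decomp}), so by Theorem~\ref{EKOR vs NP}(3) the basic stratum $\calS_I$ is a union of Deligne--Lusztig varieties; in the present $\sigma$-stable Iwahori setup the DL variety attached to the simple reflection $s_1$ is $\mathbb{P}^1$, while the DL variety attached to the commuting pair $\{s_0,s_2\}$ is $\mathbb{P}^1\times\mathbb{P}^1$, giving the description of each connected component. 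The hardest step will be the global connectedness of $\calS_I$: I would deduce it from the surjective proper projection $\calA_I\to\calA_2$, using the classical fact that $\calS_2\subset\calA_2$ is connected for $g=2$ (Katsura--Oort), together with the observation that every supersingular irreducible component of $\calA_I$ meets the minimal stratum $\calA_{I,\tau}$, which by axiom~5 (verified for Siegel varieties, cf.~\ref{ax checked}) meets every geometric connected component of $\calA_I$ and is a single central leaf by Corollary~\ref{coro--first properties}.

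For part~(4), the supersingular locus $\calS_2\subset\calA_2$ is $1$-dimensional (classical result of Koblitz--Oort), whereas $\calS_I$ contains the $2$-dimensional closed subvariety $\ol{\calA_{I,s_{02}\tau}}$; a proper surjection from a $2$-dimensional scheme onto a $1$-dimensional scheme must have positive-dimensional fibres, hence is not finite. The map $\calA_I\to\calA_2$ is proper by He--Rapoport's Axiom~1 (verified in the Siegel case, cf.~\ref{ax checked}) and restricts to a proper map $\calS_I\to\calS_2$, so failure of finiteness on the supersingular locus forces failure of finiteness globally.
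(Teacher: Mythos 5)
Your treatment of parts (1), (2) and (4), and of the decomposition in the first half of part (3), is correct and follows essentially the same route as the paper: read off the $p$-rank strata from the table~\eqref{eq:Siegel.2}, use Theorem~\ref{Siegel.2} for smoothness, irreducibility and dimension of each Iwahori KR stratum, and use the Bruhat diagram and Corollary~\ref{C: closure relation} for the closure relations and the isolation of $\calA_{I,s_{02}\tau}$. Your elaboration of ``smooth locus'' via the local model diagram is a reasonable filling-in of what the paper leaves implicit, and your dimension argument in part (4) is sound.

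Two points where you diverge from the paper or leave a gap. First, for the identification of the components of $\calS_I$ as $\mathbf{P}^1$ and $\mathbf{P}^1\times\mathbf{P}^1$, you appeal to Theorem~\ref{EKOR vs NP}(3) and Deligne--Lusztig varieties; the paper instead cites the explicit description of $\calS_I$ in Yu's work (\cite[Theorem 8.1]{yu:prank}). Your route is plausible in spirit, but as written it is not rigorous: the Deligne--Lusztig variety attached to a single reflection $s_1$ (resp.\ the commuting pair $\{s_0,s_2\}$) is an \emph{open} subvariety of $\mathbf{P}^1$ (resp.\ $\mathbf{P}^1\times\mathbf{P}^1$), not the whole projective variety, and the Proposition asserts something about the \emph{closures} $\ol{\calA_{I,s_1\tau}}$ and $\ol{\calA_{I,s_{02}\tau}}$. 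Passing from the DL variety to its closure requires exactly the kind of explicit geometric analysis in Yu's paper, so the citation cannot be avoided so cheaply.

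Second, and more seriously, your argument for the connectedness of $\calS_I$ does not work. You propose to combine (a) connectedness of $\calS_2$, (b) properness and surjectivity of $\calS_I\to\calS_2$, and (c) the fact that every component of $\calS_I$ meets $\calA_{I,\tau}$. But (a) and (b) do not imply that $\calS_I$ is connected, since a proper surjection onto a connected space can easily have a disconnected total space if the fibres are disconnected. And (c) is no help either, because $\calA_{I,\tau}$ is a zero-dimensional scheme, i.e.\ a \emph{disconnected} finite set of superspecial points; knowing that every supersingular component passes through some point of $\calA_{I,\tau}$ says nothing about how those components are glued to one another. (The invocation of Corollary~\ref{coro--first properties} that $\calA_{I,\tau}$ is a single central leaf does not rescue this: a central leaf need not be connected.) What is actually needed, and what the paper uses, is the nontrivial result of G\"ortz--Yu (\cite[Theorem~7.3]{Iwahoric Siegel-Gor-Yu}) that the low-dimensional locus $\calA_{I,\le 1}=\bigcup_{\ell(x)\le 1}\calA_{I,x}$ is connected; once one knows this, the remaining two-dimensional piece $\calA_{I,s_{02}\tau}$ is attached to it because each connected component of $\calA_{I,s_{02}\tau}$ is quasi-affine (Theorem~\ref{thm--quasi affine}), hence has a proper closure that must meet the lower-dimensional stratum $\calA_{I,\le 1}$.
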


\begin{proof}
	Statements (1), (2), (4) and the first part of (3) 
	follow directly from Theorems \ref{Siegel.1} and
	\ref{Siegel.2}, \eqref{eq:Siegel.2} and the Bruhat order. 
	(3) The second
	part follows from the description of $\calS_I$; see 
	\cite[Theorem 8.1]{yu:prank}. We show the connectedness of
	$\calS_I$. 
	By \cite[Theorem 7.3]{Iwahoric Siegel-Gor-Yu}, the union
	$\calA_{I,\le 1}:=\cup_{\ell(x)\le 1} \calA_{I,x}$ of KR strata 
	of dimension $\le
	1$ is connected. The supersingular locus $\calS_I$ is the union of
	$\calA_{I,\le 1}$ and $\calS_{I, s_{20}\tau}$. 
	Since each connected component of $\calA_{I, s_{20}\tau}$ 
	is quasi-affine,
	its closure in $\calA_I$ is proper and intersects with 
	$\calA_{I,\le 1}$. Thus, $\calS_I$ is connected. 
\end{proof}

\begin{remark}
	Proposition~\ref{Siegel.3} (3) rules out the possibility of
	equi-dimensionality of $p$-rank strata and of Newton strata. 
	Proposition~\ref{Siegel.3} (2)
	shows that the $p$-rank strata (and the Newton strata) 
	do not form a stratification on $\calA_I$,
	that is, the closure of each stratum is a union of strata. \\  
\end{remark}

{\bf (2)} {\bf Case $J=\{0\}$ (hyperspecial level) and 
	$W_{J^c}=\<s_1, s_2\>$.} In this case the EKOR strata coincide
with the EO strata and the whole moduli space is a KR stratum. Using
$\tau s_2=s_0 \tau$ and $\tau s_1=s_1 \tau$, one easily computes the
set ${}^J\Adm(\mu)$ with closure relation
\[ {}^J\Adm(\mu)=\{\tau, s_0\tau, s_{01}\tau, s_{010}\tau\}, \quad
\tau\to s_0\tau\to s_{01}\tau\to s_{010}\tau. \]  

\begin{proposition} \label{Siegel.4} Let $J=\{0\}$. 
	
	{\rm (1)} There are $4$ EKOR strata. The $p$-prank two stratum
	$\calA_2^2$ is smooth and
	irreducible. The $p$-prank one stratum $\calA_2^1$ is smooth and
	irreducible. The supersingular locus $\calS_J$ is connected
	and each 
	irreducible component is isomorphic to $\bfP^1$.
	
	{\rm (2)} We have
	\begin{equation}
	\label{eq:Siegel.3}
	\pi_{I,J}(\calA_{I,x})=
	\begin{cases}
	\calA_J^{s_{010}\tau} & \text{for $x\in \Adm^{2}(\mu)$}; \\
	\calA_J^{s_{01}\tau} & \text{for $x\in \Adm^{1}(\mu)$}, 
	\end{cases}
	\end{equation}
	and 
	\begin{equation}
	\label{eq:Siegel.4}
	\begin{split}
	& \pi_{I,J}(\calA_{I,s_0\tau}\coprod
	\calA_{I,s_2\tau})=\calA_J^{s_{0}\tau}, \\ 
	& \pi_{I,J}(\calA_{I,\tau}\coprod
	\calA_{I,s_1\tau})=\calA_J^{\tau}, \\
	& \pi_{I,J}(\calA_{I,s_{02}\tau})=\calA_J^{\tau}\coprod 
	\calA_J^{s_0 \tau}.
	\end{split}
	\end{equation}
\end{proposition}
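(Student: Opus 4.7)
The plan is to combine the general structural results on EKOR strata (Theorem~\ref{thm--first properties} and Theorem~\ref{Siegel.2}) with the classical Katsura-Oort description of the supersingular locus of $\mathcal{A}_2$ for part~(1), and to exploit the $p$-rank invariance of $\pi_{I,J}$ together with Proposition~\ref{change para--EKOR} for part~(2). Throughout I would use that He-Rapoport's axiom~4(c) is known to hold for Siegel modular varieties (cf.~\ref{ax checked}), so that Proposition~\ref{change para--EKOR}(2) yields equalities, not mere inclusions.

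For part~(1), the list ${}^J\mathrm{Adm}(\mu) = \{\tau,\, s_0\tau,\, s_{01}\tau,\, s_{010}\tau\}$ has elements of lengths $0, 1, 2, 3$, so Theorem~\ref{thm--first properties} makes each EKOR stratum $\mathcal{A}_J^x$ nonempty, locally closed, smooth, and equi-dimensional of the stated dimension. Since the $p$-rank is constant on each EKOR stratum, dimension comparison forces $\mathcal{A}_J^{s_{010}\tau} = \mathcal{A}_2^2$, $\mathcal{A}_J^{s_{01}\tau} = \mathcal{A}_2^1$, and $\mathcal{S}_J = \mathcal{A}_J^\tau \sqcup \mathcal{A}_J^{s_0\tau}$. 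Irreducibility of the two non-supersingular strata follows from Theorem~\ref{Siegel.2}(2), while the identification of the components of $\mathcal{S}_J$ with $\mathbb{P}^1$'s and their connectedness is the classical theorem of Katsura-Oort (connectedness following from the fact that every component of $\overline{\mathcal{A}_J^{s_0\tau}}$ contains a superspecial point).

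For part~(2), the projection $\pi_{I,J}$ preserves the $p$-rank, since it simply forgets the chain structure $(A_0 \to \cdots \to A_g) \mapsto A_0$ and all members of the chain share the same $p$-rank. As there is a unique ordinary and a unique non-ordinary $p$-rank one EKOR stratum in $\mathcal{A}_J$, the tabulation~\eqref{eq:Siegel.2} immediately yields~\eqref{eq:Siegel.3}, with surjectivity coming from Proposition~\ref{change para--EKOR}(2). For the supersingular identities~\eqref{eq:Siegel.4} I would apply Proposition~\ref{change para--EKOR}(2) to write $\pi_{I,J}(\mathcal{A}_{I,x}) = \coprod_{y \in \Sigma_J(x)} \mathcal{A}_J^y$, and then compute $\Sigma_J(x) \subset W_{J^c} x W_{J^c} \cap {}^{J^c}\widetilde{W}$ combinatorially from the commutation identities $\tau s_1 = s_1 \tau$, $\tau s_2 = s_0 \tau$, $s_2 \tau = \tau s_0$, and $s_0 s_2 = s_2 s_0$ in $\widetilde{W}$. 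For $x \in \{\tau,\, s_1\tau\}$ and for $x \in \{s_0\tau,\, s_2\tau\}$, a short calculation shows that the intersection collapses to the singleton $\{\tau\}$ and $\{s_0\tau\}$ respectively.

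The main obstacle will be the case $x = s_{02}\tau$, where two EKOR strata must appear in the image. Using the relations above one checks $s_2 \cdot (s_0 s_2 \tau) \cdot s_2 = \tau s_0 \cdot s_2$, from which both $\tau$ and $s_0\tau$ lie in $W_{J^c}\, s_0 s_2 \tau\, W_{J^c}$; conversely, the $p$-rank of $s_0 s_2 \tau$ is $0$, so the intersection with ${}^{J^c}\widetilde{W}$ is contained in the supersingular part $\{\tau,\, s_0\tau\}$ of ${}^J\mathrm{Adm}(\mu)$. To show that both classes are \emph{actually attained} in $\Sigma_J(s_{02}\tau)$, I would invoke a dimension argument: $\mathcal{A}_{I, s_{02}\tau}$ is two-dimensional by Theorem~\ref{Siegel.2}(1), its image under $\pi_{I,J}$ is a closed union of supersingular EKOR strata by Proposition~\ref{change para--EKOR}(2), and the only such union of dimension $\geq 2$ is $\mathcal{A}_J^\tau \sqcup \mathcal{A}_J^{s_0\tau}$. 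This combinatorial-geometric splitting is the crux of the proposition, reflecting the classical picture that a two-dimensional supersingular Iwahori KR stratum surjects onto the one-dimensional non-superspecial supersingular EO locus of $\mathcal{A}_2$ while its closed sublocus over the superspecial points collapses onto $\mathcal{A}_J^\tau$.
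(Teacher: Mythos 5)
Your proposal takes a genuinely different route from the paper for the supersingular transition relations~\eqref{eq:Siegel.4}: the paper simply reads them off the explicit geometric description of $\calS_I$ in \cite{yu:prank}, whereas you attempt a combinatorial computation of $\Sigma_J(x)$ combined with a dimension argument. The combinatorial route is legitimate and in some ways cleaner, but your specific argument has two concrete flaws.

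First, the assertion that the intersection $W_{J^c}\,x\,W_{J^c}\cap {}^{J^c}\widetilde W$ ``collapses to a singleton'' is simply false. Since $J=\{0\}$ is hyperspecial, there is a \emph{single} KR stratum and hence a \emph{single} double coset: $W_{\{1,2\}}\tau W_{\{1,2\}}$ contains all of $\tau$, $s_0\tau=\tau s_2$, $s_{01}\tau=\tau s_2 s_1$, and $s_{010}\tau=\tau s_2 s_1 s_2$, and so the intersection with ${}^{J^c}\widetilde W$ contains all four elements of ${}^J\Adm(\mu)$. The set you actually need is $\Sigma_J(x)$, which is a \emph{proper} subset of that intersection and is not determined by double-coset combinatorics alone; it requires the Deligne--Lusztig/He-Nie recursion underlying He--Rapoport Proposition~6.6. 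Concretely, the recursion runs: for $s_1\tau$, take $s=s_1$; since $s_1(s_1\tau)s_1=\tau s_1=s_1\tau$ and $s_1\cdot s_1\tau<s_1\tau$, one reduces to $\Sigma_J(\tau)=\{\tau\}$. For $s_2\tau$, take $s=s_2$; since $s_2(s_2\tau)s_2=\tau s_2=s_0\tau\neq s_2\tau$ has the same length, one $\sigma$-conjugates to $\Sigma_J(s_0\tau)=\{s_0\tau\}$. For $s_{02}\tau$, take $s=s_2$; now $s_2(s_{02}\tau)=s_0\tau$ and $s_2(s_{02}\tau)s_2=\tau$ has length strictly smaller by $2$, so the reduction gives $\Sigma_J(s_{02}\tau)=\Sigma_J(s_0\tau)\cup\Sigma_J(\tau)=\{\tau,s_0\tau\}$.

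Second, the fallback dimension argument for $s_{02}\tau$ is also broken: $\calA_J^\tau\sqcup\calA_J^{s_0\tau}$ has dimension $1$, not ``$\geq 2$'', so it is not distinguished from $\calA_J^{s_0\tau}$ alone by any dimension count (the fibers of $\pi_{I,J}$ over the supersingular locus are positive-dimensional, as Proposition~\ref{Siegel.3}(4) already notes, so the image may legitimately have smaller dimension than the source). Moreover the claim that the image is ``closed'' is unsubstantiated; $\pi_{I,J}$ is proper, so the image of the \emph{closure} of the KR stratum is closed, but that only gives $\pi_{I,J}(\calA_{I,s_{02}\tau})\subset\overline{\calA_J^{s_0\tau}}$, not the exact determination. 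In short: the recursion above (which is exactly what the ``short calculation'' must consist of) already settles the $s_{02}\tau$ case and makes the dimension heuristic superfluous; as written, your version of that heuristic does not work. Minor point on~(1): ``each component of $\overline{\calA_J^{s_0\tau}}$ contains a superspecial point'' does not by itself imply $\calS_J$ is connected --- one also needs that the dual graph of $\calS_J$ is connected; the paper instead deduces connectedness of $\calS_J$ from the surjection $\pi_{I,J}\colon\calS_I\twoheadrightarrow\calS_J$ and the connectedness of $\calS_I$ established in Proposition~\ref{Siegel.3}(3).
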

\begin{proof}
	(1) The connectedness of $S_J$ follows from
	Proposition~\ref{Siegel.3} (3). The other results are
	well-known. See Katsura-Oort \cite{katsura-oort:surface} 
	for a detailed description
	of $\calS_J$. 
	
	
	(2) Since the map $\pi_{I,J}$ preserves $p$-ranks, the first
	relation follows. The second relation follows from 
	the explicit description of the supersingular locus $S_I$;
	see \cite[Theorem 8.1]{yu:prank}.  
\end{proof}


\begin{remark}
	By Proposition~\ref{Siegel.4}, the intersection
	$\pi_{I,J}^{-1}(\calA_J^{\tau})\cap \calA_{I,s_{02}\tau}$ 
	is nonempty and $\pi_{I,J}^{-1}(\calA_J^{\tau})$ does not contain the
	KR stratum $\calA_{I,s_{02}\tau}$. Therefore, the preimage 
	$\pi_{I,J}^{-1}(\calA_J^{\tau})$  of $\calA_J^{\tau}$ is not a
	union of KR strata.  \\
\end{remark}

{\bf (3)} {\bf Case $J=\{0,1\}$ (Klingen parahoric level) and $W_{J^c}=\<s_2\>$.} 
Using  $\tau s_2=s_0 \tau$,
we get \\
\begin{center}
	\begin{tabular}{clll}
		KR types           & EKOR &  dim & $p$-rank \\   
		$[\tau]_J=[s_0\tau]_J  =\{\tau, s_2\tau, s_0\tau, s_{02}\tau\,\}$, 
		& $\tau$, $s_0\tau$ & 0, 1 & 0, 0 \\
		$[s_1\tau]_J=[s_{10}\tau]_J =\{s_1\tau, s_{10}\tau, s_{21}\tau\,\}$, &
		$s_1\tau$, $s_{10}\tau$ & 1, 2 & 0, 1 \\
		$[s_{12} \tau]_J=[s_{120}\tau]_J  =\{s_{12}\tau, s_{120}\tau,
		s_{212}\tau\,\}$,& $s_{12} \tau$,  $s_{120}\tau$ & 2, 3 & 1, 2 \\ 
		$[s_{01}\tau]_J=[s_{010}\tau]_J =\{s_{01}\tau,
		s_{010}\tau,s_{201}\tau\,\}$,  
		& $s_{01}\tau$, $s_{010}\tau$ & 2, 3 & 1, 2. 
	\end{tabular}\
\end{center}

By \cite[Theorem 6.15]{He-Rap}, we obtain the closure relation for
EKOR strata: 
\begin{equation}\label{eq:Siegel.5}
\xymatrix{
	& s_0\tau \ar[r] \ar[rd]\ar[rdd]  & 
	s_{01}\tau \ar[r]  & s_{010}\tau  \\
	\tau\ar[r]\ar[ru] & s_1\tau \ar[rd]\ar[ru]\ar[r]   
	& s_{10} \ar[ru] \ar[rd] \tau  & \\
	&          & s_{12}\tau \ar[r]  & s_{120}\tau
}
\end{equation}
Notice the new EKOR order $s_0\tau \to s_{12}\tau$ because of 
$s_2 (s_0\tau)s_2^{-1}=s_2 \tau \le s_{12}\tau$ in the Bruhat order. 
The KR closure relation is as follows:     
\begin{equation}
\label{eq:Siegel.6}
\xymatrix{
	& &[s_{120}\tau]_J\\
[s_0\tau]_J \ar[r] & [s_{10}\tau]_J \ar[ur] \ar[dr] &\\
& &[s_{010}\tau]_J.}
\end{equation}
There are 8 EKOR strata and 4 KR strata.

\begin{proposition}\label{Siegel.7}\
	\begin{itemize}
		\item[(1)] The $p$-rank two stratum $\calA_J^2$ is smooth and 
		has two irreducible components which are
		the EKOR strata $\calA_J^{s_{010}\tau}$ and 
		$\calA_J^{s_{120}\tau}$; they are
		properly contained 
		in the KR strata $\calA_{[s_{010}\tau]_J}$ and 
		$\calA_{[s_{120}\tau]_J}$,
		respectively. 
		\item[(2)] The $p$-rank one stratum $\calA_J^1$ 
		is smooth and has three irreducible components which are
		the EKOR strata $\calA_J^{s_{01}\tau},\calA_J^{s_{10}\tau}$ and
		$\calA_J^{s_{12}\tau}$; they are
		properly contained in the KR strata
		$\calA_{[s_{01}\tau]_J}$, $\calA_{[s_{10}\tau]_J}$ and
		$\calA_{[s_{12}\tau]_J}$, respectively.

		\item[(3)] The supersingular stratum $\calS_J$ is connected and it
		consists of two types of irreducible components: those in $\ol 
		{\calA_J^{s_0\tau}}=\calA_{[\tau]_J}$ (``horizontal'' ones) 
		and those in  
		$\ol {\calA_J^{s_1\tau}}$ (``vertical'' ones)\footnote{We refer
			to \cite[Remark 1.3]{yu:ss_siegel} for a detailed description
			of the role of horizontal and vertical components of $\calS_J$.}.
		The intersection  $\calS_J\cap \calA_{[s_{10}\tau]_J}$ is
		equal to the EKOR stratum $\calA_J^{s_1\tau}$, which
		consists of open 
		``vertical'' components of $\calS_J$.

		\item[(4)] The union $\calA_{[s_{010}\tau]_J}\cup
		\calA_{[s_{120}\tau]_J}$ is the 
		smooth locus of $\calA_J$. 
		\item[(5)] Two irreducible components $\ol {\calA_{[s_{010}\tau]_J}}$
		and $\ol {\calA_{[s_{120}\tau]_J}}$ are smooth and they intersect
		transversally at an irreducible smooth surface, which is equal to 
		the closure $\ol {\calA_{[s_{10}\tau]_J}}$ of the KR stratum 
		${\calA_{[s_{10}\tau]_J}}$.
		\item[(6)] (The transition relation) We have
		$\pi_{I,J}(\calA_I^x)=\calA_J^x$ for all $x\in {}^J\Adm(\mu)$, 
		\begin{equation}
		\label{eq:Siegel.7}
		\pi_{I,J}(\calA_I^{s_{212}\tau})=\calA_J^{s_{120}\tau}, \quad
		\pi_{I,J}(\calA_I^{s_{201}\tau})=\calA_J^{s_{010}\tau}, \quad 
		\pi_{I,J}(\calA_I^{s_{21}\tau})=\calA_J^{s_{10}\tau}, 
		\end{equation}
		and 
		\begin{equation}
		\label{eq:Siegel.8}
		\begin{split}
		\pi_{I,J}(\calA_I^{s_{20}\tau})=\calA_J^{\tau}\coprod
		\calA_J^{s_0\tau}  , \quad
		& \pi_{I,J}(\calA_I^{s_{2}\tau})=\calA_J^{s_{0}\tau}, \\  
		\pi_{J,\{0\}}(\calA_J^{\tau}\coprod
		\calA_J^{s_{1}\tau})=\calA_{\{0\}}^{\tau}, \quad
		& \pi_{J,\{0\}}(\calA_J^{s_{0}\tau})=\calA_{\{0\}}^{s_{0}\tau}. \\  
		\end{split}       
		\end{equation}
	\end{itemize}
\end{proposition}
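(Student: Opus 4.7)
The plan is to deduce parts (1)--(3) from the EKOR/KR tables and the closure diagrams \eqref{eq:Siegel.5}, \eqref{eq:Siegel.6} combined with the general structural results of Theorem~\ref{Siegel.2}, to get part (6) from Proposition~\ref{change para--EKOR} together with the verification of the He--Rapoport axioms for the Siegel case recorded in \ref{ax checked}, and to obtain (4)--(5) by a direct local model computation for the Klingen-type local model of $\GSp_4$.

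First I would read off from the EKOR table preceding the statement that the $p$-rank two EKOR strata of $\calA_J$ are exactly $\calA_J^{s_{010}\tau}$ and $\calA_J^{s_{120}\tau}$, of dimension $3$, and the $p$-rank one EKOR strata are $\calA_J^{s_{01}\tau}, \calA_J^{s_{10}\tau}, \calA_J^{s_{12}\tau}$, of dimension $2$. Each is non-supersingular, hence smooth and irreducible by Theorem~\ref{Siegel.2}(1)(2). From \eqref{eq:Siegel.5} one sees that the two $p$-rank two EKOR strata are maximal (neither lies in the closure of the other) and similarly for the three $p$-rank one strata, so $\calA_J^2$ and $\calA_J^1$ are their disjoint unions, which proves smoothness, equi-dimensionality, and the count of irreducible components in (1) and (2); the strict containment in the corresponding KR strata is visible from the EKOR-types column of the table (each of these KR strata contains a second, smaller EKOR stratum).

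For (3), the supersingular EKOR strata are the $p$-rank zero ones $\calA_J^\tau, \calA_J^{s_0\tau}, \calA_J^{s_1\tau}$, and \eqref{eq:Siegel.5} shows $\ol{\calA_J^{s_0\tau}} \supset \calA_J^\tau$ but $\ol{\calA_J^{s_0\tau}} \not\supset \calA_J^{s_1\tau}$. Hence $\calS_J$ is the union of the ``horizontal'' piece $\ol{\calA_J^{s_0\tau}} = \calA_{[\tau]_J}$, which is the closed KR stratum by \eqref{eq:Siegel.6}, and the ``vertical'' piece $\ol{\calA_J^{s_1\tau}}$. Connectedness of $\calS_J$ follows from surjectivity and properness of $\pi_{I,J}|_{\calS_I}\colon \calS_I \to \calS_J$ together with Proposition~\ref{Siegel.3}(3). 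The identification $\calS_J \cap \calA_{[s_{10}\tau]_J} = \calA_J^{s_1\tau}$ follows from the table, since among the EKOR strata inside $[s_{10}\tau]_J$ only $s_1\tau$ is supersingular.

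For (4) and (5), I would pass to the local model: \'etale-locally, $\calA_J$ is isomorphic to the Pappas--Zhu local model for $(\GSp_4, \{\mu\}, K_J)$ by Theorem~\ref{result P-Z and K-P}. From this one reads off that the smooth locus of $\calA_J$ is the complement of the union of the KR strata of dimension $\le 2$, which is precisely $\calA_{[s_{010}\tau]_J}\cup \calA_{[s_{120}\tau]_J}$; each of the two top closures is smooth by the local-model calculation, and their scheme-theoretic intersection identifies with the closure $\ol{\calA_{[s_{10}\tau]_J}}$ (compare \cite{Goertz2}). The main obstacle is the transversality statement in (5): at each point of $\calA_{[s_{10}\tau]_J}$ one has to verify by an explicit tangent-space computation in the Klingen local model that the two smooth 3-folds meet transversally in a smooth surface, and that their intersection is reduced (not a thickening of $\ol{\calA_{[s_{10}\tau]_J}}$). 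Finally, part (6) is a formal consequence of Proposition~\ref{change para--EKOR}(2), whose hypothesis (axiom 4~(c)) is known for Siegel modular varieties by \ref{ax checked}; the explicit relations \eqref{eq:Siegel.7} and \eqref{eq:Siegel.8} are obtained by computing the sets $\Sigma_{K_J}(x)$ for each $x\in \Adm(\mu)$ using the table and the minimal-length coset representatives in ${}^{J^c}\wt W$.
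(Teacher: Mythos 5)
Your treatment of parts (1)--(3) matches the paper's: read the dimension/$p$-rank columns of the EKOR table, invoke the general smoothness/irreducibility results, and use the closure diagram \eqref{eq:Siegel.5}. For the supersingular locus, the paper additionally points to the explicit descriptions in \cite[Proposition 4.5]{yu:ss_siegel} and \cite[p.~2346]{yu:KR}, which is useful for the horizontal/vertical terminology but your argument gives the same decomposition.

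For part (4) you propose a local-model computation. The paper's argument is considerably shorter and more robust: by \eqref{eq:Siegel.6} the closed KR stratum $\ol{\calA_{[s_{10}\tau]_J}}$ is contained in \emph{both} irreducible components $\ol{\calA_{[s_{010}\tau]_J}}$ and $\ol{\calA_{[s_{120}\tau]_J}}$ (there are exactly two by Theorem~\ref{Siegel.1}(3)), and any point on the intersection of two distinct irreducible components of a scheme is singular. Conversely the two open top KR strata are smooth and of full dimension $3$, hence consist of smooth points of $\calA_J$. No tangent-space computation in the local model is required, and you avoid having to argue that your \'etale-local model identification is compatible with the KR stratification you already have.

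Part (5) is where your proposal has a genuine gap. You correctly identify the difficulty --- that one must verify transversality and reducedness of the intersection by an explicit tangent-space computation in the Klingen local model --- but you do not actually perform this computation or supply a reference that does. The paper cites Theorem~3 of Tilouine \cite{tilouine:coates}, where exactly this analysis of the Klingen-level $\GSp_4$ model (two smooth threefolds meeting transversally along a smooth surface) is carried out. Without either the computation or a citation, part (5) is not proved.

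For part (6), your strategy via Proposition~\ref{change para--EKOR}(2) and computing the sets $\Sigma_{K_J}(x)$ (and $\Sigma_{K_{\{0\}}}(x)$ for the last two relations) is a legitimate alternative to the paper's, which combines $p$-rank preservation along $\pi_{I,J}$ with the known surjectivity onto KR strata for \eqref{eq:Siegel.7}, and invokes the explicit description of the supersingular loci from \cite[Theorem 8.1]{yu:prank} and \cite[Proposition 4.5]{yu:ss_siegel} for \eqref{eq:Siegel.8}. Your route has the advantage of being uniform over all parts of (6); the drawback is that the $\Sigma_K$ computations are not trivial and you have not carried them out. In particular for the supersingular types ($p$-rank $0$), where several EKOR strata at different levels share the same $p$-rank, the $\Sigma_K$ computation is the entire content and cannot be waved away; note also that the last two lines of \eqref{eq:Siegel.8} concern $\pi_{J,\{0\}}$, not $\pi_{I,J}$, which you mention only in passing.
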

\begin{proof}
	Statements (1), (2) and (3) follow directly from the EKOR stratification and their
	relation with $p$-rank strata; see also
	\cite[p.~2346]{yu:KR} and \cite[Proposition 4.5]{yu:ss_siegel} 
	and for  
	the description of $\calS_J$. (4) From the closure relation of KR
	strata \eqref{eq:Siegel.6}, 
	the complement $\ol {\calA_{[s_{10}\tau]_J}}$ of
	$\calA_{[s_{010}\tau]_J}\cup
	\calA_{[s_{120}\tau]_J}$ is contained in both
	irreducible components, and hence it is the singular locus of the
	moduli space $\calA_J$. 
	(5) This follows from Theorem 3 of \cite{tilouine:coates}. 
	(6) The transition relation \eqref{eq:Siegel.7} follows from 
	$\pi_{I,J}(\calA_I^{s_{212}\tau})=\calA_{[s_{120}\tau]_J}$, 
	$\pi_{I,J}(\calA_I^{s_{201}\tau})=\calA_{[s_{010}\tau]_J}$ and
	$\pi_{I,J}(\calA_I^{s_{21}\tau})=\calA_{[s_{10}\tau]_J}$. The
	relation \eqref{eq:Siegel.8} follows from the description of the
	supersingular locus $\calS_I$ and $\calS_J$; see \cite[Theorem
	8.1]{yu:prank} and \cite[Proposition 4.5]{yu:ss_siegel}. \\
\end{proof}

{\bf (4)} {\bf Case $J=\{0,2\}$ (Siegel parahoric level) and 
	$W_{J^c}=\<s_1\>$.} 
Using  $\tau s_1=s_1 \tau$,
we get \\
\begin{center}
	\begin{tabular}{clll}
		KR types           & EKOR &  dim & $p$-rank \\   
		$[\tau]_J  =\{\tau, s_1\tau, s_0\tau, s_{02}\tau\,\}$, 
		& $\tau$, $s_0\tau$ & 0  & 0  \\
		$[s_{21}\tau]_J=[s_2\tau]_J= \{s_2\tau, s_{21}\tau,
		s_{12}\tau \,\}$, &
		$s_2\tau$, $s_{21}\tau$ & 1, 2 & 0, 1 \\
		$[s_{01} \tau]_J=[s_{0}\tau]_J  =\{s_{0}\tau, s_{10}\tau, s_{01}\tau
		\,\}$,& $s_{0} \tau$,  $s_{01}\tau$ & 1, 2 & 0, 1 \\ 
		$[s_{021} \tau]_J =[s_{02} \tau]_J=\{s_{02}\tau, s_{021}\tau,
		s_{102 }\tau
		\,\}$,& $s_{02} \tau$,  $s_{021}\tau$ & 2, 3 & 1, 2 \\ 
		$[s_{212} \tau]_J =\{s_{212}\tau
		\,\}$,& $s_{212} \tau$ & 3 & 2 \\ 
		$[s_{010}\tau]_J =\{s_{010}\tau
		\,\}$, & $s_{010}\tau$ & 3 & 2. 
	\end{tabular}\
\end{center}

By \cite[Theorem 6.15]{He-Rap}, we obtain the closure relation for
EKOR strata: 
\begin{equation}\label{eq:Siegel.9}
\xymatrix{
	& s_0\tau \ar[r] \ar[rd]  & 
	s_{01}\tau \ar[r] \ar[rd] & s_{010}\tau  \\
	\tau\ar[rd]\ar[ru] &    
	& s_{02} \ar[r]\tau  & s_{021}\tau \\
	& s_2\tau \ar[ru]\ar[r] & s_{21}\tau \ar[r] \ar[ru] & s_{212}\tau
}
\end{equation}
and the KR closure relation:    
\begin{equation}
\label{eq:Siegel.10}
\xymatrix{
	&  [s_{01}\tau]_J \ar[r] \ar[rd] & [s_{010}\tau]_J  \\
	[\tau]_J\ar[rd]\ar[ru] &    
	& [s_{021}\tau]_J \\
	&  [s_{21}\tau]_J \ar[r] \ar[ru] & [s_{212}\tau]_J.
}
\end{equation}
There are 9 EKOR strata and 6 KR strata. 

\begin{proposition}
	\label{Siegel.8} Let $J=\{0,2\}$. 
	\begin{itemize}
		\item[(1)] The $p$-rank two stratum $\calA_J^2$ is smooth and has
		three irreducible components. Two of them are
		$\calA_J^{s_{212}\tau}=\calA_{[s_{212}\tau]_J}$ and 
		$\calA_J^{s_{010}\tau}=\calA_{[s_{010}\tau]_J}$, and the
		other is $\calA_J^{s_{021}\tau}$, which is properly contained in
		the KR stratum $\calA_{[s_{021}\tau]_J}$.
		\item[(2)] The $p$-rank one stratum $\calA_J^1$ is smooth and has
		two irreducible components. 
		They are EKOR strata 
		$\calA_J^{s_{01}\tau}$ and 
		$\calA_J^{s_{21}\tau}$, which are 
		properly contained in  $\calA_{[s_{01}\tau]_J}$ and
		$\calA_{[s_{21}\tau]_J}$, respectively. 
		\item[(3)] The supersingular locus $\calS_J$ has pure dimension
		$2$. It is contained in the 3-dimensional closed KR stratum $\ol
		{\calA_{[s_{021}\tau]_J}}$. Each irreducible component of $\calS_J$
		is isomorphic to $\bfP^1\times \bfP^1$. 
		\item[(4)] The zero dimensional stratum $\calA_{[\tau]_J}$ consists
		of  points $(\ul A_0\stackrel{F}{\to} \ul A_0^{(p)})$, where $A_0$
		is   superspecial. 
		\item[(5)] The union $\calA_{[s_{212}\tau]_J}\cup
		\calA_{[s_{010}\tau]_J}\cup\calA_{[s_{02}\tau]_J}$ is the smooth
		locus of $\calA_J$. \\
		\item[(6)] We have
		$\pi_{I,J}(\calA_I^x)=\calA_J^x$ for all $x\in {}^J\Adm(\mu)$ and 
		\begin{equation}
		\label{eq:Siegel.11}
		\begin{split}
		\pi_{I,J}(\calA_I^{s_{102}\tau})=\calA_J^{s_{021}\tau}, \quad
		& \pi_{I,J}(\calA_I^{s_{12}\tau})=\calA_J^{s_{21}\tau}, \\
		\pi_{I,J}(\calA_I^{s_{10}\tau})=\calA_J^{s_{01}\tau}, \quad 
		& \pi_{I,J}(\calA_I^{s_{1}\tau})=\calA_J^{\tau}.  
		\end{split}
		\end{equation}
	\end{itemize}
\end{proposition}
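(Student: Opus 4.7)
My plan is to deduce every assertion of the proposition from the table of EKOR strata listed just before the statement, the EKOR closure relation \eqref{eq:Siegel.9} and the KR closure relation \eqref{eq:Siegel.10}, combined with the general EKOR results established in the paper (Theorem \ref{Siegel.2} in the Siegel case, specialising Theorem \ref{T: EKOR abelian} and Theorem \ref{thm--first properties}), together with the Iwahori-level analysis carried out in Proposition \ref{Siegel.3} and the group-theoretic transition results Proposition \ref{change para--EKOR}. The key underlying fact is that the $p$-rank is constant on each EKOR stratum, so for $J=\{0,2\}$ the decomposition into EKOR strata refines the $p$-rank stratification, and each $\calA_J^f$ is literally the disjoint union of the EKOR strata whose type $x$ has the recorded $p$-rank.

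For parts (1) and (2), I would read off the EKOR strata of each given $p$-rank from the table and apply Theorem \ref{Siegel.2}(1)(2): every non-supersingular EKOR stratum is smooth, quasi-affine, equi-dimensional of dimension $\ell(x)$, and irreducible; the inclusion into the corresponding KR stratum is then proper exactly when the KR stratum properly contains another EKOR stratum (of smaller $p$-rank), which I read off from the table (e.g. $[s_{01}\tau]_J=\{s_0\tau,s_{01}\tau,s_{10}\tau\}$ contains the $p$-rank $0$ stratum $\calA_J^{s_0\tau}$, etc.), giving the properness in statements (1) and (2). Part (4) is immediate because the unique element in $[\tau]_J$ of minimal length is $\tau$ and the image of $\ES_0^\tau$ in $\calA_{\{0\}}$ is contained in the superspecial locus by the standard description of $\tau$ via the Frobenius isogeny $A_0\to A_0^{(p)}$ on a superspecial abelian surface; conversely every such datum gives a point of $\calA_J^\tau$.

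For part (3) I would argue that $\calS_J$ equals the union of those EKOR strata $\calA_J^x$ for which $x\in {}^J\Adm(\mu)_{\sigma\text{-}\mathrm{str}}^{\mathrm{bsc}}$, i.e. those whose Newton point is basic; by the tabulated $p$-ranks this is $\calA_J^\tau\cup\calA_J^{s_0\tau}\cup\calA_J^{s_2\tau}\cup\calA_J^{s_{02}\tau}$, and by the closure relation \eqref{eq:Siegel.9} this union coincides with $\overline{\calA_J^{s_{02}\tau}}$, which is contained in $\overline{\calA_{[s_{021}\tau]_J}}$ of dimension $3$ but has itself dimension $2$. The isomorphism of each component with $\mathbb{P}^1\times\mathbb{P}^1$ then follows from the classical Kudla–Rapoport description of the supersingular locus at Siegel parahoric level (cf. the references cited in the proof of Proposition \ref{Siegel.7}). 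Part (5) I would establish by showing that the complement $\overline{\calA_{[s_{021}\tau]_J}}\setminus \calA_{[s_{021}\tau]_J}$ lies on two distinct top-dimensional KR strata simultaneously, so the non-smooth locus is contained in $\overline{\calA_{[s_{01}\tau]_J}}\cup\overline{\calA_{[s_{21}\tau]_J}}\cup\overline{\calA_{[\tau]_J}}$, and conversely the local model around a point of $\calA_{[s_{021}\tau]_J}$ is smooth (being the unique minuscule Schubert cell of dimension $3$), while around a point of $\calA_{[s_{01}\tau]_J}$ or $\calA_{[s_{21}\tau]_J}$ the local model has two maximal-dimensional branches meeting along that stratum.

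Finally, (6) is essentially combinatorial given Proposition \ref{change para--EKOR}. Since the He–Rapoport axioms, including axiom 4(c), are known for Siegel modular varieties (see \ref{ax checked}), we have $\pi_{I,J}(\calA_I^x)=\coprod_{y\in\Sigma_J(x)}\calA_J^y$, and it is then a direct computation in the Iwahori Weyl group to evaluate $\Sigma_J(x)$ for each $x\in\Adm(\mu)$ using $\tau s_1=s_1\tau$, yielding the formulas \eqref{eq:Siegel.11}. The main obstacle in the whole proof is item (5): while the EKOR stratification controls the underlying set of the singular locus, identifying it precisely with the union of three KR strata requires careful analysis of the Pappas–Zhu local model $\mathrm{M}_0^{\mathrm{loc}}$ at the Siegel parahoric level, or equivalently invoking the classical description of the singular locus of $\calA_J$ going back to de Jong–Oort for $g=2$.
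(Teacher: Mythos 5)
Your plan is sound in outline and arrives at the correct conclusions, but it departs from the paper's route in a couple of places, and one step is under-justified.

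For parts (3) and (4), the paper does not argue at level $J$ as you do; it instead observes that the forgetful map $\pi_{I,J}$ restricts to an isomorphism $\overline{\calA_I^{s_{20}\tau}}\isoto\calS_J$ (using that $\calS_I=\overline{\calA_I^{s_1\tau}}\cup\overline{\calA_I^{s_{20}\tau}}$) and then quotes the explicit description of $\overline{\calA_I^{s_{20}\tau}}$ from \cite[Theorem 8.1]{yu:prank}. Your route --- identifying $\calS_J$ as the union of the $p$-rank $0$ EKOR strata $\tau, s_0\tau, s_2\tau, s_{02}\tau$ (you correctly read the $p$-ranks from \eqref{eq:Siegel.2}, not the table, which misprints the $p$-rank of $s_{02}\tau$ as $1$) and using the closure relation \eqref{eq:Siegel.9} to rewrite this as $\overline{\calA_J^{s_{02}\tau}}$ --- is valid, and it is in some ways cleaner since it avoids the Iwahori-level detour, but it still ultimately needs an external input (Yu/Kudla--Rapoport) for the $\bfP^1\times\bfP^1$ assertion, exactly as the paper does. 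A small terminological slip: the relevant index set is all EKOR parameters in $\Adm^0(\mu)\cap{}^J\widetilde W$, not the $\sigma$-straight ones as your notation ${}^J\Adm(\mu)_{\sigma\text{-str}}^{\mathrm{bsc}}$ suggests; the strata $s_0\tau$, $s_2\tau$, $s_{02}\tau$ are in the basic Newton stratum but are not $\sigma$-straight.

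For part (6), the gap is in the phrase ``it is then a direct computation in the Iwahori Weyl group to evaluate $\Sigma_J(x)$.'' The containment $\Sigma_J(x)\subset W_{J^c}xW_{J^c}\cap{}^J\widetilde W$ is combinatorial and easy to compute, but pinning down which element(s) of that candidate set actually appear is not a Coxeter-group calculation alone: it requires either the $G$-stable piece machinery underlying Proposition \ref{change para--group}, or (what the paper does) the observation that $\pi_{I,J}$ respects $p$-ranks and sends a KR stratum of level $I$ into a single KR stratum of level $J$, together with the uniqueness of the EKOR stratum of each given $p$-rank inside that KR stratum. Concretely, for $x=s_{12}\tau$ the candidate set is $\{s_2\tau, s_{21}\tau\}$ with $p$-ranks $0$ and $1$, and only $p$-rank preservation selects $s_{21}\tau$. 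You state $p$-rank constancy on EKOR strata as a ``key underlying fact'' at the outset, but you should make this selection mechanism explicit in (6); otherwise the argument as written does not close.

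For (5), both your sketch and the paper's one-line ``follows directly from the closure relations'' are leaving the local-model input implicit: the smoothness of the three top KR strata and the singularity along the lower ones ultimately come from the geometry of $\eM$ under the local model diagram. You flag this honestly as the main obstacle, which is fair. Parts (1), (2), and (4) are handled essentially as in the paper.
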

\begin{proof}
	Statements (1), (2) and (5) follow directly from the closure
	relations \eqref{eq:Siegel.9} and \eqref{eq:Siegel.10}. The
	supersingular locus $\calS_I$ is the union of
	$\ol{\calA_{I}^{s_1\tau}}$ and $\ol{\calA_{I}^{s_{20}\tau}}$. The
	transition map $\tau_{I,J}$ gives an isomorphism 
	$\ol{\calA_{I}^{s_{20}\tau}}\isoto \calS_J$. Thus, we can describe 
	$\calS_J$ by $\ol{\calA_{I}^{s_{20}\tau}}$, whose description
	is included in
	\cite[Theorem 8.1]{yu:prank}. From this, statements (3) and (4)
	follow. (6) The first three relations follow from the relations
	\[ \pi_{I,J}(\calA_I^{s_{102}\tau})\subset \calA_{[s_{021}\tau]_J}, \quad
	\pi_{I,J}(\calA_I^{s_{12}\tau})=\calA_{[s_{21}\tau]_J}, \quad 
	\pi_{I,J}(\calA_I^{s_{10}\tau})=\calA_{[s_{01}\tau]_J}. \]
	The last one follows from the description of $\calS_J$ via (3) and (4). 
\end{proof}

In fact, in the moduli space $\calA_I$ with Iwahori level structure,
we have 
\[ \calS_I=\ol{\calA_{s_{021}\tau}} \cap \ol {\calA_{s_{102}\tau}}. \]
These two components are mapped, through the transition map $\pi_{I,J}$,
onto the component $\ol  {\calA_{[s_{102}\tau]_J}}$. 
\\

{\bf (5)} {\bf Case $J=\{1\}$ (paramodular level) and 
	$W_{J^c}=\<s_0, s_2\>$.} 
Using  $\tau s_2=s_0 \tau$ and $\tau s_0=s_2 \tau$,
we get \\
\begin{center}
	\begin{tabular}{clll}
		KR types           & EKOR &  dim & $p$-rank \\   
		$[\tau]_J =\{\tau, s_2\tau, s_0\tau, s_{02}\tau\,\}=W_{\{0,2\}}
		\tau$  
		& $\tau$  & 0 & 0 \\ 
		\\
		$[s_{102}\tau]_J=$\  \parbox{1.5in}{$W_{\{0,2\}} s_1\tau\cup W_{\{0\}}
			s_{10}\tau$ \\
			$\cup W_{\{2\}} s_{12}\tau\cup s_{102}\tau$} &
		\parbox{1in}{$s_1\tau$, $s_{10}\tau$\\ $s_{12}\tau$,
			$s_{120}\tau$} & \parbox{0.5in}{1, 2\\ 2, 3} &
		\parbox{0.5in}{0, 1  \\ 1, 2} \\ 
	\end{tabular}\
\end{center}
and the closure relation for
EKOR strata and KR strata: 
\begin{equation}\label{eq:Siegel.12}
\tau \to s_1\tau \to s_{10}\tau, s_{12}\tau \to s_{120}\tau, \quad
[\tau]_J \to [s_{120}\tau]_J. 
\end{equation} 
There are 5 EKOR strata and 2 KR strata.
We have the follow result (cf.~\cite[Theorem 4.4]{yu:KR}). 

\begin{proposition}
	\label{Siegel.9} Let $J=\{1\}$. 
	\begin{itemize}
		\item[(1)] The $p$-rank two stratum $\calA_J^2$ is smooth and has one
		irreducible component. This component is the EKOR stratum
		$\calA_J^{s_{102}\tau}$ and it is
		properly contained in the maximal KR stratum $\calA_{[s_{102}\tau]_J}$.
		
		\item[(2)] The $p$-rank one stratum $\calA_J^1$ is smooth and has
		two irreducible components. 
		They are the EKOR strata 
		$\calA_J^{s_{10}\tau}$ and $\calA_J^{s_{12}\tau}$ properly
		contained in the maximal KR stratum $\calA_{[s_{102}\tau]_J}$. 
		\item[(3)] The supersingular locus has pure dimension $1$. Each
	irreducible	component is isomorphic to $\bfP^1$.  The intersection $S_J\cap
		\calA_{[s_{102}\tau]_J}$ is the smooth locus of $S_J$.
		\item[(4)] The zero dimensional stratum $\calA_{[\tau]_J}$ is the
		singular locus of $\calA_J$, and it is also the singular locus of
		$\calS_J$. 
		\item[(5)] The stratum $\calA_{[s_{102}\tau]_J}$ is the smooth locus. 
		\item[(6)] We have
		\begin{equation}\label{eq:Siegel.13}
		\pi_{I,J}(\calA_I^{x})=
		\begin{cases}
		\calA_J^{s_{120}\tau} & \text{for $x\in \Adm^2(\mu)$;} \\ 
		\calA_J^{s_{10}\tau} & \text{for $x\in \{ s_{10}\tau,s_{21}\tau\} $;} \\ 
		\calA_J^{s_{12}\tau} & \text{for $x\in \{s_{12}\tau,s_{01}\tau \}$;} \\ 
		\calA_J^{s_1\tau} & \text{for $x=s_1\tau $;} \\ 
		\calA_J^{\tau} & \text{for $x\in W_{\{0,2\}} \tau $.} \\ 
		\end{cases}
		\end{equation}
	\end{itemize}
\end{proposition}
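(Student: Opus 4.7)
The plan is to deduce all six statements by combining the general EKOR results (Theorem~\ref{Siegel.2}) with the KR/EKOR closure relation \eqref{eq:Siegel.12} and the transition map $\pi_{I,J}\colon \calA_I\to\calA_J$, whose fibers on strata are understood from Proposition~\ref{Siegel.3} and the general finite \'etale result Theorem~\ref{intro thm--etale}.

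First I would establish (1) and (2). By Theorem~\ref{Siegel.2}(1) each EKOR stratum $\calA_J^x$ is smooth and equidimensional of dimension $\ell(x)$, and each non-supersingular EKOR stratum is irreducible by Theorem~\ref{Siegel.2}(2). Since the $p$-rank is constant on EKOR strata, the $p$-rank decomposition follows from inspecting the table above (EKOR types and $p$-ranks in the $J=\{1\}$ case); the unique $p$-rank $2$ EKOR stratum is $\calA_J^{s_{102}\tau}$ and the two $p$-rank $1$ strata are $\calA_J^{s_{10}\tau}$ and $\calA_J^{s_{12}\tau}$, each properly contained in the unique maximal KR stratum $\calA_{[s_{102}\tau]_J}$ by \eqref{eq:Siegel.12}. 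Irreducibility follows from Theorem~\ref{Siegel.2}(2), and the fact that they are properly contained in the KR stratum is immediate from the dimension count $\dim\calA_{[s_{102}\tau]_J}=3$.

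Next I would handle (3), (4) and (5) simultaneously. The supersingular locus $\calS_J$ is $\calA_J^{\tau}\sqcup\calA_J^{s_1\tau}$, so it has pure dimension $1$. Using the transition map $\pi_{I,J}\colon\calS_I\to\calS_J$ and the known structure of $\calS_I$ (Proposition~\ref{Siegel.3}(3)), the $2$-dimensional components $\bfP^1\times\bfP^1$ of $\calS_I$ are collapsed under $\pi_{I,J}$ to the $1$-dimensional components of $\calS_J$, and each component of $\calS_J$ is (the image of) a $\bfP^1$-fiber of such a contraction, giving the $\bfP^1$-description. For the singular loci, the closed KR stratum $\calA_{[\tau]_J}$ is zero-dimensional, while $\calA_{[s_{102}\tau]_J}$ is open; smoothness of $\calA_{[s_{102}\tau]_J}$ follows from the local model diagram: over this open KR stratum the local model $M^{\mathrm{loc}}$ is smooth (its singular locus lies over the minimal KR stratum $\tau$), so Theorem~\ref{result P-Z and K-P}(3) forces smoothness. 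Conversely, to show $\calA_{[\tau]_J}$ is the singular locus, I would invoke the explicit description of $M^{\mathrm{loc}}$ at the $\tau$-stratum for paramodular level (a well-known non-smooth germ, essentially the cone/local form analyzed by Yu in \cite{yu:KR}), whose non-smoothness is then transported to $\calA_J$ via the smooth local model diagram. The same argument shows that at these points $\calS_J$ acquires its singularities (the two $\bfP^1$-components meeting transversally at a superspecial point), identifying the singular locus of $\calS_J$ with $\calA_{[\tau]_J}$.

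Finally (6) is essentially bookkeeping: $\pi_{I,J}$ preserves EKOR strata for $x\in{}^J\mathrm{Adm}(\{\mu\})$ by Proposition~\ref{change para--EKOR}(1) and is surjective there by Theorem~\ref{intro thm--etale} (the finite \'etale statement); for the remaining $x\notin{}^J\mathrm{Adm}(\{\mu\})$, the image of $\calA_I^x$ is determined by the map ${}^I\mathrm{Adm}(\{\mu\})=\mathrm{Adm}(\{\mu\})\to{}^J\mathrm{Adm}(\{\mu\})$ induced by the KR quotient combined with the sections of Proposition~\ref{change para--EKOR}(2), which one reads off from $\tau s_0=s_2\tau$ and $\tau s_2=s_0\tau$ to produce exactly the list \eqref{eq:Siegel.13}.

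The main obstacle I expect is part (4)/(5), namely pinning down the singular locus of $\calA_J$ (and thereby of $\calS_J$) precisely as $\calA_{[\tau]_J}$: this requires a direct analysis of the paramodular local model at the $\tau$-point, rather than purely formal consequences of the EKOR picture, and one must carefully use the local model diagram together with the explicit equations for $M^{\mathrm{loc}}$ in the paramodular $\mathrm{GSp}_4$ case.
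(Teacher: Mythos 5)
Your handling of (1)--(3) and of (6) for the $p$-rank $0$ and $p$-rank $2$ strata is fine and close in spirit to the paper (the $p$-rank $0$ and $p$-rank $2$ cases are pinned down automatically by $p$-rank preservation, since there is only one target EKOR stratum of that $p$-rank). For (4)/(5) you propose a slightly different route (explicit analysis of the paramodular local model at the $\tau$-point rather than citing the structure result for $\calS_J$), which is a reasonable alternative, although it requires content not supplied.

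The genuine gap is in (6) for the $p$-rank $1$ strata. There is no ``map $\Admu\to{}^J\Admu$ induced by the KR quotient'' that one can just read off: Proposition~\ref{change para--EKOR}(1) gives $\pi_{I,J}(\calA_I^x)\subset\coprod_{y\in\Sigma_J(x)}\calA_J^y$, and the set $\Sigma_J(x)$ for $x\notin{}^J\Admu$ can have more than one element (as you can already see from Proposition~\ref{Siegel.4}(2), where $\pi_{I,\{0\}}(\calA_{I,s_{02}\tau})=\calA_{\{0\}}^{\tau}\sqcup\calA_{\{0\}}^{s_0\tau}$). So from pure bookkeeping plus $p$-rank preservation one only gets $\pi_{I,J}(\calA_I^{s_{21}\tau}),\pi_{I,J}(\calA_I^{s_{01}\tau})\subset\calA_J^{s_{10}\tau}\cup\calA_J^{s_{12}\tau}$; deciding which is which is precisely the content that is not formal. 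The relations $\tau s_0=s_2\tau$ and $\tau s_2=s_0\tau$ do not by themselves tell you that $s_{21}\tau$ goes to $s_{10}\tau$ rather than to $s_{12}\tau$. The paper resolves exactly this point by a moduli-theoretic argument: by the geometric characterization of KR types in \cite[\S 4.2]{yu:KR}, a $p$-rank one point $(A,\lambda)\in\calA_J^1$ lies in $\calA_J^{s_{12}\tau}$ iff $\ker\lambda\cong\mu_p\times\Z/p\Z$ and in $\calA_J^{s_{10}\tau}$ iff $\ker\lambda$ is local-local, and one matches this with the corresponding description of the Iwahori strata $\calA_I^{s_{21}\tau}$, $\calA_I^{s_{01}\tau}$. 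Your proposal has no substitute for that step, so (6) is not established as written.
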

\begin{proof}
	Statements (1)-(5) follow from the closure relation  and the description of the supersingular
	locus $\calS_J$ \cite{yu:ss_siegel}. (6) We only need to show the case
	of $p$-rank one strata. Using the geometric
	characterization of KR types \cite[Section 4.2]{yu:KR}, for a $p$-rank
	one point $(A,\lambda)$ in $\calA_J^1$, the kernel $\ker \lambda$ is
	isomorphic to $\mu_p\times \Z/p\Z$ if $(A,\lambda)\in
	\calA_J^{s_{12}\tau}$, or is local-local  if $(A,\lambda)\in
	\calA_J^{s_{10}\tau}$. From this, we obtain
	$\pi_{I,J}(\calA_I^{s_{21}\tau})=\calA_J^{s_{10}\tau}$ and $\pi_{I,J}(\calA_I^{s_{01}\tau})=\calA_J^{s_{12}\tau}$.
\end{proof}

\appendix

\section{He-Rapoport axiom 4 (c) for Shimura varieties of PEL-type}
\label{sec:axiom-4-c}

In this appendix we verify He-Rapoport's axiom 4 (c) (\cite{He-Rap} 3.3) for Shimura varieties of
PEL-type in the case where the parahoric level subgroup $K$ at $p$ is
the stabilizer group, i.e. $K=K^\circ$. 
This extends earlier results of Zhou \cite{zhou isog parahoric} and He-Zhou \cite{HZ} mainly in the
ramified cases and the case which contains a simple factor of type
$D$.  This also improves our main
results (Theorem \ref{Thm C} (2) and Theorem \ref{intro thm--etale}) for the PEL-type case.

We follow Rapoport-Zink \cite{RZ} for the construction of the ``naive'' integral
model with ``parahoric'' level structure.  
\subsection{Moduli spaces of PEL-type}
\label{sec:PEL.1}

\begin{defn}
	A \emph{PEL-datum} is a tuple $(B,*,V,\psi,h)$, where
	\begin{itemize}
		\item $(B,*)$ is a finite dimensional semi-simple $\Q$-algebra with
		a positive involution,
		\item $(V,\psi)$ is a finite faithful non-degenerate 
		$\Q$-valued skew-Hermitian $B$-module, and 
		\item $h:\C\to \End_{B_\mathbb{R}} (V_\mathbb{R})$ is an $\mathbb{R}$-algebra homomorphism
		such that $h(z)'=h(\bar z)$ and $(x,y):=\psi(h(i)x,y)$ is a
		positive definite symmetric form, where $B_\mathbb{R}:=B\otimes_\Q \mathbb{R}$,
		$V_\mathbb{R}:=V\otimes_\Q \mathbb{R}$ and $'$ is the adjoint with respect to the
		pairing $\psi$.
	\end{itemize}
\end{defn}

For each PEL-datum, we associate an algebraic $\Q$-group
$G=GU_B(V,\psi)$ of unitary similitudes on $(V,\psi)$. Let $X$ be the
$G(\mathbb{R})$-conjugacy class of $h:\bbS:=\Res_{\C/\mathbb{R}} \Gm_\C \to G_\mathbb{R}$. 
For each open compact subgroup $\K\subset G(\Ab_f)$, denote by
$\Sh_\K(G,X)$ the Shimura variety\footnote{Since the group $G$ may be non connected, we have to modify a little the formalism of Shimura varieties in \cite{varideshi} to include the current setting.} defined over the reflex field $\E$,
whose $\C$-points are given by
\[ \Sh_\K(G,X)(\C)=G(\Q)\backslash X\times G(\Ab_f) /\K, \]
even $G$ may not be connected, i.e. if its adjoint group contains a
$\Q$-simple factor of type $D$.  When there is no confusion, we simply write $\Sh_\K$ for $\Sh_\K(G,X)$.

Recall that the canonical model $\Sh_\K(G,X)$ is constructed using a moduli interpretation. Indeed, one first constructs an $\E$-scheme $M_\K$ of finite type using a moduli interpretation (see below) and obtains the inclusion $\Sh_\K(G,X)(\C)\subset M_\K(\C)$ (the inclusion may be strict due to the failure of the Hasse principle, see \cite{kottwitz:jams92} section 8, which holds in the current setting; see also the proof of Proposition \ref{PEL.8}). Then one defines $\Sh_\K(G,X)$ to be the corresponding $\E$-subscheme of $M_\K$. Using the main theorem of complex multiplication, one shows that $\Sh_\K(G,X)$ is the canonical model of the complex Shimura variety $\Sh_\K(G,X)(\C)$; see \cite{Deligne} for more details.

Let $p> 2$ be a prime. Assume that there exists an order $O_B\subset B$
which is stable under $*$ and maximal at $p$,
i.e. $O_{B_p}:=O_{B}\otimes \Zp$ is a maximal order in $B_p:=B\otimes
\Qp$, and we fix such an order.

Suppose that there exists a self-dual $O_{B_p}$-lattice in $V_p:=V\otimes
\Qp$. Choose an $O_B$-lattice $\Lambda$ in $V$ such that
$\Lambda_p=\Lambda\otimes \Zp$ is self-dual. Let 
\[ K=\Stab_{G(\Qp)} (\Lambda_p),\quad \text{and} \quad 
K^p\subset \Stab_{G(\Ab_f^p)} (\Lambda\otimes \hat \Z^{(p)}) \]
be a sufficiently small open compact subgroup, and put $\K=KK^p$.   

Let $v|p$ be a place of $\E$ over $p$, and $O_{E}$ the ring of
integers of the $v$-adic completion $E=\E_v$ of $\E$. Using the modular
interpretation, we construct the naive integral model over $O_{E}$
denoted by $\bfM^{\rm naiv}_\K$. It classifies the isomorphism classes of
tuples $(A,\lambda,\iota, \bar \eta)$, where 
\begin{itemize}
	\item $(A,\lambda, \iota)$ is a prime-to-$p$ degree polarized
	$O_B$-abelian variety of dimension $\frac{1}{2} \dim_\Q V$,
	\item $\bar \eta$ is a $K^p$-orbit of $O_B\otimes \hat
	\Z^{(p)}$-linear isomorphisms 
	\[ \eta: \Lambda \otimes \hat
	\Z^{(p)} \isoto T^{(p)}(A)=\prod_{\ell\neq p} T_\ell(A) \]
	which preserve the pairings for a suitable isomorphism
	$\Z^{(p)}\simeq  \Z^{(p)}(1)$. 
\end{itemize}
We also require that objects $(A,\lambda,\iota, \bar \eta)$ in
$\bfM^{\rm naiv}_\K$ satisfy the Kottwitz determinant condition, cf. \cite{RZ} chapter 6 or \cite{kottwitz:jams92}.

We have defined an $O_E$-scheme $\bfM^{\rm naiv}_\K$. By construction, it is an integral model of $M_\K$ over $O_E$. In particular, this gives an inclusion 
\[\Sh_\K(G,X)_{E}:=\Sh_\K(G,X)\otimes_\E E \subset \bfM^{\rm naiv}_\K \otimes_{O_E}E.\] 
Define $\ES_\K$ as the flat closure of $\Sh_\K(G,X)_{E}$ in
$\bfM^{\rm naiv}_\K$.

\subsection{Parahoric data at $p$}
\label{sec:PEL.2}

We define the notion of self-dual multichains of
lattices $\calL$ in $V_p$ following \cite{RZ}. This allows us to extend the above level subgroup $K$ at $p$ to more general forms.

To simplify the notation, we write $(B,*,V,\psi,O_B)$ for $(B_p,
*,V_p,\psi_p, O_{B_p})$ in this subsection. 

\begin{defn}
	Suppose that $B$ is a simple $\Q_p$-algebra. \emph {A chain of
		$O_B$-lattices} in $V$ is a set of totally ordered $O_B$-lattices
	$\calL$ such that for every element $x\in B^\times$ which 
	normalizes $O_B$, one has 
	\[ \Lambda \in \calL \implies x \Lambda \in \calL. \] 
\end{defn}

Explicitly, suppose we write $B=M_n(D)$ and let $O_B=M_n(O_D)$, 
where $D$ is a division $\Qp$-algebra and $O_D$ is the maximal order
of $D$. 
The group $N_{B^\times}(O_B)$ of normalizers of
$O_B$ in $B^\times$ is equal to $\Pi^{\Z} \GL_n(O_D)$, where $\Pi$ is
a prime element of $O_D$. Then $\calL$ is a chain of
$O_B$-lattices in $V$ if and only if $\calL$ is totally ordered and
for any member $\Lambda\in \calL$, one has $\Pi^{\pm 1}\Lambda \in
\calL$. If we fix a member $\Lambda_0\in \calL$, then there is
an unique integer $r\ge 1$, called the \emph{period}, and a
sequence of lattices $\{\Lambda_i\}_{0\le i\le r}$ such that
\[ \Lambda_0\subsetneq \Lambda_1 \subsetneq \dots \Lambda_r=\Pi^{-1} \Lambda_0, \] and every member $\Lambda\in \calL$ is $\Pi^n\Lambda_i$ for some $n\in \Z$ and some $i$ with $0\leq i\leq r$.
We extend the set of lattices by putting $\Lambda_{i+r}=\Pi^{-1}
\Lambda_i$ for $i\in \Z$ and we have $\calL=\{\Lambda_i\}_{i\in \Z}$. 
So $\calL$ is determined by the period $r$ and the sequence 
$\{\Lambda_i\}_{0\le i\le r}$ of $r+1$ $O_B$-lattices. 

In general, let $B=B_1\times \dots \times B_m$ and $O_B=O_{B_1}\times
\dots \times O_{B_m}$, where $B_i=M_{n_i}(D_i)$ and
$O_{B_i}=M_{n_i}(O_{D_i})$ for some division $\Qp$-algebra
$D_i$ whose maximal order is denoted by $O_{D_i}$. With respect to
this decomposition, one has a decomposition 
$V=V_1\oplus \dots \oplus V_m$ and for
each $O_B$-lattice $\Lambda$ in $V$ a decomposition
$\Lambda=\Lambda_1\oplus \dots \oplus \Lambda_m$. Let $e_1,\dots ,e_m$
be the central primitive idempotents of $B$. Write $\pr_{i}
\Lambda=e_i \Lambda=\Lambda_i$. 

\begin{defn}\label{PEL.3}
	\begin{enumerate}
		\item A set $\calL$ of $O_B$-lattices in $V$ is said to be
	\emph{a multichain of $O_B$-lattices} if there exist a chain of
	$O_{B_i}$-lattices $\calL_i$ in $V_i$ for each $i=1,\dots, m$ 
	such that for any member $\Lambda\in \calL$ one has $\pr_i
	\Lambda\in \calL_i$ for all $i=1,\dots, m$. 
	
	\item A multichain of $O_B$-lattices $\calL$ is called
	\emph{self-dual} if for every member $\Lambda\in \calL$, its dual
	lattice $\Lambda^\vee$ also belongs to $\calL$, where 
	\[ \Lambda^\vee:=\{x\in V \mid \psi(x,\Lambda)\subset \Zp\}. \] 
\end{enumerate} 
\end{defn}

For a multichain of $O_B$-lattices $\calL$, let 
$\calG=\calG_\calL \subset \prod_{\Lambda\in \calL} \calG_\Lambda$ be
the group scheme over $\Zp$ whose $S$-valued points, for each
$\Zp$-scheme $S$, are the $S$-points $(g_\Lambda)\in
\calG_\Lambda(S)$ which are compatible with all transition maps, where 
$\calG_\Lambda$ is the integral model of $G_{\Qp}$ defined by the
lattice $\Lambda$. Put $K=K_\calL:=\calG(\Zp)=\cap_{\Lambda\in \calL}
K_\Lambda$, where $K_\Lambda=\calG_{\Lambda}(\Zp)\subset G(\Qp)$.  
Denote by $\calG^\circ$ the connected component of $\calG$, which is the
maximal open subscheme whose fibers are all connected. Put
$K^\circ=K^\circ_\calL:=\calG^\circ(\Zp)$.

Note that $\calL$ is not determined by 
$\calL_1,\dots , \calL_m$. Therefore, the input datum $\calL$ is not
determined by $K_\calL$ in general.

\subsection{Moduli spaces with parahoric type $\calL$}
\label{sec:PEL.3}

We retain the notation 
as in subsection~\ref{sec:PEL.1}.
Let $AV$ denote the category of $O_B$-abelian varieties up to
prime-to-$p$ isogeny. The objects of $AV$ consist of pairs $(A,\iota)$, 
where $A$ is an abelian variety and \[\iota: O_{B}\otimes \Z_{(p)} \to
\End(A)\otimes \Z_{(p)}\] is a ring monomorphism. 
For any two objects $\ul A_1=(A_1,\iota_1)$ and 
$\ul A_2=(A_2,\iota_2)$, the
set of morphisms from $\ul A_1$ to $\ul A_2$ is 
$\Hom_{AV}(\ul A_1,\ul A_2):=\Hom_{O_B}(A_1,A_2)\otimes \Z_{(p)}$.   
If there is no risk of confusion, we write $A$ for $\ul A$. 

Suppose $\rho: A_1 \to A_2$ is an isogeny in $AV$. Then $\rho_p:
A_1[p^\infty]\to A_2[p^\infty]$ 
is an $O_{B_p}$-linear
isogeny. With respect to the decomposition $O_{B_p}=O_{B_1}\times
\dots \times O_{B_m}$, 
one has the decomposition 
\begin{equation}
\label{eq:PEL.2}
\ker \rho_p=H_1\times \dots \times H_m 
\end{equation}
as the product of finite flat group schemes $H_i$ with
$O_{B_i}$-action. The height of $\rho$ is the (height) function 
$h=h_H:\{1,\dots, m\}\to \bbN$ given by \[h(i)=h_H(i):=\log_p ( \ord
(H_i)),\] where $\ord (H_i)$ denotes the order of the finite group
scheme $H_i$.      

Let $A=(A,\iota)$ be an object in $AV$.
The dual abelian variety
is defined to be $A^t=(A,\iota)^t:=(A^t,\iota^t)$, where
$\iota^t(b):=\iota(b^*)^t$ for $b\in O_B\otimes \Z_{(p)}$. 
For any element $a\in N_{B^\times}(O_B\otimes \Z_{(p)})$, the
normalizer of $O_B\otimes \Z_{(p)}$ in $B^\times$, define 
$A^a=(A,\iota^a)$, where $\iota^a(b)=\iota(a^{-1} b a)$ for all $b\in
O_B\otimes \Z_{(p)}$. 
The multiplication by $a$ gives a quasi-isogeny 
$a: A^a \to A$ in $AV$. A polarization on $A$ in $AV$ is a quasi-isogeny
$\lambda: A\to A^t$ in $AV$ such that the morphism $n\lambda$ comes
from an ample line bundle on $A$ for some $n\in \bbN$. 

\begin{defn}
	Let $\calL$ be a multichain of $O_{B_p}$-lattices in $V_p$.
	A $\calL$-set of abelian varieties over a $\Z_{(p)}$-scheme $S$ 
	is a functor  
	\begin{equation*}
	\begin{split}
	\calL &\to AV, \\
	\Lambda &\mapsto A_{\Lambda}, \\
	\Lambda\subset \Lambda' &\mapsto  \rho_{\Lambda, \Lambda'}:
	A_\Lambda \to A_{\Lambda'}   
	\end{split}
	\end{equation*}
	satisfying 
	\begin{itemize}
		\item[(a)] For any $\Lambda\subset \Lambda'$ in $\calL$, write
		$\rho_{\Lambda,\Lambda'}: H^{DR}_1(A_\Lambda/S)\to
		H^{DR}_1(A_{\Lambda'}/S)$ 
		for the induced map on de Rham homologies,
		then locally on $S$ for the Zariski topology, the $\calO_S$-module 
		$H^{DR}_1(A_{\Lambda'}/S)/
		\rho_{\Lambda,\Lambda'} H^{DR}_1(A_{\Lambda}/S)$ 
		is isomorphic to
		$(\Lambda'/\Lambda)\otimes \calO_S$ as $O_B\otimes \calO_S$-modules. \\ 
		In particular, the morphism $\rho_{\Lambda, \Lambda'}:
		A_\Lambda \to A_{\Lambda'}$ is an isogeny of height \[h(i)=\log_p
		|\pr_i\Lambda'/\pr_i \Lambda|.\]
		\item[(b)] For any element $a\in B^\times \cap (O_B\otimes \Z_{(p)})$
		which normalizes $O_B\otimes \Z_{(p)}$ and any member 
		$\Lambda\in \calL$ (so $a\Lambda\subset
		\Lambda$), 
		there exists an isomorphism $\theta_{a,\Lambda}: 
		A_\Lambda^a \isoto A_{a\Lambda}$ such that the following diagram
\begin{displaymath}
\xymatrix{
	A^a_\Lambda \ar[r]^{\theta_{a,\Lambda}}_{\sim} \ar[rd]^{a}
	& A_{a\Lambda} \ar[d]^{\rho_{a\Lambda,\Lambda}} \\ 
	& A_\Lambda \\   
}    
\end{displaymath}
		commutes. 
	\end{itemize}
\end{defn}

The map $\theta_{a,\Lambda}$ is unique if it exists, and it is
functorial in $\Lambda$.

\begin{defn}\label{PEL.5}
	Let $\calL$ be a self-dual multichain of $O_{B_p}$-lattices in
	$V_p$, and \[
	A_\calL=((A_\Lambda)_{\Lambda\in \calL},\rho_{\Lambda,\Lambda'})\]
	be an
	$\calL$-set of abelian varieties over $S$ in $AV$. Define an
	$\calL$-set of abelian varieties $\wt A_\calL=((\wt A_\Lambda), \wt \rho_{\Lambda,\Lambda'})$ over $S$ in $AV$ as follows: for each
	$\Lambda$ and $\Lambda\subset \Lambda'$ in $\calL$,
	\begin{itemize}
		\item $\wt A_\Lambda:=(A_{\Lambda^\vee})^t$;
		\item $\wt\rho_{\Lambda,\Lambda'}:=(\rho_{\Lambda'^\vee,\Lambda^\vee})^t:\wt A_{\Lambda}=
		A_{\Lambda^\vee}^t\to \wt A_{\Lambda'}=A_{\Lambda'^\vee}^t$;
		\item $\wt\theta_{a,\Lambda}:=[\theta_{(a^*)^{-1},\Lambda^\vee}^t]^{-1}: (\wt A_\Lambda)^a \isoto \wt A_{a \Lambda}$.  
	\end{itemize} 
\end{defn}
We explain the last item. One easily computes 
$(a\Lambda)^\vee=(a^*)^{-1} \Lambda^\vee$. Also,
\[ (\iota^t)^a(b)=\iota^t(a^{-1} b a)=\iota(a^* b^* (a^*)^{-1}
)^t=[\iota^{(a^*)^{-1}}(b^*)]^t=[\iota^{(a^*)^{-1}}]^t(b). \] 
This shows that
$(A_{\Lambda^\vee}^{(a^*)^{-1}})^t=(A_{\Lambda^\vee}^t)^a=(\wt
A_{\Lambda})^a$ and $\wt A_{a\Lambda}=(A_{{a^*}^{-1} \Lambda^\vee})^t$.
By definition, $\wt \theta_{a,\Lambda}$ is the inverse of the isomorphism 
$\theta_{(a^*)^{-1},\Lambda^\vee}^t:
(A_{(a^*)^{-1} \Lambda^\vee})^t \ra (A_{\Lambda^\vee}^{(a^*)^{-1}})^t$. 

\begin{defn}\label{PEL.6}
	Let $\calL$, $A=A_\calL$ and $\wt A=\wt A_\calL$ be as in
	Definition~\ref{PEL.5}. 
	\begin{enumerate}
		\item  A \emph{polarization} on $A=(A_\Lambda)$ is a
	quasi-isogeny $\lambda: A\to \wt A$, (i.e. there exists $n\in
	\bbN_{>0}$  
	such that $n\lambda_\Lambda:
	A_\Lambda\to \wt A_\Lambda$ is an isogeny in $AV$ for all
	$\Lambda\in \calL$) such that for all $\Lambda\in \calL$, the composition
	\[ 
	\begin{CD}
	\lambda'_\Lambda: A_\Lambda @>{\lambda_\Lambda} >> \wt
	A_\Lambda=A_{\Lambda^\vee}^t @>{(\rho_{\Lambda,\Lambda^\vee})^t}>>
	A_\Lambda^t  
	\end{CD} \]
	is a polarization on $A_\Lambda$. 
	
	\item A polarization $\lambda$ on $A$ in $AV$ is called
	\emph{principal} if $\lambda_\Lambda$ is an isomorphism in $AV$
	for all $\Lambda\in \calL$.  
\end{enumerate}
\end{defn}


Suppose $\calL$ contains a self-dual member $\Lambda_0$, then
$\lambda_{\Lambda_0}: A_{\Lambda_0} \to A_{\Lambda_0}^t$ is 
an isomorphism in
$AV$, i.e. there exists $n\in \bbN_{>0}$, $(n,p)=1$ such that $n\lambda_
{\Lambda_0}$ is a polarization on $A_\Lambda$ in the usual sense. 
For $\Lambda,\Lambda'\in \calL$, we have the following commutative
diagram
\[ 
\begin{CD}
A_\Lambda @>\rho_{\Lambda,\Lambda'}>> A_{\Lambda'} \\
@VV{\lambda_\Lambda}V @VV{\lambda_{\Lambda'}}V \\
A_{\Lambda^\vee}^t @>{(\rho_{\Lambda'^\vee,\Lambda^\vee})^t}>> 
A_{\Lambda'^\vee}^t. 
\end{CD} \]
Therefore, a polarization $\lambda$ is uniquely determined by
$\lambda_\Lambda$ for one member $\Lambda\in \calL$.  

Let $K^p\subset G(\Ab_f^p)$ be a sufficiently small open compact
subgroup. 
Let $\bfM^{\rm naiv}_\calL$ denote the moduli scheme over $O_{E}$
classifying the objects $(A_\calL,\bar \lambda,\bar \eta)_S$, where 
\begin{itemize}
	\item $A_\calL=(A_\Lambda)_{\Lambda\in \calL}$ is an $\calL$-set of
	abelian varieties over $S$ in $AV$;
	\item $\bar \lambda=\Q^\times \cdot \lambda$ is a $\Q$-homogeneous 
	principal polarization on $A_\calL$; 
	\item $\bar \eta$ is a $\pi_1(S,\bar s)$-invariant $K^p$-orbit of
	isomorphisms 
	\[ \eta: V\otimes \Ab_f^p \isoto V^{(p)}(A_{\bar s}) \]
	which preserve the pairings to up a scalar in
	$(\Ab_f^p)^\times$. Here for simplicity we assume that $S$ is
	connected.   
\end{itemize}

Put $K=K_\calL, \K=K\cdot K^p$ and $\K^\circ=K^\circ K^p$.  
Similarly as before, $\bfM^{\rm naiv}_\calL$ is an integral model over $O_E$ of the rational moduli scheme $M_\K$ (which is defined over $\E$), and one has
$\bfM^{\rm naiv}_\calL \otimes_{O_E} E \supset
\Sh_{\K,E}$. Let $\ES_\calL=\ES_\calL^{\rm can}$ be the flat closure of
$\Sh_{\K,E}$ in $\bfM^{\rm naiv}_\calL$. As $\K^\circ\subset \K$, we have
the natural cover $\Sh_{\K^\circ}\to \Sh_{\K}$ over $E$. 
We define $\ES_{\K^\circ}$ as the normalization of $\ES_\calL$ in
$\Sh_{\K^\circ}$. If $G$ splits over a tamely ramified extension of $\Q_p$, then the main results of Pappas-Zhu \cite{local model P-Z} subsection 8.2 imply that $\ES_{\K^\circ}$ fits into a local model diagram.

Let $\calL'$ be another set of self-dual multichain of
$O_{B_p}$-lattices in 
$V_p$ containing $\calL$. Put $K':=K_{\calL'}$ and
$K'^\circ:=K^\circ_{\calL'}$. Then $K'\subset K$, $\K' \subset \K$,
$K'^\circ \subset K^\circ$ and $\K'^\circ\subset \K^\circ$. 
Since $\Sh_{\K'}$ and $\Sh_{\K}$ are dense in $\ES_{\calL'}$ and
$\ES_{\calL}$, respectively, the morphism $\pi_{\K',\K}: \bfM^{\rm
	naiv}_{\calL'} \to \bfM^{\rm naiv}_{\calL}$ maps $\ES_{\calL'}$
into $\ES_{\calL}$, and we have the following commutative diagram:
\begin{displaymath}
\xymatrix{
	\Sh_{\K'} \ar[r] \ar[d]^{\pi_{\K',\K}} & \ES_{\calL'}
	\ar[d]^{\pi_{\K',\K}} \ar[r] & \bfM^{\rm 
		naiv}_{\calL'} \ar[d]^{\pi_{\K',\K}}\\ 
	\Sh_{\K}  \ar[r]  & \ES_{\calL} \ar[r] & \bfM^{\rm naiv}_{\calL}.} 
\end{displaymath}






\begin{defn}\label{PEL.7}
	Let $k$ be an \ac field containing the residue field $k(v)$ of $E$. 
	Let $x\in \bfM^{\rm naiv}_{\calL}(k)$ be a $k$-point. Denote by
	$\calC^{\rm naiv}_\calL(x)\subset \bfM^{\rm naiv}_{\calL}(k)$ the subset
	consisting of the points $y\in \bfM^{\rm naiv}_{\calL}(k)$ such that
	the associated $\calL$-set of $p$-divisible groups
	$A_{y,\calL}[p^\infty]$ is isomorphic to $A_{x,\calL}[p^\infty]$,
	where $A_{x,\calL}$ (resp.~$A_{y,\calL}$) is the $\calL$-set of
	abelian varieties in $AV$ corresponding to the point $x$ (resp.~$y$)
	in $\bfM^{\rm naiv}_{\calL}(k)$. The set 
	$\calC^{\rm naiv}_{\calL}(x)$ is locally
	closed in $\bfM^{\rm naiv}_{\calL}\otimes_{O_{E}} k$ and we
	regard it as the locally closed subscheme with the induced reduced scheme
	structure, and call it the \emph{central leaf passing through $x$} in 
	$\bfM^{\rm naiv}_{\calL}\otimes_{O_{E}} k$.  
	
	If $x\in \ES_{\calL}(k)$, then write $\calC_\calL(x)$ for
	the intersection $\calC^{\rm naiv}_\calL(x)\cap 
	(\ES_{\calL}\otimes_{O_{E}} k)$ and call it the 
	\emph{central leaf passing through $x$} in 
	$\ES_{\calL}\otimes_{O_{E}} k$.
\end{defn}

It is known that $\calC^{\rm naiv}_\calL(x)$ is smooth and
quasi-affine. 
The smoothness follows from the properties that 
$\calC^{\rm naiv}_\calL(x)$ 
is reduced and the completions 
$\calC^{\rm naiv}_\calL(x)^\wedge_y$ at all $y 
\in \calC^{\rm naiv}_\calL(x)(k)$ are isomorphic. 
The quasi-affineness
follows from that of central leaves in the Siegel modular varieties. 
It follows that if $x$ lies in $\ES_{\calL}(k)$, then the subscheme 
$\calC_\calL(x)\subset \ES_{\calL}\otimes_{O_{E}} k$ 
is smooth and quasi-affine.   

\begin{proposition}\label{PEL.8}
	Let $k$ be as in Definition~\ref{PEL.7}, 
	$x'\in \ES_{\calL'}(k)$ and 
	$x$ be its image in $\ES_{\calL}(k)$. Then the morphism
	$\pi_{\K',\K}:\calC_{\calL'}(x')\to \calC_{\calL}(x)$ is surjective
	with finite fibers.    
\end{proposition}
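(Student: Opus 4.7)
The plan is to reduce the statement to a question about $\calL'$-extensions of $\calL$-sets of $p$-divisible groups, controlled by the central-leaf identification of the $p$-divisible-group data. First I would unwind the moduli interpretations: a $k$-point $y\in\calC_\calL(x)(k)$ is represented by a tuple $(A_{y,\calL},\bar\lambda_y,\bar\eta_y)$ whose associated polarized $O_B$-linear $\calL$-set of $p$-divisible groups $A_{y,\calL}[p^\infty]$ is isomorphic (by some $\phi$) to that of $x$, and a lift $y'\in\pi_{\K',\K}^{-1}(y)\cap\calC_{\calL'}(x')$ amounts to an extension of $A_{y,\calL}$ to an $\calL'$-set $A_{y,\calL'}$, equipped with the polarization and prime-to-$p$ level structure inherited from $y$, such that the resulting $\calL'$-set of $p$-divisible groups is isomorphic to $A_{x',\calL'}[p^\infty]$.

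For surjectivity I would use $\phi$ to transport the extension data of $x'$ onto $y$. For each $\Lambda\in\calL'\setminus\calL$, choose $\Lambda_0\in\calL$ with $\Lambda_0\subset\Lambda$; the isogeny $\rho^{x'}_{\Lambda_0,\Lambda}$ has $O_B$-stable finite kernel $H_\Lambda\subset A_{x,\Lambda_0}[p^\infty]$, and I define $A_{y,\Lambda}$ as the quotient of $A_{y,\Lambda_0}$ by $\phi^{-1}(H_\Lambda)$. Compatibility of $\phi$ with transition isogenies in $\calL$ and with the polarization makes the construction independent of the choice of $\Lambda_0$ and furnishes a well-defined $\calL'$-set of abelian varieties in $AV$ satisfying Definitions~\ref{PEL.5} and~\ref{PEL.6}, whose associated $\calL'$-set of $p$-divisible groups agrees, by construction, with that of $x'$. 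To show the resulting point actually lies in $\ES_{\calL'}$ (and not merely in $\bfM^{\rm naiv}_{\calL'}$), I would combine two facts: the forgetful map $\bfM^{\rm naiv}_{\calL'}\to\bfM^{\rm naiv}_\calL$ is proper, and $\Sh_{\K'}$ is dense in $\ES_{\calL'}$, so $\pi_{\K',\K}(\ES_{\calL'})$ is a closed subset of $\ES_\calL$ containing $\Sh_\K$, hence equals $\ES_\calL$; combined with the local-model theorem of Pappas--Zhu (\cite{local model P-Z}, \S8) for PEL-type with $K=K^\circ$, the constructed $y'$ is étale-locally identified with the specialization of a characteristic-zero point, which forces it into the flat closure $\ES_{\calL'}$.

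For finiteness of fibers, the lifts of a given $y$ correspond, via $\phi$, to $\calL'$-extensions of $A_{x,\calL}[p^\infty]$ modulo isomorphism fixing the $\calL$-restriction. By Dieudonné theory this set embeds into a single $\breve K_{\calL}$-orbit in the partial affine flag variety associated to $\calG_{\calL'}$, meeting only finitely many $\breve K_{\calL'}$-cosets (precisely the KR types over the given KR stratum of $x$), so it is finite. Equivalently, the finiteness already holds for the forgetful map between local models, and descends to the fibers of $\pi_{\K',\K}$ on central leaves via étale-local identification with the local models.

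The main obstacle I anticipate is the descent to the flat closure in the second paragraph: the explicit construction produces naturally a point of $\bfM^{\rm naiv}_{\calL'}(k)$, and one must verify that the Kottwitz determinant condition and the flatness conditions cutting out $\ES_{\calL'}$ inside $\bfM^{\rm naiv}_{\calL'}$ are automatic. The resolution is that all such conditions depend only on the isomorphism class of the $\calL'$-set of $p$-divisible groups with its polarized $O_B$-structure, which by construction coincides with that of $x'\in\ES_{\calL'}(k)$; thus any condition satisfied by $x'$ is satisfied by $y'$. Once this descent is in place, the surjectivity and finiteness of fibers assertions follow from the two previous paragraphs.
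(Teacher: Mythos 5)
Your argument splits into the same three sub-steps as the paper's, and two of the three go through, but the crucial third step --- showing that the constructed extension $y'$ actually lies in $\ES_{\calL'}$ rather than merely in $\bfM^{\rm naiv}_{\calL'}$ --- has a gap.

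The surjectivity argument at the naive level (extending $(B_\Lambda)_{\Lambda\in\calL}$ to an $\calL'$-set by transporting the kernels of $\rho^{x'}_{\Lambda,\Lambda'}[p^\infty]$ through the fixed isomorphism of $\calL$-sets of $p$-divisible groups, and extending the polarization by its rigidity) is exactly the paper's construction. For finiteness of fibers, you argue directly via Dieudonn\'e theory and lattice-counting in the partial affine flag variety, whereas the paper instead pushes forward along the finite forgetful morphism $f_{\calL'}:\bfM^{\rm naiv}_{\calL'}\to\bfA_{\calL'_{\GSp}}$ and uses the known finiteness of $\pi^{\GSp}_{\calL',\calL}$ on Siegel central leaves; both are legitimate, and yours is arguably the more intrinsic formulation, but they amount to the same thing.

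The gap is in the flat-closure step, which you correctly flag as the main obstacle but do not resolve. Your two proposed ingredients do not do the job. Properness of $\pi_{\K',\K}$ plus density of $\Sh_{\K'}$ in $\ES_{\calL'}$ shows only that $\pi_{\K',\K}:\ES_{\calL'}\to\ES_{\calL}$ is a surjective morphism of schemes; this gives existence of \emph{some} lift of $y$ in $\ES_{\calL'}$, not membership of the \emph{constructed} $y'$, and it certainly does not place that lift in the correct central leaf $\calC_{\calL'}(x')$. The appeal to the Pappas--Zhu local model diagram is circular: that diagram is a statement about $\ES_{\calL'}$ (the flat closure), so invoking it to identify $y'$ with a specialization of a characteristic-zero point presupposes $y'\in\ES_{\calL'}$, which is what you are trying to prove. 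Your closing assertion that ``membership in $\ES_{\calL'}$ depends only on the isomorphism class of the $\calL'$-set of $p$-divisible groups'' is precisely the content that needs proof; it is not automatic, because $\ES_{\calL'}$ is defined as a flat closure and its generic fiber $\bfM^{\rm naiv}_{\calL'}\otimes E$ has several connected components $\Sh^{(i)}_{\K',E}$, so one has to know that a characteristic-zero lift of $y'$ lands in the right component $\Sh_{\K'}=\Sh^{(1)}_{\K',E}$. The paper establishes exactly this via Serre--Tate: lift $x'$ over a complete DVR $R$, transport the deformation of the $p$-divisible group through the fixed isomorphism to obtain a lift $\wt y'$ of $y'$, observe that $\wt y=\pi_{\K',\K}(\wt y')$ is a lift of $y\in\calC_{\calL}(x)\subset\ES_{\calL}$ and hence lands in $\Sh_{\K}=\Sh^{(1)}_{\K,E}$ generically, and conclude via compatibility of the transition map with the component decomposition that $\wt y'_{\Frac(R)}\in\Sh_{\K'}$, whence $y'\in\ES_{\calL'}$ and indeed $y'\in\calC_{\calL'}(x')$. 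To close the gap you would need to supply this Serre--Tate lifting argument (or an equivalent one), paying attention to the component of the generic fiber.
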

\begin{proof}
	We first show this statement for the morphism
	$\pi_{\K',\K}:\calC^{\rm naiv}_{\calL'}(x')\to \calC^{\rm
		naiv}_{\calL}(x)$. Let $x'=[((A_{\Lambda'})_{\Lambda'\in \calL'}, 
	\bar \lambda, \bar \eta)]$, where $\lambda$ is a principal
	representative.  
	Then \[x=[((A_{\Lambda})_{\Lambda\in\calL}, \bar \lambda,
	\bar \eta)].\] 
	Let 
	$y=[((B_\Lambda)_\Lambda, \bar \lambda_{B}, \bar \eta_B)] \in 
	\calC^{\rm naiv}_\calL(x)$, 
		where $\lambda_B$ is a principal representative. 
	We fix an isomorphism 
	\[((B_\Lambda)_\Lambda,\lambda_B)[p^\infty]\simeq 
	((A_\Lambda)_\Lambda,\lambda)[p^\infty]\] of $\calL$-sets of
	$p$-divisible groups. Then there is an extension
	$((B_{\Lambda'})_{\Lambda'},\lambda_B)$ such that
	\[((B_{\Lambda'})_{\Lambda'},\lambda_B)[p^\infty]\simeq 
	((A_{\Lambda'})_{\Lambda'},\lambda)[p^\infty].\] It is enough to
	extend $(B_{\Lambda'})_{\Lambda'\in \calL'}$ 
	for $\Lambda'\in \calL'^\square$,  where
	\[\calL'^\square:=\{\Lambda\in \calL' \mid \Lambda\subset
	\Lambda^\vee \subset p^{-1} \Lambda \},\] and use the periodic
	property to extend $(B_{\Lambda'})_{\Lambda'}$ 
	for all $\Lambda'\in \calL'$. Choose $\Lambda\in \calL$ such 
	$\Lambda\subset \Lambda'$ for all $\Lambda'\in \calL'^\square$. 
	Suppose $\Lambda'\not\in \calL$. We define $B_{\Lambda'}$ as the
	quotient $B_\Lambda/H_{\Lambda'}$, where $H_{\Lambda'}\subset
	B_\Lambda[p^\infty]$ is the finite flat subgroup scheme which corresponds
	to the finite subgroup scheme $\ker \left (
	\rho_{\Lambda,\Lambda'}[p^\infty]:
	A_\Lambda[p^\infty]\to A_{\Lambda'}[p^\infty]\right )$ 
	via the isomorphism
	$B_\Lambda[p^\infty]\simeq A_\Lambda[p^\infty]$. As explained, the
	polarization $\lambda_B$ is uniquely determined by
	$\lambda_{B,\Lambda}$ for one member $\Lambda\in \calL'$. 
	Therefore, there
	is a unique extension of the polarization $\lambda_B$ on 
	$(B_{\Lambda'})_{\Lambda'\in \calL'}$. So we construct a member
	$((B_{\Lambda'})_{\Lambda'\in \calL'}, \bar \lambda_B,\bar \eta_B)$
	such that there is an isomorphism
	$((B_{\Lambda'}),\lambda_B)[p^\infty]\simeq
	((A_{\Lambda'}),\lambda)[p^\infty]$. This shows the surjectivity of
	$\pi_{\K',\K}:\calC^{\rm naiv}_{\calL'}(x')\to \calC^{\rm
		naiv}_{\calL}(x)$. 
	
	Let $\calL_{\GSp}$ denote the set of $\Zp$-lattices in $\calL$ by
	ignoring the $O_{B_p}$-module structure, and let
	$\bfA_{\calL_{\GSp}}$ denote the Siegel moduli space over $O_{E}$
	with parahoric level $\calL_{\GSp}$ at $p$. For any $z\in
	\bfA_{\calL_{\GSp}} (k)$, we denote by $\calC_{\calL_{\GSp}}(z)$
	the central leaves passing through $z$ in
	$\bfA_{\calL_{\GSp}}\otimes k$. Forgetting the
	endomorphism structure gives a finite morphism \[f_\calL: \bfM^{\rm
		naiv}_{\calL}\to \bfA_{\calL_{\GSp}}\] \cite[Proposition
	1.1]{Yu:lift}. Similarly, we have a finite morphism 
	$f_{\calL'}: \bfM^{\rm naiv}_{\calL'}\to \bfA_{{\calL'}_{\GSp}}$ 
	and a commutative diagram
	\begin{displaymath}
	\xymatrix{
		\bfM^{\rm naiv}_{\calL'} \ar[r]^{f_{\calL'}} \ar[d]^{\pi_{\K',\K}} 
		&  \bfA_{{\calL'}_{\GSp}}
		\ar[d]^{\pi^{\GSp}_{\calL',\calL}}\\ 
		\bfM^{\rm naiv}_{\calL} \ar[r]^{f_\calL} & \bfA_{{\calL}_{\GSp}}. 
	}
	\end{displaymath}
	Restricting the forgetting
	morphisms $f_{\calL'}$ and $f_\calL$ to the central leaves we have
	the following commutative diagram 
	\begin{displaymath}
	\xymatrix{
		\calC^{\rm naiv}_{\calL'}(x') \ar[r]^{f_{\calL'}} \ar[d]^{\pi_{\K',\K}} 
		& \calC_{\calL'_{\GSp}}(f_{\calL'}(x')) 
		\ar[d]^{\pi^{\GSp}_{\calL',\calL}}\\ 
		\calC^{\rm
			naiv}_{\calL}(x) \ar[r]^{f_\calL} &
		\calC_{\calL_{\GSp}}(f_{\calL}(x)).
	} 
	\end{displaymath}
	
	Note that the morphism $\pi^{\GSp}_{\calL',\calL}$ is finite
	\cite[Section 7]{He-Rap} and so is the composition 
	$\pi^{\GSp}_{\calL',\calL}\circ f_{\calL'}$. 
	From the above commutative diagram, any fiber of $\pi_{\K',\K}$ is
	contained in a fiber of the finite morphism $\pi^{\GSp}_{\calL',\calL}\circ
	f_{\calL'}$, 
	which has only finitely many elements. Thus,   
	the morphism
	$\pi_{\K',\K}:\calC^{\rm naiv}_{\calL'}(x')\to \calC^{\rm
		naiv}_{\calL}(x)$ 
	has finite fibers. It follows that the morphism 
	$\calC_{\calL'}(x')\to \calC_{\calL}(x)$ has finite
	fibers. It remains to show the surjectivity.
	
	Note that the generic fiber of $\bfM^{\rm naiv}_{\calL}$ is the
	disjoint union of $\Sh^{(1)}_{\K,E}=\Sh_{\K,E},\dots ,
	\Sh^{(m)}_{\K,E}$, where $m$ is the cardinality of the finite set $\ker^1(\Q,G):=\ker\big(H^1(\Q, G)\ra \prod_vH^1(\Q_v,G)\big)$, for each $i$, $\Sh_{\K,E}^{(i)}=\Sh_{\K}^{(i)}\otimes_\E E$, and $\Sh^{(i)}_{\K}$ is the corresponding model of $\Sh^{(i)}_\K(\C)$ in the decomposition $\bfM^{\rm naiv}_{\calL}(\C)=\coprod_{i=1}^m\Sh_\K^{(i)}(\C)$.
	This is
	described in \cite[Section 8]{kottwitz:jams92} 
	under the unramified setting but because this
	is a statement in characteristic zero and we have assumed $p\neq 2$, 
	it also holds for the present setting. Similarly, $\bfM^{\rm naiv}_{\calL'}\otimes
	E=\coprod_{i=1}^m \Sh^{(i)}_{\K',E}$ and the transition map
	sends $\Sh^{(i)}_{\K',E} \onto \Sh^{(i)}_{\K,E}$. 
	
	Let $y=[((B_\Lambda)_\Lambda, \bar \lambda_{B}, \bar \eta_B)] \in 
	\calC_\calL(x)$, and
	 $y'=[((B_{\Lambda'})_{\Lambda'}, \bar \lambda_{B}, \bar
	\eta_B)] \in  \calC^{\rm naiv}_{\calL'}(x')$ be a point 
	we constructed over
	$y$ as before. 
	As $x'\in \ES_{\calL'}(k)$, one fixes a lifting $\wt x'\in
	\ES_{\calL'}(R)$ of $x'$ 
	over a complete DVR $R$ with residue field
	$k$; the base change $\wt x'_{\Frac(R)}$ lands in 
	$\Sh_{\K'}(\Frac
	(R))$. By the Serre-Tate theorem, deforming abelian varieties is the
	same as deforming the associated $p$-divisible groups. Thus, the way
	of lifting $x'$ to $\wt x'$ produces a lifting 
	$\wt y'\in
	\bfM^{\rm naiv}_{\calL'}(R)$ of $y'$ over
	$R$. 
	Since its image $\pi_{\K',\K}(\wt y')=:\wt y$ is a lifting of
	$y$ over $R$, the point $\wt y_{\Frac(R)}$ lands in
	$\Sh_{\K}$. Therefore, the point 
	$\wt y'_{\Frac(R)}$ lands in $\Sh_{\K'}$ and
	$y'\in \calC_{\calL'}(x')$.       
\end{proof}

Suppose that $K'=K'^\circ$ and $K=K^\circ$, which are parahoric subgroups by
definition. 
Put ${\ES}_\K:=\ES_\calL$ and $\ES_{\K'}:=\ES_{\calL'}$. Let $k=\ov{\mathbb{F}}_p$. For each
point $x\in \ES_\K(k)$, there exists an 
$O_{B_p}\otimes_{\Zp} \breve{\Q}_p$-linear isomorphism
\[M(A_{x,\Lambda})\otimes \breve{\Q}_p\simeq V\otimes \breve{\Q}_p\] 
which preserves the
pairings up to a scalar in $\breve{\Q}_p^\times$, where $M(A_{x,\Lambda})$
is the covariant Dieudonn\'e module of $A_{x,\Lambda}$, 
for one $\Lambda\in \calL$. 
Transporting the Frobenius
map on $M(A_{x,\Lambda})$ to $V\otimes \breve{\Q}_p$, 
we obtain an element $G(\breve{\Q}_p)$,
which is well-defined up to $\sigma$-$\breve{K}$-conjugate. Thus, we have
defined a map \[\Upsilon_\K: \ES_\K(k)\to G(\breve{\Q}_p)/\breve{K}_\sigma,\] whose fibers are the central leaves introduced in Definition \ref{PEL.7}.
Similarly, we also have a map
$\Upsilon_{\K'}: \ES_{\K'}(k)\to G(\breve{\Q}_p)/\breve{K}'_\sigma$ and have 
the following commutative diagram:
\[ 
\begin{CD}
\ES_{\K'}(k) @>\Upsilon_{\K'}>> G(\breve{\Q}_p)/\breve{K}'_\sigma \\
@VV\pi_{K',K}V @VV\pi^{G}V \\
\ES_{\K}(k) @>\Upsilon_{\K}>>   G(\breve{\Q}_p)/\breve{K}_\sigma.
\end{CD} \] 
Proposition~\ref{PEL.8} confirms axiom 4 (c) of \cite{He-Rap} for the
canonical Rapoport-Zink integral models $\ES_\calL$ under the
assumption $p>2$, $K'=K'^\circ$ and $K=K^\circ$.

\end{document}